\numberwithin{equation}{section} 
\newcommand{\N}{\ensuremath{\mathbb{N}}}
\newcommand{\R}{\ensuremath{\mathbb{R}}}
\newcommand{\C}{\ensuremath{\mathbb{C}}}
\newcommand{\K}{\ensuremath{\mathbb{K}}}
\newcommand{\diverg}{\textup{div}}
\newcommand{\tr}{\textup{tr}}
\newcommand{\supp}{\textup{supp}}
\newcommand{\Hc}{\ensuremath{\mathcal{H}}}
\newcommand{\Lc}{\ensuremath{\mathcal{L}}}
\newcommand{\Oc}{\ensuremath{\mathcal{O}}}
\begin{document}
\thispagestyle{plain}

\topmargin -18pt\headheight 12pt\headsep 25pt

\ifx\cs\documentclass \footheight 12pt \fi \footskip 30pt

\textheight 625pt\textwidth 431pt\columnsep 10pt\columnseprule 0pt 
                                                                      
\clearpairofpagestyles
\automark*[section]{section}
\automark*[subsection]{}
\ohead[]{\scshape\headmark}
\ofoot*{\pagemark}
\pagestyle{scrheadings}

 \renewcommand{\headfont}{\slshape}      
 \renewcommand{\pnumfont}{\upshape}      
\setcounter{secnumdepth}{5}             
\setcounter{tocdepth}{5}             

\newtheorem{Definition}{Definition}[section]
\newtheorem{Satz}[Definition]{Satz}
\newtheorem{Lemma}[Definition]{Lemma}
\newtheorem{Korollar}[Definition]{Korollar}
\newtheorem{Corollary}[Definition]{Corollary}
\newtheorem{Bemerkung}[Definition]{Bemerkung}
\newtheorem{Remark}[Definition]{Remark}
\newtheorem{Proposition}[Definition]{Proposition}
\newtheorem{Beispiel}[Definition]{Beispiel}
\newtheorem{Theorem}[Definition]{Theorem}

\thispagestyle{plain}

\pdfbookmark[1]{Titlepage}{title}
\begin{center}
	{\LARGE Convergence of the Scalar- and Vector-Valued Allen-Cahn Equation
	to Mean Curvature Flow with $90$°-Contact Angle\\[0.5ex]
	in Higher Dimensions}\\[2ex]
	\textsc{Maximilian Moser}\\[1ex]
	Fakultät für Mathematik, Universität Regensburg, Universitätsstraße 31,\\ D-93053 Regensburg, Germany\\
	maximilian1.moser@mathematik.uni-regensburg.de\\[1ex]
\end{center}

\begin{abstract}
	\textbf{Abstract.} We consider the sharp interface limit for the scalar-valued and vector-valued Allen-Cahn equation with homogeneous Neumann boundary condition in a bounded smooth domain $\Omega$ of arbitrary dimension $N\geq 2$ in the situation when a two-phase diffuse interface has developed and intersects the boundary $\partial\Omega$. The limit problem is mean curvature flow with $90$°-contact angle and we show convergence in strong norms for well-prepared initial data as long as a smooth solution to the limit problem exists. To this end we assume that the limit problem has a smooth solution on $[0,T]$ for some time $T>0$. Based on the latter we construct suitable curvilinear coordinates and set up an asymptotic expansion for the scalar-valued and the vector-valued Allen-Cahn equation. Finally, we prove a spectral estimate for the linearized Allen-Cahn operator in both cases in order to estimate the difference of the exact and approximate solutions with a Gronwall-type argument.\\
	
	\noindent\textit{2020 Mathematics Subject Classification:} Primary 35K57; Secondary 35B25, 35B36, 35R37.\\
	\textit{Keywords:} Sharp interface limit; mean curvature flow; contact angle; Allen-Cahn equation; vector-valued Allen-Cahn equation.
\end{abstract}

\section{Introduction}\label{sec_intro}
In the following we introduce the Allen-Cahn equation and the vector-valued variant considered in this paper. Moreover, we motivate and review results concerning the corresponding sharp interface limits. 

Let us begin with the scalar Allen-Cahn equation. Let $N\in\N$, $N\geq 2$ and $\Omega\subset\R^N$ be a bounded, smooth domain with outer unit normal $N_{\partial\Omega}$. Moreover, let $\varepsilon>0$ small. For $T>0$ and  $u_\varepsilon:\overline{\Omega}\times[0,T]\rightarrow\R$ we consider the Allen-Cahn equation with homogeneous Neumann boundary condition \hypertarget{AC}{(AC)} consisting of
\begin{alignat}{2}\label{eq_AC1}\tag{AC1}
\partial_tu_\varepsilon-\Delta u_\varepsilon+\frac{1}{\varepsilon^2}f'(u_\varepsilon)&=0&\qquad&\text{ in }\Omega\times(0,T),\\\label{eq_AC2}\tag{AC2}
\partial_{N_{\partial\Omega}}u_\varepsilon&=0&\qquad&\text{ on }\partial\Omega\times(0,T),\\
u_\varepsilon|_{t=0}&=u_{0,\varepsilon} &\qquad&\text{ in }\Omega,\label{eq_AC3}\tag{AC3}
\end{alignat} 
where $f:\R\rightarrow\R$ is a suitable smooth double well potential with wells of equal depth. A typical example is $f(u)=\frac{1}{2}(1-u^2)^2$, see Figure \ref{fig_double_well}. The precise conditions are
\begin{align}\label{eq_AC_fvor1}
f\in C^\infty(\R),\quad f'(\pm1)=0,\quad f''(\pm1)>0,\quad 
f(-1)=f(1),\quad f>f(1)\text{ in }(-1,1)
\end{align}
and we assume 
\begin{align}\label{eq_AC_fvor2}
uf'(u)\geq0\quad\text{ for all }|u|\geq R_0\text{ and some }R_0\geq 1.
\end{align} 
Note that \eqref{eq_AC_fvor2} is just a requirement for the sign of $f'$ outside a large ball. The condition \eqref{eq_AC_fvor2} is used later to obtain uniform a priori bounds for classical solutions $u_\varepsilon$, see Section \ref{sec_DC_prelim_bdd_scal} below. 
\begin{figure}[H]
	\centering
	\def\svgwidth{0.4\linewidth}
	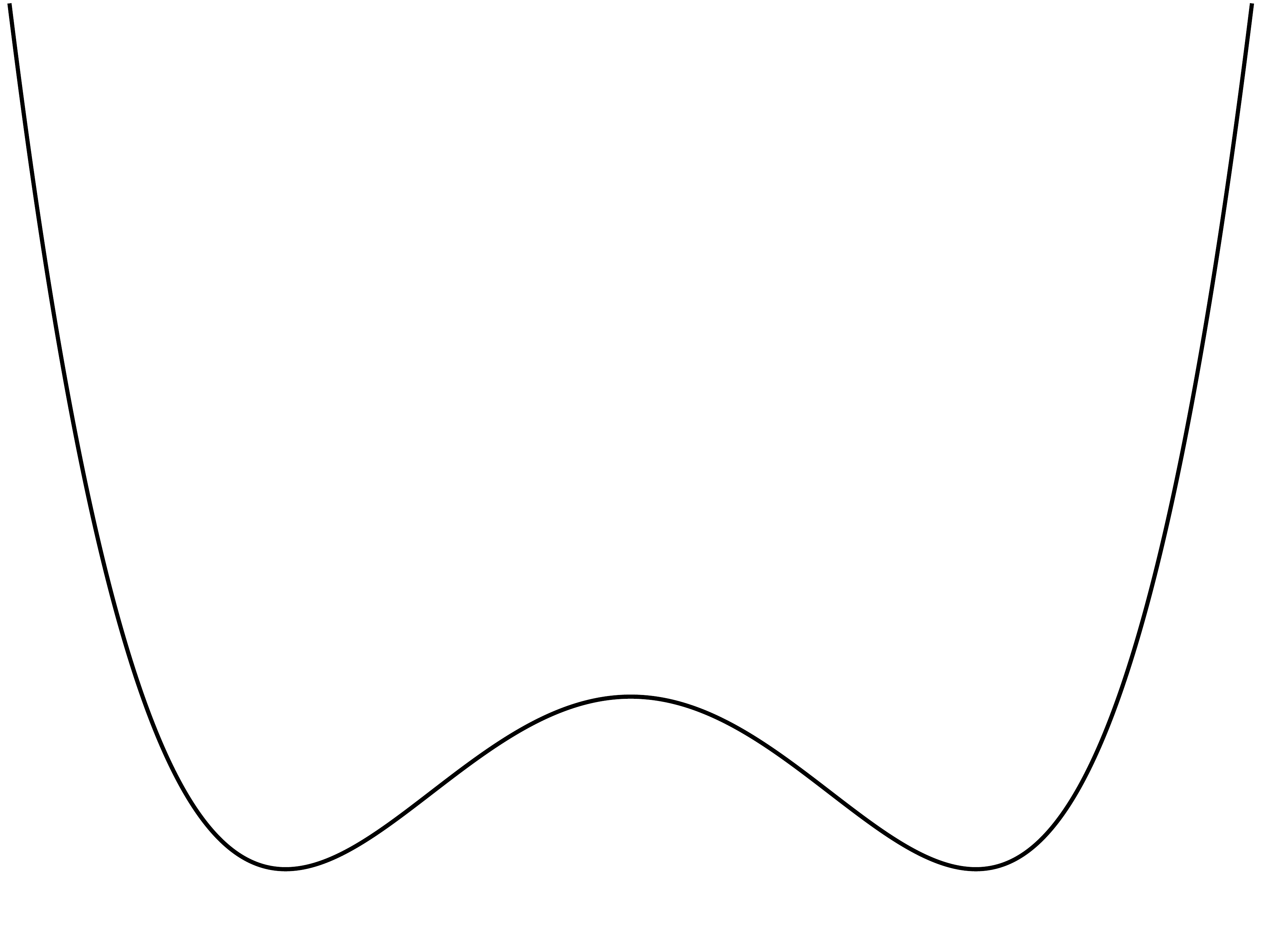
	\caption{Typical form of the double-well potential, $f(u)=\frac{1}{2}(1-u^2)^2$.}\label{fig_double_well}
\end{figure}

	The Allen-Cahn equation (similar to \eqref{eq_AC1}) was originally introduced by Allen and Cahn \cite{AC} to describe the evolution of antiphase boundaries in certain polycrystalline materials. Moreover, one can directly verify that equation \eqref{eq_AC1}-\eqref{eq_AC3} is the $L^2$-gradient flow to the energy\phantom{\qedhere}
	\begin{align}\label{eq_AC_energy}
	E_\varepsilon(u):=\int_\Omega\frac{1}{2}|\nabla u|^2+\frac{1}{\varepsilon^2}f(u)\,dx.
	\end{align}
	The latter is (up to a scaling in $\varepsilon$) the usual scalar Ginzburg-Landau energy or Modica-Mortola energy, cf.~Modica \cite{Modica}. For a summary of further motivations we refer to the introduction in Bronsard, Reitich \cite{BronsardReitich}. 
	
	The Allen-Cahn equation is a diffuse interface model: the $u_\varepsilon$ serves as an order parameter, where the values $\pm 1$ correspond to two distinct phases in applications. Typically after a short time $\Omega$ is partitioned into subdomains where the solution $u_\varepsilon$ of \eqref{eq_AC1}-\eqref{eq_AC3} is close to $\pm1$ and transition zones (diffuse interfaces; roughly $u_\varepsilon^{-1}([-1+\mu,1-\mu])$ for $\mu>0$ small) develop where $|\nabla u_\varepsilon|$ is large. See Figure \ref{fig_transition_zone} below for a typical situation. For a rigorous result in this direction (\enquote{generation of interfaces}) see Chen \cite{ChenGenPropInt}. Formally, one can see this in the equation since the \enquote{reaction term} $f'(u_\varepsilon)/\varepsilon^2$ should be large for small times away from the minima of $f$ compared to the diffusion term $\Delta u_\varepsilon$. One also speaks of fast reaction/slow diffusion, see Rubinstein, Sternberg, Keller \cite{RSK}. Neglecting $\Delta u_\varepsilon$, \eqref{eq_AC1} becomes an ODE in time for each space point. For this ODE the stationary points are $0$, $\pm 1$, where $0$ is unstable and $\pm 1$ are stable. Moreover, one can also have a look at the energy \eqref{eq_AC_energy}. It is well-known that solutions to the corresponding gradient flow behave in such a way that the energy is non-increasing (and decreases in some optimal sense) in time. In this perspective values of $u_\varepsilon$ away from $\pm 1$ are penalized strongly and values of $|\nabla u_\varepsilon|$ away from $0$ are penalized weakly. Heuristically (or in sufficiently smooth cases) one can argue that the thickness of the diffuse interfaces is proportional to $\varepsilon$. Hence for $\varepsilon\rightarrow0$ one should obtain a hypersurface $\Gamma=(\Gamma_t)_{t\in[0,T]}$ evolving in time, cf.~Figure \ref{fig_transition_zone}, and hence a sharp interface model. Such limits are therefore called \enquote{sharp interface limits}. 
	
	\begin{figure}[H]
		\centering
		\def\svgwidth{0.9\linewidth}
		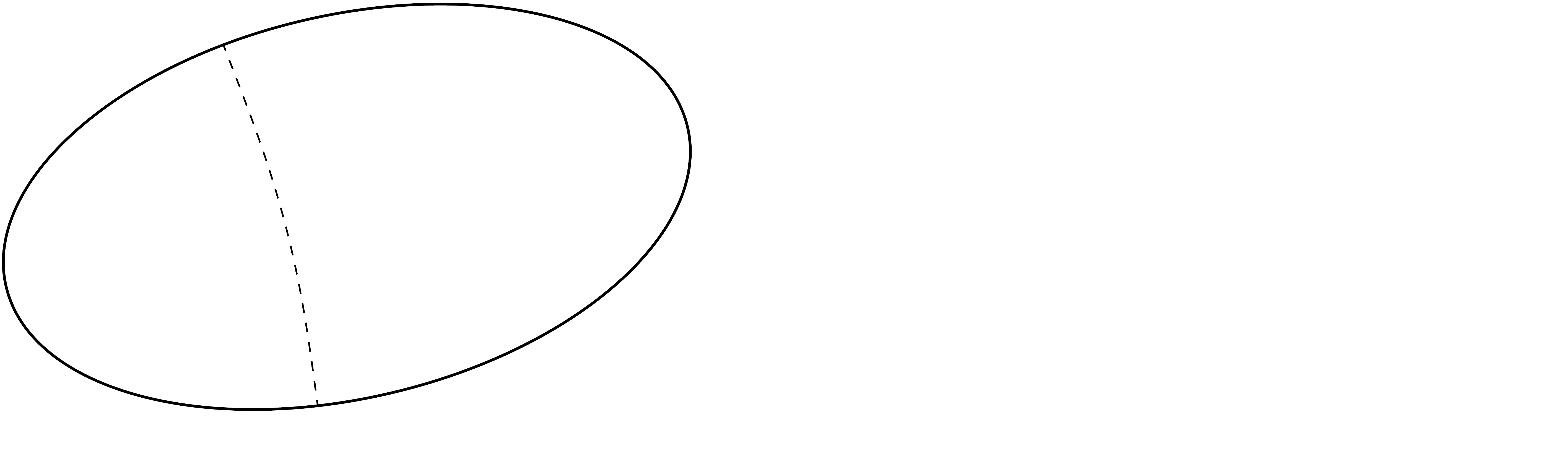
		\caption{Diffuse interface and sharp interface limit.}\label{fig_transition_zone}
	\end{figure}
	
	Next, let us introduce the vector-valued Allen-Cahn equation considered in the paper. Let $N,\Omega,N_{\partial\Omega},\varepsilon,T$ be as above. Moreover, let $m\in\N$ and $W:\R^m\rightarrow\R$ be a suitable potential which will be specified below. Let $\varepsilon>0$ be a small parameter. Then for $\vec{u}_\varepsilon:\overline{\Omega}\times[0,T]\rightarrow\R^m$ we consider the vector-valued Allen-Cahn equation with Neumann boundary condition \hypertarget{vAC}{(vAC)} consisting of
	\begin{alignat}{2}\label{eq_vAC1}\tag{vAC1}
	\partial_t\vec{u}_\varepsilon-\Delta \vec{u}_\varepsilon+\frac{1}{\varepsilon^2}\nabla W(\vec{u}_\varepsilon)&=0&\qquad&\text{ in }\Omega\times(0,T),\\\label{eq_vAC2}\tag{vAC2}
	\partial_{N_{\partial\Omega}}\vec{u}_\varepsilon&=0&\qquad&\text{ on }\partial\Omega\times(0,T),\\
	\vec{u}_\varepsilon|_{t=0}&=\vec{u}_{0,\varepsilon}&\qquad&\text{ in }\Omega.\label{eq_vAC3}\tag{vAC3}
	\end{alignat} 
	
	In analogy to the scalar case one can compute directly that equation \eqref{eq_vAC1}-\eqref{eq_vAC3} is the $L^2$-gradient flow to the vector-Ginzburg-Landau energy
	\begin{align}\label{eq_vAC_energy}
	\check{E}_\varepsilon(\vec{u}):=\int_\Omega\frac{1}{2}|\nabla \vec{u}|^2+\frac{1}{\varepsilon^2}W(\vec{u})\,dx.
	\end{align}
	See also Bronsard, Reitich \cite{BronsardReitich} for further motivations.

	\begin{proof}[The Potential $W$]
		Here we allow two types of potentials $W$. On the one hand, we consider $W$ with exactly two distinct minima and symmetry with respect to the hyperplane in the middle of these. On the other hand, we consider $m=2$ and triple-well potentials $W$ with symmetry. The first type is basically only interesting from a technical point of view, where with the second type one can describe e.g.~three distinct phases in a polycristalline material, cf.~\cite{BronsardReitich}. The precise requirements are as follows:\phantom{\qedhere}
		\begin{Definition}\upshape\label{th_vAC_W}
		Let $W:\R^m\rightarrow\R$ be smooth and one of the two assumptions hold:
		\begin{enumerate}
			\item $W$ has exactly two global minima $\vec{a}, \vec{b}$ with $W(\vec{a})=W(\vec{b})=0$ in which $D^2 W$ is positive definite and $W$ is symmetric with respect to the reflection $R_{\vec{a},\vec{b}}:\R^m\rightarrow\R^m$ at the hyperplane $\frac{1}{2} (\vec{a}+\vec{b})+\textup{span}\{\vec{a}-\vec{b}\}^\perp$. 
			\item $W$ is a symmetric triple well-potential for $m=2$, i.e.~$W$ has exactly three global minima $\vec{x}_i$, $i=1,3,5$ with $W(\vec{x}_i)=0$ for $i=1,3,5$ in which $D^2 W$ is positive definite and $W$ is symmetric with respect to the symmetry group $G$ of the equilateral triangle, cf.~Kusche \cite{Kusche}, Section 3.2 for the precise definition of $G$.
		\end{enumerate}
		Moreover, in both cases we require $\vec{u}\cdot \nabla W(\vec{u})\geq 0$ for all $\vec{u}\in\R^m$, $|\vec{u}|\geq \check{R}_0$ for some $\check{R}_0>0$. Furthermore, we assume that the kernel to a certain linear operator associated to $W$ is one-dimensional. For the precise condition see Remark \ref{th_ODE_vect_lin_op_rem} below.
		\end{Definition}
		
		\begin{Remark}\upshape\label{th_vAC_rem}
			\begin{enumerate}
				\item An example for a typical triple-well potential that fulfils the conditions in Definition \ref{th_vAC_W} can be found in \cite{Kusche}, Section 3.4. See Figure \ref{fig_triple_well} below. The example stems from Haas \cite{HaasDiss}, see also Garcke, Haas \cite{GarckeHaas}.
				\item Compared to the scalar case, we always require symmetry properties for the potential $W$. The assumption is used e.g.~in Section \ref{sec_ODE_vect_nonlin} below, but it might be possible to relax this.
			\end{enumerate}
		\end{Remark}
	\end{proof}
	
	\begin{figure}[htp]
		\centering
		\def\svgwidth{0.5\linewidth}
		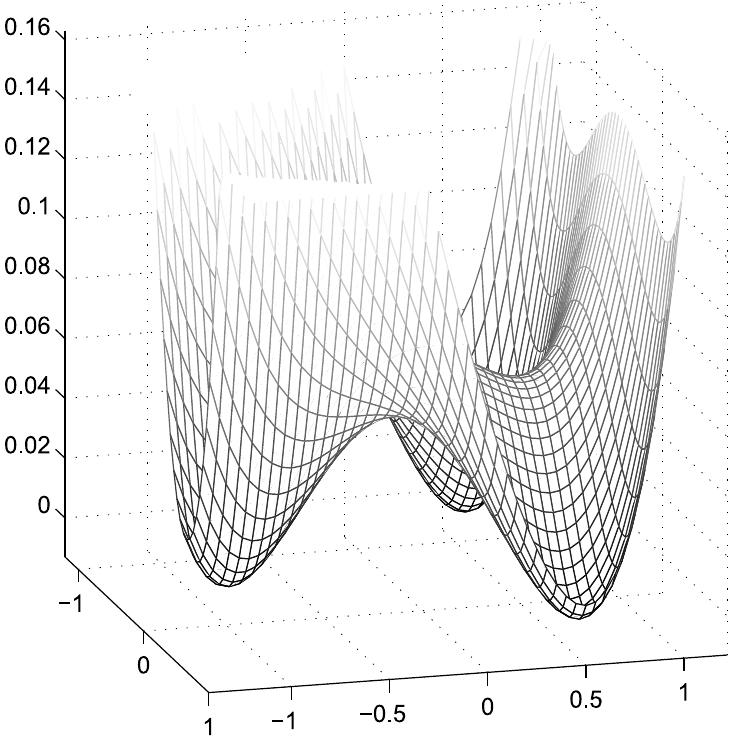
		\caption{Typical triple-well potential $W$. The image is taken from Kusche \cite{Kusche}.}\label{fig_triple_well}
	\end{figure}

	Analogously to the scalar case one can argue with formal arguments (system of fast reaction and slow diffusion; gradient flow to the energy \eqref{eq_vAC_energy}) that diffuse interfaces for solutions of \hyperlink{vAC}{(vAC)} should develop after short time. Note that the transition between minima of $W$ runs in $\R^m$. Moreover, in the case of a triple-well potential $W$ also three-fold diffuse interfaces between the three minima of $W$ are possible.

	\begin{proof}[Sharp/Diffuse Interface Models and Sharp Interface Limits]
	In general, sharp interface models and diffuse interface models are important model categories for the description of interfaces and moving boundaries in a large variety of applications. Some prominent examples are the melting of ice, the motion of an oil droplet in water, crystal growth, biological membranes, porous media, tumour evolution, spinodal decomposition of polymers and grain boundaries, see e.g.~\cite{Friedman}, \cite{PruessSimonett}, \cite{AC}, \cite{AndersonMcFaddenWheeler}, \cite{Novick_CH}, \cite{Miranville}, \cite{EckGarckeKnabner}, \cite{BarrettGarckeNuern}, \cite{BaiComMagPoz}, \cite{Ebenbeck} and the references therein.
	
	It is an important task to connect diffuse interface models and sharp interface models via their sharp interface limits for the following reasons (see also the partly universal introduction in Caginalp, Chen \cite{CaginalpChen} and general comments in Caginalp, Chen, Eck \cite{CaginalpChenEck}):\phantom{\qedhere}
	\begin{itemize}
		\item Modelling and Analysis: both types of models can usually be derived or motivated with physical principles, phenomenological observations or geometrical arguments etc., but one always incorporates some constitutive assumptions. Often the derivation for the sharp interface models is more transparent and these models appear simpler and more qualitative. On the other hand, diffuse interface models are usually advantageous in more complicated situations (see e.g.~\cite{AndersonMcFaddenWheeler}, p.141) and solutions typically have better analytical properties. Especially topology changes do not impose any difficulties. By identifying the sharp interface limit one confirms that the assumptions in the derivations are appropriate as well as that the models are compatible with each other and can be used to describe the same situation. Another motivation is the concept of using the diffuse interface model to extend solutions of the corresponding sharp interface model past singularities.
		\item Numerics: diffuse interface models are often simpler to solve numerically. By considering the sharp interface limit one justifies that the numerical solution to the diffuse interface model can be used to approximate the solution to the sharp interface model. 
	\end{itemize}
	Concerning results for sharp interface limits: in general there are formal results and rigorous proofs for convergence. The formal sharp interface limits are typically based on formal asymptotic expansions or numerical experiments. 
	However, see also Anderson, McFadden, Wheeler \cite{AndersonMcFaddenWheeler}, p.156ff for a \enquote{pillbox argument}, i.e.~reasoning with a small test volume. Regarding rigorous sharp interface limits, one can basically group such results into two types:
	\begin{itemize}
		\item Local time results that are applicable before singularities appear, i.e.~as long as the interface does not develop singularities and stays smooth. Relatively \enquote{strong} results are obtained, e.g.~norm estimates.
		\item Global time results using some kind of weak notion for the sharp interface system, e.g.~viscosity solutions for mean curvature flow, varifold solutions, distributional solutions, etc.
	\end{itemize}  
	\end{proof}
	
	\begin{proof}[Formal and Rigorous Sharp Interface Limit for the Allen-Cahn-Equation]
	In the situation of the scalar Allen-Cahn equation \hyperlink{AC}{(AC)}
	formal asymptotic analysis by Rubinstein, Sternberg, Keller \cite{RSK} yields that the limit sharp interface $(\Gamma_t)_{t\in[0,T]}$ should evolve according to the mean curvature flow
	\begin{align}\label{MCF}
	V_{\Gamma_t}=H_{\Gamma_t} \tag{MCF}
	\end{align}
	and, if there is boundary contact, there should be a $90$°-contact angle. 
	
	Moreover, the numerical experiments in Lee, Kim \cite{LeeKim} give another confirmation on a formal level for the convergence of \hyperlink{AC}{(AC)} to \eqref{MCF} with $90$°-contact angle. Finally, note that the 1D-case is not interesting for finite time in the $t$-scale because patterns persist for and evolve in exponentially slow time scales $\tau=e^{-c/\varepsilon}t$, cf.~Carr, Pego \cite{CarrPego}. More precisely, the sharp interface is a point and does not move in the time-scale $t$. This is consistent with \eqref{MCF} when the curvature of a point is defined as zero.
	
	For the vector-valued Allen-Cahn equation \hyperlink{vAC}{(vAC)} formal asymptotic calculations in Bronsard, Reitich \cite{BronsardReitich} yield that for a triple-well potential $W$ in the sharp interface limit $\varepsilon\rightarrow 0$ one should obtain \eqref{MCF} together with: 
	\begin{itemize}
		\item A $90$°-contact angle if a transition of two phases meets the boundary.\phantom{\qedhere}
		\item A $120$°-triple junction if the three phases meet at an interior point.
	\end{itemize}
	
	For the sake of completeness, note that the limit $\varepsilon\rightarrow 0$ in energies of the form \eqref{eq_AC_energy} (with similar potentials) has been considered in the context of $\Gamma$-convergence\footnote{~Of course this \enquote{$\Gamma$} has a different meaning than the $\Gamma$ in Figure \ref{fig_transition_zone}.}, see Modica \cite{Modica} (with mass constraint) and Sternberg \cite{SternbergSingPert} (with and without mass constraint). The $\Gamma$-limits are perimeter functionals which (at least formally) induce \eqref{MCF} with $90$°-contact angle via the $L^2$-gradient flow. This also motivates to study the dynamical problem \eqref{eq_AC1}-\eqref{eq_AC3} associated to the energy \eqref{eq_AC_energy} and its relation to \eqref{MCF} with $90$°-contact angle in the limit $\varepsilon\rightarrow 0$.
	
	For results in the direction of $\Gamma$-convergence with respect to $\varepsilon\rightarrow 0$ for energies like \eqref{eq_vAC_energy} (for several types of potentials and usually with mass constraint) see Baldo \cite{Baldo} and the references therein. The $\Gamma$-limits are (multiphase) perimeter functionals which (at least formally) induce (multiphase) mean curvature flow via the $L^2$-gradient flow. This gives another motivation to study the dynamical problem \eqref{eq_vAC1}-\eqref{eq_vAC3} and the connection to (multiphase) mean curvature flow in the limit $\varepsilon\rightarrow 0$.

	There are many rigorous results on the sharp interface limit for the Allen-Cahn equation (\eqref{eq_AC1} on $\R^N$ or \hyperlink{AC}{(AC)}) to \eqref{MCF} (in the case of \hyperlink{AC}{(AC)} with $90$°-contact angle).\phantom{\qedhere}
	
	We start with the local time results. Via a comparison principle and the construction of sub- and supersolutions, Chen \cite{ChenGenPropInt} proves local in time convergence as long as the interface stays smooth. Moreover, de Mottoni and Schatzman \cite{deMS} consider the $\R^N$-case and show convergence with strong norms for times when a smooth solution to \eqref{MCF} exists. This also works for \hyperlink{AC}{(AC)} when the interface is closed and strictly contained in $\Omega$. Note that the papers by Chen, Hilhorst, Logak \cite{CHL} and Abels, Liu \cite{ALiu} also yield results for \hyperlink{AC}{(AC)} with strictly contained interface by simple adjustments. The resulting proofs and results use ideas from \cite{deMS} but are more optimized. In Abels, Moser \cite{AbelsMoser} the methods in \cite{deMS}, \cite{CHL}, \cite{ALiu} are generalized to the case of boundary contact of the diffuse interface in two dimensions. Furthermore, there is the result by Fischer, Laux, Simon \cite{FischerLauxSimon}, where a relative entropy method is used. Finally, note that there is a paper by S\'aez \cite{SaezCurveShort}, but unfortunately there is a severe gap in the proof of the main theorem, cf.~\cite{AbelsMoser} for details.

	For global time results one has to use some weak formulation of \eqref{MCF}. There is the notion of viscosity solutions used by Evans, Soner, Souganidis \cite{ESS} for $\Omega=\R^N$ and by Katsoulakis, Kossioris, Reitich \cite{KKR} in the case of a convex, bounded domain. In the latter the maximum principle is used and sub- and supersolutions to the Allen-Cahn equation are constructed using the distance function from the level set of a viscosity solution for \eqref{MCF}. Moreover, varifold solutions to \eqref{MCF} are used by Ilmanen \cite{Ilmanen} in the $\R^N$-case, by Mizuno, Tonegawa \cite{MizunoTonegawa} for smooth, strictly convex, bounded domains and by Kagaya \cite{Kagaya} without the convexity assumption. For varifold solutions only convergence of a subsequence is achieved. Finally, there is the conditional result by Laux, Simon \cite{LauxSimon} where convergence of the (vector-valued; scalar case contained) Allen-Cahn equation to (multiphase) mean curvature flow in a BV-setting is obtained.

	Altogether, there is a large variety of results. However, to the authors knowledge \cite{ChenGenPropInt} and \cite{AbelsMoser} are the only results of local type that allow boundary contact for the diffuse interfaces. Moreover, to the authors knowledge in the vector-valued case there is only the conditional result \cite{LauxSimon} on the convergence of the vector-valued Allen-Cahn equation to multiphase mean curvature flow in a BV-setting. Note that there is a work by S\'aez \cite{SaezTriodFlow}, but unfortunately there is the same gap in the proof as in \cite{SaezCurveShort}, cf.~\cite{AbelsMoser} for details. It is difficult to generalize the methods in \cite{ChenGenPropInt} because comparison principles are used. On the other hand, the method by de Mottoni and Schatzman \cite{deMS} has proven to be versatile and was applied to many other diffuse interface models as well, see the comments below. 
	
	This strongly motivates to extend the result in \cite{AbelsMoser} to more complicated geometrical situations and equations. In this paper we generalize the latter in two directions. On the one hand, we look at the higher-dimensional setting and on the other hand, we also consider the vector-valued Allen-Cahn equation. The following results are obtained:
\begin{itemize}
	\item Convergence of (solutions to) the scalar-valued Allen-Cahn equation \hyperlink{AC}{(AC)} to \eqref{MCF} with $90$°-contact angle in any dimension $N\geq 2$. See Section \ref{sec_AC} below. For the case $N=2$ see also \cite{AbelsMoser}.
	\item Convergence of (solutions to) the vector-valued Allen-Cahn equation \hyperlink{vAC}{(vAC)} to \eqref{MCF} with $90$°-contact angle in any dimension $N\geq 2$, see Section \ref{sec_vAC} below. Here we only treat the case of two-phase transitions, i.e.~triple junctions are excluded. But the result is an important building block also for the triple junction case, since the arguments are localizable. 
\end{itemize}
The results are part of the PhD thesis of the author, cf.~Moser \cite{MoserDiss}. In a separate paper Abels, Moser \cite{AbelsMoserAlpha} the work \cite{AbelsMoser} is extended for the case of a non-linear Robin boundary condition which is designed in such a way that in the sharp interface limit \eqref{MCF} with $\alpha$-contact angle is attained, where $\alpha$ is fixed but close to $90$°. Note that also \cite{AbelsMoserAlpha} is part of \cite{MoserDiss}.
\end{proof}

\begin{proof}[The Method of de Mottoni and Schatzman] 
	One assumes that there exists a local smooth solution to the limit sharp interface problem. This can typically be shown for small times. Then 
	\begin{enumerate} 
		\item One \textit{rigorously} constructs an approximate solution to the diffuse interface model using asymptotic expansions based on the evolving surface that is (part of) the solution to the limit problem. To this end one has to solve model problems for the series coefficients.\phantom{\qedhere}
		\item Then one estimates the difference between exact and approximate solutions with a Gronwall-type argument. This typically involves a spectral estimate for a linear operator associated to the diffuse interface equation and the approximate solution. 
	\end{enumerate}
	This method also yields the typical profile of the solution and comparison principles are not needed in contrast to most of the other approaches. 
	
	Therefore the method was used for many other diffuse interface models as well. These results are based on general spectrum estimates in Chen \cite{ChenSpectrums} for Allen-Cahn, Cahn-Hilliard and phase-field-type operators. There are results for the Cahn-Hilliard equation by Alikakos, Bates, Chen \cite{ABC}, the phase-field equations by Caginalp, Chen \cite{CaginalpChen}, the mass-conserving Allen-Cahn equation by Chen, Hilhorst, Logak \cite{CHL}, the Cahn-Larché system by Abels, Schaubeck \cite{AbelsSchaubeck} and a Stokes/Allen-Cahn system by Abels, Liu \cite{ALiu}. See also Schaubeck \cite{Schaubeck} for a result on a convective Cahn-Hilliard equation. Moreover, Marquardt \cite{Marquardt} (see also \cite{AMa1}, \cite{AMa2}) studied the sharp interface limit for a Stokes/Cahn-Hilliard system. Furthermore, there is the result by Fei, Liu \cite{FeiLiu}, where a phase field approximation for the Willmore flow is considered. For the subtle variations in the rigorous asymptotic expansions and the spectral estimates used in applications of the method by de Mottoni and Schatzman see the inceptions to Sections 5 and 6 in \cite{MoserDiss}. Finally, \cite{AbelsMoser} is the first result obtained with the method of de Mottoni Schatzman that allows boundary contact for the diffuse interfaces.\phantom{qedhere}
\end{proof}

\begin{proof}[Mean Curvature Flow \eqref{MCF} with $90$°-Contact Angle, Coordinates and Notation]
	\enquote{Mean curvature flow} for evolving hypersurfaces means that the normal velocity equals mean curvature, where we define for convenience \enquote{mean curvature} as the sum of the principal curvatures. 
	
	For the convergence result below we will assume that \eqref{MCF} together with a $90$°-contact angle condition at $\partial\Omega$ has a smooth solution on a time interval $[0,T_0]$. This is a prerequisite for the method of de Mottoni and Schatzman \cite{deMS}. 
	
	The local well-posedness and existence of a smooth solution for small time starting from suitable initial sharp interfaces is basically well-known. At this point let us give some references in this direction. In Katsoulakis, Kossioris, Reitich \cite{KKR}, Section 2, a parametric approach is used to show local existence and uniqueness of classical solutions for \eqref{MCF} in arbitrary dimension and with fixed contact angle. In principle, it is also possible to reduce the evolution to a parabolic PDE by writing it over a reference hypersurface via suitable coordinates. For the typical procedure in the case of a closed interface see Prüss, Simonett \cite{PruessSimonett}. For curvilinear coordinates in the situation of boundary contact see Vogel \cite{Vogel} and Section \ref{sec_coord} below. Moreover, note that in Huisken \cite{Huisken} the special case of \eqref{MCF} with $90$°-contact angle in the graph case for cylindrical domains is considered and global existence and uniqueness of smooth solutions as well as convergence to a constant graph is obtained. 
	
	We need some notation in the context of the curvilinear coordinates in order to formulate the main theorems below.\phantom{\qedhere}

	\begin{Remark}[\textbf{Domain, Sharp Interface and Coordinates}]\label{th_intro_coord}\upshape
		For the details see Section \ref{sec_coord}.
		\begin{enumerate}
			\item \textit{Domain.} Let $N\in\N$, $N\geq 2$ and $\Omega\subset\R^N$ be a bounded, smooth domain with outer unit normal $N_{\partial\Omega}$. For $T>0$ we set $Q_T:=\Omega\times(0,T)$ and $\partial Q_T:=\partial\Omega\times[0,T]$. 
			\item \textit{Sharp Interface.} Consider $T_0>0$ and an evolving hypersurface $\Gamma=(\Gamma_t)_{t\in[0,T_0]}$ (with boundary, smooth, oriented, compact, connected) suitably parametrized over a reference hypersurface $\Sigma$ and such that $\partial\Gamma$ meets $\partial\Omega$ at contact angle $\frac{\pi}{2}$. For $\Gamma$ one can define the normal velocity $V_{\Gamma_t}$ and mean curvature $H_{\Gamma_t}$ at time $t\in[0,T_0]$ with respect to a unit normal $\vec{n}$ of $\Gamma$ in the classical sense. See Section \ref{sec_coord_surface_requ} for the precise assumptions and definitions.
			\item \textit{Coordinates.} We construct appropriate curvilinear coordinates $(r,s)$ with values in $[-2\delta,2\delta]\times\Sigma$ for some $\delta>0$ describing a neighbourhood of $\Gamma$ in $\overline{\Omega}\times[0,T_0]$. For the exact statements see in particular Theorem \ref{th_coordND}, with $2\delta$ instead of $\delta$ there. Here $r$ has the role of a signed distance function and $s$ works like a tangential projection. The set $\overline{Q_{T_0}}=\overline{\Omega}\times[0,T_0]$ is split by $\Gamma$ into two connected sets ($\Gamma$ excluded) according to the sign of $r$. We denote them with $Q_{T_0}^\pm$. Then we have the disjoint union
			\[
			\overline{Q_{T_0}}=\Gamma\cup Q_{T_0}^- \cup Q_{T_0}^+.
			\]
			Finally, we introduce tubular neighbourhoods $\Gamma(\eta):=r^{-1}((-\eta,\eta))$ for $\eta\in(0,2\delta]$ and define a suitable normal derivative $\partial_n$ and tangential gradient $\nabla_\tau$ on $\Gamma(\eta)$, see Remark \ref{th_coordND_rem}.	
		\end{enumerate} 
\end{Remark}\end{proof}

\begin{proof}[Structure of the paper.]
In Sections \ref{sec_AC}-\ref{sec_vAC} we formulate the main results for the scalar-valued and vector-valued Allen-Cahn equation, \hyperlink{AC}{(AC)} and \hyperlink{vAC}{(vAC)} respectively. In Section \ref{sec_fct} we fix some notation and introduce function spaces. The curvilinear coordinates are constructed in Section \ref{sec_coord}. In Section \ref{sec_model_problems} we solve the model problems appearing in the asymptotic expansions in Section \ref{sec_asym}. The spectral estimates are carried out in Section \ref{sec_SE}. The difference estimates and the proofs of the convergence theorems are carried out in Section \ref{sec_DC}.\phantom{\qedhere}
\end{proof}

\subsection[Result in Scalar Case]{Result in Scalar Case}\label{sec_AC}
Finally, we state the new convergence result for \hyperlink{AC}{(AC)} obtained with the method of de Mottoni and Schatzman \cite{deMS}.

\begin{Theorem}[\textbf{Convergence of (AC) to (MCF) with $90$°-Contact Angle}]\label{th_AC_conv}
	Let $N\geq2$, $\Omega$, $N_{\partial\Omega}$, $Q_T$ and $\partial Q_T$ for $T>0$ be as in Remark \ref{th_intro_coord},~1. Moreover, let $\Gamma=(\Gamma_t)_{t\in[0,T_0]}$ for some $T_0>0$ be a smooth evolving hypersurface with $\frac{\pi}{2}$-contact angle condition as in Remark \ref{th_intro_coord},~2.~and let $\Gamma$ satisfy \eqref{MCF}. Let $\delta>0$ small and the notation for $Q_{T_0}^\pm$, $\Gamma(\delta)$, $\nabla_\tau$, $\partial_n$ be as in Remark \ref{th_intro_coord},~3. Moreover, let $f$ satisfy \eqref{eq_AC_fvor1}-\eqref{eq_AC_fvor2}. Let $M\in\N$ with $M\geq k(N):=\max\{2,\frac{N}{2}\}$.
	
	Then there are $\varepsilon_0>0$ and $u^A_\varepsilon:\overline{\Omega}\times[0,T_0]\rightarrow\R$ smooth for $\varepsilon\in(0,\varepsilon_0]$ (depending on $M$) with $\lim_{\varepsilon\rightarrow 0}u^A_\varepsilon=\pm 1$ uniformly on compact subsets of $Q_{T_0}^\pm$ and such that the following holds: 
	\begin{enumerate}
	\item If $M>k(N)$, then let $u_{0,\varepsilon}\in C^2(\overline{\Omega})$ with $\partial_{N_{\partial\Omega}} u_{0,\varepsilon}=0$ on $\partial\Omega$ for $\varepsilon\in(0,\varepsilon_0]$ and
	\begin{align}\label{eq_AC_conv1}
	\sup_{\varepsilon\in(0,\varepsilon_0]}\|u_{0,\varepsilon}\|_{L^\infty(\Omega)}<\infty\quad\text{ and }\quad\|u_{0,\varepsilon}-u^A_\varepsilon|_{t=0}\|_{L^2(\Omega)}\leq R\varepsilon^{M+\frac{1}{2}}
	\end{align}
	for some $R>0$ and all $\varepsilon\in(0,\varepsilon_0]$.
	Then for any set of solutions $u_\varepsilon\in C^2(\overline{Q_{T_0}})$ of \eqref{eq_AC1}-\eqref{eq_AC3} for $\varepsilon\in(0,\varepsilon_0]$ with initial values $u_{0,\varepsilon}$ there are $\varepsilon_1\in(0,\varepsilon_0]$, $C>0$ such that
	\begin{align}\begin{split}\label{eq_AC_conv2}
	\sup_{t\in[0,T]}\|(u_\varepsilon-u^A_\varepsilon)(t)\|_{L^2(\Omega)}+\|\nabla(u_\varepsilon-u^A_\varepsilon)\|_{L^2(Q_T\setminus\Gamma(\delta))}&\leq C\varepsilon^{M+\frac{1}{2}},\\
	\|\nabla_\tau(u_\varepsilon-u^A_\varepsilon)\|_{L^2(Q_T\cap\Gamma(\delta))}+\varepsilon\|\partial_n(u_\varepsilon-u^A_\varepsilon)\|_{L^2(Q_T\cap\Gamma(\delta))}&\leq C\varepsilon^{M+\frac{1}{2}}\end{split}
	\end{align}
	for all $\varepsilon\in(0,\varepsilon_1]$ and $T\in(0,T_0]$.
	\item If $k(N)\in\N$ and $M\geq k(N)+1$, then there is a $\tilde{R}>0$ small such that the assertion in 1.~holds, when $R,M$ in \eqref{eq_AC_conv1}-\eqref{eq_AC_conv2} are replaced by $\tilde{R},k(N)$.
	\item If $N\in\{2,3\}$ and $M=2(=k(N))$, then there is $T_1\in(0,T_0]$ such that the assertion in 1.~is valid but only such that \eqref{eq_AC_conv2} holds for all $\varepsilon\in(0,\varepsilon_1]$ and $T\in(0,T_1]$. 
	\end{enumerate}
\end{Theorem}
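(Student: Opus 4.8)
The plan is to follow the method of de Mottoni and Schatzman in the boundary-contact setting of \cite{AbelsMoser}, in three steps: construct a rigorous approximate solution, prove a spectral estimate for the linearized operator, and close a Gronwall estimate for the difference.

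\emph{Approximate solution.} Using the curvilinear coordinates $(r,s)$ of Remark \ref{th_intro_coord} (constructed in Section \ref{sec_coord}), I would build $u^A_\varepsilon$ by matching three expansions: an \emph{outer} expansion in $Q_{T_0}^\pm$ of the form $\pm 1 + \sum_{i\geq 1}\varepsilon^i u_i^\pm$; an \emph{inner} expansion near $\Gamma$ of the form $\theta(r/\varepsilon) + \sum_{i\geq 1}\varepsilon^i u_i(r/\varepsilon,s,t)$, where $\theta$ is the optimal profile solving $\theta'' = f'(\theta)$, $\theta(\pm\infty)=\pm1$, $\theta(0)=0$; and a \emph{contact-point} expansion near $\partial\Gamma\subset\partial\Omega$ interpolating the two. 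Truncating at order $M+1$, the coefficients are determined recursively by the linear ODE model problems of Section \ref{sec_model_problems}, of the form $\Lc_0 u_i := (-\partial_\rho^2 + f''(\theta))u_i = (\text{data from lower orders})$; these are solvable because the compatibility condition (orthogonality of the data to $\ker\Lc_0 = \R\theta'$) reduces at leading order exactly to \eqref{MCF}, while at the contact set the $\tfrac{\pi}{2}$-angle makes the homogeneous Neumann condition \eqref{eq_AC2} compatible with the reflection structure of the profiles. One then verifies (Section \ref{sec_asym}) that $u^A_\varepsilon$ satisfies \eqref{eq_AC2} up to an exponentially small error, that $\lim_{\varepsilon\to0}u^A_\varepsilon=\pm1$ on compact subsets of $Q_{T_0}^\pm$, and that the remainder $r^A_\varepsilon := \partial_t u^A_\varepsilon - \Delta u^A_\varepsilon + \varepsilon^{-2}f'(u^A_\varepsilon)$ obeys $\|r^A_\varepsilon\|_{L^2(\Omega)}\leq C\varepsilon^{M+1/2}$, the half-power gain coming from integrating in the scaled normal variable.

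\emph{Difference equation and spectral estimate.} Writing $u^R_\varepsilon := u_\varepsilon - u^A_\varepsilon$, which has homogeneous Neumann data on $\partial\Omega$ and solves $\partial_t u^R_\varepsilon - \Delta u^R_\varepsilon + \varepsilon^{-2}f''(u^A_\varepsilon)u^R_\varepsilon = -r^A_\varepsilon - \varepsilon^{-2}\Nc_\varepsilon$ with $\Nc_\varepsilon := f'(u^A_\varepsilon+u^R_\varepsilon) - f'(u^A_\varepsilon) - f''(u^A_\varepsilon)u^R_\varepsilon$, one tests with $u^R_\varepsilon$ to get $\tfrac12\tfrac{d}{dt}\|u^R_\varepsilon\|_{L^2}^2 + \int_\Omega|\nabla u^R_\varepsilon|^2 + \varepsilon^{-2}f''(u^A_\varepsilon)(u^R_\varepsilon)^2\,dx = -\int_\Omega r^A_\varepsilon u^R_\varepsilon - \varepsilon^{-2}\int_\Omega \Nc_\varepsilon u^R_\varepsilon$. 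The key tool (Section \ref{sec_SE}) is a spectral estimate: there are $c_0,C_0>0$ independent of $\varepsilon$ with $\int_\Omega |\nabla\psi|^2 + \varepsilon^{-2}f''(u^A_\varepsilon)\psi^2\,dx \geq -C_0\|\psi\|_{L^2(\Omega)}^2 + c_0\bigl(\|\nabla_\tau\psi\|_{L^2(\Gamma(\delta))}^2 + \varepsilon^2\|\partial_n\psi\|_{L^2(\Gamma(\delta))}^2 + \|\nabla\psi\|_{L^2(\Omega\setminus\Gamma(\delta))}^2\bigr)$ for all $\psi\in H^1(\Omega)$. This is proved by a partition of unity localizing to $\Gamma$ and to the contact set, reducing on each piece to the half-space model operator $\Lc_0$ with a Neumann condition forced by the $\tfrac{\pi}{2}$-angle, and exploiting $\ker\Lc_0 = \R\theta'$ together with the spectral gap of $\Lc_0$ above $0$. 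I expect the careful handling of the codimension-two geometry of $\partial\Gamma\subset\partial\Omega$ in this localization to be the main technical obstacle.

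\emph{Nonlinear term and Gronwall.} The term $\varepsilon^{-2}\int\Nc_\varepsilon u^R_\varepsilon$ is not controlled by the energy unless $u^R_\varepsilon$ is small in $L^\infty$, so one argues by continuity on the maximal interval where $\|u^R_\varepsilon\|_{L^\infty}\leq1$: there $|\Nc_\varepsilon|\leq C|u^R_\varepsilon|^2$, whence $\varepsilon^{-2}\int|\Nc_\varepsilon||u^R_\varepsilon| \leq C\varepsilon^{-2}\|u^R_\varepsilon\|_{L^\infty}\|u^R_\varepsilon\|_{L^2}^2$, and a Gagliardo--Nirenberg interpolation of $\|u^R_\varepsilon\|_{L^\infty}$ against $\|u^R_\varepsilon\|_{L^2}$ and higher norms (for which the $L^2$-bound $\|u^R_\varepsilon\|\leq C\varepsilon^{M+1/2}$ has to beat the $\varepsilon^{-2}$; this is precisely why $M\geq k(N)=\max\{2,N/2\}$ is needed, with an a priori bound on all the $u_\varepsilon$ via \eqref{eq_AC_fvor2} entering here). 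Combining this with $|\int r^A_\varepsilon u^R_\varepsilon|\leq C\varepsilon^{M+1/2}\|u^R_\varepsilon\|_{L^2}$ and the spectral estimate yields $\tfrac{d}{dt}\|u^R_\varepsilon\|_{L^2}^2\leq C\|u^R_\varepsilon\|_{L^2}^2 + C\varepsilon^{2M+1}$, and Gronwall with the well-prepared datum $\|u^R_\varepsilon(0)\|_{L^2}\leq R\varepsilon^{M+1/2}$ gives $\sup_{[0,T_0]}\|u^R_\varepsilon\|_{L^2}\leq C\varepsilon^{M+1/2}$; re-integrating the energy inequality in time produces the gradient and $\nabla_\tau,\partial_n$ bounds of \eqref{eq_AC_conv2}, and for $\varepsilon$ small this closes the continuity argument, proving (1). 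In case (2), $M=k(N)\in\N$, the interpolation is borderline and leaves no slack, so the argument only closes if the running $L^\infty$-norm is a priori small, which forces the smallness of $\tilde R$. In case (3), $N\in\{2,3\}$ with $M=2=k(N)$, even that slack is lost uniformly in time, so one can only propagate the estimate on a possibly shorter interval $[0,T_1]$.
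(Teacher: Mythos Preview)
Your outline follows the correct de Mottoni--Schatzman architecture, but two steps, as you describe them, would not go through.

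\textbf{Nonlinear term and bootstrap.} You bound $\varepsilon^{-2}\int \Nc_\varepsilon u^R_\varepsilon$ by $\varepsilon^{-2}\|u^R_\varepsilon\|_{L^\infty}\|u^R_\varepsilon\|_{L^2}^2$ and propose to control $\|u^R_\varepsilon\|_{L^\infty}$ by Gagliardo--Nirenberg against $\|u^R_\varepsilon\|_{L^2}$ and ``higher norms''. For $N\geq 2$ this fails: $H^1(\Omega)\not\hookrightarrow L^\infty(\Omega)$, and the energy gives no control on $H^s$ for $s>1$. The paper does not try to make $\|u^R_\varepsilon\|_{L^\infty}$ small. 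Instead it uses the maximum-principle bound of Lemma~\ref{th_DC_bdd_scal} to get $\|u_\varepsilon\|_{L^\infty}\leq C$ \emph{unconditionally}; then $|\Nc_\varepsilon|\leq C|u^R_\varepsilon|^2$ holds everywhere, and one estimates $\varepsilon^{-2}\|u^R_\varepsilon\|_{L^3(\Omega)}^3$ directly. The bootstrap is run on the $L^2$-type quantities \eqref{eq_DC_ACND_DE}, not on $L^\infty$. Moreover, to reach the sharp threshold $k(N)=\max\{2,N/2\}$ the paper uses an \emph{anisotropic} Gagliardo--Nirenberg inside $\Gamma_t(\delta)$ (Lemma~\ref{th_DC_ACND_GN}): first GN on $\Sigma$ in the tangential variables, then on $(-\delta,\delta)$ in the normal variable, exploiting that in the bootstrap $\|\nabla_\tau u^R_\varepsilon\|_{L^2}$ is one $\varepsilon$-power better than $\|\partial_n u^R_\varepsilon\|_{L^2}$. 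A plain isotropic $L^3$--GN would give a worse $k(N)$; your $L^\infty$ route gives nothing for $N\geq 2$.

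\textbf{Contact-point expansion and the boundary term.} Describing the contact-point correction as compatibility ``via the reflection structure of the profiles'' underestimates what is needed. The $\tfrac{\pi}{2}$-angle only kills the leading $\Oc(\varepsilon^{-1})$ order of the Neumann trace; at each subsequent order one must solve the genuinely two-dimensional elliptic model problem $[-\Delta+f''(\theta_0)]\hat u^C=G$ on $\R^2_+$ with $-\partial_H\hat u^C|_{H=0}=g$ (Section~\ref{sec_hp_90}), and its solvability condition \eqref{eq_hp_comp} is precisely what determines the boundary values of the height functions $h_j$ on $\partial\Sigma$. No reflection argument produces these corrections. Relatedly, $u^R_\varepsilon$ does \emph{not} have homogeneous Neumann data: $\partial_{N_{\partial\Omega}}u^R_\varepsilon=-s^A_\varepsilon$ with $|s^A_\varepsilon|\leq C\varepsilon^M e^{-c|\rho_\varepsilon|}$ (Lemma~\ref{th_asym_ACND_uA}), so testing with $u^R_\varepsilon$ leaves a boundary integral $\int_{\partial\Omega}s^A_\varepsilon\,\tr\,u^R_\varepsilon$ which must be estimated separately (Section~\ref{sec_DC_ACND_conv}) via a trace bound involving only $\|u^R_\varepsilon\|_{L^2}$ and $\|\nabla_\tau u^R_\varepsilon\|_{L^2}$. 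Finally, a minor point: the spectral estimate actually proved (Theorem~\ref{th_SE_ACND}) does not contain your $\varepsilon^2\|\partial_n\psi\|^2$ term on the right; the $\partial_n$-control in \eqref{eq_DC_ACND_DE} is recovered afterwards in the Gronwall argument by paying an extra $\varepsilon^2$ times the full quadratic form.
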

\begin{Remark}\phantomsection{\label{th_AC_conv_rem}}\upshape\begin{enumerate}
	\item \textit{Interpretation of Theorem \ref{th_AC_conv}.}
	One can interpret $u^A_\varepsilon$ in the theorem as representation of a diffuse interface moving with $\Gamma$ since $u^A_\varepsilon$ is smooth but converges for $\varepsilon\rightarrow 0$ to a step function whose jump set is the solution $\Gamma$ to \eqref{MCF} with $\frac{\pi}{2}$-contact angle starting from $\Gamma_0$. The assumption on the initial values $u_{0,\varepsilon}$ in Theorem \ref{th_AC_conv} essentially means that a diffuse interface already has developed and is located at the initial sharp interface $\Gamma_0$ at time $t=0$, i.e.~the generation of diffuse interfaces in the evolution is skipped. One also speaks of \enquote{well-prepared initial data}, cf.~\cite{ALiu}. Hence Theorem \ref{th_AC_conv} basically shows that the qualitative behaviour of diffuse interfaces with boundary contact, generated by \hyperlink{AC}{(AC)}, is that of \eqref{MCF} with $90$°-contact angle, at least as long as the evolution of the latter stays smooth.
	\item \textit{Layout of the Proof.} 
	Required model problems, some ODEs on $\R$ and a linear elliptic equation on $\R\times(0,\infty)$ are considered in Section \ref{sec_ODE_scalar} and Section \ref{sec_hp_90} below, respectively. The asymptotic expansions are carried out in Section \ref{sec_asym_ACND}. The approximate solution $u^A_\varepsilon$ is defined in Section \ref{sec_asym_ACND_uA}. Note that $M$ corresponds to the number of terms in the expansion. The spectral estimate is proven in Section \ref{sec_SE_ACND} and the difference estimate in Section \ref{sec_DC_ACND_DE}. Finally, Theorem \ref{th_AC_conv} is obtained in Section \ref{sec_DC_ACND_conv}.
	\item \textit{Well-Posedness of \hyperlink{AC}{(AC)}.} In Theorem \ref{th_AC_conv} the existence of solutions $u_\varepsilon\in C^2(\overline{Q_{T_0}})$ of \hyperlink{AC}{(AC)} is assumed, but this is in principle well-known, see the references in Bellettini \cite{Bellettini}, Remark 15.1. An approach with weak solutions (obtained via time-discretization) can also be found in Bartels \cite{Bartels_book}, Chapter 6.1. Moreover, equation \eqref{eq_AC1}-\eqref{eq_AC3} fits in the general framework of Lunardi \cite{LunardiOptReg}, Section 7.3.1, where a semigroup approach and a Hölder-setting is used. Together with a priori boundedness of classical solutions (see Section \ref{sec_DC_prelim_bdd_scal} below) that can be obtained with maximum principle arguments, one can show global well-posedness for regular, bounded initial data. See e.g.~\cite{LunardiOptReg}, Proposition 7.3.2. Higher regularity then follows using linear theory, cf.~Lunardi, Sinestrari, von Wahl \cite{LunardiSvW}. Finally, note that well-posedness for \eqref{eq_AC1} on $\R^N$ is shown in \cite{deMS} for bounded initial data with estimates for the heat semigroup.
	\item The approximate solution $u^A_\varepsilon$ obtained from the explicit construction equals $\pm 1$ in the set $Q_{T_0}^\pm\setminus\Gamma(2\delta)$ and has a smooth, increasingly steep transition with a known \enquote{optimal profile} inbetween. Therefore Theorem \ref{th_AC_conv} also yields the typical profile of solutions to \hyperlink{AC}{(AC)} across diffuse interfaces.
	\item The level sets $\{u^A_\varepsilon=0\}$, $\{u_\varepsilon=0\}$ can be viewed as approximations for $\Gamma$. Note that in the explicit construction of $u^A_\varepsilon$ in Section \ref{sec_asym_ACND} below the error from $\{u^A_\varepsilon=0\}$ to $\Gamma$ is of order $\varepsilon$ and, in the case that $f$ is even, of order $\varepsilon^2$, see Remark \ref{th_asym_ACND_feven_rem2}. If one uses numerical computations for \hyperlink{AC}{(AC)} in order to approximate solutions to \eqref{MCF} with $90$°-contact angle condition this is of interest, cf.~also Caginalp, Chen, Eck \cite{CaginalpChenEck}.
	\item In principle also estimates of stronger norms are possible in the situation of Theorem \ref{th_AC_conv}, but better estimates for the initial values could be required. The basic idea is to interpolate the already controlled norms with stronger norms that can be estimated for exact solutions by some negative $\varepsilon$-orders. Cf.~Alikakos, Bates, Chen \cite{ABC}, Theorem 2.3 for a similar idea in the case of the Cahn-Hilliard equation. However, note that this does not improve the approximation of $\Gamma$ in the sense of 5.
	\item Theorem \ref{th_AC_conv} and the above comments hold analogously for closed $\Gamma$ moving by \eqref{MCF} and compactly contained in $\Omega$. The proof is basically contained since the constructions are localizable.
	\end{enumerate}
\end{Remark}

\subsection[Result in Vector-Valued Case]{Result in Vector-Valued Case} \label{sec_vAC} 

\begin{Theorem}[\textbf{Convergence of (vAC) to (MCF) with  $90$°-Contact Angle}]\label{th_vAC_conv}
	Let $N\geq2$, $\Omega$, $N_{\partial\Omega}$, $Q_T$ and $\partial Q_T$ be as in Remark \ref{th_intro_coord},~1. Moreover, let $\Gamma=(\Gamma_t)_{t\in[0,T_0]}$ for some $T_0>0$ be a smooth evolving hypersurface with $\frac{\pi}{2}$-contact angle condition as in Remark \ref{th_intro_coord},~2.~and let $\Gamma$ satisfy \eqref{MCF}. Let $\delta>0$ be small and the notation for $Q_{T_0}^\pm$, $\Gamma(\delta)$, $\nabla_\tau$, $\partial_n$ be as in Remark \ref{th_intro_coord},~3. Moreover, let $W:\R^m\rightarrow\R$ be as in Definition \ref{th_vAC_W} and $\vec{u}_\pm$ be any distinct pair of minimizers of $W$. Finally, let $M\in\N$ with $M\geq k(N):=\max\{2,\frac{N}{2}\}$.
	
	Then there are $\check{\varepsilon}_0>0$ and $\vec{u}^A_\varepsilon:\overline{\Omega}\times[0,T_0]\rightarrow\R^m$ smooth for $\varepsilon\in(0,\check{\varepsilon}_0]$ (depending on $M$) with $\lim_{\varepsilon\rightarrow 0}\vec{u}^A_\varepsilon=\vec{u}_\pm$ uniformly on compact subsets of $Q_{T_0}^\pm$ and such that the following holds: 
	\begin{enumerate}
		\item If $M>k(N)$, then let $\vec{u}_{0,\varepsilon}\in C^2(\overline{\Omega})^m$ with $\partial_{N_{\partial\Omega}} \vec{u}_{0,\varepsilon}=0$ on $\partial\Omega$ for $\varepsilon\in(0,\check{\varepsilon}_0]$ and
		\begin{align}\label{eq_vAC_conv1}
		\sup_{\varepsilon\in(0,\check{\varepsilon}_0]}\|\vec{u}_{0,\varepsilon}\|_{L^\infty(\Omega)^m}<\infty\quad\text{ and }\quad\|\vec{u}_{0,\varepsilon}-\vec{u}^A_\varepsilon|_{t=0}\|_{L^2(\Omega)^m}\leq R\varepsilon^{M+\frac{1}{2}}
		\end{align}
		for some $R>0$ and all $\varepsilon\in(0,\check{\varepsilon}_0]$.
		Then for any set of solutions $\vec{u}_\varepsilon\in C^2(\overline{Q_{T_0}})^m$ of \eqref{eq_vAC1}-\eqref{eq_vAC3} for $\varepsilon\in(0,\check{\varepsilon}_0]$ with initial values $\vec{u}_{0,\varepsilon}$ there are $\check{\varepsilon}_1\in(0,\check{\varepsilon}_0]$, $C>0$ with
		\begin{align}\begin{split}\label{eq_vAC_conv2}
		\sup_{t\in[0,T]}\|(\vec{u}_\varepsilon-\vec{u}^A_\varepsilon)(t)\|_{L^2(\Omega)^m}
		+\|\nabla(\vec{u}_\varepsilon-\vec{u}^A_\varepsilon)\|_{L^2(Q_T\setminus\Gamma(\delta))^{N\times m}}&\leq C\varepsilon^{M+\frac{1}{2}},\\
		\|\nabla_\tau(\vec{u}_\varepsilon-\vec{u}^A_\varepsilon)\|_{L^2(Q_T\cap\Gamma(\delta))^{N\times m}}
		+\varepsilon\|\partial_n(\vec{u}_\varepsilon-\vec{u}^A_\varepsilon)\|_{L^2(Q_T\cap\Gamma(\delta))^m}&\leq C\varepsilon^{M+\frac{1}{2}}\end{split}
		\end{align}
		for all $\varepsilon\in(0,\check{\varepsilon}_1]$ and $T\in(0,T_0]$.
		\item If $k(N)\in\N$ and $M\geq k(N)+1$, then there is a $\check{R}>0$ small such that the assertion in 1.~holds, when $R,M$ in \eqref{eq_vAC_conv1}-\eqref{eq_vAC_conv2} are replaced by $\check{R},k(N)$.
		\item If $N\in\{2,3\}$ and $M=2(=k(N))$, then there is $\check{T}_1\in(0,T_0]$ such that the assertion in 1.~is valid but only such that \eqref{eq_vAC_conv2} holds for all $\varepsilon\in(0,\check{\varepsilon}_1]$ and $T\in(0,\check{T}_1]$. 
	\end{enumerate}
\end{Theorem}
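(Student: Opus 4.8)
The plan is to carry out the method of de Mottoni and Schatzman in the vector-valued setting, in the three steps sketched in the introduction. \emph{Step 1: asymptotic expansion and model problems.} Using the curvilinear coordinates $(r,s)$ of Remark \ref{th_intro_coord} (Section \ref{sec_coord}) I would make the ansatz $\vec{u}^A_\varepsilon\sim\sum_{j=0}^{M+1}\varepsilon^j\vec{u}_j(\rho,s,t)$ with inner variable $\rho=r/\varepsilon$, plus correction terms localized near the contact set $\partial\Gamma\subset\partial\Omega$, insert it into \eqref{eq_vAC1}, and match powers of $\varepsilon$. At order $\varepsilon^0$ this produces the profile ODE $-\vec{u}_0''+\nabla W(\vec{u}_0)=0$ on $\R$ with $\vec{u}_0(\pm\infty)=\vec{u}_\pm$, whose solution is the optimal heteroclinic profile connecting the chosen pair of minima of $W$; existence and exponential decay of $\vec{u}_0-\vec{u}_\pm$ come from the structure of $W$ in Definition \ref{th_vAC_W}. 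At orders $\varepsilon^j$, $j\geq1$, one gets linear ODEs $\Lc\vec{u}_j:=-\vec{u}_j''+D^2W(\vec{u}_0)\vec{u}_j=\vec{g}_j$ with $\vec{g}_j$ built from $\vec{u}_0,\dots,\vec{u}_{j-1}$ and from geometric data of $\Gamma$. The decisive structural input (Definition \ref{th_vAC_W} and Remark \ref{th_ODE_vect_lin_op_rem}) is that $\ker\Lc$ is one-dimensional, spanned by $\vec{u}_0'$; hence $\Lc\vec{u}_j=\vec{g}_j$ is solvable iff $\vec{g}_j\perp\vec{u}_0'$ in $L^2(\R;\R^m)$, and this scalar Fredholm condition at the appropriate order is precisely the evolution law for the interface — at leading order it reproduces \eqref{MCF} (which we assume) and at higher orders it fixes the $O(\varepsilon^j)$-corrections to the interface position. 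The symmetry of $W$ enters here and in Section \ref{sec_ODE_vect_nonlin} to keep the compatibility conditions and the induced profiles well-behaved. Near $\partial\Omega$ one additionally solves linear elliptic problems on $\R\times(0,\infty)$ (analogous to Section \ref{sec_hp_90}) to impose the Neumann condition \eqref{eq_vAC2} exactly; solvability there rests on the $90$°-contact angle of $\Gamma$. The outcome is $\vec{u}^A_\varepsilon$, smooth, equal to $\vec{u}_\pm$ outside a neighbourhood of $\Gamma$, with $\lim_{\varepsilon\to0}\vec{u}^A_\varepsilon=\vec{u}_\pm$ on compacta of $Q_{T_0}^\pm$, satisfying \eqref{eq_vAC1} up to a remainder $\vec{R}_\varepsilon=O(\varepsilon^{M})$ in the relevant norms and satisfying the boundary condition exactly.

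\emph{Step 2: spectral estimate, the main obstacle.} Set $\vec{w}_\varepsilon:=\vec{u}_\varepsilon-\vec{u}^A_\varepsilon$; it solves $\partial_t\vec{w}_\varepsilon-\Delta\vec{w}_\varepsilon+\varepsilon^{-2}(\nabla W(\vec{u}_\varepsilon)-\nabla W(\vec{u}^A_\varepsilon))=-\vec{R}_\varepsilon$ with homogeneous Neumann data. Testing with $\vec{w}_\varepsilon$ brings in the indefinite term $\varepsilon^{-2}\int_\Omega D^2W(\vec{u}^A_\varepsilon)\vec{w}_\varepsilon\cdot\vec{w}_\varepsilon$, and to control it I need the spectral estimate (Section \ref{sec_SE}): for all $\vec{\phi}\in H^1(\Omega)^m$,
\[
\int_\Omega|\nabla\vec{\phi}|^2+\tfrac{1}{\varepsilon^2}D^2W(\vec{u}^A_\varepsilon)\vec{\phi}\cdot\vec{\phi}\,dx\;\geq\;-C\|\vec{\phi}\|_{L^2(\Omega)^m}^2+c\bigl(\|\nabla_\tau\vec{\phi}\|_{L^2(\Gamma(\delta))}^2+\varepsilon^2\|\partial_n\vec{\phi}\|_{L^2(\Gamma(\delta))}^2+\|\nabla\vec{\phi}\|_{L^2(\Omega\setminus\Gamma(\delta))}^2\bigr)
\]
uniformly for small $\varepsilon$. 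I would prove it by localization: away from $\Gamma$ one has $D^2W(\vec{u}^A_\varepsilon)\approx D^2W(\vec{u}_\pm)>0$ and the bound is immediate; near $\Gamma$ one freezes coefficients along normal cross-sections and reduces to the rescaled operator $-\partial_\rho^2+D^2W(\vec{u}_0(\rho))$ on the line (plus tangential derivatives), which is nonnegative with one-dimensional kernel $\R\vec{u}_0'$ and a spectral gap; the kernel direction only records a translation of the profile, which $\vec{u}^A_\varepsilon$ already encodes, so it costs at most $C\|\vec{\phi}\|_{L^2}^2$. The contact-point region is handled with the half-space model operator (Section \ref{sec_hp_90}) and its analogous gap. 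This step is where the genuinely new work lies: unlike the scalar case there is no ODE comparison or monotone-profile argument available in $\R^m$, so one must exploit solely the structural hypotheses on $W$ (nondegenerate minima, symmetry, one-dimensionality of $\ker\Lc$) while carefully tracking the $m$ normal-direction components, the coupling induced by the curvilinear coordinates, and the contact-point boundary layer.

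\emph{Step 3: difference estimate and the three cases.} Using the spectral estimate in the $L^2$-energy identity for $\vec{w}_\varepsilon$, estimating $\vec{R}_\varepsilon$ by Young's inequality, and absorbing the superquadratic remainder $\nabla W(\vec{u}_\varepsilon)-\nabla W(\vec{u}^A_\varepsilon)-D^2W(\vec{u}^A_\varepsilon)\vec{w}_\varepsilon=O(|\vec{w}_\varepsilon|^2)$ via Sobolev embeddings, one arrives at $\tfrac{d}{dt}\|\vec{w}_\varepsilon\|_{L^2}^2+c(\text{good norms})\leq C\|\vec{w}_\varepsilon\|_{L^2}^2+C\varepsilon^{2M+1}$. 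Gronwall together with the well-prepared datum \eqref{eq_vAC_conv1} then gives $\sup_{[0,T]}\|\vec{w}_\varepsilon\|_{L^2}^2+(\text{good norms})\leq C\varepsilon^{2M+1}$, which is \eqref{eq_vAC_conv2}; a continuation argument keeps the $H^1$-smallness used in the nonlinear absorption valid on $[0,T_0]$. The three cases reflect how many expansion terms are required: the threshold $k(N)=\max\{2,N/2\}$ is dictated by the Sobolev exponent governing the quadratic-and-higher terms in dimension $N$, so for $M>k(N)$ one is in the comfortable regime, for $M=k(N)$ with $k(N)\in\N$ one uses the extra term together with a smaller constant $\check R$, and for $M=k(N)=2$, $N\in\{2,3\}$, the constants are borderline and one shortens the time interval to $[0,\check T_1]$. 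Throughout, a uniform $L^\infty$-bound on $\vec{u}_\varepsilon$ — obtained from $\vec{u}\cdot\nabla W(\vec{u})\geq0$ for large $|\vec{u}|$ by a maximum-principle comparison (Section \ref{sec_DC}) — is needed to make the nonlinear estimates rigorous.
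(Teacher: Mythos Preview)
Your outline is essentially the same three-step scheme the paper uses: the asymptotic expansion of Section~\ref{sec_asym_vAC} (inner profile $\vec\theta_0$, linearized ODEs with the one-dimensional kernel condition of Remark~\ref{th_ODE_vect_lin_op_rem}, height functions fixed by the solvability conditions, contact-point layer via the $\R^2_+$-problem of Section~\ref{sec_hp_vect}), the spectral estimate of Section~\ref{sec_SE_vAC}, and the Gronwall-type difference estimate of Section~\ref{sec_DC_vAC}. Two small corrections: first, the approximate solution does \emph{not} satisfy the Neumann condition exactly; there is a boundary remainder $\vec s^A_\varepsilon=\partial_{N_{\partial\Omega}}\vec u^A_\varepsilon$ of order $\varepsilon^M e^{-c|\check\rho_\varepsilon|}$ (Lemma~\ref{th_asym_vAC_uA}), and the difference estimate must handle the corresponding surface integral (this is the content of condition~3 in Theorem~\ref{th_DC_vAC} and its verification in Section~\ref{sec_DC_vAC_conv}). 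Second, the spectral estimate as proved (Theorem~\ref{th_SE_vAC}) does \emph{not} contain a positive $\varepsilon^2\|\partial_n\vec\phi\|^2$ term on the right; the $\partial_n$-control in \eqref{eq_vAC_conv2} is obtained separately in the difference-estimate proof by absorbing $\varepsilon^2\|\partial_n\underline u_\varepsilon\|^2$ with half of the spectral term plus an $L^2$-term (see the argument just before \eqref{eq_DC_vAC_DE2}). Neither point breaks your strategy, but both must be handled to close the argument.
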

\begin{Remark}\phantomsection{\label{th_vAC_conv_rem}}\upshape\begin{enumerate}
		\item The interpretation of Theorem \ref{th_vAC_conv} is analogous to the one of Theorem \ref{th_AC_conv}, where convergence of \hyperlink{AC}{(AC)} to \eqref{MCF} with $90$°-contact angle is obtained, cf.~Remark \ref{th_AC_conv_rem},~1.
		\item \textit{Layout of the Proof.} 
		The new model problems, some vector-valued ODEs on $\R$ and a vector-valued linear elliptic equation on $\R\times(0,\infty)$ are solved in Section \ref{sec_ODE_vect} and Section \ref{sec_hp_vect} below, respectively. The asymptotic expansions are done in Section \ref{sec_asym_vAC} and the approximate solution $\vec{u}^A_\varepsilon$ is defined in Section \ref{sec_asym_vAC_uA}. Note that $M$ corresponds to the number of terms in the expansion. The spectral estimate is shown in Section \ref{sec_SE_vAC} and the difference estimate is proven in Section \ref{sec_DC_vAC_DE}. Finally, Theorem \ref{th_vAC_conv} is obtained in Section \ref{sec_DC_vAC_conv}. 
		\item \textit{Well-Posedness of \hyperlink{vAC}{(vAC)}.}
		In general the analysis of systems is more challenging than that of single equations. However, the derivatives in \eqref{eq_vAC1}-\eqref{eq_vAC3} are decoupled and hence some methods from the scalar case can also be used for (vAC), e.g.~regularity theory. Equation \eqref{eq_vAC1}-\eqref{eq_vAC3} matches the general setting of Lunardi \cite{LunardiOptReg}, Section 7.3.1, where a semigroup method and Hölder-spaces are used. Moreover, by reduction to a scalar equation and maximum principle arguments, one can obtain a priori boundedness of classical solutions, see Section \ref{sec_DC_prelim_bdd_vect} below. Hence global well-posedness for regular, bounded initial data follows. Higher regularity can be obtained with linear theory for scalar equations, cf.~Lunardi, Sinestrari, von Wahl \cite{LunardiSvW}.\phantom{\qedhere}
		\item The comments for Theorem \ref{th_AC_conv} in Remark \ref{th_AC_conv_rem}, 4.-7.~hold analogously. Here $\vec{u}_\pm$ has the role of $\pm 1$ and the order of the approximation of $\Gamma$ in the spirit of Remark \ref{th_AC_conv_rem}, 5.~is $\varepsilon^2$.
		\end{enumerate}
\end{Remark}

\section{Notation and Function Spaces}\label{sec_fct}
Let $\N$ be the natural numbers and $\N_0:=\N\cup\{0\}$. The symbol $\K$ stands for an element of $\{\R,\C\}$. Moreover, the Euclidean norm in $\R^m$, $m\in\N$ and the Frobenius norm in $\R^{m\times n}$, $m,n\in\N$ are for convenience denoted by $|.|$. The symbol \enquote{$\,\vec{\phantom{a}}\,$} indicates a vector or a vector-valued function. Moreover, objects (e.g.~vectors, operators and constants) that are associated to a vector-valued setting often get the addition \enquote{$\,\check{\phantom{a}}\,$}. Furthermore, a subset $\Omega\subseteq\R^n$, $n\in\N$ is called \enquote{domain}, if $\Omega$ is open, nonempty and connected. Additionally, restrictions or evaluations of functions are often indicated by \enquote{$|_.$}. The differential operators $\nabla$, $\diverg$ and $D^2$ are defined to act just on spatial variables. Let $X$ be a set and $Y$ a normed space. Then 
$B(X,Y):=\{ f:X\rightarrow Y\text{ bounded}\}$. Let $X,Y$ be normed spaces over $\K$. Then $\Lc(X,Y)$ denotes the set of bounded linear operators $T:X\rightarrow Y$. Finally, note that we use the usual constant convention.

\subsection{Unweighted Continuous/Continuously Differentiable Functions}\label{sec_fct_C}
\begin{Definition}\upshape\label{th_C_def}
Let $n,k\in\N$ and $\Omega\subseteq\R^n$ open and nonempty. Moreover, let $B$ be a Banach space over $\K=\R$ or $\C$. Then
\begin{enumerate}
\item $C^0(\Omega,B):=\{f:\Omega\rightarrow B\text{ continuous}\}$
and analogously we define $C^0(\overline{\Omega},B)$. Moreover,
\begin{align*}
C^k(\Omega,B)&:=\{f\in C^0(\Omega,B): f\text{ is }k\text{-times countinuously Fréchet-differentiable}\},\\
C^k(\overline{\Omega},B)&:=\{f\in C^k(\Omega,B): f\text{ and derivatives up to order }k\text{ have }C^0\text{-extensions to }\overline{\Omega}\}.
\end{align*}
\item The spaces including boundedness for the function and all appearing derivatives are denoted with $C_b^0(\Omega,B)$, $C_b^0(\overline{\Omega},B)$, $C_b^k(\Omega,B)$, $C_b^k(\overline{\Omega},B)$ and are equipped with the natural norms.  
\item Above spaces with \enquote{$\infty$} instead of \enquote{$k$} are defined via the intersection over all $k\in\N$. 
\item $C^{k,\gamma}(\overline{\Omega},B):=\{f\in C_b^k(\overline{\Omega},B):\|f\|_{C^{k,\gamma}(\overline{\Omega},B)}<\infty\}$ are the Hölder-spaces, where
\begin{align*}
\|f\|_{C^{0,\gamma}(\overline{\Omega},B)}&:=\|f\|_{C_b^0(\overline{\Omega},B)}+\sup_{x,y\in\overline{\Omega}, x\neq y}\frac{\|f(x)-f(y)\|_B}{|x-y|^\gamma},\\
\|f\|_{C^{k,\gamma}(\overline{\Omega},B)}&:=\|f\|_{C_b^k(\overline{\Omega},B)}+\sup_{\beta\in\N_0^n,|\beta|=k}\|\partial_x^\beta f\|_{C^{0,\gamma}(\overline{\Omega},B)}.
\end{align*}
\item Let $U$ be an open subset of the interior $M^\circ:=M\setminus\partial M$ of a smooth compact manifold $M$ with (or without) boundary, where $\partial M$ is defined via charts. Then $C^k(U,B)$, $C^k(\overline{U},B)$, $C_b^k(U,B)$ and $C_b^k(\overline{U},B)$ for every $k\in\N_0\cup\{\infty\}$ are defined via local coordinates and the respective spaces on domains.
\item If $B=\K$ and it is clear from the context if $\K=\R$ or $\C$, then we omit $B$ in the notation.
\item $C_0^\infty(\Omega)$ is the set of $f\in C^\infty(\Omega,\R)$ with compact support $\text{supp}\,f\subseteq\Omega$. Moreover, $C_0^\infty(\overline{\Omega})$ denotes $\{f|_{\overline{\Omega}}:f\in C_0^\infty(\R^n)\}$.\end{enumerate}
\end{Definition}

\begin{Lemma}\label{th_C_lemma}
Let $U$ be an open subset of $M^\circ$, where $M$ is a smooth compact manifold $M$ with (or without) boundary and dimension $l\in\N$. Let $x_j:U_j\rightarrow V_j\subseteq[0,\infty)\times\R^{l-1}$ for $j=1,...,L$ be charts of $M$ and $W_j$ open in $[0,\infty)\times\R^{l-1}$ with $\overline{W_j}\subset V_j$ compact for $j=1,...,L$ such that  $\bigcup_{j=1}^L x_j^{-1}(W_j)=M$. Then $C_b^k(U,B)$ for $k\in\N_0$ is a Banach space with
	\[
	\|f\|_{C_b^k(U,B)}:=\sum_{j=1}^L \|f\circ x_j^{-1}|_{x_j(U\cap U_j)\cap W_j}\|_{C_b^k(x_j(U\cap U_j)\cap W_j ,B)}\quad\text{ for all }f\in C_b^k(U,B).
	\]
Different choices of $x_j,W_j$ yield equivalent norms. The analogous assertion holds for $C_b^k(\overline{U},B)$.
\end{Lemma}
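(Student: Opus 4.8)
The plan is to establish three things in turn: that the displayed expression defines a norm on $C_b^k(U,B)$, that the resulting normed space is complete, and that any other admissible family $(x_j,W_j)$ produces an equivalent norm. Throughout I abbreviate $A_j:=x_j(U\cap U_j)\cap W_j$; since each $x_j$ is a homeomorphism onto the open set $V_j$ and $U\subseteq M^\circ$ consists of interior points, each $A_j$ is open in $\R^l$, so each summand is literally the $C_b^k$-norm of Definition \ref{th_C_def} applied to the coordinate representation $f\circ x_j^{-1}$ on $A_j$.

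For the norm axioms I would note that homogeneity and the triangle inequality pass termwise from the summands, and that definiteness uses the covering hypothesis $\bigcup_j x_j^{-1}(W_j)=M\supseteq U$: every $p\in U$ lies in some $x_j^{-1}(W_j)$, hence in $U\cap U_j$ with $x_j(p)\in A_j$, so vanishing of the norm forces $f(p)=(f\circ x_j^{-1})(x_j(p))=0$. For completeness I would take a Cauchy sequence $(f_n)$, observe that for each fixed $j$ the coordinate representations form a Cauchy sequence in the Banach space $C_b^k(A_j,B)$ (completeness of these coordinate spaces on open subsets of $\R^l$ is standard) and hence converge to some $g_j$, and then glue: define $f(p):=g_j(x_j(p))$ whenever $p\in x_j^{-1}(W_j)\cap U$. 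This prescribes $f$ on all of $U$, and consistency on overlaps is automatic because the $f_n$ are genuine functions on $U$ and the convergences $f_n\circ x_j^{-1}\to g_j$ are uniform, so $g_i(x_i(p))=\lim_n f_n(p)=g_j(x_j(p))$. Since $f\circ x_j^{-1}=g_j$ on the open set $A_j$ we get $f\in C^k(U,B)$, and $\|f_n-f\|_{C_b^k(U,B)}=\sum_j\|f_n\circ x_j^{-1}-g_j\|_{C_b^k(A_j,B)}\to0$ as a finite sum; finiteness of $\|f\|_{C_b^k(U,B)}$ then follows from the triangle inequality, so $f\in C_b^k(U,B)$ and $f_n\to f$.

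For the equivalence assertion I would let $(x_j',W_j')_{j=1}^{L'}$ be a second admissible family with sets $A_j'$ and norm $\|\cdot\|'$; by symmetry it suffices to find $C$ with $\|f\|'\le C\|f\|$. Fix $j\le L'$ and $q\in A_j'$ and set $p:=(x_j')^{-1}(q)$; choose $i$ with $p\in x_i^{-1}(W_i)$, so $p\in U\cap U_i$ and $x_i(p)\in A_i$. The crucial observation is that $p$ then lies in the compact set $K_{ij}:=x_i^{-1}(\overline{W_i})\cap(x_j')^{-1}(\overline{W_j'})\subseteq U_i\cap U_j'$, hence $q$ lies in the compact set $x_j'(K_{ij})$, which sits inside the open set $x_j'(U_i\cap U_j')$ on which the transition map $\psi_{ij}:=x_i\circ(x_j')^{-1}$ is $C^\infty$; therefore all derivatives of $\psi_{ij}$ up to order $k$ are bounded there by a constant $C_{i,j,k}$ independent of $f$ and $q$. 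On a neighbourhood of $q$ inside $A_j'$ that $\psi_{ij}$ maps into $A_i$ one has $f\circ(x_j')^{-1}=(f\circ x_i^{-1}|_{A_i})\circ\psi_{ij}$, so the chain rule (Fa\`a di Bruno) gives $|D^\beta(f\circ(x_j')^{-1})(q)|\le C_{i,j,k}\,\|f\circ x_i^{-1}|_{A_i}\|_{C_b^k(A_i,B)}$ for $|\beta|\le k$; taking maxima over the finitely many $i$ and admissible $\beta$ and the supremum over $q$ yields $\|f\circ(x_j')^{-1}|_{A_j'}\|_{C_b^k(A_j',B)}\le C_j'\|f\|$, and summing over $j$ gives $\|f\|'\le C\|f\|$. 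Running this once with $(x_j,W_j)$ taken to be the family used in Definition \ref{th_C_def} also shows that every admissible norm is finite on $C_b^k(U,B)$, and then the reverse inequality gives genuine equivalence. The $C_b^k(\overline U,B)$ case goes through verbatim with $A_j$ replaced by $x_j(\overline U\cap U_j)\cap W_j$ (and its closure), using the spaces on closures from Definition \ref{th_C_def} where $\overline U$ meets $\partial M$.

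I expect the equivalence step to be the only real obstacle, and specifically the point that the coordinate images $A_j$ need not be relatively compact, since $U$ is only assumed open, so the transition maps are not uniformly controlled on all of $A_j$. The hypothesis \enquote{$\overline{W_j}\subset V_j$ compact} is exactly what repairs this: a transition map is only ever estimated on the portion of some $A_j'$ that simultaneously maps into some $W_i$, and that portion is contained in the compact set $x_j'(K_{ij})$ lying well inside the open domain of the transition map, where bounds on all derivatives up to order $k$ come for free. Everything else---the norm axioms and the gluing in the completeness argument---I expect to be routine.
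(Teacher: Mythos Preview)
Your proof is correct and follows essentially the same approach as the paper, which gives only a two-sentence sketch: the Banach space property \enquote{follows directly}, and the compactness hypothesis $\overline{W_j}\subset V_j$ ensures the relevant chart transformations have $C^k$-extensions to the closure of their domain, hence induce bounded linear maps between the $C_b^k$-spaces. You have correctly identified and fleshed out the key point---that the transition maps need only be estimated on compact sets $x_j'(K_{ij})$ sitting well inside their smooth open domain of definition---which is exactly what the paper encodes in the phrase \enquote{$C^k$-extensions to the closure}.
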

\begin{proof}
The Banach space property follows directly. Moreover, the assumptions ensure that for different choices of $x_j,W_j$ the relevant chart transformations have $C^k$-extensions to the closure of their domain. These induce bounded linear transformations of the associated $C_b^k$-spaces.
\end{proof}

\subsection{Unweighted Lebesgue- and Sobolev-Spaces}\label{sec_LebSob}
\subsubsection{Lebesgue-Spaces}\label{sec_Leb}
Let $(M,\mathcal{A},\mu)$ be a $\sigma$-finite, complete measure space and $B$ be a Banach space over $\K=\R$ or $\C$.
Then one can define the notions of ($\mu$- or strongly-) measurable and (Bochner-)integrable functions $f:M\rightarrow B$ and the \textit{Bochner(-Lebesgue)-Integral}, see Amann, Escher \cite{AmannEscherIII}, Chapter X for the definitions and properties. In particular the \textit{Lebesgue-spaces} $L^p(M,B)$ for $1\leq p\leq\infty$ are defined. If $B=\K$ and it is clear from the context if $\K=\R$ or $\C$, then we omit $B$ in the notation. We also use the Fubini Theorem for scalar-valued functions on $\sigma$-finite measure spaces, see Elstrodt \cite{Elstrodt},~Satz V.2.4. 

Later we need the notion of the support of a measurable function:

\begin{Remark}\upshape\label{th_Leb_supp}
	Let $\Omega\subseteq\R^n$, $n\in\N$ be open and $f:\Omega\rightarrow B$ measurable. Then the support of $f$
	is 
	\[
	\supp\,f:=\left[\bigcup_{U\subset\Omega\text{ open }:f=0\text{ a.e.~in }U}U\right]^c.
	\]
	With topological properties of $\R^n$ one can show that $\supp\,f$ is closed and $f=0$ a.e.~on $(\supp\,f)^c$. Moreover, $\supp(f+g)\subseteq\supp\,f\cup\supp\,g$ for all $f,g:\Omega\rightarrow B$ measurable. Finally, for continuous $f$ it holds $\supp\,f=\overline{\{x\in\Omega:f(x)\neq0\}}$.
\end{Remark}

Moreover, we need the following transformation theorem:
\begin{Theorem}[\textbf{Substitution Rule}]\label{th_Leb_trafo}
	Let $U,V\subseteq\R^n$ be open, nonempty and $\Phi:U\rightarrow V$ be a $C^1$-diffeomorphism. Moreover, let $B$ be a Banach space and $f:V\rightarrow B$. Then $f\in L^1(V,B)$ if and only if $(f\circ\Phi)|\det\,D\Phi|\in L^1(U,B)$. In this case it holds
	\[
	\int_V f\,dy=\int_U (f\circ\Phi)|\det\,D\Phi|\,dx.
	\]
\end{Theorem}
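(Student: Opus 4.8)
The plan is to reduce the Banach-space valued substitution rule to the classical scalar case. First I would note that by the very definition of the Bochner integral and of the Bochner--Lebesgue spaces $L^1(V,B)$, a measurable function $g:V\to B$ lies in $L^1(V,B)$ if and only if $\|g(\cdot)\|_B\in L^1(V,\R)$ (the Bochner integrability criterion, see Amann--Escher \cite{AmannEscherIII}, Chapter X). Applying this to $g=f$ on $V$ and to $g=(f\circ\Phi)|\det D\Phi|$ on $U$, the equivalence $f\in L^1(V,B)\iff (f\circ\Phi)|\det D\Phi|\in L^1(U,B)$ follows at once from the \emph{scalar} transformation theorem applied to the nonnegative function $\|f(\cdot)\|_B$, since $\|(f\circ\Phi)(x)|\det D\Phi(x)|\|_B=\|f(\Phi(x))\|_B\,|\det D\Phi(x)|$ and $\Phi$ is a $C^1$-diffeomorphism (in particular $|\det D\Phi|>0$ everywhere, so $\Phi$ maps null sets to null sets in both directions and measurability is preserved).

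For the integral identity itself, the cleanest route is to verify it first on a dense, convenient subclass and then pass to the limit. The natural subclass is that of $B$-valued simple functions, i.e.\ finite sums $\sum_{i} \mathbf{1}_{A_i} b_i$ with $A_i\subseteq V$ measurable of finite measure and $b_i\in B$. For such a function the claimed identity reduces by linearity of the Bochner integral to the identity $\int_V \mathbf{1}_{A}\,dy\cdot b=\int_U (\mathbf{1}_A\circ\Phi)|\det D\Phi|\,dx\cdot b$, which is just the scalar substitution theorem applied to $\mathbf{1}_A$, multiplied by the fixed vector $b$.

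Then I would approximate a general $f\in L^1(V,B)$ by a sequence of simple functions $f_k$ with $\|f_k-f\|_{L^1(V,B)}\to 0$ (such a sequence exists by the definition of $L^1(V,B)$, or by strong measurability plus dominated convergence). By the equivalence from the first paragraph, $(f_k\circ\Phi)|\det D\Phi|\to (f\circ\Phi)|\det D\Phi|$ in $L^1(U,B)$ as well, because $\|(f_k\circ\Phi)|\det D\Phi|-(f\circ\Phi)|\det D\Phi|\|_{L^1(U,B)}=\int_U \|f_k(\Phi(x))-f(\Phi(x))\|_B|\det D\Phi(x)|\,dx=\int_V\|f_k(y)-f(y)\|_B\,dy=\|f_k-f\|_{L^1(V,B)}$ by the scalar rule. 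Since the Bochner integral is continuous on $L^1$, passing to the limit on both sides of the identity $\int_V f_k\,dy=\int_U (f_k\circ\Phi)|\det D\Phi|\,dx$ gives the result for $f$.

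I do not expect a genuine obstacle here; the statement is a standard transfer of a classical theorem to the vector-valued setting, and the only points requiring a little care are the preservation of (strong) measurability under composition with $\Phi$ and $\Phi^{-1}$ — which holds because $\Phi$ is a homeomorphism and a $C^1$-diffeomorphism, hence both $\Phi$ and $\Phi^{-1}$ are continuous and map Lebesgue-null sets to Lebesgue-null sets — and the bookkeeping that $|\det D\Phi|$ is continuous and strictly positive so that all the $L^1$-norm computations above are legitimate. If one prefers to avoid simple-function approximation, an alternative is to apply the scalar substitution theorem to $\langle \ell, f(\cdot)\rangle$ for each $\ell$ in a separating subset of $B'$, but the simple-function argument is more elementary and self-contained.
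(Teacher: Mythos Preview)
Your argument is correct and is the standard way to prove the Bochner--Lebesgue substitution rule by reduction to the scalar case. However, the paper does not give its own proof of this statement at all: it simply cites \cite{AmannEscherIII}, Theorem~X.8.14. So there is nothing to compare on the level of proof strategy --- you have supplied a complete (and correct) argument where the paper only records a reference.
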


This is \cite{AmannEscherIII}, Theorem X.8.14.
Note that the corresponding assertion for measurable functions in general only holds if additionally $f$ is almost separable-valued, cf.~\cite{AmannEscherIII}, Theorem X.1.4.
 
\begin{Remark}\upshape\label{th_Leb_RiemRem}
Let $(M,g)$ be a $C^1$-Riemannian submanifold of $\R^n$ with (or without) boundary. We denote with $\partial M$ the boundary (defined via charts) and with $M^\circ:=M\setminus\partial M$ the interior.
\begin{enumerate}
	\item Let $\Lc_M$ be the \textit{Lebesgue $\sigma$-Algebra of $M$} and $\lambda_M$ the \textit{Riemann-Lebesgue Volume Measure of $M$}. See \cite{AmannEscherIII}, Chapter XI and Chapter XII.1 for the definitions and properties. In particular $(M,\Lc_M,\lambda_M)$ is a $\sigma$-finite complete measure space (and satisfies many more properties). Therefore the Bochner-Integral and the Lebesgue spaces are defined. 
	\item Let $g$ be the Euclidean Metric and let $M$ have dimension $m$. Then one can show that $\lambda_M$ coincides with the $m$-dimensional Hausdorff-measure $\Hc^m$ on $M$. Cf.~with Evans, Gariepy \cite{EvansGariepy}, Chapter 3.3.4. For the definitions and properties of Hausdorff-measures, in particular the connections to Lebesgue measure cf.~\cite{EvansGariepy}, Chapter 2. Therefore in the application later we write $\Hc^m$ instead of $\lambda_M$ for convenience.
\end{enumerate}
\end{Remark} 

Finally, we show a transformation theorem for Riemannian submanifolds. Note that later we will only need the Euclidean metric, but the proof for the general case is the same.

\begin{Theorem}[\textbf{Substitution Rule for Riemannian Submanifolds of $\R^n$}]\label{th_Leb_trafo_mfd}
	Let $(M,g)$ and $(N,h)$ be $C^1$-Riemannian submanifolds of $\R^n$ with (or without) boundary and dimension $m$. Moreover, let $U\subset M^\circ$, $V\subset N^\circ$ be open and $\Phi:U\rightarrow V$ be a $C^1$-diffeomorphism. Then
	\begin{enumerate}
		\item Define $|\det d\Phi|:U\rightarrow\R:p\mapsto |\det d_p\Phi|$, where the latter is defined as the modulus of the determinant of the representation matrix of $d_p\Phi$ with respect to arbitrary orthonormal bases of $T_pM$ and $T_{\Phi(p)}N$ for all $p\in U$. Then $|\det d\Phi|$ is well-defined and in $C^1(U,\R)$.
		\item Let $B$ be a Banach space and consider $f:N\rightarrow B$. Then $f\in L^1(V,B)$ if and only if $(f\circ\Phi)|\det\,d\Phi|\in L^1(U,B)$. In this case it holds
		\[
		\int_V f\,d\lambda_N=\int_U (f\circ\Phi)|\det\,d\Phi|\,d\lambda_M.
		\]
	\end{enumerate}
\end{Theorem}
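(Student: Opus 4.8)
The plan is to reduce the manifold statement to the Euclidean substitution rule (Theorem \ref{th_Leb_trafo}) via charts, using a partition of unity to patch local pieces together. First I would establish part~1: given $p\in U$, pick charts $\varphi:U_1\to\tilde U_1\subseteq\R^m$ of $M$ around $p$ and $\psi:V_1\to\tilde V_1\subseteq\R^m$ of $N$ around $\Phi(p)$ with $\Phi(U_1)\subseteq V_1$. The Gram matrices $G_\varphi:=(d\varphi^{-1})^\top(d\varphi^{-1})$ and $G_\psi$ (built from the induced metric, i.e.\ the Euclidean metric pulled back) are positive definite and $C^0$ (indeed $C^1$ if the submanifold is $C^1$ and we differentiate one order less — more precisely the metric coefficients are $C^0$, so I should be slightly careful and note that "$C^1$-Riemannian submanifold" makes $|\det d\Phi|$ continuous, and the $C^1$-claim uses that the chart transition maps are $C^1$; I would phrase part~1 as "$|\det d\Phi|\in C^0(U,\R)$, and $C^1$ if the relevant structure maps are $C^1$", matching the hypotheses). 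The key linear-algebra fact: if $A$ is the matrix of $d_p\Phi$ in the coordinate bases $\partial/\partial x^i$, $\partial/\partial y^j$, then the matrix in orthonormal bases is $Q_\psi^{-1} A Q_\varphi$ where $Q_\varphi^\top Q_\varphi = G_\varphi$ etc., so $|\det d_p\Phi| = |\det A|\,\sqrt{\det G_\psi(\Phi(p))}/\sqrt{\det G_\varphi(p)}$. This is chart-independent because any two orthonormal bases differ by an orthogonal matrix, which has determinant $\pm1$; hence well-definedness. Continuity (resp.\ $C^1$) follows since $A$, $\det G_\varphi$, $\det G_\psi$ depend continuously (resp.\ $C^1$) on $p$ and $\det G_\varphi>0$.

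For part~2, the local computation is the heart of the matter. On $U_1$, by the change-of-variables formula for the Riemann–Lebesgue measure on a chart (this is essentially the definition of $\lambda_M$, cf.\ Amann–Escher \cite{AmannEscherIII}, Chapter XII.1), $\int_{U_1} h\,d\lambda_M = \int_{\tilde U_1} (h\circ\varphi^{-1})\sqrt{\det G_\varphi}\,dx$, and similarly for $N$. Writing $\tilde\Phi := \psi\circ\Phi\circ\varphi^{-1}:\tilde U_1\to\tilde V_1$, which is a $C^1$-diffeomorphism between open subsets of $\R^m$, Theorem \ref{th_Leb_trafo} gives, for $f:V_1\to B$,
\[
\int_{V_1} f\,d\lambda_N = \int_{\tilde V_1}(f\circ\psi^{-1})\sqrt{\det G_\psi}\,dy
= \int_{\tilde U_1}\big((f\circ\psi^{-1})\sqrt{\det G_\psi}\big)\circ\tilde\Phi\,\,|\det D\tilde\Phi|\,dx.
\]
Now $(f\circ\psi^{-1})\circ\tilde\Phi = f\circ\Phi\circ\varphi^{-1}$, and combining $|\det D\tilde\Phi|\cdot\sqrt{\det G_\psi\circ\tilde\Phi}$ with the factor $\sqrt{\det G_\varphi}$ from the $M$-side chart formula reproduces exactly $(|\det d\Phi|\circ\varphi^{-1})\cdot\sqrt{\det G_\varphi}$ by the formula from part~1. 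Hence $\int_{V_1} f\,d\lambda_N = \int_{U_1}(f\circ\Phi)|\det d\Phi|\,d\lambda_M$, the local version of the claim; and the "if and only if" for $L^1$-membership follows from the same identity applied to $|f|$ (using that $|\det d\Phi|$ is bounded above and below by positive constants on compact subsets).

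To globalize, cover $U$ by countably many such chart domains $U_k$ on which the local identity holds, choose a $C^0$ (or locally finite continuous) partition of unity $\{\chi_k\}$ subordinate to $\{U_k\}$ with $\sum_k\chi_k=1$ on $U$, apply the local identity to $\chi_k f$ (noting $\supp(\chi_k f)\subseteq U_k$, and that $\chi_k f\in L^1$ iff $(\chi_k f)\circ\Phi\,|\det d\Phi|\in L^1$), and sum over $k$ using monotone convergence for $|f|$ to get the $L^1$-equivalence, then dominated convergence (or linearity plus the $L^1$-bound) for the integral identity itself. The main obstacle I anticipate is purely bookkeeping rather than conceptual: keeping the orthonormal-versus-coordinate bases straight so that the Jacobian factors from the two chart formulas and from Theorem \ref{th_Leb_trafo} combine correctly, and handling the Bochner-integrability transfer cleanly (one should invoke the scalar substitution rule on $|f|_B$, which is a scalar measurable function, to move the integrability statement across, since Theorem \ref{th_Leb_trafo} as stated already covers Banach-valued $f$ directly). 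No separability subtlety arises here because $\Phi$ is a diffeomorphism, so $f\circ\Phi$ inherits measurability and almost-separable-valuedness from $f$ without the caveat mentioned after Theorem \ref{th_Leb_trafo}.
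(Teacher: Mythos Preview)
Your proposal is correct and follows the same overall architecture as the paper---localize via charts, verify the Jacobian identity, and globalize with a partition of unity---but the paper streamlines the local step in a way worth noting. Instead of choosing independent charts $\varphi$ on $M$ and $\psi$ on $N$ and then invoking the Euclidean substitution rule (Theorem~\ref{th_Leb_trafo}) for the coordinate map $\tilde\Phi=\psi\circ\Phi\circ\varphi^{-1}$, the paper takes a chart $(\psi,W)$ on $M$ and uses the \emph{pushforward chart} $(\psi\circ\Phi^{-1},\Phi(W))$ on $N$. Both charts then land in the same open set $\psi(W)\subseteq\R^m$ and the coordinate transition is the identity, so no Euclidean change of variables is needed at all: the entire computation reduces to comparing the two metric volume factors, i.e.\ verifying $\sqrt{H}=(|\det d\Phi|\circ\psi^{-1})\sqrt{G}$, and applying the chart definition of $\lambda_M$ and $\lambda_N$ (Amann--Escher, Theorem~XII.1.10) on each side. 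Your route with separate charts works just as well and is arguably more transparent about where each factor originates, but it carries one extra layer. One small point: your Gram matrices should be built from the given Riemannian metrics $g$ and $h$, not the induced Euclidean metric---the statement allows arbitrary $g,h$ (even though the paper's applications only use the Euclidean case, cf.\ Remark~\ref{th_Leb_RiemRem}).
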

\begin{proof}[Proof. Ad 1] 
	The definition is independent of the choice of the orthonormal bases since the representation matrix corresponding to the change of basis on each of the tangent spaces has determinant $\pm1$. Via local representations one can prove that $|\det d\Phi|\in C^1(U,\R)$.\qedhere$_{1.}$\end{proof}
	
	\begin{proof}[Ad 2]
	The assertion is compatible with restrictions on $U$ and $V$. Therefore we can assume w.l.o.g. $U=M$ and $V=N$. Moreover, it is enough to prove one direction. Let $f\in L^1(V,B)$ and let $(\psi,W)$ be a chart of $M$. Then $(\psi\circ\Phi^{-1},\Phi(W))$ is a chart of $N$. Let $(g_{ij})_{i,j=1}^m$, $(h_{ij})_{i,j=1}^m$ be the local representations of $g$ and $h$ corresponding to $(\psi,W)$ and $(\psi\circ\Phi^{-1},\Phi(W))$, respectively. Furthermore, we set $G:=\det[(g_{ij})_{i,j=1}^m]$ and $H:=\det[(h_{ij})_{i,j=1}^m]$. The latter are viewed as maps from $\psi(W)$ to $\R$. Then Amann, Escher \cite{AmannEscherIII}, Theorem XII.1.10 yields $(f\circ(\psi\circ\Phi^{-1})^{-1})\sqrt{H}\in L^1(\psi(W),B)$. Choosing orthonormal bases for the related tangent spaces, one can show with the chain rule that $\sqrt{H}=|\det d\Phi|\circ\psi^{-1}\sqrt{G}$. Therefore we obtain 
	\[
	[(f\circ\Phi) |\det d\Phi|]\circ\psi^{-1}\sqrt{G}\in L^1(\psi(W),B)
	\]
	and \cite{AmannEscherIII}, Theorem X.1.10 yields $(f\circ\Psi)|\det d\Phi|\in L^1(W,B)$ as well as
	\[	
	\int_{\Phi(W)} f\,d\lambda_N=\int_W(f\circ\Psi)|\det d\Phi|\,d\lambda_M. 
	\]
	In particular $(f\circ\Psi)|\det d\Phi|:M\rightarrow B$ is $\lambda_M$-measurable, cf.~e.g.~\cite{AmannEscherIII}, Proposition XII.1.8 and Theorem X.1.14. Finally, via $\Phi$ one can push forward a countable atlas for $M$ and a corresponding $C^1$-partition of unity. Hence analogous computations as above and \cite{AmannEscherIII}, Proposition XII.1.11 yield the claim.\qedhere$_{2.}$	
\end{proof}

\subsubsection{Sobolev-Spaces on Domains in $\R^n$}\label{sec_SobDom}
\begin{Definition}\upshape\phantomsection{\label{th_SobDom_def}}
\begin{enumerate} 
	\item Let $\Omega\subseteq\R^n$, $n\in\N$ be open and nonempty. Moreover, let $k\in\N_0$, $1\leq p\leq\infty$ and $B$ be a Banach space. Then $W^{k,p}(\Omega,B)$ are the usual \textit{Sobolev-spaces}, where $W^{0,p}(\Omega,B):=L^p(\Omega,B)$. We also write $H^k(\Omega,B)$ instead of $W^{k,2}(\Omega,B)$. If $B=\K$ and it is clear from the context if $\K=\R$ or $\C$, then we omit $B$ in the notation.
	\item Let $n\in\N$. Then $H^\beta(\R^n)$ for $\beta>0$ are the well-known $L^2$-\textit{Bessel-Potential spaces} and $W^{k+\mu,p}(\R^n)$ for $k\in\N_0$, $\mu\in(0,1)$ and $1\leq p<\infty$ the \textit{Sobolev-Slobodeckij spaces}.
\end{enumerate}
\end{Definition}

For the definitions and properties of scalar-valued function spaces, in particular embeddings, interpolation results and trace theorems see Adams, Fournier \cite{AdamsFournier}, Alt \cite{AltFA}, Leoni \cite{Leoni} and Triebel \cite{Triebel_Interpol_Theory}, \cite{Triebel_Fct_SpacesI}. Many properties can be generalized to vector-valued function spaces over domains, see e.g.~Kreuter \cite{Kreuter} and the references therein. In particular: 

\begin{Lemma}\label{th_SobDom_dense}
	Let $\Omega\subseteq\R^n$, $n\in\N$ be open and nonempty, $k\in\N_0$, $1\leq p<\infty$ and $B$ be a Banach space. Then $C^\infty(\Omega,B)\cap W^{k,p}(\Omega,B)$ is dense in $W^{k,p}(\Omega,B)$. 
\end{Lemma}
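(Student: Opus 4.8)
The plan is to run the classical Meyers--Serrin argument (``$H=W$'') in the Banach-space-valued setting; since $B$ enters all the relevant constructions only linearly, no genuinely new phenomenon appears, and the two ingredients are mollification of compactly supported functions and a locally finite smooth partition of unity adapted to an exhaustion of $\Omega$. First one records the mollification facts. Fix $\eta\in C_0^\infty(B_1(0))$ with $\eta\geq 0$ and $\int_{\R^n}\eta\,dx=1$, and set $\eta_\varepsilon(x):=\varepsilon^{-n}\eta(x/\varepsilon)$ for $\varepsilon>0$. For $h\in L^p(\R^n,B)$ with compact support the Bochner integral $(\eta_\varepsilon*h)(x):=\int_{\R^n}\eta_\varepsilon(x-y)h(y)\,dy$ is well-defined, lies in $C^\infty(\R^n,B)$ (differentiation under the integral sign being justified via the properties of the Bochner integral, cf.~\cite{AmannEscherIII}, Chapter X), and has compact support. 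If moreover $h\in W^{k,p}(\R^n,B)$, then $\partial^\alpha(\eta_\varepsilon*h)=\eta_\varepsilon*(\partial^\alpha h)$ for $|\alpha|\leq k$ and $\eta_\varepsilon*h\to h$ in $W^{k,p}(\R^n,B)$ as $\varepsilon\to 0$; the convergence reduces to $\|\eta_\varepsilon*g-g\|_{L^p(\R^n,B)}\to 0$ for $g\in L^p(\R^n,B)$, which follows from continuity of translation in $L^p(\R^n,B)$ for $p<\infty$ (valid by density of the simple functions with values on sets of finite measure, respectively of the compactly supported continuous $B$-valued functions). These are the vector-valued analogues of the standard scalar statements; see also \cite{Kreuter}.

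Next one sets up the geometry. Put $\Omega_j:=\{x\in\Omega:|x|<j\text{ and }\operatorname{dist}(x,\R^n\setminus\Omega)>1/j\}$ for $j\in\N$ (with $\operatorname{dist}(\cdot,\emptyset):=+\infty$ in case $\Omega=\R^n$) and $\Omega_0:=\Omega_{-1}:=\emptyset$; then each $\overline{\Omega_j}$ is a compact subset of $\Omega_{j+1}$ and $\bigcup_{j\in\N}\Omega_j=\Omega$. The open sets $U_j:=\Omega_{j+1}\setminus\overline{\Omega_{j-1}}$, $j\in\N$, satisfy $\overline{U_j}\subseteq\overline{\Omega_{j+1}}$ (compact, contained in $\Omega$) and form a locally finite open cover of $\Omega$: any point of $\Omega$ lies in some $\Omega_J$, and $\Omega_J\cap U_j=\emptyset$ for $j\geq J+1$, so $\Omega_J$ is a neighbourhood of that point meeting only $U_1,\dots,U_J$. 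Choose a smooth partition of unity $\{\varphi_j\}_{j\in\N}$ subordinate to $\{U_j\}_{j\in\N}$ in the standard way, i.e.~$\varphi_j\in C_0^\infty(U_j)$, $0\leq\varphi_j\leq 1$ and $\sum_{j\in\N}\varphi_j\equiv 1$ on $\Omega$, the sum being locally finite.

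Then one runs the approximation. Let $f\in W^{k,p}(\Omega,B)$ and $\delta>0$. For each $j$ the function $\varphi_j f$, extended by $0$ outside $\Omega$, belongs to $W^{k,p}(\R^n,B)$ with compact support contained in $U_j$ (Leibniz rule; $\varphi_j$ and all its derivatives are bounded). Pick $\varepsilon_j>0$ so small that $\supp(\eta_{\varepsilon_j}*(\varphi_j f))$ is still a compact subset of $U_j$ and $\|\eta_{\varepsilon_j}*(\varphi_j f)-\varphi_j f\|_{W^{k,p}(\R^n,B)}\leq\delta\,2^{-j}$. Define $g:=\sum_{j\in\N}\eta_{\varepsilon_j}*(\varphi_j f)$. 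Since $\supp(\eta_{\varepsilon_j}*(\varphi_j f))\subseteq U_j$ and $\{U_j\}$ is locally finite, this sum is locally finite on $\Omega$, hence $g\in C^\infty(\Omega,B)$. On the other hand $g-f=\sum_{j\in\N}\bigl(\eta_{\varepsilon_j}*(\varphi_j f)-\varphi_j f\bigr)$ is a series whose $W^{k,p}(\Omega,B)$-norms sum to at most $\delta$; by completeness of $W^{k,p}(\Omega,B)$ it converges there, and since it also converges to $g-f$ pointwise almost everywhere (again local finiteness) we get $g-f\in W^{k,p}(\Omega,B)$ with $\|g-f\|_{W^{k,p}(\Omega,B)}\leq\delta$. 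Thus $g\in C^\infty(\Omega,B)\cap W^{k,p}(\Omega,B)$ with $\|f-g\|_{W^{k,p}(\Omega,B)}\leq\delta$, which gives the density.

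The only real work is the Banach-space-valued mollification step in the first paragraph --- well-definedness and $C^\infty$-regularity of the Bochner convolution, its commutation with weak derivatives, and $L^p$-approximation for $p<\infty$ --- but each of these follows exactly as in the scalar case from the elementary properties of the Bochner integral together with the density of simple functions in $L^p(\R^n,B)$, so no new difficulty arises from $B$. The partition-of-unity construction and the completeness of $W^{k,p}(\Omega,B)$ are standard and may be quoted from \cite{Kreuter}.
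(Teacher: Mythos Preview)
Your proof is correct and is precisely the classical Meyers--Serrin argument (mollification plus a locally finite partition of unity subordinate to an exhaustion of $\Omega$) carried out in the Banach-space-valued setting; this is exactly what the paper means by ``via convolution analogously to the scalar case, cf.~\cite{Kreuter}, Chapter 4.2'', only spelled out in detail rather than cited.
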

\begin{proof}
	This follows via convolution analogously to the scalar case, cf.~\cite{Kreuter}, Chapter 4.2.
\end{proof}

For transformations of Sobolev spaces we use
\begin{Theorem}\label{th_SobDom_trafo}
	Let $\Omega_1,\Omega_2\subseteq\R^n$ be open, nonempty and bounded. Moreover, let $l\in\N$, $l\geq 1$, $1\leq p<\infty$ and $B$ be a Banach space. Let $\Phi:\Omega_1\rightarrow\Omega_2$ be a $C^l$-diffeomorphism with $\Phi\in C^l(\overline{\Omega_1})^n$ and $\Phi^{-1}\in C^l(\overline{\Omega_2})^n$ such that
	\[
	|\det\,D(\Phi^{-1})|\leq R_1\quad\text{ and }\quad \|\Phi\|_{C_b^l(\overline{\Omega_1})^n}\leq R_2.
	\]  
	Then $T:W^{k,p}(\Omega_2,B)\rightarrow W^{k,p}(\Omega_1,B):f\mapsto f\circ\Phi$ is well-defined, continuous and linear for all $k\in\N_0$ with $0\leq k\leq l$ and the operator norm is bounded by some $C(R_1)>0$ if $k=0$ and bounded by $C(R_1,R_2,p,l)>0$ if $k$ is arbitrary.
\end{Theorem}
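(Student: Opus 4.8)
The plan is to reduce the statement to the scalar-valued case and then apply the classical chain rule for Sobolev functions, proceeding by induction on $k$. First I would treat $k=0$: here $Tf = f\circ\Phi$ and the substitution rule (Theorem~\ref{th_Leb_trafo}) applied to $\Phi^{-1}:\Omega_2\to\Omega_1$ gives, for $f\in L^p(\Omega_2,B)$,
\[
\int_{\Omega_1}\|f\circ\Phi\|_B^p\,dx=\int_{\Omega_2}\|f\|_B^p\,|\det D(\Phi^{-1})|\,dy\leq R_1\int_{\Omega_2}\|f\|_B^p\,dy,
\]
so $\|Tf\|_{L^p(\Omega_1,B)}\leq R_1^{1/p}\|f\|_{L^p(\Omega_2,B)}$ and $T$ is well-defined, linear and bounded by $C(R_1)$ on $L^p$. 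Linearity is immediate.

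For $k\geq 1$, the strategy is to establish the chain rule $\partial_{x_i}(f\circ\Phi)=\sum_{j}(\partial_{y_j}f)\circ\Phi\cdot\partial_{x_i}\Phi_j$ for $f\in W^{1,p}(\Omega_2,B)$. The clean way is density: by Lemma~\ref{th_SobDom_dense}, $C^\infty(\Omega_2,B)\cap W^{k,p}(\Omega_2,B)$ is dense in $W^{k,p}(\Omega_2,B)$, so I would first verify the chain rule for smooth $f$ (where it is the ordinary chain rule, valid since $\Phi\in C^l$ with $l\geq 1$), then pass to the limit: if $f_n\to f$ in $W^{1,p}$ with $f_n$ smooth, the $k=0$ estimate shows $f_n\circ\Phi\to f\circ\Phi$ in $L^p(\Omega_1,B)$, and each term $(\partial_{y_j}f_n)\circ\Phi\cdot\partial_{x_i}\Phi_j$ converges in $L^p(\Omega_1,B)$ to $(\partial_{y_j}f)\circ\Phi\cdot\partial_{x_i}\Phi_j$ (using the $k=0$ bound on the precomposition and the uniform bound $\|\Phi\|_{C_b^l}\leq R_2$ on the multiplier). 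Hence $f\circ\Phi\in W^{1,p}(\Omega_1,B)$ with the expected weak derivatives, and one reads off $\|T f\|_{W^{1,p}(\Omega_1,B)}\leq C(R_1,R_2,p)\|f\|_{W^{1,p}(\Omega_2,B)}$. For general $k\leq l$ one iterates: differentiating the chain-rule formula $k-1$ more times produces, by the Leibniz and chain rules, a finite sum of terms of the form (a derivative of $f$ of order $\leq k$, precomposed with $\Phi$) times (a polynomial expression in derivatives of $\Phi$ of order $\leq k\leq l$), each of which lies in $L^p(\Omega_1,B)$ by the $k=0$ estimate together with $\|\Phi\|_{C_b^l(\overline{\Omega_1})^n}\leq R_2$; this is again justified rigorously by the same density argument, and yields the operator-norm bound $C(R_1,R_2,p,l)$.

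The main obstacle is bookkeeping rather than conceptual: one must check that the vector-valued chain and Leibniz rules hold for Bochner–Sobolev functions, i.e. that weak differentiation commutes with composition by a $C^l$-map and with multiplication by a $C^l$ scalar function in the $B$-valued setting. This is handled exactly as in the scalar case (approximation by smooth functions plus the $L^p$-continuity of precomposition established in the $k=0$ step), and it is precisely here that the hypotheses $\Omega_i$ bounded, $\Phi$ and $\Phi^{-1}$ in $C^l$ up to the boundary, and the two uniform bounds enter — boundedness guarantees the $C_b^l$-norms control everything, and $|\det D(\Phi^{-1})|\leq R_1$ controls the change of variables in the $L^p$-norm. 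I would also note that since only finitely many multi-indices of order $\leq k$ occur and all derivatives of $\Phi$ up to order $l$ are uniformly bounded by $R_2$, the combinatorial constants depend only on $n,k,l$, so the final constant has the claimed form. Finally, one records that $\Phi^{-1}$ satisfies the symmetric hypotheses (with $R_1$ replaced by a bound on $|\det D\Phi|$, which follows from $\|\Phi\|_{C_b^l}\leq R_2$), so $T$ is in fact a topological isomorphism, though only the stated one-sided boundedness is needed.
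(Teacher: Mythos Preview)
Your proposal is correct and takes essentially the same approach as the paper: the paper's proof simply refers to Adams--Fournier, Theorem~3.41 for the scalar case and notes that the same argument---density of smooth functions (Lemma~\ref{th_SobDom_dense}), the substitution rule (Theorem~\ref{th_Leb_trafo}), and the chain rule---goes through verbatim for Banach-valued functions. You have written out exactly this argument in detail, so there is nothing to add.
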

\begin{proof}
	For $B=\K$ this follows from the proof of Adams, Fournier \cite{AdamsFournier}, Theorem~3.41. One only needs density of $C^\infty(\Omega_2)\cap W^{k,p}(\Omega_2)$ in $W^{k,p}(\Omega_2)$, the substitution rule and the chain rule. Due to Lemma \ref{th_SobDom_dense} and Theorem \ref{th_Leb_trafo} analogous arguments apply for general $B$.
\end{proof}

For simplicity we only consider scalar-valued functions in the remainder of this section. In the following we need to know how Lebesgue and Sobolev spaces behave on product sets.

\begin{Lemma}\label{th_SobDom_prod_set} 
	Let $\Omega_1\subseteq\R^m, \Omega_2\subseteq \R^n$ for $m,n\in\N$ be measurable. Then
	\begin{enumerate}
		\item  Let $1\leq p<\infty$ and $f\in L^p(\Omega_1\times \Omega_2)$. Then $f(x_1,.)\in L^p(\Omega_2)$ for a.e.~$x_1\in \Omega_1$ and  $Tf:\Omega_1\rightarrow L^p(\Omega_2):x_1\mapsto f(x_1,.)$ is an element of $L^p(\Omega_1, L^p(\Omega_2))$. Moreover, the map $T:L^p(\Omega_1\times \Omega_2)\rightarrow L^p(\Omega_1,L^p(\Omega_2))$ is an isometric isomorphism.
		\item Let $\Omega_1,\Omega_2$ be open and $1<p<\infty$. Then by restriction of $T$ from 1.~it holds
		\[
		W^{1,p}(\Omega_1\times\Omega_2)\cong L^p(\Omega_1,W^{1,p}(\Omega_2))\cap W^{1,p}(\Omega_1,L^p(\Omega_2))
		\] 
		and derivatives are compatible via $T$. Analogous assertions hold for higher orders. 
		\item Let $\Omega_1,\Omega_2$ open, $k\in\N_0$ and $1\leq p\leq\infty$. Then for $f\in W^{k,p}(\Omega_1)$ and $g\in W^{k,p}(\Omega_2)$ the product $(f\otimes g)(x_1,x_2):=f(x_1)g(x_2)$ 
		is well-defined for a.e.~$(x_1,x_2)\in\Omega_1\times\Omega_2$. Moreover, it holds $f\otimes g\in W^{k,p}(\Omega_1\times\Omega_2)$, the derivatives are natural and 
		\[
		\|f\otimes g\|_{W^{k,p}(\Omega_1\times\Omega_2)}\leq C_{k,p}\|f\|_{W^{k,p}(\Omega_1)}\|g\|_{W^{k,p}(\Omega_2)}.
		\] 
		For $\Omega_1=\R$, $\Omega_2=\R_+$, $k=1$ and $1\leq p<\infty$ the trace is given by $\tr_{\partial\R^2_+}(f\otimes g)=g(0)f$.
	\end{enumerate}
\end{Lemma}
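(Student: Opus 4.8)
The plan is to handle the three parts in order, feeding Part~1 into Part~2 and the density Lemma~\ref{th_SobDom_dense} into Parts~2 and~3.

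\emph{Part 1.} First I would record the pointwise and isometry statements from the Fubini--Tonelli theorem for $\sigma$-finite measure spaces quoted above: applying Tonelli to $|f|^p$ shows that $f(x_1,\cdot)\in L^p(\Omega_2)$ for a.e.\ $x_1\in\Omega_1$ and that $\int_{\Omega_1}\|f(x_1,\cdot)\|_{L^p(\Omega_2)}^p\,dx_1=\|f\|_{L^p(\Omega_1\times\Omega_2)}^p$. The one genuinely non-formal point is strong measurability of $Tf\colon\Omega_1\to L^p(\Omega_2)$: since $\Omega_2$ with Lebesgue measure is $\sigma$-finite, $L^p(\Omega_2)$ is separable for $1\le p<\infty$, so by the Pettis measurability theorem it suffices to check weak measurability, i.e.\ that $x_1\mapsto\int_{\Omega_2}f(x_1,x_2)\varphi(x_2)\,dx_2$ is measurable for each $\varphi\in L^{p'}(\Omega_2)$, which is again Fubini. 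Thus $T$ is a well-defined linear isometry into $L^p(\Omega_1,L^p(\Omega_2))$; for surjectivity one notes that finite sums $\sum_i\chi_{A_i}\otimes g_i$ (with $A_i\subseteq\Omega_1$ measurable of finite measure, $g_i\in L^p(\Omega_2)$) are dense in $L^p(\Omega_1,L^p(\Omega_2))$ and obviously lie in the range, and the range is closed because $T$ is isometric.

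\emph{Part 2.} The inclusion ``$\subseteq$'' is a direct test-function computation: for $f\in W^{1,p}(\Omega_1\times\Omega_2)$, testing the weak-derivative identity for $\partial_{x_2}f$ against products $\eta\otimes\psi$ and using Fubini (together with a countable-dense-set argument in $\psi$) shows that for a.e.\ $x_1$ one has $f(x_1,\cdot)\in W^{1,p}(\Omega_2)$ with derivative $(\partial_{x_2}f)(x_1,\cdot)$, hence by Part~1 $Tf\in L^p(\Omega_1,W^{1,p}(\Omega_2))$; analogously the vector-valued weak-derivative identity $\int_{\Omega_1}Tf\,\eta'\,dx_1=-\int_{\Omega_1}(T\partial_{x_1}f)\,\eta\,dx_1$ in $L^p(\Omega_2)$ follows by pairing with $\varphi\in L^{p'}(\Omega_2)$ and Fubini, so $Tf\in W^{1,p}(\Omega_1,L^p(\Omega_2))$, and this already identifies the two ``directional'' derivatives with $\partial_{x_2}$ and $\partial_{x_1}$. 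For ``$\supseteq$'' I would argue through difference quotients: the two memberships of $g$ translate into a uniform bound $\|f(\cdot+he_j,\cdot)-f\|_{L^p(\Omega_1\times\Omega_2)}\le C|h|$ for $j=1,2$, and since $1<p<\infty$ the standard characterization of $W^{1,p}$ by uniformly bounded translates (weak compactness in $L^p$) yields $f=T^{-1}g\in W^{1,p}(\Omega_1\times\Omega_2)$ --- this is exactly where the hypothesis $1<p<\infty$ enters. Higher orders follow by iteration.

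\emph{Part 3.} For $1\le p<\infty$ choose by Lemma~\ref{th_SobDom_dense} smooth $f_j\to f$ in $W^{k,p}(\Omega_1)$ and $g_j\to g$ in $W^{k,p}(\Omega_2)$. For smooth factors the classical product rule gives $\partial_{x_1}^\alpha\partial_{x_2}^\beta(f_j\otimes g_j)=(\partial^\alpha f_j)\otimes(\partial^\beta g_j)$, and Fubini gives $\|(\partial^\alpha f_j)\otimes(\partial^\beta g_j)\|_{L^p(\Omega_1\times\Omega_2)}=\|\partial^\alpha f_j\|_{L^p(\Omega_1)}\|\partial^\beta g_j\|_{L^p(\Omega_2)}$; summing over $|\alpha|,|\beta|\le k$ yields the stated estimate for $f_j\otimes g_j$ and shows $(f_j\otimes g_j)_j$ is Cauchy in $W^{k,p}(\Omega_1\times\Omega_2)$, with limit $f\otimes g$ (identify the $L^p$-limit via Part~1) and derivatives $(\partial^\alpha f)\otimes(\partial^\beta g)$; the estimate passes to the limit. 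For $p=\infty$, where Lemma~\ref{th_SobDom_dense} is unavailable, I would instead mollify $f$ in $x_1$ and $g$ in $x_2$ separately and pass to the limit by dominated convergence against $C_0^\infty(\Omega_1\times\Omega_2)$-test functions to get the same derivative identities, the $L^\infty$-estimate then being immediate. For the trace claim with $\Omega_1=\R$, $\Omega_2=\R_+$, $k=1$, $1\le p<\infty$: approximate as above, use that $\tr_{\partial\R^2_+}$ is continuous from $W^{1,p}(\R^2_+)$ to $L^p(\R)$ and that the one-dimensional trace $g\mapsto g(0)$ is continuous from $W^{1,p}(\R_+)$ to $\R$, note $\tr_{\partial\R^2_+}(f_j\otimes g_j)=g_j(0)f_j$ for smooth factors, and pass to the limit.

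\emph{Main obstacle.} The computations (Leibniz rule, Fubini, integration by parts) are routine; the delicate points are the strong-versus-weak measurability issue in Part~1 (resolved by separability of $L^p(\Omega_2)$ and Pettis) and, in Part~2, the reconstruction direction ``$\supseteq$'' --- making precise that the $L^p(\Omega_2)$-valued weak derivative of $Tf$ in the $x_1$-direction really corresponds to the distributional derivative on the product, and pinning down why $1<p<\infty$ is essential there (the difference-quotient/weak-compactness argument fails for $p=1$).
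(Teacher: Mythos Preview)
Your proof is correct. The paper's own proof is brief (deferring Parts~1--2 to R\r{u}\v{z}i\v{c}ka and Schweizer, with only the remark that the restriction $1<p<\infty$ stems from a ``duality argument''), so your difference-quotient approach for the ``$\supseteq$'' direction in Part~2 is a legitimate alternative that likewise isolates reflexivity of $L^p$ as the key ingredient; just note that the translate bound $\|f(\cdot+he_j,\cdot)-f\|_{L^p}\le C|h|$ should be stated on compactly contained subsets (of the form $\Omega_1'\times\Omega_2$ or $\Omega_1\times\Omega_2'$ with $\Omega_i'\Subset\Omega_i$), not on all of $\Omega_1\times\Omega_2$, which is what the standard local characterization of $W^{1,p}$ actually uses. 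The main divergence is Part~3: the paper says it follows ``directly with the definitions'', i.e.\ by the two-step Fubini/integration-by-parts computation against a general $\Phi\in C_0^\infty(\Omega_1\times\Omega_2)$ --- first in $x_1$ using that $\partial_{x_2}^\beta\Phi(\cdot,x_2)\in C_0^\infty(\Omega_1)$, then in $x_2$ after observing (via differentiation under the integral sign) that $x_2\mapsto\int_{\Omega_1}(\partial^\alpha f)(x_1)\Phi(x_1,x_2)\,dx_1$ lies in $C_0^\infty(\Omega_2)$ --- which yields $\partial_{x_1}^\alpha\partial_{x_2}^\beta(f\otimes g)=(\partial^\alpha f)\otimes(\partial^\beta g)$ uniformly for all $1\le p\le\infty$ without any approximation. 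Your density route via Lemma~\ref{th_SobDom_dense} (for $p<\infty$) plus mollification (for $p=\infty$) is also correct but longer and forces the case split that the direct argument avoids. For the trace assertion both you and the paper argue by density.
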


Note that we restricted $p$ in the second assertion due to a duality argument.
\begin{proof}
	The first two assertions follow with ideas from R\r{u}\v{z}i\v{c}ka \cite{Ruzicka}, Chapter 2.1.1 and the Paragraph \enquote{Zusammenhang mit elementaren Definitionen} in Schweizer \cite{Schweizer}, p.188f. The first part of the third claim can be proven directly with the definitions. The trace assertion follows with a density argument.
\end{proof}

Moreover, we need the notion of domains with Lipschitz-boundary. See Alt \cite{AltFA}, Section A8.2 for the precise definition of a bounded Lipschitz-domain. We generalize this definition for parts of the boundary as a preparation for the next section.

\begin{Definition}[\textbf{Lipschitz Condition}]\label{th_SobDom_LipDef}\upshape
	Let $\Omega\subset\R^n$, $n\in\N$ be open and nonempty. Then
	\begin{enumerate}
	\item Let $x\in\partial\Omega$. We say that $\Omega$ satisfies the \textit{local Lipschitz condition} in $x$ if $\partial\Omega$ is locally at $x$ the graph of a Lipschitz function in a suitable orthogonal coordinate system such that $\Omega$ lies on one side of the graph. Cf.~Alt \cite{AltFA}, Section A8.2 for more details.
	\item We say $\Omega$ has \textit{Lipschitz-boundary} if the local Lipschitz condition holds in $x$ for all $x\in\partial\Omega$.
	\item We call $\Omega$ a \textit{Lipschitz-domain} if $\Omega$ is a domain and has Lipschitz-boundary.
	\end{enumerate}
\end{Definition}

\begin{Remark}[\textbf{Integral on the Boundary of Bounded Lipschitz Domains in $\R^n$}]\label{th_SobDom_LipRem}\upshape
	Let $\Omega\subset\R^n$, $n\in\N$ be open and nonempty. Moreover, let $\Sigma$ be open in $\partial\Omega$ (for example $\Sigma=\partial\Omega$) and assume that $\Omega$ satisfies the local Lipschitz condition in every point in the compact set $\overline{\Sigma}$. For simplicity we only consider scalar-valued functions.
	\begin{enumerate}
	\item Due to the Rademacher-Theorem, cf.~\cite{EvansGariepy}, Chapter 3.1, one can define the notions of measurable and integrable functions $f:\Sigma\rightarrow\R$ and the integral over $\Sigma$ in a natural way, cf.~e.g.~Alt \cite{AltFA}, Section A8.5. Then for $1\leq p\leq\infty$ we denote the usual Lebesgue spaces by $L^p(\Sigma)$. The definitions are the same as the ones via $\Hc^{n-1}$ on $\Sigma$, cf.~\cite{EvansGariepy}, Chapter 3.3.4 and also the proof of \cite{AltFA}, A8.5(2).
	\item The outer unit normal $N_{\partial\Omega}$ to $\Omega$ can be defined a.e.~on $\Sigma$, cf.~\cite{AltFA}, A8.5(3).
	\item Let additionally $\Sigma=M\cup Z$ with a $C^1$-hypersurface $M$ of $\R^n$ and a null-set $Z$ with respect to $\Hc^{n-1}$ on $\R^n$. Then the notions in 1.~are equivalent to the ones coming from the measure space $(M,\Lc_M,\lambda_M)$ introduced in Remark \ref{th_Leb_RiemRem}, 1.~One can prove this by going into the constructions or via identification with $\Hc^{n-1}$ on $\Sigma$, cf.~1.~and Remark \ref{th_Leb_RiemRem}, 2.
	\end{enumerate}
\end{Remark}

We need some properties of Sobolev spaces on domains in $\R^n$, where parts of the boundary satisfy the Lipschitz condition:

\begin{Theorem}\label{th_SobDom_LipThm}
	Let $\Omega\subset\R^n$, $n\in\N$ be open, bounded and let $\Sigma$ be open in $\partial\Omega$ (e.g.~$\Sigma=\partial\Omega$) such that $\Omega$ satisfies the local Lipschitz condition in every point in $\overline{\Sigma}$. Let $1\leq p<\infty$. Then
	\begin{enumerate}
		\item $\{\psi\in C_0^\infty(\overline{\Omega}):\supp\,\psi\subset\Omega\cup\Sigma\}$ is dense in $\{f\in W^{k,p}(\Omega):\supp\, f\subset\Omega\cup\Sigma\}$ for all $k\in\N_0$, where $\supp\,f$ for measurable $f:\Omega\rightarrow\R$ is defined in Remark \ref{th_Leb_supp}.
		\item $C^0(\Omega\cup\overline{\Sigma})\cap W^{1,p}(\Omega)$ is dense in $W^{1,p}(\Omega)$.
		\item There is a unique bounded linear operator $\tr:W^{1,p}(\Omega)\rightarrow L^p(\Sigma)$ such that $\tr\,u=u|_{\Sigma}$ for all $u\in C^0(\Omega\cup\overline{\Sigma})\cap W^{1,p}(\Omega)$.
		\item Let $\Sigma=\partial\Omega$, i.e.~$\Omega$ has Lipschitz-boundary. Then the Gauß Theorem holds for $W^{1,1}$-functions in weak form.
	\end{enumerate}
\end{Theorem}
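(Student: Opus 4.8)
The plan is to reduce all four assertions to local statements in Lipschitz boundary charts via a finite cover and a subordinate partition of unity, and then to treat the resulting model situation — a Sobolev function supported in a box whose intersection with $\Omega$ is the region above a Lipschitz graph — by the classical translate-then-mollify argument.

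\emph{Localisation.} First I would use that $\overline{\Sigma}$ is compact and $\Omega$ satisfies the local Lipschitz condition at each of its points to cover $\overline{\Sigma}$ by finitely many open boxes $B_1,\dots,B_L$ in each of which, after an orthogonal change of coordinates, $\partial\Omega\cap B_i$ is the graph of a Lipschitz function $\gamma_i$ with $\Omega\cap B_i=\{x_n>\gamma_i(x')\}$; after shrinking we may assume $\overline{B_i}\cap\partial\Omega\subset\Sigma$. I would then fix a cutoff $\rho\in C_0^\infty(\R^n)$ with $\rho\equiv 1$ near $\overline{\Sigma}$ and $\supp\rho\subset\bigcup_{i=1}^L B_i$, a second cutoff $\rho_0$ with $\rho_0\equiv 1$ near $\overline{\Sigma}$ and $\supp\rho_0\subset\{\rho\equiv 1\}$ (so that the telescoping identity $(1-\rho_0)(1-\rho)=1-\rho$ holds), and a smooth partition of unity $(\chi_i)_{i=1}^L$ with $\sum_i\chi_i\equiv 1$ on a neighbourhood of $\supp\rho$ and $\supp\chi_i\subset B_i$.

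\emph{Model problem and assertions 1.\ and 2.} In a chart box $B_i$, given $g\in W^{k,p}(\Omega)$ with support in a compact subset of $B_i$, the translate $g_\tau(x):=g(x',x_n+\tau)$ is defined and lies in $W^{k,p}$ on the slab $\{x_n>\gamma_i(x')-\tau\}\cap B_i$, which contains $\overline{\Omega}\cap\overline{B_i'}$ for a slightly smaller box $B_i'$; moreover $g_\tau\to g$ in $W^{k,p}$ as $\tau\to 0^+$ (continuity of translation, valid for $1\le p<\infty$). Mollifying $g_\tau$ at scale $\epsilon$ with $(1+\mathrm{Lip}\,\gamma_i)\,\epsilon<\tau$ yields functions that are $C^\infty$ on a neighbourhood of $\overline{\Omega}\cap\overline{B_i'}$ and converge to $g$ in $W^{k,p}(\Omega)$; multiplying by a cutoff keeps their support inside $B_i$, hence inside $\Omega\cup\Sigma$ when $\supp g\subset\Omega\cup\Sigma$ (since $g$ is then supported away from $\partial\Omega\setminus\Sigma$). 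For assertion 1., I note that $\supp f$ is compact ($\Omega$ is bounded) and contained in the relatively open set $\Omega\cup\Sigma$, hence at positive distance from $\partial\Omega\setminus\Sigma$; I cover the part of $\supp f$ meeting $\partial\Omega$ by the boxes $B_i$ and the part in $\Omega$ by balls compactly contained in $\Omega$, apply the model argument (respectively plain interior mollification) to the pieces $\chi_i f$, and sum to get $\psi_j\in C_0^\infty(\overline{\Omega})$ with $\supp\psi_j\subset\Omega\cup\Sigma$ and $\psi_j\to f$ in $W^{k,p}(\Omega)$. For assertion 2., I write $f=\rho f+(1-\rho)f$: I approximate $\rho f=\sum_i\chi_i\rho f$ by the model argument, obtaining functions in $C^0(\Omega\cup\overline{\Sigma})\cap W^{1,p}(\Omega)$ (smooth near $\overline{\Sigma}$); I approximate $(1-\rho)f$ in $W^{1,p}(\Omega)$ by $g_j\in C^\infty(\Omega)\cap W^{1,p}(\Omega)$ using Lemma \ref{th_SobDom_dense} and replace $g_j$ by $(1-\rho_0)g_j$, which is smooth on $\Omega$, vanishes near $\overline{\Sigma}$ (hence lies in $C^0(\Omega\cup\overline{\Sigma})$ by zero extension) and, by the telescoping identity, still converges to $(1-\rho)f$ in $W^{1,p}(\Omega)$. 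Summing the two families proves 2.

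\emph{Trace (assertion 3.) and Gauß theorem (assertion 4.).} For $u\in C^0(\Omega\cup\overline{\Sigma})\cap W^{1,p}(\Omega)$ I set $\tr u:=u|_{\Sigma}\in L^p(\Sigma)$ (integral on $\Sigma$ as in Remark \ref{th_SobDom_LipRem}). The estimate $\|u|_{\Sigma}\|_{L^p(\Sigma)}\le C\,\|u\|_{W^{1,p}(\Omega)}$ I prove chartwise: with a fixed cutoff $\eta$ supported in $B_i$ and equal to $1$ on $B_i'$, write $u(x',\gamma_i(x'))=-\int_0^{h}\partial_n(\eta u)(x',\gamma_i(x')+t)\,dt$, raise to the power $p$ (Hölder when $p>1$), integrate over the base of $B_i'$ and over $t$, and bound the graph Jacobian $\sqrt{1+|\nabla\gamma_i|^2}$; summing over the partition of unity gives the global estimate, and $\tr$ extends uniquely to $W^{1,p}(\Omega)$ by density (assertion 2.). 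For 4., let $\Sigma=\partial\Omega$. For $f\in C^0(\overline{\Omega})\cap W^{1,1}(\Omega)$ the identity $\int_\Omega\partial_i f\,dx=\int_{\partial\Omega}(\tr f)\,(N_{\partial\Omega})_i\,d\Hc^{n-1}$ is obtained by localising with the partition of unity and, in each chart, applying the elementary divergence theorem on the subgraph $\{x_n>\gamma_i(x')\}$ (Fubini plus the fundamental theorem of calculus in $x_n$, valid for Lipschitz $\gamma_i$, for which $N_{\partial\Omega}$ exists $\Hc^{n-1}$-a.e.); the general $f\in W^{1,1}(\Omega)$ then follows by approximation in $W^{1,1}$ via 2., using continuity of $\tr$ from 3.\ and of $f\mapsto\int_\Omega\partial_i f$, and the weak (test-function) form is recovered by applying the identity to $f$ times a component of a $C^1(\overline{\Omega})$-vector field.

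\emph{Main obstacle.} No step is deep; the technical heart will be the model-problem translate-and-mollify argument — in particular the quantitative choice $(1+\mathrm{Lip}\,\gamma_i)\,\epsilon<\tau$ ensuring the mollified function is genuinely defined on the relevant piece of $\overline{\Omega}$ — together with the chartwise trace inequality, and the bookkeeping of the nested cutoffs $\rho,\rho_0,\chi_i$ needed to localise everything strictly near $\overline{\Sigma}$ without disturbing $\Omega$ elsewhere. This is the standard Lipschitz-domain toolkit (cf.\ Alt \cite{AltFA}, Adams, Fournier \cite{AdamsFournier}), here adapted to an open piece $\Sigma$ of $\partial\Omega$ rather than the whole boundary.
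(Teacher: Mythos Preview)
Your approach is correct and is essentially the one the paper sketches (localise near $\overline{\Sigma}$ with a partition of unity, translate-then-mollify in each Lipschitz chart for 1.\ and 2., do the chartwise trace estimate for 3., and the chartwise divergence theorem plus density for 4.); the paper simply cites Alt \cite{AltFA} for each step, while you spell out the same machinery. One small point to fix in your bookkeeping: the shrinking condition $\overline{B_i}\cap\partial\Omega\subset\Sigma$ is incompatible with the $B_i$ covering $\overline{\Sigma}$ whenever $\overline{\Sigma}\setminus\Sigma\neq\emptyset$, so for assertion 2.\ either drop that condition (you only need the approximants to be continuous on $\Omega\cup\overline{\Sigma}$, which the translate-and-mollify output is regardless) or, as the paper does, first enlarge $\Sigma$ to an open $\tilde{\Sigma}\subset\partial\Omega$ with $\overline{\Sigma}\subset\tilde{\Sigma}$ still satisfying the Lipschitz condition and apply assertion 1.\ with $\tilde{\Sigma}$.
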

\begin{proof}[Proof. Ad 1]
	This can be shown via localization with a suitable partition of unity and convolution similar to the proof of Alt \cite{AltFA}, Lemma A8.7.\hfill$\square_{1.}$\\
	\textit{Ad 2.} Note that due to compactness of $\overline{\Sigma}$, there is another set $\tilde{\Sigma}$ open in $\partial\Omega$ such that $\overline{\Sigma}\subset\tilde{\Sigma}$ and $\tilde{\Sigma}$ satisfies analogous properties as $\Sigma$. Therefore one can combine 1.~and Lemma \ref{th_SobDom_dense} with a partition of unity to show density of $C^0(\Omega\cup\overline{\Sigma})\cap W^{1,p}(\Omega)$ in $W^{1,p}(\Omega)$.
	\hfill$\square_{2.}$\\
    \textit{Ad 3.} The proof of \cite{AltFA}, Theorem A8.6 can be directly adapted.\hfill$\square_{3.}$\\
    \textit{Ad 4.} See Alt \cite{AltFA}, Theorem A8.8.\qedhere$_{4.}$
\end{proof}

\subsubsection{Sobolev Spaces on Domains in Compact Submanifolds of $\R^n$}\label{sec_SobMfd}
In this section let $(M,g)$ be a $m$-dimensional compact Riemannian submanifold of $\R^n$ with (or without) boundary and class $C^l$, where $l\in\N\cup\{\infty\}$, $l\geq 1$. Let $U\subset M^\circ$ be open and nonempty. Moreover, let $B$ be a Banach space. Then $L^p(U,B)$ is defined due to Remark \ref{th_Leb_RiemRem}, 1.

\begin{Definition}\upshape\label{th_SobMfd_def}
Let $x_j:U_j\rightarrow V_j\subseteq[0,\infty)\times\R^{m-1}$ for $j=1,...,N$ be charts of $M$ and $W_j$ open in $[0,\infty)\times\R^{m-1}$ with $\overline{W_j}\subset V_j$ compact for $j=1,...,N$ and $\bigcup_{j=1}^N x_j^{-1}(W_j)=M$.
\begin{enumerate} 
\item Then for $k\in\N$, $k\leq l$ and $1\leq p<\infty$ we define the \textit{Sobolev spaces}
\[
W^{k,p}(U,B):=\{f\in L^p(U,B): f\circ x_j^{-1}|_{x_j(U\cap U_j)\cap W_j}\in W^{k,p}(x_j(U\cap U_j)\cap W_j,B)\,\forall j\}.
\]
\item For $f\in W^{1,p}(U,B)$, $1\leq p<\infty$ we define (in analogy to the scalar case) the \textit{surface gradient} 
\[
[\nabla_Uf]\circ x_j^{-1}|_{x_j(U\cap U_j)\cap W_j}
:=\sum_{r,s=1}^m g^{rs}\circ x_j^{-1}\partial_{y_r}[f\circ x_j^{-1}|_{x_j(U\cap U_j)\cap W_j}] \partial_{y_s}(x_j^{-1})
\]
for all $j=1,...,N$, where $(g^{rs})_{r,s=1}^m$ is the inverse of the representation matrix of $g$ with respect to $x_j$ and the product of $\partial_{y_r}[f\circ x_j^{-1}|_{x_j(U\cap U_j)\cap W_j}]\in L^p(x_j(U\cap U_j)\cap W_j,B)$ with $\partial_{y_s}(x_j^{-1})\in C^l(\overline{x_j(U\cap U_j)\cap W_j},\R^n)$ is understood component-wise.\end{enumerate}
\end{Definition}

\begin{Lemma}\label{th_SobMfd_def_lemma}
	Consider the situation of Definition \ref{th_SobMfd_def}. Let $k\in\N$, $k\leq l$ and $1\leq p<\infty$. Then
	\begin{enumerate}
	\item $W^{k,p}(U,B)$ is a Banach space with norm
	\[
	\|f\|_{W^{k,p}(U,B)}^\ast:=\sum_{j=1}^N \|f\circ x_j^{-1}|_{x_j(U\cap U_j)\cap W_j}\|_{W^{k,p}(x_j(U\cap U_j)\cap W_j,B)}\quad\text{ for all }f\in W^{k,p}(U,B).
	\]
	Different choices of $(x_j, W_j)$ yield the same spaces with equivalent norms. 
	\item $C^l(U,B)\cap W^{k,p}(U,B)$ is dense in $W^{k,p}(U,B)$.
	\item $\nabla_Uf$ is well-defined for all $f\in W^{1,p}(U,B)$ and independent of the choices of $(x_j,W_j)$. Moreover, $\nabla_Uf\in L^p(U,B^n)$ and
	\[
	\|f\|_{W^{1,p}(U,B)}:=\|f\|_{L^p(U,B)}+\|\nabla_Uf\|_{L^p(U,B^n)}
	\] 
	defines an equivalent norm on $W^{1,p}(U,B)$.
	\end{enumerate}
\end{Lemma}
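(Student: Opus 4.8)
The plan is to reduce everything to the corresponding statements on domains in $\R^m$ (or rather relatively open subsets of $[0,\infty)\times\R^{m-1}$), which have already been established or can be found in the standard references, and then to glue the local information together with a partition of unity subordinate to the chart domains. Throughout I would use that the transition maps $x_i\circ x_j^{-1}$ are $C^l$-diffeomorphisms between relatively open subsets of $[0,\infty)\times\R^{m-1}$ with all relevant derivatives bounded on the compact closures $\overline{W_i},\overline{W_j}$, so Theorem \ref{th_SobDom_trafo} applies and shows that composition with a transition map is a bounded linear isomorphism on the relevant $W^{k,p}$-spaces (here $k\le l$ is exactly what makes the transformation theorem applicable).

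For part~1, I would first check the Banach space property: if $(f_\nu)$ is Cauchy with respect to $\|\cdot\|^\ast_{W^{k,p}(U,B)}$, then each localized sequence $f_\nu\circ x_j^{-1}|_{\dots}$ is Cauchy in the Banach space $W^{k,p}(x_j(U\cap U_j)\cap W_j,B)$, hence converges; since $\bigcup_j x_j^{-1}(W_j)=M$ these local limits are restrictions of a single $f\in L^p(U,B)$ (the $L^p$-limit), and $f$ lies in $W^{k,p}(U,B)$ with $f_\nu\to f$ in the $\ast$-norm. The independence of the space and equivalence of norms for a second choice $(\tilde x_i,\tilde W_i)$ follows by writing, on each overlap, $f\circ \tilde x_i^{-1}$ as $(f\circ x_j^{-1})$ composed with the transition map and applying Theorem \ref{th_SobDom_trafo} together with the finiteness of the atlas and a uniform bound on a subordinate partition of unity; one uses here that $\{\tilde x_i^{-1}(\tilde W_i)\}$ also covers $M$, so one can estimate each piece of the $\tilde{}$-norm by finitely many pieces of the untilded norm, and vice versa.

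For part~2, density of $C^l(U,B)\cap W^{k,p}(U,B)$: pick a $C^l$-partition of unity $(\varphi_j)$ subordinate to $(x_j^{-1}(W_j^\circ))$ (available since $M$ is $C^l$ and compact), write $f=\sum_j \varphi_j f$, push each summand to $x_j(U\cap U_j)\cap W_j$ where it has support away from the part of the boundary coming from $\partial W_j$, approximate it there by $C^\infty\cap W^{k,p}$ functions using Lemma \ref{th_SobDom_dense} (or Theorem \ref{th_SobDom_LipThm}, part~1, to respect the support near the $x_m=0$ face), and pull back; multiplying the approximants by $\varphi_j$ again and summing gives a $C^l$ approximant of $f$ on $U$. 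For part~3, well-definedness and chart-independence of $\nabla_U f$ is the tensorial transformation law of the gradient: one verifies on overlaps, using the chain rule and $g^{rs}=\sum \partial(\tilde y)/\partial y\cdot \tilde g\cdot \partial(\tilde y)/\partial y$, that the two local formulas agree as elements of $L^p(U\cap U_i\cap U_j,B^n)$; this is the same computation as in the smooth scalar case and only requires $l\ge 1$. That $\nabla_U f\in L^p(U,B^n)$ and that $\|f\|_{L^p}+\|\nabla_U f\|_{L^p}$ is equivalent to $\|f\|^\ast_{W^{1,p}(U,B)}$ follows once more by localization: in each chart, $\|\nabla_U f\circ x_j^{-1}\|_{L^p}$ is comparable to $\sum_r\|\partial_{y_r}(f\circ x_j^{-1})\|_{L^p}$ with constants depending on bounds for $g,g^{-1}$ and $Dx_j^{-1}$ on $\overline{W_j}$, which are finite by compactness and the $C^l$-assumption.

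The only genuinely delicate point is bookkeeping near the manifold boundary: the charts take values in relatively open subsets of the half-space, and the covering sets $W_j$ are merely relatively open, so "support away from $\partial W_j$" must be made precise and the density argument in part~2 must use the half-space density statement (Theorem \ref{th_SobDom_LipThm}, part~1) rather than the full-space one, being careful that cutting off with $\varphi_j$ does not create support on the artificial boundary $V_j\cap\partial W_j$ while keeping support allowed on the real boundary face $\{x_m=0\}$. Once this is arranged, everything else is the routine transfer of the Euclidean results via Theorem \ref{th_SobDom_trafo}, Lemma \ref{th_SobDom_dense} and Theorem \ref{th_Leb_trafo_mfd}, exactly parallel to the proof of Lemma \ref{th_C_lemma}.
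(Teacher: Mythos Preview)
Your plan is correct and follows essentially the same route as the paper. Two points worth noting.

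First, your concern about the manifold boundary in part~2 is unnecessary in this setup: by hypothesis $U\subset M^\circ$, so the chart images $x_j(U\cap U_j)$ lie in the open half-space $(0,\infty)\times\R^{m-1}$, and hence $x_j(U\cap U_j)\cap W_j$ is genuinely open in $\R^m$. The plain density result Lemma~\ref{th_SobDom_dense} therefore applies without any special care near $\{x_m=0\}$. The paper also organizes the approximation slightly differently from your sketch: rather than first cutting off $f$ with $\varphi_j$ and then approximating $\varphi_j f$ in each chart, it directly approximates $f\circ x_j^{-1}|_{x_j(U\cap U_j)\cap W_j}$ by smooth functions $f_i^j$ and only afterwards multiplies by a $C^l$ partition of unity $\chi_j$ to form $f_i:=\sum_j\chi_j(f_i^j\circ x_j)$. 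This avoids any support bookkeeping.

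Second, for part~3 the paper does not verify the tensorial transformation law for weak derivatives directly as you propose. Instead it first treats $f\in C^1(U,B)\cap W^{1,p}(U,B)$, where the Hahn--Banach theorem reduces the chart-independence of $\nabla_U f$ to the known scalar case, and the uniform equivalence of $|\nabla_U f\circ x_j^{-1}|$ and $|\nabla(f\circ x_j^{-1})|$ on $\overline{W_j}$ follows from compactness. The extension to all of $W^{1,p}(U,B)$ then comes from the density established in part~2. Your direct route via the chain rule for weak derivatives (implicit in Theorem~\ref{th_SobDom_trafo}) also works, but the paper's approach is cleaner and explains why part~2 is proved before part~3.
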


Later on $W^{1,p}(U,B)$ we always take the norm in Lemma \ref{th_SobMfd_def_lemma}, 3. Note that for higher orders one can also define coordinate independent norms, cf.~with Hebey \cite{Hebey}, Chapter 2 in the scalar case. Nevertheless, later we only need $W^{1,p}(U,B)$.

	\begin{proof}[Proof. Ad 1]
	First, one can directly prove that $W^{k,p}(U,B)$ is a normed space with $\|.\|_{W^{k,p}(U,B)}^\ast$. Moreover, let $(f_i)_{i\in\N}$ be a Cauchy sequence in $W^{k,p}(U,B)$. Then because $L^p(U,B)$ and $W^{k,p}(x_j(U\cap U_j)\cap W_j,B)$ for $j=1,...,N$ are Banach spaces, there are $f\in L^p(U,B)$ and $h_j\in W^{k,p}(x_j(U\cap U_j)\cap W_j,B)$ such that for all $j=1,...,N$
	\[
	f_i\overset{i\rightarrow\infty}{\longrightarrow}f\text{ in }L^p(U,B)\quad\text{ and }\quad 
	f_i\circ x_j^{-1}|_{x_j(U\cap U_j)\cap W_j}\overset{i\rightarrow\infty}{\longrightarrow}h_j\text{ in }W^{k,p}(x_j(U\cap U_j)\cap W_j,B).
	\]
	Therefore $f\circ x_j^{-1}|_{x_j(U\cap U_j)\cap W_j}=h_j$ for all $j=1,...,N$ and $f\in W^{k,p}(U,B)$ with $f_i\overset{i\rightarrow\infty}{\longrightarrow}f$ in $W^{k,p}(U,B)$. Hence $W^{k,p}(U,B)$ is a Banach space.
		
	Now let $(\tilde{x}_j,\tilde{W}_j)$ for $j=1,...,\tilde{N}$ be another combination of coordinates and sets as in Definition \ref{th_SobMfd_def}. We denote with $\tilde{W}^{k,p}(U,B)$ and $\|.\|_{\tilde{W}^{k,p}(U,B)}^\ast$ the corresponding space and norm. We have to show $W^{k,p}(U,B)=\tilde{W}^{k,p}(U,B)$ and that the norms are equivalent. It is enough to prove one direction. Let $f\in W^{k,p}(U,B)$ and fix $i\in\{1,...,\tilde{N}\}$. It holds
	\[
	\tilde{x}_i(U\cap\tilde{U}_i)\cap\tilde{W}_i=\bigcup_{j=1}^N \tilde{x}_i(U\cap \tilde{U}_i\cap x_j^{-1}(W_j))\cap\tilde{W}_i.
	\]
	To obtain a suitable partition of unity for this note that $K_i:=\overline{\tilde{x}_i(U\cap\tilde{U}_i)\cap\tilde{W}_i}$ is compact and 
	$Y_{ij}:=\tilde{x}_i(x_j^{-1}(W_j)\cap \overline{U}\cap U_i)\cap\overline{\tilde{W}_i}$ is open in $K_i$ with $K_i\subseteq\bigcup_{j=1}^N Y_{ij}\subset\bigcup_{j=1}^N Y_{ij}\cup K_i^c$. Hence there are $\eta_{ij}\in C_0^\infty(\R^m)$, $j=1,...,N$ such that $0\leq \eta_{ij}\leq 1$, $\supp\,\eta_{ij}\subset Y_{ij}\cup K_i^c$ and $\sum_{j=1}^N\eta_{ij}\equiv 1$ on $K_i$. Therefore
	\[
	f\circ\tilde{x}_i^{-1}|_{\tilde{x}_i(U\cap\tilde{U}_i)\cap\tilde{W}_j}
	=\sum_{j=1}^N \eta_{ij}[f\circ x_j^{-1}|_{x_j(U\cap U_j\cap\tilde{x}_i^{-1}(\tilde{W}_i))\cap W_j}]\circ(x_j\circ\tilde{x}_i^{-1})|_{\tilde{x}_i(U\cap \tilde{U}_i\cap x_j^{-1}(W_j))\cap\tilde{W}_i}.
	\]
	Finally, due to Theorem \ref{th_SobDom_trafo} and since multiplication with smooth functions induce bounded linear operators on Sobolev spaces, we obtain $f\circ\tilde{x}_i^{-1}|_{\tilde{x}_i(U\cap\tilde{U}_i)\cap\tilde{W}_j}\in W^{k,p}(\tilde{x}_i(U\cap\tilde{U}_i)\cap\tilde{W}_j)$ and
	\[
	\|f\circ\tilde{x}_i^{-1}|_{\tilde{x}_i(U\cap\tilde{U}_i)\cap\tilde{W}_j}\|_{W^{k,p}(\tilde{x}_i(U\cap\tilde{U}_i)\cap\tilde{W}_j)}\leq C\|f\|_{W^{k,p}(U,B)}^\ast,
	\]
	where $C$ does not depend on $f$. Since $i\in\{1,...,\tilde{N}\}$ was arbitrary, it follows that $f\in\tilde{W}^{k,p}(U,B)$ with $\|f\|_{\tilde{W}^{k,p}(U,B)}^\ast\leq \overline{C}\|f\|_{W^{k,p}(U,B)}^\ast$ with $\tilde{C}$ independent of $f$. This yields the claim.
	\qedhere$_{1.}$
	\end{proof}

	\begin{proof}[Ad 2]
	Due to 1.~and Lemma \ref{th_SobDom_dense} there are 
	\[
	(f_i^j)_{i\in\N}\subset C^\infty(x_j(U\cap U_j)\cap W_j,B)\cap W^{k,p}(x_j(U\cap U_j)\cap W_j,B)
	\] 
	such that $f_i^j\overset{i\rightarrow\infty}{\longrightarrow}f\circ x_j^{-1}|_{x_j(U\cap U_j)\cap W_j}$ in $W^{k,p}(x_j(U\cap U_j)\cap W_j,B)$. In order to get a suitable partition of unity note that $\overline{U}=\bigcup_{j=1}^N\overline{U}\cap x_j^{-1}(W_j)$ and $\overline{U}\cap x_j^{-1}(W_j)$ is open in $\overline{U}$. Hence there are $\chi_j\in C^l(M)$, $j=1,...,N$ such that $0\leq \chi_j\leq 1$, $\supp\,\chi_j\subset(\overline{U}\cap x_j^{-1}(W_j))\cup\overline{U}^c$ and $\sum_{j=1}^N\chi_j\equiv 1$ on $M$. By construction and Theorem \ref{th_SobDom_trafo} it follows that
	\[
	f_i:=\sum_{j=1}^N \chi_j (f_i^j\circ x_j)\in C^l(U,B)\cap W^{k,p}(U,B)
	\]
	for all $i\in\N$ and $f_i\overset{i\rightarrow\infty}{\longrightarrow} f$ in $W^{k,p}(U,B)$.\qedhere$_{2.}$
	\end{proof}

	\begin{proof}[Ad 3]
	First let $f\in C^1(U,B)\cap W^{1,p}(U,B)$. Then the Hahn-Banach Theorem and the scalar case yield that $\nabla_Uf$ is well-defined and independent of the choice of $x_j,W_j$. Therefore it holds $\nabla_Uf\in C^0(U,B^n)$. Moreover, $\|[\nabla_Uf]\circ x_j^{-1}|_{x_j(U\cap U_j)\cap W_j}\|_{B^n}$ and $\|\nabla[f\circ x_j^{-1}|_{x_j(U\cap U_j)\cap W_j}]\|_{B^n}$ satisfy uniform equivalence estimates on $x_j(U\cap U_j)\cap W_j$ for all $j=1,...,N$ with constants independent of $f$ due to compactness. Hence the claim follows via density from 2.\qedhere$_{3.}$
	\end{proof}

Moreover, we need a transformation theorem.

\begin{Theorem}\label{th_SobMfd_trafo}
	Let $(M,g)$ and $(\tilde{M},\tilde{g})$ be $m$-dimensional compact Riemannian submanifolds of $\R^n$ with (or without) boundary and class $C^l$, where $l\in\N\cup\{\infty\}$, $l\geq 1$. Moreover, let $k\in\N_0$, $0\leq k\leq l$ and $1\leq p<\infty$ as well as $B$ be a Banach space. Let $U\subset M^\circ$, $V\subset \tilde{M}^\circ$ be open and $\Phi:U\rightarrow V$ be a $C^l$-diffeomorphism such that $\Phi\in C^l(\overline{U})^m$ and $\Phi^{-1}\in C^l(\overline{V})^m$. Then $T:W^{k,p}(V,B)\rightarrow W^{k,p}(U,B):f\mapsto f\circ\Phi$ is a well-defined bounded linear operator.
\end{Theorem}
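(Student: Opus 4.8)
The plan is to reduce the statement to the already-established transformation theorem for Sobolev spaces on domains in $\R^n$, namely Theorem~\ref{th_SobDom_trafo}, by working in local coordinates on $M$ and $\tilde M$. First I would fix families of charts and subordinate sets realizing the norms: let $x_j:U_j\to V_j$, $W_j$ for $j=1,\dots,N$ be charts and sets for $M$ as in Definition~\ref{th_SobMfd_def}, and similarly $\tilde x_i:\tilde U_i\to\tilde V_i$, $\tilde W_i$ for $i=1,\dots,\tilde N$ for $\tilde M$. By Lemma~\ref{th_SobMfd_def_lemma}, 1., it suffices to bound $\|f\circ\Phi\|_{W^{k,p}(U,B)}^\ast=\sum_{j}\|(f\circ\Phi)\circ x_j^{-1}|_{x_j(U\cap U_j)\cap W_j}\|_{W^{k,p}(x_j(U\cap U_j)\cap W_j,B)}$ by $C\|f\|_{W^{k,p}(V,B)}^\ast$ with $C$ independent of $f$. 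Linearity of $T$ is immediate; well-definedness (i.e.\ that $f\circ\Phi$ is again measurable, resp.\ in $L^p$) for $k=0$ follows from Theorem~\ref{th_Leb_trafo_mfd}, since $\Phi$ is a $C^1$-diffeomorphism between open subsets of Riemannian submanifolds and $|\det d\Phi|$ is bounded below and above on $\overline U$ by compactness and the diffeomorphism property.

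For the main estimate I would localize further as in the proof of Lemma~\ref{th_SobMfd_def_lemma}, 1. Fix $j$. The issue is that $\Phi$ maps $U\cap U_j$ into $V$, but not into a single chart domain $\tilde U_i$; so I cover $\overline{x_j(U\cap U_j)\cap W_j}$ by finitely many pieces on each of which $\Phi$ composed with the relevant transition map lands in one $\tilde U_i\cap\tilde W_i$. Concretely, write $x_j(U\cap U_j)\cap W_j=\bigcup_i x_j(U\cap U_j\cap \Phi^{-1}(\tilde x_i^{-1}(\tilde W_i)))\cap W_j$, choose a smooth partition of unity $\{\eta_{ji}\}$ subordinate to this cover (as constructed via compactness in Lemma~\ref{th_SobMfd_def_lemma}), and decompose $(f\circ\Phi)\circ x_j^{-1}=\sum_i \eta_{ji}\,\bigl(f\circ\tilde x_i^{-1}\bigr)\circ\bigl(\tilde x_i\circ\Phi\circ x_j^{-1}\bigr)$ on the corresponding piece. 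Each inner map $\Psi_{ji}:=\tilde x_i\circ\Phi\circ x_j^{-1}$ is, by hypothesis, a $C^l$-diffeomorphism between (bounded, open) subsets of $\R^m$ extending $C^l$-smoothly to the closure, with $|\det D\Psi_{ji}^{-1}|$ bounded and $\|\Psi_{ji}\|_{C_b^l}$ bounded — here the hypotheses $\Phi\in C^l(\overline U)^m$, $\Phi^{-1}\in C^l(\overline V)^m$, together with smoothness of the chart transitions on compact sets, are exactly what is needed. Thus Theorem~\ref{th_SobDom_trafo} applies to each $\Psi_{ji}$ and gives $\|(f\circ\tilde x_i^{-1})\circ\Psi_{ji}\|_{W^{k,p}}\le C_{ji}\|f\circ\tilde x_i^{-1}|_{\dots}\|_{W^{k,p}}$; multiplication by the fixed smooth cutoff $\eta_{ji}$ is bounded on $W^{k,p}$, and summing over $i$ and then $j$ yields $\|f\circ\Phi\|_{W^{k,p}(U,B)}^\ast\le C\|f\|_{W^{k,p}(V,B)}^\ast$ with $C=C(M,\tilde M,\Phi,k,p,l)$ independent of $f$.

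The only real bookkeeping obstacle is ensuring that the domains appearing in the application of Theorem~\ref{th_SobDom_trafo} are genuinely bounded open subsets of $\R^m$ on which the transition-and-$\Phi$ composite and its inverse have $C^l$-extensions with controlled norms; this is handled exactly as in Lemma~\ref{th_SobMfd_def_lemma}, 1., by replacing $W_i$ with a slightly smaller relatively compact set inside $V_i$ where necessary, so that all closures stay inside the chart images and one may invoke compactness to get uniform bounds. (Since $T$ is already bounded, no separate density argument is needed, but one could alternatively verify the formula first on $C^l(V,B)\cap W^{k,p}(V,B)$, dense by Lemma~\ref{th_SobMfd_def_lemma}, 2., and extend by continuity.) I would finally remark that for $k\ge1$ one may also track that $T$ intertwines the surface gradient appropriately via the chain rule, but this is not asserted in the statement and can be omitted.
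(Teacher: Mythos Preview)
Your proposal is correct and takes essentially the same approach as the paper: handle $k=0$ via Theorem~\ref{th_Leb_trafo_mfd}, then reduce to the Euclidean transformation Theorem~\ref{th_SobDom_trafo} via local charts on $M$ and $\tilde M$. The paper's version is slightly more streamlined in that, rather than introducing a partition of unity, it simply refines the sets $W_i$ so that each $\Phi(U\cap x_i^{-1}(W_i))$ lands in a single chart domain $\tilde x_j^{-1}(\tilde W_j)$, after which Theorem~\ref{th_SobDom_trafo} applies directly.
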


Note that for convenience we did not attempt to obtain a uniform estimate for the operator norm. In order to get such estimates for $k=1$ in the application later, we use Theorem \ref{th_Leb_trafo_mfd} and uniform equivalence estimates for the surface gradient.

\begin{proof}
	The case $k=0$ directly follows from Theorem \ref{th_Leb_trafo_mfd}. Now let $(x_i,W_i)$ for $i=1,...,N$ and $(\tilde{x}_j,\tilde{W}_j)$ for $j=1,...,\tilde{N}$ be as in Definition \ref{th_SobMfd_def} for $M$ and $\tilde{M}$ respectively. W.l.o.g.~we can assume  $\Phi(U\cap x_i^{-1}(W_i))\subseteq\tilde{x}_j^{-1}(\tilde{W}_j)$ for some $j=j(i)\in\{1,...,\tilde{N}\}$ and all $i=1,...,N$. Otherwise one can simply refine the $W_i$. Then Theorem \ref{th_SobDom_trafo} yields the claim.
\end{proof}

From now on let $B=\K$ for convenience. We need a product lemma analogous to Lemma \ref{th_SobDom_prod_set}, 1.-2.~provided that one of the sets equals some $U$ as in the beginning of the section. Note that the product of $U$ with some open bounded set $\Omega\subseteq\R^q$ is again of the same type. Therefore the definitions and assertions in this section can also be applied for $\Omega\times U$ and $U\times \Omega$ instead of $U$.

\begin{Lemma}\label{th_SobMfd_prod_set}
	Let $(M,g)$ be an $m$-dimensional compact Riemannian submanifold of $\R^n$ of class $C^1$ and $U\subset M^\circ$ open. Moreover, let $\Omega\subset\R^q$, $q\in\N$ be open and bounded. Then
	\begin{enumerate}
	\item Let $1\leq p<\infty$ and $f\in L^p(U\times\Omega)$. Then $f(u,.)\in L^p(\Omega)$ for $\lambda_U$-a.e.~$u\in U$ and $Tf:U\rightarrow L^p(\Omega):u\mapsto f(u,.)$ is an element of $L^p(U,L^p(\Omega))$. Moreover, the map $T:L^p(U\times\Omega)\rightarrow L^p(U,L^p(\Omega))$ is an isometric isomorphism. 
	\item Let $1<p<\infty$. Then by restriction of $T$ from 1.~it holds
	\[
	W^{1,p}(U\times\Omega)\cong L^p(U,W^{1,p}(\Omega))\cap W^{1,p}(U,L^p(\Omega))
	\]
	and the derivatives $\nabla_U$ as well as $\nabla_\Omega=\nabla$ are compatible in both spaces via $T$. Here $\nabla_{U\times\Omega}=(\nabla_U,\nabla_\Omega)$ canonically on $W^{1,p}(U\times\Omega)$.
	\item Both assertions 1.~and 2.~also hold when we exchange $U$ and $\Omega$.
	\end{enumerate}
\end{Lemma}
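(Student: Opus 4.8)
The plan is to reduce all three assertions to the Euclidean product Lemma~\ref{th_SobDom_prod_set} by localizing with charts of $M$ together with a subordinate partition of unity, exactly as in the proof of Lemma~\ref{th_SobMfd_def_lemma}. Fix charts $x_j\colon U_j\to V_j$ and sets $W_j$ as in Definition~\ref{th_SobMfd_def}; since $U\subset M^\circ$, the sets $\Omega_1^{(j)}:=x_j(U\cap U_j)\cap W_j$ are open and bounded in $\R^m$, and the maps $x_j\times\mathrm{id}_\Omega$ are $C^l$-diffeomorphisms from $(U\cap U_j)\times\Omega$ onto the open bounded set $\Omega_1^{(j)}\times\Omega\subseteq\R^{m+q}$. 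As remarked before the statement, $U\times\Omega$ (and $\Omega\times U$) is again of the type considered at the beginning of the section, and by definition membership in and the norm of $W^{k,p}(U\times\Omega)$ are controlled chart-by-chart, through these product charts, by the Euclidean Sobolev spaces $W^{k,p}(\Omega_1^{(j)}\times\Omega)$. Hence it suffices to prove the claims after this identification on each chart piece and then patch.

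For part~1, observe that the induced Riemannian metric on $U\times\Omega\subset\R^n\times\R^q$ is block diagonal, $g\oplus\mathrm{id}_{\R^q}$, so its volume density factorizes; equivalently, in a product chart the Jacobian factor $\sqrt{\det g}$ supplied by Theorem~\ref{th_Leb_trafo_mfd} depends only on the $U$-variable. Applying Lemma~\ref{th_SobDom_prod_set},~1 on $\Omega_1^{(j)}\times\Omega$ and transporting back (using that $\sqrt{\det g}$ separates) therefore shows, after gluing with the partition of unity, that $f(u,\cdot)\in L^p(\Omega)$ for $\lambda_U$-a.e.~$u$, that $Tf\in L^p(U,L^p(\Omega))$ and, by Fubini, that $\|Tf\|_{L^p(U,L^p(\Omega))}=\|f\|_{L^p(U\times\Omega)}$; linearity and surjectivity of $T$ are clear, so $T$ is an isometric isomorphism.

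For part~2, the block structure of the metric moreover gives that the surface gradient on $U\times\Omega$ decouples canonically as $\nabla_{U\times\Omega}=(\nabla_U,\nabla_\Omega)$ with $\nabla_\Omega=\nabla$ the ordinary gradient in the $\R^q$-direction: this is read off from the formula in Definition~\ref{th_SobMfd_def}, since the inverse metric coefficients $g^{rs}$ and the push-forwards $\partial_{y_r}(x_j^{-1})$ ignore the $\Omega$-directions. Transporting to $\Omega_1^{(j)}\times\Omega$ and invoking Lemma~\ref{th_SobDom_prod_set},~2 with $\Omega_1=\Omega_1^{(j)}$, $\Omega_2=\Omega$ identifies, on each chart piece, $W^{1,p}$ of the product with $L^p(\Omega_1,W^{1,p}(\Omega))\cap W^{1,p}(\Omega_1,L^p(\Omega))$, with $\nabla_\Omega$ and $\nabla_{\Omega_1}$ compatible; the restriction $1<p<\infty$ is inherited here. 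Multiplication by the $C^l$ metric factors, by the chart-transition maps and by the partition-of-unity functions induces bounded operators on all the Sobolev spaces in sight (cf.~Theorem~\ref{th_SobDom_trafo} and Theorem~\ref{th_SobMfd_trafo}), and the extra product-rule terms arising in the patching are of lower order, so the local isomorphisms assemble to $W^{1,p}(U\times\Omega)\cong L^p(U,W^{1,p}(\Omega))\cap W^{1,p}(U,L^p(\Omega))$ with $\nabla_U$ and $\nabla_\Omega$ compatible via $T$; higher orders follow in the same way. Part~3 is obtained by repeating the argument with the two factors interchanged (legitimate because Lemma~\ref{th_SobDom_prod_set} is symmetric in its factors and $\Omega\times U$ is again admissible).

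The step I expect to be the main obstacle is the bookkeeping in part~2: one has to verify carefully that the chart representation of $\nabla_{U\times\Omega}$ genuinely splits into a \enquote{$U$-block} and an \enquote{$\Omega$-block} so that Lemma~\ref{th_SobDom_prod_set},~2 applies verbatim, and that passing through the transition maps and the partition of unity preserves — on the dense subspace $C^l\cap W^{1,p}$, hence everywhere — the identification of the $\Omega$-component of the gradient with the intrinsic $\Omega$-derivative of $Tf$, rather than merely up to bounded equivalence. Once this is pinned down, everything else is a routine combination of Lemma~\ref{th_SobDom_prod_set} with the transformation theorems and a partition of unity.
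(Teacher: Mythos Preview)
Your proposal is correct and follows essentially the same approach as the paper: localize via the charts $x_j$ and a subordinate partition of unity, reduce to the Euclidean product Lemma~\ref{th_SobDom_prod_set}, and transport back using the substitution rules Theorems~\ref{th_Leb_trafo_mfd} and~\ref{th_SobMfd_trafo}. One small remark: for Part~3 the paper does not merely invoke symmetry of Lemma~\ref{th_SobDom_prod_set} but redoes the argument, since with the manifold factor in the second slot the strong measurability of $\tilde{T}\tilde{f}\colon\Omega\to L^p(U)$ and the construction of the inverse go through the partition-of-unity representation $\sum_j\chi_j(\cdot)\circ x_j$ rather than directly; the ingredients are the same, but the bookkeeping you flag as the main obstacle is present there too.
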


\begin{proof}
	For the proof let $x_j:U_j\rightarrow V_j$ and $W_j$ for $j=1,...,N$ be as in Definition \ref{th_SobMfd_def} for $M$. Moreover, we need a partition of unity as in the proof of Lemma \ref{th_SobMfd_def_lemma},~3., i.e.~let $\chi_j\in C^1(M)$, $j=1,...,N$ such that $0\leq \chi_j\leq 1$, $\supp\,\chi_j\subset(\overline{U}\cap x_j^{-1}(W_j))\cup\overline{U}^c$ and $\sum_{j=1}^N\chi_j\equiv 1$ on $M$. Furthermore, in the following we often denote restrictions to some set by \enquote{$|$} without the set if there is no ambiguity. Finally, note that we often use the notation $u,y,z$ for points in $U,V_j,\Omega$, respectively. This convention also clarifies how some derivatives are understood. \phantom{\qedhere}
	
	\begin{proof}[Ad 1]
	Let $f\in L^p(U\times\Omega)$. Then $f\circ(x_j^{-1},\textup{id})|\in L^p([x_j(U\cap U_j)\cap W_j]\times\Omega)$ for all $j=1,...,N$ due to Theorem \ref{th_Leb_trafo_mfd}. Lemma \ref{th_SobDom_prod_set} yields $f(x_j^{-1}(y),.)\in L^p(\Omega)$ for a.e.~$y\in x_j(U\cap U_j)\cap W_j$ and the mapping $x_j(U\cap U_j)\cap W_j\rightarrow L^p(\Omega):y\mapsto f(x_j^{-1}(y),.)$ is strongly measurable and in $L^p$ for all $j=1,...,N$. Therefore $f(u,.)\in L^p(\Omega)$ for $\lambda_U$-a.e.~$u\in U$ and with the well-known characterization for measurability, see Amann, Escher \cite{AmannEscherIII}, Theorem X.1.4 we obtain that $Tf:U\rightarrow L^p(\Omega):u\mapsto f(u,.)$ is strongly measurable. Moreover, the Fubini Theorem implies that $u\mapsto\|f(u,.)\|_{L^p(\Omega)}^p$ is an element of $L^1(U)$ and
	\[
	\int_{U\times\Omega}|f|^p\,d\lambda_{U\times\Omega}(u,z)=
	\int_U\|f(u,.)\|_{L^p(\Omega)}^p\,d\lambda_U(u).
	\]   
	Hence $Tf$ is Bochner-integrable due to the Bochner Theorem, see \cite{Ruzicka}, Satz 1.12. Therefore $Tf$ is contained in $L^p(U,L^p(\Omega))$ with norm equal to $\|f\|_{L^p(U\times\Omega)}$. In particular the mapping $T:L^p(U\times\Omega)\rightarrow L^p(U,L^p(\Omega))$ is well-defined and isometric.
	
	It remains to prove that $T$ is surjective. To this end consider $h\in L^p(U,L^p(\Omega))$. Then Theorem \ref{th_Leb_trafo_mfd} yields $h\circ x_j^{-1}|\in L^p([x_j(U\cap U_j)\cap W_j],L^p(\Omega))$ for all $j=1,...,N$. Due to Lemma \ref{th_SobDom_prod_set} there exist $h_j\in L^p([x_j(U\cap U_j)\cap W_j]\times\Omega)$ such that $[y\mapsto h_j(y,.)]=h\circ x_j^{-1}|_{x_j(U\cap U_j)\cap W_j}$ for all $j=1,...,N$. Then $h_j\circ(x_j,\textup{id})\in L^p([U\cap x_j^{-1}(W_j)]\times\Omega)$ due to Theorem \ref{th_Leb_trafo_mfd} and
	\begin{align}\label{eq_SobMfd_prod_set_1}
    f_h:=\sum_{j=1}^N\chi_j [h_j\circ(x_j,\textup{id})]\in L^p(U\times\Omega).
	\end{align}
	By construction it holds $f_h(u,.)=h(u)$ for $\lambda_U$-a.e.~$u\in U$, i.e.~$Tf_h=h$. \qedhere$_{1.}$
	\end{proof}

	\begin{proof}[Ad 2]
	Let $1<p<\infty$ and $f\in W^{1,p}(U\times\Omega)$. We build up on the proof of 1. By definition $f\circ(x_j^{-1},\textup{id})|\in W^{1,p}([x_j(U\cap U_j)\cap W_j]\times\Omega)$ for all $j=1,...,N$. Hence Lemma \ref{th_SobDom_prod_set} yields
	\[
	[y\mapsto f(x_j^{-1}(y),.)]\in W^{1,p}([x_j(U\cap U_j)\cap W_j],L^p(\Omega))\cap L^p([x_j(U\cap U_j)\cap W_j],W^{1,p}(\Omega)),
	\]
	$\partial_{y_i}[f(x_j^{-1},\textup{id})|](y,.)=
	\partial_{y_i}[y\mapsto f(x_j^{-1}(y),.)]|_y$ and $\partial_{z_k}[f(x_j^{-1},\textup{id})|](y,.)=\partial_{z_k}[f(x_j^{-1}(y),.)]$ for all $i=1,...,m$, $k=1,...,q$ and a.e.~$y\in x_j(U\cap U_j)\cap W_j$, $j=1,...,N$. With 1., Definition \ref{th_SobMfd_def} and Lemma \ref{th_SobMfd_def_lemma} we obtain $Tf\in W^{1,p}(U,L^p(\Omega))\cap L^p(U,W^{1,p}(\Omega))$ and since 
	\[
	[\nabla_{U\times\Omega}f]|_{(x_j^{-1},\textup{id})}
	=[\nabla_Uf,\nabla_\Omega f]|_{(x_j^{-1},\textup{id})}=\left(\sum_{r,s=1}^m g^{rs}\partial_{y_r}(f|_{(x_j^{-1},\textup{id})})\partial_{y_s}(x_j^{-1}),\nabla_z(f|_{(x_j^{-1},\textup{id})})
	\right)
	\]
	it follows that
	$[\nabla_U f](u,.)=\nabla_U[Tf]|_u$ and $[\nabla_\Omega f](u,.)=\nabla_z[Tf]|_u$ for $\lambda_U$-a.e.~$u\in U$. Therefore the derivatives are compatible under $T$ and Lemma \ref{th_SobMfd_def_lemma},~3.~yields the norm equivalence.
	
	It is left to show that $T$ on $W^{1,p}(U\times\Omega)$ with values in $W^{1,p}(U,L^p(\Omega))\cap L^p(U,W^{1,p}(\Omega))$ is surjective. Therefore let $h$ be in the target space and $h_j$ for $j=1,...,N$ be as in 1.~for $h$. Then due to Theorem \ref{th_SobMfd_trafo} and Lemma \ref{th_SobDom_prod_set} it holds $h_j\in W^{1,p}([x_j(U\cap U_j)\cap W_j]\times\Omega)$. Therefore Lemma \ref{th_SobMfd_def_lemma} and Theorem \ref{th_SobMfd_trafo} yield that $f_h$ defined in 	
	\eqref{eq_SobMfd_prod_set_1} is an element of $W^{1,p}(U\times\Omega)$.\qedhere$_{2.}$
	\end{proof}

	\begin{proof}[Ad 3]
	Now we exchange the order of $U$ and $\Omega$. The proof is divided into four parts in accordance with the proofs of 1.-2. For the proof we denote the corresponding map in 1.~with $\tilde{T}$.
	
	Let $1\leq p<\infty$ and $\tilde{f}\in L^p(\Omega\times U)$. Then $\tilde{f}\circ(\textup{id},x_j^{-1})|\in L^p(\Omega\times[x_j(U\cap U_j)\cap W_j])$ for all $j=1,...,N$ due to Theorem \ref{th_Leb_trafo_mfd}. Lemma \ref{th_SobDom_prod_set} yields $\tilde{f}(z,x_j^{-1}|)\in L^p(x_j(U\cap U_j)\cap W_j)$ for a.e.~$z\in\Omega$ and the map $\Omega\rightarrow L^p(x_j(U\cap U_j)\cap W_j):z\mapsto \tilde{f}(z,x_j^{-1}|)$ is strongly measurable and contained in $L^p$ for all $j=1,...,N$. Therefore because of Theorem \ref{th_Leb_trafo_mfd}
	\[
	\tilde{T}\tilde{f}:\Omega\rightarrow L^p(U):z\mapsto\tilde{f}(z,.)=\sum_{j=1}^N\chi_j[\tilde{f}(z,x_j^{-1})]\circ x_j|_{U\cap x_j^{-1}(W_j)}
	\]
	is strongly measurable. Now analogously to the proof of 1.~we obtain with the Fubini Theorem and the Bochner Theorem that $\tilde{T}\tilde{f}$ is Bochner-integrable and  $\tilde{T}\tilde{f}\in L^p(\Omega,L^p(U))$ with norm equal $\|\tilde{f}\|_{L^p(\Omega\times U)}$. In particular $\tilde{T}:L^p(\Omega\times U)\rightarrow L^p(\Omega,L^p(U))$ is well-defined and isometric.
	
	Next we prove that $\tilde{T}$ is surjective. Therefore let $\tilde{h}\in L^p(\Omega,L^p(U))$. First note that due to Theorem \ref{th_Leb_trafo_mfd} the map $L^p(U)\rightarrow L^p(x_j(U\cap U_j)\cap W_j):\phi\mapsto \phi\circ x_j^{-1}|$ is bounded and linear for all $j=1,...,N$. Hence because of Lemma \ref{th_SobDom_prod_set} there are $\tilde{h}_j\in L^p(\Omega\times[x_j(U\cap U_j)\cap W_j])$ such that $\tilde{h}_j(z,.)=\tilde{h}(z)\circ x_j^{-1}|$ for a.e.~$z\in\Omega$ and $j=1,...,N$. Therefore Theorem \ref{th_Leb_trafo_mfd} yields
	\begin{align}\label{eq_SobMfd_prod_set_2}
	\tilde{f}_{\tilde{h}}:=\sum_{j=1}^N\chi_j[\tilde{h}_j\circ(\textup{id},x_j)]\in L^p(\Omega\times U)
	\end{align}
	and by construction $\tilde{f}_{\tilde{h}}(z,.)=\tilde{h}(z)$ for a.e.~$z\in\Omega$, i.e.~$\tilde{T}\tilde{f}_{\tilde{h}}=\tilde{h}$. Hence $\tilde{T}$ is an isomorphism.
	
	Now let $1<p<\infty$ and $\tilde{f}\in W^{1,p}(\Omega\times U)$. Then $\tilde{f}\circ(\textup{id},x_j^{-1})|\in W^{1,p}(\Omega\times[x_j(U\cap U_j)]\cap W_j)$ for $j=1,...,N$ by definition. Therefore Lemma \ref{th_SobDom_prod_set} yields
	\[
	[z\mapsto\tilde{f}(z,x_j^{-1}|)]\in 
	W^{1,p}(\Omega, L^p([x_j(U\cap U_j)\cap W_j]))\cap L^p(\Omega,W^{1,p}([x_j(U\cap U_j)\cap W_j])),
	\]
	$\partial_{y_i}[\tilde{f}(\textup{id},x_j^{-1})|](z,.)=\partial_{y_i}[\tilde{f}(z,x_j^{-1}|)]$ and $\partial_{z_k}[\tilde{f}(\textup{id},x_j^{-1})|](z,.)=\partial_{z_k}[z\mapsto\tilde{f}(z,x_j^{-1}|)]|_z$ for all $i=1,...,m$, $k=1,...,q$ and a.e.~$z\in\Omega$, $j=1,...,N$. Using the isomorphism property of $\tilde{T}$ on $L^p$-spaces, Lemma \ref{th_SobMfd_def_lemma} and Theorem \ref{th_SobMfd_trafo} it follows that 
	\[
	T\tilde{f}=\sum_{j=1}^N\chi_j[z\mapsto\tilde{f}(z,x_j^{-1}|)]\circ x_j|_{U\cap x_j^{-1}(W_j)}\in W^{1,p}(\Omega,L^p(U))\cap L^p(\Omega,W^{1,p}(U)),
	\]
	$[\nabla_\Omega f](z,.)=\nabla_z[\tilde{T}\tilde{f}]|_z$ and $[\nabla_U f](z,.)=\nabla_U[\tilde{T}\tilde{f}]|_z$ for a.e.~$z\in\Omega$. Hence the derivatives are compatible under $\tilde{T}$ and Lemma \ref{th_SobMfd_def_lemma},~3.~yields the norm equivalence.
	
	Finally, we prove that $\tilde{T}$ on $W^{1,p}(\Omega\times U)$ with values in $W^{1,p}(\Omega,L^p(U))\cap L^p(\Omega,W^{1,p}(U))$ is surjective. To this end let $\tilde{h}$ be in the target space and $\tilde{h}_j$ for $j=1,...,N$ be as above for $\tilde{h}$. Then due to Theorem \ref{th_SobMfd_trafo} and Lemma \ref{th_SobDom_prod_set} it holds $h_j\in W^{1,p}(\Omega\times[x_j(U\cap U_j)\cap W_j])$ for all $j=1,...,N$. Therefore Lemma \ref{th_SobMfd_def_lemma} and Theorem \ref{th_SobMfd_trafo} imply that $\tilde{f}_{\tilde{h}}$ defined in \eqref{eq_SobMfd_prod_set_2} is contained in $W^{1,p}(\Omega\times U)$.\qedhere$_{3.}$
	\end{proof}
\end{proof}

Finally, we need the notion of domains with Lipschitz-boundary in compact Riemannian submanifolds of $\R^n$ and the analogue of Theorem \ref{th_SobDom_LipThm} in the case $\Sigma=\partial\Omega$.

\begin{Definition}\label{th_SobMfd_LipDef}\upshape
	Let $(M,g)$ be a compact $m$-dimensional Riemannian submanifold of $\R^n$ with (or without) boundary and class $C^1$. Let $U\subset M^\circ$ be open and nonempty. Then 
	\begin{enumerate}
	\item Let $u\in\partial U$. Then we say that $U$ satisfies the \textit{local Lipschitz condition} in $u$ if this holds in local coordinates, i.e.~for any chart $x:\tilde{U}\rightarrow V\subseteq[0,\infty)\times\R^{m-1}$ with $u\in\tilde{U}$ it follows that the domain $x(U\cap\tilde{U})$ satisfies the local Lipschitz condition in $x(u)$.
	\item We say $U$ has \textit{Lipschitz-boundary} if the local Lipschitz-condition holds in $u$ for all $u\in \partial U$.
	\item We call $U$ a \textit{Lipschitz-domain} in $M$ if $U$ is connected and has Lipschitz-boundary.
	\end{enumerate}
\end{Definition}

By definition $M^\circ$ has Lipschitz-boundary.

\begin{Remark}\label{th_SobMfd_LipRem}\upshape
	The local Lipschitz condition from Definition \ref{th_SobDom_LipDef} for domains in $\R^n$ is invariant under $C^1$-diffeomorphisms (between open subsets of $\R^n$) defined on an open neighbourhood of the closure of the domain, cf.~Hofmann, Mitrea, Taylor \cite{HMitreaTaylor}, Theorem 4.1. Therefore
	\begin{enumerate}
	\item The invariance under $C^1$-diffeomorphisms carries over to Definition \ref{th_SobMfd_LipDef}.
	\item It is enough to prove the condition in Definition \ref{th_SobMfd_LipDef},~1.~for one admissible chart.
	\item Definition \ref{th_SobMfd_LipDef} is consistent with Definition \ref{th_SobDom_LipDef} in the case $m=n$.
	\end{enumerate} 
\end{Remark}

\begin{Theorem}\label{th_SobMfd_LipThm}
	Let $(M,g)$ be a compact $m$-dimensional Riemannian submanifold of $\R^n$ with (or without) boundary and class $C^l$, $l\in\N_0\cup\{\infty\}$. Let $U\subset M^\circ$ be a Lipschitz-domain and $1\leq p<\infty$, $k\in\N_0$. Then $C^l(\overline{U})$ is dense in $W^{k,p}(U)$.
	
	Let (for convenience) additionally $\partial U=\Sigma\cup Z$ with an $(m-1)$-dimensional $C^1$-submanifold $\Sigma$ of $M$ and a null set $Z$ with respect to $\Hc^{m-1}$. Then $L^p(\partial U):=L^p(\Sigma)$ is defined in Remark \ref{th_Leb_RiemRem} and there is a unique bounded linear operator $\tr:W^{1,p}(U)\rightarrow L^p(\partial U)$ such that $\tr\,u=u|_{\partial U}$ for all $u\in C^0(\overline{U})\cap W^{1,p}(U)$.
\end{Theorem}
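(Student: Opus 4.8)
The statement decomposes naturally into two independent assertions: (i) density of $C^l(\overline{U})$ in $W^{k,p}(U)$ for a Lipschitz-domain $U$ in $M^\circ$; and (ii) existence and uniqueness of a bounded trace operator $\tr\colon W^{1,p}(U)\to L^p(\partial U)$. Both will be reduced to the corresponding Euclidean statements already available: Theorem \ref{th_SobDom_LipThm} and Lemma \ref{th_SobDom_dense}, via the chart machinery of Definition \ref{th_SobMfd_def} and the invariance of the Lipschitz condition under $C^1$-diffeomorphisms established in Remark \ref{th_SobMfd_LipRem}.

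\emph{Density.} Fix charts $x_j\colon U_j\to V_j\subseteq[0,\infty)\times\R^{m-1}$ and sets $W_j$ as in Definition \ref{th_SobMfd_def}, together with a $C^l$-partition of unity $\chi_j$ subordinate to $\overline{U}\cap x_j^{-1}(W_j)$, exactly as in the proof of Lemma \ref{th_SobMfd_def_lemma}, 2. Given $f\in W^{k,p}(U)$, the pullback $f\circ x_j^{-1}$ restricted to $x_j(U\cap U_j)\cap W_j$ lies in the Euclidean Sobolev space over that set. The key point is that, because $U$ is a Lipschitz-domain in $M$, Remark \ref{th_SobMfd_LipRem} guarantees $x_j(U\cap U_j)$ — hence after intersecting with the open set $W_j$ — has a boundary that satisfies the local Lipschitz condition on the relevant compact piece, so Theorem \ref{th_SobDom_LipThm}, 2.\ (or rather the density of $C^0(\Omega\cup\overline\Sigma)\cap W^{1,p}$, extended to $W^{k,p}$ and to $C^l(\overline\Omega)$ by combining with Lemma \ref{th_SobDom_dense} and a cutoff) yields a sequence $f_i^j\in C^l(\overline{x_j(U\cap U_j)\cap W_j})\cap W^{k,p}$ converging to $f\circ x_j^{-1}$. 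One then sets $f_i:=\sum_j\chi_j\,(f_i^j\circ x_j)$; by Theorem \ref{th_SobDom_trafo} these are in $C^l(\overline U)\cap W^{k,p}(U)$ and converge to $f$ in $W^{k,p}(U)$. Some care is needed to arrange the chart domains so that the supports of the $\chi_j$ sit inside $x_j^{-1}(W_j)$ and the transition maps have $C^l$-extensions to the closures — but this is precisely the setup already used repeatedly in Section \ref{sec_SobMfd}, so no new difficulty arises here.

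\emph{Trace operator.} For the existence, define $\tr$ locally: on each chart patch, $\tr(f)\circ x_j^{-1}$ should equal the Euclidean trace (Theorem \ref{th_SobDom_LipThm}, 3.) of $f\circ x_j^{-1}$ on the boundary portion $x_j(\partial U\cap U_j)\cap W_j$, which makes sense since $x_j(U\cap U_j)$ is a Lipschitz domain. Patching these via the partition of unity $\chi_j$ gives a well-defined element of $L^p(\partial U)=L^p(\Sigma)$ — here one uses Theorem \ref{th_Leb_trafo_mfd} to compare the Euclidean surface measure on $x_j(\partial U\cap U_j)$ with $\Hc^{m-1}$ on $\Sigma$, and the hypothesis $\partial U=\Sigma\cup Z$ with $Z$ an $\Hc^{m-1}$-null set to ensure the boundary integral is insensitive to the (measure-zero) corner set. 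Boundedness of each local trace combined with boundedness of the multiplication and pullback operators gives $\|\tr f\|_{L^p(\partial U)}\le C\|f\|_{W^{1,p}(U)}$. Uniqueness and the identity $\tr u=u|_{\partial U}$ for $u\in C^0(\overline U)\cap W^{1,p}(U)$ follow from the density part: two bounded operators agreeing on the dense subspace $C^l(\overline U)$ coincide, and on continuous functions the local definition manifestly restricts to the boundary value.

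\emph{Main obstacle.} The only genuinely delicate point is checking well-definedness of the patched trace: one must verify that the local Euclidean traces glue consistently on chart overlaps, i.e.\ that the transition maps $x_i\circ x_j^{-1}$ — which are $C^1$-diffeomorphisms between (pieces of) Lipschitz domains in $[0,\infty)\times\R^{m-1}$ — intertwine the Euclidean trace operators. This reduces to the transformation behavior of the trace under $C^1$-changes of variables near a Lipschitz boundary, which one obtains by approximating in $W^{1,p}$ by continuous functions (using the density just proved) where the identity $\tr u=u|_{\partial U}$ is transparent, and then passing to the limit using the boundedness of both sides. Everything else is a bookkeeping exercise with partitions of unity and the transformation theorems from Sections \ref{sec_Leb} and \ref{sec_SobDom}.
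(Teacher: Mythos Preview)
Your proposal is correct and follows essentially the same approach as the paper, which proves the theorem in a single line: ``This follows from Theorem \ref{th_SobDom_LipThm} via localization and a suitable partition of unity.'' You have simply fleshed out that localization-and-partition-of-unity argument in detail, including the one nontrivial verification (consistency of the local traces on chart overlaps) that the paper leaves implicit.
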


\begin{proof}
	This follows from Theorem \ref{th_SobDom_LipThm} via localization and a suitable partition of unity.
\end{proof}

\subsection{Exponentially Weighted Spaces}
\label{sec_fct_exp} We define all used spaces with exponential weight.
\begin{Definition}\label{th_exp_def1}\upshape
	Let $1\leq p\leq\infty, k\in\N_0, \mu\in(0,1)$ and $\beta, \beta_1,\beta_2\geq 0$.
	\begin{enumerate}
		\item Then we introduce with canonical norms
		\begin{align*}
		L^p_{(\beta_1,\beta_2)}(\R^2_+)&:=\{u\in L^1_\textup{loc}(\R^2_+):e^{\beta_1|R|+\beta_2 H}u\in L^p(\R^2_+)\},\\
		W^{k,p}_{(\beta_1,\beta_2)}(\R^2_+)&:=\{u\in L^1_\textup{loc}(\R^2_+):D^\gamma u\in L^p_{(\beta_1,\beta_2)}(\R^2_+)\text{ for all }|\gamma|\leq k\}.
		\end{align*}
		We also write $H^k$ instead of $W^{k,2}$. Moreover, $C^k_{(\beta_1,\beta_2)}(\overline{\R^2_+}):=C_b^k(\overline{\R^2_+})\cap W^{k,\infty}_{(\beta_1,\beta_2)}(\R^2_+)$.
		\item In a similar way we define $L^p_{(\beta)}(\R)$, $L^p_{(\beta)}(\R_+)$, $W^{k,p}_{(\beta)}(\R)$, $W^{k,p}_{(\beta)}(\R_+)$, $C^k_{(\beta)}(\R)$, $C^k_{(\beta)}(\overline{\R_+})$. 
		\item
		Let $\eta:\R\rightarrow\R$ be smooth with $\eta(R)=|R|$ for all $|R|\geq\overline{R}$ and some $\overline{R}>0$. Then we define
		\[
		W^{k+\mu,p}_{(\beta)}(\R):=\{u\in L^1_\textup{loc}(\R): e^{\beta\eta(R)}u\in W^{k+\mu,p}(\R)\}
		\]
		for $1\leq p<\infty$ with natural norm.
	\end{enumerate}
\end{Definition}
The following lemma summarizes all the needed properties for these spaces: 
\begin{Lemma} \phantomsection{\label{th_exp1}}
	\begin{enumerate}
		\item The spaces in Definition \ref{th_exp_def1} are Banach spaces.
		\item \textup{Equivalent norms:} Let $\eta:\R\rightarrow\R$ be as in Definition \ref{th_exp_def1}, 3., $k\in\N_0$, $1\leq p\leq\infty$, $\beta_1,\beta_2\geq 0$. Then  $W^{k,p}_{(\beta_1,\beta_2)}(\R^2_+)=\{u\in L^1_\textup{loc}(\R^2_+):e^{\beta_1\eta(R)+\beta_2H}u\in W^{k,p}(\R^2_+)\}$ and
		\begin{align*}
		&\sum_{\gamma\in\N_0^2,|\gamma|\leq k}\|e^{\beta_1|R|+\beta_2H} D^\gamma u\|_{L^p(\R^2_+)},\quad 
		\sum_{\gamma\in\N_0^2,|\gamma|\leq k}\|e^{\beta_1\eta(R)+\beta_2H} D^\gamma u\|_{L^p(\R^2_+)}\\
		&\text{ and }\quad \|e^{\beta_1\eta(R)+\beta_2 H} u\|_{W^{k,p}(\R^2_+)}
		\end{align*}
		are equivalent uniformly in $u\in W^{k,p}_{(\beta_1,\beta_2)}(\R^2_+)$. For fixed $B>0$ the constants in the equivalence estimates can be taken uniformly in $\beta_1, \beta_2\in[0,B]$. Analogous assertions hold for $\R$ instead of $\R^2_+$.
		\item \textup{Density of smooth functions with compact support:} For all $k\in\N_0$, $1\leq p<\infty$ and $\beta_1,\beta_2\geq 0$ it holds: $C_0^\infty(\overline{\R^2_+})$ is dense in $W^{k,p}_{(\beta_1,\beta_2)}(\R^2_+)$,
		$C_0^\infty(\overline{\R_+})$ is dense in $W^{k,p}_{(\beta_2)}(\R_+)$ and 
		$C_0^\infty(\R)$ is dense in $W^{k,p}_{(\beta_1)}(\R)$.
		\item \textup{Embeddings:} It holds $W^{k,p}_{(\beta_1,\beta_2)}(\R^2_+)\hookrightarrow W^{k,p}_{(\gamma_1,\gamma_2)}(\R^2_+)$ for all $k\in\N_0$, $1\leq p\leq\infty$ and $0\leq \gamma_1\leq\beta_1$, $0\leq \gamma_2\leq\beta_2$, as well as
		\begin{align*}
		L^p_{(\beta_1,\beta_2)}(\R^2_+)\hookrightarrow L^q_{(\beta_1-\varepsilon,\beta_2-\varepsilon)}(\R^2_+)\quad\text{ for all }\min\{\beta_1,\beta_2\}>\varepsilon>0, 1\leq q\leq p. 
		\end{align*} 
		Analogous embeddings hold for spaces on $\R$ and $\R_+$. 
		\item \textup{Traces of weighted functions on $\R^2_+$:} For all $k\in\N$, $1\leq p<\infty$ and $\beta\geq 0$ the trace operator
		\[
		\tr:W^{k,p}_{(\beta,0)}(\R^2_+)\subset W^{k,p}(\R^2_+)\rightarrow W^{k-\frac{1}{p},p}_{(\beta)}(\R)
		\]
		is well-defined, bounded and there is a coretract operator $R_\beta$ (independent of $k,p$), i.e.
		\[
		R_\beta\in\Lc(W^{k-\frac{1}{p},p}_{(\beta)}(\R),W^{k,p}_{(\beta,0)}(\R^2_+))\quad\text{ with }\tr\circ R_\beta=\textup{id}.
		\]
		Finally, all operator norms for fixed $k,p$ are bounded uniformly in $\beta\geq 0$ if we take the third norm in Lemma \ref{th_exp1},~2. 
		\item \textup{$L^2$-Poincaré Inequality for weighted functions on $\R_+$:} For $\beta>0$ and all $u\in H^1_{(\beta)}(\R_+)$ it holds $\|u\|_{L^2_{(\beta)}(\R_+)}\leq\frac{1}{\beta}\|\partial_Hu\|_{L^2_{(\beta)}(\R_+)}$.
		\item \textup{Reverse Fundamental Theorem for weighted $L^2$-functions on $\R_+$:} For $\beta>0$ and $v$ in $L^2_{(\beta)}(\R_+)$ it holds $-\int_.^\infty v\,ds=:w\in H^1_{(\beta)}(\R_+)$ with $\partial_Hw=v$. In particular 6.~is applicable. 
	\end{enumerate}
\end{Lemma}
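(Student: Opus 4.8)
The common thread is that, for $\beta_1,\beta_2\ge 0$, multiplication by the smooth positive weight $\omega:=e^{\beta_1\eta(R)+\beta_2 H}$ is an \emph{isometric} isomorphism (with respect to the third norm in assertion 2) of each weighted space onto the corresponding unweighted one, and every assertion reduces to transporting a known unweighted fact through this isomorphism. Hence the plan is to prove assertion 2 first. Since $|R|-\eta(R)$ is continuous with compact support, $e^{\beta_1(|R|-\eta(R))}$ and its reciprocal are bounded uniformly for $\beta_1\in[0,B]$, which gives equivalence of the first two displayed sums. For the third, the Leibniz rule yields $D^\gamma(\omega u)=\sum_{\gamma'\le\gamma}\binom{\gamma}{\gamma'}(D^{\gamma-\gamma'}\omega)\,D^{\gamma'}u$ with $D^{\gamma-\gamma'}\omega=\omega\cdot q_{\gamma-\gamma'}$, where $q_{\gamma-\gamma'}$ is a polynomial in $\beta_1,\beta_2$ and $\eta',\eta'',\dots$; these arguments are bounded ($\eta'$ is bounded, $\eta^{(j)}$ has compact support for $j\ge 2$), and on $[0,B]^2$ the polynomials are bounded, so $\|\omega u\|_{W^{k,p}}$ and $\sum_{|\gamma|\le k}\|\omega D^\gamma u\|_{L^p}$ are equivalent uniformly in $\beta_1,\beta_2\in[0,B]$; applying the same to $\omega^{-1}$ (of the same form) also shows $u\in W^{k,p}_{(\beta_1,\beta_2)}$ iff $\omega u\in W^{k,p}$. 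The cases of $\R$ and $\R_+$ are verbatim.

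Assertions 1, 3 and 4 are then short. For 1, by 2 and the definitions $u\mapsto\omega u$ is an isomorphism onto the Banach space $W^{k,p}$ (resp.\ $W^{k+\mu,p}(\R)$), and $C^k_{(\beta_1,\beta_2)}(\overline{\R^2_+})$ is an intersection of two Banach spaces continuously embedded into $L^1_{\mathrm{loc}}$. For 3, given $u$ in the weighted space, pick $v_j\in C_0^\infty(\overline{\R^2_+})$ with $v_j\to\omega u$ in $W^{k,p}$ (half-space density via reflection, mollification and cut-off); then $u_j:=\omega^{-1}v_j\in C_0^\infty(\overline{\R^2_+})$ has the same compact support as $v_j$, and $\omega u_j=v_j\to\omega u$, so $u_j\to u$ in the weighted norm by 2; the one-dimensional analogues are identical. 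For 4, the first embedding is pointwise monotonicity of the weights; for the second, writing $e^{(\beta_1-\varepsilon)|R|+(\beta_2-\varepsilon)H}u=\bigl(e^{\beta_1|R|+\beta_2H}u\bigr)e^{-\varepsilon(|R|+H)}$ and applying Hölder with exponents $p/q$ and $p/(p-q)$ gives $\|u\|_{L^q_{(\beta_1-\varepsilon,\beta_2-\varepsilon)}}\le\|u\|_{L^p_{(\beta_1,\beta_2)}}\bigl\|e^{-\varepsilon(|R|+H)}\bigr\|_{L^{p/(p-q)}(\R^2_+)}$, the last factor being finite since $\int_\R e^{-c|R|}\,dR$ and $\int_0^\infty e^{-cH}\,dH$ converge (the case $q=p$ being trivial).

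For 5 the plan is to transport the classical half-space trace theorem. As $e^{\beta\eta(R)}$ does not depend on $H$, it commutes with restriction to $\{H=0\}$; thus under the isometric isomorphisms $W^{k,p}_{(\beta,0)}(\R^2_+)\cong W^{k,p}(\R^2_+)$ and $W^{k-\frac1p,p}_{(\beta)}(\R)\cong W^{k-\frac1p,p}(\R)$ given by multiplication by $e^{\beta\eta}$, the map $u\mapsto u|_{H=0}$ corresponds exactly to the classical trace $\tr_0:W^{k,p}(\R^2_+)\to W^{k-\frac1p,p}(\R)$. A classical $(k,p)$-independent coretract $R_0$ of $\tr_0$ for the half-space is then transported to $R_\beta:=e^{-\beta\eta}\,R_0\bigl(e^{\beta\eta}\,\cdot\,\bigr)$ with $\tr\circ R_\beta=\mathrm{id}$; boundedness of $\tr$ and $R_\beta$, uniformly in $\beta\ge 0$ for the third norm, reduces to that of $\tr_0$ and $R_0$.

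Assertions 6 and 7 are the only genuinely computational ones. For 6 it suffices by 3 to take $u\in C_0^\infty(\overline{\R_+})$; integrating $\frac{d}{dH}\bigl(e^{2\beta H}u(H)^2\bigr)=2\beta e^{2\beta H}u^2+2e^{2\beta H}uu'$ over $(0,\infty)$ gives $-u(0)^2=2\beta\int_0^\infty e^{2\beta H}u^2+2\int_0^\infty e^{2\beta H}uu'$, hence $2\beta\|u\|_{L^2_{(\beta)}(\R_+)}^2\le -2\int_0^\infty e^{2\beta H}uu'\le 2\|u\|_{L^2_{(\beta)}(\R_+)}\|\partial_Hu\|_{L^2_{(\beta)}(\R_+)}$ by Cauchy--Schwarz, and the general case follows by density. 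For 7, with $g(s):=e^{\beta s}|v(s)|\in L^2(\R_+)$ one has $v\in L^1(\R_+)$ (Cauchy--Schwarz, $e^{-\beta\,\cdot}\in L^2(\R_+)$), so $w(H):=-\int_H^\infty v\,ds$ is well-defined, absolutely continuous, vanishes at $+\infty$, and $\partial_Hw=v$; moreover $e^{\beta H}|w(H)|\le\int_0^\infty e^{-\beta t}g(H+t)\,dt$, so Minkowski's integral inequality gives $\|e^{\beta\,\cdot}w\|_{L^2(\R_+)}\le\int_0^\infty e^{-\beta t}\|g(\,\cdot+t)\|_{L^2(\R_+)}\,dt\le\tfrac1\beta\|v\|_{L^2_{(\beta)}(\R_+)}$, whence $w\in H^1_{(\beta)}(\R_+)$ and 6 applies. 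The only imported non-routine input is the classical half-space trace theorem with a universal coretract in assertion 5; the main care elsewhere is keeping all constants uniform in the weight parameters, which is why I would systematically work with the third norm of assertion 2.
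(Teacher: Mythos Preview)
Your proposal is correct and for assertions 1--5 follows essentially the same route as the paper: transport through the multiplication isomorphism $u\mapsto e^{\beta_1\eta(R)+\beta_2 H}u$, with the classical half-space trace/coretract (from Triebel) pulled back via this isomorphism in 5.

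The computations in 6 and 7 differ in detail. For 6 the paper writes $u(H)^2\le 2\int_H^\infty |u\,\partial_s u|\,ds$, multiplies by $e^{2\beta H}$, applies Fubini and $\int_0^s e^{2\beta H}\,dH\le\frac{1}{2\beta}e^{2\beta s}$; your integration of $\frac{d}{dH}(e^{2\beta H}u^2)$ is a slightly slicker one-line variant. For 7 the approaches genuinely diverge: the paper approximates $v$ by $v_l\in C_0^\infty(\overline{\R_+})$, sets $u_l:=-\int_\cdot^\infty v_l$, and uses assertion 6 to show $(u_l)$ is Cauchy in $H^1_{(\beta)}(\R_+)$, then identifies the limit. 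Your direct route---bounding $e^{\beta H}|w(H)|\le\int_0^\infty e^{-\beta t}g(H+t)\,dt$ and applying Minkowski's integral inequality---is more self-contained and even recovers the constant $1/\beta$ for $\|w\|_{L^2_{(\beta)}}$ without invoking 6. The paper's density argument has the minor advantage of making 7 an immediate corollary of 6; yours makes 7 independent of 6 and arguably clearer.
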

\begin{Remark}\phantomsection{\label{th_exp_rem}}\upshape
\begin{enumerate}
\item Note that the choice $\int_0^. v\,ds$ in Lemma \ref{th_exp1}, 7.~would not be appropriate for integrability on $\R_+$. 
\item From now on we will always use the third norm in Lemma \ref{th_exp1},~2.~for the weighted spaces.\end{enumerate}
\end{Remark}

\begin{proof}[Proof of Lemma \ref{th_exp1}. Ad 1]
	That all spaces are normed ones directly follows from the unweighted case. It is left to prove the completeness. For $L^p_{(\beta_1,\beta_2)}(\R^2_+)$ let $(u_l)_{l\in\N}$ be a Cauchy sequence. Then $(u_l)_{l\in\N}$ and $(e^{\beta_1|R|+\beta_2H}u_l)_{l\in\N}$ are Cauchy sequences in $L^p(\R^2_+)$ and therefore converge to some $u$ and $v$, respectively, in $L^p(\R^2_+)$. Since one finds a.e.~convergent subsequences it follows that $e^{\beta_1|R|+\beta_2H}u=v$ and hence $u_l\rightarrow u$ in $L^p_{(\beta_1,\beta_2)}(\R^2_+)$ for $l\rightarrow\infty$. For $W^{k,p}_{(\beta_1,\beta_2)}(\R^2_+)$ one shows the completeness with the case $k=0$ and embeddings into the unweighted spaces. For the fractional Sobolev spaces, the completeness follows directly with the definition and the unweighted case.\qedhere$_{1.}$
\end{proof}

\begin{proof}[Ad 2] For $k=0$ this directly follows from $c_\eta e^{|R|}\leq e^{\eta(R)}\leq C_\eta e^{|R|}$ for all $R\in\R$ and some constants $c_\eta, C_\eta>0$. In the case $k\geq 1$ one uses the product rule for distributions with smooth functions. \qedhere$_{2.}$\end{proof}

\begin{proof}[Ad 3] The density properties directly carry over from the unweighted case since smooth functions with compact support stay in this class when multiplied with a smooth function.\qedhere$_{3.}$\end{proof}

\begin{proof}[Ad 4] The first embedding is clear. For the second one we use Hölder's inequality.\qedhere$_{4.}$\end{proof}

\begin{proof}[Ad 5] The trace operator $\tr$ is a bounded operator from $W^{k,p}(\R^2_+)$ onto $W^{k-\frac{1}{p},p}(\R)$ if $k\in\N_0$, $1\leq p<\infty$ and there is a coretract operator $R$ independent of $k,p$, cf.~Triebel \cite{Triebel_Fct_SpacesI}, Theorem 2.7.2 and the construction therein. For $u\in W^{k,p}_{(\beta,0)}(\R^2_+)$ we write 
	\[
	u=e^{-\beta\eta(.)}\cdot e^{\beta\eta(.)}u\in C_b^\infty(\overline{\R^2_+})\cdot W^{k,p}(\R^2_+).
	\] 
	Then $W^{k-\frac{1}{p},p}(\R)\ni\tr\,u=e^{-\beta\eta(.)}\tr(e^{\beta\eta(.)}u)\in  W^{k-\frac{1}{p},p}_{(\beta)}(\R)$. Moreover, we have the estimate
	\[
	\|\tr\,u\|_{W^{k-\frac{1}{p},p}_{(\beta)}(\R)}=\|\tr(e^{\beta\eta(.)}u)\|_{W^{k-\frac{1}{p},p}(\R)}\leq C_{k,p}\|e^{\beta\eta(.)}u\|_{W^{k,p}(\R^2_+)}=C_{k,p}\|u\|_{W^{k,p}_{(\beta,0)}(\R^2_+)}
	\]
	for all $u\in W^{k,p}_{(\beta,0)}(\R^2_+)$. The coretract operator can be taken as $R_\beta v:=e^{-\beta\eta(.)} R(e^{\beta\eta(.)}v)$ for all $v\in W^{k-\frac{1}{p},p}_{(\beta)}(\R)$. One can directly verify the claimed properties.\qedhere$_{5.}$\end{proof}

\begin{proof}[Ad 6] By density it is enough to prove the estimate for $u\in C_0^\infty(\overline{\R_+})$. With the Fundamental Theorem of Calculus, Fubini's Theorem and the Hölder inequality we obtain
	\begin{align*}
	&\|u\|_{L^2_{(\beta)}(\R_+)}^2=\int_0^\infty e^{2\beta H} u^2(H)\,dH\leq 2\int_0^\infty e^{2\beta H}\int_H^\infty|u \partial_su|\,ds\,dH\leq\\
	&\leq 2\int_0^\infty \int_0^s e^{2\beta H}\,dH |u \partial_su|\,ds\leq 
	\frac{1}{\beta}\int_0^\infty e^{2\beta s}|u\partial_su|\,ds
	\leq \frac{1}{\beta}\|\partial_Hu\|_{L^2_{(\beta)}(\R_+)}\|u\|_{L^2_{(\beta)}(\R_+)},
	\end{align*}
	where we used $\int_0^s e^{2\beta H}\,dH=\frac{1}{2\beta}[e^{2\beta s}-1]\leq\frac{1}{2\beta}e^{2\beta s}$. This shows the estimate.\qedhere$_{6.}$\end{proof}

\begin{proof}[Ad 7] Let $v\in L^2_{(\beta)}(\R_+)$ for a $\beta>0$ and $v_l\in C_0^\infty(\overline{\R_+})$ with $v_l\rightarrow v$ for $l\rightarrow \infty$. Then $u_l:=-\int_.^\infty v_l(s)\,ds\in C_0^\infty(\overline{\R_+})$ with $\frac{d}{dH}u_l=v_l$. From 6.~we obtain that $(u_l)_{l\in\N}$ is a Cauchy sequence in $H^1_{(\beta)}(\R_+)$, hence by 1.~there is a limit $u$ in $H^1_{(\beta)}(\R_+)$ and $\frac{d}{dH}u=v$. Because of $u_l=-\int_.^\infty v_l(s)\,ds\rightarrow -\int_.^\infty v(s)\,ds$ for $l\rightarrow\infty$ pointwise, we get $u=-\int_.^\infty v(s)\,ds$.
	\qedhere$_{7.}$\end{proof}

\section{Curvilinear Coordinates}\label{sec_coord}
Let $N\geq2$ and $\Omega\subseteq\R^N$ be a bounded, smooth domain (i.e.~nonempty, open and connected\footnote{\label{foot_connected}~For convenience. The considerations can be adapted for the case of finitely many connected components.}) with outer unit normal $N_{\partial\Omega}$. 
In this section we show the existence of a curvilinear coordinate system describing a neighbourhood of a suitable evolving surface\footnote{~For the definition of an evolving hypersurface cf.~Depner \cite{Depner}, Definition 2.31.} in $\overline{\Omega}$ that meets the boundary $\partial\Omega$ at a $90$°-contact angle. We adapt the ideas from the $2$-dimensional case in \cite{AbelsMoser} to the $N$-dimensional case.

\subsection{Requirements for the Evolving Surface}\label{sec_coord_surface_requ}
Let $\Sigma\subset\R^N$ be a smooth, orientable, compact and connected$^\text{\ref{foot_connected}}$ hypersurface with boundary $\partial\Sigma$ and let $X_0:\Sigma\times[0,T]\rightarrow\overline{\Omega}$ be smooth such that $X_0(.,t)$ is an injective immersion for all $t\in[0,T]$. For technical reasons, we assume that there is a smooth, orientable and connected hypersurface $\Sigma_0\subset\R^N$ without boundary such that $\Sigma\subsetneq\Sigma_0$ and a smooth extension of $X_0$ to $\tilde{X}_0:\Sigma_0\times(-\tau_0,T+\tau_0)\rightarrow\R^N$ for some $\tau_0>0$ such that $\tilde{X}_0(.,t)$ is an injective immersion for all $t\in(-\tau_0,T+\tau_0)$. Finally, we choose a smooth, orientable, compact and connected hypersurface $\tilde{\Sigma}$ with boundary such that $\Sigma\subsetneq\tilde{\Sigma}^\circ$ and $\tilde{\Sigma}\subsetneq\Sigma_0$. 

\begin{Remark}\upshape\label{th_coord_surface_requ_rem}
	Such $\Sigma_0, \tau_0, \tilde{X}_0$ should exist for any $\Sigma$, $X_0$ as above. For $N=2$ this is clear, but for $N\geq 3$ this is more difficult to show. First, it should be possible to extend any $\Sigma$ as above to a smooth orientable hypersurface $\hat{\Sigma}\subset\R^N$ without boundary by merging together local extensions in a suitable way. However, this is quite technical since one has to deal with fraying.

	Then $X_0$ can be extended to a smooth immersion $\hat{X}$ on an open neighbourhood of $\Sigma\times[0,T]$ in $\hat{\Sigma}\times\R$. Because immersions are locally injective (cf.~O'Neill \cite{ONeill}, Lemma 1.33), one can prove injectivity of $\hat{X}$ on a possibly smaller open neighbourhood of $\Sigma\times[0,T]$ in $\hat{\Sigma}\times\R$ with a contradiction argument and compactness.
\end{Remark} 

Since continuous bijections of compact topological spaces into Hausdorff spaces are homeo-morphisms, we know that $X_0(.,t)$ is an embedding and $\Gamma_t:=X_0(\Sigma,t)\subset\R^N$ is a smooth, orientable, compact and connected hypersurface with boundary for all $t\in[0,T]$. Moreover,
\[
\Gamma:=\bigcup_{t\in[0,T]}\Gamma_t\times\{t\}
\]
is a smooth evolving hypersurface and 
\[
\overline{X}_0:=(X_0,\textup{pr}_t):\Sigma\times[0,T]\rightarrow\Gamma:(s,t)\mapsto(X_0(s,t),t)
\] 
is a homeomorphism. We choose a smooth normal field $\vec{n}:\Sigma\times[0,T]\rightarrow\R^N$ meaning that $\vec{n}$ is smooth and $\vec{n}(.,t)$ describes a normal field on $\Gamma_t$. Due to Depner \cite{Depner}, Lemma 2.40 the corresponding normal velocity is given by 
\[
V(s,t):=V_{\Gamma_t}(s):=\vec{n}(s,t)\cdot\partial_tX_0(s,t)\quad\text{ for }\quad(s,t)\in \Sigma\times[0,T].
\]
Moreover, let $H(s,t):=H_{\Gamma_t}(s)$ for $(s,t)\in \Sigma\times[0,T]$ be the mean curvature which we choose to be the sum of the principal curvatures. The above definitions applied to $\tilde{X}_0$ on $\tilde{\Sigma}\times[-\frac{\tau_0}{2},T+\frac{\tau_0}{2}]$ yield suitable extensions of $\Gamma_t, \Gamma, \vec{n}, V$ and $H$. For convenience, we use the same notation for $\vec{n}$.

Additionally, we require $(\Gamma_t)^\circ\subseteq\Omega$ and $\partial\Gamma_t\subseteq\partial\Omega$. Then the \textit{contact angle} of $\Gamma_t$ with $\partial\Omega$ in any boundary point $X_0(s,t), (s,t)\in\partial\Sigma\times[0,T]$ with respect to $\vec{n}(s,t)$ is defined by
\[
|\measuredangle(N_{\partial\Omega}|_{X_0(s,t)},\vec{n}(s,t))|\in(0,\pi),
\]
where $\measuredangle(N_{\partial\Omega}|_{X_0(s,t)},\vec{n}(s,t))$ is taken in $(-\pi,\pi)$.

\subsection{Existence of Curvilinear Coordinates}\label{sec_coordND}
Let the assumptions in Section \ref{sec_coord_surface_requ} hold for dimension $N\geq2$ and constant contact angle $\frac{\pi}{2}$ for times $t\in[0,T]$. We consider the \textit{outer unit conormal} $\vec{n}_{\partial\Sigma}:\partial\Sigma\rightarrow\R^N$, cf.~\cite{Depner}, Definition 2.28 on p.22. Moreover, we introduce the outer unit conormal for the evolving hypersurface $\Gamma$, $\vec{n}_{\partial\Gamma}:\partial\Sigma\times[0,T]\rightarrow\R^N$, where $\vec{n}_{\partial\Gamma}(\sigma,t):=\vec{n}_{\partial\Gamma_t}(\sigma)$ is the outer unit conormal with respect to $\partial\Gamma_t$ at $X_0(\sigma,t)$ for all $(\sigma,t)\in\partial\Sigma\times[0,T]$.  One can show smoothness and
\[ \vec{n}_{\partial\Gamma_t}(\sigma)=N_{\partial\Omega}|_{X_0(\sigma,t)}\quad\text{ for all }(\sigma,t)\in\partial\Sigma\times[0,T]
\] 
with the considerations in \cite{Depner}. Furthermore, we use the tubular neighbourhood coordinate system of $\partial\Sigma$ in $\tilde{\Sigma}$: for $\mu_1>0$ small there is a smooth diffeomorphism
\[
\tilde{Y}:\partial\Sigma\times[-2\mu_1,2\mu_1]\rightarrow R(\tilde{Y})\subset\tilde{\Sigma},\quad(\sigma,b)\mapsto\tilde{Y}(\sigma,b)
\]
onto a neighbourhood $R(\tilde{Y})$ of $\partial\Sigma$ in $\tilde{\Sigma}$ such that $\tilde{Y}|_{b=0}=\textup{id}_{\partial\Sigma}$ and $Y:=\tilde{Y}|_{\partial\Sigma\times[0,2\mu_1]}$ is a diffeomorphism onto a neighbourhood $R(Y)$ of $\partial\Sigma$ in $\Sigma$. We use the notation $(\tilde{\sigma},\tilde{b}):=\tilde{Y}^{-1}$. We define $\tilde{Y}$ via the exponential map on the normal bundle of $\partial\Sigma$ in $\tilde{\Sigma}$, cf.~Proposition 7.26 in O'Neill \cite{ONeill}. Then 
\begin{align}\label{eq_coordND_DbY}
\partial_bY(\sigma,0)=-\vec{n}_{\partial\Sigma}(\sigma)\quad\text{ for all }\sigma\in\partial\Sigma.
\end{align}

\begin{Theorem}[\textbf{Curvilinear Coordinates}]\label{th_coordND}
	Let the above assumptions hold. There exist $\delta>0$ and a smooth map $[-\delta,\delta]\times\Sigma\times[0,T]\ni(r,s,t)\mapsto X(r,s,t)\in\overline{\Omega}$ with the following properties: 
	\begin{enumerate}
		\item $\overline{X}:=(X,\textup{pr}_t)$ is a homeomorphism onto a neighbourhood of $\Gamma$ in $\overline{\Omega}\times[0,T]$. Moreover, $\overline{X}$ can be extended to a smooth diffeomorphism defined on an open neighbourhood of $[-\delta,\delta]\times\Sigma\times[0,T]$ in $\R\times\tilde{\Sigma}\times\R$ mapping onto an open set in $\R^{N+1}$. The set
		\[
		\Gamma(\tilde{\delta})
		:=\overline{X}((-\tilde{\delta},\tilde{\delta})\times\Sigma\times[0,T])
		\]
		is an open neighbourhood of $\Gamma$ in $\overline{\Omega}\times[0,T]$ for $\tilde{\delta}\in(0,\delta]$. \item $X|_{r=0}=X_0$ and $X$ coincides with the well-known tubular neighbourhood coordinate system for $s\in\Sigma\textbackslash Y(\partial\Sigma\times[0,\mu_0])$ for some $\mu_0\in(0,\mu_1]$ small. Additionally, for points $(r,s,t)\in[-\delta,\delta]\times\Sigma\times[0,T]$ it holds $X(r,s,t)\in\partial\Omega$ if and only if $s\in\partial\Sigma$.
		\item Let $(r,s,\textup{pr}_t)$ be the inverse of $\overline{X}$. Then $(\partial_{x_j}s|_{(x,t)})_{j=1}^N$ generate the tangent space $T_{s(x,t)}\Sigma$, $|\nabla r|_{(x,t)}|\geq c>0$ for some $c>0$ independent of $(x,t)$ and $D_xs(D_xs)^\top|_{(x,t)}$ is uniformly positive definite as a linear map in $\Lc(T_{s(x,t)}\Sigma)$ for all $(x,t)\in\overline{\Gamma(\delta)}$. Furthermore, we have
		\[
		|\nabla r|^2|_\Gamma=1,\quad \partial_r(|\nabla r|^2\circ\overline{X})|_{r=0}=0\quad\text{ and }\quad D_xs\nabla r|_\Gamma=0
		\] 
		and for all $(r,s,t)\in[-\delta,\delta]\times\left[\Sigma\textbackslash Y(\partial\Sigma\times[0,\mu_0])\right]\times[0,T]$ it holds
		\[
		\nabla r|_{\overline{X}(r,s,t)}=\vec{n}(s,t)\quad\text{ and }\quad D_xs|_{\overline{X}(r,s,t)}\vec{n}(s,t)=0.
		\]
		Moreover, we can choose $\nabla r\circ\overline{X}_0=\vec{n}$. Then it holds $V=-\partial_tr\circ\overline{X}_0$ and $H=-\Delta r\circ\overline{X}_0$.
		\item Let $(\sigma,b):=Y^{-1}\circ s:
		\overline{X}([-\delta,\delta]\times R(Y)\times[0,T])\rightarrow
		\partial\Sigma\times[0,2\mu_1]$. 
		Then 
		\begin{alignat*}{2}
		N_{\partial\Omega}\cdot\nabla b|_{\overline{X}_0(\sigma,t)}=-D_xsN_{\partial\Omega}|_{\overline{X}_0(\sigma,t)}\cdot \vec{n}_{\partial\Sigma}|_{\sigma}, \quad |N_{\partial\Omega}\cdot\nabla b|_{\overline{X}_0(\sigma,t)}|\geq c>0
		\end{alignat*}
		and $\nabla b\cdot\nabla r|_{\overline{X}_0(\sigma,t)}=0$, $|\nabla b|_{\overline{X}_0(\sigma,t)}|\geq c>0$ for all $(\sigma,t)\in\partial\Sigma\times[0,T]$.
	\end{enumerate}
\end{Theorem}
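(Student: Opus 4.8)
The plan is to first produce an auxiliary coordinate system adapted to $\partial\Omega$ and then renormalise it, in two further steps, so that $r$ becomes a signed distance and $s$ becomes constant along $\vec{n}$ on $\Gamma$. The whole construction is carried out on a neighbourhood of $\Sigma$ in $\tilde{\Sigma}$ over $(-\tfrac{\tau_0}{2},T+\tfrac{\tau_0}{2})$ using the extensions $\tilde{X}_0,\Sigma_0$, so that there is no boundary to worry about except along the contact set $\partial\Gamma$; the $\frac{\pi}{2}$-contact angle is the feature that makes the interior (tubular) part and the boundary part fit together.

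\textbf{Step 1: a $\partial\Omega$-adapted system.} First I would construct, smoothly in $t$, a vector field $\mathcal{W}_t$ on a neighbourhood of $\Gamma_t$ in $\R^N$ which (i) equals the normal extension $x\mapsto\vec{n}(\pi_t(x),t)$ of $\vec{n}$ away from $\partial\Omega$, where $\pi_t$ is the nearest-point projection onto $\tilde{\Gamma}_t$, and (ii) is tangent to $\partial\Omega$ at every point of $\partial\Omega$. Such a field is obtained from the normal extension $\mathcal{E}_t$ by subtracting $\psi(\textup{dist}(\cdot,\partial\Omega))\,(\mathcal{E}_t\cdot N)\,N$, with $N$ the normal of $\partial\Omega$ pulled back by the nearest-point projection onto $\partial\Omega$ and $\psi$ a cutoff equal to $1$ at $\partial\Omega$. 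On $\partial\Gamma_t$ the contact angle condition gives $\vec{n}\cdot N_{\partial\Omega}=0$, hence the correction vanishes there and $\mathcal{W}_t|_{\partial\Gamma_t}=\vec{n}\in T\partial\Omega$; in particular $\mathcal{W}_t$ stays transversal to $\Gamma_t$ near $\partial\Gamma_t$. Now let $X(r,s,t)$ be the time-$r$ flow of $\mathcal{W}_t$ started at $X_0(s,t)$, with $(r,s,\textup{pr}_t)$ its inverse on a neighbourhood of $\Gamma$; that $\overline{X}=(X,\textup{pr}_t)$ is a diffeomorphism there (for $\delta$ small) follows from the inverse function theorem together with a compactness argument (a local diffeomorphism that is injective on the compact central slice $\Gamma$ is injective on a neighbourhood of it). Then $X|_{r=0}=X_0$; since $\partial\Omega$, and hence $\Omega^\circ$, is locally invariant under the flow, $X(r,s,t)\in\partial\Omega$ iff $s\in\partial\Sigma$; away from the collar $Y(\partial\Sigma\times[0,\mu_0])$ the flow lines are straight normal lines, so $X$ coincides with the usual tubular coordinate system there; and the flow lines through $\partial\Gamma_t$ leave in the direction $\vec{n}$.

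\textbf{Steps 2 and 3: renormalisation and the remaining claims.} Next, keeping the $s$-level sets fixed, replace the flow time by the signed distance $r(x,t)$ from $x$ to $\tilde{\Gamma}_t$; along each $s$-level curve this is a diffeomorphic reparametrisation (its $r$-derivative is positive at $\Gamma$ by transversality) and it changes nothing away from the collar. Now $|\nabla r|^2\equiv1$ near $\Gamma$, giving at once $|\nabla r|^2|_\Gamma=1$, $\partial_r(|\nabla r|^2\circ\overline{X})|_{r=0}=0$, $\nabla r\circ\overline{X}_0=\vec{n}$, and the identities $V=-\partial_t r\circ\overline{X}_0$, $H=-\Delta r\circ\overline{X}_0$ are the standard ones for the signed distance of a moving hypersurface. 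At this stage $\partial_rX|_{r=0}=\vec{n}+w^\top$ with $w^\top\in T\Gamma_t$ vanishing on $\partial\Sigma$ by Step 1; replacing $s$ by $\hat{s}:=s-r\,a(s,t)$ with $a^i:=(\nabla s^i\cdot\nabla r)|_\Gamma$ then makes $D_x\hat{s}\,\nabla r|_\Gamma=0$, keeps $(r,\hat{s})$ a diffeomorphism near $\Gamma$, and, since $a$ vanishes on $\partial\Sigma$, preserves both that $\hat{s}=s\in\partial\Sigma$ on $\partial\Omega$ and that a level curve of $\hat{s}$ with value in $\partial\Sigma$ lies in $\partial\Omega$; away from the collar $a\equiv0$. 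Renaming $\hat{s}$ as $s$: the assertions in 3.\ hold because $D_xs$ has full rank $N-1$ over the compact set $\overline{\Gamma(\delta)}$ (so $(\partial_{x_j}s)_j$ generate $T_s\Sigma$ and $D_xs(D_xs)^\top$ is uniformly positive definite) and $|\nabla r|\equiv1$ near $\Gamma$; Item 1's extension to a smooth diffeomorphism onto an open set in $\R^{N+1}$ follows by running the same construction on $\tilde{\Sigma}$ over the enlarged time interval. For Item 4, with $b$ the $[0,2\mu_1]$-component of $Y^{-1}\circ s$, differentiating through $s$ and using \eqref{eq_coordND_DbY} gives $\nabla b|_{\overline{X}_0(\sigma,t)}=-(D_xs)^\top\vec{n}_{\partial\Sigma}$, hence $N_{\partial\Omega}\cdot\nabla b=-(D_xsN_{\partial\Omega})\cdot\vec{n}_{\partial\Sigma}$ and $\nabla b\cdot\nabla r|_\Gamma=-\vec{n}_{\partial\Sigma}\cdot(D_xs\nabla r)|_\Gamma=0$ by Step 3; the lower bounds hold because $N_{\partial\Omega}|_{X_0(\sigma,t)}=\vec{n}_{\partial\Gamma_t}(\sigma)$ is, within $\Gamma_t$, transversal and outward to $\partial\Gamma_t$, so $D_xsN_{\partial\Omega}$ has a component along $\vec{n}_{\partial\Sigma}$ bounded away from $0$ on the compact set $\partial\Sigma\times[0,T]$.

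\textbf{Main obstacle.} The delicate part is Step 1: building $\mathcal{W}_t$ so that it is at the same time the normal extension of $\vec{n}$ away from $\partial\Omega$ and tangent to $\partial\Omega$, smoothly in $t$ and along the entire contact set, and then checking that the associated flow map is a genuine diffeomorphism on a full neighbourhood of $\Gamma$ in $\overline{\Omega}\times[0,T]$, with interior parameter points mapping into $\Omega$ and only boundary parameter points mapping onto $\partial\Omega$. This is precisely the place where the $\frac{\pi}{2}$-contact angle is used in an essential way: it forces the correction term to vanish on $\partial\Gamma$, so that $\mathcal{W}_t|_{\partial\Gamma_t}=\vec{n}$ lies in $T\partial\Omega$ and stays transversal to $\Gamma_t$ there. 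The rest consists of normalisations and bookkeeping: compactness for the uniform constants, and the extensions onto $\tilde{\Sigma}$ and onto a slightly larger time interval.
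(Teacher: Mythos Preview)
Your approach is correct and takes a genuinely different route from the paper. The paper writes $X$ down explicitly: it first proves (Lemma~\ref{th_coordND_lemma}) that near each contact point $\partial\Omega$ is the graph $r\mapsto X_0(\sigma,t)+r\,\vec n+w(r,\sigma,t)\,\vec n_{\partial\Gamma}$ with $w|_{r=0}=\partial_rw|_{r=0}=0$, and then sets
\[
X(r,s,t)=X_0(s,t)+r\,\vec n(s,t)+w(r,\tilde\sigma(s),t)\,\vec T(s,t),
\]
where $\vec T$ is a cutoff of a tangential extension of $\vec n_{\partial\Gamma}$. The second-order vanishing of $w$ at $r=0$ delivers all the identities on $\Gamma$ in item~3 at once (in particular $D_xs\,\nabla r|_\Gamma=0$ from $d_{(0,s)}X=(\vec n,d_sX_0)$, so no analogue of your Step~3 is needed); on the other hand the paper has to work through a careful cone-type geometric argument, with a specific order of choosing the small constants, to show that interior parameter points land in $\Omega$ and exterior ones outside. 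Your flow construction trades these features: the boundary equivalence $X(r,s,t)\in\partial\Omega\Leftrightarrow s\in\partial\Sigma$ comes for free from invariance of $\partial\Omega$ under the flow of $\mathcal W_t$, and as a bonus $r$ is the true signed distance (so $|\nabla r|\equiv1$, stronger than stated); the price is the two renormalisations. In Step~3 two points deserve a line more than you give them: $s-r\,a(s,t)$ must be read via the exponential map (or a chart) since $s\in\Sigma$, and for the boundary equivalence to survive you need that $\Phi_r:s\mapsto\hat s$ sends $\Sigma^\circ$ into $\Sigma^\circ$, not merely that it fixes $\partial\Sigma$; this follows for $|r|$ small from $a|_{\partial\Sigma}=0$ by expanding in the collar variable $b$ (the $b$-component of $a$ is $O(b)$, so the $b$-component of $\Phi_r$ stays positive).
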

\begin{Remark}\phantomsection{\label{th_coordND_rem}}\upshape
	\begin{enumerate}
		\item Let $Q_T:=\Omega\times(0,T)$. There are unique connected $Q_T^\pm\subseteq\overline{Q_T}=\overline{\Omega}\times[0,T]$ such that $\overline{Q_T}=Q_T^-\cup Q_T^+\cup\Gamma$ (disjoint) and $\textup{sign}\,r=\pm 1$ on $Q_T^\pm\cap\Gamma(\delta)$. Moreover, we set
		\[
		\Gamma^C(\tilde{\delta},\mu):=\overline{X}((-\tilde{\delta},\tilde{\delta})\times Y(\partial\Sigma\times(0,\mu))\times[0,T]),\quad
		\Gamma(\tilde{\delta},\mu):=\Gamma(\tilde{\delta})\textbackslash\overline{\Gamma^C(\tilde{\delta},\mu)}
		\]
		for $\tilde{\delta}\in(0,\delta]$ and $\mu\in(0,2\mu_1]$. For $t\in[0,T]$ fixed let $\Gamma_t(\tilde{\delta}), \Gamma_t^C(\tilde{\delta},\mu)$ and $\Gamma_t(\tilde{\delta},\mu)$ be the respective sets intersected with $\R^N\times\{t\}$ and then projected to $\R^N$. Here $\Gamma(\tilde{\delta})$ is defined in Theorem \ref{th_coordND}.
		
		\item Let $\tilde{\delta}\in(0,\delta]$. For a sufficiently smooth $\psi:\Gamma(\tilde{\delta})\rightarrow\R$ we define the \emph{tangential} and \emph{normal derivative} by 
		\[
		\nabla_\tau\psi:=(D_xs)^\top[\nabla_\Sigma(\psi\circ\overline{X})\circ\overline{X}^{-1}]\quad\text{ and }\quad\partial_n\psi:=\partial_r(\psi\circ\overline{X})\circ\overline{X}^{-1},
		\]
		respectively. For $t\in[0,T]$ fixed and $\psi:\Gamma_t(\tilde{\delta})\rightarrow\R$ smooth enough, we define $\nabla_\tau\psi$ and $\partial_n\psi$ analogously. The same notation applies if $\psi$ is only defined on an open subset of $\Gamma(\tilde{\delta})$ or $\Gamma_t(\tilde{\delta})$ for some $\mu\in(0,2\mu_1]$ and $t\in[0,T]$. Note that in the case $N=2$ and $\Sigma=[-1,1]$ the definitions here coincide with the ones in \cite{AbelsMoser}, Remark 2.2. Important properties of $\nabla_\tau$ and $\partial_n$ will be shown in Corollary \ref{th_coordND_nabla_tau_n}.
		
		\item For transformation arguments we define 
		\[
		J(r,s,t):=J_t(r,s):=|\det d_{(r,s)}X(r,s,t)|\quad\text{ for }(r,s,t)\in[-\delta,\delta]\times\Sigma\times[0,T],
		\]
		where the determinant is taken with respect to an arbitrary orthonormal basis of $T_s\Sigma$. The latter is well-defined, cf.~Theorem \ref{th_Leb_trafo_mfd}, 1. Via local coordinates it follows that $J$ is smooth and with a compactness argument we obtain that $0<c\leq J\leq C$ for some $c,C>0$.
	\end{enumerate}
\end{Remark}

The proof of Theorem \ref{th_coordND} is similar to \cite{AbelsMoser}, Theorem 2.1, where the case $N=2$ was proven. The first step for the proof of Theorem \ref{th_coordND} is to show an analogue of \cite{AbelsMoser}, Lemma 2.3.

\begin{Lemma}\label{th_coordND_lemma}
	There is an $\eta>0$ such that $\partial\Omega\cap R_\eta(\sigma,t)$ admits a graph parametrization over $X_0(\sigma,t)+[B_\eta(0)\cap T_{X_0(\sigma,t)}\partial\Omega]$ for all $(\sigma,t)\in\partial\Sigma\times[0,T]$, where
	\[
	R_\eta(\sigma,t):=X_0(\sigma,t)+(-\eta,\eta)\vec{n}_{\partial\Gamma}(\sigma,t)+[B_\eta(0)\cap T_{X_0(\sigma,t)}\partial\Omega].
	\]
	Moreover, for $\eta>0$ small there exists $w:(-\eta,\eta)\times\partial\Sigma\times[0,T]\rightarrow\R$ smooth such that $w|_{r=0}=\partial_rw|_{r=0}=0$ and
	\[
	(-\eta,\eta)\ni r\mapsto X_0(\sigma,t)+r\vec{n}(\sigma,t)+w(r,\sigma,t)\vec{n}_{\partial\Gamma}(\sigma,t)
	\]
	describes $\partial\Omega$ in $X_0(\sigma,t)+(-\eta,\eta)\vec{n}(\sigma,t)+(-\eta,\eta)\vec{n}_{\partial\Gamma}(\sigma,t)$ for all $(\sigma,t)\in\partial\Sigma\times[0,T]$.
\end{Lemma}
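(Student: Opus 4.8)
The plan is to set up a parametrization of $\partial\Omega$ near each contact point $X_0(\sigma,t)$ adapted to the frame $\{\vec{n}(\sigma,t),\vec{n}_{\partial\Gamma}(\sigma,t)\}$ and the tangent space $T_{X_0(\sigma,t)}\partial\Omega$, and then exploit the $90^\circ$-contact angle and the compactness of $\partial\Sigma\times[0,T]$ to get uniformity in $(\sigma,t)$. First I would note that since $\Gamma_t$ meets $\partial\Omega$ at right angle, we have $\vec{n}_{\partial\Gamma}(\sigma,t) = N_{\partial\Omega}|_{X_0(\sigma,t)}$ (this is recorded in Section~\ref{sec_coordND}), so $\vec{n}(\sigma,t) \in T_{X_0(\sigma,t)}\partial\Omega$; hence $\vec{n}(\sigma,t)$ together with a basis of $T_{X_0(\sigma,t)}\partial\Omega \cap \vec{n}(\sigma,t)^\perp$ spans $T_{X_0(\sigma,t)}\partial\Omega$, and adding $N_{\partial\Omega}|_{X_0(\sigma,t)} = \vec{n}_{\partial\Gamma}(\sigma,t)$ gives an orthonormal basis of $\R^N$ depending smoothly on $(\sigma,t)$ (smoothness of the tangential complement follows from smoothness of $\vec{n}$, $\vec{n}_{\partial\Gamma}$ and a local Gram–Schmidt / smooth frame argument on the compact manifold $\partial\Sigma\times[0,T]$).

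For the graph statement, I would invoke the standard fact that a smooth hypersurface is locally a graph over its tangent plane: at each fixed $(\sigma,t)$ there is $\eta(\sigma,t)>0$ and a smooth function giving $\partial\Omega$ as a graph over $X_0(\sigma,t)+[B_{\eta(\sigma,t)}(0)\cap T_{X_0(\sigma,t)}\partial\Omega]$ in the direction $N_{\partial\Omega}|_{X_0(\sigma,t)}$. The passage to a \emph{uniform} $\eta$ is where compactness of $\partial\Sigma\times[0,T]$ enters: the size of the graph neighbourhood can be controlled by a lower bound on the reach of $\partial\Omega$ (which is positive since $\partial\Omega$ is compact and smooth) together with an angle bound between $N_{\partial\Omega}$ at nearby points, which is uniform by continuity. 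A standard covering/contradiction argument then yields a single $\eta>0$ working for all $(\sigma,t)$, so $\partial\Omega\cap R_\eta(\sigma,t)$ is a graph over $X_0(\sigma,t)+[B_\eta(0)\cap T_{X_0(\sigma,t)}\partial\Omega]$ as claimed. One must also check the graph function is jointly smooth in $(\sigma,t)$ and the graph variable; this follows from the implicit function theorem applied to a defining function of $\partial\Omega$ composed with the smooth frame, with non-degeneracy guaranteed by the transversality $N_{\partial\Omega}\cdot N_{\partial\Omega}=1$.

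For the second part, I would restrict the graph to the two-dimensional affine slice $X_0(\sigma,t)+(-\eta,\eta)\vec{n}(\sigma,t)+(-\eta,\eta)\vec{n}_{\partial\Gamma}(\sigma,t)$. Intersecting $\partial\Omega$ with this plane and using that $\vec{n}(\sigma,t)\in T_{X_0(\sigma,t)}\partial\Omega$ while $\vec{n}_{\partial\Gamma}(\sigma,t)=N_{\partial\Omega}|_{X_0(\sigma,t)}$ is normal, the intersection is, for $\eta$ small, a curve which is a graph $r\mapsto X_0(\sigma,t)+r\vec{n}(\sigma,t)+w(r,\sigma,t)\vec{n}_{\partial\Gamma}(\sigma,t)$ over the $\vec{n}$-axis. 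Again apply the implicit function theorem to the restriction of a defining function of $\partial\Omega$ to this slice: at $r=0$ the point $X_0(\sigma,t)$ lies on $\partial\Omega$, so $w|_{r=0}=0$, and since the $r$-direction $\vec{n}(\sigma,t)$ is tangent to $\partial\Omega$ at that point, the first-order term in $r$ carries no $\vec{n}_{\partial\Gamma}$-component, giving $\partial_r w|_{r=0}=0$. Joint smoothness of $w$ in $(r,\sigma,t)$ and uniformity of $\eta$ follow from the smooth frame and the compactness argument exactly as before.

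The main obstacle I expect is the uniformity in $(\sigma,t)$: one must be careful that the graph neighbourhoods and the solution $w$ exist on a fixed $\eta$-neighbourhood independent of the contact point, which requires combining the positive reach of $\partial\Omega$, uniform continuity of the frame $(\sigma,t)\mapsto (\vec{n}(\sigma,t), N_{\partial\Omega}|_{X_0(\sigma,t)}, \text{tangential complement})$, and a quantitative implicit function theorem (or a compactness/contradiction argument) to turn pointwise existence into uniform existence. Everything else is routine differential geometry: the normalizations $w|_{r=0}=\partial_r w|_{r=0}=0$ are immediate from the contact-angle condition, and smoothness is inherited from smoothness of $\partial\Omega$ and $X_0$.
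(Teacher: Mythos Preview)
Your proposal is correct and follows essentially the same route as the paper's proof: both exploit the $90^\circ$-contact angle to identify $\vec{n}_{\partial\Gamma}(\sigma,t)=N_{\partial\Omega}|_{X_0(\sigma,t)}$ (hence $\vec{n}(\sigma,t)\in T_{X_0(\sigma,t)}\partial\Omega$), set up an adapted orthonormal frame, obtain the graph function via the implicit function theorem applied to a defining function of $\partial\Omega$, and then pass to uniform $\eta$ by compactness of $\partial\Sigma\times[0,T]$. The paper's version differs only in presentation: it first constructs the full graph $w_0$ depending on all tangential variables $(r,r_1,\dots,r_{N-2})$ using a local extension of a basis of $T_{X_0(\sigma_0,t_0)}\partial\Gamma_{t_0}$, then defines $w$ by restricting to $r_1=\dots=r_{N-2}=0$ and checks that this restriction is independent of the auxiliary frame choice---a step your global defining-function approach sidesteps.
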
 

The assertions are compatible with shrinking $\eta$ for small $\eta>0$ which follows from the contact angle assumption and Taylor's Theorem.

\begin{proof}
	Let $(\sigma_0,t_0)\in\partial\Sigma\times[0,T]$ be arbitrary. We choose a basis $\vec{v}_1,...,\vec{v}_{N-2}$ of $T_{X_0(\sigma_0,t_0)}\partial\Gamma_{t_0}$ and extend it to smooth tangential vector fields $\vec{\tau}_1,...,\vec{\tau}_{N-2}$ on $T\partial\Gamma$ such that locally in $\partial\Gamma$ around $X_0(\sigma_0,t_0)$ these are again bases in the corresponding tangential spaces over $\partial\Gamma_{t}$ for all $t\in B_\varepsilon(t_0)\cap[0,T]$, $\varepsilon>0$ small. In coordinates one can apply similar arguments as in the proof of \cite{AbelsMoser}, Lemma 2.3., to obtain smooth graph parametrizations of $\partial\Omega\cap R_\eta(\sigma,t)$ over $X_0(\sigma,t)+[B_\eta(0)\cap T_{X_0(\sigma,t)}\partial\Omega]$ for some $\eta>0$ and $(\sigma,t)\in\partial\Sigma\times[0,T]$ close to $(\sigma_0,t_0)$. More precisely, there is an $\eta>0$ and a smooth
	\[
	w_0:(-\eta,\eta)\times B_\eta(0)\times U\times V\subseteq\R\times\R^{N-2}\times\Sigma\times[0,T]\rightarrow\R,
	\]
	where $U,V$ are open neighbourhoods of $\sigma_0, t_0$ in $\partial\Sigma, [0,T]$, respectively, such that
	\begin{align*}
	(-\eta,\eta)\!\times\! B_{\eta}(0)\ni (r,r_1,...,r_{N-2})\mapsto &X_0(\sigma,t)+r\vec{n}(\sigma,t)+r_1\vec{\tau}_1(\sigma,t)+...+r_{N-2}\vec{\tau}_{N-2}(\sigma,t)\\
	&+w_0(r,r_2,...,r_{N-2},\sigma,t)\vec{n}_{\partial\Gamma}(\sigma,t)
	\end{align*}
	describes $\partial\Omega\cap R_\eta(\sigma,t)$ in  $R_\eta(\sigma,t)$. Moreover, $w_0|_{r=0}=\partial_rw_0|_{r=0}=0$. We set
	\[
	w:(-\eta,\eta)\times U\times V\rightarrow\R: (r,\sigma,t)\mapsto w_0(r,0,...,0,\sigma,t)
	\]
	and we observe that this definition is independent of the choice of $\vec{v}_1,...,\vec{v}_{N-2}$ and $\vec{\tau}_1,...,\vec{\tau}_{N-2}$ as well as $(\sigma_0,t_0)$. By compactness $\eta>0$ can be taken uniformly in $(\sigma_0,t_0)\in\partial\Sigma\times[0,T]$.  
\end{proof}

\begin{proof}[Proof of Theorem \ref{th_coordND}]
	Let $\Sigma_1$ be a compact hypersurface with boundary such that $\Sigma\subsetneq\Sigma_1^\circ$ and $\Sigma_1\subsetneq\tilde{\Sigma}^\circ$.
	Similar as in the proof of \cite{AbelsMoser}, Theorem 2.1 there is a $\delta_0>0$ such that for all $\delta\in(0,\delta_0]$ it holds that \phantom{\qedhere}
	\[
	(-\delta,\delta)\times\Sigma_1\ni (r,s)\mapsto \tilde{X}_0(s,t)+r\vec{n}(s,t)\in\R^N
	\] 
	is a diffeomorphism onto its image $U_\delta(t)$ and $U_\delta(t)\cap\overline{\Omega}=B_\delta(\tilde{\Gamma}_t)\cap\overline{\Omega}$ for all $t\in[0,T]$, where we have set $\tilde{\Gamma}_t:=\tilde{X}_0(\tilde{\Sigma},t)$.
	
	We choose $\eta>0$ small such that $R_\eta(\partial\Sigma,t)$ is contained in $U_{\delta_0}(t)$, the assertions of Lemma~\ref{th_coordND_lemma} are fulfilled and such that the angles between the tangent planes of $R_{\eta}(\sigma,t)\cap\partial\Omega$ 
	are smaller than a fixed $\beta>0$ (which will be chosen later).
	
	Now we define $X$. Let $\vec{\tau}:\tilde{\Sigma}\times[0,T]\rightarrow\R^N$ be a smooth vector field with the property that $\vec{\tau}(s,t)\in T_{\tilde{X}_0(s,t)}\tilde{\Gamma}_t$ for all $(s,t)\in\tilde{\Sigma}\times[0,T]$ and $\vec{\tau}|_{\partial\Sigma\times[0,T]}=\vec{n}_{\partial\Gamma}$. Existence of such a $\vec{\tau}$ follows via local extensions of $\vec{n}_{\partial\Gamma}$ in submanifold charts of $\partial\Sigma$ with respect to $\tilde{\Sigma}$ and compactness arguments.  Moreover, by uniform continuity there is an $\varepsilon\in(0,\mu_1]$ such that $\tilde{X}_0(\tilde{Y}(\sigma,\tilde{b}),t)\in R_{\frac{\eta}{2}}(\sigma,t)$ for all $(\sigma,\tilde{b},t)\in\partial\Sigma\times[-\varepsilon,\varepsilon]\times[0,T]$ as well as
	\begin{align}\label{eq_coordND_eps}
	|\partial_b\tilde{Y}(\sigma,\tilde{b})+\vec{n}_{\partial\Sigma}(\sigma)|+\left|\frac{d}{d\tilde{b}}\tilde{X}_0(\tilde{Y}(\sigma,\tilde{b}))-\left.\frac{d}{d\tilde{b}}\right|_{\tilde{b}=0}\tilde{X}_0(\tilde{Y}(\sigma,.))\right|\leq c_0
	\end{align}
	for a fixed $c_0>0$ small (to be determined later), where we used \eqref{eq_coordND_DbY}. Let $\chi:\R\rightarrow[0,1]$ be a smooth cutoff-function with $\chi=1$ for $|b|\leq \frac{\varepsilon}{2}$ and $\chi=0$ for $|b|\geq \varepsilon$. We define
	$\vec{T}(s,t):=\chi(\tilde{b}(s))\vec{\tau}(s,t)$ for $(s,t)\in\Sigma\times[0,T]$ and 
	\[
	X(r,s,t):=\tilde{X}_0(s,t)+r\vec{n}(s,t)+w(r,\tilde{\sigma}(s),t)\vec{T}(s,t)\in\R^N\quad\text{ for }(r,s,t)\in[-\delta,\delta]\times\tilde{\Sigma}\times[0,T]
	\] 
	and $\delta>0$ small. In the following we show that the properties in the theorem are satisfied if $\delta>0$ is small and $\beta>0$ as well as $c_0>0$ above were chosen properly. 
	
	\begin{proof}[Ad 1.-2]
		First of all, $X$ is well-defined due to the cutoff-function. Moreover, $X$ is smooth and
		\begin{align}\label{eq_coordND_Xabl_r}
		\partial_rX(r,s,t)&=\vec{n}(s,t)+\partial_rw(r,\tilde{\sigma}(s),t)\vec{T}(s,t)\in\R^N,\\
		d_s[X(r,.,t)]&=d_s[\tilde{X}_0(.,t)]+rd_s[\vec{n}(.,t)]+d_s[w(r,\tilde{\sigma}(.),t)]\vec{T}+w|_{(r,\tilde{\sigma}(s),t)}d_s[\vec{T}(.,t)]\label{eq_coordND_Xabl_s}
		\end{align}
		for all $(r,s,t)\in[-\delta,\delta]\times\tilde{\Sigma}\times[0,T]$, where $d_s[X(r,.,t)]$, $d_s[\tilde{X}_0(.,t)]$, $d_s[\vec{n}(.,t)]$ and $d_s[\vec{T}(.,t)]$ map from $T_s\tilde{\Sigma}$ to $\R^N$. Hence
		\begin{align}\label{eq_coordND_Xabl1}
		d_{(r,s)}[X(.,t)]&:\R\times T_s\tilde{\Sigma}\rightarrow\R^N, (v_1,v_2)\mapsto \partial_rX|_{(r,s,t)}v_1+d_s[X(r,.,t)](v_2),\\
		d_{(0,s)}[X(.,t)]&:\R\times T_s\tilde{\Sigma}\rightarrow\R^N, (v_1,v_2)\mapsto\vec{n}(s,t)v_1+d_s[\tilde{X}_0(.,t)](v_2),\label{eq_coordND_Xabl2}
		\end{align}
		where we used $w|_{r=0}=\partial_rw|_{r=0}=0$. Since $d_s[\tilde{X}_0(.,t)]:T_s\tilde{\Sigma}\rightarrow T_{\tilde{X}_0(s,t)}\tilde{\Gamma}_t$ is an isomorphism and $\R^N=T_{\tilde{X}_0(s,t)}\tilde{\Gamma}_t\oplus N_{\tilde{X}_0(s,t)}\tilde{\Gamma}_t$, we obtain that $d_{(0,s)}[X(.,t)]:\R\times T_s\tilde{\Sigma}\rightarrow\R^N$ is invertible for every $(s,t)\in\tilde{\Sigma}\times[0,T]$. By compactness this is also valid for $d_{(r,s)}[X(.,t)]$ for every $(r,s,t)\in[-\delta,\delta]\times\tilde{\Sigma}\times[0,T]$ if $\delta>0$ is small, cf.~the similar argument in the proof of Theorem 2.1 in \cite{AbelsMoser}. The Inverse Function Theorem yields that $\overline{X}$ is locally injective and together with injectivity on $\{0\}\times\tilde{\Sigma}\times[0,T]$ we get similarly as in the $2$-dimensional case in \cite{AbelsMoser} by contradiction and compactness that $\overline{X}$ is injective on $[-\delta,\delta]\times\tilde{\Sigma}\times[0,T]$ for $\delta>0$ small. Moreover, due to the Inverse Function Theorem, $\overline{X}$ can locally in $\R\times\Sigma_0\times\R$ be extended to a smooth diffeomorphism. With a similar contradiction and compactness argument as before, it follows that $\overline{X}$ can be extended to a smooth diffeomorphism on an open neighbourhood of $[-\delta,\delta]\times\Sigma\times[0,T]$ in $\R\times\Sigma_0\times\R$ mapping onto an open set in $\R^{N+1}$.
		
		Next we prove that $X([-\delta,\delta]\times\Sigma\times[0,T])\subset\overline{\Omega}$ if $\delta>0$ is small and related properties. First, note that the set $\Gamma\textbackslash\overline{X}_0(Y(\partial\Sigma\times[0,\varepsilon]\times[0,T]))$ has a positive distance to $\partial\Omega\times[0,T]$ by compactness. Moreover,
		\[
		X(r,s,t)=X_0(s,t)+r\vec{n}(s,t)\quad\text{ for }(r,s,t)\in[-\delta,\delta]\times\left[\Sigma\textbackslash Y(\partial\Sigma\times[0,\varepsilon])\right]\times[0,T].
		\]
		Therefore $X(r,s,t)$ stays in $\Omega$ for such $(r,s,t)$ if $\delta>0$ is small. For the remaining points we use geometric arguments with angles and Lemma \ref{th_coordND_lemma}. For $s\in Y(\partial\Sigma\times[0,\varepsilon])$ we observe that $Y(\tilde{\sigma}(s),.):[0,\tilde{b}(s)]\rightarrow\Sigma$ is a curve from $\tilde{\sigma}(s)$ to $s$. Hence $X(r,Y(\tilde{\sigma}(s),.),t):[0,b(s)]\rightarrow\R^N$ is a curve from $X(r,\tilde{\sigma}(s),t)$ to $X(r,s,t)$, where
		\[
		\frac{d}{db}[X(r,Y(\tilde{\sigma}(s),b),t)]=d_{Y(\tilde{\sigma}(s),b)}[X(r,.,t)](\partial_bY(\tilde{\sigma}(s),b)).
		\]
		Because of \eqref{eq_coordND_Xabl_s} and $w|_{r=0}=\partial_rw|_{r=0}=0$ we have 
		\[
		\left.\frac{d}{db}\right|_{b=0}[X(0,Y(\sigma,.),t)]=d_\sigma[X_0(.,t)](-\vec{n}_{\partial\Sigma}(\sigma))\quad\text{ for all }(\sigma,t)\in\partial\Sigma\times[0,T].
		\]
		Here $d_\sigma[X_0(.,t)]$ is invertible from $T_\sigma\Sigma$ to $T_{X_0(\sigma,t)}\Gamma_t$ as well as from $T_\sigma\partial\Sigma$ to $T_{X_0(\sigma,t)}\partial\Gamma_t$. Therefore $d_{\sigma}[X_0(.,t)](\vec{n}_{\partial\Sigma}(\sigma))\cdot\vec{n}_{\partial\Gamma}(\sigma,t)>0$ and by compactness
		\[
		d_{\sigma}[X_0(.,t)](\vec{n}_{\partial\Sigma}(\sigma))\cdot\vec{n}_{\partial\Gamma}(\sigma,t)\geq c_1>0\quad\text{ for all }(\sigma,t)\in\partial\Sigma\times[0,T].
		\]
		Due to \eqref{eq_coordND_eps} and \eqref{eq_coordND_Xabl_s} we obtain for all $(r,b,s,t)\in[-\delta,\delta]\times[0,\varepsilon]\times Y(\partial\Sigma\times[0,\varepsilon])\times[0,T]$ that
		\begin{align}\label{eq_coordND_cone}
		-\vec{n}_{\partial\Gamma}|_{(\tilde{\sigma}(s),t)}\cdot\frac{d}{db}\left[X(r,Y(\tilde{\sigma}(s),b),t)\right]\geq\frac{c_1}{2}>0
		\end{align}
		provided that $\delta>0$ is small and $c_0>0$ was chosen sufficiently small before. Moreover, it holds $X_0(Y(\sigma,b),t)\in R_{\frac{\eta}{2}}(\sigma,t)$ for $(\sigma,b,t)\in\partial\Sigma\times[0,\varepsilon]\times[0,T]$ by the choice of $\varepsilon$. This yields
		\begin{align}\label{eq_coordND_cylinder}
		X(r,Y(\sigma,b),t)\in R_{\frac{3\eta}{4}}(\sigma,t)\quad\text{ for all }(r,\sigma,b,t)\in[-\delta,\delta]\times\partial\Sigma\times[0,\varepsilon]\times[0,T]
		\end{align}
		if $\delta>0$ is small. Altogether we can determine the location of $X(r,Y(\sigma,b),t)$ geometrically: By \eqref{eq_coordND_cylinder} we know that $X(r,Y(\sigma,b),t)$ is contained in a cylinder where we have a suitable graph parametrization of $\partial\Omega$ due to Lemma \ref{th_coordND_lemma}. Moreover, \eqref{eq_coordND_cone} and the Fundamental Theorem of Calculus yield that $X(r,Y(\sigma,b),t)$ lies in a cone (where $c_1$ determines how close it can be to a half space) viewed from $X(r,Y(\sigma,0),t)$. Therefore if $\beta>0$ in the beginning of the proof was chosen sufficiently small, the cone without the tip lies inside of $\Omega$. Note that $c_1$ above is independent of $\beta,c_0,\eta,\varepsilon,\delta$. Therefore we can choose $\beta,c_0>0$ small first (both only depending on $c_1$), then $\eta>0$, then $\varepsilon >0$ and finally $\delta>0$. This proves $X([-\delta,\delta]\times\Sigma\times[0,T])\subset\overline{\Omega}$ and $X(r,s,t)\in\partial\Omega$ if and only if $s\in\partial\Sigma$. Moreover, with an analogous argument it follows that $X$ maps
		\[
		[-\delta,\delta]\times\tilde{Y}(\partial\Sigma\times[-\varepsilon,0))\times[0,T]
		\]
		outside $\overline{\Omega}$ for possibly smaller $\beta,c_0,\eta,\varepsilon,\delta$. Finally, the extension property of $\overline{X}$ yields that $\Gamma(\tilde{\delta})$
		is an open neighbourhood of $\Gamma$ in $\overline{\Omega}\times[0,T]$ for $\tilde{\delta}\in(0,\delta]$ if $\delta>0$ is small.\qedhere$_{\text{1.-2.}}$\end{proof}
	
	\begin{proof}[Ad 3]
		Consider $(r,s,\textup{pr}_t):=\overline{X}^{-1}:\overline{\Gamma(\delta)}\rightarrow[-\delta,\delta]\times\Sigma\times[0,T]$. Then
		\[
		d_x[X^{-1}(.,t)]:\R^N\rightarrow\R\times T_{s(x,t)}\Sigma:
		\vec{v}\mapsto(d_x[r(.,t)],d_x[s(.,t)])\vec{v}
		=(\nabla r|_{(x,t)}\cdot\vec{v},D_xs|_{(x,t)}\vec{v})
		\]
		is invertible for all $(x,t)\in\overline{\Gamma(\delta)}$. Therefore $|\nabla r|_{(x,t)}|>0$ and by compactness $|\nabla r|_{(x,t)}|\geq c>0$ for all $(x,t)\in\overline{\Gamma(\delta)}$. Moreover, $(\partial_{x_j}s|_{(x,t)})_{j=1}^N$ generate $T_{s(x,t)}\Sigma$ for all such $(x,t)$. In particular
		\[
		D_xs(D_xs)^\top|_{(x,t)}
		=(\nabla s_i\cdot\nabla s_j|_{(x,t)})_{i,j=1}^N
		=\sum_{q=1}^N\partial_qs(\partial_qs)^\top|_{(x,t)}
		\]
		is injective as a linear map in $\Lc(T_{s(x,t)}\Sigma)$ for all $(x,t)\in\overline{\Gamma(\delta)}$. The latter follows directly since $D_xs(D_xs)^\top|_{(x,t)}\vec{v}=0$ for some $\vec{v}\in T_{s(x,t)}\Sigma$ implies $\sum_{q=1}^N|(\partial_qs|_{(x,t)})^\top\vec{v}|^2=0$ and hence $\vec{v}=0$. Therefore $D_xs(D_xs)^\top|_{(x,t)}$ is positive definite on $T_{s(x,t)}\Sigma$ for all $(x,t)\in\overline{\Gamma(\delta)}$. In local coordinates the latter transforms to a linear map on $\R^{N-1}$. Note that by scaling it is equivalent to consider vectors in the sphere in $\R^{N-1}$ in order to verify the definition of positive definiteness. Therefore by compactness we obtain that $D_xs(D_xs)^\top|_{(x,t)}$ is uniformly positive definite as a linear map in $\Lc(T_{s(x,t)}\Sigma)$ for all $(x,t)\in\overline{\Gamma(\delta)}$.

		Let $(r,s,t)\in[-\delta,\delta]\times\Sigma\times[0,T]$.
		Then $d_{(r,s)}[X(.,t)]\circ d_{X(r,s,t)}[X^{-1}(.,t)]=\textup{Id}_{\R^N}$. Hence
		\[
		(d_{(r,s)}[X(.,t)])^{-1}=(d_{X(r,s,t)}[r(.,t)],d_{X(r,s,t)}[s(.,t)]):\R^N\rightarrow\R\times T_s\Sigma.
		\]
		On the other hand \eqref{eq_coordND_Xabl2} implies
		\begin{align}\label{eq_eq_coordND_abl_r_s_0}
		d_{X(0,s,t)}[r(.,t)]=\vec{n}(s,t)^\top\quad\text{ and }\quad d_{X(0,s,t)}[s(.,t)]=d_s[X_0(.,t)]^{-1}P_{T_{X_0(s,t)}\Gamma_t}.
		\end{align}
		We can also write 
		\begin{align}\label{eq_coordND_abl_r_s}
		d_{X(r,s,t)}[r(.,t)](\vec{v})=D_xr|_{\overline{X}(r,s,t)} \vec{v}\quad\text{ and }\quad d_{X(r,s,t)}[s(.,t)](\vec{v})=D_xs|_{\overline{X}(r,s,t)}\vec{v}
		\end{align} 
		for all $\vec{v}\in\R^N$. Altogether this yields $\nabla r|_{\overline{X}_0(s,t)}=\vec{n}(s,t)$ and $D_xs|_{\overline{X}_0(s,t)}\vec{n}(s,t)=0$ for all $(s,t)\in\Sigma\times[0,T]$. With similar arguments we obtain
		\[
		\nabla r|_{\overline{X}(r,s,t)}=\vec{n}(s,t)\quad\text{ and }\quad D_xs|_{\overline{X}(r,s,t)}\vec{n}(s,t)=0
		\]
		for all $(r,s,t)\in[-\delta,\delta]\times\left[\Sigma\textbackslash Y(\partial\Sigma\times[0,\mu_0])\right]\times[0,T]$. Moreover, it holds
		\[
		\partial_r(|\nabla r|^2\circ\overline{X})|_{(0,s,t)}=2\frac{d}{dr}[D_xr|_{X(.,s,t)}]|_{r=0}\cdot \nabla r|_{\overline{X}_0(s,t)}.
		\] 
		In order to use \eqref{eq_coordND_abl_r_s} we compute 
		\[
		\frac{d}{dr}[(d_{(r,s)}[X(.,t)])^{-1}]|_{r=0}=-(d_{(0,s)}X(.,t))^{-1}\circ\frac{d}{dr}[d_{(r,s)}X(.,t)]|_{r=0}\circ(d_{(0,s)}X(.,t))^{-1}
		\]
		with the formula for the Fréchet derivative of the inverse of a differentiable family of invertible, linear operators. Here $(d_{(0,s)}X(.,t))^{-1}:\R^N\rightarrow\R\times T_s\Sigma$ is explicitly determined by \eqref{eq_eq_coordND_abl_r_s_0}. Furthermore, differentiating \eqref{eq_coordND_Xabl_r}-\eqref{eq_coordND_Xabl_s} with respect to $r$ we obtain for all $(v_1,v_2)\in\R\times T_s\Sigma$
		\[
		\frac{d}{dr}[d_{(r,s)}X(.,t)]|_{r=0}(v_1,v_2)=\partial_r^2w(0,\sigma(s),t)\vec{T}(s,t)v_1+d_s[\vec{n}(.,t)](v_2)\in\R^N.
		\]
		Therefore it holds
		\begin{align*}
		&\frac{d}{dr}[(d_{(r,s)}[X(.,t)])^{-1}]|_{r=0}(\vec{n}(s,t))=
		-(d_{(0,s)}X(.,t))^{-1}\left(\frac{d}{dr}[d_{(r,s)}X(.,t)]|_{r=0}(1,0)\right)\\
		&=-(d_{(0,s)}X(.,t))^{-1}\left(\partial_r^2w(0,\sigma(s),t)\vec{T}(s,t)\right)=
		-(0,\partial_r^2w(0,\sigma(s),t)d_s[X_0(.,t)]^{-1}\vec{T}(s,t)),
		\end{align*}
		where $\partial_r(|\nabla r|^2\circ\overline{X})|_{(0,s,t)}$ equals twice the first component, thus equals zero. Moreover, one can prove the identities for the normal velocity $V$ and mean curvature $H$ in an analogous way as in the case $N=2$, cf.~the proof of Theorem 2.1 in \cite{AbelsMoser}.\qedhere$_{3.}$\end{proof}
	
	\begin{proof}[Ad 4]
		Finally, we show the properties of $b$. Let $(\overline{\sigma},\overline{b}):=Y^{-1}:R(Y)\rightarrow\partial\Sigma\times[0,2\mu_1]$. Then $b=\overline{b}\circ s$ on $\overline{X}([-\delta,\delta]\times R(Y)\times[0,T])$ and by chain rule 
		\[
		d_x[b(.,t)]=d_{s(x,t)}\overline{b}\circ d_x[s(.,t)]:\R^N\rightarrow\R,
		\] 
		where we are interested in boundary points $x=X_0(\sigma,t)$ for any $(\sigma,t)\in\partial\Sigma\times[0,T]$. For such $x$ it holds $s(x,t)=\sigma$. Because of $Y(.,0)=\textup{id}_{\partial\Sigma}$ and \eqref{eq_coordND_DbY} we have
		\[
		d_{(\sigma,0)}Y:T_\sigma\partial\Sigma\times\R\rightarrow T_\sigma\Sigma:
		(v_1,v_2)\mapsto v_1-\vec{n}_{\partial\Sigma}(\sigma)v_2.
		\]
		Therefore $(d_\sigma\overline{\sigma},d_\sigma\overline{b})=(d_{(\sigma,0)}Y)^{-1}=(\textup{pr}_{T_\sigma\partial\Sigma},-\vec{n}_{\partial\Sigma}(\sigma)^\top)$. Together with \eqref{eq_eq_coordND_abl_r_s_0} we obtain
		\[
		\nabla b|_{(x,t)}\cdot v=d_x[b(.,t)](v)=-\vec{n}_{\partial\Sigma}(\sigma)\cdot((d_\sigma[X_0(.,t)])^{-1}\circ P_{T_x\Gamma_t}(v))\quad\text{ for all }v\in\R^N.
		\]
		Hence $\nabla b|_{\overline{X}_0(\sigma,t)}=\nabla b|_{(x,t)}\in T_{X_0(\sigma,t)}\Gamma_t$, in particular $\nabla b\cdot\nabla r|_{\overline{X}_0(\sigma,t)}=0$, and
		\[
		\nabla b|_{\overline{X}_0(\sigma,b)}\cdot N_{\partial\Omega}|_{X_0(\sigma,b)}=-\vec{n}_{\partial\Sigma}(\sigma)\cdot (d_\sigma[X_0(.,t)])^{-1}\vec{n}_{\partial\Gamma}(\sigma,t)\quad\text{ for all }(\sigma,t)\in\partial\Sigma\times[0,T].
		\]
		Note that due to \eqref{eq_eq_coordND_abl_r_s_0}-\eqref{eq_coordND_abl_r_s} it holds 
		\[
		(d_\sigma[X_0(.,t)])^{-1}\vec{n}_{\partial\Gamma}(\sigma,t)=D_xs N_{\partial\Omega}|_{\overline{X}_0(\sigma,t)}
		\] 
		for all $(\sigma,t)\in\partial\Sigma\times[0,T]$. Hence we obtain the identity in the theorem. Moreover, we know that $d_\sigma[X_0(.,t)]$ is invertible from $T_\sigma\Sigma$ to $T_{X_0(\sigma,t)}\Gamma_t$ as well as from $T_\sigma\partial\Sigma$ to $T_{X_0(\sigma,t)}\partial\Gamma_t$. This yields that $|\nabla b|_{\overline{X}_0(\sigma,t)}\cdot N_{\partial\Omega}|_{X_0(\sigma,t)}|>0$ for all $(\sigma,t)\in\partial\Sigma\times[0,T]$ and by smoothness and compactness the latter is bounded from below by a uniform positive constant. Because of the Cauchy-Schwarz-Inequality, this estimate carries over to $|\nabla b|_{\overline{X}_0(\sigma,t)}|$.\qedhere$_{\text{4.}}$\end{proof}
\end{proof}

Finally, we show relations of $\partial_n, \nabla_\tau$ defined in Remark \ref{th_coordND_rem},~2.~to $\nabla,\nabla_\Sigma,\nabla_{\partial\Sigma},\partial_b$.

\begin{Corollary}\label{th_coordND_nabla_tau_n}
	Let $\psi:\Gamma(\tilde{\delta})\rightarrow\R$ for some $\tilde{\delta}\in(0,\delta]$ be sufficiently smooth. Then 
	\begin{enumerate}
		\item $\nabla\psi=\partial_n\psi\nabla r+\nabla_\tau\psi$ on $\Gamma(\tilde{\delta})$ and there are $c,C>0$ independent of $\psi$ and $\tilde{\delta}$ such that  
		\begin{alignat*}{2}
		c(|\partial_n\psi|+|\nabla_\tau\psi|)
		&\leq|\nabla\psi|
		\leq C(|\partial_n\psi|+|\nabla_\tau\psi|)&\quad&\text{ on }\Gamma(\tilde{\delta}),\\
		c|\partial_n\psi|&\leq|\nabla r\partial_r(\psi\circ\overline{X})\circ\overline{X}^{-1}|\leq C|\partial_n\psi|&\quad&\text{ on }\Gamma(\tilde{\delta}),\\
		c|\nabla_\tau\psi|&\leq|\nabla_\Sigma(\psi\circ\overline{X})\circ\overline{X}^{-1}|\leq C|\nabla_\tau\psi|&\quad&\text{ on }\Gamma(\tilde{\delta}).
		\end{alignat*}
		\item It holds $|\nabla\psi|^2
		=|\partial_n\psi|^2+|\nabla_\tau\psi|^2$ on $\Gamma(\tilde{\delta},\mu_0)$.
		\item Set 
		$\overline{Y}:[-\delta,\delta]\times\partial\Sigma\times[0,2\mu_1]\times[0,T]\rightarrow[-\delta,\delta]\times\Sigma\times[0,T]:(r,\sigma,b,t)\mapsto(r,Y(\sigma,b),t)$ and $\overline{\psi}:=\psi\circ\overline{X}\circ\overline{Y}|_{(-\tilde{\delta},\tilde{\delta})\times\partial\Sigma\times[0,2\mu_1]\times[0,T]}$.
		Then 
		\[
		\tilde{c}(|\nabla_{\partial\Sigma}\overline{\psi}|+|\partial_b\overline{\psi}|)
		\leq|\nabla_\Sigma[\psi\circ\overline{X}]\circ\overline{Y}|
		\leq \tilde{C}(|\nabla_{\partial\Sigma}\overline{\psi}|+|\partial_b\overline{\psi}|)
		\]
		on $(-\tilde{\delta},\tilde{\delta})\times\partial\Sigma\times[0,2\mu_1]\times[0,T]$ for some $\tilde{c},\tilde{C}>0$ independent of $\psi, \tilde{\delta}$.
	\end{enumerate}
	Analogous assertions hold for $\psi$ defined on $\Gamma_t(\tilde{\delta})$, $t\in[0,T]$ and for $\psi$ defined on open subsets of $\Gamma(\tilde{\delta})$ or $\Gamma_t(\tilde{\delta})$, $t\in[0,T]$ with natural adjustments and uniform constants (w.r.t.~$\psi$, $t$ and the sets).
\end{Corollary}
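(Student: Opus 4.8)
The plan is to reduce all three assertions to the chain rule together with the structural properties of $\overline{X}$ collected in Theorem~\ref{th_coordND}, and a compactness argument on the \emph{fixed} compact sets $[-\delta,\delta]\times\Sigma\times[0,T]$ and $\partial\Sigma\times[0,2\mu_1]$ (this is what produces constants independent of $\psi$ and $\tilde\delta$). First I would set $\Psi:=\psi\circ\overline{X}$ on $(-\tilde\delta,\tilde\delta)\times\Sigma\times[0,T]$, so that $\psi=\Psi\circ\overline{X}^{-1}$. For fixed $t$ the chain rule in the spatial variable, using $d_x[X^{-1}(.,t)](\vec{v})=(\nabla r\cdot\vec{v},\,D_xs\,\vec{v})$ and that $\Sigma$ carries the Euclidean metric (so $d_s\Psi(\vec{w})=\nabla_\Sigma\Psi\cdot\vec{w}$ for $\vec{w}\in T_s\Sigma$), gives
\[
\nabla\psi=\big(\partial_r\Psi\,\nabla r+(D_xs)^\top\nabla_\Sigma\Psi\big)\circ\overline{X}^{-1}=\partial_n\psi\,\nabla r+\nabla_\tau\psi\qquad\text{on }\Gamma(\tilde\delta),
\]
the decomposition claimed in~1. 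The crucial observation is that the pointwise linear map $L_{(x,t)}\colon\R\times T_{s(x,t)}\Sigma\to\R^N$, $(\lambda,\vec{w})\mapsto\lambda\nabla r+(D_xs)^\top\vec{w}$, is exactly the adjoint, with respect to the Euclidean inner products, of $A_{(x,t)}:=d_x[X^{-1}(.,t)]\colon\vec{v}\mapsto(\nabla r\cdot\vec{v},D_xs\,\vec{v})$. Since $\overline{X}$ is (a restriction of) a smooth diffeomorphism by Theorem~\ref{th_coordND}, $A_{(x,t)}$ is invertible with $A_{(x,t)}^{-1}=d_{(r,s)}[X(.,t)]$ smooth; hence $\|A\|$ and $\|A^{-1}\|$ are bounded on the compact set $[-\delta,\delta]\times\Sigma\times[0,T]$, and consequently $L_{(x,t)}=A_{(x,t)}^{\ast}$ is invertible with $\|L^{-1}\|=\|A^{-1}\|$ uniformly bounded.

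From this the estimates in~1.\ follow. Indeed $\|L\|\le C$ gives $|\nabla\psi|\le C\big(|\partial_n\psi|+|(\nabla_\Sigma\Psi)\circ\overline{X}^{-1}|\big)$, while $\|L^{-1}\|\le C$ gives $|\partial_n\psi|+|(\nabla_\Sigma\Psi)\circ\overline{X}^{-1}|\le C|\nabla\psi|$, passing freely between the $\ell^1$- and $\ell^2$-norms on $\R\times T_s\Sigma$. The uniform positive definiteness of $D_xs(D_xs)^\top$ on $T_{s(x,t)}\Sigma$ yields $c\,|\nabla_\Sigma\Psi|\le|(D_xs)^\top\nabla_\Sigma\Psi|\le C\,|\nabla_\Sigma\Psi|$, that is $|\nabla_\tau\psi|\sim|(\nabla_\Sigma\Psi)\circ\overline{X}^{-1}|$, which is the third line; combined with the previous inequalities this gives the first line, and the second line is immediate from $0<c\le|\nabla r|\le C$ on $\overline{\Gamma(\delta)}$ (the lower bound is part of Theorem~\ref{th_coordND}, the upper bound is compactness). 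For~2., on $\Gamma(\tilde\delta,\mu_0)$ the $s$-coordinate avoids $Y(\partial\Sigma\times[0,\mu_0])$, so Theorem~\ref{th_coordND} gives $\nabla r=\vec{n}$ and $D_xs\,\vec{n}=0$; the latter means $\nabla r\cdot(D_xs)^\top\vec{w}=(D_xs\,\vec{n})\cdot\vec{w}=0$ for every $\vec{w}$, so $\partial_n\psi\,\nabla r\perp\nabla_\tau\psi$ and $|\nabla r|=1$, whence $|\nabla\psi|^2=|\partial_n\psi|^2+|\nabla_\tau\psi|^2$.

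For~3.\ I would fix $(r,t)$ and regard $\overline{\psi}(r,.,.,t)=\Psi(r,.,t)|_{R(Y)}\circ Y$. The chain rule gives $d[\overline{\psi}(r,.,.,t)]=d[\Psi(r,.,t)]\circ dY$, and passing to Riemannian gradients (product metric on $\partial\Sigma\times[0,2\mu_1]$, Euclidean metric on $\Sigma$) one gets $\big(\nabla_{\partial\Sigma}\overline{\psi},\,\partial_b\overline{\psi}\big)=(dY)^{\ast}\big(\nabla_\Sigma[\psi\circ\overline{X}]\circ\overline{Y}\big)$. Since $Y=\tilde{Y}|_{\partial\Sigma\times[0,2\mu_1]}$ with $\tilde{Y}$ a smooth diffeomorphism on the compact set $\partial\Sigma\times[-2\mu_1,2\mu_1]$, the maps $dY$ and $(dY)^{-1}$ are uniformly bounded, hence $(dY)^{\ast}$ is uniformly invertible, and the asserted two-sided estimate follows (again up to equivalence of the $\ell^1$- and $\ell^2$-norms), with constants depending only on the fixed data and not on $\psi$ or $\tilde\delta$. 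The ``analogous assertions'' for $\psi$ on $\Gamma_t(\tilde\delta)$ or on open subsets are obtained by the same computations with $\overline{X}(.,.,t)$ for fixed $t$, the uniformity in $t$ again following from compactness of $[-\delta,\delta]\times\Sigma\times[0,T]$, while for subsets one just restricts the pointwise estimates. I expect the only genuinely delicate points to be the identification $L=A^{\ast}$, which is what makes the lower bounds in~1.\ available directly rather than via a separate perturbation argument in the normal direction near $\partial\Omega$, and the bookkeeping between the product metric and the pullback metric in~3.
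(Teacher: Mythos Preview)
Your proof is correct and follows essentially the same approach as the paper: chain rule for the decomposition, invertibility of $d_x[X(.,t)^{-1}]$ with uniform norm bounds by compactness for the two-sided estimates in~1., the identities $|\nabla r|=1$ and $D_xs\,\nabla r=0$ for~2., and the invertibility of $dY$ with uniform bounds for~3. Your explicit identification $L_{(x,t)}=A_{(x,t)}^{\ast}$ is a clean way to extract the lower bound on $|\nabla\psi|$ that the paper obtains slightly more implicitly by viewing $\nabla\psi\cdot(\cdot)$ as the composition $d_{(r,s)}\Psi\circ d_x[X^{-1}]$ and comparing operator norms; the two are equivalent, and the rest of your argument (including the use of the uniform positive definiteness of $D_xs(D_xs)^\top$ for the $\nabla_\tau$--$\nabla_\Sigma$ equivalence) matches the paper.
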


\begin{proof}
	We only consider $\psi:\Gamma(\tilde{\delta})\rightarrow\R$. The case of sufficiently smooth $\psi:\Gamma_t(\tilde{\delta})\rightarrow\R$ for any $t\in[0,T]$ and the case of other open subsets can be shown with analogous arguments.\phantom{\qedhere}
	
	\begin{proof}[Ad 1]
		The second equivalence estimate is evident since $0<\tilde{c}\leq|\nabla r|\leq\tilde{C}$ due to Theorem \ref{th_coordND}. Moreover, it holds $\psi=(\psi\circ\overline{X})\circ\overline{X}^{-1}|_{(-\tilde{\delta},\tilde{\delta})\times\Sigma\times[0,T]}$. The chain rule yields 
		\[
		\nabla\psi|_{(x,t)}\cdot.=d_x[\psi(.,t)]=d_{(r,s)}[\psi\circ \overline{X}(.,t)]\circ d_x[X(.,t)^{-1}]:\R^N\rightarrow\R
		\]
		for all $(x,t)=\overline{X}(r,s,t)\in\Gamma(\tilde{\delta})$.
		Here 
		\begin{align*}
		d_{(r,s)}[\psi\circ\overline{X}(.,t)]:\R\times T_s\Sigma\rightarrow\R:(w,\vec{v})&\mapsto  
		d_r[\psi\circ\overline{X}|_{(.,s,t)}](w)
		+d_s[\psi\circ\overline{X}|_{(r,.,t)}](\vec{v})\\
		&=\partial_{\tilde{r}}[\psi\circ\overline{X}|_{(.,s,t)}]|_r\,w
		+\nabla_\Sigma[\psi\circ\overline{X}|_{(r,.,t)}]|_s\cdot\vec{v}.
		\end{align*}
		Furthermore,
		$d_x[X(.,t)^{-1}]:\R^N\rightarrow\R\times T_s\Sigma:\vec{u}\mapsto(\nabla r\cdot\vec{u},D_xs\,\vec{u})$ is invertible for all $(x,t)=\overline{X}(r,s,t)\in\overline{\Gamma(\delta)}$ and the operator norm and the one of the inverse is uniformly bounded due to compactness. This yields $\nabla\psi=\partial_n\psi\nabla r+\nabla_\tau\psi$ on $\Gamma(\tilde{\delta})$ and 
		\[
		c(|\partial_n\psi|+|\nabla_\Sigma(\psi\circ\overline{X})\circ\overline{X}^{-1}|)
		\leq |\nabla\psi|
		\leq C(|\partial_n\psi|+|\nabla_\Sigma(\psi\circ\overline{X})\circ\overline{X}^{-1}|)\quad\text{ on }\Gamma(\tilde{\delta})
		\]
		with $c,C>0$ independent of $\psi,\tilde{\delta}$. In order to show 1.~it remains to prove the last equivalence estimate in the claim. The latter is valid since $D_xs$ is uniformly bounded and $D_xs(D_xs)^\top$ is uniformly positive definite on $T_{s(x,t)}\Sigma$ for all $(x,t)\in\overline{\Gamma(\delta)}$ due to Theorem \ref{th_coordND}.\qedhere$_{1.}$\end{proof}
	
	\begin{proof}[Ad 2]
		Consequence of 1.~and $|\nabla r|=1$, $D_xs\nabla r=0$ on $\Gamma(\tilde{\delta},\mu_0)$ due to Theorem \ref{th_coordND}, 3.\qedhere$_{2.}$\end{proof}
	
	\begin{proof}[Ad 3]
		The chain rule yields $d_{(\sigma,b)}[\overline{\psi}(r,.,t)]=d_{Y(\sigma,b)}[\psi\circ\overline{X}(r,.,t)]\circ d_{(\sigma,b)}Y:T_\sigma\partial\Sigma\times\R\rightarrow\R$ for all $(r,\sigma,b,t)\in(-\tilde{\delta},\tilde{\delta})\times\partial\Sigma\times[0,2\mu_1]\times[0,T]$. Here $d_{(\sigma,b)}Y:T_\sigma\partial\Sigma\times\R\rightarrow T_{Y(\sigma,b)}\Sigma$ is invertible for all $(\sigma,b)\in\partial\Sigma\times[0,2\mu_1]$ and the operator norm and the one of the inverse is uniformly bounded by compactness. Moreover, it holds
		\[
		d_{(\sigma,b)}[\overline{\psi}(r,.,t)](\vec{v},w)
		=\nabla_{\partial\Sigma}[\overline{\psi}(r,.,b,t)]|_\sigma\cdot\vec{v}+\partial_b[\overline{\psi}(r,\sigma,.,t)]|_b\,w
		\]
		for all $(\vec{v},w)\in T_\sigma\partial\Sigma\times\R$
		and $d_{Y(\sigma,b)}[\psi\circ\overline{X}(r,.,t)](\vec{u})=\nabla_\Sigma[\psi\circ\overline{X}(r,.,t)]|_{Y(\sigma,b)}\cdot\vec{u}$ for all $\vec{u}\in T_{Y(\sigma,b)}\Sigma$. This proves the claim. 
		\qedhere$_{3.}$\end{proof}
\end{proof}

\section{Model Problems}\label{sec_model_problems}
Unless otherwise stated we use real-valued function spaces in this section.

\subsection{Some Scalar-valued ODE Problems on $\R$}\label{sec_ODE_scalar}
In this section we recall existence and regularity results needed for ODEs appearing in the inner asymptotic expansion for \hyperlink{AC}{(AC)}. For the potential $f:\R\rightarrow\R$ in this section we assume \eqref{eq_AC_fvor1}. 
\subsubsection{The ODE for the Optimal Profile}\label{sec_ODE_scalar_nonlin}
The ODE system for the lowest order is
\begin{align}\label{eq_theta_0_ODE}
-w''+f'(w)=0,\quad w(0)=0,\quad \lim_{z\rightarrow\pm\infty}w(z)=\pm1.
\end{align}
\begin{Theorem}\label{th_theta_0}
	Let $f$ be as in \eqref{eq_AC_fvor1}. Then \eqref{eq_theta_0_ODE} has a unique solution $\theta_0\in C^2(\R)$. Moreover, $\theta_0$ is smooth, $\theta_0'=\sqrt{2(f(\theta_0)-f(-1))}>0$ and
	\[
	D_z^k(\theta_0\mp 1)(z)=\Oc(e^{-\beta|z|})\quad\text{ for }z\rightarrow\pm\infty\text{ and all }k\in\N_0, \beta\in\left(0,\sqrt{\min\{f''(\pm1)\}}\right).
	\]
\end{Theorem}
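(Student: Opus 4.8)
The plan is to treat \eqref{eq_theta_0_ODE} as an autonomous second-order ODE and exploit its Hamiltonian structure. First I would establish the key first integral: multiplying $-w''+f'(w)=0$ by $w'$ and integrating gives $\tfrac12 (w')^2 - f(w) = \text{const}$; the boundary conditions $\lim_{z\to\pm\infty} w(z) = \pm 1$ together with $f'(\pm 1)=0$ force (after also arguing $w'\to 0$ at $\pm\infty$, which follows from boundedness of the orbit) the constant to be $-f(1) = -f(-1)$. Hence any solution satisfies $(w')^2 = 2(f(w)-f(1))$. By \eqref{eq_AC_fvor1} the right-hand side is strictly positive for $w\in(-1,1)$ and vanishes exactly at $w=\pm 1$, so a solution with $w(0)=0$ must be strictly increasing, giving $w' = +\sqrt{2(f(w)-f(-1))}>0$ on all of $\R$.

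Next I would construct $\theta_0$ and prove uniqueness simultaneously via separation of variables. On $(-1,1)$ define $G(y) := \int_0^y \frac{d\xi}{\sqrt{2(f(\xi)-f(-1))}}$. Near $y=\pm 1$ the nondegeneracy $f''(\pm 1)>0$ gives $f(\xi)-f(-1) \sim \tfrac12 f''(\pm 1)(\xi\mp 1)^2$, so the integrand behaves like $c/|\xi\mp 1|$ and $G(y)\to\pm\infty$ as $y\to\pm 1$. Thus $G:(-1,1)\to\R$ is a smooth increasing bijection, and $\theta_0 := G^{-1}$ solves the ODE with $\theta_0(0)=0$ and the correct limits; smoothness of $\theta_0$ follows from $G\in C^\infty$ with nonvanishing derivative and a bootstrap in the ODE itself. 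Uniqueness: any solution of \eqref{eq_theta_0_ODE} satisfies the same separated equation $G(w(z)) = z$, hence equals $\theta_0$.

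For the exponential decay estimates, fix the sign $+$ (the case $-$ is symmetric). Near $w=1$ write $v := 1-\theta_0$, which is positive, decreasing to $0$; from $(\theta_0')^2 = 2(f(\theta_0)-f(-1))$ and the Taylor expansion $f(1-v)-f(1) = \tfrac12 f''(1)v^2 + O(v^3)$ one gets $v' = -\sqrt{f''(1)}\,v\,(1+O(v))$, so for any $\beta \in (0,\sqrt{f''(1)})$ a Gronwall / comparison argument gives $v(z) \le C e^{-\beta z}$ for $z$ large; equivalently $\theta_0(z)-1 = O(e^{-\beta z})$. The decay of the derivatives is then obtained inductively: $\theta_0' = \sqrt{2(f(\theta_0)-f(-1))}$ is a smooth function of $\theta_0$ vanishing to first order at $w=1$, so $\theta_0' = O(e^{-\beta z})$; differentiating the ODE, $\theta_0'' = f'(\theta_0)$ and $\theta_0^{(k)}$ is a polynomial expression in $\theta_0',\theta_0'',\dots$ and derivatives of $f$ at $\theta_0$, each term carrying at least one factor decaying exponentially, so all $D_z^k(\theta_0-1)$ decay like $e^{-\beta z}$. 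I expect the main obstacle to be the careful handling of the logarithmic-type singularity of $G$ at the endpoints — one must verify both that $G$ is genuinely onto $\R$ (so the limits are attained) and that the local analysis near $\pm 1$ yields the sharp exponential rate with the stated constant $\beta < \sqrt{\min\{f''(\pm 1)\}}$; everything else is routine ODE bookkeeping.
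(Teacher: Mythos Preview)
Your proposal is correct and follows essentially the same route as the paper: the paper reduces to the equivalent first-order ODE $w'=\sqrt{2(f(w)-f(-1))}$, $w(0)=0$, via the Hamiltonian first integral (citing Schaubeck~\cite{Schaubeck}, Lemma~2.6.1), and your separation-of-variables construction through $G(y)=\int_0^y d\xi/\sqrt{2(f(\xi)-f(-1))}$ is exactly the standard way to solve that autonomous equation. Your decay argument via the local linearisation $v'\approx -\sqrt{f''(1)}\,v$ and bootstrap on derivatives is also the expected elementary method.
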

\begin{proof}
	See Schaubeck \cite{Schaubeck}, Lemma 2.6.1 and its proof. The idea is to solve the equivalent first order ODE
	\[
	w'=\sqrt{\int_{-1}^w 2f'(s)\,ds},\quad w(0)=0.
	\]
	Note that only ODE-methods and elementary arguments are used.
\end{proof}
We call $\theta_0$ the \emph{optimal profile}. A rescaled version will be the typical profile of the solutions for the scalar-valued Allen-Cahn equation with Neumann boundary condition \eqref{eq_AC1}-\eqref{eq_AC3} across the interface. If $f$ is even, then $\theta_0$ is even, $\theta_0'$ is odd and $\theta_0''$ even etc. For the typical double-well potential $f(u)=\frac{1}{2}(1-u^2)^2$ shown in Figure \ref{fig_double_well} one can directly compute that the optimal profile is $\theta_0=\tanh$, cf.~Figure \ref{fig_opt_profile}.

\begin{figure}[H]
	\centering
	\def\svgwidth{\linewidth}
	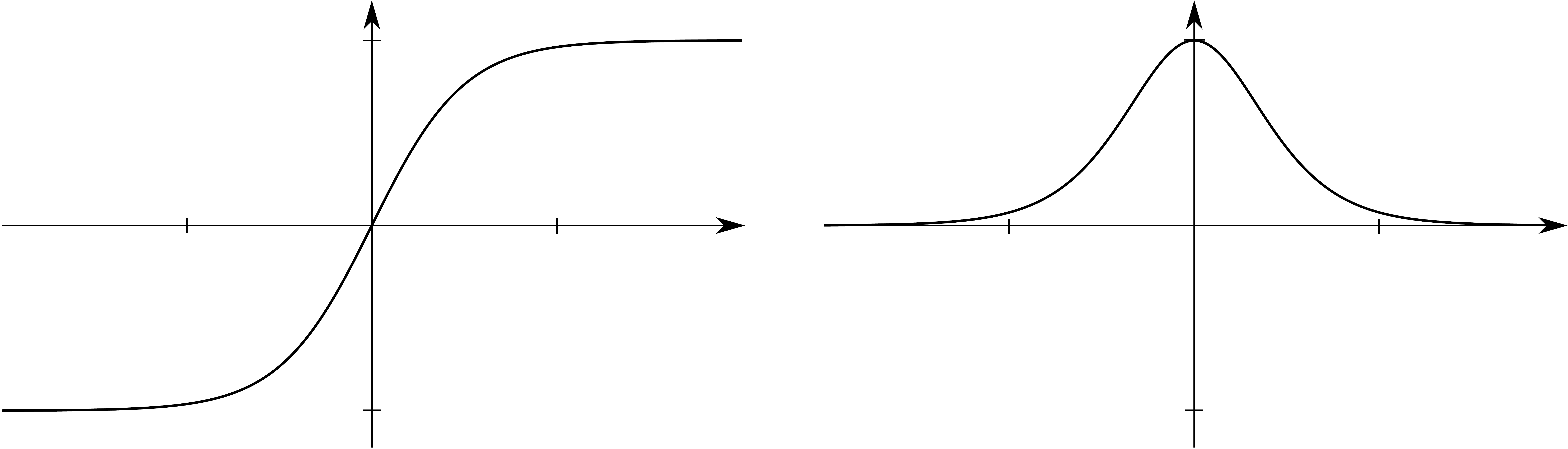
	\caption{Typical optimal profile $\theta_0=\tanh$ and the derivative $\theta_0'$.}\label{fig_opt_profile}
\end{figure}

\subsubsection{The Linearized ODE}\label{sec_ODE_scalar_lin}
The following theorem is concerned with the solvability of the equation which is obtained by linearization of \eqref{eq_theta_0_ODE} at $\theta_0'$, i.e.
\begin{align}\label{eq_ODE_lin}
\Lc_0w=A\quad\text{ in }\R,\quad w(0)=0,
\end{align}
where we have set $\Lc_0:=-\frac{d^2}{dz^2}+f''(\theta_0)$.
\begin{Theorem}\phantomsection{\label{th_ODE_lin}}
	\begin{enumerate}
		\item Let $A\in C_b^0(\R)$. Then \eqref{eq_ODE_lin} has a solution $w\in C^2(\R)\cap C_b^0(\R)$ if and only if $\int_{\R}A\theta_0'\,dz=0$. In that case $w$ is unique. Moreover, if $A(z)-A^\pm=\Oc(e^{-\beta|z|})$ for $z\rightarrow\pm\infty$ for some $\beta\in(0,\sqrt{\min\{f''(\pm1)\}})$, then 
		\[
		D_z^{l}\left[w-\frac{A^\pm}{f''(\pm 1)}\right]=\Oc(e^{-\beta|z|})\quad\text{ for }z\rightarrow\pm\infty,\quad l=0,1,2.
		\]
		\item Let $U\subseteq\R^d$ (any set $U$ is allowed, e.g.~a point) and $A:\R\times U\rightarrow\R$, $A^\pm:U\rightarrow\R$ be smooth (i.e.~locally smooth extendible) and the following hold uniformly in $U$:
		\[
		D_x^k D_z^l\left[A(z,.)-A^\pm\right]=\Oc(e^{-\beta|z|})\quad\text{ for }z\rightarrow\pm\infty,\quad k=0,...,K, l=0,...,L,
		\] 
		for some $\beta\in(0,\sqrt{\min\{f''(\pm1)\}})$ and $K,L\in\N_0$. Then $w:\R\times U\rightarrow\R$, where $w(.,x)$ is the solution of \eqref{eq_ODE_lin} for $A(.,x)$ for all $x\in U$, is also smooth and uniformly in $U$ it holds
		\[
		D_x^k D_z^l\left[w(z,.)-\frac{A^\pm}{f''(\pm1)}\right]=\Oc(e^{-\beta|z|})\quad\text{ for }z\rightarrow\pm\infty, m=0,...,K, l=0,...,L+2.
		\] 
	\end{enumerate}
\end{Theorem}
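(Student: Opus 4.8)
The plan is to treat \eqref{eq_ODE_lin} as a second-order linear ODE with the explicitly known homogeneous solution $\theta_0'$, use variation of parameters, and then carefully analyze the growth of the resulting integrals at $\pm\infty$ using the exponential decay of the inhomogeneity. Throughout, the spectral properties of $\Lc_0 = -\frac{d^2}{dz^2} + f''(\theta_0)$ are the key: since $f''(\theta_0(z)) \to f''(\pm1) > 0$ as $z \to \pm\infty$, the operator is ``positive at infinity'', and its kernel on bounded functions is spanned by the single decaying solution $\theta_0'$ (this is a standard fact, essentially Theorem~\ref{th_theta_0} plus an ODE argument showing the second homogeneous solution grows exponentially).

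For part~1, first I would record that $\theta_0'$ solves $\Lc_0 \theta_0' = 0$ (differentiate \eqref{eq_theta_0_ODE}), that $\theta_0' > 0$ everywhere, and that $\theta_0'(z) = \Oc(e^{-\beta|z|})$ for $\beta < \sqrt{\min\{f''(\pm1)\}}$ by Theorem~\ref{th_theta_0}. Let $\varphi$ be a second, linearly independent solution of $\Lc_0 w = 0$ with Wronskian $W(\theta_0',\varphi) \equiv 1$; one shows $\varphi$ grows like $e^{+\beta_0|z|}$ at $\pm\infty$ (roughly $\varphi(z) \sim \theta_0'(z)\int^z (\theta_0')^{-2}$). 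The general solution of $\Lc_0 w = A$ is then
\[
w(z) = c_1\theta_0'(z) + c_2\varphi(z) + \varphi(z)\int_0^z \theta_0'(y)A(y)\,dy - \theta_0'(z)\int_0^z \varphi(y)A(y)\,dy.
\]
Boundedness forces the coefficient of the growing part to vanish at both ends: writing $\varphi(z)\big[c_2 + \int_0^z \theta_0' A\big]$, boundedness as $z\to\pm\infty$ requires $c_2 = -\int_0^{\pm\infty}\theta_0' A\,dy$, and these two conditions are compatible precisely when $\int_\R \theta_0' A\,dy = 0$, which is the Fredholm solvability condition. Then $c_1$ is fixed by $w(0) = 0$, giving uniqueness. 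For the decay estimate, write $A = A^\pm + (A - A^\pm)$ near $\pm\infty$; the constant part contributes the particular solution $A^\pm/f''(\pm1)$ (solving $f''(\pm1)w = A^\pm$ to leading order), and the remainder, being $\Oc(e^{-\beta|z|})$, feeds through the variation-of-parameters integrals to produce an $\Oc(e^{-\beta|z|})$ correction — here one must check that the product $\varphi \cdot \int (\theta_0')(A-A^\pm)$ still decays, using that $\varphi$ grows no faster than $e^{\beta_0|z|}$ with $\beta_0$ arbitrarily close to $\sqrt{\min f''(\pm1)}$, hence one can choose $\beta_0 \in (\beta, \sqrt{\min f''(\pm1)})$ so the product decays like $e^{-(\beta - 0)|z|}$; bootstrapping via the equation $w'' = f''(\theta_0)w - A$ upgrades $l = 0$ to $l = 1, 2$.

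For part~2, the dependence on the parameter $x \in U$ is handled by differentiating the variation-of-parameters formula (and the compatibility/initial conditions) in $x$: since $\theta_0', \varphi$ do not depend on $x$, the $x$-derivatives only hit $A$, and the hypothesis gives $D_x^k D_z^l[A(z,\cdot) - A^\pm] = \Oc(e^{-\beta|z|})$ uniformly, so exactly the same integral estimates as in part~1 apply to each $D_x^k w$; smoothness of $w$ in $x$ follows from differentiating under the integral sign (the integrands are smooth and the integrals converge locally uniformly). The gain of two extra $z$-derivatives ($L+2$ instead of $L$) again comes from the ODE $D_z^2 D_x^k w = f''(\theta_0)D_x^k w - D_x^k A$, iterated. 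I expect the main obstacle to be the careful tracking of the exponential rates: one must verify that the growing homogeneous solution $\varphi$ can be bounded by $e^{\beta_0|z|}$ with $\beta_0$ as close to $\sqrt{\min\{f''(\pm1)\}}$ as desired (this uses the precise asymptotics $f''(\theta_0(z)) \to f''(\pm1)$ from Theorem~\ref{th_theta_0}, e.g.~via a comparison/Levinson-type argument for the ODE $\varphi'' = f''(\theta_0)\varphi$), and then that the cancellation in the bounded combination is genuine so that no spurious exponential growth survives in $w$ — this is where the solvability condition $\int_\R A\theta_0'\,dz = 0$ is used a second time, not just for existence but to guarantee the decay (without it one would only get boundedness, not decay, at one of the two ends).
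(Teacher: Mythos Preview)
Your approach is essentially the same as the paper's: the paper cites Schaubeck, whose method is to ``reduce to a first order ODE for the derivative of $w/\theta_0'$'' --- this is reduction of order, and your variation-of-parameters formula with $\varphi = \theta_0'\int(\theta_0')^{-2}$ is an equivalent reformulation. The overall strategy (compatibility condition from boundedness, subtract the constant $A^\pm/f''(\pm1)$, bootstrap derivatives from the equation, differentiate under the integral for parameter dependence) is correct and matches the paper.

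There is one genuine slip in your rate-tracking, however, and it is exactly the obstacle you flagged at the end. You write that ``$\varphi$ grows no faster than $e^{\beta_0|z|}$ with $\beta_0$ arbitrarily close to $\sqrt{\min f''(\pm1)}$, hence one can choose $\beta_0 \in (\beta, \sqrt{\min f''(\pm1)})$''. This is backwards: $\varphi$ grows \emph{at least} like $e^{\sqrt{f''(\pm1)}\,|z|}$, so no such $\beta_0$ strictly below the threshold gives an upper bound on $\varphi$. The mere $O(e^{-\beta'|z|})$ upper bound on $\theta_0'$ from Theorem~\ref{th_theta_0} (for $\beta'<\sqrt{f''(\pm1)}$) is likewise insufficient on its own, since bounding $\varphi$ from above requires a \emph{lower} bound on $\theta_0'$. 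The fix is to establish the sharp asymptotics $\theta_0'(z)\sim c_\pm e^{-\sqrt{f''(\pm1)}\,|z|}$ (which does follow from the ODE $-\theta_0'''+f''(\theta_0)\theta_0'=0$ with $f''(\theta_0(z))\to f''(\pm1)$ exponentially, via a Levinson/comparison argument as you suggest); then $\varphi(z)\theta_0'(z)$ is asymptotically constant, and the product $\varphi(z)\int_z^{\pm\infty}\theta_0'(y)(A(y)-A^\pm)\,dy$ is $O(e^{-\beta|z|})$ directly, with no loss in the exponent. With this correction your argument goes through.
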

For our purpose $A^\pm =0$ will be enough. 

\begin{proof}
	The result follows from the proof of \cite{Schaubeck}, Lemma 2.6.2. The idea is to reduce to a first order ODE for the derivative of $w/\theta_0'$. In order to show boundedness of the $w\in C^2(\R)$ in \cite{Schaubeck} for $A\in C_b^0(\R)$ provided $\int_\R A\theta_0'\,dz=0$, one can use $\theta_0'>0$, estimate $A$ roughly in the formula for $w$ in \cite{Schaubeck} and apply the convergence proof in \cite{Schaubeck} for the case of constant $A$ there. Note that only ODE-methods and elementary arguments are used.
\end{proof}

\subsection{An Elliptic Problem on $\R^2_+$ with Neumann Boundary Condition}\label{sec_hp_90}
Let $f:\R\rightarrow\R$ be as in \eqref{eq_AC_fvor1} and $\theta_0$ be as in Theorem \ref{th_theta_0}. For the contact point expansion for \hyperlink{AC}{(AC)} in any dimension $N\geq 2$ we have to solve the following model problem on $\R^2_+$: For data $G:\overline{\R^2_+}\rightarrow\R$, $g:\R\rightarrow\R$ with suitable regularity and exponential decay find a solution $u:\overline{\R^2_+}\rightarrow\R$ with similar decay to  
\begin{alignat}{2}\label{eq_hp1}
\left[-\Delta+f''(\theta_0(R))\right]u(R,H)&=G(R,H)& \quad &\text{ for }(R,H)\in\R^2_+,\\
-\partial_H u|_{H=0}(R)&=g(R)&\quad &\text{ for }R\in\R.
\label{eq_hp2}
\end{alignat}  

In Section \ref{sec_hp_weak_sol_reg} we recall existence and uniqueness assertions for weak solutions from \cite{AbelsMoser}. For the proofs see \cite{AbelsMoser}, Section 2.4.1. Moreover, in \cite{AbelsMoser}, Section 2.4.2 exponential decay estimates were proven via ordinary differential inequality arguments.  Here we proceed differently. In Section \ref{sec_hp_exp_sol} we introduce a functional analytic setting with several types of exponentially weighted Sobolev spaces (defined in Section \ref{sec_fct_exp}) in order to have solution operators for \eqref{eq_hp1}-\eqref{eq_hp2}. The rough idea is always to multiply the equation with the weights, use the product rule and known isomorphisms.

The framework with exponentially weighted Sobolev spaces helps with additional independent variables and simplifies the induction procedure needed in the asymptotic expansion in Section \ref{sec_asym_ACND} below. Note that in the 2D-case an induction is not necessary and hence was not carried out in \cite{AbelsMoser}. Moreover, the isomorphism property can be used to solve via the Implicit Function Theorem a corresponding model problem appearing in the sharp interface limit for an Allen-Cahn equation with a nonlinear Robin boundary condition designed to approximate \eqref{MCF} with a constant $\alpha$-contact angle for $\alpha$ close to $\frac{\pi}{2}$. See \cite{MoserDiss} or \cite{AbelsMoserAlpha} for details.

\subsubsection{Weak Solutions and Regularity}\label{sec_hp_weak_sol_reg}
The definition of a weak solution is canonical:
\begin{Definition}\label{th_hp_weak_sol_def}\upshape
	Let $G\in L^2(\R^2_+)$ and $g\in L^2(\R)$. Then $u\in H^1(\R^2_+)$ is called \emph{weak solution} of \eqref{eq_hp1}-\eqref{eq_hp2} if for all $\varphi\in H^1(\R^2_+)$ it holds that
	\[
	a(u,\varphi):=\int_{\R^2_+}\nabla u\cdot\nabla\varphi+f''(\theta_0(R))u\varphi\,d(R,H)=\int_{\R^2_+}G\varphi\,d(R,H)+\int_{\R}g(R)\varphi|_{H=0}(R)\,dR. 
	\]
\end{Definition}

Regarding weak solutions we have the following theorem:
\begin{Theorem}\label{th_hp_weak_sol} 
	Let $G\in L^2(\R^2_+)$ and $g\in L^2(\R)$. Then it holds:
	\begin{enumerate}
		\item $a:H^1(\R^2_+)\times H^1(\R^2_+)\rightarrow\R$ is not coercive.
		\item If $G(.,H),g\perp\theta_0'$ for a.e.~$H>0$ in $L^2(\R)$, then there is a weak solution $u$ such that $u(.,H)\perp\theta_0'$ for a.e.~$H>0$ and it holds $\|u\|_{H^1(\R^2_+)}\leq C(\|G\|_{L^2(\R^2_+)}+\|g\|_{L^2(\R)})$.
		\item Weak solutions are unique.
		\item If $G\theta_0'\in L^1(\R^2_+)$ and $u$ is a weak solution with $\partial_Hu\,\theta_0'\in L^1(\R^2_+)$, then the following compatibility condition holds:
		\begin{align}\label{eq_hp_comp}
		\int_{\R^2_+}G(R,H)\theta_0'(R)\,d(R,H)+\int_{\R}g(R)\theta_0'(R)\,dR=0.
		\end{align}
		\item If $G\theta_0'\in L^1(\R^2_+)$, then $\tilde{G}(H):=(G(.,H),\theta_0')_{L^2(\R)}$ is well-defined for a.e.~$H>0$ and $\tilde{G}\in L^1(\R_+)\cap L^2(\R_+)$. Moreover, we have the decomposition
		\begin{align}\label{eq_hp_orth_decomp}
		G=\tilde{G}(H)\frac{\theta_0'(R)}{\|\theta_0'\|_{L^2(\R)}^{2}}+G^\perp(R,H),\quad
		g=(g,\theta_0')_{L^2(\R)}\frac{\theta_0'(R)}{\|\theta_0'\|_{L^2(\R)}^{2}}+g^\perp(R)
		\end{align}
		for some $G^\perp\in L^2(\R^2_+), g^\perp\in L^2(\R)$ with $G^\perp(.,H),g\perp \theta_0'$ in $L^2(\R)$ for a.e.~$H>0$.
		\item If $\|G(.,H)\|_{L^2(\R)}\leq C e^{-\nu H}$ for a.e.~$H>0$ and a constant $\nu>0$, then $G\theta_0'\in L^1(\R^2_+)$. Let $\tilde{G}$ be defined as in 5.~and the compatibility condition \eqref{eq_hp_comp} hold. Then 
		\begin{align}\label{eq_hp_sol_formula}
		u_1(R,H):=-\int_H^\infty\int_{\tilde{H}}^{\infty}\tilde{G}(\hat{H})\,d\hat{H}\,d\tilde{H}\,\frac{\theta_0'(R)}{\|\theta_0'\|_{L^2(\R)}^{2}}
		\end{align}
		is well-defined for a.e.~$(R,H)\in\R^2_+$, it holds $u_1\in W^2_1(\R^2_+)\cap H^2(\R^2_+)$ and $u_1$ is a weak solution of \eqref{eq_hp1}-\eqref{eq_hp2} for $G-G^\perp,g-g^\perp$ from \eqref{eq_hp_orth_decomp} instead of $G,g$.
	\end{enumerate}
\end{Theorem}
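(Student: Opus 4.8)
The plan is to prove the seven items in order, the whole argument resting on the spectral structure of $\Lc_0=-\tfrac{d^2}{dR^2}+f''(\theta_0)$ on $L^2(\R)$. Differentiating the optimal-profile equation \eqref{eq_theta_0_ODE} gives $\Lc_0\theta_0'=0$, and by Theorem~\ref{th_theta_0} the eigenfunction $\theta_0'$ is strictly positive, so $\theta_0'$ is the simple ground state of $\Lc_0$ with eigenvalue $0$; hence $\Lc_0\geq 0$ on $L^2(\R)$ and $\langle\Lc_0 v,v\rangle_{L^2(\R)}\geq\nu_1\|v\|_{L^2(\R)}^2$ for all $v\perp\theta_0'$ in $L^2(\R)$, with a spectral gap $\nu_1>0$ (the part of the spectrum of $\Lc_0$ above $0$ is separated from $0$, its essential spectrum being $[\min\{f''(\pm1)\},\infty)$). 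The organising identity is
\[
a(u,u)=\int_0^\infty\langle\Lc_0 u(\cdot,H),u(\cdot,H)\rangle_{L^2(\R)}\,dH+\|\partial_H u\|_{L^2(\R^2_+)}^2,
\]
which in particular shows $a\geq 0$. For item~1 I test this with $u_\lambda(R,H):=\theta_0'(R)\psi(\lambda H)$ for a fixed $0\not\equiv\psi\in C_0^\infty(\overline{\R_+})$ and $\lambda\to 0^+$: since $\langle\Lc_0\theta_0',\theta_0'\rangle_{L^2(\R)}=0$ the first term vanishes and $a(u_\lambda,u_\lambda)=\lambda\|\theta_0'\|_{L^2(\R)}^2\|\psi'\|_{L^2(\R_+)}^2$ stays bounded, whereas $\|u_\lambda\|_{H^1(\R^2_+)}^2$ is of order $\lambda^{-1}$; the Rayleigh quotient tends to $0$, so $a$ is not coercive.

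For item~2 I work on the closed subspace $V:=\{u\in H^1(\R^2_+):u(\cdot,H)\perp\theta_0'\text{ in }L^2(\R)\text{ for a.e. }H>0\}$ (closed because $u\mapsto\bigl(H\mapsto(u(\cdot,H),\theta_0')_{L^2(\R)}\bigr)$ is bounded from $H^1(\R^2_+)$ into $L^2(\R_+)$). On $V$, for a.e.\ $H$ one has both $\int_\R(\partial_R u)^2+f''(\theta_0)u^2\,dR\geq\nu_1\int_\R u^2\,dR$ and $\geq\int_\R(\partial_R u)^2\,dR-\|f''(\theta_0)\|_{L^\infty(\R)}\int_\R u^2\,dR$; a suitable convex combination of these two fibrewise bounds, integrated in $H$ and added to $\|\partial_H u\|^2$, yields $a(u,u)\geq c\|u\|_{H^1(\R^2_+)}^2$ for all $u\in V$. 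Together with continuity of the right-hand side (trace theorem, $G,g\in L^2$), Lax–Milgram produces a unique $u\in V$ solving $a(u,\varphi)=\int_{\R^2_+}G\varphi\,d(R,H)+\int_\R g\,\varphi|_{H=0}\,dR$ for all $\varphi\in V$, with the stated estimate. It then remains to check that this $u$ is a weak solution for every $\varphi\in H^1(\R^2_+)$. Writing $P$ for the fibrewise orthogonal projection onto $\{\theta_0'\}^\perp$ (bounded on $H^1(\R^2_+)$, since $\theta_0'$ is smooth with exponential decay, e.g.\ by Lemma~\ref{th_SobDom_prod_set}), one has $a(u,\varphi)=a(u,P\varphi)$: the $\partial_H$-contribution of $(\textup{id}-P)\varphi$ drops because $\partial_H u(\cdot,H)\perp\theta_0'$ (differentiate $(u(\cdot,H),\theta_0')\equiv0$ in $H$), and its $R$-contribution drops after integration by parts in $R$ using $\Lc_0\theta_0'=0$; the hypothesis $G(\cdot,H),g\perp\theta_0'$ gives likewise that the right-hand side does not see $(\textup{id}-P)\varphi$. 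Since $P\varphi\in V$, $u$ is a weak solution, and $u(\cdot,H)\perp\theta_0'$ holds by construction.

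Items~3–5 are more direct. For item~3 I take $\varphi=w:=u_1-u_2$ for two weak solutions with the same data; then $a(w,w)=0$, and since both summands in the displayed identity are nonnegative we get $\partial_H w\equiv0$, whence $w=0$ because $w\in L^2(\R^2_+)$. For item~4 I test the weak formulation with $\varphi_n(R,H):=\theta_0'(R)\chi_n(H)$, where $\chi_n\in C_0^\infty(\overline{\R_+})$, $\chi_n\equiv1$ on $[0,n]$, $\supp\chi_n\subset[0,2n]$, $\|\chi_n'\|_{L^\infty}\leq C/n$: integration by parts in $R$ and $\Lc_0\theta_0'=0$ leave $a(u,\varphi_n)=\int_{\R^2_+}\chi_n'(H)\,\partial_H u\,\theta_0'\,d(R,H)\to0$ (as $\partial_H u\,\theta_0'\in L^1(\R^2_+)$ and $\|\chi_n'\|_{L^\infty}\to0$), while the right-hand side tends to $\int_{\R^2_+}G\theta_0'\,d(R,H)+\int_\R g\theta_0'\,dR$ by dominated convergence (using $G\theta_0'\in L^1$) and $\chi_n(0)=1$, giving \eqref{eq_hp_comp}. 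For item~5, Fubini applied to $G\theta_0'\in L^1(\R^2_+)$ shows $\tilde G(H)=(G(\cdot,H),\theta_0')_{L^2(\R)}$ is defined for a.e.\ $H$ with $\|\tilde G\|_{L^1(\R_+)}\leq\|G\theta_0'\|_{L^1(\R^2_+)}$, and fibrewise Cauchy–Schwarz with $G\in L^2(\R^2_+)$ gives $\|\tilde G\|_{L^2(\R_+)}\leq\|\theta_0'\|_{L^2(\R)}\|G\|_{L^2(\R^2_+)}$; setting $G^\perp:=G-\tilde G(H)\theta_0'(R)/\|\theta_0'\|_{L^2(\R)}^2$ and $g^\perp:=g-(g,\theta_0')_{L^2(\R)}\theta_0'/\|\theta_0'\|_{L^2(\R)}^2$ produces \eqref{eq_hp_orth_decomp}.

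For item~6, the exponential bound on $\|G(\cdot,H)\|_{L^2(\R)}$ gives $\int_{\R^2_+}|G\theta_0'|\leq\|\theta_0'\|_{L^2(\R)}\int_0^\infty Ce^{-\nu H}\,dH<\infty$ and $|\tilde G(H)|\leq C\|\theta_0'\|_{L^2(\R)}e^{-\nu H}$. With $\Phi(H):=-\int_H^\infty\int_{\tilde H}^\infty\tilde G(\hat H)\,d\hat H\,d\tilde H$ one has $\Phi'(H)=\int_H^\infty\tilde G$ and $\Phi''=-\tilde G$, so $u_1=\Phi(H)\theta_0'(R)/\|\theta_0'\|_{L^2(\R)}^2$ satisfies, using $\Lc_0\theta_0'=0$,
\[
(-\Delta+f''(\theta_0))u_1=\frac{-\Phi''(H)\theta_0'(R)}{\|\theta_0'\|_{L^2(\R)}^2}=\frac{\tilde G(H)\theta_0'(R)}{\|\theta_0'\|_{L^2(\R)}^2}=G-G^\perp,
\]
while $-\partial_H u_1|_{H=0}=-\Phi'(0)\theta_0'/\|\theta_0'\|_{L^2(\R)}^2=-\bigl(\int_0^\infty\tilde G\bigr)\theta_0'/\|\theta_0'\|_{L^2(\R)}^2$, which equals $g-g^\perp$ precisely because the compatibility condition \eqref{eq_hp_comp} reads $\int_0^\infty\tilde G\,dH=-(g,\theta_0')_{L^2(\R)}$. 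Exponential decay of $\tilde G$ passes to $\Phi,\Phi',\Phi''$, so $u_1$ and its derivatives up to order $2$ decay exponentially in $(R,H)$, giving $u_1\in W^2_1(\R^2_+)\cap H^2(\R^2_+)$; Green's identity on $\R^2_+$ then turns the classical identities into the weak formulation for $(G-G^\perp,g-g^\perp)$. I expect the main difficulty to be item~2: the form $a$ is merely nonnegative, and weak solvability hinges on the spectral gap of $\Lc_0$ on $\{\theta_0'\}^\perp$ (to recover coercivity on the constrained space $V$ after absorbing the indefinite zeroth-order term $f''(\theta_0)$) together with the verification that the constrained solution is genuinely a weak solution of the full problem, which is exactly where the orthogonality hypotheses on $G$ and $g$ enter.
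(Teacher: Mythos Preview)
Your proof is correct and follows essentially the same approach the paper indicates: the paper defers the details to \cite{AbelsMoser}, Theorem~2.9, noting only that the spectral properties of $L_0=-\frac{d^2}{dR^2}+f''(\theta_0)$ on $L^2(\R)$ are the key input, and your argument is built precisely on those properties (ground state $\theta_0'$ with eigenvalue $0$, spectral gap $\nu_1>0$ on $\{\theta_0'\}^\perp$, Lax--Milgram on the fibrewise orthogonal subspace, and the explicit parallel part for item~6). Your treatment of item~2---solving first on $V$ and then verifying via the fibrewise projection $P$ that the constrained solution satisfies the full weak formulation---and your cutoff argument for item~4 are exactly the natural implementations of this strategy.
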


\begin{proof}
	See \cite{AbelsMoser}, Theorem 2.9. Here note that the spectral properties of the linear operator $L_0:H^2(\R)\subseteq L^2(\R)\rightarrow L^2(\R):u\mapsto[-\frac{d^2}{dR^2}+f''(\theta_0)]u$ are important. See \cite{MoserDiss}, Lemma 4.2 or \cite{AbelsMoser}, Lemma 2.5 for the latter.
\end{proof}

In Theorem \ref{th_hp_weak_sol},~6.~weaker conditions on $G$ are enough, cf.~Section \ref{sec_hp_exp_sol}. The point is included for aesthetic reasons. Altogether we obtain an existence theorem for weak solutions: 

\begin{Corollary}\phantomsection{\label{th_hp_weak_sol_2}}
	\begin{enumerate}
		\item Let $g\in L^2(\R)$, $G\in L^2(\R^2_+)$ with $\|G(.,H)\|_{L^2(\R)}\leq Ce^{-\nu H}$ f.a.e.~$H>0$ and some $\nu>0$. Let \eqref{eq_hp_comp} hold. Then there is a unique weak solution of \eqref{eq_hp1}-\eqref{eq_hp2}. 
		\item Let $k\in\N_0$ and $u\in H^1(\R^2_+)$ be a weak solution of \eqref{eq_hp1}-\eqref{eq_hp2} for $G\in H^k(\R^2_+)$ and $g\in H^{k+\frac{1}{2}}(\R)$. Then $u\in H^{k+2}(\R^2_+)\hookrightarrow C^{k,\gamma}(\overline{\R^2_+})$ for all $\gamma\in(0,1)$ and it holds 
		\[
		\|u\|_{H^{k+2}(\R^2_+)}\leq C_k(\|G\|_{H^k(\R^2_+)}+\|g\|_{H^{k+\frac{1}{2}}(\R)}+\|u\|_{H^1(\R^2_+)}).
		\]
	\end{enumerate}
\end{Corollary}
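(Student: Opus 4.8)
The plan is to prove Corollary \ref{th_hp_weak_sol_2} in two stages, corresponding to its two parts, relying heavily on Theorem \ref{th_hp_weak_sol} and standard elliptic regularity theory adapted to the half-space with Neumann data.

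\textbf{Existence and uniqueness (part 1).} Uniqueness is immediate from Theorem \ref{th_hp_weak_sol},~3. For existence, first I would use the decay hypothesis $\|G(.,H)\|_{L^2(\R)}\leq Ce^{-\nu H}$ to invoke Theorem \ref{th_hp_weak_sol},~5.--6.: this gives $G\theta_0'\in L^1(\R^2_+)$, the well-definedness of $\tilde G\in L^1(\R_+)\cap L^2(\R_+)$, and the orthogonal decomposition \eqref{eq_hp_orth_decomp} into a $\theta_0'$-component and a $\theta_0'$-orthogonal remainder $G^\perp, g^\perp$. By Theorem \ref{th_hp_weak_sol},~6.\ (using that the compatibility condition \eqref{eq_hp_comp} is assumed) the explicit function $u_1$ in \eqref{eq_hp_sol_formula} is a weak solution for the data $G-G^\perp$, $g-g^\perp$. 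On the other hand, since $G^\perp(.,H), g^\perp\perp\theta_0'$ for a.e.\ $H>0$ and $G^\perp\in L^2(\R^2_+)$, $g^\perp\in L^2(\R)$, Theorem \ref{th_hp_weak_sol},~2.\ yields a weak solution $u_2$ for the data $G^\perp, g^\perp$ with the $H^1$-bound. Then $u:=u_1+u_2\in H^1(\R^2_+)$ solves \eqref{eq_hp1}-\eqref{eq_hp2} for $G,g$ by linearity of the bilinear form $a$, establishing part 1.

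\textbf{Regularity (part 2).} Here I would run the usual bootstrap. Since $f\in C^\infty(\R)$ and $\theta_0$ is smooth with $f''(\theta_0)\in C_b^\infty(\R)$ (bounded with all derivatives, as $\theta_0\to\pm1$ and $f''(\pm1)>0$), the coefficient is a benign bounded smooth potential. For the base case, a weak solution $u\in H^1$ of $-\Delta u = G - f''(\theta_0)u=:F\in L^2(\R^2_+)$ with Neumann data $-\partial_H u|_{H=0}=g\in H^{1/2}(\R)$ lies in $H^2(\R^2_+)$ by standard elliptic regularity for the Neumann problem on a half-space (difference quotients in the tangential $R$-direction give $\partial_R u\in H^1$; then the equation itself recovers $\partial_H^2 u = -\partial_R^2 u - F\in L^2$, and the boundary condition controls the trace of $\partial_H u$), with the estimate $\|u\|_{H^2}\leq C(\|F\|_{L^2}+\|g\|_{H^{1/2}}+\|u\|_{H^1})\leq C(\|G\|_{L^2}+\|g\|_{H^{1/2}}+\|u\|_{H^1})$, absorbing $\|f''(\theta_0)u\|_{L^2}\leq C\|u\|_{L^2}$. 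Iterating: if $u\in H^{j+2}$ and $G\in H^{j+1}$, $g\in H^{j+1+1/2}$ with $j+1\leq k$, then $F=G-f''(\theta_0)u\in H^{j+1}$ (product rule; $f''(\theta_0)\in C_b^\infty$), so $u\in H^{j+3}$ with the corresponding estimate. After $k$ steps one obtains $u\in H^{k+2}(\R^2_+)$ and the stated inequality. Finally $H^{k+2}(\R^2_+)\hookrightarrow C^{k,\gamma}(\overline{\R^2_+})$ for every $\gamma\in(0,1)$ by Sobolev embedding in dimension two (where $H^2\hookrightarrow C^{0,\gamma}$ already), applied to $u$ and its derivatives up to order $k$.

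\textbf{Main obstacle.} The only genuinely non-routine point is the elliptic regularity step on the \emph{unbounded} domain $\R^2_+$ with \emph{Neumann} boundary condition: one must ensure the $H^2$-estimate does not pick up a constant depending on the (infinite) size of the domain. This is handled because $\R^2_+$ is translation-invariant in $R$, so a finite covering argument with translated bounded half-balls gives a uniform constant; alternatively one argues directly via the difference-quotient method in the tangential direction, which is insensitive to the unboundedness, and then reads off the normal second derivative from the PDE. Everything else — the decomposition, the superposition of the two partial solutions, and the bootstrap — is a direct assembly of results already stated as Theorem \ref{th_hp_weak_sol} together with standard product and embedding estimates.
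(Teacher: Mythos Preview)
Your proposal is correct and follows essentially the same approach as the paper, which in fact defers the details to \cite{AbelsMoser}, Corollary~2.10: part~1 is exactly the superposition $u=u_1+u_2$ assembled from Theorem~\ref{th_hp_weak_sol},~2.~and~6.\ via the decomposition~\eqref{eq_hp_orth_decomp}, and part~2 is the standard Neumann elliptic bootstrap on $\R^2_+$ with $f''(\theta_0)\in C_b^\infty(\R)$ as lower-order term. Your remark on handling the unbounded domain via tangential difference quotients is the right justification for the uniform $H^2$-estimate.
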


\begin{proof}
	See \cite{AbelsMoser}, Corollary 2.10.
\end{proof}

\subsubsection{Solution Operators in Exponentially Weighted Spaces}\label{sec_hp_exp_sol} 
In the following the superscript \enquote{$\perp$} always means $u(.,H)\perp \theta_0'$ in $L^2(\R)$ for a.e.~$H\in\R_+$, if $u\in L^2(\R^2_+)$, and $u\perp\theta_0'$ if $u\in L^2(\R)$. The symbol \enquote{$\parallel$} has the same meaning with \enquote{$\perp$} replaced by \enquote{$\parallel$}.

\begin{Theorem}[\textbf{Solution Operators for Decay in $H$}]\phantomsection{\label{th_hp_exp1}}
	\begin{enumerate}
		\item For $\gamma\geq 0$ small the operator
		\[
		L_{\frac{\pi}{2}}:=(-\Delta+f''(\theta_0(R)),-\partial_H|_{H=0}):H^{2,\perp}_{(0,\gamma)}(\R^2_+)\rightarrow L^{2,\perp}_{(0,\gamma)}(\R^2_+)\times H^{\frac{1}{2},\perp}(\R)
		\] 
		is well-defined and invertible. Moreover, for small $\gamma\geq0$ the operator norm of $L_{\frac{\pi}{2}}^{-1}$ is uniformly bounded in the corresponding spaces.  
		\item For all $\gamma\in(0,\overline{\gamma}]$ and any $\overline{\gamma}>0$ the operator 
		\[
		L_\frac{\pi}{2}:H^{2,\parallel}_{(0,\gamma)}(\R^2_+)\rightarrow \left\{(G,g)\in L^{2,\parallel}_{(0,\gamma)}(\R^2_+)\times H^{\frac{1}{2},\parallel}(\R):\int_{\R^2_+}G\theta_0'+\int_\R g\theta_0'=0\right\}
		\]
		is well-defined, invertible and the norm of $L_{\frac{\pi}{2}}^{-1}$ is bounded by $C_{\overline{\gamma}}(1+\frac{1}{\gamma^2})$ for all $\gamma\in(0,\overline{\gamma}]$.
		\item For $\gamma>0$ small 
		\[
		L_\frac{\pi}{2}:H^2_{(0,\gamma)}(\R^2_+)\rightarrow \left\{(G,g)\in L^2_{(0,\gamma)}(\R^2_+)\times H^\frac{1}{2}(\R):\int_{\R^2_+}G\theta_0'+\int_\R g\theta_0'=0\right\}
		\]
		is well-defined, invertible and the norm of $L_{\frac{\pi}{2}}^{-1}$ is bounded by $C(1+\frac{1}{\gamma^2})$ for small $\gamma>0$.
	\end{enumerate}
\end{Theorem}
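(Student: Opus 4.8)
The plan is to deduce all three statements from the weak-solution theory in Theorem~\ref{th_hp_weak_sol} and the elliptic regularity in Corollary~\ref{th_hp_weak_sol_2}, by conjugating the problem with the exponential weight $e^{\gamma H}$ and exploiting the weighted Poincaré-type estimate in Lemma~\ref{th_exp1},~6.~and~7. The key observation is that for $u\in H^{2}_{(0,\gamma)}(\R^2_+)$ the function $v:=e^{\gamma H}u$ lies in the unweighted space $H^2(\R^2_+)$, and a direct computation with the product rule gives
\[
e^{\gamma H}\left[(-\Delta+f''(\theta_0))u\right]=(-\Delta+f''(\theta_0))v+2\gamma\,\partial_H v-\gamma^2 v,
\qquad
e^{\gamma H}(-\partial_H u)|_{H=0}=(-\partial_H v)|_{H=0}+\gamma v|_{H=0}.
\]
So $L_{\frac{\pi}{2}}$ acting on $H^2_{(0,\gamma)}$ is, after conjugation, the unperturbed operator from Corollary~\ref{th_hp_weak_sol_2} plus the lower-order terms $2\gamma\partial_H-\gamma^2$ in the interior and $+\gamma$ on the boundary; these are bounded perturbations whose norm tends to $0$ as $\gamma\to0$. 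First I would record this conjugation identity and note that it respects the splitting $L^2(\R)=\operatorname{span}\{\theta_0'\}\oplus(\theta_0')^\perp$ pointwise in $H$, since the weight does not act on $R$; hence the $\perp$/$\parallel$ decompositions are preserved.

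For part~1, I would project onto the orthogonal complement of $\theta_0'$: on $(\theta_0')^\perp$ the operator $L_0=-\frac{d^2}{dR^2}+f''(\theta_0)$ has a spectral gap (cited after Theorem~\ref{th_hp_weak_sol}), so the unconjugated, unweighted operator $L_{\frac{\pi}{2}}:H^{2,\perp}(\R^2_+)\to L^{2,\perp}(\R^2_+)\times H^{\frac12,\perp}(\R)$ is an isomorphism — this follows by combining the existence/uniqueness of weak solutions (Theorem~\ref{th_hp_weak_sol},~2.--3.) with the regularity estimate (Corollary~\ref{th_hp_weak_sol_2},~2.). Since isomorphisms form an open set in $\Lc$ and the perturbation $2\gamma\partial_H-\gamma^2$ (resp.~$+\gamma$ on the boundary) has operator norm $O(\gamma)$ from $H^2_{(0,\gamma)}$ into the target, a Neumann-series argument shows $L_{\frac{\pi}{2}}$ remains invertible for all small $\gamma\geq 0$ with $\|L_{\frac{\pi}{2}}^{-1}\|$ uniformly bounded. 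For part~3, I would argue by decomposing $(G,g)$ via \eqref{eq_hp_orth_decomp}: the $\perp$-part is handled by part~1, and the $\parallel$-part $(\tilde G(H)\theta_0'/\|\theta_0'\|^2,\,(g,\theta_0')\theta_0'/\|\theta_0'\|^2)$ is handled by part~2, so part~3 is the direct sum of the two; the $\gamma^{-2}$ in the bound comes entirely from the $\parallel$-part, so I would prove part~2 before part~3.

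The heart of the matter, and the main obstacle, is part~2: the $\parallel$-component. Here the relevant ODE is the one obtained by testing \eqref{eq_hp1} against $\theta_0'(R)$ and using $L_0\theta_0'=0$, which kills the $R$-derivatives and leaves, for $w(H):=(u(\cdot,H),\theta_0')_{L^2(\R)}$, the one-dimensional problem $-w''=\tilde G(H)$ on $\R_+$ with Neumann data $-w'(0)=(g,\theta_0')_{L^2(\R)}$; the compatibility condition $\int_{\R^2_+}G\theta_0'+\int_\R g\theta_0'=0$ is exactly $\int_0^\infty\tilde G+(g,\theta_0')=0$, which makes this solvable. The explicit solution is the double integral \eqref{eq_hp_sol_formula}, and the quantitative bound $\|L_{\frac{\pi}{2}}^{-1}\|\le C_{\overline\gamma}(1+\gamma^{-2})$ on $H^{2,\parallel}_{(0,\gamma)}$ must come from applying the weighted $L^2$-Poincaré inequality of Lemma~\ref{th_exp1},~6.~\emph{twice} to the double integral (each application of the ``reverse fundamental theorem'' Lemma~\ref{th_exp1},~7.~costs a factor $\gamma^{-1}$), together with the weighted trace/coretract estimates Lemma~\ref{th_exp1},~5.~to absorb the boundary data $g$. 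One has to be careful that the two integrations in \eqref{eq_hp_sol_formula} run from $H$ to $\infty$ (not from $0$), which is precisely what Remark~\ref{th_exp_rem},~1.~warns is needed for integrability, and that the constant degrades only like $\gamma^{-2}$ and not worse. Finally, the uniform-in-$\gamma\in(0,\overline\gamma]$ character of the remaining constants follows from the uniform norm-equivalence statement in Lemma~\ref{th_exp1},~2., and the $H^2$-regularity of $u_1$ is read off directly from the explicit formula since $\theta_0'$ and its derivatives decay exponentially by Theorem~\ref{th_theta_0}.
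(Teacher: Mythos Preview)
Your proposal is correct and follows essentially the same route as the paper: conjugation by $e^{\gamma H}$ plus a Neumann-series argument for part~1, reduction to the scalar ODE $-w''=\tilde G$ and the explicit double integral \eqref{eq_hp_sol_formula} together with two applications of the weighted Poincar\'e inequality (Lemma~\ref{th_exp1},~6.--7.) for part~2, and the orthogonal splitting \eqref{eq_hp_orth_decomp} for part~3. One small simplification: in part~2 you do not need the trace/coretract machinery of Lemma~\ref{th_exp1},~5.\ --- since $g$ is parallel to $\theta_0'$, the compatibility condition reads $\int_0^\infty\tilde G+\tilde g=0$, which directly gives $-\partial_H\tilde u(0)=\tilde g$ from the formula $\partial_H\tilde u=\int_\cdot^\infty\tilde G$.
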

\begin{proof}[Proof. Ad 1]
	$L_{\frac{\pi}{2}}$ is well-defined in the spaces because of Lemma \ref{th_exp1},~2.,~5.~and since the orthogonality property can be shown via integration by parts as well as by differentiating the orthogonality condition for $u$ with respect to $H$. In the case $\gamma=0$ invertibility follows from Theorem \ref{th_hp_weak_sol},~2.-3.~and Corollary \ref{th_hp_weak_sol_2},~2. Now let $\gamma>0$. In order to solve $L_{\frac{\pi}{2}}u=(G,g)$ we make the ansatz $u=e^{-\gamma H}v$ with $v\in H^{2,\perp}(\R^2_+)$. By computing derivatives of $u$ we obtain equations we want to solve for $v$. Note that the exponential factor does not destroy the orthogonality property. It holds
	\[
	\partial_Hu=-\gamma e^{-\gamma H}v+e^{-\gamma H}\partial_Hv\quad\text{ and }\quad 
	\partial_H^2u=\gamma^2 e^{-\gamma H}v - 2\gamma e^{-\gamma H}\partial_Hv + e^{-\gamma H}\partial_H^2v.
	\]
	Therefore we consider
	\begin{align}\label{eq_hp_exp_sol1}
	L_\frac{\pi}{2}v+N_\gamma v=(Ge^{\gamma H},g),\quad\text{ where }N_\gamma v:=(-\gamma^2 v+2\gamma\partial_Hv, \gamma v|_{H=0}).
	\end{align}
	Here $N_\gamma$ is a bounded linear operator from  $H^{2,\perp}(\R^2_+)$ to  $L^{2,\perp}(\R^2_+)\times H^{\frac{1}{2},\perp}(\R)$ and the operator norm is estimated by $C(\gamma+\gamma^2)$. Hence a Neumann series argument yields that $L_{\frac{\pi}{2}}+N_\gamma$ is invertible in those spaces for small $\gamma$ and the norm of the inverse is bounded uniformly. Let $(G,g)$ be in $L^{2,\perp}_{(0,\gamma)}(\R^2_+)\times H^{\frac{1}{2}}(\R)$. Then we obtain for small $\gamma>0$ a unique $v\in H^{2,\perp}(\R^2_+)$ that solves \eqref{eq_hp_exp_sol1}. The above computations yield that $u:=e^{-\gamma H}v\in H^{2,\perp}_{(0,\gamma)}(\R^2_+)$ is a solution of $L_\frac{\pi}{2}u=(G,g)$ and that it is unique. Finally, we have the estimate
	\[
	\|u\|_{H^{2,\perp}_{(0,\gamma)}(\R^2_+)}=\|v\|_{H^{2,\perp}(\R^2_+)}\leq C\|(Ge^{\gamma H},g)\|_{L^{2,\perp}(\R^2_+)\times H^{\frac{1}{2}}(\R)}=C\|(G,g)\|_{L^{2,\perp}_{(0,\gamma)}(\R^2_+)\times H^{\frac{1}{2}}(\R)},
	\]
	where $C>0$ is independent of $\gamma>0$ small.\qedhere$_{1.}$\end{proof}

\begin{proof}[Ad 2] Let $\gamma>0$. Then $u\in H^{2,\parallel}_{(0,\gamma)}(\R^2_+)$ if and only if 
	\[
	e^{\gamma H}u\in H^2(\R^2_+)\quad\text{ and }\quad u(R,H)=(u(.,H),\theta_0')_{L^2(\R)}\frac{\theta_0'(R)}{\|\theta_0'\|^2_{L^2(\R)}}\text{ almost everywhere}.
	\]
	Since $H^2(\R^2_+)\hookrightarrow H^2(\R_+,L^2(\R))$ by Lemma \ref{th_SobDom_prod_set} and because multiplication with $\theta_0'$ is a bounded linear operator from $L^2(\R)$ to $\R$, it follows that $u\in H^{2,\parallel}_{(0,\gamma)}(\R^2_+)$ is equivalent to $u(R,H)=\tilde{u}(H)\theta_0'(R)$ f.a.a. $(R,H)\in\R^2_+$ for some $\tilde{u}\in H^2_{(\gamma)}(\R_+)$. The operator $L_\frac{\pi}{2}$ acts as
	\[
	L_\frac{\pi}{2}u=(-\partial_H^2\tilde{u}(H)\theta_0'(R),-\partial_H\tilde{u}(0)\theta_0'(R))\in L^{2,\parallel}_{(0,\gamma)}(\R^2_+)\times H^{\frac{1}{2},\parallel}(\R). 
	\]
	Additionally, the compatibility condition  \eqref{eq_hp_comp} holds because of Theorem \ref{th_hp_weak_sol},~4. The latter could also be directly computed here. Altogether, $L_\frac{\pi}{2}$ is well-defined in the spaces. Moreover, let $(G,g)$ be in the space $L^{2,\parallel}_{(0,\gamma)}(\R^2_+)\times H^{\frac{1}{2},\parallel}(\R)$ and let the compatibility condition \eqref{eq_hp_comp} hold. Then $G=\tilde{G}(H)\theta_0'(R)$, $g=\tilde{g}\theta_0'$ for some $\tilde{G}\in L^2_{(\gamma)}(\R_+)$ and $\tilde{g}\in\R$. By Lemma \ref{th_exp1},~6.-7.~it holds
	\[
	\tilde{u}:=-\int_.^\infty\int_{\hat{H}}^\infty\tilde{G}(\overline{H})\,d\overline{H}\,d\hat{H}\in H^2_{(\gamma)}(\R_+)
	\]
	with $\partial_H\tilde{u}=\int_.^\infty\tilde{G}(\hat{H})\,d\hat{H}$ and $\partial_H^2\tilde{u}=-\tilde{G}$ as well as $\|\tilde{u}\|_{H^2_{(\gamma)}(\R_+)}\leq c_{\overline{\gamma}}(1+\frac{1}{\gamma^2})\|\tilde{G}\|_{L^2_{(\gamma)}(\R_+)}$ for all $\gamma\in(0,\overline{\gamma}]$, where $\overline{\gamma}>0$ is fixed. Therefore $u_1:=\tilde{u}(H)\theta_0'(R)\in H^2_{(0,\gamma)}(\R^2_+)$ solves
	\begin{align*}
	[-\Delta+f''(\theta_0)]u_1 &=-\partial_H^2\tilde{u}\theta_0'=G,\\
	-\partial_Hu_1|_{H=0} &=-\int_0^\infty \tilde{G}(\hat{H})\,d\hat{H} \theta_0' =\tilde{g}\theta_0'=g,
	\end{align*}
	where the last equality follows from the compatibility condition \eqref{eq_hp_comp}. Hence $u_1$ is a solution of $L_\frac{\pi}{2}u_1=(G,g)$ and it is unique because of Theorem \ref{th_hp_weak_sol},~3. Moreover, we have
	\[
	\|u_1\|_{H^2_{(0,\gamma)}(\R^2_+)}\leq C\|\tilde{u}\|_{H^2_{(\gamma)}(\R_+)}\leq Cc_{\overline{\gamma}}(1+\frac{1}{\gamma^2})\|\tilde{G}\|_{L^2_{(\gamma)}(\R_+)}\leq C_{\overline{\gamma}}(1+\frac{1}{\gamma^2})\|G\|_{L^2_{(0,\gamma)}(\R^2_+)}.
	\]
	Altogether this proves the claim.\qedhere$_{2.}$\end{proof}

\begin{proof}[Ad 3] Via \eqref{eq_hp_orth_decomp} we have isomorphic splitting operators from $H^k_{(0,\gamma)}(\R^2_+)$ onto the direct sum $H^{k,\perp}_{(0,\gamma)}(\R^2_+)\oplus H^{k,\parallel}_{(0,\gamma)}(\R^2_+)$ for all $k\in\N_0$ and the operator norms for fixed $k$ are estimated by a constant independent of $\gamma\in(0,\overline{\gamma}]$. Therefore the claim follows from 1.~and 2.\qedhere$_{3.}$
\end{proof}

\begin{Theorem}[\textbf{Solution Operators for Decay in $(R,H)$}]\label{th_hp_exp2}
	Let $\overline{\gamma}>0$ be such that Theorem \ref{th_hp_exp1},~3.~holds for $\gamma\in(0,\overline{\gamma}]$. Then there is a non-decreasing $\overline{\beta}:(0,\overline{\gamma}]\rightarrow(0,\infty)$ such that 
	\[
	L_\frac{\pi}{2}:H^2_{(\beta,\gamma)}(\R^2_+)\rightarrow Y_{(\beta,\gamma)}:=\left\{(G,g)\in L^2_{(\beta,\gamma)}(\R^2_+)\times H^\frac{1}{2}_{(\beta)}(\R):\int_{\R^2_+}G\theta_0'+\int_\R g\theta_0'=0\right\}
	\]
	is an isomorphism for all $\beta\in[0,\overline{\beta}(\gamma)]$ and the operator norm of the inverse is bounded by $\tilde{C}(1+\frac{1}{\gamma^2})$ with $\tilde{C}$ independent of $(\beta,\gamma)$. 
\end{Theorem}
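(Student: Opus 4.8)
The idea is to conjugate away the $R$-weight and reduce everything to Theorem \ref{th_hp_exp1}, 3 (the case $\beta=0$). For $u\in H^2_{(\beta,\gamma)}(\R^2_+)$ write $u=e^{-\beta\eta(R)}w$ with $\eta$ as in Definition \ref{th_exp_def1}, 3. By Lemma \ref{th_exp1}, 2 the assignment $u\mapsto w:=e^{\beta\eta}u$ is an isomorphism $H^2_{(\beta,\gamma)}(\R^2_+)\to H^2_{(0,\gamma)}(\R^2_+)$ with operator norm and inverse norm bounded uniformly for $\beta$ in a fixed bounded interval; likewise $G\mapsto e^{\beta\eta}G$ and $g\mapsto e^{\beta\eta}g$ are (uniformly bounded) isomorphisms $L^2_{(\beta,\gamma)}(\R^2_+)\to L^2_{(0,\gamma)}(\R^2_+)$ and $H^{1/2}_{(\beta)}(\R)\to H^{1/2}(\R)$. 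Since $\partial_R(e^{-\beta\eta}w)=e^{-\beta\eta}(\partial_R w-\beta\eta' w)$ and $\eta$ is $H$-independent, a short computation gives $L_{\frac{\pi}{2}}(e^{-\beta\eta}w)=e^{-\beta\eta}\bigl(L_{\frac{\pi}{2}}w+M_\beta w\bigr)$ with the $R$-only perturbation
\[
M_\beta w:=\bigl(2\beta\eta'(R)\partial_R w+(\beta\eta''(R)-\beta^2\eta'(R)^2)w,\;0\bigr).
\]
Because $\eta',\eta''$ are bounded and $e^{\gamma H}$ commutes with $\partial_R$, this defines a bounded operator $H^2_{(0,\gamma)}(\R^2_+)\to Z:=L^2_{(0,\gamma)}(\R^2_+)\times H^{1/2}(\R)$ with $\|M_\beta\|\le C\beta$ for $\beta\le1$, the constant $C$ being independent of $\gamma\in(0,\overline\gamma]$. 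Hence solving $L_{\frac{\pi}{2}}u=(G,g)$ in $H^2_{(\beta,\gamma)}$ with $(G,g)\in Y_{(\beta,\gamma)}$ is equivalent to solving $L^{(\beta)}w:=L_{\frac{\pi}{2}}w+M_\beta w=(e^{\beta\eta}G,e^{\beta\eta}g)$ in $H^2_{(0,\gamma)}$, and under the weight the data space $Y_{(\beta,\gamma)}$ becomes $\ker\ell_\beta$, where $\ell_\beta(\hat G,\hat g):=\int_{\R^2_+}e^{-\beta\eta}\hat G\theta_0'+\int_\R e^{-\beta\eta}\hat g\theta_0'$ is a bounded, nonzero functional on $Z$ (nonzero, e.g., on $(\theta_0'\chi,0)$ for a nonnegative cutoff $\chi$).

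I then use two structural facts. First, Theorem \ref{th_hp_exp1}, 3 says $L_{\frac{\pi}{2}}\colon H^2_{(0,\gamma)}(\R^2_+)\to\ker\ell_0$ is an isomorphism with $\|L_{\frac{\pi}{2}}^{-1}\|\le C_0(1+\tfrac1{\gamma^2})$; viewed as an operator into all of $Z$ it is therefore injective with closed range $\ker\ell_0$ of codimension one, i.e.\ Fredholm of index $-1$. Second, $\ell_\beta\circ L^{(\beta)}=0$: indeed $L^{(\beta)}=\Phi_\beta L_{\frac{\pi}{2}}\Phi_\beta^{-1}$ where $\Phi_\beta$ is multiplication by $e^{\beta\eta}$, the operator $L_{\frac{\pi}{2}}$ maps $H^2_{(\beta,\gamma)}$ into $Y_{(\beta,\gamma)}$ (the identity $\ell_0\circ L_{\frac{\pi}{2}}=0$ follows by integrating by parts in $R$ and in $H$, using $L_0\theta_0'=0$; it is also the compatibility identity of Theorem \ref{th_hp_weak_sol}, 4), and $\ell_\beta\circ\Phi_\beta=\ell_0$. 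So the range of $L^{(\beta)}$ always lies inside $\ker\ell_\beta$.

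Now I show $L^{(\beta)}\colon H^2_{(0,\gamma)}(\R^2_+)\to\ker\ell_\beta$ is an isomorphism for $\beta\le\overline\beta(\gamma)$. \emph{Injectivity}: if $L^{(\beta)}w=0$ then $L_{\frac{\pi}{2}}w=-M_\beta w$ lies in $\operatorname{range}(L_{\frac{\pi}{2}})=\ker\ell_0$, so $w=-L_{\frac{\pi}{2}}^{-1}M_\beta w$ and $\|w\|\le C_0C\beta(1+\tfrac1{\gamma^2})\|w\|$, forcing $w=0$ as soon as $C_0C\beta(1+\tfrac1{\gamma^2})\le\tfrac12$. \emph{Surjectivity}: by stability of the Fredholm property and index under operator-norm-small perturbations, $L^{(\beta)}=L_{\frac{\pi}{2}}+M_\beta$ is, for $\beta$ sufficiently small relative to $1/\|L_{\frac{\pi}{2}}^{-1}\|$ (again of order $\gamma^2/(1+\gamma^2)$), Fredholm of index $-1$ from $H^2_{(0,\gamma)}$ into $Z$; being injective it has closed range of codimension exactly one, and as this range is contained in the codimension-one subspace $\ker\ell_\beta$ it equals $\ker\ell_\beta$. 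Thus $\overline\beta(\gamma):=\min\{1,\,c\,\gamma^2/(1+\gamma^2)\}$ with $c>0$ small works, and this function is non-decreasing on $(0,\overline\gamma]$. \emph{Estimate}: for $(\hat G,\hat g)\in\ker\ell_\beta$ the solution $w$ of $L^{(\beta)}w=(\hat G,\hat g)$ satisfies $L_{\frac{\pi}{2}}w=(\hat G,\hat g)-M_\beta w\in\ker\ell_0$, hence $w=L_{\frac{\pi}{2}}^{-1}[(\hat G,\hat g)-M_\beta w]$; absorbing $\|M_\beta w\|\le C\beta\|w\|$ (legitimate since $\beta\le\overline\beta(\gamma)$) gives $\|w\|_{H^2_{(0,\gamma)}}\le 2C_0(1+\tfrac1{\gamma^2})\|(\hat G,\hat g)\|_Z$. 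Transporting back through the uniformly bounded weight isomorphisms yields $\|u\|_{H^2_{(\beta,\gamma)}}\le\tilde C(1+\tfrac1{\gamma^2})\|(G,g)\|_{L^2_{(\beta,\gamma)}\times H^{1/2}_{(\beta)}}$ with $\tilde C$ independent of $(\beta,\gamma)$, which is the assertion.

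The one point that needs genuine care is the \emph{mismatch between the compatibility subspaces} $\ker\ell_0$ and $\ker\ell_\beta$: since $M_\beta$ does not preserve $\ker\ell_0$, one cannot simply run a Neumann series with $L_{\frac{\pi}{2}}^{-1}$, and the codimension (Fredholm-index) argument above is the clean device that circumvents this. Everything else — the conjugation identity, boundedness of $M_\beta$, the absorption, and the transfer of estimates — is routine, provided one checks (as Lemma \ref{th_exp1} is designed to guarantee) that the weighted-space isomorphisms and embeddings used hold with constants uniform in $\beta$ over a fixed bounded interval.
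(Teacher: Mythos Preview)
Your proof is correct and uses the same conjugation $u=e^{-\beta\eta}w$ as the paper, but you handle the central difficulty—the mismatch between the compatibility hyperplanes $\ker\ell_0$ and $\ker\ell_\beta$—by a different device. The paper resolves this concretely: it modifies both the perturbation and the right-hand side by subtracting their $\theta_0'$-components in the boundary datum, so that the adjusted equation $L_{\frac{\pi}{2}}v+\tilde N_{(\beta,\gamma)}v=(\overline G,\overline g)$ lives entirely in $Y_{(0,\gamma)}=\ker\ell_0$ and a plain Neumann series applies; it then checks a posteriori that the artificial correction vanishes because $\int_\R e^{-\beta\eta}(\theta_0')^2>0$ and both $L_{\frac{\pi}{2}}u$ and $(G,g)$ satisfy the original compatibility condition. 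Your argument instead invokes Fredholm index stability: $L_{\frac{\pi}{2}}$ into $Z$ has index $-1$, hence so does $L^{(\beta)}$ for small $\beta$, and combined with injectivity and $\operatorname{range}L^{(\beta)}\subseteq\ker\ell_\beta$ this forces surjectivity onto $\ker\ell_\beta$. Your route is more conceptual and avoids the explicit projection trick, at the cost of importing the perturbation theorem for the Fredholm index (Kato \cite{Kato}, Theorem IV.5.17, which the paper already cites elsewhere); the paper's route is entirely elementary but requires the somewhat ad~hoc correction terms. Both yield the same threshold $\overline\beta(\gamma)\sim c\,\gamma^2/(1+\gamma^2)$ and the same bound $\tilde C(1+\tfrac{1}{\gamma^2})$ for the inverse.
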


\begin{proof}
	The idea is similar as in the proof of Theorem \ref{th_hp_exp1},~1. $L_\frac{\pi}{2}$ is well-defined in the spaces due to Theorem \ref{th_hp_weak_sol},~4. In order to solve $L_\frac{\pi}{2}u=(G,g)\in Y_{(\beta,\gamma)}$, we make the ansatz $u=e^{-\beta\eta(R)}v$ for $v\in H^2_{(0,\gamma)}(\R^2_+)$, where $\eta:\R\rightarrow\R$ is as in Definition \ref{th_exp_def1}, 3. We compute
	\begin{align*}
	\partial_Ru&=e^{-\beta\eta(R)}[\partial_Rv-\beta\eta'(R)v],\\ \partial_R^2u&=e^{-\beta\eta(R)}[\partial_R^2v-2\beta \eta'(R)\partial_Rv+v(\beta^2\eta'(R)^2-\beta \eta''(R))].
	\end{align*}
	Therefore for $v\in H^2_{(0,\gamma)}(\R^2_+)$ we consider the equation 
	\begin{align}\label{eq_hp_exp_sol2}
	L_\frac{\pi}{2}v+(N_{(\beta,\gamma)}v,0)=e^{\beta\eta(R)}(G,g),\quad N_{(\beta,\gamma)}v:=2\beta\eta'\partial_Rv-v(\beta^2(\eta')^2-\beta\eta'').
	\end{align}
	There is a problem with the compatibility condition \eqref{eq_hp_comp} here. For $L_{\frac{\pi}{2}}v$ the latter is valid by Theorem \ref{th_hp_weak_sol},~4., but for $(N_{(\beta,\gamma)}v,0)$ and $e^{\beta\eta(R)}(G,g)$ it does not hold necessarily. Therefore we enforce the condition on both sides artificially and look at the adjusted equation
	\begin{align}\label{eq_hp_exp_sol3}
	L_\frac{\pi}{2}v+\tilde{N}_{(\beta,\gamma)}v&=(\overline{G},\overline{g}),\\\notag
	\tilde{N}_{(\beta,\gamma)}v&:=\left(N_{(\beta,\gamma)}v, -\left[\int_{\R^2_+}N_{(\beta,\gamma)}v\theta_0'\right]\frac{\theta_0'}{\|\theta_0'\|_{L^2(\R)}^2}\right),
	\\\notag (\overline{G},\overline{g})&:=e^{\beta\eta(R)}(G,g)-\left(0,\left[\int_{\R^2_+}e^{\beta\eta(R)}G\theta_0'+\int_{\R}e^{\beta\eta(R)}g\theta_0'\right]\frac{\theta_0'}{\|\theta_0'\|_{L^2(\R)}^2}\right).
	\end{align}
	
	In order to solve \eqref{eq_hp_exp_sol3}, we observe that $\tilde{N}_{(\beta,\gamma)}$ is a bounded linear operator from $H^2_{(0,\gamma)}(\R^2_+)$ to $Y_{(0,\gamma)}$ with norm estimated by $C(\beta+\beta^2)$ for all $\beta\geq 0$ and $\gamma\in(0,\overline{\gamma}]$. Moreover, Theorem \ref{th_hp_exp1},~3.~yields that $L_\frac{\pi}{2}$ is an isomorphism in these spaces and that the inverse is bounded by $\overline{C}(1+\frac{1}{\gamma^2})$ for all $\gamma\in(0,\overline{\gamma}]$. We choose $\overline{\beta}=\overline{\beta}(\gamma)$ such that $C(\overline{\beta}+\overline{\beta}^2)\leq 1/[2\overline{C}(1+\frac{1}{\gamma^2})]$ and such that $\overline{\beta}:(0,\overline{\gamma}]\rightarrow(0,\infty)$ is non-decreasing. Then a Neumann series argument yields that $L_\frac{\pi}{2}+\tilde{N}_{(\beta,\gamma)}$ is invertible from $H^2_{(0,\gamma)}(\R^2_+)$ onto $Y_{(0,\gamma)}$ for all $\beta\in[0,\overline{\beta}(\gamma)]$, $\gamma\in(0,\overline{\gamma}]$ and the norm of the inverse is bounded by $2\overline{C}(1+\frac{1}{\gamma^2})$.
	
	Now let $\beta\in[0,\overline{\beta}(\gamma)]$, $\gamma\in(0,\overline{\gamma}]$ and $v\in H^2_{(0,\gamma)}(\R^2_+)$ solve \eqref{eq_hp_exp_sol3} for $(G,g)\in Y_{(\beta,\gamma)}$. Then $u:=e^{-\beta\eta(R)}v\in H^2_{(\beta,\gamma)}(\R^2_+)$ is a solution of
	\[
	L_\frac{\pi}{2}u=(G,g)+\left(0,\frac{e^{-\beta\eta(R)}\theta_0'}{\|\theta_0'\|_{L^2(\R)}^2}\left[\int_{\R^2_+}(N_{(\beta,\gamma)}v+e^{\beta\eta(R)}G)\theta_0'+\int_\R e^{\beta\eta(R)}g\theta_0'\right]\right).
	\]
	The compatibility condition \eqref{eq_hp_comp} holds for $L_\frac{\pi}{2}u$ and $(G,g)$. Since $\int_{\R}e^{-\beta\eta(R)}\theta_0'(R)^2\,dR$ is positive, it follows that the second term is zero for the solution, i.e.~$u$ is a solution of $L_\frac{\pi}{2}u=(G,g)$. By construction or alternatively by Theorem \ref{th_hp_weak_sol},~3.~the solution is unique and we have the estimate
	\[
	\|u\|_{H^2_{(\beta,\gamma)}(\R^2_+)}=\|v\|_{H^2_{(0,\gamma)}(\R^2_+)}\leq 2\overline{C}(1+\frac{1}{\gamma^2})\|(\overline{G},\overline{g})\|_{Y_{(0,\gamma)}}\leq\tilde{C}(1+\frac{1}{\gamma^2})\|(G,g)\|_{Y_{(\beta,\gamma)}}
	\] 
	with $\tilde{C}>0$ independent of $\beta, \gamma$ and the functions. This proves the theorem.
\end{proof}

\begin{Theorem}[\textbf{Solution Operators for Higher Regularity}]\label{th_hp_exp3}
	Let $\overline{\beta}:(0,\overline{\gamma}]\rightarrow(0,\infty)$ and $\overline{\gamma}>0$ be as in Theorem \ref{th_hp_exp2}. Then for all $k\in\N_0$, $\gamma\in(0,\overline{\gamma}]$ and $\beta\in[0,\overline{\beta}(\gamma)]$ it follows that
	\[
	L_\frac{\pi}{2}:H^{k+2}_{(\beta,\gamma)}(\R^2_+)\rightarrow Y_{(\beta,\gamma)}^k:=
	\left\{(G,g)\in H^k_{(\beta,\gamma)}(\R^2_+)\times H^{k+\frac{1}{2}}_{(\beta)}(\R):
	\int_{\R^2_+}G\theta_0'+\int_\R g\theta_0'=0\right\}
	\]
	is invertible and the operator norm of the inverse is bounded by $C(k)(1+\frac{1}{\gamma^2})^{k+1}$.
\end{Theorem}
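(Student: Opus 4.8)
The plan is to bootstrap from Theorem \ref{th_hp_exp2} (the case $k=0$) by an induction on $k$, exactly as one proves interior/boundary elliptic regularity for the flat half-space Laplacian, but keeping track of the exponential weights. First I would record the \emph{unweighted} higher-regularity statement: by Corollary \ref{th_hp_weak_sol_2},~2., if $u\in H^1(\R^2_+)$ is a weak solution of \eqref{eq_hp1}-\eqref{eq_hp2} with data $(G,g)\in H^k(\R^2_+)\times H^{k+\frac12}(\R)$, then $u\in H^{k+2}(\R^2_+)$ with the corresponding estimate. So it suffices to transfer this to the weighted spaces and to track the dependence of the constant on $\gamma$.

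For the induction step, assume the claim holds for $k-1$. Given $(G,g)\in Y^k_{(\beta,\gamma)}$, Theorem \ref{th_hp_exp2} already provides a unique $u\in H^2_{(\beta,\gamma)}(\R^2_+)$ with $L_{\frac\pi2}u=(G,g)$ and $\|u\|_{H^2_{(\beta,\gamma)}}\le \tilde C(1+\tfrac1{\gamma^2})\|(G,g)\|_{Y_{(\beta,\gamma)}}$; the only thing left is the higher-regularity estimate. I would use the equivalent norm from Lemma \ref{th_exp1},~2.: writing $v:=e^{\beta\eta(R)+\gamma H}u$, the function $v$ satisfies, by the product rule (as in \eqref{eq_hp_exp_sol1} and \eqref{eq_hp_exp_sol2}), an equation of the form
\[
[-\Delta+f''(\theta_0(R))]v = \overline G + \text{(lower-order terms in }v\text{)},\qquad -\partial_Hv|_{H=0}=\overline g + \gamma\, v|_{H=0},
\]
where $\overline G=e^{\beta\eta+\gamma H}G$, $\overline g=e^{\beta\eta}g$ and the lower-order terms have coefficients that are smooth, bounded together with all derivatives (since $\eta\in C^\infty$ with $\eta'$ bounded and $f''(\theta_0)\in C_b^\infty$ by Theorem \ref{th_theta_0}), and are linear of order $\le 1$ in $v$ with coefficient-bounds $O(\beta+\beta^2+\gamma+\gamma^2)$ uniform in the admissible range. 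Now apply the \emph{unweighted} estimate from Corollary \ref{th_hp_weak_sol_2},~2.~at level $k$ to $v$: the right-hand side lies in $H^k(\R^2_+)\times H^{k+\frac12}(\R)$ with $H^k$-norm controlled by $\|\overline G\|_{H^k}+\|\overline g\|_{H^{k+\frac12}}+\|v\|_{H^{k+1}}$ (the last term coming from the lower-order terms, which cost one derivative less than the top order), plus $\|v\|_{H^1}$. Hence
\[
\|v\|_{H^{k+2}(\R^2_+)}\le C_k\bigl(\|\overline G\|_{H^k}+\|\overline g\|_{H^{k+\frac12}}+\|v\|_{H^{k+1}(\R^2_+)}\bigr),
\]
and the induction hypothesis (applied to $v$, i.e.~to $u$ at level $k-1$) bounds $\|v\|_{H^{k+1}}=\|u\|_{H^{k+1}_{(\beta,\gamma)}}\le C(k-1)(1+\tfrac1{\gamma^2})^{k}\,\|(G,g)\|_{Y^{k-1}_{(\beta,\gamma)}}$, while $\|v\|_{H^1}$ is controlled by Theorem \ref{th_hp_exp2}. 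Combining and translating back via Lemma \ref{th_exp1},~2.~gives $\|u\|_{H^{k+2}_{(\beta,\gamma)}}\le C(k)(1+\tfrac1{\gamma^2})^{k+1}\|(G,g)\|_{Y^k_{(\beta,\gamma)}}$, which closes the induction; invertibility is inherited from Theorem \ref{th_hp_exp2} since the solution is the same $u$ and uniqueness holds in $H^1(\R^2_+)$ by Theorem \ref{th_hp_weak_sol},~3.

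The main obstacle — and the only point requiring genuine care rather than routine bookkeeping — is to make sure that \emph{all} constants produced along the way are independent of $\beta$ and $\gamma$ except through the explicit factor $(1+\tfrac1{\gamma^2})$, and that the power $k+1$ is sharp in the sense that each induction step multiplies by exactly one more factor $(1+\tfrac1{\gamma^2})$. This is where one must be disciplined: the unweighted constant $C_k$ from Corollary \ref{th_hp_weak_sol_2} is harmless (no $\gamma$), the coefficient bounds of the lower-order terms are $O(\beta+\beta^2+\gamma+\gamma^2)\le O(1)$ on the admissible range and so contribute only $\gamma$-independent multiplicative constants, and the single factor $(1+\tfrac1{\gamma^2})$ enters precisely once per step through the base estimate of Theorem \ref{th_hp_exp2} (or the inductive hypothesis). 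A secondary technical nuisance is the handling of the weighted fractional boundary space $H^{k+\frac12}_{(\beta)}(\R)$ and the term $\gamma\,v|_{H=0}$: one uses the weighted trace theory of Lemma \ref{th_exp1},~5.~(whose operator norms are uniform in $\beta$) to see that $\|\gamma\,v|_{H=0}\|_{H^{k+\frac12}(\R)}\le C\gamma\|v\|_{H^{k+1}(\R^2_+)}$, which is again of lower order and absorbs into the induction hypothesis. Once these uniformities are checked the rest is the standard elliptic bootstrap.
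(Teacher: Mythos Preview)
Your approach is correct and establishes the theorem, but it takes a different route from the paper. The paper never conjugates by the full weight $e^{\beta\eta(R)+\gamma H}$; instead it differentiates the equation in $R$, obtaining for each $l=1,\dots,k$ a problem of the form $L_{\frac\pi2}(\partial_R^l u)=(G_l,g_l)$ with $G_l=\partial_R^l G-\sum_{j=1}^l\binom{l}{j}\partial_R^j(f''(\theta_0))\,\partial_R^{l-j}u$, and applies the \emph{weighted} base-case isomorphism (Theorem~\ref{th_hp_exp2}) directly at each step; this is why each $R$-derivative costs one extra factor $(1+\tfrac{1}{\gamma^2})$ in the paper's accounting. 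The remaining pure and mixed $H$-derivatives are then read off algebraically from $\partial_H^2 u=\partial_R^2 u-f''(\theta_0)u+G$ and its iterates.

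Your conjugation argument is in fact more economical: once you pass to $v=e^{\beta\eta+\gamma H}u$ and invoke the \emph{unweighted} regularity estimate of Corollary~\ref{th_hp_weak_sol_2},~2., no new factor of $(1+\tfrac1{\gamma^2})$ is introduced at the induction step, because the constant $C_k$ there is $\gamma$-independent and the lower-order coefficients are $O(1)$ on the admissible range. The only place $(1+\tfrac1{\gamma^2})$ enters is the base case $k=0$ via Theorem~\ref{th_hp_exp2}, and the induction simply propagates this single factor. So your own argument actually yields the stronger bound $\|u\|_{H^{k+2}_{(\beta,\gamma)}}\le C(k)(1+\tfrac1{\gamma^2})\,\|(G,g)\|_{Y^k_{(\beta,\gamma)}}$; your remark that ``each induction step multiplies by exactly one more factor'' is a harmless overcount. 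Either way the stated bound follows.
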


\begin{proof}
	$L_\frac{\pi}{2}$ is a well-defined, bounded linear operator in the above spaces. Let $(G,g)\in Y_{(\beta,\gamma)}^k$. Then by Theorem \ref{th_hp_exp2} there is a unique $u\in H^2_{(\beta,\gamma)}(\R^2_+)$ that solves $L_\frac{\pi}{2}u=(G,g)$. By regularity theory, cf.~Corollary \ref{th_hp_weak_sol_2},~2., it follows that $u\in H^{k+2}(\R^2_+)$. 
	
	Now we show $\partial_R^lu\in H^2_{(\beta,\gamma)}(\R^2_+)$ for all $l=1,...,k$ and suitable estimates. To this end we apply $\partial_R^l$ to the equation and get
	\[
	L_\frac{\pi}{2}\partial_R^l u =\left(\partial_R^l G-\sum_{j=1}^l
	\begin{pmatrix}l \\j\end{pmatrix}
	\partial_R^j(f''(\theta_0))\partial_R^{l-j}u,\partial_R^lg\right)=:(G_l,g_l).
	\]
	First we consider $l=1$. It holds $G_1=\partial_RG-\partial_R(f''(\theta_0))u\in L^2_{(\beta,\gamma)}(\R^2_+)$ and $g_1\in H^{\frac{1}{2}}_{(\beta)}(\R)$. Moreover, due to Theorem \ref{th_hp_weak_sol},~4.~the compatibility condition \eqref{eq_hp_comp} holds for $(G_1,g_1)$. Therefore Theorem \ref{th_hp_exp2} and the uniqueness of solutions in Theorem \ref{th_hp_weak_sol},~3.~implies $\partial_Ru\in H^2_{(\beta,\gamma)}(\R^2_+)$ and
	\begin{align*}
	\|\partial_Ru\|_{H^2_{(\beta,\gamma)}(\R^2_+)}&\leq 
	\tilde{C}(1+\frac{1}{\gamma^2})\|(G_1,g_1)\|_{Y_{(\beta,\gamma)}}\\ 
	&\leq C(1+\frac{1}{\gamma^2})(\|G\|_{H^1_{(\beta,\gamma)}(\R^2_+)}+\|g\|_{H^{\frac{3}{2}}_{(\beta)}(\R)}+\|u\|_{L^2_{(\beta,\gamma)}(\R^2_+)}),
	\end{align*}
	where $\|u\|_{L^2_{(\beta,\gamma)}(\R^2_+)}\leq\tilde{C}(1+\frac{1}{\gamma^2})\|(G,g)\|_{Y_{(\beta,\gamma)}}$ and we used the product rule to rewrite and estimate $e^{\beta\eta(R)}\partial_Rg$.  This shows the case $l=1$. By mathematical induction on $l$ it follows that $\partial_R^l u\in H^2_{(\beta,\gamma)}(\R^2_+)$ for $l=1,...,k$ and $\|\partial_R^l u\|_{H^2_{(\beta,\gamma)}(\R^2_+)}\leq C(k)(1+\frac{1}{\gamma^2})^{k+1}$.
	
	The remaining assertions and estimates will be shown by differentiating and rearranging the first equation in $L_\frac{\pi}{2}u=(G,g)$. For $l=0,...,k$ we have
	\[
	\partial_R^l\partial_H^2u = -\partial_R^l[-\partial_R^2u+f''(\theta_0(R))u-G].
	\]
	For $k=1$ and $l=1$ this implies $\partial_H^2u\in H^1_{(\beta,\gamma)}(\R^2_+)$ together with a suitable estimate. Hence in the case $k=1$ we are done. Now let $k\geq 2$. Then we obtain $\partial_R^l\partial_H^2u\in H^2_{(\beta,\gamma)}(\R^2_+)$ for all $l=0,...,k-2$ with appropriate estimates. This also shows the case $k=2$. Now let $k\geq 3$. Applying $\partial_H^2$ to the above equation and similar arguments as before yield $\partial_H^4u\in H^1_{(\beta,\gamma)}(\R^2_+)$ in the case $k=3$ and $\partial_R^l\partial_H^4u\in H^2_{(\beta,\gamma)}(\R^2_+)$ for all $k\geq 4$ and $l=0,...,k-4$. Additionally, one also obtains suitable estimates. One can complete the argument with an induction proof.
\end{proof}

\begin{Remark}[\textbf{Dependence on Parameters}]\upshape \label{th_hp_exp_sol_rem}
	When the right hand sides $(G,g)$ depend on independent variables, e.g.~time $t\in[0,T]$, one directly obtains a solution $u$ with the same regularity with respect to those variables because we have linear, bounded solution operators in Theorems \ref{th_hp_exp1}-\ref{th_hp_exp3}. E.g.~if for $(\beta,\gamma)$ as in Theorem \ref{th_hp_exp3} and $n,k\in\N_0$ we have
	\[
	(G,g)\in C^n([0,T],H^k_{(\beta,\gamma)}(\R^2_+)\times H^{k+\frac{1}{2}}_{(\beta)}(\R))\quad\text{ with }\int_{\R^2_+}G\theta_0'+\int_\R g\theta_0'=0,
	\] 
	then there is exactly one solution $u\in C^n([0,T],H^{k+2}_{(\beta,\gamma)}(\R^2_+))$ of $L_\frac{\pi}{2}u=(G,g)$. By embeddings, this can e.g.~be applied for sufficiently smooth right hand sides with pointwise exponential decay for the functions and enough derivatives.
\end{Remark}

\subsection{Some Vector-valued ODE Problems on $\R$}\label{sec_ODE_vect}
The structure of this section is similar to Section \ref{sec_ODE_scalar} which is the analogue in the scalar case. We consider vector-valued ODEs appearing in the inner asymptotic expansion of \hyperlink{vAC}{(vAC)} and also the linear operator belonging to a linearized ODE. In order to solve the linearized ODE, we will assume that the kernel of the linearization is $1$-dimensional. The latter is fulfilled for a typical potential, cf.~the example in Remark \ref{th_ODE_vect_lin_op_rem} below. 

Let $W:\R^m\rightarrow\R$ be as in Definition \ref{th_vAC_W} and $\vec{u}_\pm$ be any distinct pair in $\{ \vec{a},\vec{b} \}$ or $\{\vec{x}_1,\vec{x}_3,\vec{x}_5\}$, respectively. From now on, we fix $\vec{u}_\pm$. 

\subsubsection{The Nonlinear ODE}\label{sec_ODE_vect_nonlin}
The nonlinear ODE problem in the lowest order is the following: Find $\vec{u}:\R\rightarrow\R^m$ smooth with suitable decay such that
\begin{align}\label{eq_ODE_vect}
-\vec{u}'' + \nabla W(\vec{u}) = 0\quad\text{ on }\R, \quad \lim_{z\rightarrow\pm\infty}\vec{u}(z)=\vec{u}_\pm.
\end{align}

\begin{Theorem}\label{th_ODE_vect}
	Let $m$, $W$, $\vec{u}_\pm$ be as above and let $\lambda>0$ be such that $D^2 W(\vec{u}_\pm)\geq\lambda I$. Then there is a smooth solution $\vec{u}:\R\rightarrow\R^m$ to \eqref{eq_ODE_vect} such that 
	\[
	\partial_z^k[\vec{u}-\vec{u}_\pm](z)=\Oc(e^{-\beta|z|})\quad\text{ for }z\rightarrow\pm\infty\text{ and all }k\in\N_0, \beta\in\left(0,\sqrt{\lambda/2}\right).
	\] 
	Moreover, $\vec{u}$ can be chosen $R_{\vec{u}_-,\vec{u}_+}$-odd, i.e.~$\vec{u}(-.)=R_{\vec{u}_-,\vec{u}_+}\vec{u}$ with $R_{\vec{u}_-,\vec{u}_+}$ as in Definition \ref{th_vAC_W}. In this case it holds $R_{\vec{u}_-,\vec{u}_+}\vec{u}'|_{z=0}\neq \vec{u}'|_{z=0}$.
\end{Theorem}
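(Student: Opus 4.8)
The plan is to mimic the scalar construction in Theorem~\ref{th_theta_0} but adapted to the vector setting via a variational (or energy-conservation) argument, and then to exploit the reflection symmetry $R:=R_{\vec{u}_-,\vec{u}_+}$ of $W$ to select an odd solution. First I would observe that \eqref{eq_ODE_vect} is the Euler--Lagrange equation of the one-dimensional energy $\mathcal{E}(\vec{u}):=\int_{\R}\tfrac12|\vec{u}'|^2+W(\vec{u})\,dz$ on the affine space of maps $\vec{u}:\R\to\R^m$ with $\vec{u}(z)\to\vec{u}_\pm$ as $z\to\pm\infty$; minimizing $\mathcal{E}$ over this class (or over a suitable weighted Sobolev completion) produces a minimizer, and a standard argument — translating the minimizer so that it passes through a fixed hyperplane, using the direct method with coercivity coming from $W\ge 0$ and $W>0$ away from the minima, plus a cut-and-paste/concatenation argument to rule out loss of compactness at infinity — yields existence of a classical solution $\vec{u}\in C^\infty(\R,\R^m)$. (For the two-well case, heteroclinic connections of this type are classical, going back to the work around Modica--Mortola and Sternberg; in the triple-well case one restricts attention to the connection between the \emph{chosen} pair $\vec{u}_\pm$ and argues that a minimizer in that homotopy class cannot touch the third well, using the strict positivity of $W$ there and an energy comparison.) The paper's Definition~\ref{th_vAC_W} exactly provides the hypotheses ($W\ge 0$, nondegenerate minima $D^2W(\vec{u}_\pm)\ge\lambda I$, symmetry) needed here.

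Next I would establish the exponential decay. Since $\vec{u}(z)\to\vec{u}_\pm$, for $|z|$ large $\vec{u}$ lies in a neighbourhood of $\vec{u}_\pm$ where $\nabla W(\vec{u})=D^2W(\vec{u}_\pm)(\vec{u}-\vec{u}_\pm)+O(|\vec{u}-\vec{u}_\pm|^2)$; writing $\vec{v}:=\vec{u}-\vec{u}_\pm$ one gets $\vec{v}''=D^2W(\vec{u}_\pm)\vec{v}+O(|\vec{v}|^2)$, and testing with $\vec{v}$ gives the differential inequality $(\tfrac12|\vec{v}|^2)''\ge \lambda|\vec{v}|^2-C|\vec{v}|^3\ge \tfrac{\lambda}{2}|\vec{v}|^2$ for $|z|$ large. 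A standard comparison with the ODE $\varphi''=\tfrac{\lambda}{2}\varphi$ then forces $|\vec{v}(z)|=O(e^{-\beta|z|})$ for every $\beta<\sqrt{\lambda/2}$; bootstrapping through the equation $\vec{u}''=\nabla W(\vec{u})$ and differentiating repeatedly gives the same decay for all derivatives $\partial_z^k[\vec{u}-\vec{u}_\pm]$, which is exactly the claimed estimate. This step is essentially the vector-valued analogue of the decay argument behind Theorem~\ref{th_theta_0} and Theorem~\ref{th_ODE_lin}.

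For the symmetry statement: $W$ is $R$-invariant and $R$ swaps $\vec{u}_+$ and $\vec{u}_-$ (it fixes $\tfrac12(\vec{u}_++\vec{u}_-)$ and acts as $-1$ along $\vec{u}_+-\vec{u}_-$, identity on the orthogonal complement), so $z\mapsto R\,\vec{u}(-z)$ is again a solution of \eqref{eq_ODE_vect} with the same limits; by the minimality/uniqueness of the connection up to translation one can pin the translate so that $\vec{u}(-\cdot)=R\vec{u}$, i.e.\ $\vec{u}$ is $R$-odd. (Alternatively, one directly minimizes $\mathcal{E}$ over the $R$-odd class, which is nonempty and closed, and checks the minimizer is a genuine solution; the constraint $\vec{u}(0)\in\operatorname{Fix}(R)=\tfrac12(\vec{u}_++\vec{u}_-)+\{\vec{u}_+-\vec{u}_-\}^\perp$ is consistent with the symmetric critical-point structure.) Finally, the transversality claim $R\vec{u}'|_{z=0}\ne \vec{u}'|_{z=0}$: differentiating $\vec{u}(-z)=R\vec{u}(z)$ at $z=0$ gives $-\vec{u}'(0)=R\vec{u}'(0)$, so $R\vec{u}'(0)=\vec{u}'(0)$ would force $\vec{u}'(0)=0$; but then $\vec{u}(0)$ would be a critical point of the ODE, hence (by uniqueness for the second-order system with zero velocity) $\vec{u}\equiv\vec{u}(0)$ would be constant, contradicting $\vec{u}(\pm\infty)=\vec{u}_\pm\ne\vec{u}_\mp$. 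Hence $\vec{u}'(0)\ne0$ and $R\vec{u}'(0)=-\vec{u}'(0)\ne\vec{u}'(0)$, as claimed.

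The main obstacle is the existence step in the triple-well case: one must ensure that the heteroclinic between the chosen pair $\vec{u}_\pm$ does not "see" the third minimum — i.e.\ that minimizing in the right topological/homotopy sector is legitimate and that the minimizer stays in the correct region. The symmetry group $G$ of the equilateral triangle is precisely what makes this tractable: by symmetrizing and using the strict positivity $W>0$ at the remaining well together with an energy-comparison argument (a path dipping toward the third well can be shortened), one excludes it. The decay and the symmetry/transversality parts are then routine consequences of nondegeneracy and uniqueness-up-to-translation, exactly parallel to the scalar statements already recorded in Section~\ref{sec_ODE_scalar}.
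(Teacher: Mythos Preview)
Your existence, symmetry and decay arguments are correct and essentially coincide with the paper's approach: the paper likewise obtains an $R_{\vec{u}_-,\vec{u}_+}$-odd heteroclinic by minimising $\int_\R\tfrac12|\vec{u}'|^2+W(\vec{u})\,dz$ over $\Xi(\vec{u}_-,\vec{u}_+)+H^1(\R)^m$ (citing \cite{Kusche} for the two-well and \cite{BGS} for the triple-well case) and bootstraps the exponential decay from the equation.

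Your transversality argument, however, has a genuine gap. The reflection $R=R_{\vec{u}_-,\vec{u}_+}$ is \emph{affine}: $R\vec{v}=A\vec{v}+b$ with $A$ the linear reflection across $\{\vec{u}_+-\vec{u}_-\}^\perp$ and $b=(I-A)\tfrac12(\vec{u}_++\vec{u}_-)$. Differentiating $\vec{u}(-z)=R\vec{u}(z)$ therefore gives $-\vec{u}'(0)=A\vec{u}'(0)$, not $-\vec{u}'(0)=R\vec{u}'(0)$. Combining the correct relation with the affine fixed-point hypothesis $R\vec{u}'(0)=\vec{u}'(0)$ yields $\vec{u}'(0)=\tfrac12 b$, which is nonzero whenever $|\vec{u}_+|\neq|\vec{u}_-|$; so your deduction $\vec{u}'(0)=0$ fails in general. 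Even if $\vec{u}'(0)=0$ held, your conclusion that $\vec{u}$ is constant requires $\nabla W(\vec{u}(0))=0$; this does not follow from the initial-value problem alone and needs the first integral $\tfrac12|\vec{u}'|^2=W(\vec{u})$, which you did not invoke. The paper avoids both issues by working directly with the scalar $(\vec{u}')^\perp:=\tfrac12 P_{\mathrm{span}\{\vec{u}_+-\vec{u}_-\}}[\vec{u}'-R\vec{u}']$: from $R$-oddness of $\vec{u}$ (hence of $\vec{u}''$) one gets $[(\vec{u}')^\perp]'(0)=0$, while the boundary conditions force $(\vec{u}')^\perp\not\equiv 0$; an ODE-uniqueness argument then yields $(\vec{u}')^\perp(0)\neq 0$, which is exactly the affine statement $R\vec{u}'(0)\neq\vec{u}'(0)$.
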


\begin{Remark}\upshape\phantomsection{\label{th_ODE_vect_rem}}
	\begin{enumerate}
	\item From now on, we fix a $R_{\vec{u}_-,\vec{u}_+}$-odd solution and simply denote it by $\vec{\theta}_0$.
	\item The proof relies on minimizing an energy over an approporiate set (see below). Similar to Bronsard, Gui, Schatzman \cite{BGS}, Section 2, where the triple-well case is considered, it should be possible to determine the qualitative behaviour of the set of minimizers for both types of $W$ in Definition \ref{th_vAC_W} precisely. E.g.~in the triple-well case the minimizers are trapped in the smaller sector between $\vec{u}_-$ and $\vec{u}_+$. But this is not needed here.
	\end{enumerate}
\end{Remark}

\begin{proof}[Proof of Theorem \ref{th_ODE_vect}]
	Let $\xi:\R\rightarrow\R$ be smooth and odd such that $\xi(z)=\textup{sign}(z)$ for $|z|\geq 1$. Moreover, we define
	\begin{align}\label{eq_ODE_vect_Xi}
	\Xi(\vec{u}_-,\vec{u}_+):=\frac{1}{2}(\vec{u}_- +\vec{u}_+) + \xi\frac{1}{2}(\vec{u}_+ -\vec{u}_-).
	\end{align}
	Then \cite{Kusche}, Section 2.1 for potentials $W$ as in Definition \ref{th_vAC_W},~1.~and \cite{BGS}, Section 2 for triple-well potentials $W$ in Definition \ref{th_vAC_W},~2., respectively, yield that
	\[
	E: \Xi(\vec{u}_-,\vec{u}_+) + H^1(\R)^m \rightarrow\R : \vec{u} \mapsto \int_\R \frac{1}{2}|\vec{u}'|^2 + W(\vec{u})\,dz
	\]
	admits a global minimizer $\vec{u}$ that satisfies $\vec{u}\in C^3(\R)^m \cap (H^2(\R)^m + \Xi(\vec{u}_-,\vec{u}_+))$ and is  $R_{\vec{u}_-,\vec{u}_+}$-odd. Moreover, $\vec{u}$ satisfies \eqref{eq_ODE_vect} and 
	\[
	\partial_z^k[\vec{u}-\vec{u}_\pm](z)=\Oc(e^{-\beta|z|})\quad\text{ for }z\rightarrow\pm\infty\text{ and }k=0,1,2,3, \beta\in(0,\sqrt{\lambda/2}).
	\]
	Furthermore, one obtains $\vec{u}\in C^{k+1}(\R)^m \cap (H^k(\R)^m + \Xi(\vec{u}_-,\vec{u}_+))$ for all $k\in\N$, $k\geq 2$ and the decay properties by induction and differentiating the equation. 
	
	Finally, we show $R_{\vec{u}_-,\vec{u}_+}\vec{u}'|_{z=0}\neq \vec{u}'|_{z=0}$ for any smooth $\vec{u}:\R\rightarrow\R^m$ that solves \eqref{eq_ODE_vect} and is $R_{\vec{u}_-,\vec{u}_+}$-odd. This will be shown by contradiction with uniqueness for the initial value ODE problem for the part of $\vec{u}$ orthogonal to the hypersurface inbetween $\vec{u}_-$ and $\vec{u}_+$. Therefore let
	\[
	\vec{v}^\perp := 
	\frac{1}{2}P_{\textup{span}\{\vec{u}_+-\vec{u}_-\}}[	\vec{v}-R_{\vec{u}_-,\vec{u}_+}	\vec{v}]\quad\text{ for every }\vec{v}\in \R^m.
	\]
	Then $(\vec{u}')^\perp: \R\rightarrow\R$ is smooth and solves $-[(\vec{u}')^\perp]'' = [D^2W(\vec{u})\vec{u}']^\perp$. Since $\vec{u}$ is $R_{\vec{u}_-,\vec{u}_+}$-odd, this also holds for $\vec{u}''$ and hence $[(\vec{u}')^\perp]'(0)=0$. Due to the boundary condition in \eqref{eq_ODE_vect} we obtain $(\vec{u}')^\perp\not\equiv 0$ . Therefore $(\vec{u}')^\perp(0)\neq 0$, otherwise we get a contradiction to $(\vec{u}')^\perp\equiv 0$ due to ODE-theory. This proves $R_{\vec{u}_-,\vec{u}_+}\vec{u}'|_{z=0}\neq \vec{u}'|_{z=0}$.
\end{proof}

\subsubsection{The Linearized Operator}\label{sec_ODE_vect_lin_op}
	We look at the operator obtained by linearization of the left hand side of the ODE \eqref{eq_ODE_vect} at $\vec{\theta}_0$, i.e.
	\begin{align}\label{eq_ODE_vect_L0}
	\check{L}_0: H^2(\R,\K)^m\subseteq L^2(\R,\K)^m\rightarrow L^2(\R,\K)^m:\vec{u}\mapsto \check{\Lc}_0\vec{u}:=\!\left[-\frac{d^2}{dz^2} + D^2W(\vec{\theta}_0)\right]\!\vec{u}.
	\end{align}
\begin{Remark}[\textbf{Assumption 				$\dim\ker\check{L}_0=1$}]\upshape\label{th_ODE_vect_lin_op_rem}
	$\vec{\theta}_0 '$ is an element of $\ker \check{L}_0$ and $\vec{\theta}_0'\not\equiv 0$ due to Theorem \ref{th_ODE_vect}. In order to have a spectral gap property that is needed for solving the vector-valued linearized ODE and the vector-valued $\R^2_+$-model problem in the next sections, we \textit{assume} $\dim \ker\check{L}_0 =1$ (this is independent of $\K$ since $D^2W(\vec{\theta}_0)$ is real-valued). This is reasonable, cf.~\cite{Kusche}, Section 3.4 for a typical triple-well potential that fulfils this. Note that the assumption should be stable under suitable \enquote{small} perturbations of the potential $W$ due to the upper continuity of the nullity index for (semi-)Fredholm operators, cf.~Kato \cite{Kato}, Theorem 5.22.
\end{Remark}
		
\begin{Lemma}\label{th_ODE_vect_lin_op}
	Let $\check{L}_0$ be as above and $\K=\R$ or $\C$. Then $\check{L}_0$ is self-adjoint, $\check{L}_0\geq 0$ and 
	\[
	\sigma_e(\check{L}_0)=[\min\{\sigma(D^2 W(\vec{u}_\pm))\},\infty).	
	\]
	In particular $\sigma_d(\check{L}_0)\subset [0, \min\{\sigma(D^2 W(\vec{u}_\pm))\})$. Moreover, if $\dim \ker\check{L}_0 =1$, then with $(\ker\check{L}_0)^\perp:=
	\{\vec{w}\in L^2(\R,\C)^m:(\vec{w},\vec{\theta}_0')_{L^2}=0\}$ it holds
	\begin{align*} 
	0<\check{\nu}_0
	&:=\inf_{\vec{w}\in H^2(\R,\C)^m\cap\textup{span}\{\vec{\theta}_0'\} ^\perp,\|\vec{w}\|_{L^2}=1}
	(\check{L}_0 \vec{w},\vec{w})_{L^2(\R,\C)^m}\\
	&\phantom{:}=\inf_{\vec{w}\in H^1(\R,\C)^m\cap\textup{span}\{\vec{\theta}_0'\} ^\perp,\|\vec{w}\|_{L^2}=1}\int_{\R}|\vec{w}'|^2+(D^2W(\vec{\theta}_0)\vec{w},\vec{w})_{\C^m}\,dz.
\end{align*}
\end{Lemma}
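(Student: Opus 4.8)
\emph{Strategy.} The plan is to reduce all assertions to standard self-adjoint spectral theory on $L^2(\R,\K)^m$, treating in order: self-adjointness and nonnegativity, the essential spectrum via a decomposition principle, and the spectral gap under the kernel hypothesis. Throughout, $\vec{\theta}_0$ denotes the $R_{\vec{u}_-,\vec{u}_+}$-odd solution fixed in Remark \ref{th_ODE_vect_rem}, produced in Theorem \ref{th_ODE_vect} as a global minimizer of $E(\vec{u})=\int_\R\frac12|\vec{u}'|^2+W(\vec{u})\,dz$ over $\Xi(\vec{u}_-,\vec{u}_+)+H^1(\R)^m$.

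\emph{Self-adjointness and $\check{L}_0\geq0$.} First I would observe that $-\frac{d^2}{dz^2}$ with domain $H^2(\R,\K)^m$ is self-adjoint and that $D^2W(\vec{\theta}_0)$ acts as multiplication by a bounded symmetric matrix-valued function (bounded since $\vec{\theta}_0$ is bounded and $W$ is smooth); hence $\check{L}_0$ is self-adjoint on $H^2(\R,\K)^m$ by the Kato--Rellich theorem, with associated quadratic form $\check{q}(\vec{w}):=\int_\R|\vec{w}'|^2+(D^2W(\vec{\theta}_0)\vec{w},\vec{w})_{\K^m}\,dz$ on the form domain $H^1(\R,\K)^m$ (the perturbation being bounded). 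For nonnegativity I would use the minimizing property of $\vec{\theta}_0$: for any $\vec{w}\in H^1(\R)^m$ the map $t\mapsto E(\vec{\theta}_0+t\vec{w})$ is $C^2$ (dominated convergence, $\vec{w}\in L^\infty\cap L^2$ and $D^2W$ bounded on the relevant bounded set) and has a minimum at $t=0$, so $\frac{d^2}{dt^2}\big|_{t=0}E(\vec{\theta}_0+t\vec{w})=\check{q}(\vec{w})\geq0$. Thus $\check{q}\geq0$ on $H^1(\R,\K)^m$ ($D^2W$ being real), whence $\sigma(\check{L}_0)\subseteq[0,\infty)$.

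\emph{Essential spectrum.} By Theorem \ref{th_ODE_vect}, $D^2W(\vec{\theta}_0(z))-D^2W(\vec{u}_\pm)=\Oc(e^{-\beta|z|})$ as $z\to\pm\infty$. I would invoke the decomposition principle: imposing Dirichlet conditions at $\pm R$ decouples $\check{L}_0$ into operators on $(-\infty,-R)$, $(-R,R)$, $(R,\infty)$ while changing the resolvent by a finite-rank operator, hence leaving $\sigma_e$ unchanged. The middle piece has compact resolvent and contributes nothing; on each half-line the exponentially decaying difference $D^2W(\vec{\theta}_0(\cdot))-D^2W(\vec{u}_\pm)$ is a relatively compact perturbation of the constant-coefficient operator $-\frac{d^2}{dz^2}+D^2W(\vec{u}_\pm)$, whose essential spectrum — after diagonalizing the constant symmetric matrix into decoupled scalar Schrödinger operators — equals $[\min\sigma(D^2W(\vec{u}_\pm)),\infty)$. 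Combining the two ends gives $\sigma_e(\check{L}_0)=[\min\sigma(D^2W(\vec{u}_-)),\infty)\cup[\min\sigma(D^2W(\vec{u}_+)),\infty)=[\min\{\sigma(D^2W(\vec{u}_\pm))\},\infty)=:[\check{\mu},\infty)$ (the two numbers coincide anyway by the reflection/symmetry-group invariance of $W$), and $\check{\mu}>0$ since $D^2W$ is positive definite at the minima. Everything here is independent of $\K$ because the coefficients are real. Together with $\sigma(\check{L}_0)\subseteq[0,\infty)$ this yields $\sigma_d(\check{L}_0)=\sigma(\check{L}_0)\setminus\sigma_e(\check{L}_0)\subseteq[0,\check{\mu})$. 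The main obstacle is precisely this step: one must set up the decomposition principle carefully and verify that the exponentially decaying matrix perturbation is genuinely relatively compact; the rest is routine.

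\emph{Spectral gap.} Differentiating \eqref{eq_ODE_vect} gives $\check{L}_0\vec{\theta}_0'=0$, and $\vec{\theta}_0'\in H^2(\R)^m$ by the decay in Theorem \ref{th_ODE_vect} while $\vec{\theta}_0'\not\equiv0$ (Remark \ref{th_ODE_vect_lin_op_rem}), so under $\dim\ker\check{L}_0=1$ we get $\ker\check{L}_0=\textup{span}\{\vec{\theta}_0'\}$. Then $0=\inf\sigma(\check{L}_0)$ is an isolated eigenvalue of finite multiplicity (as $0<\check{\mu}=\inf\sigma_e(\check{L}_0)$), so there is $\varepsilon>0$ with $\sigma(\check{L}_0)\cap(0,\varepsilon)=\emptyset$. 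The closed subspace $\textup{span}\{\vec{\theta}_0'\}^\perp$ reduces $\check{L}_0$; $0$ is not an eigenvalue of the restriction (an eigenvector would lie in $\ker\check{L}_0\cap\textup{span}\{\vec{\theta}_0'\}^\perp=\{0\}$), and $\sigma_e(\check{L}_0|_{\textup{span}\{\vec{\theta}_0'\}^\perp})=\sigma_e(\check{L}_0)\subseteq[\check{\mu},\infty)$, so $0\notin\sigma(\check{L}_0|_{\textup{span}\{\vec{\theta}_0'\}^\perp})$ and hence $\sigma(\check{L}_0|_{\textup{span}\{\vec{\theta}_0'\}^\perp})=\sigma(\check{L}_0)\setminus\{0\}\subseteq[\varepsilon,\infty)$. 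Finally, the variational characterization of the bottom of the spectrum of the semibounded self-adjoint operator $\check{L}_0$ (form $\check{q}$, form domain $H^1(\R,\K)^m$), using density of $H^2\cap\textup{span}\{\vec{\theta}_0'\}^\perp$ in $H^1\cap\textup{span}\{\vec{\theta}_0'\}^\perp$ and continuity of $\check{q}$, gives
\[
\check{\nu}_0=\inf_{\vec{w}\in H^2(\R,\C)^m\cap\textup{span}\{\vec{\theta}_0'\}^\perp,\,\|\vec{w}\|_{L^2}=1}(\check{L}_0\vec{w},\vec{w})_{L^2}=\inf_{\vec{w}\in H^1(\R,\C)^m\cap\textup{span}\{\vec{\theta}_0'\}^\perp,\,\|\vec{w}\|_{L^2}=1}\check{q}(\vec{w})=\inf\sigma\big(\check{L}_0|_{\textup{span}\{\vec{\theta}_0'\}^\perp}\big)\geq\varepsilon>0,
\]
which establishes both displayed equalities and the strict positivity.
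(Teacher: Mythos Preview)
Your proof is correct and follows the same overall strategy as the paper's: self-adjointness, nonnegativity from the minimizing property of $\vec{\theta}_0$, computation of $\sigma_e$, and the spectral gap via restriction to $(\ker\check{L}_0)^\perp$. The only minor technical variations are that the paper obtains self-adjointness from symmetry plus $\rho(\check{L}_0)\cap\R\neq\emptyset$ (Lax--Milgram) rather than Kato--Rellich, and computes $\sigma_e$ by citing Persson's theorem and Weyl sequences (via \cite{Kusche}, Proposition~2.1) rather than your decomposition-principle/relatively-compact-perturbation argument; both routes are standard and equivalent for this operator.
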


\begin{proof}
	That $\check{L}_0$ is self-adjoint e.g.~follows from \cite{Kusche}, Proposition 1.1 or with a typical argument with the symmetry of $\check{L}_0$ and $\rho(\check{L}_0)\cap\R\neq\emptyset$ by the Lax-Milgram Theorem. The property $\check{L}_0\geq 0$ is an outcome of the energetic approach in the proof of Theorem \ref{th_ODE_vect} (first for $\K=\R$, then it follows for $\K=\C$). Moreover, one can use Persson's Theorem and Weyl sequences to show the identity for $\sigma_e(\check{L}_0)$, cf.~the proof of Proposition 2.1 in \cite{Kusche}. The remaining assertions can be deduced in the analogous way as in the scalar case, cf.~the proof of \cite{AbelsMoser}, Lemma 2.5.
\end{proof}
	   
\subsubsection{The Linearized ODE}\label{sec_ODE_vect_lin}
We have to consider the ODE that arises from the linearization of \eqref{eq_ODE_vect} at $\vec{\theta}_0$. More precisely, for $\vec{A}:\R\rightarrow\R^m$ with suitable regularity and decay we seek a function $\vec{u}:\R\rightarrow\R^m$ such that
\begin{align}\label{eq_ODE_vect_lin}
\check{L}_0\vec{u}=\vec{A}\quad\text{ and }\quad\check{B}\vec{u}=0,
\end{align}
where $\check{B}\in\Lc(H^l(\R)^m,\R)$ for some $l\in\{0,1,2\}$ with $\check{B}\vec{\theta}_0'\neq 0$. As before we make the assumption $\dim\ker\check{L}_0 =1$, cf.~Remark \ref{th_ODE_vect_lin_op_rem}.

\begin{Remark}\upshape\label{th_ODE_vect_lin_rem1}
	The additional condition with $\check{B}$ is imposed in order to get uniqueness below. The natural choice from a functional analytic point of view is $\check{B}:=(.,\vec{\theta}_0')_{L^2(\R)^m}:L^2(\R)^m\rightarrow\R$. However, the canonical choice for the application later is  
	\[
	\check{B}:=
	(\vec{u}_- -\vec{u}_+)^\top
	[R_{\vec{u}_-,\vec{u}_+}-I](.)|_{z=0}:H^1(\R)^m\rightarrow\R,
	\] 
	where $R_{\vec{u}_-,\vec{u}_+}$ is defined analogously to Definition \ref{th_vAC_W}. The latter fulfils $\check{B}\vec{\theta}_0'\neq 0$ due to Theorem \ref{th_ODE_vect} and heuristically the condition $\check{B}\vec{u}=0$  means that $\vec{u}|_{z=0}$ is precisely in the middle of the two phases. E.g.~for $\vec{u}_-=(-1,1)^\top$ and $\vec{u}_+=(1,1)^\top$ the latter reduces to $\vec{u}(0)_1=0$.
\end{Remark}

\begin{Theorem}\label{th_ODE_vect_lin}
	Let $\dim\ker\check{L}_0 =1$, cf.~Remark \ref{th_ODE_vect_lin_op_rem}. Then it holds
	\begin{enumerate}
	\item Let $\vec{A}\in L^2(\R)^m$. Then there is a $\vec{u}\in H^2(\R)^m$ such that $\check{L}_0 \vec{u}=\vec{A}$ if and only if $\int_\R\vec{A}\cdot \vec{\theta}_0'=0$. In this case $\vec{u}$ is unique up to multiples of $\vec{\theta}_0'$. In particular, \eqref{eq_ODE_vect_lin} admits a unique solution $\vec{u}\in H^2(\R)^m$ if and only if $\int_\R\vec{A}\cdot \vec{\theta}_0'=0$. Moreover, for all $k\in\N_0$
	\[
	\check{L}_0:\left\{\vec{u}\in H^{k+2}(\R)^m:\check{B}\vec{u}=0\right\}\rightarrow\left\{\vec{A}\in H^k(\R)^m:\int_{\R}\vec{A}\cdot\vec{\theta}_0'=0\right\}
	\]
	is an isomorphism and the inverse is bounded by some $c(\check{B},k)>0$.
	\item There is a $\check{\beta}_0>0$ small such that for all $\beta\in(0,\check{\beta}_0)$ and $k\in\N_0$
	\[
	\check{L}_0:\left\{\vec{u}\in H^{k+2}_{(\beta)}(\R)^m:\check{B}\vec{u}=0\right\}\rightarrow \left\{ \vec{A}\in H^k_{(\beta)}(\R)^m : \int_\R\vec{A}\cdot\vec{\theta}_0'=0 \right\}
	\]
	is an isomorphism and the norm of the inverse is bounded by a constant $C(\check{B},k)>0$. 
	\end{enumerate}
\end{Theorem}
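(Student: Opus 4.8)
The plan is to reduce everything to the spectral gap property established in Lemma \ref{th_ODE_vect_lin_op}, following the scalar template of Theorem \ref{th_ODE_lin} but replacing the explicit first-order reduction (which is not available in the vector-valued case) by a functional-analytic argument. First I would treat part~1 in the Hilbert-space setting $k=0$. Since $\check{L}_0$ is self-adjoint, $\check{L}_0\geq 0$, $\ker\check{L}_0=\textup{span}\{\vec{\theta}_0'\}$ and $0$ is isolated in the spectrum (by $\sigma_e(\check{L}_0)\subset[\min\sigma(D^2W(\vec{u}_\pm)),\infty)$ and $\sigma_d(\check{L}_0)$ finite below that, together with $\dim\ker\check{L}_0=1$), the range of $\check{L}_0$ equals $(\ker\check{L}_0)^\perp=\{\vec{A}\in L^2(\R)^m:\int_\R\vec{A}\cdot\vec{\theta}_0'=0\}$ and the Fredholm alternative gives solvability iff $\int_\R\vec{A}\cdot\vec{\theta}_0'=0$, with the solution unique up to multiples of $\vec{\theta}_0'$. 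Imposing $\check{B}\vec{u}=0$ with $\check{B}\vec{\theta}_0'\neq0$ picks out a unique representative: writing a general solution as $\vec{u}_0+\lambda\vec{\theta}_0'$ and solving $\check{B}(\vec{u}_0+\lambda\vec{\theta}_0')=0$ for $\lambda$ works because $\check{B}\vec{\theta}_0'\neq0$. Boundedness of the solution operator $\{\vec{A}:\int\vec{A}\cdot\vec{\theta}_0'=0\}\to\{\vec{u}:\check{B}\vec{u}=0\}$ follows from the open mapping theorem (or directly from $\check{\nu}_0>0$ in Lemma \ref{th_ODE_vect_lin_op} to control the component of $\vec{u}$ in $\textup{span}\{\vec{\theta}_0'\}^\perp$, plus $|\lambda|\lesssim\|\check{B}\vec{u}_0\|\lesssim\|\vec{u}_0\|_{H^2}$).

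For higher regularity $k\geq1$ in part~1, I would argue by induction, differentiating the equation: if $\check{L}_0\vec{u}=\vec{A}$ with $\vec{A}\in H^k(\R)^m$, then $\partial_z^k\vec{u}$ satisfies $\check{L}_0\partial_z^k\vec{u}=\partial_z^k\vec{A}-\sum_{j=1}^k\binom{k}{j}\partial_z^j(D^2W(\vec{\theta}_0))\,\partial_z^{k-j}\vec{u}$, and bootstrapping gives $\vec{u}\in H^{k+2}(\R)^m$ with the corresponding estimate; this is entirely analogous to the argument in the proof of Theorem \ref{th_hp_exp3}. The isomorphism statement onto $\{\vec{A}\in H^k:\int\vec{A}\cdot\vec{\theta}_0'=0\}$ is then immediate. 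Part~2 is handled by the exponential-weight trick already used repeatedly in Section \ref{sec_hp_90}: for $\beta>0$ small one writes $\vec{u}=e^{-\beta\eta(z)}\vec{v}$ (with $\eta$ as in Definition \ref{th_exp_def1}, 3.), computes that $\check{L}_0\vec{u}=\vec{A}$ transforms into $\check{L}_0\vec{v}+\check{N}_\beta\vec{v}=e^{\beta\eta}\vec{A}$ where $\check{N}_\beta\vec{v}=2\beta\eta'\vec{v}'-(\beta^2(\eta')^2-\beta\eta'')\vec{v}$ is a bounded operator on the relevant unweighted spaces with norm $O(\beta+\beta^2)$, enforces the compatibility condition $\int\vec{A}\cdot\vec{\theta}_0'=0$ artificially on both sides (projecting off the $\vec{\theta}_0'$-component, exactly as in the proof of Theorem \ref{th_hp_exp2}), solves the perturbed problem by a Neumann series for $\beta\in(0,\check{\beta}_0)$, and checks afterwards via the compatibility condition that the artificial correction term vanishes on the actual solution. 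The choice of $\check{\beta}_0$ must ensure the Neumann series converges; one also needs that the weighted condition $\check{B}\vec{u}=0$ transforms compatibly — for the canonical $\check{B}=(\cdot,\vec{\theta}_0')_{L^2}$ this needs a small adjustment since $e^{\beta\eta}$ changes the inner product, so it is cleaner to note that $\check{B}\vec{u}=0$ can be enforced by the same $\lambda$-shift argument as in part~1 after solving modulo $\textup{span}\{\vec{\theta}_0'\}$.

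The main obstacle I anticipate is not any single estimate but the bookkeeping around the compatibility condition under the exponential conjugation in part~2: the operator $\check{N}_\beta$ does not preserve orthogonality to $\vec{\theta}_0'$, so one cannot work purely in the $\perp$-subspace as in Theorem \ref{th_hp_exp1},~1., and must instead use the full-space argument with the artificial projection as in Theorem \ref{th_hp_exp2}, taking care that the final solution genuinely solves the unprojected equation. A secondary point requiring care is that, unlike the scalar case where $\theta_0'>0$ allows an explicit solution formula, here one has only the abstract spectral gap $\check{\nu}_0>0$; this is enough for all quantitative bounds, but it means every step must be phrased spectrally rather than via quadrature, and in particular the uniform-in-$k$ structure of the constants $c(\check{B},k)$, $C(\check{B},k)$ comes from iterating the $k=0$ bound along the differentiation induction, with the commutator terms $\partial_z^j(D^2W(\vec{\theta}_0))$ absorbed using the exponential decay of $\vec{\theta}_0-\vec{u}_\pm$ from Theorem \ref{th_ODE_vect}.
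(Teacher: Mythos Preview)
Your proposal is correct and follows essentially the same route as the paper: Part~1 via the spectral gap from Lemma~\ref{th_ODE_vect_lin_op} and the Fredholm alternative for the self-adjoint $\check{L}_0$ (with uniqueness fixed by the $\lambda$-shift along $\vec{\theta}_0'$ using $\check{B}\vec{\theta}_0'\neq0$), higher $k$ by differentiating the equation and bootstrapping, and Part~2 via the conjugation $\vec{u}=e^{-\beta\eta}\vec{v}$, the perturbation $\check{N}_\beta$ of size $O(\beta)$, an artificial orthogonal correction to enforce the compatibility condition, and a Neumann series.

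The one point where the paper is slightly more concrete than your sketch is the handling of the $\check{B}$-constraint in Part~2. Rather than ``solve modulo $\textup{span}\{\vec{\theta}_0'\}$ and then $\lambda$-shift'' (which is correct in spirit but leaves open \emph{which} Banach space you run the Neumann series in), the paper first replaces $\check{B}$ by a specific pointwise functional $\check{B}_0:=(\,\cdot\,)_1|_{z=0}$, chosen so that $\check{B}_0(e^{-\beta\eta}\vec{v})=0\iff\check{B}_0\vec{v}=0$; this makes the domain $\{\vec{v}\in H^2(\R)^m:\check{B}_0\vec{v}=0\}$ genuinely $\beta$-independent, so the Neumann series and the artificial projection $\check{M}_\beta$ work on a fixed space. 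The passage to a general $\check{B}$ is then exactly the $\lambda$-shift you describe. This is a minor implementation detail, not a different strategy.
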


\begin{Remark}\upshape\phantomsection{\label{th_ODE_vect_lin_rem2}}
	\begin{enumerate}
	\item \textit{Dependence on parameteres.} In Theorem \ref{th_ODE_vect_lin} we obtained linear solution operators in suitable spaces with exponential decay. Therefore, if the right hand side in \eqref{eq_ODE_vect_lin} depends on additional parameters and satisfies such exponential decay estimates, the regularity and decay carries over to the solution.
	\item Using Theorem \ref{th_ODE_vect_lin} one can directly obtain a result for right hand sides $\vec{A}$ that converge with appropriate rate to non-zero vectors $\vec{A}^\pm\in\R^m$ at $\pm\infty$. The idea is as follows: set
	\[
	\vec{U}_\pm := [D^2W(\vec{u}_\pm)]^{-1}(\vec{A}_\pm)
	\quad\text{ and }\quad \vec{U}:=\Xi(\vec{U}_-,\vec{U}_+),
	\] 
	where the latter is analogous to \eqref{eq_ODE_vect_Xi}. Then formally it holds $\check{\Lc}_0\vec{u}=\vec{A}$ if and only if $\check{\Lc}_0(\vec{u}-\vec{U})=\vec{A}-\vec{A}_0$, where $\vec{A}_0:=\check{L}_0\vec{U}$. To this equation one can apply the results in Theorem \ref{th_ODE_vect_lin} for suitable $\vec{A}$. Note that the compatibility condition is the same as before since $\int_\R \vec{A}_0\cdot\vec{\theta}_0'=0$ due to integration by parts. The case $\vec{A}^\pm\neq0$ is not needed here but may be interesting for more sophisticated equations, e.g.~a vector-valued Cahn-Hilliard equation. Moreover, the idea to reduce to $\vec{A}_\pm=0$ could also be helpful for triple junction cases. Finally, note the analogy in the limits at infinity to the ones in Theorem \ref{th_ODE_lin}.
	\item Kusche \cite{Kusche}, Proposition 1.6 and Corollary 1.2 yield pointwise exponential decay estimates. The latter would be enough for our purpose. But the downside is that the exponent shrinks. Moreover, the proof of Theorem \ref{th_ODE_vect_lin} is self-contained and simpler. However, in \cite{Kusche} there are also uniform estimates for finite large intervals which are important for the spectral estimates, cf.~Section \ref{sec_SE_1Dvect} below.
	\end{enumerate}
\end{Remark}

\begin{proof}[Proof of Theorem \ref{th_ODE_vect_lin}. Ad 1]
		Consider $\K=\R$ in Lemma \ref{th_ODE_vect_lin_op}. With the latter one can show that
		\[
		\check{L}_0^\perp:=\check{L}_0|_{(\ker\check{L}_0)^\perp}: H^2(\R)^m\cap (\ker\check{L}_0)^\perp \rightarrow (\ker\check{L}_0)^\perp
		\] 
		is well-defined, self-adjoint and $\sigma(\check{L}_0)=\sigma(\check{L}_0^\perp)\cup\{0\}$. As in the proof of Lemma 2.5 in \cite{AbelsMoser} it follows that $\sigma(\check{L}_0^\perp)=\sigma(\check{L}_0)\textbackslash\{0\}$ and hence $0\in\rho(\check{L}_0^\perp)$. Moreover, the graph norm is equivalent to the $H^2(\R)^m$-norm. This can e.g.~be seen via direct estimates.
		
		Now let $\vec{v}\in H^2(\R)^m$ solve $\check{L}_0\vec{v}=\vec{A}$ for some $\vec{A}\in L^2(\R)^m$. Then due to integration by parts it holds $(\vec{A},\vec{\theta}_0')_{L^2(\R)^m}=(\vec{v},\check{L}_0\vec{\theta}_0')_{L^2(\R)^m}=0$. On the other hand, let $\vec{A}\in L^2(\R)^m$ be such that $\int_\R\vec{A}\cdot \vec{\theta}_0'=0$. The latter is equivalent to $\vec{A}\in (\ker\check{L}_0)^\perp \cap L^2(\R)^m$. The above considerations yield a unique solution $\vec{v}\in H^2(\R)^m\cap (\ker\check{L}_0)^\perp$ to $\check{L}_0^\perp\vec{v}=\vec{A}$ and $\|\vec{v}\|_{H^2(\R)^m}\leq C\|\vec{A}\|_{L^2(\R)^m}$. Because of the assumption $\dim\ker\check{L}_0 =1$ it holds $\ker \check{L}_0=\textup{span} \{\vec{\theta}_0'\}$. This implies the uniqueness in $H^2(\R)^m$ up to multiples of $\vec{\theta}_0'$. Due to $\check{B}\in\Lc(H^l(\R)^2,\R)$ for some $l\in\{0,1,2\}$ and $\check{B}\vec{\theta}_0'\neq0$, we obtain that
		\[
		\vec{u}:=\vec{v}-\frac{\check{B}\vec{v}}{\vec{B}\vec{\theta}_0'} \vec{\theta}_0' \in H^2(\R)^m
		\]
		is well-defined, the unique solution to \eqref{eq_ODE_vect_lin} and that the estimate $\|\vec{u}\|_{H^2(\R)^m}\leq C(\check{B})\|\vec{A}\|_{L^2(\R)^m}$ holds. In particular the claim follows for $k=0$. For arbitrary $k\in\N$ let $\vec{A}\in H^k(\R)^m$ and $\vec{u}\in H^2(\R)^m$ solve $\check{L}_0\vec{u}=\vec{A}$. Then it follows iteratively from the equation that $\vec{u}\in H^{k+2}(\R)^m$. In particular $\check{L}_0$ is an isomorphism with respect to the spaces in the theorem for all $k\in\N$. The estimate for the inverse can also be shown iteratively using the equation.\qedhere$_{1.}$\end{proof}
		
		\begin{proof}[Ad 2] We prove this with similar ideas as in Section \ref{sec_hp_exp_sol}, i.e.~for $\vec{A}\in L^2_{(\beta)}(\R)^m$ such that $\int_\R\vec{A}\cdot\vec{\theta}_0'=0$ we make the ansatz $\vec{u}=e^{-\beta\eta}\vec{v}$ with $\vec{v}\in H^2(\R)^m$, where $\eta:\R\rightarrow\R$ is as in Definition \ref{th_exp_def1}, 3. It holds
		\[
		\vec{u}''=e^{-\beta\eta}[\vec{v}''-2\beta \eta'\vec{v}'+\vec{v}(\beta^2(\eta')^2-\beta \eta'')].
		\]
		Therefore we consider the equation
		\[
		\check{L}_0\vec{v} + \check{N}_{\beta}\vec{v} = e^{\beta\eta}\vec{A}, \quad \check{N}_{\beta}\vec{v}:=2\beta\eta'\vec{v}' - \vec{v}(\beta^2(\eta')^2-\beta\eta'').
		\]
		In order to solve this, we want to use the spaces in 1. One problem is the compatibility condition for $\check{N}_{\beta}\vec{v}$ and the right hand side. Therefore we solve a different equation, where suitable terms are subtracted that enforce the compatibility condition. Moreover, $\vec{v}$ should satisfy $\check{B}_{(\beta)}\vec{v}=0$, where $\check{B}_{(\beta)}:=\check{B}(e^{-\beta\eta}.)$. This is a problem since then the space would depend on $\beta$. Therefore note that it is enough to show the assertion e.g.~for $\check{B}=\check{B}_0:=(.)_1|_{z=0}$ since in the end one can subtract $\vec{\theta}_0' \check{B}\vec{u}/\check{B}\vec{\theta}_0'$ to get a solution of \eqref{eq_ODE_vect_lin}. If $\check{\beta}_0>0$ is small enough, in particular $\check{\beta}_0\leq\sqrt{\lambda/2}$ with $\lambda$ as in Theorem \ref{th_ODE_vect}, the assertion carries over to general $\check{B}$. The advantage of $\check{B}_0$ is that $\check{B}_0(e^{-\beta\eta}\vec{v})=0$ if and only if $\check{B}_0\vec{v}=0$. Therefore the space for $\vec{v}$ can be chosen uniformly in $\beta$. Hence we solve $\check{B}_0\vec{v}=0$ together with
		\begin{align}\label{eq_ODE_vect_lin1}
		\check{L}_0\vec{v}+\check{M}_{\beta}\vec{v}&=
		e^{\beta\eta}\vec{A} - \left[\int_\R e^{\beta\eta}\vec{A}\cdot\vec{\theta}_0' \right] \frac{\vec{\theta}_0'}{\|\vec{\theta}_0'\|_{L^2(\R)^m}},\\
		\check{M}_{\beta}\vec{v}&:=
		\check{N}_{\beta}\vec{v}-\left[\int_\R \check{N}_{\beta}\vec{v}\cdot\vec{\theta}_0' \right] \frac{\vec{\theta}_0'}{\|\vec{\theta}_0'\|_{L^2(\R)^m}}.\notag
		\end{align}
		Using the properties of exponentially weighted Sobolev spaces in Lemma \ref{th_exp1} one can directly show that
		\[
		\check{N}_{\beta}, \check{M}_{\beta} \in \Lc(H^2_{(\beta)}(\R)^m,L^2_{(\beta)}(\R)^m)
		\]
		with norm bounded by $C\beta$ for all $\beta\in[0,\check{\beta}_0)$ and any fixed $\check{\beta}_0>0$. Therefore the first part of the theorem and a Neumann series argument imply that for all $\beta\in[0,\check{\beta}_0)$ and $\check{\beta}_0>0$ small
		\[
		\check{L}_0+\check{M}_{\beta}:
		\left\{\vec{v}\in H^2(\R)^m:\check{B}_0\vec{v}=0\right\}
		\rightarrow
		\left\{\vec{A}\in L^2(\R)^m:\int_{\R}\vec{A}\cdot\vec{\theta}_0'=0
		\right\}
		\]
		is an isomorphism and the norm of the inverse is bounded by a constant independent of $\beta$. Hence \eqref{eq_ODE_vect_lin1} admits a unique solution $\vec{v}\in H^2(\R)^m$ with $\check{B}_0\vec{v}=0$ for all $\beta\in[0,\check{\beta}_0)$. The computations above yield that $\vec{u}:=e^{-\beta\eta}\vec{v}\in H^2_{(\beta)}(\R)^m$ solves
		\[
		\check{L}_0\vec{u} =\vec{A} + \left[-\int_\R e^{\beta\eta}\vec{A}\cdot\vec{\theta}_0' + \int_\R \check{N}_{\beta}\vec{v}\cdot\vec{\theta}_0'\right]\frac{e^{-\beta\eta}\vec{\theta}_0'}{\|\vec{\theta}_0'\| _{L^2(\R)^m} ^2}.
		\]
		By assumption it holds 
		$\int_\R \vec{A}\cdot\vec{\theta}_0'=0$ and because of the first part of the theorem we have $\int_\R \check{L}_0\vec{u}\cdot\vec{\theta}_0'=0$. Due to Theorem \ref{th_ODE_vect} it holds $\vec{\theta}_0'\neq 0$ and hence $\int_\R e^{-\beta\eta}|\vec{\theta}_0'|^2>0$. Therefore 
		\[
		\check{L}_0\vec{u}=\vec{A}\quad\text{ and }\quad\check{B}_0\vec{u}=0.
		\] 
		Finally, the first part in the theorem yields uniqueness and the estimate follows with the above considerations and Lemma \ref{th_exp1}.\qedhere$_{2.}$
\end{proof}

\subsection[A Vector-valued Elliptic Problem on $\R^2_+$ with Neumann Bdry. Cond.]{A Vector-valued Elliptic Problem on $\R^2_+$ with Neumann Boundary Condition}\label{sec_hp_vect}
This section is analogous to Section \ref{sec_hp_90}, where the scalar case was done. Let $W:\R^m\rightarrow\R$ be as in Definition \ref{th_vAC_W} and $\vec{\theta}_0$ be as in Remark \ref{th_ODE_vect_rem},~1. In the contact point expansion for \hyperlink{vAC}{(vAC)} in any dimension $N\geq 2$ the following model problem appears: For suitable data $\vec{G}:\overline{\R^2_+}\rightarrow\R^m$, $\vec{g}:\R\rightarrow\R^m$ find a solution $\vec{u}:\overline{\R^2_+}\rightarrow\R^m$ to the system
\begin{alignat}{2}\label{eq_hp_vect1}
\left[-\Delta+D^2W(\vec{\theta}_0(R))\right]\vec{u}(R,H)&=\vec{G}(R,H)& \quad &\text{ for }(R,H)\in\R^2_+,\\
-\partial_H\vec{u}|_{H=0}(R)&=\vec{g}(R)&\quad &\text{ for }R\in\R.
\label{eq_hp_vect2}
\end{alignat}  
As often in the last Section \ref{sec_ODE_vect} we make the \textit{assumption} that $\dim\ker\check{L}_0=1$, where $\check{L}_0$ is defined in \eqref{eq_ODE_vect_L0}, cf.~also Remark \ref{th_ODE_vect_lin_op_rem}. This implies a useful estimate for functions orthogonal to $\vec{\theta}_0'$, cf.~Lemma \ref{th_ODE_vect_lin_op}. The solution strategy for \eqref{eq_hp_vect1}-\eqref{eq_hp_vect2} is completely analogous to Section \ref{sec_hp_90}. First of all, we show some assertions for weak solutions of the problem in Section \ref{sec_hp_vect_weak_sol}. Then in Section \ref{sec_hp_vect_exp_sol} we obtain solution operators in exponentially weighted Sobolev spaces.

\subsubsection{Weak Solutions and Regularity}\label{sec_hp_vect_weak_sol}
In the vector-valued case weak solutions are defined as follows:
\begin{Definition}\label{th_hp_vect_weak_def}\upshape
	Let $\vec{G}\in L^2(\R^2_+)^m$ and $\vec{g}\in L^2(\R)^m$. Then $\vec{u}\in H^1(\R^2_+)^m$ is called \textit{weak solution} of \eqref{eq_hp_vect1}-\eqref{eq_hp_vect2} if for all $\vec{\varphi}\in H^1(\R^2_+)^m$ it holds that
	\begin{align*}
	\check{a}(\vec{u},\vec{\varphi})&:=\int_{\R^2_+}\nabla \vec{u} :\nabla\vec{\varphi}+(D^2W(\vec{\theta}_0(R))\vec{u},\vec{\varphi})_{\R^m}\,d(R,H)\\
	&\phantom{:}=\int_{\R^2_+}\vec{G}\cdot\vec{\varphi}\,d(R,H)+\int_{\R}\vec{g}(R)\cdot\vec{\varphi}|_{H=0}(R)\,dR.
	\end{align*}
\end{Definition}

We obtain the analogue of Theorem \ref{th_hp_weak_sol}:
\begin{Theorem}\label{th_hp_vect_weak} 
	Let $\dim\ker\check{L}_0=1$ and consider $\vec{G}\in L^2(\R^2_+)^m$ and $\vec{g}\in L^2(\R)^m$. Then
	\begin{enumerate}
		\item $\check{a}:H^1(\R^2_+)^m\times H^1(\R^2_+)^m\rightarrow\R$ is not coercive.
		\item If $\vec{G}(.,H),\vec{g}\perp\vec{\theta}_0'$ for a.e.~$H>0$ in $L^2(\R)^m$, then there exists a weak solution $\vec{u}$ with $\vec{u}(.,H)\perp\vec{\theta}_0'$ for a.e.~$H>0$ and it holds $\|\vec{u}\|_{H^1(\R^2_+)^m}\leq C(\|\vec{G}\|_{L^2(\R^2_+)^m} +\|\vec{g}\|_{L^2(\R)^m})$.
		\item Weak solutions are unique.
		\item If $\vec{G}\cdot\vec{\theta}_0'\in L^1(\R^2_+)$ and $\vec{u}$ is a weak solution with $\partial_H\vec{u}\cdot\vec{\theta}_0'\in L^1(\R^2_+)$, then the following compatibility condition holds:
		\begin{align}\label{eq_hp_vect_comp}
		\int_{\R^2_+}\vec{G}(R,H)\cdot\vec{\theta}_0'(R)\,d(R,H)+\int_{\R}\vec{g}(R)\cdot\vec{\theta}_0'(R)\,dR=0.
		\end{align}
		\item If $\vec{G}\cdot\vec{\theta}_0'\in L^1(\R^2_+)$, then $\tilde{G}(H):=(\vec{G}(.,H),\vec{\theta}_0')_{L^2(\R)^m}$ is well-defined for a.e.~$H>0$ and $\tilde{G}\in L^1(\R_+)\cap L^2(\R_+)$. Moreover, we have the decomposition
		\begin{align}\label{eq_hp_vect_orth_dec}
		\vec{G}=\tilde{G}(H)\frac{\vec{\theta}_0'(R)}{\|\vec{\theta}_0'\|_{L^2(\R)^m}^{2}}+\vec{G}^\perp(R,H),\:
		\vec{g}=(\vec{g},\vec{\theta}_0')_{L^2(\R)^m}\frac{\vec{\theta}_0'(R)}{\|\vec{\theta}_0'\|_{L^2(\R)^m}^{2}}+\vec{g}^\perp(R),
		\end{align}
		where $\vec{G}^\perp\in L^2(\R^2_+)^m, \vec{g}^\perp\in L^2(\R)^m$  with $\vec{G}^\perp(.,H), \vec{g}^\perp\perp\vec{\theta}_0'$ in $L^2(\R)^m$ f.a.e.~$H>0$.
		\item If $\|\vec{G}(.,H)\|_{L^2(\R)^m}\leq C e^{-\nu H}$ for a.e.~$H>0$ and a constant $\nu>0$, then $\vec{G}\cdot\vec{\theta}_0'\in L^1(\R^2_+)$. Let $\tilde{G}$ be defined as in 5.~and the compatibility condition \eqref{eq_hp_vect_comp} hold. Then 
		\begin{align}\label{eq_hp_vect_formula}
		\vec{u}_1(R,H):=-\int_H^\infty\int_{\tilde{H}}^{\infty}\tilde{G}(\hat{H})\,d\hat{H}\,d\tilde{H}\,\frac{\vec{\theta}_0'(R)}{\|\vec{\theta}_0'\|_{L^2(\R)^m}^{2}}
		\end{align}
		is well-defined for a.e.~$(R,H)\in\R^2_+$, $\vec{u}_1\in W^2_1(\R^2_+)^m\cap H^2(\R^2_+)^m$ and $\vec{u}_1$ is a weak solution of \eqref{eq_hp_vect1}-\eqref{eq_hp_vect2} for $\vec{G}-\vec{G}^\perp,\vec{g}-\vec{g}^\perp$ from \eqref{eq_hp_vect_orth_dec} instead of $\vec{G},\vec{g}$.
	\end{enumerate}
\end{Theorem}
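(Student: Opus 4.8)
The plan is to mirror, mutatis mutandis, the proof of Theorem \ref{th_hp_weak_sol} (the scalar case), using the spectral information from Lemma \ref{th_ODE_vect_lin_op} in place of the scalar operator facts. The key structural input is that, under the assumption $\dim\ker\check L_0=1$, we have $\ker\check L_0=\mathrm{span}\{\vec\theta_0'\}$ and a strict spectral gap $\check\nu_0>0$ for $\check L_0$ restricted to $\mathrm{span}\{\vec\theta_0'\}^\perp$; this is exactly the substitute for the property $L_0\ge 0$ with one-dimensional kernel used in \cite{AbelsMoser}. Throughout, everything is componentwise in $\R^m$, with the symmetric positive semidefinite matrix $D^2W(\vec\theta_0(R))$ playing the role of $f''(\theta_0(R))$, and $D^2W(\vec u_\pm)$ positive definite replacing $f''(\pm1)>0$.

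First I would prove 1. by exhibiting the explicit one-dimensional obstruction: taking $\vec\varphi=\vec\theta_0'\psi(R)$ (or rather $\vec\varphi$ concentrated in $H$) in the bilinear form and integrating by parts in $R$ shows $\check a(\vec u,\vec\theta_0'\psi)$ picks out only the $H$-derivative term, so coercivity fails along this direction — identical to the scalar computation. For 2., I restrict $\check a$ to the closed subspace $V:=\{\vec u\in H^1(\R^2_+)^m: \vec u(.,H)\perp\vec\theta_0' \text{ for a.e. } H\}$; on $V$, Lemma \ref{th_ODE_vect_lin_op} gives $\int_{\R^2_+}\nabla\vec u:\nabla\vec u+(D^2W(\vec\theta_0)\vec u,\vec u)_{\R^m}\ge \min\{\check\nu_0,1\}\,\|\vec u\|_{L^2}^2$ fiberwise in $H$ (plus the full $H$-gradient term), hence $\check a$ is coercive on $V$, and Lax–Milgram yields a weak solution with the stated estimate; one checks that the subspace condition is compatible with the weak formulation because $\vec G(.,H),\vec g\perp\vec\theta_0'$. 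Uniqueness (3.) follows by testing the difference $\vec w$ of two weak solutions with $\vec w$ itself after splitting $\vec w=\vec w^\parallel+\vec w^\perp$: the $\perp$-part vanishes by coercivity on $V$, and the $\parallel$-part $\tilde w(H)\vec\theta_0'(R)$ satisfies, after testing with $\vec\theta_0'\varphi(H)$, that $-\partial_H^2\tilde w=0$ with $\partial_H\tilde w(0)=0$ and $\tilde w\in H^1(\R_+)$, forcing $\tilde w\equiv0$.

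For 4., I test the weak formulation with $\vec\varphi=\vec\theta_0'$ (justified by the integrability hypotheses $\vec G\cdot\vec\theta_0'\in L^1$ and $\partial_H\vec u\cdot\vec\theta_0'\in L^1$, via a cutoff-and-limit argument as in \cite{AbelsMoser}): the left side collapses using $\check L_0\vec\theta_0'=0$ and integration by parts to a boundary term, giving \eqref{eq_hp_vect_comp}. Point 5. is a direct consequence of Cauchy–Schwarz ($\tilde G(H)=(\vec G(.,H),\vec\theta_0')_{L^2}$ is well-defined a.e. and $|\tilde G(H)|\le\|\vec\theta_0'\|_{L^2}\|\vec G(.,H)\|_{L^2}$, whence $\tilde G\in L^1\cap L^2(\R_+)$), and the $L^2$-orthogonal decomposition of $\vec G,\vec g$ onto $\mathrm{span}\{\vec\theta_0'\}$ is defined fiberwise. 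Finally 6.: the exponential decay $\|\vec G(.,H)\|_{L^2}\le Ce^{-\nu H}$ gives $\tilde G\in L^1\cap L^2(\R_+)$ with exponential decay, so the iterated integral in \eqref{eq_hp_vect_formula} converges a.e. and defines $\vec u_1=\tilde u(H)\vec\theta_0'(R)$ with $\tilde u\in H^2(\R_+)$; one computes directly $-\partial_H^2\tilde u=\tilde G$ and $-\partial_H\tilde u(0)=\int_0^\infty\tilde G\,d\hat H=(\vec g,\vec\theta_0')_{L^2}$ by the compatibility condition \eqref{eq_hp_vect_comp}, and since $\check L_0\vec\theta_0'=0$ the function $\vec u_1$ weakly solves the system with right-hand sides $\tilde G(H)\vec\theta_0'/\|\vec\theta_0'\|^2$ and $(\vec g,\vec\theta_0')\vec\theta_0'/\|\vec\theta_0'\|^2$, i.e. with $\vec G-\vec G^\perp$, $\vec g-\vec g^\perp$; membership in $W^2_1\cap H^2$ follows from the product structure and the $L^1\cap L^2$ decay of $\tilde G$.

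The main obstacle is point 2., specifically verifying that the coercivity estimate from Lemma \ref{th_ODE_vect_lin_op} transfers from the one-dimensional fiber $L^2(\R)^m$ to the half-space subspace $V\subset H^1(\R^2_+)^m$: one must use the fiber decomposition $H^1(\R^2_+)^m\cong L^2(\R_+,H^1(\R)^m)\cap H^1(\R_+,L^2(\R)^m)$ from Lemma \ref{th_SobDom_prod_set}, apply the variational characterization of $\check\nu_0$ to $\vec u(\cdot,H)$ for a.e. $H$, and then integrate in $H$ while retaining the $\partial_H$-gradient contribution to control the full $H^1$-norm. Everything else is bookkeeping parallel to \cite{AbelsMoser}, Theorem 2.9, with $f''(\theta_0)$, $\theta_0'$, $f''(\pm1)$ replaced by $D^2W(\vec\theta_0)$, $\vec\theta_0'$, $D^2W(\vec u_\pm)$ respectively, and with all pairings interpreted as the Euclidean inner product on $\R^m$.
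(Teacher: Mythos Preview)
Your proposal is correct and follows essentially the same approach as the paper: the paper's proof simply states that one copies the proof of Theorem~\ref{th_hp_weak_sol}, interpreting products as scalar products in $\R^m$, replacing $f''(\theta_0)$ by $D^2W(\vec{\theta}_0)$, switching to vector-valued spaces, and invoking Lemma~\ref{th_ODE_vect_lin_op} in place of the scalar spectral lemma to obtain coercivity on the orthogonal subspace and uniqueness. Your detailed outline of each step (including the fiberwise application of the $\check\nu_0$-gap via Lemma~\ref{th_SobDom_prod_set}) is precisely the content of that copying procedure.
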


\begin{proof}
	One can essentially copy the proof of Theorem \ref{th_hp_weak_sol}. All multiplications of functions that are now vector-valued have to be interpreted as scalar products and $f''(\theta_0)$ has to be replaced by $D^2W(\vec{\theta}_0)$. Moreover, all spaces except for products and for $\tilde{G}$ change to vector-valued ones. Finally, one uses Lemma \ref{th_ODE_vect_lin_op} instead of Lemma 2.5 in \cite{AbelsMoser} to get coercivity of $\check{a}$ on the orthogonal parts and uniqueness of weak solutions.
\end{proof}

Again this yields an existence theorem for weak solutions analogously to Corollary \ref{th_hp_weak_sol_2}:

\begin{Corollary}\label{th_hp_vect_weak2} Let $\dim\ker\check{L}_0=1$. Then it holds
	\begin{enumerate}
		\item Let $\vec{g}\in L^2(\R)^m$, $\vec{G}\in L^2(\R^2_+)^m$ with $\|\vec{G}(.,H)\|_{L^2(\R)^m}\leq Ce^{-\nu H}$ for a.e.~$H>0$ and some $\nu>0$. Let \eqref{eq_hp_vect_comp} hold. Then there is a unique weak solution of \eqref{eq_hp_vect1}-\eqref{eq_hp_vect2}. 
		\item Let $k\in\N_0$ and $\vec{u}\in H^1(\R^2_+)^m$ be a weak solution of \eqref{eq_hp_vect1}-\eqref{eq_hp_vect2} for $\vec{G}\in H^k(\R^2_+)^m$ and $\vec{g}\in H^{k+\frac{1}{2}}(\R)^m$. Then $\vec{u}\in H^{k+2}(\R^2_+)^m\hookrightarrow C^{k,\gamma}(\overline{\R^2_+})^m$ for all $\gamma\in(0,1)$ and 
		\[
		\|\vec{u}\|_{H^{k+2}(\R^2_+)^m}\leq C_k(\|\vec{G}\|_{H^k(\R^2_+)^m}+\|\vec{g}\|_{H^{k+\frac{1}{2}}(\R)^m}+\|\vec{u}\|_{H^1(\R^2_+)^m}).
		\]
	\end{enumerate}
\end{Corollary}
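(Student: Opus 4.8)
\textbf{Proof proposal for Corollary \ref{th_hp_vect_weak2}.}

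The plan is to mimic the scalar argument behind Corollary \ref{th_hp_weak_sol_2}, exploiting the fact that all the heavy lifting has already been done in Theorem \ref{th_hp_vect_weak}; the vector-valued setting introduces no genuinely new difficulty because the derivatives in \eqref{eq_hp_vect1}-\eqref{eq_hp_vect2} are decoupled and the only place where the structure of $D^2W(\vec{\theta}_0)$ is needed is the spectral gap, which is supplied by the assumption $\dim\ker\check{L}_0=1$ via Lemma \ref{th_ODE_vect_lin_op}.

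For part 1, I would argue as follows. Under the decay hypothesis on $\vec{G}$ we have $\vec{G}\cdot\vec{\theta}_0'\in L^1(\R^2_+)$ by Theorem \ref{th_hp_vect_weak},~6., so the orthogonal decomposition \eqref{eq_hp_vect_orth_dec} applies and splits $(\vec{G},\vec{g})$ into a part parallel to $\vec{\theta}_0'$, namely $(\vec{G}-\vec{G}^\perp,\vec{g}-\vec{g}^\perp)$, and a part orthogonal to $\vec{\theta}_0'$, namely $(\vec{G}^\perp,\vec{g}^\perp)$, where the latter satisfies the orthogonality needed for Theorem \ref{th_hp_vect_weak},~2.\ and the former (together with the compatibility condition \eqref{eq_hp_vect_comp}, which is assumed) is exactly the situation of Theorem \ref{th_hp_vect_weak},~6., producing the explicit weak solution $\vec{u}_1$ of \eqref{eq_hp_vect_formula}. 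Since \eqref{eq_hp_vect_comp} holds for $(\vec{G},\vec{g})$ and automatically for $(\vec{G}-\vec{G}^\perp,\vec{g}-\vec{g}^\perp)$ (the orthogonal part integrates to zero against $\vec{\theta}_0'$), it also holds for $(\vec{G}^\perp,\vec{g}^\perp)$; but $\vec{G}^\perp(.,H),\vec{g}^\perp\perp\vec{\theta}_0'$ makes the integrals in \eqref{eq_hp_vect_comp} vanish identically, so no obstruction arises there. Adding the two weak solutions and using linearity of $\check{a}$ gives a weak solution of the full problem; uniqueness is Theorem \ref{th_hp_vect_weak},~3.

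For part 2, the scheme is the standard bootstrap: a weak solution $\vec{u}\in H^1(\R^2_+)^m$ with $\vec{G}\in H^k$ and $\vec{g}\in H^{k+\frac12}$ is shown to lie in $H^{k+2}$ by componentwise elliptic regularity for the Neumann problem on the half-plane --- the equations for the $m$ components are coupled only through the zeroth-order term $D^2W(\vec{\theta}_0)\vec{u}$, which is of lower order and smooth with bounded derivatives, so one can treat it as a right-hand side and apply the scalar half-space regularity theory (the same that underlies Corollary \ref{th_hp_weak_sol_2},~2.), first reflecting evenly to remove the boundary and then differentiating tangentially and using the equation to recover normal derivatives. The estimate $\|\vec{u}\|_{H^{k+2}}\le C_k(\|\vec{G}\|_{H^k}+\|\vec{g}\|_{H^{k+\frac12}}+\|\vec{u}\|_{H^1})$ comes out of the same argument with the lower-order term absorbed into the $\|\vec{u}\|_{H^1}$-contribution by interpolation; the Sobolev embedding $H^{k+2}(\R^2_+)\hookrightarrow C^{k,\gamma}(\overline{\R^2_+})$ for $\gamma\in(0,1)$ in dimension two is classical. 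Concretely I would simply invoke Corollary \ref{th_hp_weak_sol_2},~2.\ componentwise, moving $-D^2W(\vec{\theta}_0)\vec{u}$ into the data, so that the only thing to check is that the induced right-hand side $\vec{G}-D^2W(\vec{\theta}_0)\vec{u}$ gains one order of regularity at each step; this is an easy induction.

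The main obstacle, such as it is, is purely bookkeeping: making sure the orthogonal decomposition in part 1 really does preserve the compatibility condition on each summand and that the regularity bootstrap in part 2 handles the coupling through $D^2W(\vec{\theta}_0)$ cleanly rather than circularly --- but since that term is a smooth, bounded, zeroth-order coefficient, the induction closes without trouble. In short, as the authors note, one "essentially copies" the scalar proof with scalar products in place of products and Lemma \ref{th_ODE_vect_lin_op} in place of \cite{AbelsMoser}, Lemma 2.5.
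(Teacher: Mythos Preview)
Your proposal is correct and follows essentially the same approach as the paper: part 1 is obtained by combining the various parts of Theorem \ref{th_hp_vect_weak} (decomposition, explicit parallel solution, perpendicular solution via coercivity, uniqueness), and part 2 by applying scalar half-plane regularity componentwise with $D^2W(\vec{\theta}_0)\vec{u}$ shifted to the right-hand side and bootstrapping. The paper's proof is in fact just a two-sentence pointer to exactly this argument.
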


\begin{proof}
	The first part is a direct consequence of Theorem \ref{th_hp_vect_weak}. The second assertion can be shown similar to the proof of Corollary 2.10 in \cite{AbelsMoser},~2.~using iteratively \textit{scalar} regularity theory for every component of the elliptic equation, where $D^2W(\vec{\theta}_0)\vec{u}$ is viewed as part of the right hand side.
\end{proof}

\subsubsection{Solution Operators in Exponentially Weighted Spaces}\label{sec_hp_vect_exp_sol}
With analogous adjustments as above one obtains solution operators in exponentially weighted Sobolev spaces similar to Section \ref{sec_hp_exp_sol}, cf.~Theorems \ref{th_hp_exp1}-\ref{th_hp_exp3}. We will just need the analogue of Theorem \ref{th_hp_exp3}, hence we only formulate the latter in the vector-valued setting:

\begin{Theorem}[\textbf{Solution Operators for the Vector-valued Case}]\label{th_hp_vect_exp_sol}
	Let $\dim\ker\check{L}_0=1$. There exist $\check{\gamma}>0$ and $\check{\beta}:(0,\check{\gamma}] \rightarrow (0,\infty)$ non-decreasing such that 
	\begin{align*}
	\check{L}_\frac{\pi}{2}
	&:= 
	(-\Delta + D^2W(\vec{\theta}_0(R)) , -\partial_H|_{H=0})
	:H^{k+2}_{(\beta,\gamma)}(\R^2_+)^m\rightarrow \check{Y}_{(\beta,\gamma)}^k,\\
	\check{Y}_{(\beta,\gamma)}^k
	&:=
	\left\{
		(\vec{G},\vec{g})\in H^k_{(\beta,\gamma)}(\R^2_+)^m\times H^{k+\frac{1}{2}}_{(\beta)}(\R)^m :
		\int_{\R^2_+} \vec{G}\cdot\vec{\theta}_0'+\int_\R \vec{g}\cdot\vec{\theta}_0'=0
	\right\}
	\end{align*}
	is well-defined and invertible for all $k\in\N_0$, $\gamma\in(0,\check{\gamma}]$ and $\beta\in[0,\check{\beta}(\gamma)]$ and the operator norm of the inverse is bounded by $\check{C}(k)(1+\frac{1}{\gamma^2})^{k+1}$.
\end{Theorem}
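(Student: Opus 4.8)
The plan is to mirror the scalar construction in Theorems \ref{th_hp_exp1}--\ref{th_hp_exp3} step by step, replacing $f''(\theta_0)$ by $D^2W(\vec\theta_0)$, scalars by vectors in $\R^m$, products by scalar products, and the spectral input \cite{AbelsMoser}, Lemma 2.5 by Lemma \ref{th_ODE_vect_lin_op}, whose hypotheses are exactly the assumption $\dim\ker\check L_0=1$. The key structural facts that make the scalar argument go through verbatim are already in place: Theorem \ref{th_hp_vect_weak} gives existence, uniqueness and the compatibility condition for weak solutions; Corollary \ref{th_hp_vect_weak2} gives the unweighted elliptic regularity $\vec u\in H^{k+2}(\R^2_+)^m$; and Lemma \ref{th_ODE_vect_lin_op} provides the coercivity gap $\check\nu_0>0$ on $\operatorname{span}\{\vec\theta_0'\}^\perp$. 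So I would not reprove these, but only assemble them.

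First I would establish the analogue of Theorem \ref{th_hp_exp1}, i.e.\ decay in $H$ only. On the orthogonal part $H^{2,\perp}_{(0,\gamma)}(\R^2_+)^m$ one uses the substitution $\vec u=e^{-\gamma H}\vec v$, computes the two lower-order error terms $\check N_\gamma\vec v=(-\gamma^2\vec v+2\gamma\partial_H\vec v,\ \gamma\vec v|_{H=0})$ exactly as in the proof of Theorem \ref{th_hp_exp1},~1, notes that the exponential factor preserves the orthogonality to $\vec\theta_0'$ (by the same integration-by-parts / differentiation-in-$H$ argument), and closes by a Neumann series since $\|\check N_\gamma\|\le C(\gamma+\gamma^2)$ and $\check L_{\frac\pi2}$ is invertible at $\gamma=0$ by Theorem \ref{th_hp_vect_weak},~2--3 and Corollary \ref{th_hp_vect_weak2},~2. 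On the parallel part one identifies $\vec u(R,H)=\tilde u(H)\vec\theta_0'(R)$ via Lemma \ref{th_SobDom_prod_set}, reduces to the scalar $1$D identities $\partial_H^2\tilde u=-\tilde G$, $-\partial_H\tilde u(0)=\tilde g$, and invokes Lemma \ref{th_exp1},~6--7 for the $H^2_{(\gamma)}(\R_+)$-bound $C_{\overline\gamma}(1+\tfrac1{\gamma^2})$; the splitting \eqref{eq_hp_vect_orth_dec} then glues the two pieces with $\gamma$-uniform splitting operator norms, giving the full-space version. Next I would do the analogue of Theorem \ref{th_hp_exp2}: ansatz $\vec u=e^{-\beta\eta(R)}\vec v$ with $\vec v\in H^2_{(0,\gamma)}(\R^2_+)^m$, collect the $R$-error terms $\check N_{(\beta,\gamma)}\vec v$, artificially enforce the compatibility condition on both sides exactly as in \eqref{eq_hp_exp_sol3} (the term subtracted is a multiple of $\vec\theta_0'$, harmless because $\int_\R e^{-\beta\eta(R)}|\vec\theta_0'|^2>0$), choose $\check\beta(\gamma)$ so that $C(\check\beta+\check\beta^2)\le \tfrac1{2\overline C(1+1/\gamma^2)}$, and run the Neumann series. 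Finally the analogue of Theorem \ref{th_hp_exp3} (higher regularity) is obtained by differentiating the equation in $R$, using Theorem \ref{th_hp_vect_weak},~4 to check that each $(\check G_l,\check g_l)$ satisfies the compatibility condition, applying the $H$-decay/$R$-decay results just proven, and then reading off $\partial_R^l\partial_H^2\vec u$ from the equation; an induction on the number of $H$-derivatives, identical to the scalar one, yields the bound $\check C(k)(1+\tfrac1{\gamma^2})^{k+1}$. This is precisely the statement to be proved.

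I would write the proof essentially as: "\emph{One proceeds exactly as in the proofs of Theorems \ref{th_hp_exp1}--\ref{th_hp_exp3}, with the replacements described at the beginning of Section \ref{sec_hp_vect} and Lemma \ref{th_ODE_vect_lin_op} in place of the scalar spectral lemma; for the weak-solution input one uses Theorem \ref{th_hp_vect_weak} and Corollary \ref{th_hp_vect_weak2}. The only point requiring the hypothesis $\dim\ker\check L_0=1$ is the coercivity $\check\nu_0>0$ on $\operatorname{span}\{\vec\theta_0'\}^\perp$, which feeds into unique solvability on the orthogonal component; everything else is a verbatim repetition with vectors and scalar products.}" The main obstacle — and the only place where a little care beyond copying is needed — is checking that the orthogonality to $\vec\theta_0'$ genuinely decouples the system: one must verify that $D^2W(\vec\theta_0)$ maps $\operatorname{span}\{\vec\theta_0'\}^\perp$-valued functions appropriately so that the decomposition \eqref{eq_hp_vect_orth_dec} is preserved under $\check L_{\frac\pi2}$, and that the parallel part really collapses to the scalar $1$D problem for $\tilde u\vec\theta_0'$. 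Both are consequences of $\check L_0\vec\theta_0'=0$ (self-adjointness of $\check L_0$ from Lemma \ref{th_ODE_vect_lin_op}), so no new estimate is required; still, this is where a referee would look, so I would spell it out. Everything else — the Neumann-series bookkeeping, the choice of $\check\gamma$ and $\check\beta(\gamma)$, the induction for higher $R$- and $H$-derivatives — is routine and I would not reproduce it in detail, referring instead to the corresponding scalar proofs.
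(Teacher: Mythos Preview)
Your proposal is correct and takes essentially the same approach as the paper, which does not give a separate proof but simply states that the scalar arguments of Theorems \ref{th_hp_exp1}--\ref{th_hp_exp3} carry over with the obvious vector-valued replacements and with Theorem \ref{th_hp_vect_weak}, Corollary \ref{th_hp_vect_weak2} and Lemma \ref{th_ODE_vect_lin_op} in place of their scalar counterparts. Your outline is in fact more detailed than the paper's own treatment, and the one point you single out for care---that the parallel/perpendicular decomposition is preserved by $\check L_{\frac\pi2}$ via $\check L_0\vec\theta_0'=0$ and self-adjointness---is exactly the spot worth checking.
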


\begin{Remark}[\textbf{Dependence on Parameters}]\upshape \label{th_hp_vect_exp_rem}
	For data that depend on other indepenent variables the regularity directly carries over to the solution since we have linear and bounded solution operators. This is analogous to the scalar case, cf.~Remark \ref{th_hp_exp_sol_rem}.
\end{Remark}

\section{Asymptotic Expansions}\label{sec_asym}
In this section we carry out the rigorous asymptotic expansions for \hyperlink{AC}{(AC)} and \hyperlink{vAC}{(vAC)} in the situations mentioned in the introduction. The expansions are based on the curvilinear coordinates from Section \ref{sec_coord} and use the solutions for the model problems in Section \ref{sec_model_problems}.

\subsection{Asymptotic Expansion of (AC) in ND}\label{sec_asym_ACND}
Let $N\geq 2$, $\Omega\subseteq\R^N$ be as in Remark \ref{th_intro_coord},~1.~and $\Gamma:=(\Gamma_t)_{t\in[0,T]}$ be as in Section \ref{sec_coord_surface_requ} with contact angle $\alpha=\frac{\pi}{2}$. We use the notation from Section \ref{sec_coord_surface_requ} and \ref{sec_coordND}. Moreover, let $\delta>0$ be such that the assertions of Theorem \ref{th_coordND} hold for $2\delta$ instead of $\delta$. Finally, we assume that $\Gamma$ evolves according to \eqref{MCF}. Moreover, let $f$ be as in \eqref{eq_AC_fvor1}. Based on $\Gamma$ we construct a smooth approximate solution $u_\varepsilon^A$ to \eqref{eq_AC1}-\eqref{eq_AC3} with $u_\varepsilon^A=\pm 1$ in $Q_T^\pm\setminus\Gamma(2\delta)$, increasingly steep \enquote{transition} from $-1$ to $1$ for $\varepsilon\rightarrow 0$ and such that $\{ u^A_\varepsilon=0 \}$ approaches $\Gamma$ for $\varepsilon\rightarrow 0$. This is the analogous qualitative behaviour as in the 2D-case, cf.~\cite{AbelsMoser}, Section 3. Compared to the latter case, the computations are similar, but more technical and we also iterate the construction. The most striking insight is that in the contact point expansion we also end up with model problems on the half space $\R^2_+$, where elements of $\partial\Sigma$ enter as independent variables. 

Let $M\in\N$ with $M\geq 2$. Then we consider height functions $h_j:\Sigma\times[0,T]\rightarrow\R$ for $j=1,...,M$ and we set $h_\varepsilon:=\sum_{j=1}^M \varepsilon^{j-1} h_j$. Analogously as in the $2$-dimensional case we define $h_{M+1}:=h_{M+2}:=0$ and we introduce the scaled variable
\begin{align}\label{eq_asym_ACND_rho}
\rho_\varepsilon(x,t):=\frac{r(x,t)}{\varepsilon}-h_\varepsilon(s(x,t),t)\quad\text{ for }(x,t)\in\overline{\Gamma(2\delta)}.
\end{align}

In Section \ref{sec_asym_ACND_in} we construct the inner expansion and in Section \ref{sec_asym_ACND_cp} the contact point expansion. Finally, in Section \ref{sec_asym_ACND_uA} we show that the construction yields a suitable approximate solution $u^A_\varepsilon$ to \eqref{eq_AC1}-\eqref{eq_AC3}.

\subsubsection{Inner Expansion of (AC) in ND}\label{sec_asym_ACND_in}
For the inner expansion we consider the following ansatz: Let $\varepsilon>0$ be small and
\[
u^I_\varepsilon:=\sum_{j=0}^{M+1}\varepsilon^ju_j^I,\quad u_j^I(x,t):=\hat{u}_j^I(\rho_\varepsilon(x,t),s(x,t),t)\quad\text{ for }(x,t)\in\overline{\Gamma(2\delta)},
\]
where 
\[
\hat{u}_j^I:\R\times\Sigma\times[0,T]\rightarrow\R: (\rho,s,t)\mapsto \hat{u}_j^I(\rho,s,t)
\] 
for $j=0,...,M+1$. Moreover, we set $u^I_{M+2}:=0$ and $\hat{u}^I_\varepsilon:=\sum_{j=0}^{M+1}\varepsilon^j\hat{u}_j^I$. We will expand \eqref{eq_AC1} for $u_\varepsilon=u^I_\varepsilon$ into $\varepsilon$-series with coefficients in $(\rho_\varepsilon,s,t)$ up to $\Oc(\varepsilon^{M-1})$. This yields equations of analogous form as in \cite{AbelsMoser}, Section 3.1, where $I$ is replaced by $\Sigma$. Therefore we have to compute the action of the differential operators on $u^I_\varepsilon$. 

In the following the surface gradient $\nabla_\Sigma$ (see \cite{Depner}, Definition 2.21) for functions $g:\Sigma\rightarrow\R$ is viewed as a map $\nabla_\Sigma g:\Sigma\rightarrow \R^N$. Then we set $(\nabla_\Sigma)_i g:=(\nabla_\Sigma g)_i$ for $i=1,...,N$. If $g$ depends on other variables as well, the analogous definition applies. 

\begin{Lemma}\label{th_asym_ACND_in_trafo}
	Let $\varepsilon>0$, $\hat{w}:\R\times \Sigma\times[0,T]\rightarrow\R$ be sufficiently smooth and $w:\overline{\Gamma(2\delta)}\rightarrow\R$ be defined by $w(x,t):=\hat{w}(\rho_\varepsilon(x,t),s(x,t),t)$ for all $(x,t)\in\overline{\Gamma(2\delta)}$. Then
	\begin{align*}
	\partial_tw&=\partial_\rho\hat{w}\left[\frac{\partial_tr}{\varepsilon}-(\partial_th_\varepsilon+\partial_ts\cdot \nabla_\Sigma h_\varepsilon)\right]+\partial_ts\cdot \nabla_\Sigma\hat{w}+\partial_t\hat{w},\\
	\nabla w&=\partial_\rho\hat{w}\left[\frac{\nabla r}{\varepsilon}-(D_x s)^\top \nabla_\Sigma h_\varepsilon\right]+(D_x s)^\top \nabla_\Sigma\hat{w},\\
	\Delta w&=\partial_\rho\hat{w}\left[\frac{\Delta r}{\varepsilon}-\left(\Delta s\cdot \nabla_\Sigma h_\varepsilon+\sum_{i,l=1}^N\nabla s_i\cdot\nabla s_l(\nabla_\Sigma)_i(\nabla_\Sigma)_l h_\varepsilon\right)\right]\\
	&+\Delta s\cdot\nabla_\Sigma\hat{w}+\sum_{i,l=1}^N\nabla s_i\cdot\nabla s_l(\nabla_\Sigma)_i(\nabla_\Sigma)_l\hat{w}\\
	&+2\left((D_x s)^\top\nabla_\Sigma\partial_\rho\hat{w}\right)\cdot\left[\frac{\nabla r}{\varepsilon}-(D_xs)^\top\nabla_\Sigma h_\varepsilon\right]+\partial_\rho^2\hat{w}\left|\frac{\nabla r}{\varepsilon}-(D_xs)^\top\nabla_\Sigma h_\varepsilon\right|^2,
	\end{align*}
	where the $w$-terms on the left hand side and derivatives of $r$ or $s$ are evaluated at $(x,t)\in\overline{\Gamma(2\delta)}$, the $h_\varepsilon$-terms at $(s(x,t),t)$ and the $\hat{w}$-terms at $(\rho_\varepsilon(x,t),s(x,t),t)$. 
\end{Lemma}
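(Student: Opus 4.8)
The plan is to prove the chain-rule formulas in Lemma~\ref{th_asym_ACND_in_trafo} by applying the standard multivariate chain rule to the composition $w(x,t) = \hat{w}(\rho_\varepsilon(x,t), s(x,t), t)$ and then simplifying using the structure of the scaled variable $\rho_\varepsilon$ from \eqref{eq_asym_ACND_rho}. Throughout I treat $s$ as taking values on the manifold $\Sigma$, so that derivatives of $\hat w$ with respect to the $s$-slot are expressed via the surface gradient $\nabla_\Sigma$ as in Section~\ref{sec_SobMfd}; in local coordinates on $\Sigma$ all identities below reduce to the ordinary chain rule, and the intrinsic formulation in the statement is then the coordinate-free repackaging (this is exactly the viewpoint already used in Remark~\ref{th_coordND_rem},~2.~and Corollary~\ref{th_coordND_nabla_tau_n}).

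First I would record the partial derivatives of the scaled variable itself. From \eqref{eq_asym_ACND_rho}, $\rho_\varepsilon(x,t) = \tfrac{r(x,t)}{\varepsilon} - h_\varepsilon(s(x,t),t)$, so
\[
\partial_t\rho_\varepsilon = \frac{\partial_t r}{\varepsilon} - \partial_t h_\varepsilon - \partial_t s\cdot\nabla_\Sigma h_\varepsilon,
\qquad
\nabla_x\rho_\varepsilon = \frac{\nabla r}{\varepsilon} - (D_x s)^\top\nabla_\Sigma h_\varepsilon,
\]
where $\partial_t s$ and the columns of $D_x s$ lie in the tangent space $T_{s(x,t)}\Sigma$, so pairing them against $\nabla_\Sigma h_\varepsilon$ is well-defined. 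Then the chain rule gives immediately $\partial_t w = \partial_\rho\hat w\,\partial_t\rho_\varepsilon + \partial_t s\cdot\nabla_\Sigma\hat w + \partial_t\hat w$ and $\nabla w = \partial_\rho\hat w\,\nabla_x\rho_\varepsilon + (D_x s)^\top\nabla_\Sigma\hat w$, which are exactly the first two claimed formulas after substituting the expressions for $\partial_t\rho_\varepsilon$ and $\nabla_x\rho_\varepsilon$. Here one must be slightly careful that $\nabla_\Sigma\hat w$ means the surface gradient in the $s$-slot of $\hat w$ at fixed $(\rho,t)$, evaluated at $(\rho_\varepsilon(x,t),s(x,t),t)$, and similarly $\nabla_\Sigma h_\varepsilon$ is the surface gradient of $h_\varepsilon(\cdot,t)$ at $s(x,t)$; the transpose $(D_x s)^\top$ is the adjoint of $D_x s\in\Lc(\R^N,T_{s}\Sigma)$, consistent with the notation in Remark~\ref{th_coordND_rem},~2.

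For the Laplacian I would write $\Delta w = \diverg_x(\nabla w)$ and differentiate the expression $\nabla w = \partial_\rho\hat w\,\nabla_x\rho_\varepsilon + (D_x s)^\top\nabla_\Sigma\hat w$ componentwise. Each term produces a contribution: differentiating $\partial_\rho\hat w$ brings down $\partial_\rho^2\hat w\,\nabla_x\rho_\varepsilon + (D_x s)^\top\nabla_\Sigma\partial_\rho\hat w$ dotted with $\nabla_x\rho_\varepsilon$, which after the two cross terms and the pure $\partial_\rho^2$ term gives the last line of the formula (the factor $2$ arising because $\partial_\rho^2\hat w\,|\nabla_x\rho_\varepsilon|^2$ appears once but the mixed second-order term $(D_xs)^\top\nabla_\Sigma\partial_\rho\hat w\cdot\nabla_x\rho_\varepsilon$ also appears in the companion expansion of the $(D_xs)^\top\nabla_\Sigma\hat w$ term — one has to track both occurrences and combine). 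Differentiating the coefficient $\nabla_x\rho_\varepsilon = \tfrac{\nabla r}{\varepsilon} - (D_x s)^\top\nabla_\Sigma h_\varepsilon$ with respect to $x$ produces $\tfrac{\Delta r}{\varepsilon}$ together with the terms $\Delta s\cdot\nabla_\Sigma h_\varepsilon + \sum_{i,l}\nabla s_i\cdot\nabla s_l\,(\nabla_\Sigma)_i(\nabla_\Sigma)_l h_\varepsilon$, using that $\diverg_x((D_xs)^\top v) = \Delta s\cdot v + \sum_{i,l}\nabla s_i\cdot\nabla s_l(\nabla_\Sigma)_i(\nabla_\Sigma)_l(\text{of }v)$ for $v$ depending on $s$; the identical identity applied to $(D_xs)^\top\nabla_\Sigma\hat w$ yields the $\Delta s\cdot\nabla_\Sigma\hat w + \sum_{i,l}\nabla s_i\cdot\nabla s_l(\nabla_\Sigma)_i(\nabla_\Sigma)_l\hat w$ line. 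Collecting all pieces and grouping the $\varepsilon^{-1}$ and $\varepsilon^{-2}$ contributions into the bracketed expressions gives the stated formula.

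The main obstacle — and really the only nontrivial point — is establishing and correctly bookkeeping the second-order divergence identity $\diverg_x\big((D_x s)^\top v(s(x,t),t)\big) = \Delta s\cdot v + \sum_{i,l=1}^N \nabla s_i\cdot\nabla s_l\,(\nabla_\Sigma)_i(\nabla_\Sigma)_l v$, i.e.~verifying that the composition of the tangential Hessian of $v$ with $D_x s(D_x s)^\top$ together with the term coming from $\Delta s = (\Delta s_1,\dots,\Delta s_N)$ reproduces exactly the second $x$-derivatives; this requires care about what $(\nabla_\Sigma)_i(\nabla_\Sigma)_l$ means (it is the $(i,l)$-entry of the matrix of components of the tangential Hessian, not a naive iterated derivative) and about the fact that $D_x s$ has values in a tangent space that itself varies with $x$. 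Once this identity is in hand, everything else is the routine chain rule; I would state the identity as a short sublemma or verify it inline in local coordinates on $\Sigma$, invoking the coordinate-independence already guaranteed by Lemma~\ref{th_SobMfd_def_lemma} and Theorem~\ref{th_coordND}. The regularity hypothesis ``$\hat w$ sufficiently smooth'' ($C^2$ in all variables together with $r,s$ smooth from Theorem~\ref{th_coordND} and $h_\varepsilon$ smooth) guarantees all the derivatives above exist classically, so no approximation argument is needed.
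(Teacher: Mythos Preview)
Your proposal is correct and follows essentially the same route as the paper: compute $\partial_t\rho_\varepsilon$ and $\nabla_x\rho_\varepsilon$, apply the chain rule for the first-order identities, and then obtain $\Delta w$ via the product rule in $\partial_{x_j}(\nabla w)_j$. The only stylistic difference is that the paper makes the manifold chain rule rigorous by passing through smooth ambient extensions $\overline g,\overline s$ and the projection formula $\nabla_\Sigma g=P_{T_s\Sigma}\nabla\overline g$ (together with $\partial_{x_j}s\in T_{s}\Sigma$ from Theorem~\ref{th_coordND}), whereas you invoke local coordinates on $\Sigma$; both devices yield the same divergence identity you single out, and the paper explicitly notes the two viewpoints are interchangeable.
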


\begin{Remark}\phantomsection{\label{th_asym_ACND_in_rem}} \upshape
	\begin{enumerate}
		\item The differential operator $\nabla_\Sigma$ commutes with other ones acting on different variables. This can be shown directly with the definitions or suitable extension arguments. For the latter there are similar ideas in the proof of Lemma \ref{th_asym_ACND_in_trafo}.
		\item Note the similarity to Lemma 3.1 in \cite{AbelsMoser}. The terms on the right hand side are more sophisticated, but are always of the same type as in the 2D-case. Therefore in the expansion they will contribute in the analogous way.
	\end{enumerate}
\end{Remark}

\begin{proof}[Proof of Lemma \ref{th_asym_ACND_in_trafo}]
	Basically, this follows from the chain and product rules as well as from the properties of $\nabla_\Sigma$. Let $g:\Sigma\rightarrow\R$ be $C^1$. Then $g\circ s:\overline{\Gamma(2\delta)}\rightarrow\R$. In order to compute $\nabla_x[g(s)]$, let $\overline{g}:\R^N\rightarrow\R$ and $\overline{s}:\R^{N+1}\rightarrow\R^N$ be smooth extensions of $g$ and $s$, respectively. Such a $\overline{g}$ can be constructed via local extensions in submanifold charts and the existence of $\overline{s}$ follows from Theorem \ref{th_coordND}, 1. Then the chain rule yields
	\[
	D_x(g(s))|_{(x,t)}=D\overline{g}|_{s(x,t)}D_x\overline{s}|_{(x,t)}=(\nabla_\Sigma g)^\top|_{s(x,t)} D_xs|_{(x,t)},
	\]
	where we have used $\nabla_\Sigma g|_s=P_{T_s\Sigma}\nabla\overline{g}|_s$ for all $s\in\Sigma$ due to \cite{Depner}, Remark 2.22 as well as $\partial_{x_j}s|_{(x,t)}\in T_{s(x,t)}\Sigma$ for all $(x,t)\in\overline{\Gamma(2\delta)}$, cf.~Theorem \ref{th_coordND}. Alternatively, one can also use the chain rule for differentials and the definition of the surface gradient. For the derivative in time this works analogously. Therefore it holds for all $(x,t)\in \overline{\Gamma(2\delta)}$:
	\[
	\nabla_x[g(s)]|_{(x,t)}=(D_xs)^\top|_{(x,t)} \nabla_\Sigma g|_{s(x,t)}\quad\text{ and }\quad \partial_t[g(s)]|_{(x,t)}=\partial_ts|_{(x,t)}\cdot \nabla_\Sigma g|_{s(x,t)}.
	\]
	With similar arguments and the chain rule one can derive formulas for the first derivatives of functions of type $g(s(x,t),t)$ for $(x,t)\in\overline{\Gamma(2\delta)}$, where $g:\Sigma\times[0,T]\rightarrow\R$. In this case it holds
	\begin{align*}
	\frac{d}{dt}[g(s(x,.),.)]|_t&=\partial_tg|_{(s(x,t),t)} + \partial_ts|_{(x,t)}\cdot \nabla_\Sigma g|_{(s(x,t),t)},\\ \nabla_x[g(s(.,t),t)]|_x&=(D_xs)|_{(x,t)}^\top\nabla_\Sigma g|_{(s(x,t),t)}.
	\end{align*}
	Using this and similar arguments as before, one can derive formulas for the first derivatives of functions of the form $g(\rho_\varepsilon(x,t),s(x,t),t)$ for $(x,t)\in\overline{\Gamma(2\delta)}$ with $g:\R\times\Sigma\times[0,T]\rightarrow\R$. The latter are written in the lemma for $\hat{w}$ instead of $g$. Putting all those identities together and using the product rule in $\partial_{x_j}(\nabla w)_j$ for $j=1,...,N$, one obtains the formula for $\Delta w$.
\end{proof}

To expand the Allen-Cahn equation $\partial_tu^I_\varepsilon-\Delta u^I_\varepsilon+\frac{1}{\varepsilon^2}f'(u^I_\varepsilon)=0$ into $\varepsilon$-series, we again use Taylor expansions. For the $f'$-part this is identically to the 2D-case: If the $u_j^I$ are bounded, then 
\begin{align}\label{eq_asym_ACND_in_taylor_f}
f'(u^I_\varepsilon)=f'(u_0^I)+\sum_{k=1}^{M+2}\frac{f^{(k+1)}(u_0^I)}{k!}\left[\sum_{j=1}^{M+1} u_j^I\varepsilon^j\right]^k+\Oc(\varepsilon^{M+3})\quad\text{ on }\overline{\Gamma(2\delta)}.
\end{align}
The terms in the $\varepsilon$-expansion that are needed explicitly are
\begin{align*}
\Oc(1)&:\quad f'(u_0^I),\\
\Oc(\varepsilon)&:\quad f''(u_0^I)u_1^I,\\
\Oc(\varepsilon^2)&:\quad f''(u_0^I)u_2^I+\frac{f^{(3)}(u_0^I)}{2!}(u_1^I)^2.
\end{align*}
For $k=3,...,M+1$ the order $\Oc(\varepsilon^k)$ is given by
\begin{alignat*}{2}
\Oc(\varepsilon^k)&:\quad f''(u_0^I)u_k^I +\quad & [\text{some polynomial in }(u_1^I,...,u_{k-1}^I)\text{ of order }\leq k,\text{ where the}\\
& &\text{ coefficients are multiples of }f^{(3)}(u_0^I),...,f^{(k+1)}(u_0^I)\\
& &\text{and every term contains a }u^I_j\text{-factor}].
\end{alignat*}
Let $u_{M+2}^I:=0$. Then the latter also holds for $k=M+2$. The other explicit terms in \eqref{eq_asym_ACND_in_taylor_f} are of order  $\Oc(\varepsilon^{M+3})$.

Moreover, we expand functions of $(x,t)\in\overline{\Gamma(2\delta)}$ into $\varepsilon$-series with a Taylor expansion via $r(x,t)=\varepsilon(\rho_\varepsilon(x,t)+h_\varepsilon(s(x,t),t))$ for $(x,t)\in\overline{\Gamma(2\delta)}$. Then again $\rho_\varepsilon$ is replaced by $\rho\in\R$. For a smooth  $g:\overline{\Gamma(2\delta)}\rightarrow\R$ the Taylor expansion yields for $r\in[-2\delta,2\delta]$ uniformly in $(s,t)$: 
\begin{align}\label{eq_asym_ACND_in_taylor2}
\tilde{g}(r,s,t):=g(\overline{X}(r,s,t))=\sum_{k=0}^{M+2}\frac{\partial_r^k\tilde{g}|_{(0,s,t)}}{k!}r^k+\Oc(|r|^{M+3}).
\end{align}
Only the first few terms in the $\varepsilon$-expansion are needed explicitly. These are
\begin{align*}
\Oc(1)&:\quad g|_{\overline{X}_0(s,t)},\\
\Oc(\varepsilon)&:\quad (\rho+h_1(s,t))\partial_r\tilde{g}|_{(0,s,t)},\\
\Oc(\varepsilon^2)&:\quad h_2(s,t)\partial_r\tilde{g}|_{(0,s,t)}+(\rho+h_1(s,t))^2\frac{\partial_r^2\tilde{g}|_{(0,s,t)}}{2}.
\end{align*} 
For $k=3,...,M$ the order $\Oc(\varepsilon^k)$ is
\begin{align*}
\Oc(\varepsilon^k):\quad h_k \partial_r\tilde{g}|_{(0,s,t)} &+ \frac{\partial_r^2\tilde{g}|_{(0,s,t)}}{2}2(\rho+h_1(s,t))h_{k-1}(s,t)\\ & +[\text{some polynomial in }(\rho,h_1(s,t),...,h_{k-2}(s,t))\text{ of order }\leq k,\\
& \phantom{+[}\text{ where the coefficients are multiples of }\partial_r^2\tilde{g}|_{(0,s,t)},...,\partial_r^k\tilde{g}|_{(0,s,t)}].
\end{align*}
Let $h_{M+1}:=h_{M+2}:=0$. Then the latter also holds for $k=M+1, M+2$. The other explicit terms in \eqref{eq_asym_ACND_in_taylor2} are bounded by $\varepsilon^{M+3}$ times some polynomial in $|\rho|$ if the $h_j$ are bounded. Later, these terms and the $\Oc(|r|^{M+3})$-term in \eqref{eq_asym_ACND_in_taylor2} for each choice of $g$ will be multiplied with terms that decay exponentially in $|\rho|$. Then these remainder terms will become $\Oc(\varepsilon^{M+3})$.

For the higher orders in the expansion the following definition is useful:
\begin{Definition}[\textbf{Notation for Inner Expansion of (AC) in ND}]\upshape\phantomsection{\label{th_asym_ACND_in_def}}
	\begin{enumerate}
		\item We call $(\theta_0,u^I_1)$ the \textit{zero-th inner order} and $(h_j,u^I_{j+1})$ the \textit{$j$-th inner order} for $j=1,...,M$. 
		\item Let $k\in\{-1,...,M+2\}$. We denote with $P^I_k$ the set of polynomials in $\rho$ with smooth coefficients in $(s,t)\in \Sigma\times[0,T]$ depending only on the $h_j$ for $1\leq j\leq \min\{k,M\}$.    
		\item Let $k\in\{-1,...,M+2\}$ and $\beta>0$. We denote with $R^I_{k,(\beta)}$ the set of smooth functions $R:\R\times\Sigma\times[0,T]\rightarrow\R$ that depend only on the $j$-th inner orders for $0\leq j\leq \min\{k,M\}$ and satisfy uniformly in $(\rho,s,t)$:
		\[
		|\partial_\rho^i(\nabla_\Sigma)_{n_1}...(\nabla_\Sigma)_{n_d}\partial_t^n R(\rho,s,t)|=\Oc(e^{-\beta|\rho|})
		\]
		for all $n_1,...,n_d\in\{1,...,N\}$ and $d,i,n\in\N_0$.
		\item For $k\in\{-1,...,M+2\}$ and $\beta>0$ the set $\hat{R}^I_{k,(\beta)}$ is defined analogously to $R^I_{k,(\beta)}$ with functions of type $R:\R\times\partial\Sigma\times[0,T]\rightarrow\R$ instead.\footnote{~Note that this set only appears in the contact point expansion later.}
	\end{enumerate}
\end{Definition}

Now we expand \eqref{eq_AC1} for $u_\varepsilon=u^I_\varepsilon$ into $\varepsilon$-series. This works analogously to the 2D-case, cf.~Remark \ref{th_asym_ACND_in_rem},~2. In the following $(\rho,s,t)$ are always in $\R\times \Sigma\times[0,T]$ and sometimes omitted.

\paragraph{Inner Expansion: $\Oc(\varepsilon^{-2})$}\label{sec_asym_ACND_in_m2}
We obtain that the $\Oc(\frac{1}{\varepsilon^2})$-order is zero if
\[
\quad -|\nabla r|^2|_{\overline{X}_0(s,t)}\partial_\rho^2\hat{u}_0^I(\rho,s,t)+f'(\hat{u}_0^I(\rho,s,t))=0.
\]
Because of Theorem \ref{th_coordND} we have $|\nabla r|^2|_{\overline{X}_0(s,t)}=1$. Moreover, $\{\rho_\varepsilon=0\}$ should approximate the zero level set of $u^I_\varepsilon$. Hence we require $\hat{u}_0^I(0,s,t)=0$. Finally, $\hat{u}_0^I$ should connect the values $\pm1$, i.e.~$\lim_{\rho\rightarrow\pm\infty}\hat{u}_0^I(\rho,s,t)=\pm1$. Altogether due to Theorem \ref{th_theta_0} we have to define
\[
\hat{u}_0^I(\rho,s,t):=\theta_0(\rho).
\] 

\paragraph{Inner Expansion: $\Oc(\varepsilon^{-1})$}\label{sec_asym_ACND_in_m1}
The $\partial_tu$-part yields $\frac{1}{\varepsilon}\partial_tr|_{\overline{X}_0(s,t)}\theta_0'(\rho)$ and $\Delta u$ yields
\begin{align*}
&\frac{1}{\varepsilon^2}\left[\partial_r(|\nabla r|^2\circ \overline{X})|_{(0,s,t)}\varepsilon(\rho+h_1(s,t))\theta_0''(\rho)+|\nabla r|^2|_{\overline{X}_0(s,t)}\varepsilon\partial_\rho^2\hat{u}_1^I(\rho,s,t)\right]\\
&+\frac{1}{\varepsilon}\left[\theta_0'(\rho)\Delta r|_{\overline{X}_0(s,t)}+2(D_xs\nabla r)^\top|_{\overline{X}_0(s,t)}(\nabla_\Sigma\theta_0'(\rho)-\nabla_\Sigma h_1(s,t)\theta_0''(\rho))\right]\\
&=\frac{1}{\varepsilon}\left[\partial_\rho^2\hat{u}_1^I(\rho,s,t)+\theta_0'(\rho)\Delta r|_{\overline{X}_0(s,t)}\right],
\end{align*}
where we used Theorem \ref{th_coordND}. Therefore the $\Oc(\frac{1}{\varepsilon})$-order cancels if  
\[
\Lc_0\hat{u}_1^I(\rho,s,t)+\theta_0'(\rho)(\partial_tr-\Delta r )|_{\overline{X}_0(s,t)}=0,\quad\text{ where }\Lc_0:=-\partial_\rho^2+f''(\theta_0).
\]
Due to Theorem \ref{th_ODE_lin} this parameter-dependent ODE together with $\hat{u}_1^I(0,s,t)=0$ and boundedness in $\rho$ has a (unique) solution $\hat{u}_1^I$ if and only if $(\partial_tr-\Delta r)|_{\overline{X}_0(s,t)}=0$. The latter is valid because it is equivalent to \eqref{MCF} for $\Gamma$ by Theorem \ref{th_coordND}. Therefore we define $\hat{u}_1^I:=0$.

\paragraph{Inner Expansion: $\Oc(\varepsilon^0)$}\label{sec_asym_ACND_in_0}
From $\partial_tu$ we obtain
\begin{align*}
&\frac{1}{\varepsilon}\left[\partial_tr|_{\overline{X}_0(s,t)}\varepsilon\partial_\rho\hat{u}_1^I+\partial_r(\partial_tr\circ\overline{X})|_{(0,s,t)}\varepsilon(\rho+h_1(s,t))\theta_0'(\rho)\right]\\
&+\theta_0'(\rho)\left[-\partial_th_1(s,t)-\partial_ts|_{\overline{X}_0(s,t)}\cdot \nabla_\Sigma h_1(s,t)\right]+\partial_ts|_{\overline{X}_0(s,t)}\cdot\nabla_\Sigma\theta_0+\partial_t\theta_0(\rho)\\
&=\theta_0'(\rho)\left[(\rho+h_1(s,t))\partial_r(\partial_tr\circ \overline{X})|_{(0,s,t)}-\partial_th_1(s,t)-\partial_ts|_{\overline{X}_0(s,t)}\cdot \nabla_\Sigma h_1(s,t)\right],
\end{align*}
and from $\Delta u$:
\begin{align*}
&\frac{1}{\varepsilon^2}\theta_0''(\rho)\left[
\varepsilon^2\frac{1}{2}(\rho+h_1)^2\partial_r^2(|\nabla r|^2\circ\overline{X})|_{(0,s,t)}+\varepsilon^2 h_2\partial_r(|\nabla r|^2\circ\overline{X})|_{(0,s,t)}\right]\\
&+\frac{1}{\varepsilon^2}\partial_\rho^2\hat{u}_1^I\varepsilon^2(\rho+h_1)\partial_r(|\nabla r|^2\circ\overline{X})|_{(0,s,t)}+\frac{1}{\varepsilon^2}|\nabla r|^2|_{\overline{X}_0(s,t)}\varepsilon^2\partial_\rho^2\hat{u}_2^I\\
&+\frac{1}{\varepsilon}\left[\theta_0'(\rho)\varepsilon(\rho+h_1)\partial_r(\Delta r\circ\overline{X})|_{(0,s,t)}+\varepsilon\partial_\rho\hat{u}_1^I\Delta r|_{\overline{X}_0(s,t)}\right]\\
&+\frac{1}{\varepsilon}2(D_xs\nabla r)^\top|_{\overline{X}_0(s,t)}\left[\nabla_\Sigma\partial_\rho\hat{u}_1^I\varepsilon-\nabla_\Sigma h_1\varepsilon\partial_\rho^2\hat{u}_1^I-\varepsilon\nabla_\Sigma h_2\theta_0''(\rho)\right]\\
&+\frac{1}{\varepsilon}2\partial_r((D_xs\nabla r)^\top\circ\overline{X})|_{(0,s,t)}\varepsilon(\rho+h_1)\left[\nabla_\Sigma\theta_0'(\rho)-\nabla_\Sigma h_1\theta_0''(\rho)\right]\\
&+\Delta s|_{\overline{X}_0(s,t)}\cdot\nabla_\Sigma\theta_0(\rho)+\sum_{i,l=1}^N\nabla s_i\cdot\nabla s_l|_{\overline{X}_0(s,t)}(\nabla_\Sigma)_i(\nabla_\Sigma)_l\theta_0(\rho)\\
&-2\nabla_\Sigma\theta_0'(\rho)^\top D_xs(D_xs)^\top|_{\overline{X}_0(s,t)}\nabla_\Sigma h_1+\left|(D_xs)^\top|_{\overline{X}_0(s,t)}\nabla_\Sigma h_1\right|^2\theta_0''(\rho)\\
&-\theta_0'(\rho)\left[\Delta s|_{\overline{X}_0(s,t)}\cdot\nabla_\Sigma h_1+\sum_{i,l=1}^N\nabla s_i\cdot\nabla s_l|_{\overline{X}_0(s,t)}(\nabla_\Sigma)_i(\nabla_\Sigma)_l h_1\right].
\end{align*}
Because of Theorem \ref{th_coordND} the latter is the same as
\begin{align*}
&\theta_0''(\rho)\left[\frac{1}{2}(\rho+h_1)^2\partial_r^2(|\nabla r|^2\circ\overline{X})|_{(0,s,t)}+\left|(D_xs)^\top|_{\overline{X}_0(s,t)}\nabla_\Sigma h_1\right|^2\right]+\partial_\rho^2\hat{u}_2^I\\
&+\theta_0''(\rho)2\partial_r((D_xs\nabla r)^\top\circ\overline{X})|_{(0,s,t)}(\rho+h_1)(-\nabla_\Sigma h_1)+\theta_0'(\rho)(\rho+h_1)\partial_r(\Delta r\circ\overline{X})|_{(0,s,t)}\\
&-\theta_0'(\rho)\left[\Delta s|_{\overline{X}_0(s,t)}\cdot\nabla_\Sigma h_1+\sum_{i,l=1}^N\nabla s_i\cdot\nabla s_l(\nabla_\Sigma)_i(\nabla_\Sigma)_l h_1\right].
\end{align*}
Due to $\hat{u}_1^I=0$, the $f'$-part contributes $f''(\theta_0)\hat{u}_2^I$. Hence for the cancellation of the $\Oc(1)$-term in the expansion for the Allen-Cahn equation \eqref{eq_AC1} for $u_\varepsilon=u^I_\varepsilon$ we require
\begin{align}\label{eq_asym_ACND_in_u2}
-\Lc_0\hat{u}_2^I(\rho,s,t)&=R_1(\rho, s,t),\\ \notag
R_1(\rho,s,t):=\theta_0'(\rho)&\left[-\partial_th_1+\sum_{i,l=1}^N\nabla s_i\cdot\nabla s_l(\nabla_\Sigma)_i(\nabla_\Sigma)_l h_1 \right.\\\notag
&\left. +(\rho+h_1)\partial_r((\partial_tr-\Delta r)\circ\overline{X})|_{(0,s,t)}-(\partial_ts-\Delta s)|_{\overline{X}_0(s,t)}\cdot\nabla_\Sigma h_1\right]\\ \notag
+\theta_0''(\rho)&\left[-\frac{1}{2}(\rho+h_1)^2\partial_r^2(|\nabla r|^2\circ\overline{X})|_{(0,s,t)}\right.\\
&\left.+2(\rho+h_1)\partial_r((D_xs\nabla r)^\top\circ\overline{X})|_{(0,s,t)}\nabla_\Sigma h_1-\left|(D_xs)^\top|_{\overline{X}_0(s,t)}\nabla_\Sigma h_1\right|^2\right].\notag
\end{align}
If $h_1$ is smooth, then $R_1$ is smooth and together with all derivatives decays exponentially in $|\rho|$ uniformly in $(s,t)$ with rate $\beta$ for every $\beta\in(0,\min\{\sqrt{f''(\pm 1)}\})$ because of Theorem \ref{th_theta_0}. Therefore Theorem \ref{th_ODE_lin} applied in local coordinates for $\Sigma$ yields that there is a unique bounded solution $\hat{u}_2^I$ to \eqref{eq_asym_ACND_in_u2} together with $\hat{u}_2^I(0,s,t)=0$ if and only if the compatibility condition $\int_\R R_1(\rho,s,t)\theta_0'(\rho)\,d\rho=0$ holds. Since $\int_\R\theta_0'(\rho)\theta_0''(\rho)\,d\rho=0$ due to integration by parts, the nonlinearities in $h_1$ drop out and we obtain a linear non-autonomous parabolic equation for $h_1$ on $\Sigma$ with principal part $\partial_t-\sum_{i,l=1}^N\nabla s_i\cdot\nabla s_l|_{\overline{X}_0(s,t)}(\nabla_\Sigma)_i(\nabla_\Sigma)_l$:
\begin{align}\label{eq_asym_ACND_in_h1}
\partial_th_1-\sum_{i,l=1}^N\nabla s_i\cdot\nabla s_l|_{\overline{X}_0(s,t)}(\nabla_\Sigma)_i(\nabla_\Sigma)_l h_1+a_1\cdot \nabla_\Sigma h_1+a_0h_1=f_0
\end{align} 
in $\Sigma\times[0,T]$. Here, with $d_1,...,d_5$ defined by
\begin{alignat*}{3}
d_1&:=\int_\R\theta_0'(\rho)^2\,d\rho,&\quad d_2&:=\int_\R\theta_0'(\rho)^2\rho\,d\rho,&\quad
d_3&:=\int_\R\theta_0'(\rho)^2\rho^2\,d\rho,\\
d_4&:=\int_\R\theta_0'(\rho)\theta_0''(\rho)\rho\,d\rho,&\quad d_5&:=\int_\R\theta_0'(\rho)\theta_0''(\rho)\rho^2\,d\rho,&\quad
d_6&:=\int_\R\theta_0'(\rho)\theta_0''(\rho)\rho^3\,d\rho,
\end{alignat*}
we have set for all $(s,t)\in\Sigma\times[0,T]$:
\begin{align}
a_1(s,t)&:=(\partial_ts-\Delta s)|_{\overline{X}_0(s,t)}-2\frac{d_4}{d_1}\partial_r((D_xs\nabla r)^\top \circ\overline{X})|_{(0,s,t)}\in \R^N,\label{eq_asym_ACND_in_a1}\\
a_0(s,t)&:=-\partial_r((\partial_tr-\Delta r)\circ\overline{X})|_{(0,s,t)}+\frac{d_4}{d_1}\partial_r^2(|\nabla r|^2\circ\overline{X})|_{(0,s,t)}\in\R,\label{eq_asym_ACND_in_a0}\\
f_0(s,t)&:=\frac{d_2}{d_1}\partial_r((\partial_tr-\Delta r)\circ\overline{X})|_{(0,s,t)}-\frac{d_5}{2d_1}\partial_r^2(|\nabla r|^2\circ\overline{X})|_{(0,s,t)}\in\R.
\end{align}
If $h_1$ is smooth and solves \eqref{eq_asym_ACND_in_h1}, then Theorem \ref{th_ODE_lin} (applied in local coordinates for $\Sigma$) yields a smooth solution $\hat{u}_2^I$ to \eqref{eq_asym_ACND_in_u2} and we also get decay estimates. By compactness, we obtain $\hat{u}_2^I\in R^I_{1,(\beta)}$ for any $\beta\in(0,\min\{\sqrt{f''(\pm 1)}\})$ because of the following remark:
\begin{Remark}\label{th_nabla_sigma_equiv}\upshape
	The norm of the entirety of derivatives in local coordinates up to any fixed order $d\in\N$ is equivalent to the norm of the collection of all $\nabla_\Sigma$-derivatives up to order $d$ on any compact subset of a chart domain. This can be shown inductively via local representations. 
\end{Remark}

\begin{Remark}\label{th_asym_ACND_feven_rem1}\upshape
	If additionally $f$ is even, then $\theta_0'$ is even and $\theta_0''$ is odd. Hence $d_2=d_5=0$ and $f_0=0$. Therefore the equation \eqref{eq_asym_ACND_in_h1} for $h_1$ is homogeneous in this case.
\end{Remark}

\paragraph{Inner Expansion: $\Oc(\varepsilon^k)$}\label{sec_asym_ACND_in_k}
Let $k\in\{1,...,M-1\}$ and suppose that the $j$-th inner order has already been constructed for $j=0,...,k$, that it is smooth and $\hat{u}_{j+1}^I\in R^I_{j,(\beta)}$ for every $\beta\in(0,\min\{\sqrt{f''(\pm1)}\})$. This assumption will be fulfilled later, when we apply an induction argument.

Then with the notation in Definition \ref{th_asym_ACND_in_def} it holds for all $\beta\in(0,\min\{\sqrt{f''(\pm1)}\})$:
\begin{align*}
\text{For }j=1,...,k+2:\,[\Oc(\varepsilon^j)\text{ in }\eqref{eq_asym_ACND_in_taylor_f}] &\in \, f''(u_0^I)u^I_j + R^I_{j-2,(\beta)}\quad [\subseteq R^I_{j-1,(\beta)},\text{ if }j\leq k+1],\\
\text{For }j=1,...,k+1:\,[\Oc(\varepsilon^j)\text{ in }\eqref{eq_asym_ACND_in_taylor2}] &\in \,
\partial_r\tilde{g}|_{(0,s,t)} h_j + P_{j-1}^I\quad[\subseteq P_j^I,\text{ if }j\leq k],\\
\text{For }j=3,...,k+1:\,[\Oc(\varepsilon^j)\text{ in }\eqref{eq_asym_ACND_in_taylor2}] &\in \,
\partial_r\tilde{g}|_{(0,s,t)} h_j + \partial_r^2\tilde{g}|_{(0,s,t)}(\rho+h_1)h_{j-1} + P_{j-2}^I.
\end{align*}

With these identities one can compute the $\Oc(\varepsilon^k)$-order in \eqref{eq_AC1} for $u_\varepsilon=u_\varepsilon^I$. The calculation is straightforward but lengthy, cf.~\cite{MoserDiss}, Section 5.1.1.4.~for the 2D-case. Therefore we do not go into details. However, note that the explicit identities from Theorem \ref{th_coordND} enter as well as that $\Gamma$ evolves according to \eqref{MCF} and $\hat{u}^I_1=0$. This yields that the $\Oc(\varepsilon^k)$-order in \eqref{eq_AC1} for $u_\varepsilon=u_\varepsilon^I$ vanishes if 
\begin{align}\label{eq_asym_ACND_in_uk}
-\Lc_0\hat{u}_{k+2}^{I}(\rho,s,t)&=R_{k+1}(\rho,s,t),\\
R_{k+1}(\rho,s,t):=\theta_0'(\rho)\notag
&\left[
-\partial_th_{k+1}
+\sum_{i,l=1}^N\nabla s_i\cdot\nabla s_l|_{\overline{X}_0(s,t)}(\nabla_\Sigma)_i(\nabla_\Sigma)_l h_{k+1}\right. \\ \notag
&\left.-(\partial_ts-\Delta s)|_{\overline{X}_0(s,t)}\cdot \nabla_\Sigma h_{k+1}
+h_{k+1}\partial_r((\partial_tr-\Delta r)\circ\overline{X})|_{(0,s,t)}
\right]\\ \notag
+\theta_0''(\rho)
&\left[
-(\rho+h_1)h_{k+1} \partial_r^2(|\nabla r|^2\circ\overline{X})|_{(0,s,t)}\right.\\ \notag
&-2(\nabla_\Sigma h_1)^\top D_xs(D_xs)^\top|_{\overline{X}_0(s,t)}\nabla_\Sigma h_{k+1}\\ \notag
&\left.+2\partial_r((D_xs\nabla r)^\top\circ\overline{X})|_{(0,s,t)}[(\rho+h_1)\nabla_\Sigma h_{k+1}+h_{k+1}\nabla_\Sigma h_1]
\right]\\ 
+ &\tilde{R}_{k}(\rho,s,t), \notag
\end{align}
where $\tilde{R}_{k}\in R^I_{k,(\beta)}$. If $h_{k+1}$ is smooth, then due to Theorem \ref{th_ODE_lin} (applied in local coordinates for $\Sigma$) equation \eqref{eq_asym_ACND_in_uk} has a unique bounded solution $\hat{u}_{k+2}^I$ with $\hat{u}_{k+2}^I(0,s,t)=0$ if and only if $\int_\R R_{k+1}(\rho,s,t)\theta_0'(\rho)\,d\rho=0$. Because of $\int_\R\theta_0''\theta_0'=0$ the latter is equivalent to 
\begin{align}\label{eq_asym_ACND_in_hk}
\partial_th_{k+1}-\sum_{i,l=1}^N\nabla s_i\cdot\nabla s_l|_{\overline{X}_0(s,t)}(\nabla_\Sigma)_i(\nabla_\Sigma)_l h_{k+1}+a_1\cdot\nabla_\Sigma h_{k+1} + a_0 h_{k+1} = f_k,
\end{align}
where 
\[
f_k(s,t):=\int_\R \tilde{R}_k(\rho,s,t)\theta_0'(\rho)\,d\rho\frac{1}{\|\theta_0'\|_{L^2(\R)}^2}
\] 
is a smooth function of $(s,t)$ and depends only on the $j$-th inner orders for $0\leq j\leq k$. Here $a_0, a_1$ are defined in \eqref{eq_asym_ACND_in_a1}-\eqref{eq_asym_ACND_in_a0}. If $h_{k+1}$ is smooth and solves \eqref{eq_asym_ACND_in_hk}, then Theorem \ref{th_ODE_lin} yields as in the last Section \ref{sec_asym_ACND_in_0} a smooth solution $\hat{u}_{k+2}^I$ to \eqref{eq_asym_ACND_in_uk} such that $\hat{u}_{k+2}^I\in R^I_{k+1,(\beta)}$ for all $\beta\in(0,\min\{\sqrt{f''(\pm1)}\})$.

\subsubsection{Contact Point Expansion of (AC) in ND}\label{sec_asym_ACND_cp}
In the contact point expansion we proceed similarly as in the 2D-case, cf.~\cite{AbelsMoser}, Section 3.2, but the computations are more technical. Here we have more contact points, namely all $X_0(\sigma,t)$, where $(\sigma,t)\in\partial\Sigma\times[0,T]$. We make the ansatz $u_\varepsilon=u^I_\varepsilon+u^C_\varepsilon$ in $\Gamma(2\delta)$ close to $X_0(\partial\Sigma\times[0,T])$. Therefore we use the mappings $Y:\partial\Sigma\times[0,2\mu_1]\rightarrow R(Y)\subset\Sigma$ as well as
\[
(\sigma,b)=Y^{-1}\circ s:\overline{\Gamma^C(2\delta,2\mu_1)}\rightarrow\partial\Sigma\times[0,2\mu_1]
\] 
from Section \ref{sec_coordND}. Besides $r$ we only scale $b$ with $\varepsilon$. Let $H_\varepsilon:=\frac{b}{\varepsilon}$ and
\[
u^C_\varepsilon:=\sum_{j=1}^M\varepsilon^j u_j^C,\quad u_j^C(x,t):=\hat{u}_j^C(\rho_\varepsilon(x,t),H_\varepsilon(x,t),\sigma(x,t),t)\quad\text{ for }(x,t)\in\overline{\Gamma^C(2\delta,2\mu_1)},
\]
where
\[
\hat{u}_j^C:\overline{\R^2_+}\times\partial\Sigma\times[0,T]\rightarrow\R:(\rho,H,\sigma,t)\mapsto \hat{u}_j^C(\rho,H,\sigma,t)
\] 
for $j=1,...,M$. Moreover, we set $u^C_{M+1}:=u^C_{M+2}:=0$ and $\hat{u}^C_\varepsilon:=\sum_{j=1}^M\varepsilon^j \hat{u}_j^C$. Instead of \eqref{eq_AC1} for $u_\varepsilon=u^I_\varepsilon+u^C_\varepsilon$, we will expand 
\begin{align}\label{eq_asym_ACND_cp} 
\partial_tu^C_\varepsilon-\Delta u^C_\varepsilon+\frac{1}{\varepsilon^2}\left[f'(u^I_\varepsilon+u^C_\varepsilon)-f'(u^I_\varepsilon)\right]=0
\end{align}
into $\varepsilon$-series with coefficients in $(\rho_\varepsilon,H_\varepsilon,\sigma,t)$. We call \eqref{eq_asym_ACND_cp} the \enquote{bulk equation} and expand it up to $\Oc(\varepsilon^{M-2})$. Moreover, we will expand \eqref{eq_AC2} for $u_\varepsilon=u^I_\varepsilon+u^C_\varepsilon$ into $\varepsilon$-series with coefficients in $(\rho_\varepsilon,\sigma,t)$ up to $\Oc(\varepsilon^{M-1})$. Altogether we end up with similar equations as in \cite{AbelsMoser}, Section 3.2. Here besides $t\in[0,T]$ also points on $\partial\Sigma$ enter as independent variables in the model problems on $\R^2_+$. The solvability condition \eqref{eq_hp_comp} yields the boundary conditions on $\partial\Sigma\times[0,T]$ for the height functions $h_j$.

For the expansion we calculate the action of the differential operators on $u^C_\varepsilon$ in the next lemma. Here for $\nabla_{\partial\Sigma}$ and $\nabla_\Sigma$ we use similar conventions as in Lemma \ref{th_asym_ACND_in_trafo}.

\begin{Lemma}\label{th_asym_ACND_cp_trafo}
	Let $\overline{\R^2_+}\times\partial\Sigma\times[0,T]\ni(\rho,H,\sigma,t)\mapsto\hat{w}(\rho,H,\sigma,t)\in\R$ be sufficiently smooth and let $w:\overline{\Gamma^C(2\delta,2\mu_1)}\rightarrow\R:(x,t)\mapsto\hat{w}(\rho_\varepsilon(x,t),H_\varepsilon(x,t),\sigma(x,t),t)$. Then
	\begin{align*}
	\partial_tw&=\partial_\rho\hat{w}\left[\frac{\partial_tr}{\varepsilon}-\left(\partial_th_\varepsilon+\partial_ts\cdot \nabla_\Sigma h_\varepsilon\right)\right]+\partial_H\hat{w}\frac{\partial_tb}{\varepsilon}+\partial_t\sigma\cdot\nabla_{\partial\Sigma}\hat{w}+\partial_t\hat{w},\\
	\nabla w&=\partial_\rho\hat{w}\left[\frac{\nabla r}{\varepsilon}-(D_xs)^\top\nabla_\Sigma h_\varepsilon\right]+\partial_H\hat{w}\frac{\nabla b}{\varepsilon}+(D_x\sigma)^\top\nabla_{\partial\Sigma}\hat{w},\\
	\Delta w&=\partial_\rho\hat{w}\left[\frac{\Delta r}{\varepsilon}-\left(\Delta s\cdot\nabla_\Sigma h_\varepsilon+\sum_{i,l=1}^N\nabla s_i\cdot\nabla s_l(\nabla_\Sigma)_i(\nabla_\Sigma)_lh_\varepsilon\right)\right]+\partial_H\hat{w}\frac{\Delta b}{\varepsilon}\\
	&+\partial_H^2\hat{w}\frac{|\nabla b|^2}{\varepsilon^2}
	+\partial_\rho^2\hat{w}\left|\frac{\nabla r}{\varepsilon}-(D_xs)^\top\nabla_\Sigma h_\varepsilon\right|^2+ 2\partial_\rho\partial_H\hat{w}\,\frac{\nabla b}{\varepsilon}\cdot\left[\frac{\nabla r}{\varepsilon}-(D_xs)^\top\nabla_\Sigma h_\varepsilon\right]\\
	&+2\left( (D_x\sigma)^\top\nabla_{\partial\Sigma}\partial_\rho \hat{w} \right)
	\cdot\left[\frac{\nabla r}{\varepsilon}
	-(D_xs)^\top \nabla_\Sigma h_\varepsilon\right]
	+2\left((D_x\sigma)^\top\nabla_{\partial\Sigma}\partial_H\hat{w}\right)\cdot\frac{\nabla b}{\varepsilon}\\
	&+\Delta\sigma\cdot\nabla_{\partial\Sigma}\hat{w}+\sum_{i,l=1}^N \nabla\sigma_i\cdot\nabla\sigma_l(\nabla_{\partial\Sigma})_i(\nabla_{\partial\Sigma})_l\hat{w},
	\end{align*}
	where the $w$-terms on the left hand side and derivatives of $r$ or $s$ are evaluated at $(x,t)$, the $h_\varepsilon$-terms at $(s(x,t),t)$ and the $\hat{w}$-terms at $(\rho_\varepsilon(x,t),H_\varepsilon(x,t),\sigma(x,t),t)$.
\end{Lemma}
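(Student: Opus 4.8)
The plan is to mimic the proof of Lemma \ref{th_asym_ACND_in_trafo}, the only new features being the extra inner variable $H_\varepsilon=b/\varepsilon$ and the $\partial\Sigma$-valued tangential variable $\sigma$ appearing in place of (and alongside) $s$. As a preliminary step I would record the \textbf{building block identities}: for smooth $g:\partial\Sigma\times[0,T]\to\R$ and all $(x,t)\in\overline{\Gamma^C(2\delta,2\mu_1)}$,
\begin{align*}
\nabla_x[g(\sigma(x,t),t)] &= (D_x\sigma)^\top|_{(x,t)}\,\nabla_{\partial\Sigma}g|_{(\sigma(x,t),t)},\\
\partial_t[g(\sigma(x,t),t)] &= \partial_tg|_{(\sigma(x,t),t)} + \partial_t\sigma|_{(x,t)}\cdot\nabla_{\partial\Sigma}g|_{(\sigma(x,t),t)}.
\end{align*}
These follow from the chain rule applied to smooth extensions of $g$ and $\sigma$ — the extension of $\sigma$ being available because $\overline{X}$ extends to a smooth diffeomorphism by Theorem \ref{th_coordND},~1. — together with the characterization $\nabla_{\partial\Sigma}g=P_{T_\sigma\partial\Sigma}\nabla\overline{g}$ (cf.~\cite{Depner}, Remark 2.22) and $\partial_{x_j}\sigma\in T_\sigma\partial\Sigma$. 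The same extension argument shows that $\nabla_{\partial\Sigma}$ commutes with $\partial_\rho$, $\partial_H$, $\partial_t$ and with $\nabla_\Sigma$ when these act on different slots, as in Remark \ref{th_asym_ACND_in_rem},~1.

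For the first-order derivatives I would apply the chain rule to $w(x,t)=\hat{w}(\rho_\varepsilon(x,t),H_\varepsilon(x,t),\sigma(x,t),t)$, using $\nabla_x\rho_\varepsilon=\nabla r/\varepsilon-(D_xs)^\top\nabla_\Sigma h_\varepsilon$ and $\partial_t\rho_\varepsilon=\partial_tr/\varepsilon-(\partial_th_\varepsilon+\partial_ts\cdot\nabla_\Sigma h_\varepsilon)$ exactly as in Lemma \ref{th_asym_ACND_in_trafo} (since $\rho_\varepsilon$ has the same functional form here), $\nabla_xH_\varepsilon=\nabla b/\varepsilon$, $\partial_tH_\varepsilon=\partial_tb/\varepsilon$, and the $\sigma$-identities above. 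Collecting terms yields the stated formulas for $\partial_tw$ and $\nabla w$.

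For the Laplacian I would write $\Delta w=\sum_{j=1}^N\partial_{x_j}(\nabla w)_j$ and differentiate the gradient formula term by term with the product rule. Differentiating the $\hat{w}$-derivative factors once more produces the second-order inner derivatives $\partial_\rho^2\hat{w}$, $\partial_H^2\hat{w}$, $\partial_\rho\partial_H\hat{w}$ multiplied by the appropriate products of $\nabla r/\varepsilon-(D_xs)^\top\nabla_\Sigma h_\varepsilon$ and $\nabla b/\varepsilon$, together with the mixed terms $(D_x\sigma)^\top\nabla_{\partial\Sigma}\partial_\rho\hat{w}$ and $(D_x\sigma)^\top\nabla_{\partial\Sigma}\partial_H\hat{w}$ (each with a combinatorial factor $2$ from symmetry), while differentiating the geometric vector fields produces $\Delta r$, $\Delta b$, $\Delta\sigma$, the expansion of $\diverg[(D_xs)^\top\nabla_\Sigma h_\varepsilon]$ into $\Delta s\cdot\nabla_\Sigma h_\varepsilon+\sum_{i,l=1}^N\nabla s_i\cdot\nabla s_l(\nabla_\Sigma)_i(\nabla_\Sigma)_lh_\varepsilon$ exactly as in Lemma \ref{th_asym_ACND_in_trafo}, and the corresponding term $\Delta\sigma\cdot\nabla_{\partial\Sigma}\hat{w}+\sum_{i,l=1}^N\nabla\sigma_i\cdot\nabla\sigma_l(\nabla_{\partial\Sigma})_i(\nabla_{\partial\Sigma})_l\hat{w}$ coming from $\diverg[(D_x\sigma)^\top\nabla_{\partial\Sigma}\hat{w}]$. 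Using the commutation relations from the first paragraph to reorder derivatives then gives the claimed identity.

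The main obstacle is purely bookkeeping: the gradient formula now has three geometric contributions (the $\rho$-, $H$- and $\sigma$-slots), so taking its divergence roughly doubles the number of terms compared with Lemma \ref{th_asym_ACND_in_trafo}, and one must track all cross terms — in particular the $2\partial_\rho\partial_H\hat{w}$ term and the two mixed $\nabla_{\partial\Sigma}$-terms — with the correct factors and verify nothing is double-counted. There is no conceptual difficulty; the geometric identities of Theorem \ref{th_coordND} are \emph{not} invoked here (they enter only later in the $\varepsilon$-expansion), so the computation stays at the level of the chain and product rules plus the projection formulas for the surface gradients.
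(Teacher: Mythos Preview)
Your proposal is correct and follows essentially the same approach as the paper, which simply states that the lemma can be shown in a similar manner as the proof of Lemma~\ref{th_asym_ACND_in_trafo}. Your explicit recording of the $\sigma$-building block identities and the bookkeeping for the additional $H$- and $\sigma$-slots is exactly the elaboration the paper leaves implicit.
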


\begin{proof}
	This can be shown in a similar manner as in the proof of Lemma \ref{th_asym_ACND_in_trafo}.
\end{proof}

\begin{Remark}\label{th_asym_ACND_cp_rem}\upshape
	The formulas in Lemma \ref{th_asym_ACND_cp_trafo} without the $\nabla_{\partial\Sigma}$-terms directly correspond to Lemma 3.3 in \cite{AbelsMoser}. The structure of the new terms is similar whereas their $\varepsilon$-order is the same or higher. Hence later these will only contribute to lower order remainder terms in the expansion.
\end{Remark}

\paragraph{Contact Point Expansion: The Bulk Equation}\label{sec_asym_ACND_cp_bulk}
We expand the $f'$-part in \eqref{eq_asym_ACND_cp}: If the $u_j^I, u_j^C$ are bounded, the Taylor expansion yields on $\overline{\Gamma(2\delta)}$
\begin{align}\label{eq_asym_ACND_cp_taylor_f}
f'(u^I_\varepsilon+u^C_\varepsilon)=f'(\theta_0)+\sum_{k=1}^{M+2}\frac{1}{k!}f^{(k+1)}(\theta_0)\left[\sum_{j=1}^{M+1}\varepsilon^j(u_j^I+u_j^C)\right]^k+\Oc(\varepsilon^{M+3}).
\end{align}
Combining this with the expansion for $f'(u^I_\varepsilon)$ in \eqref{eq_asym_ACND_in_taylor_f} and using $u^I_1=0$, the terms in the asymptotic expansion for $f'(u^I_\varepsilon+u^C_\varepsilon)-f'(u^I_\varepsilon)$ are for $k=1,...,M+1$:
\begin{alignat*}{2}
\Oc(1)&:\quad 0,&\\
\Oc(\varepsilon)&:\quad f''(\theta_0)u^C_1,& \\
\Oc(\varepsilon^k)&:\quad f''(\theta_0)u_k^C +\quad & [\text{some polynomial in }(u_1^I,...,u_{k-1}^I, u^C_1,...,u_{k-1}^C)\text{ of order }\leq k,\\
& &\text{ where the coefficients are multiples of }f^{(3)}(\theta_0),...,f^{(k+1)}(\theta_0)\\
& &\text{ and every term contains a }u^C_j\text{-factor}].
\end{alignat*}
Let $u_{M+2}^C:=0$. Then the latter is also valid for $k=M+2$. The other explicit terms in $f'(u^I_\varepsilon+u^C_\varepsilon)-f'(u^I_\varepsilon)$ are of order $\Oc(\varepsilon^{M+3})$.

Moreover, we expand terms arising from Lemma \ref{th_asym_ACND_cp_trafo} in \eqref{eq_asym_ACND_cp} that are functions of $(s,t)$ or $(\rho,s,t)$, i.e.~all the $h_j$-terms and the $u_j^I$-terms from the $f'$-expansion, respectively, as well as the terms depending on $(x,t)$, i.e.~all the derivatives of $r,b, s, \sigma$. Therefore we consider smooth $g_1:\Sigma\times[0,T]\rightarrow\R$ or $g_1:\R\times \Sigma\times[0,T]\rightarrow\R$  such that $\tilde{g}_1:=g_1|_{s=Y}$ admits bounded derivatives in $b$, where $Y:\partial\Sigma\times[0,2\mu_1]\rightarrow\Sigma:(\sigma,b)\mapsto Y(\sigma,b)$. Due to $b=\varepsilon H_\varepsilon$ we apply a Taylor expansion to obtain uniformly
\begin{align}\label{eq_asym_ACND_cp_taylor2}
g_1|_{b=\varepsilon H}=\tilde{g}_1|_{b=0}+\sum_{k=1}^{M+2}(\varepsilon H)^k\frac{\partial_b^k\tilde{g}_1|_{b=0}}{k!}+\Oc((\varepsilon H)^{M+3})\quad\text{ for }H\in[0,\frac{2\mu_1}{\varepsilon}].
\end{align}
Furthermore, let $g_2:\overline{\Gamma^C(2\delta,2\mu_1)}\rightarrow\R$ be smooth. For convenience we define
\[
\overline{X}_1:[-2\delta,2\delta]\times[0,2\mu_1]\times\partial\Sigma\times[0,T]\rightarrow\overline{\Gamma^C(2\delta,2\mu_1)}:(r,b,\sigma,t)\mapsto \overline{X}(r,Y(\sigma,b),t).
\] 
Then a Taylor expansion yields
\begin{align}\label{eq_asym_ACND_cp_taylor3}
\tilde{g}_2(r,b,\sigma,t):=g_2(\overline{X}_1(r,b,\sigma,t))=\sum_{j+k=0}^{M+2}\frac{\partial_r^j\partial_s^k\tilde{g}_2|_{(0,\sigma,t)}}{j!\,k!}r^j b^k+\Oc(|(r,b)|^{M+3})
\end{align}
uniformly in $(r,b,\sigma,t)\in[-2\delta,2\delta]\times[0,2\mu_1]\times\partial\Sigma\times[0,T]$. Later we evaluate at
\[
r=\varepsilon(\rho_\varepsilon(x,t)+h_\varepsilon(s(x,t),t)),\quad b=\varepsilon H_\varepsilon(x,t),\quad \sigma=\sigma(x,t)\quad\text{ for }(x,t)\in\overline{\Gamma^C(2\delta,2\mu_1)}
\] 
and expand $h_\varepsilon$ with \eqref{eq_asym_ACND_cp_taylor2}. Then we replace $(\rho_\varepsilon,H_\varepsilon)$ by arbitrary $(\rho,H)\in\overline{\R^2_+}$. Hence for $k=1,...,M+2$ we obtain
\begin{alignat*}{2}
\Oc(1)&:\quad & &g_2|_{\overline{X}_0(\sigma,t)} \\
\Oc(\varepsilon)&:\quad & &\partial_r\tilde{g}_2|_{(0,\sigma,t)}(\rho+h_1|_{(\sigma,t)})+\partial_b\tilde{g}_2|_{(0,\sigma,t)} H.\\
\Oc(\varepsilon^k)&:\quad & &[\text{some polynomial in }(\rho,H,\partial_b^l\tilde{h}_j|_{(\sigma,t)}), l=0,...,k-1, j=1,...,k\text{ of order}\leq k,\\
& & &\phantom{[}\text{where the coefficients are multiples of }\partial_r^{l_1}\partial_b^{l_2}\tilde{g}_2|_{(0,\sigma,t)}, l_1,l_2\in\N_0, l_1+l_2\leq k],
\end{alignat*}
where $h_{M+1}=h_{M+2}=0$ by definition. The order  $\Oc(\varepsilon)$ is not needed explicitly and just included for clarity. The other explicit terms in \eqref{eq_asym_ACND_cp_taylor2} are estimated by $\varepsilon^{M+3}$ times some polynomial in $(|\rho_\varepsilon|,H_\varepsilon)$. In the end these terms and the remainder in \eqref{eq_asym_ACND_cp_taylor2} are multiplied with exponentially decaying terms and hence become $\Oc(\varepsilon^{M+3})$.

Let us introduce some notation for the higher orders in the expansion:
\begin{Definition}[\textbf{Notation for Contact Point Expansion of (AC) in ND}]\upshape\phantomsection{\label{th_asym_ACND_cp_def}}
	\begin{enumerate}
		\item We denote with $(\theta_0,u^I_1)$ the \textit{zero-th order} and with $(h_j,u^I_{j+1},u^C_j)$ the \textit{$j$-th order} for $j=1,...,M$. 
		\item Let $k\in\{-1,...,M+2\}$. Let $P^C_k(\rho,H)$ be the set of polynomials in $(\rho,H)$ with smooth coefficients in $(\sigma,t)\in\partial\Sigma\times[0,T]$ that depend only on the $h_j$ for $1\leq j\leq \min\{k,M\}$. The sets $P^C_k(\rho)$ and $P^C_k(H)$ are defined with $(\rho,H)$ replaced by $\rho$ and $H$, respectively.
		\item Let $k\in\{-1,...,M+2\}$ and $\beta,\gamma>0$. Then $R^C_{k,(\beta,\gamma)}$ denotes the set of smooth functions $R:\overline{\R^2_+}\times\partial\Sigma\times[0,T]\rightarrow\R$ depending only on the $j$-th orders for $0\leq j\leq \min\{k,M\}$ and such that uniformly in $(\rho,H,\sigma,t)$:
		\[
		|\partial_\rho^i\partial_H^l(\nabla_{\partial\Sigma})_{n_1}...(\nabla_{\partial\Sigma})_{n_d}\partial_t^n R(\rho,H,\sigma,t)|=\Oc(e^{-(\beta|\rho|+\gamma H)})
		\]
		for all $n_1,...,n_d\in\{1,...,N\}$ and $d,i,l,n\in\N_0$.
		\item The set $R^C_{k,(\beta)}$ is defined analogously to $R^C_{k,(\beta,\gamma)}$ but without the $H$-dependence.
	\end{enumerate}
\end{Definition}

In the following we expand the bulk equation \eqref{eq_asym_ACND_cp} with the above formulas into $\varepsilon$-series with coefficients in $(\rho_\varepsilon,H_\varepsilon,\sigma,t)$.

\subparagraph{Bulk Equation: $\Oc(\varepsilon^{-1})$}\label{sec_asym_ACND_cp_bulk_m1}
The lowest order $\Oc(\frac{1}{\varepsilon})$ in  \eqref{eq_asym_ACND_cp} vanishes if 
\begin{align}\label{eq_asym_ACND_cp_bulk1}
[-\Delta^{\sigma,t}+f''(\theta_0(\rho))]\hat{u}^C_1(\rho,H,\sigma,t)=0,
\end{align}
where $\Delta^{\sigma,t}:=\partial_\rho^2+|\nabla b|^2|_{\overline{X}_0(\sigma,t)}\partial_H^2$ and we used $\nabla r\cdot\nabla b|_{\overline{X}_0(\sigma,t)}=0$ for all $(\sigma,t)\in\partial\Sigma\times[0,T]$.

\subparagraph{Bulk Equation: $\Oc(\varepsilon^{k-1})$}\label{sec_asym_ACND_cp_bulk_km1}
For $k=1,...,M-1$ we compute $\Oc(\varepsilon^{k-1})$ in  \eqref{eq_asym_ACND_cp} and derive an equation for $\hat{u}^C_{k+1}$. Therefore we assume that the $j$-th order is already constructed for $j=0,...,k$, that it is smooth and it holds $\hat{u}^I_{j+1}\in R^I_{j,(\beta_1)}$ for every $\beta_1\in(0,\min\{\sqrt{f''(\pm1)}\})$ (bounded and all derivatives bounded is enough here) and we assume $\hat{u}^C_j\in R^C_{j,(\beta,\gamma)}$ for every $\beta\in(0,\min\{\overline{\beta}(\gamma),\sqrt{f''(\pm1)}\})$, $\gamma\in(0,\overline{\gamma})$, where $\overline{\beta}, \overline{\gamma}$ are as in Theorem \ref{th_hp_exp3}. 

Then with the notation from Definition \ref{th_asym_ACND_cp_def} it holds for all those $(\beta,\gamma)$:
\begin{alignat*}{2}
\text{For }j=1,...,k+1:&\quad[\Oc(\varepsilon^j)\text{ in }\eqref{eq_asym_ACND_cp_taylor_f}\text{ minus }\eqref{eq_asym_ACND_in_taylor_f}]\quad& \in\quad & f''(\theta_0(\rho))\hat{u}^C_j + R^C_{j-1,(\beta,\gamma)},\\
\text{For }i,j=1,...,k:&\quad[\Oc(\varepsilon^j)\text{ in }\eqref{eq_asym_ACND_cp_taylor2}\text{ for }g_1=g_1(h_i)]\quad& \in\quad & P^C_i(H),\\
\text{For }j=0,...,k:&\quad[\Oc(\varepsilon^j)\text{ in }\eqref{eq_asym_ACND_cp_taylor3}]\quad& \in\quad & P^C_j(\rho,H).
\end{alignat*}

With these identities, one can compute that the $\Oc(\varepsilon^{k-1})$-order in the expansion for the bulk equation \eqref{eq_asym_ACND_cp} is zero if 
\begin{align}\label{eq_asym_ACND_cp_bulkk}
[-\Delta^{\sigma,t}+f''(\theta_0)]\hat{u}^C_{k+1}=G_k(\rho,H,\sigma,t),
\end{align}
where $G_k \in R^C_{k,(\beta,\gamma)}$. See \cite{MoserDiss}, Section 5.1.2.1.2 for the computation in the 2D-case.

\paragraph{Contact Point Expansion: The Neumann Boundary Condition}\label{sec_asym_ACND_cp_neum}
The boundary conditions complementing \eqref{eq_asym_ACND_cp_bulk1} and \eqref{eq_asym_ACND_cp_bulkk} will be obtained from the expansion of the Neumann boundary condition \eqref{eq_AC2} for $u_\varepsilon=u^I_\varepsilon+u^C_\varepsilon$, i.e.~$N_{\partial\Omega}\cdot\nabla(u^I_\varepsilon+u^C_\varepsilon)|_{\partial Q_T}=0$. Lemma \ref{th_asym_ACND_in_trafo} and Lemma \ref{th_asym_ACND_cp_trafo} yield on $\overline{\Gamma^C(2\delta,2\mu_1)}$
\begin{align*}
\nabla u^I_\varepsilon|_{(x,t)}&=\partial_\rho\hat{u}^I_\varepsilon|_{(\rho,s,t)}\left[\frac{\nabla r|_{(x,t)}}{\varepsilon}-(D_xs)^\top|_{(x,t)}\nabla_\Sigma h_\varepsilon|_{(s,t)}\right]+(D_xs)^\top|_{(x,t)}\nabla_\Sigma\hat{u}^I_\varepsilon|_{(\rho,s,t)},\\
\nabla u^C_\varepsilon|_{(x,t)}&=\partial_\rho\hat{u}^C_\varepsilon
|_{(\rho,H,\sigma,t)}\left[\frac{\nabla r|_{(x,t)}}{\varepsilon}-(D_xs)^\top|_{(x,t)}\nabla_\Sigma h_\varepsilon|_{(s,t)}\right]+\frac{\nabla b|_{(x,t)}}{\varepsilon}\partial_H\hat{u}^C_\varepsilon|_{(\rho,H,\sigma,t)}\\
&\phantom{=}+(D_x\sigma)^\top|_{(x,t)}\nabla_{\partial\Sigma}\hat{u}^C_\varepsilon|_{(\rho,H,\sigma,t)},
\end{align*}
where $\rho=\rho_\varepsilon(x,t), H=H_\varepsilon(x,t)$, $s=s(x,t)$ and $\sigma=\sigma(x,t)$. We consider the points $x=X(r,\sigma,t)$ for $(r,\sigma,t)\in[-2\delta,2\delta]\times\partial\Sigma\times[0,T]$, in particular $H=0$ and $s=\sigma$. 

For $g:\overline{\Gamma(2\delta)}\cap\partial Q_T\rightarrow\R$ smooth we use a Taylor expansion similar to \eqref{eq_asym_ACND_in_taylor2}:
\begin{align}\label{eq_asym_ACND_cp_taylor4}
\tilde{g}(r,\sigma,t):=g(\overline{X}(r,\sigma,t))=\sum_{k=0}^{M+2}\frac{\partial_r^k\tilde{g}|_{(0,\sigma,t)}}{k!}r^k+\Oc(|r|^{M+3}).
\end{align}
Then we insert $r=\varepsilon(\rho_\varepsilon+h_\varepsilon|_{(\sigma,t)})$ and replace $\rho_\varepsilon$ by an arbitrary $\rho\in\R$. Analogous to the inner expansion, the terms in the $\varepsilon$-expansion of \eqref{eq_asym_ACND_cp_taylor4} are for $k=2,...,M$:
\begin{align*}
\Oc(1)&:\quad g|_{\overline{X}_0(\sigma,t)},\\
\Oc(\varepsilon)&:\quad (\rho+h_1|_{(\sigma,t)})\partial_r\tilde{g}|_{(0,\sigma,t)},\\
\Oc(\varepsilon^k)&:\quad h_k|_{(\sigma,t)}\partial_r\tilde{g}|_{(0,\sigma,t)} +[\text{a polynomial in }(\rho,h_1|_{(\sigma,t)},...,h_{k-1}|_{(\sigma,t)})\text{ of order }\leq k,\\
&\phantom{:\quad h_k|_{(\sigma,t)}\partial_r\tilde{g}|_{(0,\sigma,t)} +[} \text{ where coefficients are multiples of }(\partial_r^2\tilde{g},...,\partial_r^k\tilde{g})|_{(0,\sigma,t)}].
\end{align*}
The latter also holds for $k=M+1, M+2$ since $h_{M+1}=h_{M+2}=0$ by definition. The other explicit terms in \eqref{eq_asym_ACND_cp_taylor4} are bounded by $\varepsilon^{M+3}$ times some polynomial in $|\rho|$ if the $h_j$ are bounded. Later, these terms and the $\Oc(|r|^{M+3})$-term in \eqref{eq_asym_ACND_cp_taylor4} for each choice of $g$ will be multiplied with terms that decay exponentially in $|\rho|$. Then these remainder terms will become $\Oc(\varepsilon^{M+3})$.

In the following we expand the Neumann boundary condition into $\varepsilon$-series with coefficients in $(\rho_\varepsilon,\sigma,t)$ up to the order $\Oc(\varepsilon^{M-1})$.

\subparagraph{Neumann Boundary Condition: $\Oc(\varepsilon^{-1})$}\label{sec_asym_ACND_cp_neum_m1}
For the lowest order $\Oc(\frac{1}{\varepsilon})$ we obtain the equation $(N_{\partial\Omega}\cdot\nabla r)|_{\overline{X}_0(\sigma,t)}\theta_0'(\rho)=0$. This is valid due to the $90$°-contact angle condition.

\subparagraph{Neumann Boundary Condition: $\Oc(\varepsilon^0)$}\label{sec_asym_ACND_cp_neum_0}
The order $\Oc(1)$ is zero if we require
\begin{align}\label{eq_asym_ACND_cp_bc1}
&(N_{\partial\Omega}\cdot\nabla b)|_{\overline{X}_0(\sigma,t)}\partial_H\hat{u}^C_1|_{H=0}(\rho,\sigma,t)=g_1(\rho,\sigma,t),\\ \notag
g_1|_{(\rho,\sigma,t)}&:=\theta_0'[(D_xsN_{\partial\Omega})^\top|_{\overline{X}_0(\sigma,t)}\nabla_\Sigma h_1|_{(\sigma,t)}-\partial_r((N_{\partial\Omega}\!\cdot\!\nabla r)\circ\overline{X})|_{(0,\sigma,t)}h_1|_{(\sigma,t)}]+\tilde{g}_0|_{(\rho,\sigma,t)},
\end{align}
where $\tilde{g}_0(\rho,\sigma,t):=-\rho \theta_0'(\rho)\partial_r((N_{\partial\Omega}\cdot\nabla r)\circ\overline{X})|_{(0,\sigma,t)}$. For $j=1,...,M$ let
\begin{align}\label{eq_asym_ACND_cp_ubar}
\overline{u}^C_j:\overline{\R^2_+}\times\partial\Sigma\times[0,T]\rightarrow\R:(\rho,H,\sigma,t)\mapsto\hat{u}^C_j(\rho,|\nabla b|(\overline{X}_0(\sigma,t))H,\sigma,t).
\end{align}
Note that $|\nabla b|_{\overline{X}_0(\sigma,t)}|\geq c>0$ and $|N_{\partial\Omega}\cdot\nabla b|_{\overline{X}_0(\sigma,t)}|\geq c>0$ for all $(\sigma,t)\in\partial\Sigma\times[0,T]$ because of Theorem \ref{th_coordND}. Hence equations \eqref{eq_asym_ACND_cp_bulk1} and \eqref{eq_asym_ACND_cp_bc1} for $\hat{u}^C_1$ are equivalent to
\begin{align}\label{eq_asym_ACND_cp_uC1}
[-\Delta+f''(\theta_0(\rho))]\overline{u}^C_1&=0,\\
-\partial_{H}\overline{u}^C_1|_{H=0}&=(|\nabla b|/N_{\partial\Omega}\cdot\nabla b)|_{\overline{X}_0(\sigma,t)} g_1(\rho,\sigma,t).\label{eq_asym_ACND_cp_uC2}
\end{align}
The solvability condition \eqref{eq_hp_comp} belonging to \eqref{eq_asym_ACND_cp_uC1}-\eqref{eq_asym_ACND_cp_uC2} is
\[
(|\nabla b|/N_{\partial\Omega}\cdot\nabla b)|_{\overline{X}_0(\sigma,t)}
\int_\R g_1(\rho,\sigma,t)\theta_0'(\rho)\,d\rho=0.
\]
This yields a linear boundary condition for $h_1$:
\begin{align}\label{eq_asym_ACND_cp_h1}
b_1(\sigma,t)\cdot\nabla_\Sigma h_1|_{(\sigma,t)}+ b_0(\sigma,t)h_1|_{(\sigma,t)}=f_0(\sigma,t)\quad\text{ for }(\sigma,t)\in\partial\Sigma\times[0,T],
\end{align}
where 
\begin{align*}
b_1(\sigma,t) &:=
(|\nabla b|/N_{\partial\Omega}\cdot\nabla b)|_{\overline{X}_0(\sigma,t)} (D_xsN_{\partial\Omega})|_{\overline{X}_0(\sigma,t)}\in\R^N,\\
b_0(\sigma,t) &:= 
-(|\nabla b|/N_{\partial\Omega}\cdot\nabla b)|_{\overline{X}_0(\sigma,t)} 
\partial_r((N_{\partial\Omega}\cdot\nabla r)\circ\overline{X})|_{(0,\sigma,t)}\in\R,\\
f_0(\sigma,t) &:= (|\nabla b|/N_{\partial\Omega}\cdot\nabla b)|_{\overline{X}_0(\sigma,t)} \int_\R\theta_0'(\rho)\tilde{g}_0(\rho,\sigma,t)\,d\rho/\|\theta_0'\|_{L^2(\R)}^2\in\R
\end{align*}
are smooth in $(\sigma,t)\in\partial\Sigma\times[0,T]$. Together with the linear parabolic equation \eqref{eq_asym_ACND_in_h1} for $h_1$ from Subsection \ref{sec_asym_ACND_in_0}, we obtain a time-dependent linear parabolic boundary value problem for $h_1$, where the initial value $h_1|_{t=0}$ is not prescribed yet. 

\begin{Remark}\label{th_asym_ACND_feven_rem2}\upshape
	If $f$ is even, then so is $\theta_0'$ and thus $f_0=0$. Therefore the boundary condition \eqref{eq_asym_ACND_cp_h1} for $h_1$ is homogeneous and because of Remark \ref{th_asym_ACND_feven_rem1} we can choose $h_1=0$ in this case.
\end{Remark}

Now we solve \eqref{eq_asym_ACND_in_h1} together with \eqref{eq_asym_ACND_cp_h1}. We show that the principal part in \eqref{eq_asym_ACND_in_h1} satisfies a suitable ellipticity condition and that \eqref{eq_asym_ACND_cp_h1} fulfils a non-tangentiality condition in local coordinates. Based on this one can show maximal regularity results in Hölder spaces (similar to Lunardi, Sinestrari, von Wahl \cite{LunardiSvW}) and Sobolev spaces (similar to Prüss, Simonett \cite{PruessSimonett}, Chapter 6.1-6.4 and Denk, Hieber, Prüss \cite{DenkHieberPrüss}) with typical localization procedures. This always involves compatibility conditions for the initial value. In our case these can be avoided via extension arguments similar to \cite{AbelsMoser}, Section 3.2.2. All these arguments involve many technical computations, but are in principle well-known. Therefore we refrain from going into details.

\begin{proof}[The Ellipticity Condition]
	Let $y:U\subseteq\Sigma\rightarrow V\subseteq\overline{\R^{N-1}_+}$ be a chart. Moreover, we denote with $(g_{ij})_{i,j=1}^{N-1}:V\rightarrow\R^{(N-1)\times(N-1)}$ the local representation of the Euclidean metric on $\Sigma$ and let $(g^{kl})_{k,l=1}^{N-1}$ denote its pointwise inverse. Then one can show with local representations that for $h:U\rightarrow\R$ sufficiently smooth and $i,j=1,...,N$ it holds \phantom{\qedhere}
	\begin{align*}
	[(\nabla_\Sigma)_i(\nabla_\Sigma)_jh]\circ y^{-1} &= \sum_{p,q,k,l=1}^{N-1} g^{pq}g^{kl}\partial_{v_q}(y^{-1})_i \partial_{v_l}(y^{-1})_j \partial_{v_p}\partial_{v_k}(h\circ y^{-1})\\
	&+g^{pq}\partial_{v_q}(y^{-1})_i \left[\partial_{v_p}g^{kl}\partial_{v_l}(y^{-1})_j+g^{kl}\partial_{v_p}\partial_{v_k}(y^{-1})_j\right] \partial_{v_k}(h\circ y^{-1}).
	\end{align*}
	Therefore the principal part of 
	$\sum_{i,j=1}^N\nabla s_i\cdot\nabla s_j|_{\overline{X}_0(.,t)}(\nabla_\Sigma)_i(\nabla_\Sigma)_j$ for fixed $t\in[0,T]$ in the local coordinates with respect to $y$ is given by
	\[
	\sum_{p,k=1}^{N-1} A_{p,k} \partial_{v_p}\partial_{v_k}, \quad A_{p,k}:=\sum_{i,j=1}^N \nabla s_i\cdot\nabla s_j|_{\overline{X}_0(y^{-1},t)} \sum_{q,l=1}^{N-1} g^{pq} g^{kl} \partial_{v_q}(y^{-1})_i \partial_{v_l}(y^{-1})_j.
	\]
	For any appropriate ellipticity notion, it is enough to prove that $A:=(A_{p,k})_{p,k=1}^{N-1}$ is uniformly positive definite on compact subsets of $V$. First we show that $A$ is pointwise positive definite. Therefore we represent $\partial_{x_n}s|_{\overline{X}_0(y^{-1}(.),t)}=\sum_{\mu=1}^{N-1}\tilde{A}_{n,\mu}|_{(.,t)}\partial_{v_\mu}(y^{-1})$ on $V$ for $n=1,...,N$ and we denote $\tilde{A}:=(\tilde{A}_{n,\mu}|_{(.,t)})_{n,\mu=1}^{N,N-1}:V\rightarrow\R^{N\times(N-1)}$. Then it holds for all $p,k=1,...,N-1$
	\begin{align*}
	A_{p,k}
	&=\sum_{i,j,n=1}^N\sum_{\mu,\nu,q,l=1}^{N-1} \tilde{A}_{n,\mu}\tilde{A}_{n,\nu} g^{pq} g^{kl} \partial_{v_\mu}(y^{-1})_i\partial_{v_q}(y^{-1})_i\partial_{v_\nu}(y^{-1})_j\partial_{v_l}(y^{-1})_j\\
	&=\sum_{n=1}^N\sum_{\mu,\nu,q,l=1}^{N-1}
	\tilde{A}_{n,\mu}\tilde{A}_{n,\nu} g^{pq} g^{kl} g_{\mu q}g_{\nu l}=\sum_{n=1}^N\sum_{\mu,\nu=1}^{N-1}\tilde{A}_{n,\mu}\tilde{A}_{n,\nu}\delta^p_\mu\delta^k_\nu=(\tilde{A}^\top\tilde{A})_{p,k}.
	\end{align*}
	Moreover, Theorem \ref{th_coordND} yields that $(\partial_{x_n}s|_{\overline{X}_0(s,t)})_{n=1}^{N}$ generate $T_s\Sigma$ for all $s\in\Sigma$. Hence the matrix $\tilde{A}|_v\in\R^{N\times(N-1)}$ is injective for all $v\in V$. Finally, this yields
	\[
	w^\top Aw=|\tilde{A}w|^2>0\quad\text{ for all }\quad w\in\R^{N-1}.
	\]
	Therefore $A$ is pointwise positive definite. Since it is equivalent to prove the estimate for vectors on the sphere in $\R^{N-1}$, by compactness $A$ is uniform positive definite on compact subsets of $V$.
	Altogether, the principal part in \eqref{eq_asym_ACND_in_h1} satisfies a suitable ellipticity condition.\end{proof}

\begin{proof}[The Non-Tangentiality Condition] 
	Let $y:U\subseteq\Sigma\rightarrow V\subseteq\overline{\R^{N-1}_+}$ be a chart with $U\cap \partial\Sigma\neq\emptyset$. Then for $h:U\rightarrow\R$ sufficiently smooth and fixed $t\in[0,T]$ it holds:\phantom{\qedhere}
	\[
	b_1(y^{-1},t)\cdot(\nabla_\Sigma h\circ y^{-1}) = \nabla_v(h\circ y^{-1})\cdot\left[\sum_{l=1}^{N-1}g^{kl}b_1(y^{-1},t)\cdot\partial_{v_l}(y^{-1})\right]_{k=1}^{N-1}
	\]
	on $V\cap(\R^{N-2}\times\{0\})$, where $b_1$ is defined below \eqref{eq_asym_ACND_cp_h1}. Therefore the transformed boundary condition in the local coordinates with respect to $y$ satisfies a non-tangentiality condition if $\sum_{l=1}^{N-1}g^{N-1,l}b_1(y^{-1},t)\cdot\partial_{v_l}(y^{-1})\neq 0$ on the set $V\cap(\R^{N-2}\times\{0\})$. On the latter set we use the representation $b_1(y^{-1},t)=\sum_{n=1}^{N-1} B_n(y^{-1},t)\partial_{v_n}(y^{-1})$. Then the condition reads as $B_{N-1}(\sigma,t)\neq 0$ for  all $(\sigma,t)\in(U\cap\partial\Sigma)\times[0,T]$. However, Theorem \ref{th_coordND} yields $b_1(\sigma,t)\cdot\vec{n}_{\partial\Sigma}(\sigma)\neq 0$ for all $(\sigma,t)\in \partial\Sigma\times[0,T]$. Due to the properties of $y$, we know that $(\partial_{v_l}(y^{-1})|_{y(\sigma)})_{l=1}^{N-1}$ form a basis of $T_\sigma\Sigma$ and the first $N-2$ components are a basis of $T_\sigma\partial\Sigma$ for all $\sigma\in U\cap\partial\Sigma$. Because $\vec{n}_{\partial\Sigma}$ is orthogonal to $T_\sigma\partial\Sigma$, it necessarily holds $B_{N-1}(\sigma,t)\neq 0$ for all $(\sigma,t)\in(U\cap\partial\Sigma)\times[0,T]$. Therefore the boundary condition \eqref{eq_asym_ACND_cp_h1} satisfies a non-tangentiality condition in local coordinates.\end{proof}

Finally, we obtain a smooth solution $h_1$ to \eqref{eq_asym_ACND_in_h1} and \eqref{eq_asym_ACND_cp_h1}. Therefore $\hat{u}^I_2$ (solving \eqref{eq_asym_ACND_in_u2}) is determined from Section \ref{sec_asym_ACND_in_0} and it holds $\hat{u}^I_2\in R^I_{1,(\beta_1)}$ for every $\beta_1\in(0,\min\{\sqrt{f''(\pm 1)}\})$. In particular the first inner order is computed. Moreover, it holds $g_1\in \hat{R}^I_{1,(\beta_1)}$ for all $\beta_1$ as above because of Theorem \ref{th_theta_0}. Hence with Theorem \ref{th_hp_exp3} (applied in local coordinates for $\partial\Sigma$) there is a unique smooth solution $\overline{u}^C_1$ to \eqref{eq_asym_ACND_cp_uC1}-\eqref{eq_asym_ACND_cp_uC2} and we get decay properties. By compactness and Remark \ref{th_nabla_sigma_equiv} with $\partial\Sigma$ instead of $\Sigma$ we obtain the decay property $\overline{u}^C_1\in R^C_{1,(\beta,\gamma)}$ for all $\beta\in(0,\min\{\overline{\beta}(\gamma),\sqrt{f''(\pm1)}\})$, $\gamma\in(0,\overline{\gamma})$, where $\overline{\beta}, \overline{\gamma}$ are as in Theorem \ref{th_hp_exp3}. Altogether the first order is determined.

\subparagraph{Neumann Boundary Condition: $\Oc(\varepsilon^k)$ and Induction}\label{sec_asym_ACND_cp_neum_k}
For $k=1,...,M-1$ we consider $\Oc(\varepsilon^k)$ in \eqref{eq_AC2} for $u_\varepsilon=u^I_\varepsilon+u^C_\varepsilon$ and derive equations for the ($k+1$)-th order. We assume the following induction hypothesis: suppose that the $j$-th order already has been constructed for all $j=0,...,k$, that it is smooth and admits the decay $\hat{u}^I_{j+1}\in R^I_{j,(\beta_1)}$ for all $\beta_1\in(0,\min\{\sqrt{f''(\pm1)}\})$ as well as $\hat{u}^C_j\in R^C_{j,(\beta,\gamma)}$ for every $\beta\in(0,\min\{\overline{\beta}(\gamma),\sqrt{f''(\pm1)}\})$, $\gamma\in(0,\overline{\gamma})$, where $\overline{\beta}, \overline{\gamma}$ are as in Theorem \ref{th_hp_exp3}. The assumption holds for $k=1$ by Section \ref{sec_asym_ACND_cp_neum_0}.

With the notation as in Definition \ref{th_asym_ACND_cp_def} it holds for $j=1,...,k+1$:
\begin{align*}
[\Oc(\varepsilon^j)\text{ in }\eqref{eq_asym_ACND_cp_taylor4}]\quad\in 
\partial_r\tilde{g}|_{(0,\sigma,t)} h_j|_{(\sigma,t)} + P^C_{j-1}(\rho)\quad 
[\subseteq P^C_j(\rho)\text{, if }j\leq k].
\end{align*}
With this identity one can show (see \cite{MoserDiss}, Section 5.1.2.2.3 for the calculation in the 2D-case) that the $\Oc(\varepsilon^k)$-order in \eqref{eq_AC2} for $u_\varepsilon=u^I_\varepsilon + u^C_\varepsilon$ vanishes if
\begin{align}\label{eq_asym_ACND_cp_bck}
&(N_{\partial\Omega}\cdot\nabla b)|_{\overline{X}_0(\sigma,t)}\partial_H\hat{u}^C_{k+1}|_{H=0}(\rho,\sigma,t)=g_{k+1}(\rho,\sigma,t),\\ \notag
g_{k+1}|_{(\rho,\sigma,t)}&:=\theta_0'(\rho)[(D_xsN_{\partial\Omega})^\top|_{\overline{X}_0(\sigma,t)}\nabla_\Sigma h_{k+1}|_{(\sigma,t)}-\partial_r((N_{\partial\Omega}\cdot\nabla r)\circ\overline{X})|_{(0,\sigma,t)}h_{k+1}|_{(\sigma,t)}]\\ \notag
&+\tilde{g}_k(\rho,\sigma,t),\notag
\end{align}
where $\tilde{g}_k\in R^C_{k,(\beta)}$ and therefore $g_{k+1}\in\hat{R}^I_{k+1,(\beta)}+R^C_{k,(\beta)}$, if $h_{k+1}$ is smooth. 

As in the last Section \ref{sec_asym_ACND_cp_neum_0}, the equations \eqref{eq_asym_ACND_cp_bulkk}, \eqref{eq_asym_ACND_cp_bck} are equivalent to
\begin{align}\label{eq_asym_ACND_cp_uCk_1}
[-\Delta+f''(\theta_0(\rho))]\overline{u}^C_{k+1}&=\overline{G}_k(\rho,H,\sigma,t),\\
-\partial_H\overline{u}^C_{k+1}|_{H=0}&=(|\nabla b|/N_{\partial\Omega}\cdot\nabla b)|_{\overline{X}_0(\sigma,t)} g_{k+1}(\rho,H,\sigma,t),\label{eq_asym_ACND_cp_uCk_2}
\end{align}
where $\overline{u}^C_{k+1}$ was defined in \eqref{eq_asym_ACND_cp_ubar} and $\overline{G}_k$ is defined analogously with the $G_k\in R^C_{k,(\beta,\gamma)}$ from Section \ref{sec_asym_ACND_cp_bulk_km1}. The corresponding compatibility condition \eqref{eq_hp_comp}, namely
\[
\int_{\R^2_+}\overline{G}_k(\rho,H,\sigma,t)\theta_0'(\rho)\,d(\rho,H)
+(|\nabla b|/N_{\partial\Omega}\cdot\nabla b)|_{\overline{X}_0(\sigma,t)}\int_\R g_{k+1}(\rho,\sigma,t)\theta_0'(\rho)\,d\rho=0,
\]
yields a linear boundary condition for $h_{k+1}$:
\begin{align}\label{eq_asym_ACND_cp_hk}
b_1(\sigma,t)\cdot\nabla_\Sigma h_{k+1}|_{(\sigma,t)}+ b_0(\sigma,t)h_{k+1}|_{(\sigma,t)}=f^B_k(\sigma,t)\quad\text{ for }(\sigma,t)\in\partial\Sigma\times[0,T],
\end{align}
where $b_0,b_1$ are defined below \eqref{eq_asym_ACND_cp_h1} and 
\[
f^B_k(\sigma,t) := -\frac{1}{\|\theta_0'\|_{L^2(\R)}^2}
\left[
\int_{\R^2_+}\overline{G}_k|_{(\rho,H,\sigma,t)}\theta_0'(\rho)\,d\rho +\left.\frac{|\nabla b|}{N_{\partial\Omega}\cdot\nabla b}\right|_{\overline{X}_0(\sigma,t)}\int_\R \tilde{g}_k|_{(\rho,\sigma,t)}\theta_0'(\rho)\,d\rho
\right]
\] 
is smooth in $(\sigma,t)\in\partial\Sigma\times[0,T]$.

Because of the computations in the last Section \ref{sec_asym_ACND_cp_neum_0} one can solve \eqref{eq_asym_ACND_in_hk} from Section \ref{sec_asym_ACND_in_k} together with \eqref{eq_asym_ACND_cp_hk} and get a smooth solution $h_{k+1}$. Therefore Section \ref{sec_asym_ACND_in_k} yields $\hat{u}^I_{k+2}$ (solving \eqref{eq_asym_ACND_in_uk}) with $\hat{u}^I_{k+2}\in R^I_{k+1,(\beta_1)}$ for all $\beta_1\in(0,\min\{\sqrt{f''(\pm 1)}\})$. In particular the $(k+1)$-th inner order is computed and it holds $G_k\in R^C_{k,(\beta,\gamma)}$ as well as $g_{k+1}\in \hat{R}^I_{k+1,(\beta)}+R^C_{k,(\beta)}$ for all $\beta\in(0,\min\{\overline{\beta}(\gamma),\sqrt{f''(\pm1)}\})$, $\gamma\in(0,\overline{\gamma})$, where $\overline{\beta}, \overline{\gamma}$ are as in Theorem \ref{th_hp_exp3}. As in the last Section \ref{sec_asym_ACND_cp_neum_0} we obtain a unique smooth solution $\overline{u}^C_{k+1}$ to \eqref{eq_asym_ACND_cp_uCk_1}-\eqref{eq_asym_ACND_cp_uCk_2} with the decay $\hat{u}^C_{k+1}\in R^C_{k,(\beta,\gamma)}$ for all $(\beta,\gamma)$ as above. Altogether, the $(k+1)$-th order is constructed.

Finally, by induction the $j$-th order is determined for all $j=0,...,M$, the $h_j$ are smooth and $\hat{u}^I_{j+1}\in R^I_{j,(\beta_1)}$ for all $\beta_1\in(0,\min\{\sqrt{f''(\pm1)}\})$ as well as $\hat{u}^C_j\in R^C_{j,(\beta,\gamma)}$ for every $\beta\in(0,\min\{\overline{\beta}(\gamma),\sqrt{f''(\pm1)}\})$, $\gamma\in(0,\overline{\gamma})$, where $\overline{\beta}, \overline{\gamma}$ are as in Theorem \ref{th_hp_exp3}.

\subsubsection{The Approximate Solution for (AC) in ND}\label{sec_asym_ACND_uA}
Let $N\geq2$ and $\Gamma:=(\Gamma_t)_{t\in[0,T]}$ be as in Section \ref{sec_coord_surface_requ} with contact angle $\alpha=\frac{\pi}{2}$ and a solution to \eqref{MCF} in $\Omega$. Moreover, let $\delta>0$ be such that the assertions of Theorem \ref{th_coordND} hold for $2\delta$ instead of $\delta$ and let $r,s,b,\sigma, \mu_1$ be as in the theorem. Furthermore, let $M\in\N$, $M\geq 2$ be as in the beginning of Section \ref{sec_asym_ACND}. Let $\eta:\R\rightarrow[0,1]$ be smooth with $\eta(r)=1$ for $|r|\leq 1$ and $\eta(r)=0$ for $|r|\geq 2$. Then for $\varepsilon>0$ we set
\begin{align*}
u^A_\varepsilon:=
\begin{cases}
\eta(\frac{r}{\delta})\left[u^I_\varepsilon+u^C_\varepsilon \eta(\frac{b}{\mu_1})\right]+(1-\eta(\frac{r}{\delta}))\textup{sign}(r)&\quad\text{ in }\overline{\Gamma(2\delta)},\\
\pm 1&\quad\text{ in }Q_T^\pm\setminus\Gamma(2\delta),
\end{cases}
\end{align*}
where $u^I_\varepsilon$ and $u^C_\varepsilon$ were constructed in Sections \ref{sec_asym_ACND_in} and \ref{sec_asym_ACND_cp}. This yields an approximate solution for \eqref{eq_AC1}-\eqref{eq_AC3} in the following sense:
\begin{Lemma}\label{th_asym_ACND_uA}
	The function $u^A_\varepsilon$ is smooth, uniformly bounded with respect to $x,t,\varepsilon$ and for the remainder 
	$r^A_\varepsilon := 
	\partial_t u^A_\varepsilon-\Delta u^A_\varepsilon +\frac{1}{\varepsilon^2}f'(u^A_\varepsilon)$ in \eqref{eq_AC1} and $s^A_\varepsilon:=\partial_{N_{\partial\Omega}} u^A_\varepsilon$ in \eqref{eq_AC2} it holds
	\begin{alignat*}{2}
	|r^A_\varepsilon|&\leq C(\varepsilon^M e^{-c|\rho_\varepsilon|}+\varepsilon^{M+1})&\quad &\text{ in }\Gamma(2\delta,\mu_1),\\
	|r^A_\varepsilon|&
	\leq 
	C(\varepsilon^{M-1} e^{-c(|\rho_\varepsilon|+H_\varepsilon)}
	+\varepsilon^M e^{-c|\rho_\varepsilon|}+\varepsilon^{M+1})
	&\quad &\text{ in }\Gamma^C(2\delta,2\mu_1),\\
	r^A_\varepsilon &
	=0&\quad&\text{ in }Q_T\setminus\Gamma(2\delta),\\
	|s^A_\varepsilon|&
	\leq C\varepsilon^M e^{-c|\rho_\varepsilon|}&\quad&\text{ on }\partial Q_T\cap\Gamma(2\delta),\\
	s^A_\varepsilon &
	=0&\quad&\text{ on }\partial Q_T\setminus\Gamma(2\delta)
	\end{alignat*}
	for $\varepsilon>0$ small and some $c,C>0$. Here $\rho_\varepsilon$ is defined in \eqref{eq_asym_ACND_rho} and $H_\varepsilon=\frac{b}{\varepsilon}$.
\end{Lemma}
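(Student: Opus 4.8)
The strategy is the standard verification that the constructed $u^A_\varepsilon$ solves \eqref{eq_AC1}--\eqref{eq_AC3} up to the claimed remainder orders, exploiting that the asymptotic expansion in Sections~\ref{sec_asym_ACND_in}--\ref{sec_asym_ACND_cp} was set up precisely to kill the low-order terms. Smoothness of $u^A_\varepsilon$ is immediate: on $\overline{\Gamma(2\delta)}$ the functions $u^I_\varepsilon$, $u^C_\varepsilon$ are compositions of the smooth profiles $\hat u^I_j,\hat u^C_j$ with the smooth coordinate maps $\rho_\varepsilon,H_\varepsilon,s,\sigma$ from Theorem~\ref{th_coordND}, the cutoffs $\eta(r/\delta),\eta(b/\mu_1)$ are smooth, and the pieces match smoothly across $\{|r|=2\delta\}$ and (for the $u^C_\varepsilon$-part) across $\{b=2\mu_1\}$ since $u^C_\varepsilon\eta(b/\mu_1)$ vanishes together with all derivatives there. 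Uniform boundedness in $x,t,\varepsilon$ follows because $\theta_0$ is bounded (Theorem~\ref{th_theta_0}), each $\hat u^I_{j+1}\in R^I_{j,(\beta)}$ and each $\hat u^C_j\in R^C_{j,(\beta,\gamma)}$ is bounded with $\varepsilon^j$ prefactors, and outside $\Gamma(2\delta)$ we have $u^A_\varepsilon=\pm1$.

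\textbf{The bulk estimate.} I would split $\Gamma(2\delta)$ into the three regions $Q_T\setminus\Gamma(2\delta)$, $\Gamma(2\delta,\mu_1)$ (away from the contact set), and $\Gamma^C(2\delta,2\mu_1)$. In $Q_T\setminus\Gamma(2\delta)$ the function is constant $\pm1$ and $f'(\pm1)=0$, so $r^A_\varepsilon=0$. In $\Gamma(2\delta,\mu_1)$ one has $b\ge\mu_1$ so $\eta(b/\mu_1)=0$ and $u^A_\varepsilon=\eta(r/\delta)u^I_\varepsilon+(1-\eta(r/\delta))\operatorname{sign}(r)$; on the subregion $|r|\le\delta$ this is exactly $u^I_\varepsilon$ and, by the inner expansion, all orders $\Oc(\varepsilon^{-2}),\dots,\Oc(\varepsilon^{M-1})$ of $\partial_t u^I_\varepsilon-\Delta u^I_\varepsilon+\varepsilon^{-2}f'(u^I_\varepsilon)$ were arranged to vanish (via the ODEs \eqref{eq_theta_0_ODE}, \eqref{eq_asym_ACND_in_u2}, \eqref{eq_asym_ACND_in_uk} for the $\hat u^I_j$ and \eqref{eq_asym_ACND_in_h1}, \eqref{eq_asym_ACND_in_hk} for the $h_j$); the Taylor remainders in \eqref{eq_asym_ACND_in_taylor_f}, \eqref{eq_asym_ACND_in_taylor2} are $\Oc(\varepsilon^{M+3})$ times polynomials in $|\rho_\varepsilon|$ which, multiplied by the exponentially decaying factors $\theta_0^{(k)}$ or $\hat u^I_j\in R^I_{\cdot,(\beta)}$, yield the bound $\varepsilon^M e^{-c|\rho_\varepsilon|}+\varepsilon^{M+1}$ (the $\varepsilon^{M+1}$ accommodating the error from dropping $u^I_{M+2}=0$ and related truncations). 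On the overlap $\delta\le|r|\le2\delta$ one uses additionally that $u^I_\varepsilon\to\operatorname{sign}(r)$ exponentially fast in $\rho_\varepsilon=r/\varepsilon-h_\varepsilon$, so the cutoff terms $\eta'(r/\delta)\nabla r\cdot\nabla(u^I_\varepsilon-\operatorname{sign} r)$ etc.\ are $\Oc(e^{-c/\varepsilon})$, absorbed into $\varepsilon^{M+1}$. In $\Gamma^C(2\delta,2\mu_1)$ one writes $r^A_\varepsilon$ using $u^A_\varepsilon=u^I_\varepsilon+u^C_\varepsilon$ where $b\le\mu_1$ (so $\eta(b/\mu_1)=1$), decomposes $r^A_\varepsilon$ into the inner part (already $\Oc(\varepsilon^M e^{-c|\rho_\varepsilon|}+\varepsilon^{M+1})$ by the previous step) plus the bulk equation \eqref{eq_asym_ACND_cp}, whose orders $\Oc(\varepsilon^{-1}),\dots,\Oc(\varepsilon^{M-2})$ were cancelled by \eqref{eq_asym_ACND_cp_bulk1}, \eqref{eq_asym_ACND_cp_bulkk}; the $\hat u^C_j\in R^C_{j,(\beta,\gamma)}$ decay in $(\rho_\varepsilon,H_\varepsilon)$ gives the leading $\varepsilon^{M-1}e^{-c(|\rho_\varepsilon|+H_\varepsilon)}$, and again the Taylor remainders from \eqref{eq_asym_ACND_cp_taylor_f}--\eqref{eq_asym_ACND_cp_taylor3} plus the $\eta(b/\mu_1)$-cutoff contributions (which live near $b\sim\mu_1$, i.e.\ $H_\varepsilon\sim\mu_1/\varepsilon$, hence are exponentially small) and truncation errors contribute $\varepsilon^M e^{-c|\rho_\varepsilon|}+\varepsilon^{M+1}$.

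\textbf{The Neumann estimate.} On $\partial Q_T\setminus\Gamma(2\delta)$ we have $u^A_\varepsilon=\pm1$, so $s^A_\varepsilon=\partial_{N_{\partial\Omega}}u^A_\varepsilon=0$. On $\partial Q_T\cap\Gamma(2\delta)$ one has $s\in\partial\Sigma$, hence $b=0$ and $\eta(b/\mu_1)=1$, so $u^A_\varepsilon=\eta(r/\delta)(u^I_\varepsilon+u^C_\varepsilon)+(1-\eta(r/\delta))\operatorname{sign}(r)$ there; computing $N_{\partial\Omega}\cdot\nabla u^A_\varepsilon$ with Lemmas~\ref{th_asym_ACND_in_trafo}, \ref{th_asym_ACND_cp_trafo} and expanding in $\varepsilon$, the orders $\Oc(\varepsilon^{-1}),\dots,\Oc(\varepsilon^{M-1})$ vanish by the contact-angle condition (the $\Oc(\varepsilon^{-1})$-term), by \eqref{eq_asym_ACND_cp_bc1}/\eqref{eq_asym_ACND_cp_uC2} and \eqref{eq_asym_ACND_cp_bck}/\eqref{eq_asym_ACND_cp_uCk_2} for the $\hat u^C_j$, and by the boundary conditions \eqref{eq_asym_ACND_cp_h1}, \eqref{eq_asym_ACND_cp_hk} for the $h_j$ (whose solvability was exactly the compatibility condition \eqref{eq_hp_comp} for the $\R^2_+$-model problems); the remaining Taylor remainder in \eqref{eq_asym_ACND_cp_taylor4} times exponentially decaying factors and the cutoff term $\eta'(r/\delta)N_{\partial\Omega}\cdot\nabla r\,(u^I_\varepsilon+u^C_\varepsilon-\operatorname{sign} r)$ (exponentially small in $1/\varepsilon$) give $C\varepsilon^M e^{-c|\rho_\varepsilon|}$.

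\textbf{Main obstacle.} The substantive difficulty is purely bookkeeping: confirming that every term produced by the operators $\partial_t-\Delta+\varepsilon^{-2}f'$ and $\partial_{N_{\partial\Omega}}$ acting on the truncated expansions, after substituting all the Taylor expansions \eqref{eq_asym_ACND_in_taylor_f}--\eqref{eq_asym_ACND_in_taylor2}, \eqref{eq_asym_ACND_cp_taylor_f}--\eqref{eq_asym_ACND_cp_taylor3}, \eqref{eq_asym_ACND_cp_taylor4} and exploiting the geometric identities of Theorem~\ref{th_coordND} together with $\Gamma$ solving \eqref{MCF}, is either one of the cancelled orders or lies in an error class $R^I_{\cdot,(\beta)}$, $R^C_{\cdot,(\beta,\gamma)}$, $\hat R^I_{\cdot,(\beta)}$ with the asserted $\varepsilon$-power and exponential decay — in particular tracking that the remainder terms $\tilde R_k$, $G_k$, $\tilde g_k$ etc.\ depend only on \emph{already constructed} lower orders, which is the content of the induction in Section~\ref{sec_asym_ACND_cp_neum_k}. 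I would not reproduce this computation but refer to the analogous 2D computation in \cite{AbelsMoser}, Section~3 and \cite{MoserDiss}, Sections~5.1.1--5.1.2, noting that the extra $\nabla_\Sigma$- and $\nabla_{\partial\Sigma}$-terms appearing in Lemmas~\ref{th_asym_ACND_in_trafo}, \ref{th_asym_ACND_cp_trafo} are of equal or higher $\varepsilon$-order (Remarks~\ref{th_asym_ACND_in_rem}, \ref{th_asym_ACND_cp_rem}) and hence only feed into the remainder classes; the one genuinely new point, already handled above, is the contact-point model problem on $\R^2_+$ with $\partial\Sigma$-parameters, whose solution operators from Theorem~\ref{th_hp_exp3} deliver the required uniform exponential decay.
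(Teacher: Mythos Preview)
Your proposal is correct and follows essentially the same approach as the paper: establish the estimates first for $u^I_\varepsilon$ and for $u^I_\varepsilon+u^C_\varepsilon$ separately using the cancellations built into the expansion, then handle the cutoff transitions via the exponential decay of $u^I_\varepsilon\mp 1$ in $|\rho_\varepsilon|$ and of $\hat u^C_j$ in $H_\varepsilon$. One small slip worth correcting: $\eta(b/\mu_1)$ vanishes only for $b\ge 2\mu_1$, not throughout $\Gamma(2\delta,\mu_1)$ (which includes the strip $b\in[\mu_1,2\mu_1]$); the paper handles this transition zone explicitly by writing $f'(u^I_\varepsilon+\eta(b/\mu_1)u^C_\varepsilon)=(1-\eta)f'(u^I_\varepsilon)+\eta f'(u^I_\varepsilon+u^C_\varepsilon)+\Oc(\eta|u^C_\varepsilon|)$ and using the product rule for $\partial_t-\Delta$, which is precisely the exponential-decay mechanism you already invoke in your $\Gamma^C(2\delta,2\mu_1)$ paragraph.
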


\begin{Remark}\upshape \label{th_asym_ACND_uA_rem}
	The estimate also holds without the $\varepsilon^{M+1}$-term. This follows from a more precise consideration of the remainder terms in the Taylor expansions in Sections \ref{sec_asym_ACND_in}-\ref{sec_asym_ACND_cp} and below. Moreover, one could also lower the number of terms needed in the Taylor expansions a little bit by looking closely at the construction in the previous sections. This would only be of interest if one considers hypersurfaces of class $C^l$ for some large but finite $l$.
\end{Remark}

\begin{proof}[Proof of Lemma \ref{th_asym_ACND_uA}.]
	The third and the last equation are evident from the construction. Moreover, the rigorous Taylor expansions \eqref{eq_asym_ACND_in_taylor_f}-\eqref{eq_asym_ACND_in_taylor2}, \eqref{eq_asym_ACND_cp_taylor_f}-\eqref{eq_asym_ACND_cp_taylor3} and \eqref{eq_asym_ACND_cp_taylor4} together with the remarks for the remainders and Sections \ref{sec_asym_ACND_in}-\ref{sec_asym_ACND_cp} yield
	\begin{alignat*}{2}
	|\partial_tu^I_\varepsilon-\Delta u^I_\varepsilon+\frac{1}{\varepsilon^2}f'(u^I_\varepsilon)|&\leq C(\varepsilon^M e^{-c|\rho_\varepsilon|}+\varepsilon^{M+1}) 
	&\text{ in }\Gamma(2\delta),\\
	\left|\partial_tu^C_\varepsilon-\Delta u^C_\varepsilon+\frac{f'(u^I_\varepsilon+u^C_\varepsilon)-f'(u^I_\varepsilon)}{\varepsilon^2}\right|&\leq C(\varepsilon^{M-1} e^{-c(|\rho_\varepsilon|+H_\varepsilon)}+\varepsilon^{M+1})\hspace{-5.5ex}
	&\text{ in }\Gamma^C(2\delta,2\mu_1),\\
	|\partial_{N_{\partial\Omega}}(u^I_\varepsilon+u^C_\varepsilon)|&\leq C\varepsilon^M e^{-c|\rho_\varepsilon|} &\text{ on }\Gamma^C(2\delta,2\mu_1)\cap\partial Q_T.
	\end{alignat*}
	Therefore the first estimate in the lemma holds for the remainder of $u^I_\varepsilon$ in \eqref{eq_AC1} on $\Gamma(2\delta)$ and the second estimate in the lemma is true for the remainder of $u^I_\varepsilon+u^C_\varepsilon$ in \eqref{eq_AC1} on $\Gamma^C(2\delta,2\mu_1)$. In order to use this for $u^A_\varepsilon$, we have to deal with the mixed terms due to the cutoff-functions. First, we prove that the remainder of 
	$\tilde{u}^A_\varepsilon :=u^I_\varepsilon+u^C_\varepsilon \eta(b/\mu_1)$ in \eqref{eq_AC1} satisfies the first two estimates in the lemma. Note that on $\Gamma(2\delta,1-2\mu_1)$ and $\Gamma^C(2\delta,\mu_1)$ this follows from the above estimates. Moreover, due to Taylor expansions it holds
	\begin{align*}
	f'(u^I_\varepsilon+\eta(\tfrac{b}{\mu_1})u^C_\varepsilon) 
	&= f'(u^I_\varepsilon) +\Oc\left(\eta(\tfrac{b}{\mu_1})|u^C_\varepsilon|\right)\\
	&= (1-\eta(\tfrac{b}{\mu_1})) f'(u^I_\varepsilon) 
	+ \eta(\tfrac{b}{\mu_1}) f'(u^I_\varepsilon + u^C_\varepsilon) + \Oc\left(\eta(\tfrac{b}{\mu_1})|u^C_\varepsilon|\right).
	\end{align*}
	Hence with the product rule for $\partial_t$ and $\Delta$ we obtain
	\begin{align*}
	&\left[\text{l.h.s. in \eqref{eq_AC1} for } u^I_\varepsilon+\eta(\tfrac{b}{\mu_1})u^C_\varepsilon\right]
	\\ 
	&= 
	(1-\eta(\tfrac{b}{\mu_1})) 
	\left[\text{l.h.s. in \eqref{eq_AC1} for } u^I_\varepsilon\right] 
	+
	\eta(\tfrac{b}{\mu_1}) 
	\left[\text{l.h.s. in \eqref{eq_AC1} for } u^I_\varepsilon+u^C_\varepsilon\right]\\
	&+u^C_\varepsilon(\partial_t-\Delta)\left[\eta(\tfrac{b}{\mu_1})\right] -2\nabla\left[\eta(\tfrac{b}{\mu_1})\right]\cdot \nabla u^C_\varepsilon
	+\frac{1}{\varepsilon^2}\Oc\left(\eta(\tfrac{b}{\mu_1})|u^C_\varepsilon|\right),
	\end{align*}
	where \enquote{l.h.s.} stands for \enquote{left hand side}. Hence the above estimates and the asymptotics of $\hat{u}^C_j$ for $j=0,...,M$ yield the desired estimates for the remainder of $\tilde{u}^A_\varepsilon$ in \eqref{eq_AC1} on $\Gamma(2\delta)$. Altogether the first two estimates hold for $\delta$ instead of $2\delta$. In $\Gamma(2\delta)\textbackslash\Gamma(\delta)$ we have again similar mixed terms as above due to the cutoff functions. With Taylor expansions we obtain in $Q_T^\pm\textbackslash\Gamma(\delta)$:
	\begin{align*}
	f'(u^A_\varepsilon) 
	= \Oc\left(\eta(\tfrac{r}{\delta})|\tilde{u}^A_\varepsilon\mp 1|\right),\quad 
	\eta(\tfrac{r}{\delta}) f'(\tilde{u}^A_\varepsilon) 
	= \Oc\left(\eta(\tfrac{r}{\delta})|\tilde{u}^A_\varepsilon\mp 1| \right),
	\end{align*}
	where we used $f'(\pm 1)=0$. Hence the product rule for $\partial_t$ and $\Delta$ yields in $\Gamma(2\delta)\textbackslash\Gamma(\delta)$
	\begin{align*}
	\left[\text{l.h.s. in \eqref{eq_AC1} for }u^A_\varepsilon\right] 
	&=\eta(\tfrac{r}{\delta})\left[\text{l.h.s. in \eqref{eq_AC1} for }\tilde{u}^A_\varepsilon\right]
	+ (\tilde{u}^A_\varepsilon\mp 1)(\partial_t-\Delta)\left[\eta(\tfrac{r}{\delta})\right]\\
	&-2\nabla\left[\eta(\tfrac{r}{\delta})\right]\cdot\nabla\tilde{u}^A_\varepsilon  
	+\frac{1}{\varepsilon^2}\Oc\left(\eta(\tfrac{r}{\delta})|\tilde{u}^A_\varepsilon\mp 1|\right).
	\end{align*}
	Finally, the asymptotics of $u^I_j$ and $u^C_j$ for $j=0,...,M$ imply the estimates for $r^A_\varepsilon$. 
	
	It is left to prove the remaining assertion for $s^A_\varepsilon$. By definition it holds $u^A_\varepsilon=u^I_\varepsilon+u^C_\varepsilon$ on $\Gamma^C(\delta,\mu_1)\cap\partial Q_T$. For the latter we already have an estimate of the Neumann derivative on $\Gamma^C(2\delta,\mu_1)\cap\partial Q_T$, see above. Again we have mixed terms for $u^A_\varepsilon$ on $[\Gamma^C(2\delta,\mu_1)\cap\partial Q_T]\textbackslash \Gamma(\delta)$ because of the cutoff-functions. On the latter set it holds 
	$\tilde{u}^A_\varepsilon=u^I_\varepsilon + u^C_\varepsilon$ and
	\[
	\partial_{N_{\partial\Omega}} u^A_\varepsilon 
	= (\tilde{u}^A_\varepsilon\mp 1) \partial_{N_{\partial\Omega}} \left[\eta(\tfrac{r}{\varepsilon})\right] + \eta(\tfrac{r}{\varepsilon}) \partial_{N_{\partial\Omega}}\tilde{u}^A_\varepsilon.
	\]
	Therefore the claim follows with the asymptotics of $u^I_j$ and $u^C_j$ for $j=0,...,M$.
\end{proof}

\subsection{Asymptotic Expansion of (vAC) in ND}\label{sec_asym_vAC}
Let $N\geq 2$, $\Omega\subseteq\R^N$, $\Gamma:=(\Gamma_t)_{t\in[0,T]}$ and $\delta>0$ be as in the beginning of Section \ref{sec_asym_ACND}, in particular $\Gamma$ is a smooth solution to \eqref{MCF} with $90$°-contact angle condition in $\Omega$. Moreover, let $W:\R^m\rightarrow\R$ be as in Definition \ref{th_vAC_W} and $\vec{u}_\pm$ be any distinct pair of minimizers of $W$. In this section we construct a smooth approximate solution $\vec{u}_\varepsilon^A$ to \eqref{eq_vAC1}-\eqref{eq_vAC3} with $\vec{u}_\varepsilon^A=\vec{u}_\pm$ in $Q_T^\pm\setminus\Gamma(2\delta)$, increasingly \enquote{steep} transition from $\vec{u}_-$ to $\vec{u}_+$ for $\varepsilon\rightarrow 0$ and such that $\{ \vec{u}^A_\varepsilon=0 \}$ converges to $\Gamma$ for $\varepsilon\rightarrow 0$. All computations are very similar to the ones in Section \ref{sec_asym_ACND}. We just have to incorporate vector-valued functions and for the appearing vector-valued model problems we use the corresponding solution theorems in Sections \ref{sec_ODE_vect}-\ref{sec_hp_vect}. For the latter we make the assumption $\dim\ker\check{L}_0=1$, where $\check{L}_0$ is as in Remark \ref{th_ODE_vect_lin_op_rem} for a solution $\vec{\theta}_0$ as in Theorem \ref{th_ODE_vect}.

Let $M\in\N$ with $M\geq 2$. Again we introduce height functions 
\[
\check{h}_j:\Sigma\times[0,T]\rightarrow\R\quad\text{ for }j=1,...,M\quad\text{ and }\quad\check{h}_\varepsilon:=\sum_{j=1}^M \varepsilon^{j-1}\check{h}_j.
\] 
Moreover, we set $\check{h}_{M+1}:=\check{h}_{M+2}:=0$ and we define the scaled variable
\begin{align}\label{eq_asym_vAC_rho}
\check{\rho}_\varepsilon(x,t):=\frac{r(x,t)}{\varepsilon}-\check{h}_\varepsilon(s(x,t),t)\quad\text{ for }(x,t)\in\overline{\Gamma(2\delta)}.
\end{align}

In Section \ref{sec_asym_vAC_in} we construct the inner expansion and in Section \ref{sec_asym_vAC_cp} the contact point expansion. Finally, in Section \ref{sec_asym_vAC_uA} the result on the approximation error of $\vec{u}^A_\varepsilon$ can be found.

\subsubsection{Inner Expansion of (vAC) in ND}\label{sec_asym_vAC_in}
For the inner expansion we consider the following ansatz: Let $\varepsilon>0$ be small and
\[
\vec{u}^I_\varepsilon:=\sum_{j=0}^{M+1}\varepsilon^j\vec{u}_j^I,\quad \vec{u}_j^I(x,t):=\check{u}_j^I(\rho_\varepsilon(x,t),s(x,t),t)\quad\text{ for }(x,t)\in\overline{\Gamma(2\delta)},
\]
where 
\[
\check{u}_j^I:\R\times\Sigma\times[0,T]\rightarrow\R^m: (\rho,s,t)\mapsto \check{u}_j^I(\rho,s,t)
\] 
for $j=0,...,M+1$. Moreover, we set $\vec{u}^I_{M+2}:=0$ and $\check{u}^I_\varepsilon:=\sum_{j=0}^{M+1}\varepsilon^j\check{u}_j^I$. We will expand \eqref{eq_vAC1} for $\vec{u}_\varepsilon=\vec{u}^I_\varepsilon$ into $\varepsilon$-series with coefficients in $(\check{\rho}_\varepsilon,s,t)$ up to $\Oc(\varepsilon^{M-1})$. This yields equations of analogous form as in the scalar case in Section \ref{sec_asym_ACND_in}. Therefore we have to compute the action of the differential operators on $\vec{u}^I_\varepsilon$. 

In the following we use the same conventions as in Lemma \ref{th_asym_ACND_in_trafo}. Moreover, for a sufficiently smooth $\vec{g}:\Sigma\rightarrow\R^m$ we set 
$D_\Sigma \vec{g}:=(\nabla_\Sigma g_1,...,\nabla_\Sigma g_m)^\top:\Sigma\rightarrow\R^{m\times N}$.

\begin{Lemma}\label{th_asym_vAC_in_trafo}
	Let $\varepsilon>0$, $\check{w}:\R\times \Sigma\times[0,T]\rightarrow\R^m$ be sufficiently smooth and $\vec{w}:\overline{\Gamma(2\delta)}\rightarrow\R^m$ be defined by $\vec{w}(x,t):=\check{w}(\check{\rho}_\varepsilon(x,t),s(x,t),t)$ for all $(x,t)\in\overline{\Gamma(2\delta)}$. Then it holds
	\begin{align*}
	\partial_t\vec{w}&=\partial_\rho\check{w}\left[\frac{\partial_tr}{\varepsilon}-(\partial_t\check{h}_\varepsilon+\partial_ts\cdot \nabla_\Sigma \check{h}_\varepsilon)\right]+D_\Sigma\check{w}\,\partial_ts +\partial_t\check{w},\\
	D_x\vec{w}&=\partial_\rho\check{w}\left[\frac{\nabla r}{\varepsilon}-(D_x s)^\top \nabla_\Sigma \check{h}_\varepsilon\right]^\top +D_\Sigma\check{w} D_x s,\\
	\Delta\vec{w}&=\partial_\rho\check{w}\left[\frac{\Delta r}{\varepsilon}-\left(\Delta s\cdot \nabla_\Sigma \check{h}_\varepsilon+\sum_{i,l=1}^N\nabla s_i\cdot\nabla s_l(\nabla_\Sigma)_i(\nabla_\Sigma)_l \check{h}_\varepsilon\right)\right]\\
	&+D_\Sigma\check{w}\,\Delta s+\sum_{i,l=1}^N\nabla s_i\cdot\nabla s_l(\nabla_\Sigma)_i(\nabla_\Sigma)_l\check{w}\\
	&+2D_\Sigma\partial_\rho\check{w}\,D_x s\,\left[\frac{\nabla r}{\varepsilon}-(D_xs)^\top\nabla_\Sigma \check{h}_\varepsilon\right]+\partial_\rho^2\check{w}\left|\frac{\nabla r}{\varepsilon}-(D_xs)^\top\nabla_\Sigma\check{h}_\varepsilon\right|^2,
	\end{align*}
	where the $\vec{w}$-terms on the left hand side and derivatives of $r$ or $s$ are evaluated at $(x,t)\in\overline{\Gamma(2\delta)}$, the $\check{h}_\varepsilon$-terms at $(s(x,t),t)$ and the $\check{w}$-terms at $(\check{\rho}_\varepsilon(x,t),s(x,t),t)$. 
\end{Lemma}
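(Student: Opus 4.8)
The plan is to reduce the vector-valued computation to the scalar one in Lemma \ref{th_asym_ACND_in_trafo} by working componentwise, since the Laplacian and time derivative act diagonally on vector-valued functions. Concretely, write $\vec{w} = (w_1,\dots,w_m)^\top$ with $w_i(x,t) = \hat{w}_i(\check\rho_\varepsilon(x,t), s(x,t), t)$, where $\hat w_i$ is the $i$-th component of $\check w$ and I use the \emph{same} scaled variable $\check\rho_\varepsilon$ from \eqref{eq_asym_vAC_rho} (built with $\check h_\varepsilon$). Each $w_i$ is then exactly of the form treated in Lemma \ref{th_asym_ACND_in_trafo}, with $\check h_\varepsilon$ in place of $h_\varepsilon$. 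Applying that lemma to each component and stacking the results back into a vector yields the claimed formulas, once one checks that the scalar quantities $\nabla_\Sigma \hat w_i$, $(\nabla_\Sigma)_j(\nabla_\Sigma)_l \hat w_i$, etc., assemble into the matrix objects $D_\Sigma \check w$ and $\sum_{j,l}\nabla s_j\cdot\nabla s_l (\nabla_\Sigma)_j(\nabla_\Sigma)_l\check w$ appearing in the statement.

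First I would record the componentwise dictionary. The scalar formula has the tangential first-order term $(D_xs)^\top\nabla_\Sigma\hat w$; stacking over $i$ gives a vector in $\R^m$ whose $i$-th entry is $(D_xs)^\top\nabla_\Sigma\hat w_i$, i.e. the rows of $D_\Sigma\check w$ paired with $D_xs$. Transposing appropriately this is the term $D_\Sigma\check w\, D_xs$ in the statement for $D_x\vec w$ (keeping careful track of whether $D_x\vec w$ is an $m\times N$ matrix, which is the convention fixed in Section \ref{sec_fct}). Similarly, the scalar $\partial_\rho\hat w\,[\tfrac{\nabla r}{\varepsilon} - (D_xs)^\top\nabla_\Sigma h_\varepsilon]$, stacked, becomes $\partial_\rho\check w\,[\tfrac{\nabla r}{\varepsilon} - (D_xs)^\top\nabla_\Sigma\check h_\varepsilon]^\top$, matching the row-vector placement in the $D_x\vec w$ formula. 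For $\Delta\vec w$: the scalar second-order mixed term $2\,((D_xs)^\top\nabla_\Sigma\partial_\rho\hat w)\cdot[\cdots]$ becomes, after stacking, $2\,D_\Sigma\partial_\rho\check w\, D_xs\,[\cdots]$; the pure $\partial_\rho^2$ term is scalar-multiplied and so is unchanged; and the curvature terms $\Delta s\cdot\nabla_\Sigma(\cdot)$ and $\sum_{j,l}\nabla s_j\cdot\nabla s_l(\nabla_\Sigma)_j(\nabla_\Sigma)_l(\cdot)$ carry over verbatim with $\hat w$ replaced by $\check w$, interpreted entrywise. The $\partial_t\vec w$ formula is the most immediate: each scalar term just acquires a vector value, and $\partial_ts\cdot\nabla_\Sigma\hat w$ stacked is $D_\Sigma\check w\,\partial_ts$.

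The only genuine content beyond Lemma \ref{th_asym_ACND_in_trafo} is the same one flagged there: justifying that $\nabla_\Sigma$ (hence $D_\Sigma$) commutes with $\partial_\rho$ and $\partial_t$ and with the substitution $x\mapsto(\check\rho_\varepsilon(x,t),s(x,t),t)$, and that $\nabla_\Sigma$ of a function defined on $\Sigma$ is computed via a smooth ambient extension and $P_{T_s\Sigma}$, using $\partial_{x_j}s|_{(x,t)}\in T_{s(x,t)}\Sigma$ from Theorem \ref{th_coordND}. All of this is already established in (or immediately after) the proof of Lemma \ref{th_asym_ACND_in_trafo} and in Remark \ref{th_asym_ACND_in_rem}, 1., and applies componentwise without change, so I would simply invoke it. Thus the proof is: \emph{fix} a component, apply Lemma \ref{th_asym_ACND_in_trafo} (with $h_\varepsilon$ replaced by $\check h_\varepsilon$ and $\rho_\varepsilon$ by $\check\rho_\varepsilon$, which is legitimate since the lemma's proof only uses \eqref{eq_asym_ACND_rho}-type structure and the coordinate identities, not the specific choice of height functions), then reassemble. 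I expect the main obstacle to be purely bookkeeping: getting the matrix/row-vector transpose conventions in the $D_x\vec w$ and $\Delta\vec w$ formulas to line up exactly with the definition of $D_\Sigma$ and with the convention for $D_x$ of a vector-valued map, so that the stacked scalar formulas coincide literally with the displayed vector formulas. I would therefore spell out the reassembly for one representative term of each type and then assert the rest follow identically.

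\begin{proof}
Write $\vec{w}=(w_1,\dots,w_m)^\top$ and $\check{w}=(\hat w_1,\dots,\hat w_m)^\top$, so that $w_i(x,t)=\hat w_i(\check\rho_\varepsilon(x,t),s(x,t),t)$ for $(x,t)\in\overline{\Gamma(2\delta)}$. Since $\partial_t$ and $\Delta$ act componentwise, it suffices to compute $\partial_tw_i$, $\nabla w_i$ and $\Delta w_i$ for each $i$ and then reassemble. Each $w_i$ is of the form treated in Lemma \ref{th_asym_ACND_in_trafo}, with $h_\varepsilon$ replaced by $\check h_\varepsilon$ and $\rho_\varepsilon$ by $\check\rho_\varepsilon$ from \eqref{eq_asym_vAC_rho}; the proof of that lemma uses only the chain and product rules, the properties of $\nabla_\Sigma$ recalled there (in particular that $\nabla_\Sigma$ commutes with $\partial_\rho$ and $\partial_t$, that $\nabla_\Sigma g|_s=P_{T_s\Sigma}\nabla\overline g|_s$ for a smooth ambient extension $\overline g$, and that $\partial_{x_j}s|_{(x,t)}\in T_{s(x,t)}\Sigma$ by Theorem \ref{th_coordND}), and the coordinate identities of Theorem \ref{th_coordND}, none of which depends on the particular choice of height functions. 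Hence Lemma \ref{th_asym_ACND_in_trafo} gives, for each $i=1,\dots,m$,
\begin{align*}
\partial_tw_i&=\partial_\rho\hat w_i\left[\frac{\partial_tr}{\varepsilon}-(\partial_t\check h_\varepsilon+\partial_ts\cdot\nabla_\Sigma\check h_\varepsilon)\right]+\partial_ts\cdot\nabla_\Sigma\hat w_i+\partial_t\hat w_i,\\
\nabla w_i&=\partial_\rho\hat w_i\left[\frac{\nabla r}{\varepsilon}-(D_xs)^\top\nabla_\Sigma\check h_\varepsilon\right]+(D_xs)^\top\nabla_\Sigma\hat w_i,\\
\Delta w_i&=\partial_\rho\hat w_i\left[\frac{\Delta r}{\varepsilon}-\left(\Delta s\cdot\nabla_\Sigma\check h_\varepsilon+\sum_{j,l=1}^N\nabla s_j\cdot\nabla s_l(\nabla_\Sigma)_j(\nabla_\Sigma)_l\check h_\varepsilon\right)\right]\\
&\quad+\Delta s\cdot\nabla_\Sigma\hat w_i+\sum_{j,l=1}^N\nabla s_j\cdot\nabla s_l(\nabla_\Sigma)_j(\nabla_\Sigma)_l\hat w_i\\
&\quad+2\left((D_xs)^\top\nabla_\Sigma\partial_\rho\hat w_i\right)\cdot\left[\frac{\nabla r}{\varepsilon}-(D_xs)^\top\nabla_\Sigma\check h_\varepsilon\right]+\partial_\rho^2\hat w_i\left|\frac{\nabla r}{\varepsilon}-(D_xs)^\top\nabla_\Sigma\check h_\varepsilon\right|^2,
\end{align*}
where all terms are evaluated as in Lemma \ref{th_asym_ACND_in_trafo}.

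Now stack these over $i$. The scalar terms that are multiplied by $\partial_\rho\hat w_i$ or $\partial_\rho^2\hat w_i$ or by the curvature coefficients reassemble with $\check w$ in place of $\hat w$ by definition of the componentwise action. For the tangential first-order terms, note that by definition $D_\Sigma\check w=(\nabla_\Sigma\hat w_1,\dots,\nabla_\Sigma\hat w_m)^\top$, so the vector with $i$-th entry $\partial_ts\cdot\nabla_\Sigma\hat w_i$ equals $D_\Sigma\check w\,\partial_ts$, giving the stated formula for $\partial_t\vec w$. Similarly, the vector with $i$-th entry $(D_xs)^\top\nabla_\Sigma\hat w_i$, read as the $i$-th row of an $m\times N$ matrix, equals $D_\Sigma\check w\,D_xs$, and the rank-one contribution stacks to $\partial_\rho\check w\,[\tfrac{\nabla r}{\varepsilon}-(D_xs)^\top\nabla_\Sigma\check h_\varepsilon]^\top$; together these give the stated formula for $D_x\vec w$. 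Finally, the mixed second-order term has $i$-th entry $2\,((D_xs)^\top\nabla_\Sigma\partial_\rho\hat w_i)\cdot[\cdots]$, which stacks to $2\,D_\Sigma\partial_\rho\check w\,D_xs\,[\cdots]$ with $[\cdots]=\tfrac{\nabla r}{\varepsilon}-(D_xs)^\top\nabla_\Sigma\check h_\varepsilon$; assembling all contributions yields the stated formula for $\Delta\vec w$. This proves the lemma.
\end{proof}
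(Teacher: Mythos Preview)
Your proposal is correct and takes essentially the same approach as the paper, which simply states that the lemma follows from Lemma~\ref{th_asym_ACND_in_trafo} applied to every component. You have merely spelled out in more detail the bookkeeping of how the componentwise scalar formulas reassemble into the stated matrix and vector expressions via the definition of $D_\Sigma$.
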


\begin{proof}[Proof of Lemma \ref{th_asym_vAC_in_trafo}]
	This follows from Lemma \ref{th_asym_ACND_in_trafo} applied to every component.
\end{proof}

For the expansion of \eqref{eq_vAC1} for $\vec{u}_\varepsilon=\vec{u}_\varepsilon^I$ we use Taylor expansions again. For the $\nabla W$-part this yields: If the $\vec{u}_j^I$ are bounded, then 
\begin{align}\label{eq_asym_vAC_in_taylor_W}
\nabla W(\vec{u}^I_\varepsilon)=\nabla W(\vec{u}_0^I)+\sum_{\nu\in\N_0^m,|\nu|=1}^{M+2}\frac{\partial_y^{\nu}\nabla W(\vec{u}_0^I)}{\nu!}\left[\sum_{j=1}^{M+1}\vec{u}_j^I\varepsilon^j\right]^\nu+\Oc(\varepsilon^{M+3})\,\,\text{ on }\overline{\Gamma(2\delta)}.
\end{align}
The terms in the $\varepsilon$-expansion that are needed explicitly are
\begin{align*}
\Oc(1)&:\quad \nabla W(\vec{u}_0^I),\\
\Oc(\varepsilon)&:\quad D^2W(\vec{u}_0^I)\vec{u}_1^I,\\
\Oc(\varepsilon^2)&:\quad D^2W(\vec{u}_0^I)\vec{u}_2^I
+\sum_{\nu\in\N_0^m,|\nu|=2}\frac{\partial_y^{\nu}\nabla W(\vec{u}_0^I)}{\nu!}\left[\vec{u}_1^I\right]^\nu.
\end{align*}
For $k=3,...,M+2$ the order $\Oc(\varepsilon^k)$ is given by
\begin{alignat*}{2}
\Oc(\varepsilon^k)&:\quad D^2W(\vec{u}_0^I)\vec{u}_k^I + & [\text{some polynomial in entries of }(\vec{u}_1^I,...,\vec{u}_{k-1}^I)\text{ of order }\leq k,\\
& &\text{where the coefficients are multiples of }\partial_y^\nu\nabla W(\vec{u}^I_0),\\
& & \nu\in\N_0^m,|\nu|=2,...,k\text{ and every term contains a }(\vec{u}^I_j)_n\text{-factor}].
\end{alignat*}
The other explicit terms in \eqref{eq_asym_vAC_in_taylor_W} are of order $\Oc(\varepsilon^{M+3})$.

Moreover, we expand functions of $(x,t)\in\overline{\Gamma(2\delta)}$ into $\varepsilon$-series analogously to the scalar case, cf.~the Taylor expansion \eqref{eq_asym_ACND_in_taylor2} and the remarks there. We just replace $h_j, \rho_\varepsilon$ by $\check{h}_j, \check{\rho}_\varepsilon$.

For the higher orders in the expansion we use analogous definitions as in the scalar case:
\begin{Definition}[\textbf{Notation for Inner Expansion of (vAC)}]\upshape\phantomsection{\label{th_asym_vAC_in_def}}
	\begin{enumerate}
		\item We call $(\vec{\theta}_0,\vec{u}^I_1)$ the \textit{zero-th inner order} and $(\check{h}_j,\vec{u}^I_{j+1})$ the \textit{$j$-th inner order} for $j=1,...,M$. 
		\item Let $k\in\{-1,...,M+2\}$ and $\beta>0$. We denote with $\check{R}^I_{k,(\beta)}$ the set of smooth vector-valued functions $\vec{R}:\R\times\Sigma\times[0,T]\rightarrow\R^m$ that depend only on the $j$-th inner orders for $0\leq j\leq \min\{k,M\}$ and satisfy uniformly in $(\rho,s,t)$:
		\[
		|\partial_\rho^i(\nabla_\Sigma)_{n_1}...(\nabla_\Sigma)_{n_d}\partial_t^n \vec{R}(\rho,s,t)|=\Oc(e^{-\beta|\rho|})
		\]
		for all $n_1,...,n_d\in\{1,...,N\}$ and $d,i,n\in\N_0$.
		\item For $k\in\{-1,...,M+2\}$ and $\beta>0$ the set $\tilde{R}^I_{k,(\beta)}$ is defined analogously to $\check{R}^I_{k,(\beta)}$ with functions of type $\vec{R}:\R\times\partial\Sigma\times[0,T]\rightarrow\R^m$ instead.
	\end{enumerate}
\end{Definition}

Now we expand \eqref{eq_vAC1} for $\vec{u}_\varepsilon=\vec{u}^I_\varepsilon$ into $\varepsilon$-series. This is analogous to the scalar case in Section \ref{sec_asym_ACND_in}. In the following $(\rho,s,t)$ are always in $\R\times \Sigma\times[0,T]$ and sometimes omitted.

\paragraph{Inner Expansion: $\Oc(\varepsilon^{-2})$}\label{sec_asym_vAC_in_m2}
Using $|\nabla r|^2|_{\overline{X}_0(s,t)}=1$ due to Theorem \ref{th_coordND}, the $\Oc(\frac{1}{\varepsilon^2})$-order is zero if
\begin{align}\label{eq_asym_vAC_in_u0}
\quad -\partial_\rho^2\check{u}_0^I(\rho,s,t)+\nabla W(\check{u}_0^I(\rho,s,t))=0.
\end{align}
Since we want to connect the minima $\vec{u}_\pm$ of $W$, we require $\lim_{\rho\rightarrow\pm\infty}\check{u}_0^I(\rho,s,t)=\vec{u}_\pm$. Moreover, it is natural to ask for $R_{\vec{u}_-,\vec{u}_+}\check{u}_0^I|_{\rho=0}=\check{u}_0^I|_{\rho=0}$, since then $\check{u}_0^I|_{\rho=0}$ can be interpreted as being in the middle of the two phases $\vec{u}_\pm$. Here $R_{\vec{u}_-,\vec{u}_+}$ is as in Definition \ref{th_vAC_W}. By Theorem \ref{th_ODE_vect} there is a smooth $R_{\vec{u}_-,\vec{u}_+}$-odd $\vec{\theta}_0:\R\rightarrow\R^m$ such that $\check{u}^I_0(\rho,s,t):=\vec{\theta}_0(\rho)$ solves \eqref{eq_asym_vAC_in_u0} and
\[
\partial_z^l[\vec{\theta}_0-\vec{u}_\pm](\rho)=\Oc(e^{-\beta|\rho|})\quad\text{ for }\rho\rightarrow\pm\infty\text{ and all }l\in\N_0, \beta\in\left(0,\sqrt{\lambda/2}\right),
\] 
where $\lambda>0$ is such that $D^2W(\vec{u}_\pm)\geq\lambda I$.
Moreover, it holds $R_{\vec{u}_-,\vec{u}_+}\vec{\theta}_0'|_{\rho=0}\neq \vec{\theta}_0'|_{\rho=0}$.
\paragraph{Inner Expansion: $\Oc(\varepsilon^{-1})$}\label{sec_asym_vAC_in_m1}
Analogously to the scalar case, cf.~Section \ref{sec_asym_ACND_in_m1}, it follows that the $\Oc(\frac{1}{\varepsilon})$-order cancels if  
\[
\check{\Lc}_0\check{u}_1^I(\rho,s,t)+\vec{\theta}_0'(\rho)(\partial_tr-\Delta r )|_{\overline{X}_0(s,t)}=0,\quad\text{ where }\check{\Lc}_0:=-\partial_\rho^2+D^2W(\vec{\theta}_0).
\]
Moreover, it is natural to require that $R_{\vec{u}_-,\vec{u}_+}\check{u}_1^I|_{\rho=0}=\check{u}_1^I|_{\rho=0}$ since then heuristically $\check{u}_1^I|_{\rho=0}$ is in the middle of the two phases $\vec{u}_\pm$. We assume $\dim\ker\check{L}_0=1$ with respect to the spaces in \eqref{eq_ODE_vect_L0}, cf.~Remark \ref{th_ODE_vect_lin_op_rem}. Then due to Theorem \ref{th_ODE_vect_lin} and Remarks \ref{th_ODE_vect_lin_rem1}, \ref{th_ODE_vect_lin_rem2},~1.~this parameter-dependent ODE together with the additional condition and suitable decay in $|\rho|$ has a unique solution $\check{u}_1^I$ if and only if $(\partial_tr-\Delta r)|_{\overline{X}_0(s,t)}=0$. The latter holds since it is equivalent to \eqref{MCF} for $\Gamma$ by Theorem \ref{th_coordND}. Therefore we set $\check{u}_1^I:=0$.

\paragraph{Inner Expansion: $\Oc(\varepsilon^0)$}\label{sec_asym_vAC_in_0}
In the analogous way as in the scalar case, cf.~Section \ref{sec_asym_ACND_in_0}, the $\Oc(1)$-term in the expansion cancels if we require
\begin{align}\label{eq_asym_vAC_in_u2}
-\check{\Lc}_0\check{u}_2^I(\rho,s,t)&=\vec{R}_1(\rho, s,t),\\ \notag
\vec{R}_1(\rho,s,t):=\vec{\theta}_0'(\rho)&\left[-\partial_t\check{h}_1+\sum_{i,l=1}^N\nabla s_i\cdot\nabla s_l(\nabla_\Sigma)_i(\nabla_\Sigma)_l\check{h}_1 \right.\\\notag
&\left. +(\rho+\check{h}_1)\partial_r((\partial_tr-\Delta r)\circ\overline{X})|_{(0,s,t)}-(\partial_ts-\Delta s)|_{\overline{X}_0(s,t)}\cdot\nabla_\Sigma\check{h}_1\right]\\ \notag
+\vec{\theta}_0''(\rho)&\left[-\frac{1}{2}(\rho+\check{h}_1)^2\partial_r^2(|\nabla r|^2\circ\overline{X})|_{(0,s,t)}\right.\\
&\left.+2(\rho+\check{h}_1)\partial_r((D_xs\nabla r)^\top\circ\overline{X})|_{(0,s,t)}\nabla_\Sigma\check{h}_1-\left|(D_xs)^\top|_{\overline{X}_0(s,t)}\nabla_\Sigma\check{h}_1\right|^2\right].\notag
\end{align}
If $\check{h}_1$ is smooth, then $\vec{R}_1$ is smooth and together with all derivatives decays exponentially in $|\rho|$ uniformly in $(s,t)$ with rate $\beta$ for every $\beta\in(0,\sqrt{\lambda/2})$ because of Theorem \ref{th_ODE_vect}. Therefore Theorem \ref{th_ODE_vect_lin} (applied in local coordinates for $\Sigma$) yields that there is a unique solution $\check{u}_2^I$ to \eqref{eq_asym_vAC_in_u2} together with suitable regularity and decay as well as $R_{\vec{u}_-,\vec{u}_+}\check{u}_2^I|_{\rho=0}=\check{u}_2^I|_{\rho=0}$ if and only if $\int_\R \vec{R}_1(\rho,s,t)\cdot\vec{\theta}_0'(\rho)\,d\rho=0$. Because of integration by parts it holds $\int_\R\vec{\theta}_0'(\rho)\cdot\vec{\theta}_0''(\rho)\,d\rho=0$. Therefore the nonlinearities in $\check{h}_1$ cancel and we obtain a linear non-autonomous parabolic equation for $\check{h}_1$ on $\Sigma$:
\begin{align}\label{eq_asym_vAC_in_h1}
\partial_t\check{h}_1-\sum_{i,l=1}^N\nabla s_i\cdot\nabla s_l|_{\overline{X}_0(s,t)}(\nabla_\Sigma)_i(\nabla_\Sigma)_l \check{h}_1+\check{a}_1\cdot \nabla_\Sigma \check{h}_1+\check{a}_0\check{h}_1=\check{f}_0
\end{align} 
in $\Sigma\times[0,T]$. Here with
\begin{alignat*}{3}
\check{d}_1&:=\int_\R|\vec{\theta}_0'(\rho)|^2\,d\rho,&\quad \check{d}_2&:=\int_\R|\vec{\theta}_0'(\rho)|^2\rho\,d\rho,&\quad
\check{d}_3&:=\int_\R|\vec{\theta}_0'(\rho)|^2\rho^2\,d\rho,\\
\check{d}_4&:=\int_\R\vec{\theta}_0'(\rho)\cdot\vec{\theta}_0''(\rho)\rho\,d\rho,&\quad \check{d}_5&:=\int_\R\vec{\theta}_0'(\rho)\cdot\vec{\theta}_0''(\rho)\rho^2\,d\rho,&\quad
\check{d}_6&:=\int_\R\vec{\theta}_0'(\rho)\cdot\vec{\theta}_0''(\rho)\rho^3\,d\rho,
\end{alignat*}
we have defined for all $(s,t)\in\Sigma\times[0,T]$:
\begin{align}
\check{a}_1(s,t)&:=(\partial_ts-\Delta s)|_{\overline{X}_0(s,t)}-2\frac{\check{d}_4}{\check{d}_1}\partial_r((D_xs\nabla r)^\top \circ\overline{X})|_{(0,s,t)}\in \R^N,\label{eq_asym_vAC_in_a1}\\
\check{a}_0(s,t)&:=-\partial_r((\partial_tr-\Delta r)\circ\overline{X})|_{(0,s,t)}+\frac{\check{d}_4}{\check{d}_1}\partial_r^2(|\nabla r|^2\circ\overline{X})|_{(0,s,t)}\in\R,\label{eq_asym_vAC_in_a0}\\
\check{f}_0(s,t)&:=\frac{\check{d}_2}{\check{d}_1}\partial_r((\partial_tr-\Delta r)\circ\overline{X})|_{(0,s,t)}-\frac{\check{d}_5}{2\check{d}_1}\partial_r^2(|\nabla r|^2\circ\overline{X})|_{(0,s,t)}\in\R.
\end{align}
Note that since $\vec{\theta}_0$ is $R_{\vec{u}_-,\vec{u}_+}$-odd and due to the isometry properties of $R_{\vec{u}_-,\vec{u}_+}$, it follows that $\check{d}_2=\check{d}_5=0$ and hence also $\check{f}_0=0$. Therefore the equation \eqref{eq_asym_vAC_in_h1} for $\check{h}_1$ is homogeneous. Note that this corresponds to the case of symmetric $f$ in the scalar case, cf.~Remark \ref{th_asym_ACND_feven_rem1}. This is due to the fact that we restricted to symmetric $W$ in the vector-valued case.

If $\check{h}_1$ is smooth and solves \eqref{eq_asym_vAC_in_h1}, then Theorem \ref{th_ODE_vect_lin} (applied in local coordinates for $\Sigma$) yields a smooth solution $\check{u}_2^I$ to \eqref{eq_asym_vAC_in_u2} with $R_{\vec{u}_-,\vec{u}_+}\check{u}_2^I|_{\rho=0}=\check{u}_2^I|_{\rho=0}$ and we get decay estimates. With Remark \ref{th_nabla_sigma_equiv} and compactness we obtain $\check{u}_2^I\in \check{R}^I_{1,(\beta)}$ for any $\beta\in(0,\min\{\sqrt{\lambda/2}, \check{\beta}_0\})$, where $\check{\beta}_0>0$ is as in Theorem \ref{th_ODE_vect_lin}.

\paragraph{Inner Expansion: $\Oc(\varepsilon^k)$}\label{sec_asym_vAC_in_k}
Let $k\in\{1,...,M-1\}$ and suppose that the $j$-th inner order has already been constructed for $j=0,...,k$, that it is smooth and $\check{u}_{j+1}^I\in\check{R}_{j,(\beta)}^I$ for every $\beta\in(0,\min\{\sqrt{\lambda/2},\check{\beta}_0\})$ with $\check{\beta}_0>0$ as in Theorem \ref{th_ODE_vect_lin}. Analogously to the scalar case one can compute the $\Oc(\varepsilon^k)$-order in \eqref{eq_vAC1} for $\vec{u}_\varepsilon=\vec{u}^I_\varepsilon$. This yields that the order $\Oc(\varepsilon^k)$ is zero if 
\begin{align}\label{eq_asym_vAC_in_uk}
-\check{\Lc}_0\check{u}_{k+2}^{I}(\rho,s,t)&=\vec{R}_{k+1}(\rho,s,t),\\
\vec{R}_{k+1}(\rho,s,t):=\vec{\theta}_0'(\rho)\notag
&\left[
-\partial_t\check{h}_{k+1}
+\sum_{i,l=1}^N\nabla s_i\cdot\nabla s_l|_{\overline{X}_0(s,t)}(\nabla_\Sigma)_i(\nabla_\Sigma)_l \check{h}_{k+1}\right. \\ \notag
&\left.-(\partial_ts-\Delta s)|_{\overline{X}_0(s,t)}\cdot \nabla_\Sigma\check{h}_{k+1}
+\check{h}_{k+1}\partial_r((\partial_tr-\Delta r)\circ\overline{X})|_{(0,s,t)}
\right]\\ \notag
+\vec{\theta}_0''(\rho)
&\left[
-(\rho+\check{h}_1)\check{h}_{k+1} \partial_r^2(|\nabla r|^2\circ\overline{X})|_{(0,s,t)}\right.\\ \notag
&-2(\nabla_\Sigma \check{h}_1)^\top D_xs(D_xs)^\top|_{\overline{X}_0(s,t)}\nabla_\Sigma \check{h}_{k+1}\\ \notag
&\left.+2\partial_r((D_xs\nabla r)^\top\circ\overline{X})|_{(0,s,t)}[(\rho+\check{h}_1)\nabla_\Sigma \check{h}_{k+1}+\check{h}_{k+1}\nabla_\Sigma \check{h}_1]
\right]\\ 
+ &\check{R}_{k}(\rho,s,t), \notag
\end{align}
where $\check{R}_{k}\in\check{R}_{k,(\beta)}^I$. If $\check{h}_{k+1}$ is smooth, then due to Theorem \ref{th_ODE_vect_lin} equation \eqref{eq_asym_vAC_in_uk} admits a unique solution $\check{u}_{k+2}^I$ with suitable regularity and decay as well as $R_{\vec{u}_-,\vec{u}_+}\check{u}_2^I|_{\rho=0}=\check{u}_2^I|_{\rho=0}$ if and only if $\int_\R \vec{R}_{k+1}(\rho,s,t)\cdot\vec{\theta}_0'(\rho)\,d\rho=0$. Because of $\int_\R\vec{\theta}_0''\cdot\vec{\theta}_0'=0$, the latter is equivalent to 
\begin{align}\label{eq_asym_vAC_in_hk}
\partial_t\check{h}_{k+1}-\sum_{i,l=1}^N\nabla s_i\cdot\nabla s_l|_{\overline{X}_0(s,t)}(\nabla_\Sigma)_i(\nabla_\Sigma)_l \check{h}_{k+1}+\check{a}_1\cdot\nabla_\Sigma \check{h}_{k+1} + \check{a}_0 \check{h}_{k+1} = \check{f}_k,
\end{align}
where 
\[
\check{f}_k(s,t):=\int_\R \check{R}_k(\rho,s,t)\cdot\vec{\theta}_0'(\rho)\,d\rho/\|\vec{\theta}_0'\|_{L^2(\R)^m}^2
\] 
is a smooth function of $(s,t)$ and depends only on the $j$-th inner orders for $0\leq j\leq k$. Here $\check{a}_0, \check{a}_1$ are defined in \eqref{eq_asym_vAC_in_a1}-\eqref{eq_asym_vAC_in_a0}. If $\check{h}_{k+1}$ is smooth and solves \eqref{eq_asym_vAC_in_hk}, then Theorem \ref{th_ODE_vect_lin} yields as in the last Section \ref{sec_asym_vAC_in_0} a smooth solution $\check{u}_{k+2}^I$ to \eqref{eq_asym_vAC_in_uk} such that $\check{u}_{k+2}^I\in \check{R}_{k+1,(\beta)}^I$ for all $\beta\in(0,\min\{\sqrt{\lambda/2}, \check{\beta}_0\})$.

\subsubsection{Contact Point Expansion of (vAC) in ND}\label{sec_asym_vAC_cp}
This is analogous to the scalar case, cf.~Section \ref{sec_asym_ACND_cp}. We make the ansatz $\vec{u}_\varepsilon=\vec{u}^I_\varepsilon+\vec{u}^C_\varepsilon$ in $\Gamma(2\delta)$ close to the contact points. Let $\sigma,b:\overline{\Gamma^C(2\delta,2\mu_1)}\rightarrow\partial\Sigma\times[0,2\mu_1]$ be as in Theorem \ref{th_coordND}. Then with $H_\varepsilon:=\frac{b}{\varepsilon}$ we set
\[
\vec{u}^C_\varepsilon:=\sum_{j=1}^M\varepsilon^j \vec{u}_j^C,\quad \vec{u}_j^C(x,t):=\check{u}_j^C(\check{\rho}_\varepsilon(x,t),H_\varepsilon(x,t),\sigma(x,t),t)\quad\text{ for }(x,t)\in\overline{\Gamma^C(2\delta,2\mu_1)},
\]
where 
\[
\check{u}_j^C:\overline{\R^2_+}\times\partial\Sigma\times[0,T]\rightarrow\R^m:
(\rho,H,\sigma,t)\mapsto \check{u}_j^C(\rho,H,\sigma,t)
\] 
for $j=1,...,M$. Moreover, we define $\vec{u}^C_{M+1}:=\vec{u}^C_{M+2}:=0$ and $\check{u}^C_\varepsilon:=\sum_{j=1}^M\varepsilon^j \check{u}_j^C$. As in the scalar case, instead of \eqref{eq_vAC1} for $\vec{u}_\varepsilon=\vec{u}^I_\varepsilon+\vec{u}^C_\varepsilon$, we will expand the \enquote{bulk equation}
\begin{align}\label{eq_asym_vAC_cp} 
\partial_t\vec{u}^C_\varepsilon-\Delta \vec{u}^C_\varepsilon+\frac{1}{\varepsilon^2}\left[\nabla W(\vec{u}^I_\varepsilon+\vec{u}^C_\varepsilon)-\nabla W(\vec{u}^I_\varepsilon)\right]=0
\end{align}
into $\varepsilon$-series with coefficients in $(\check{\rho}_\varepsilon,H_\varepsilon,\sigma,t)$ up to $\Oc(\varepsilon^{M-2})$. Moreover, we will expand \eqref{eq_vAC2} for $\vec{u}_\varepsilon=\vec{u}^I_\varepsilon+\vec{u}^C_\varepsilon$ into $\varepsilon$-series with coefficients in $(\check{\rho}_\varepsilon,\sigma,t)$ up to $\Oc(\varepsilon^{M-1})$. Altogether we end up with analogous equations as in the scalar case. The solvability condition \eqref{eq_hp_vect_comp} will yield the boundary conditions on $\partial\Sigma\times[0,T]$ for the height functions $\check{h}_j$.

For the expansions we calculate the action of the differential operators on $\vec{u}^C_\varepsilon$ in the next lemma. Here we use the same conventions as in Lemma \ref{th_asym_vAC_in_trafo} and define $D_{\partial\Sigma}$ in an analogous way as $D_\Sigma$.

\begin{Lemma}\label{th_asym_vAC_cp_trafo}
	Let $\overline{\R^2_+}\times\partial\Sigma\times[0,T]\ni(\rho,H,\sigma,t)\mapsto\check{w}(\rho,H,\sigma,t)\in\R^m$ be sufficiently smooth and let $\vec{w}:\overline{\Gamma^C(2\delta,2\mu_1)}\rightarrow\R^m:(x,t)\mapsto\check{w}(\check{\rho}_\varepsilon(x,t),H_\varepsilon(x,t),\sigma(x,t),t)$. Then
	\begin{align*}
	\partial_t\vec{w}&=\partial_\rho\check{w}\left[\frac{\partial_tr}{\varepsilon}-\left(\partial_t\check{h}_\varepsilon+\partial_ts\cdot \nabla_\Sigma \check{h}_\varepsilon\right)\right]+\partial_H\check{w}\frac{\partial_tb}{\varepsilon}+D_{\partial\Sigma}\check{w}\,\partial_t\sigma+\partial_t\check{w},\\
	D_x\vec{w}&=\partial_\rho\check{w}\left[\frac{\nabla r}{\varepsilon}-(D_xs)^\top\nabla_\Sigma \check{h}_\varepsilon\right]^\top+\partial_H\check{w}\left[\frac{\nabla b}{\varepsilon}\right]^\top+D_{\partial\Sigma}\check{w}D_x\sigma,\\
	\Delta\vec{w}&=\partial_\rho\check{w}\left[\frac{\Delta r}{\varepsilon}-\left(\Delta s\cdot\nabla_\Sigma \check{h}_\varepsilon+\sum_{i,l=1}^N\nabla s_i\cdot\nabla s_l(\nabla_\Sigma)_i(\nabla_\Sigma)_l\check{h}_\varepsilon\right)\right]+\partial_H\check{w}\frac{\Delta b}{\varepsilon}\\
	&+\partial_H^2\check{w}\frac{|\nabla b|^2}{\varepsilon^2}
	+\partial_\rho^2\check{w}\left|\frac{\nabla r}{\varepsilon}-(D_xs)^\top\nabla_\Sigma \check{h}_\varepsilon\right|^2+ 2\partial_\rho\partial_H\check{w}\,\frac{\nabla b}{\varepsilon}\cdot\left[\frac{\nabla r}{\varepsilon}-(D_xs)^\top\nabla_\Sigma \check{h}_\varepsilon\right]\\
	&+2D_{\partial\Sigma}\partial_\rho \check{w}\,D_x\sigma\,
	\left[\frac{\nabla r}{\varepsilon}
	-(D_xs)^\top\nabla_\Sigma\check{h}_\varepsilon\right]
	+2D_{\partial\Sigma}\partial_H\check{w}\,D_x\sigma\,\frac{\nabla b}{\varepsilon}\\
	&+D_{\partial\Sigma}\check{w}\,\Delta\sigma+\sum_{i,l=1}^N \nabla\sigma_i\cdot\nabla\sigma_l(\nabla_{\partial\Sigma})_i(\nabla_{\partial\Sigma})_l\check{w},
	\end{align*}
	where the $\vec{w}$-terms on the left hand side and derivatives of $r$ or $s$ are evaluated at $(x,t)$, the $\check{h}_\varepsilon$-terms at $(s(x,t),t)$ and the $\check{w}$-terms at $(\check{\rho}_\varepsilon(x,t),H_\varepsilon(x,t),\sigma(x,t),t)$.
\end{Lemma}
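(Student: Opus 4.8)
The statement is a change-of-variables formula for the differential operators $\partial_t$, $D_x$ and $\Delta$ acting on a function $\vec{w}$ of the form $\check{w}\circ(\check{\rho}_\varepsilon,H_\varepsilon,\sigma,t)$, where the independent variables $(\rho,H,\sigma,t)$ are composed with the curvilinear coordinates of Theorem \ref{th_coordND}. The plan is to reduce everything to the scalar case already handled in Lemma \ref{th_asym_ACND_cp_trafo} by applying that lemma componentwise. Indeed, writing $\vec{w}=(w_1,\dots,w_m)^\top$ with $w_n(x,t)=\check{w}_n(\check{\rho}_\varepsilon(x,t),H_\varepsilon(x,t),\sigma(x,t),t)$, each $w_n$ is exactly of the form treated in Lemma \ref{th_asym_ACND_cp_trafo} (with $h_\varepsilon$ there replaced by $\check{h}_\varepsilon$, which changes nothing in the derivation). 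So for each $n$ one gets the scalar formula for $\partial_t w_n$, $\nabla w_n$ and $\Delta w_n$ in terms of $\partial_\rho\check{w}_n$, $\partial_H\check{w}_n$, $\nabla_{\partial\Sigma}\check{w}_n$ and their higher derivatives.

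First I would stack these $m$ scalar identities into vector/matrix form. For $\partial_t\vec{w}$ and $\Delta\vec{w}$ this is immediate: the right-hand sides of Lemma \ref{th_asym_ACND_cp_trafo} are linear in the $\check{w}$-derivatives with scalar coefficients depending only on $(x,t)$ (the geometric quantities $\partial_t r$, $\nabla r$, $\Delta r$, $\partial_t b$, $\nabla b$, $\Delta b$, $D_x s$, $D_x\sigma$, $\Delta s$, $\Delta\sigma$, $\nabla s_i\cdot\nabla s_l$, $\nabla\sigma_i\cdot\nabla\sigma_l$) and on $\check{h}_\varepsilon$ and its $\nabla_\Sigma$-derivatives, none of which carry a component index; the terms involving $\nabla_{\partial\Sigma}$ of $\check{w}_n$ assemble into $D_{\partial\Sigma}\check{w}$ and $D_{\partial\Sigma}\partial_\rho\check{w}$, $D_{\partial\Sigma}\partial_H\check{w}$ by the very definition of $D_{\partial\Sigma}$ given just before the lemma (the analogue of $D_\Sigma$). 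So stacking gives precisely the asserted formulas for $\partial_t\vec{w}$ and $\Delta\vec{w}$. For $D_x\vec{w}$ one has to be a little careful with transposes: the scalar gradient $\nabla w_n$ is a column in $\R^N$, and $D_x\vec{w}$ is the $m\times N$ Jacobian whose $n$-th row is $(\nabla w_n)^\top$; transposing the scalar identity $\nabla w_n=\partial_\rho\check{w}_n[\tfrac{\nabla r}{\varepsilon}-(D_xs)^\top\nabla_\Sigma\check{h}_\varepsilon]+\partial_H\check{w}_n\tfrac{\nabla b}{\varepsilon}+(D_x\sigma)^\top\nabla_{\partial\Sigma}\check{w}_n$ and stacking over $n$ yields $D_x\vec{w}=\partial_\rho\check{w}[\tfrac{\nabla r}{\varepsilon}-(D_xs)^\top\nabla_\Sigma\check{h}_\varepsilon]^\top+\partial_H\check{w}[\tfrac{\nabla b}{\varepsilon}]^\top+D_{\partial\Sigma}\check{w}\,D_x\sigma$, which matches the claim (here $\partial_\rho\check{w}$ and $\partial_H\check{w}$ are columns in $\R^m$ and the bracketed objects are rows in $\R^N$, so the products are outer products of the right shape, and $D_{\partial\Sigma}\check{w}\in\R^{m\times(N-1)\text{-ish}}$ times $D_x\sigma$ has the right size since $\sigma$ takes values in $\partial\Sigma$).

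The only genuine bookkeeping is to confirm that the matrix products in the $\Delta\vec{w}$ line — in particular $D_\Sigma\partial_\rho\check{w}\,D_xs\,[\tfrac{\nabla r}{\varepsilon}-(D_xs)^\top\nabla_\Sigma\check{h}_\varepsilon]$ and $D_{\partial\Sigma}\partial_\rho\check{w}\,D_x\sigma\,[\cdots]$ and the analogous $\partial_H$-terms, plus the trace-type term $\sum_{i,l}\nabla\sigma_i\cdot\nabla\sigma_l(\nabla_{\partial\Sigma})_i(\nabla_{\partial\Sigma})_l\check{w}$ — are exactly the componentwise stacking of the corresponding scalar terms $2((D_x\sigma)^\top\nabla_{\partial\Sigma}\partial_\rho\hat{w}_n)\cdot[\cdots]$ etc.\ in Lemma \ref{th_asym_ACND_cp_trafo}. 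This is the step I expect to be the main (though still routine) obstacle: matching the index placement in the vectorized cross-derivative terms, since a factor like $2\big((D_x\sigma)^\top\nabla_{\partial\Sigma}\partial_\rho\hat w\big)\cdot[\,\cdot\,]$ in the scalar lemma is a scalar for each component and must be reorganized as $2\,D_{\partial\Sigma}\partial_\rho\check{w}\,D_x\sigma\,[\,\cdot\,]$ after stacking, which requires keeping track of which index is summed. Once this is checked, the lemma follows; as in the scalar case, the identities hold pointwise on $\overline{\Gamma^C(2\delta,2\mu_1)}$ wherever $\check{w}$ is sufficiently smooth, by the chain and product rules together with the smoothness and the properties of $r,b,s,\sigma$ from Theorem \ref{th_coordND}. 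Hence the proof is simply: \emph{``This follows from Lemma \ref{th_asym_ACND_cp_trafo} applied to every component,''} exactly parallel to the proof of Lemma \ref{th_asym_vAC_in_trafo}.
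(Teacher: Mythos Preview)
Your proposal is correct and matches the paper's approach exactly: the paper's proof is the single sentence ``This can be shown by applying Lemma \ref{th_asym_ACND_cp_trafo} to every component.'' Your additional bookkeeping about stacking, transposes, and the $D_{\partial\Sigma}$-terms is a faithful (and more detailed) unpacking of what that sentence means.
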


\begin{proof}
	This can be shown by applying Lemma \ref{th_asym_ACND_cp_trafo} to every component.
\end{proof}

\paragraph{Contact Point Expansion: The Bulk Equation}\label{sec_asym_vAC_cp_bulk}
We expand the $\nabla W$-part in \eqref{eq_asym_vAC_cp}: If the $\vec{u}_j^I, \vec{u}_j^C$ are bounded, the Taylor expansion yields on $\overline{\Gamma(2\delta)}$
\begin{align*}
\nabla W(\vec{u}^I_\varepsilon+\vec{u}^C_\varepsilon)=\nabla W(\vec{\theta}_0)+\sum_{\nu\in\N_0^m,|\nu|=1}^{M+2}\frac{1}{\nu!}\partial_y^\nu\nabla W(\vec{\theta}_0)\left[\sum_{j=1}^{M+1}\varepsilon^j(\vec{u}_j^I+\vec{u}_j^C)\right]^\nu+\Oc(\varepsilon^{M+3}).
\end{align*}
As in the scalar case one can combine the latter with the expansion for $\nabla W(\vec{u}^I_\varepsilon)$ in \eqref{eq_asym_vAC_in_taylor_W} and use $\vec{u}^I_1=0$. This yields that the terms in the asymptotic expansion for $\nabla W(\vec{u}^I_\varepsilon+\vec{u}^C_\varepsilon)-\nabla W(\vec{u}^I_\varepsilon)$ are for $k=1,...,M+1$:
\begin{alignat*}{2}
\Oc(1)&:\quad 0,&\\
\Oc(\varepsilon)&:\quad D^2W(\vec{\theta}_0)\vec{u}^C_1,& \\
\Oc(\varepsilon^k)&:\quad D^2W(\vec{\theta}_0)\vec{u}_k^C +\quad & [\text{some polynomial in entries of }(\vec{u}_1^I,...,\vec{u}_{k-1}^I, \vec{u}^C_1,...,\vec{u}_{k-1}^C)\text{ of}\\
& &\text{order }\leq k, \text{ where the coefficients are multiples of }\partial_y^\nu\nabla W(\vec{\theta}_0),\\
& & \nu\in\N_0^m,|\nu|=2,...,k\text{ and every term contains a }(\vec{u}_j^C)_n\text{-factor}].
\end{alignat*}
The other explicit terms in $\nabla W(\vec{u}^I_\varepsilon+\vec{u}^C_\varepsilon)-\nabla W(\vec{u}^I_\varepsilon)$ are of order $\Oc(\varepsilon^{M+3})$.

Functions of $(s,t)$, $(\rho,s,t)$ and $(x,t)$ are expanded in the analogous way as in the scalar case, cf.~\eqref{eq_asym_ACND_cp_taylor2}-\eqref{eq_asym_ACND_cp_taylor3} and the remarks there. We just replace $h_j,\rho_\varepsilon$ by $\check{h}_j,\check{\rho}_\varepsilon$.

As in the scalar case we use some notation for the higher orders in the expansion:
\begin{Definition}[\textbf{Notation for Contact Point Expansion of (vAC)}]\upshape\phantomsection{\label{th_asym_vAC_cp_def}}
	\begin{enumerate}
		\item We call $(\vec{\theta}_0,\vec{u}^I_1)$ the \textit{zero-th order} and $(\check{h}_j,\vec{u}^I_{j+1},\vec{u}^C_j)$ the \textit{$j$-th order} for $j=1,...,M$. 
		\item Let $k\in\{-1,...,M+2\}$ and $\beta,\gamma>0$. Then $\check{R}^C_{k,(\beta,\gamma)}$ denotes the set of smooth functions $\vec{R}:\overline{\R^2_+}\times\partial\Sigma\times[0,T]\rightarrow\R^m$ depending only on the $j$-th orders for $0\leq j\leq \min\{k,M\}$ and such that uniformly in $(\rho,H,\sigma,t)$:
		\[
		|\partial_\rho^i\partial_H^l(\nabla_{\partial\Sigma})_{n_1}...(\nabla_{\partial\Sigma})_{n_d}\partial_t^n\vec{R}(\rho,H,\sigma,t)|=\Oc(e^{-(\beta|\rho|+\gamma H)})
		\]
		for all $n_1,...,n_d\in\{1,...,N\}$ and $d,i,l,n\in\N_0$.
		\item The set $\check{R}^C_{k,(\beta)}$ is defined in an analogous way without the $H$-dependence.
	\end{enumerate}
\end{Definition}
In the following we expand \eqref{eq_asym_vAC_cp} into $\varepsilon$-series with coefficients in $(\check{\rho}_\varepsilon,H_\varepsilon,\sigma,t)$.

\subparagraph{Bulk Equation: $\Oc(\varepsilon^{-1})$}\label{sec_asym_vAC_cp_bulk_m1}
The lowest order $\Oc(\frac{1}{\varepsilon})$ in  \eqref{eq_asym_vAC_cp} cancels if 
\begin{align}\label{eq_asym_vAC_cp_bulk1}
[-\Delta^{\sigma,t}+D^2W(\vec{\theta}_0(\rho))]\check{u}^C_1(\rho,H,\sigma,t)=0,
\end{align}
where $\Delta^{\sigma,t}:=\partial_\rho^2+|\nabla b|^2|_{\overline{X}_0(\sigma,t)}\partial_H^2$ and we used $\nabla r\cdot\nabla b|_{\overline{X}_0(\sigma,t)}=0$ for all $(\sigma,t)\in\partial\Sigma\times[0,T]$.

\subparagraph{Bulk Equation: $\Oc(\varepsilon^{k-1})$}\label{sec_asym_vAC_cp_bulk_km1}
For $k=1,...,M-1$ we assume that the $j$-th order is constructed for all $j=0,...,k$, that it is smooth and that $\check{u}^I_{j+1}\in \check{R}^I_{j,(\beta)}$ (bounded and all derivatives bounded is enough here) and $\check{u}^C_j\in\check{R}^C_{j,(\beta,\gamma)}$ for every $\beta\in(0,\min\{\check{\beta}(\gamma),\sqrt{\lambda/2},\check{\beta}_0\})$, $\gamma\in(0,\check{\gamma})$, where $\check{\beta}_0$ is from Theorem \ref{th_ODE_vect_lin} and $\check{\beta}, \check{\gamma}$ are as in Theorem \ref{th_hp_vect_exp_sol}. With analogous computations as in the scalar case, the $\Oc(\varepsilon^{k-1})$-order in the expansion for the bulk equation \eqref{eq_asym_vAC_cp} is zero if 
\begin{align}\label{eq_asym_vAC_cp_bulkk}
[-\Delta^{\sigma,t}+D^2W(\vec{\theta}_0)]\check{u}^C_{k+1}=\vec{G}_k(\rho,H,\sigma,t),
\end{align}
where $\vec{G}_k \in\check{R}^C_{k,(\beta,\gamma)}$.

\paragraph{Contact Point Expansion: The Neumann Boundary Condition}\label{sec_asym_vAC_cp_neum}
As in the scalar case, the boundary conditions complementing \eqref{eq_asym_vAC_cp_bulk1}-\eqref{eq_asym_vAC_cp_bulkk} will be obtained from the expansion of the Neumann boundary condition \eqref{eq_vAC2} for $\vec{u}_\varepsilon=\vec{u}^I_\varepsilon+\vec{u}^C_\varepsilon$, i.e.~$D_x(\vec{u}^I_\varepsilon+\vec{u}^C_\varepsilon)|_{\partial Q_T}N_{\partial\Omega}=0$. Lemma \ref{th_asym_vAC_in_trafo} and Lemma \ref{th_asym_vAC_cp_trafo} yield on $\overline{\Gamma^C(2\delta,2\mu_1)}$
\begin{align*}
D_x \vec{u}^I_\varepsilon|_{(x,t)}&=\partial_\rho\check{u}^I_\varepsilon|_{(\rho,s,t)}\left[\frac{\nabla r|_{(x,t)}}{\varepsilon}-(D_xs)^\top|_{(x,t)}\nabla_\Sigma h_\varepsilon|_{(s,t)}\right]^\top+D_\Sigma\check{u}^I_\varepsilon|_{(\rho,s,t)}\,D_xs|_{(x,t)},\\
D_x\vec{u}^C_\varepsilon|_{(x,t)}&=\partial_\rho\check{u}^C_\varepsilon|_{(\rho,H,\sigma,t)}\left[\frac{\nabla r|_{(x,t)}}{\varepsilon}-(D_xs)^\top|_{(x,t)}\nabla_\Sigma h_\varepsilon|_{(s,t)}\right]^\top\\
&\phantom{=}+\partial_H\check{u}^C_\varepsilon|_{(\rho,H,\sigma,t)}\left[\frac{\nabla b|_{(x,t)}}{\varepsilon}\right]^\top+D_{\partial\Sigma}\check{u}^C_\varepsilon|_{(\rho,H,\sigma,t)}\,D_x\sigma|_{(x,t)},
\end{align*}
where $\rho=\check{\rho}_\varepsilon(x,t), H=H_\varepsilon(x,t)$, $s=s(x,t)$ and $\sigma=\sigma(x,t)$. We consider the points $x=X(r,\sigma,t)$ for $(r,\sigma,t)\in[-2\delta,2\delta]\times\partial\Sigma\times[0,T]$, in particular $H=0$ and $s=\sigma$. 

For $g:\overline{\Gamma(2\delta)}\cap\partial Q_T\rightarrow\R$ smooth we use an expansion as in the scalar case, cf.~\eqref{eq_asym_ACND_cp_taylor4} and the remarks there. We just use $\check{h}_j,\check{\rho}_\varepsilon$ instead of $h_j,\rho_\varepsilon$.

In the following we expand the Neumann boundary condition into $\varepsilon$-series with coefficients in $(\check{\rho}_\varepsilon,\sigma,t)$ up to the order $\Oc(\varepsilon^{M-1})$. 

\subparagraph{Neumann Boundary Condition: $\Oc(\varepsilon^{-1})$}\label{sec_asym_vAC_cp_neum_m1}
At the lowest order $\Oc(\frac{1}{\varepsilon})$ we have $(N_{\partial\Omega}\cdot\nabla r)|_{\overline{X}_0(\sigma,t)}\vec{\theta}_0'(\rho)=0$. This is valid due to the $90$°-contact angle condition.

\subparagraph{Neumann Boundary Condition: $\Oc(\varepsilon^0)$}\label{sec_asym_vAC_cp_neum_0}
The order $\Oc(1)$ vanishes if
\begin{align}\label{eq_asym_vAC_cp_bc1}
&(N_{\partial\Omega}\cdot\nabla b)|_{\overline{X}_0(\sigma,t)}\partial_H\check{u}^C_1|_{H=0}(\rho,\sigma,t)=\vec{g}_1(\rho,\sigma,t),\\ \notag
\vec{g}_1(\rho,\sigma,t)&:=\vec{\theta}_0'(\rho)[(D_xsN_{\partial\Omega})^\top|_{\overline{X}_0(\sigma,t)}\nabla_\Sigma \check{h}_1|_{(\sigma,t)}-\partial_r((N_{\partial\Omega}\cdot\nabla r)\circ\overline{X})|_{(0,\sigma,t)}\check{h}_1|_{(\sigma,t)}]\\ \notag
&+\check{g}_0(\rho,\sigma,t),\notag
\end{align}
where $\check{g}_0(\rho,\sigma,t):=-\rho \vec{\theta}_0'(\rho)\partial_r((N_{\partial\Omega}\cdot\nabla r)\circ\overline{X})|_{(0,\sigma,t)}$. For $j=1,...,M$ let
\begin{align}\label{eq_asym_vAC_cp_ubar}
\underline{u}^C_j:\overline{\R^2_+}\times\partial\Sigma\times[0,T]\rightarrow\R^m:(\rho,H,\sigma,t)\mapsto\check{u}^C_j(\rho,|\nabla b|(\overline{X}_0(\sigma,t))H,\sigma,t).
\end{align}
Due to Theorem \ref{th_coordND} it holds $|\nabla b|_{\overline{X}_0(\sigma,t)}|\geq c>0$ and $|N_{\partial\Omega}\cdot\nabla b|_{\overline{X}_0(\sigma,t)}|\geq c>0$ for all $(\sigma,t)\in\partial\Sigma\times[0,T]$. Therefore \eqref{eq_asym_vAC_cp_bulk1} and \eqref{eq_asym_vAC_cp_bc1} for $\check{u}^C_1$ are equivalent to
\begin{align}\label{eq_asym_vAC_cp_uC1}
[-\Delta+D^2W(\vec{\theta}_0(\rho))]\underline{u}^C_1&=0,\\
-\partial_{H}\underline{u}^C_1|_{H=0}&=(|\nabla b|/N_{\partial\Omega}\cdot\nabla b)|_{\overline{X}_0(\sigma,t)} \vec{g}_1(\rho,\sigma,t).\label{eq_asym_vAC_cp_uC2}
\end{align}
The solvability condition \eqref{eq_hp_vect_comp} corresponding to \eqref{eq_asym_vAC_cp_uC1}-\eqref{eq_asym_vAC_cp_uC2} is
\[
(|\nabla b|/N_{\partial\Omega}\cdot\nabla b)|_{\overline{X}_0(\sigma,t)}
\int_\R \vec{g}_1(\rho,\sigma,t)\cdot\vec{\theta}_0'(\rho)\,d\rho=0.
\]
Due to the symmetry properties of $\vec{\theta}_0$, the term coming from $\check{g}_0$ vanishes. Therefore the latter condition yields the following boundary condition for $\check{h}_1$:
\begin{align}\label{eq_asym_vAC_cp_h1}
b_1(\sigma,t)\cdot\nabla_\Sigma\check{h}_1|_{(\sigma,t)}+ b_0(\sigma,t)\check{h}_1|_{(\sigma,t)}=0\quad\text{ for }(\sigma,t)\in\partial\Sigma\times[0,T],
\end{align}
where $b_1,b_0$ are as in the scalar case, cf.~the formulas below \eqref{eq_asym_ACND_cp_h1}. Together with the linear parabolic equation \eqref{eq_asym_vAC_in_h1} for $\check{h}_1$ from Subsection \ref{sec_asym_vAC_in_0}, we obtain a time-dependent linear parabolic boundary value problem for $\check{h}_1$, where the initial value $\check{h}_1|_{t=0}$ is not prescribed yet. However, since $\check{f}_0$ is zero, the equations for $\check{h}_1$ are homogeneous and we can take $\check{h}_1=0$.

Hence we get $\check{u}^I_2$ from Section \ref{sec_asym_vAC_in_0} with $\check{u}^I_2\in \check{R}^I_{1,(\beta_1)}$ for all $\beta_1\in(0,\min\{\sqrt{\lambda/2}, \check{\beta}_0\})$, where $\check{\beta}_0>0$ is as in Theorem \ref{th_ODE_vect_lin}. In particular the first inner order is determined. Furthermore, we have $\vec{g}_1\in \tilde{R}^I_{1,(\beta_1)}$ for all $\beta_1\in(0,\sqrt{\lambda/2})$ due to Theorem \ref{th_ODE_vect}. With Theorem \ref{th_hp_vect_exp_sol} (applied in local coordinates for $\partial\Sigma$) there is a unique smooth solution $\underline{u}^C_1$ to \eqref{eq_asym_vAC_cp_uC1}-\eqref{eq_asym_vAC_cp_uC2} and we get decay properties. By compactness and Remark \ref{th_nabla_sigma_equiv} with $\partial\Sigma$ instead of $\Sigma$ we obtain the decay $\underline{u}^C_1\in \check{R}^C_{1,(\beta,\gamma)}$ for all $\beta\in(0,\min\{\check{\beta}(\gamma),\sqrt{\lambda/2}\})$, $\gamma\in(0,\check{\gamma})$, where $\check{\beta}, \check{\gamma}$ are as in Theorem \ref{th_hp_vect_exp_sol}. Altogether we computed the first order.

\subparagraph{Neumann Boundary Condition: $\Oc(\varepsilon^k)$ and Induction}\label{sec_asym_vAC_cp_neum_k}
For $k=1,...,M-1$ we compute $\Oc(\varepsilon^k)$ in \eqref{eq_vAC2} for $\vec{u}_\varepsilon=\vec{u}^I_\varepsilon+\vec{u}^C_\varepsilon$ and obtain equations for the ($k+1$)-th order. We use the following induction hypothesis: assume that the $j$-th order is constructed for all $j=0,...,k$, that it is smooth and has the decay $\check{u}^I_{j+1}\in \check{R}^I_{j,(\beta_1)}$ for all $\beta_1\in(0,\min\{\sqrt{\lambda/2},\check{\beta}_0\})$ as well as $\check{u}^C_j\in \check{R}^C_{j,(\beta,\gamma)}$ for all $\beta\in(0,\min\{\check{\beta}(\gamma),\sqrt{\lambda/2},\check{\beta}_0\})$, $\gamma\in(0,\check{\gamma})$, where $\check{\beta}_0$ is from Theorem \ref{th_ODE_vect_lin} and $\check{\beta}, \check{\gamma}$ are as in Theorem \ref{th_hp_vect_exp_sol}. The assumption holds for $k=1$ due to Section \ref{sec_asym_vAC_cp_neum_0}. 

Analogously as in the scalar case, the $\Oc(\varepsilon^k)$-order in \eqref{eq_vAC2} for $\vec{u}_\varepsilon=\vec{u}^I_\varepsilon + \vec{u}^C_\varepsilon$ is zero if 
\begin{align}\label{eq_asym_vAC_cp_bck}
&(N_{\partial\Omega}\cdot\nabla b)|_{\overline{X}_0(\sigma,t)}\partial_H\check{u}^C_{k+1}|_{H=0}(\rho,\sigma,t)=\vec{g}_{k+1}(\rho,\sigma,t),\\ \notag
\vec{g}_{k+1}|_{(\rho,\sigma,t)}&:=\vec{\theta}_0'(\rho)[(D_xsN_{\partial\Omega})^\top|_{\overline{X}_0(\sigma,t)}\nabla_\Sigma \check{h}_{k+1}|_{(\sigma,t)}-\partial_r((N_{\partial\Omega}\cdot\nabla r)\circ\overline{X})|_{(0,\sigma,t)}\check{h}_{k+1}|_{(\sigma,t)}]\\ \notag
&+\check{g}_k(\rho,\sigma,t),\notag
\end{align}
where $\check{g}_k\in \check{R}^C_{k,(\beta)}$ and hence $\vec{g}_{k+1}\in \tilde{R}^I_{k+1,(\beta)}+ \check{R}^C_{k,(\beta)}$, if $\check{h}_{k+1}$ is smooth. 

As in the last Section \ref{sec_asym_vAC_cp_neum_0}, the equations \eqref{eq_asym_vAC_cp_bulkk}, \eqref{eq_asym_vAC_cp_bck} are equivalent to
\begin{align}\label{eq_asym_vAC_cp_uCk_1}
[-\Delta+D^2W(\vec{\theta}_0(\rho))]\underline{u}^C_{k+1}&=\underline{G}_k(\rho,H,\sigma,t),\\
-\partial_H\underline{u}^C_{k+1}|_{H=0}&=(|\nabla b|/N_{\partial\Omega}\cdot\nabla b)|_{\overline{X}_0(\sigma,t)} \vec{g}_{k+1}(\rho,H,\sigma,t),\label{eq_asym_vAC_cp_uCk_2}
\end{align}
where we defined $\underline{u}^C_{k+1}$ in \eqref{eq_asym_vAC_cp_ubar} and $\underline{G}_k$ is defined in the analogous way with the $\vec{G}_k\in \check{R}^C_{k,(\beta,\gamma)}$ from Section \ref{sec_asym_vAC_cp_bulk_km1}. The compatibility condition \eqref{eq_hp_vect_comp} for \eqref{eq_asym_vAC_cp_uCk_1}-\eqref{eq_asym_vAC_cp_uCk_2}, i.e.
\[
\int_{\R^2_+}\underline{G}_k(\rho,H,\sigma,t)\cdot\vec{\theta}_0'(\rho)\,d(\rho,H)
+(|\nabla b|/N_{\partial\Omega}\cdot\nabla b)|_{\overline{X}_0(\sigma,t)}\int_\R \vec{g}_{k+1}(\rho,\sigma,t)\cdot\vec{\theta}_0'(\rho)\,d\rho=0,
\]
implies a linear boundary condition for $\check{h}_{k+1}$:
\begin{align}\label{eq_asym_vAC_cp_hk}
b_1(\sigma,t)\cdot\nabla_\Sigma \check{h}_{k+1}|_{(\sigma,t)}+ b_0(\sigma,t)\check{h}_{k+1}|_{(\sigma,t)}=\check{f}^B_k(\sigma,t)\quad\text{ for }(\sigma,t)\in\partial\Sigma\times[0,T],
\end{align}
where $b_0,b_1$ are defined below \eqref{eq_asym_ACND_cp_h1} and 
\[
\check{f}^B_k|_{(\sigma,t)} := \frac{-1}{\|\vec{\theta}_0'\|_{L^2(\R)^m}^2}
\left[
\int_{\R^2_+}\underline{G}_k|_{(\rho,H,\sigma,t)}\cdot\vec{\theta}_0'|_{\rho}\,d\rho +\left.\frac{|\nabla b|}{N_{\partial\Omega}\cdot\nabla b}\right|_{\overline{X}_0(\sigma,t)}\int_\R \check{g}_k|_{(\rho,\sigma,t)}\cdot\vec{\theta}_0'|_{\rho}\,d\rho
\right]
\] 
is smooth in $(\sigma,t)\in\partial\Sigma\times[0,T]$.

Because of the remarks and computations in Section \ref{sec_asym_ACND_cp_neum_0} we can solve \eqref{eq_asym_vAC_in_hk} from Section \ref{sec_asym_vAC_in_k} together with \eqref{eq_asym_vAC_cp_hk} and obtain a smooth solution $\check{h}_{k+1}$. Therefore Section \ref{sec_asym_vAC_in_k} yields $\check{u}^I_{k+2}$ (solving \eqref{eq_asym_vAC_in_uk}) with $\check{u}^I_{k+2}\in \check{R}^I_{k+1,(\beta_1)}$ for all $\beta_1\in(0,\min\{\sqrt{\lambda/2},\check{\beta}_0\})$. In particular the $(k+1)$-th inner order is computed and it holds $\vec{G}_k\in \check{R}^C_{k,(\beta,\gamma)}$ as well as $\vec{g}_{k+1}\in \tilde{R}^I_{k+1,(\beta)}+ \check{R}^C_{k,(\beta)}$ for all $\beta\in(0,\min\{\check{\beta}(\gamma),\sqrt{\lambda/2},\check{\beta}_0\})$, $\gamma\in(0,\check{\gamma})$. As in the last Section \ref{sec_asym_vAC_cp_neum_0} we obtain a unique smooth solution $\underline{u}^C_{k+1}$ to \eqref{eq_asym_vAC_cp_uCk_1}-\eqref{eq_asym_vAC_cp_uCk_2} with the decay $\check{u}^C_{k+1}\in \check{R}^C_{k,(\beta,\gamma)}$ for all $(\beta,\gamma)$ as above. Altogether, the $(k+1)$-th order is determined.

Finally, by induction the $j$-th order is constructed for all $j=0,...,M$, the $\check{h}_j$ are smooth and $\check{u}^I_{j+1}\in \check{R}^I_{j,(\beta_1)}$ for all for all $\beta_1\in(0,\min\{\sqrt{\lambda/2},\check{\beta}_0\})$ as well as $\check{u}^C_j\in \check{R}^C_{j,(\beta,\gamma)}$ for every $\beta\in(0,\min\{\check{\beta}(\gamma),\sqrt{\lambda/2},\check{\beta}_0\})$, $\gamma\in(0,\check{\gamma})$.

\subsubsection{The Approximate Solution for (vAC) in ND}\label{sec_asym_vAC_uA}
Let $N\geq2$ and $\Gamma=(\Gamma_t)_{t\in[0,T]}$ be as in Section \ref{sec_coord_surface_requ} with contact angle $\alpha=\frac{\pi}{2}$ and a solution to \eqref{MCF} in $\Omega$. Moreover, let $\delta>0$ be such that the assertions of Theorem \ref{th_coordND} hold for $2\delta$ instead of $\delta$ and let $r,s,b,\sigma,\mu_0$ be as in the theorem. Furthermore, let $W:\R^m\rightarrow\R$ be as in Definition \ref{th_vAC_W} and $\vec{u}_\pm$ be any distinct pair of minimizers of $W$. Moreover, let $M\in\N$, $M\geq 2$ be as in the beginning of Section \ref{sec_asym_vAC}. Let $\eta:\R\rightarrow[0,1]$ be smooth with $\eta(r)=1$ for $|r|\leq 1$ and $\eta(r)=0$ for $|r|\geq 2$. Then for $\varepsilon>0$ we set
\begin{align*}
\vec{u}^A_\varepsilon:=
\begin{cases}
\eta(\frac{r}{\delta})\left[\vec{u}^I_\varepsilon+\vec{u}^C_\varepsilon \eta(\frac{b}{\mu_1})\right]+(1-\eta(\frac{r}{\delta}))\vec{u}_{\textup{sign}(r)}&\quad\text{ in }\overline{\Gamma(2\delta)},\\
\vec{u}_\pm&\quad\text{ in }Q_T^\pm\setminus\Gamma(2\delta),
\end{cases}
\end{align*}
where $\vec{u}^I_\varepsilon$ and $\vec{u}^C_\varepsilon$ were constructed in Sections \ref{sec_asym_vAC_in} and \ref{sec_asym_vAC_cp}. Analogously as in Section \ref{sec_asym_ACND_uA} one can prove that $\vec{u}^A_\varepsilon$ is an approximate solution for \eqref{eq_vAC1}-\eqref{eq_vAC3} in the following sense:
\begin{Lemma}\label{th_asym_vAC_uA}
	The function $\vec{u}^A_\varepsilon$ is smooth, uniformly bounded with respect to $x,t,\varepsilon$ and for the remainder 
	$\vec{r}^A_\varepsilon := 
	\partial_t\vec{u}^A_\varepsilon-\Delta\vec{u}^A_\varepsilon +\frac{1}{\varepsilon^2}\nabla W(\vec{u}^A_\varepsilon)$ in \eqref{eq_vAC1} and $\vec{s}^A_\varepsilon:=\partial_{N_{\partial\Omega}} \vec{u}^A_\varepsilon$ in \eqref{eq_vAC2} it holds 
	\begin{alignat*}{2}
	|\vec{r}^A_\varepsilon|&\leq C(\varepsilon^M e^{-c|\check{\rho}_\varepsilon|}+\varepsilon^{M+1})&\quad &\text{ in }\Gamma(2\delta,\mu_1),\\
	|\vec{r}^A_\varepsilon|&
	\leq 
	C(\varepsilon^{M-1} e^{-c(|\check{\rho}_\varepsilon|+H_\varepsilon)}
	+\varepsilon^M e^{-c|\check{\rho}_\varepsilon|}+\varepsilon^{M+1})
	&\quad &\text{ in }\Gamma^C(2\delta,2\mu_1),\\
	\vec{r}^A_\varepsilon &
	=0&\quad&\text{ in }Q_T\setminus\Gamma(2\delta),\\
	|\vec{s}^A_\varepsilon|&
	\leq C\varepsilon^M e^{-c|\check{\rho}_\varepsilon|}&\quad&\text{ on }\partial Q_T\cap\Gamma(2\delta),\\
	\vec{s}^A_\varepsilon &
	=0&\quad&\text{ on }\partial Q_T\setminus\Gamma(2\delta)
	\end{alignat*}
	for $\varepsilon>0$ small and some $c,C>0$. Here $\check{\rho}_\varepsilon$ is defined in \eqref{eq_asym_vAC_rho} and $H_\varepsilon=\frac{b}{\varepsilon}$.
\end{Lemma}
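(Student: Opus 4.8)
The statement is the vector-valued analogue of Lemma~\ref{th_asym_ACND_uA}, so the plan is to follow the proof of Lemma~\ref{th_asym_ACND_uA} essentially verbatim, replacing every scalar object by its vector-valued counterpart: $u^I_\varepsilon, u^C_\varepsilon, \theta_0, f', \rho_\varepsilon$ become $\vec u^I_\varepsilon, \vec u^C_\varepsilon, \vec\theta_0, \nabla W, \check\rho_\varepsilon$, all products of scalars become scalar products in $\R^m$, and the model-problem solution theorems used to control the expansion coefficients are the vector-valued ones (Theorems \ref{th_ODE_vect}, \ref{th_ODE_vect_lin}, \ref{th_hp_vect_exp_sol}) in place of the scalar ones (Theorems \ref{th_theta_0}, \ref{th_ODE_lin}, \ref{th_hp_exp3}). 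Smoothness and uniform boundedness of $\vec u^A_\varepsilon$ are immediate from the construction in Section~\ref{sec_asym_vAC}: the inner and contact-point expansions produce smooth coefficients with exponential decay, the cutoff functions $\eta(r/\delta)$ and $\eta(b/\mu_1)$ are smooth, and outside $\Gamma(2\delta)$ the function is the constant $\vec u_\pm$; boundedness follows since each $\vec u^I_j$, $\vec u^C_j$ is bounded (being a sum of a bounded term and one decaying in $|\check\rho_\varepsilon|$, resp. $|\check\rho_\varepsilon|+H_\varepsilon$) uniformly in $x,t,\varepsilon$.

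For the remainder estimates, I would first record that the third and fifth identities ($\vec r^A_\varepsilon=0$ in $Q_T\setminus\Gamma(2\delta)$ and $\vec s^A_\varepsilon=0$ on $\partial Q_T\setminus\Gamma(2\delta)$) are trivial since $\vec u^A_\varepsilon\equiv\vec u_\pm$ there and $\nabla W(\vec u_\pm)=0$. Next, using the rigorous Taylor expansions set up in Sections \ref{sec_asym_vAC_in}--\ref{sec_asym_vAC_cp} (the vector-valued versions of \eqref{eq_asym_ACND_in_taylor_f}--\eqref{eq_asym_ACND_in_taylor2}, \eqref{eq_asym_ACND_cp_taylor_f}--\eqref{eq_asym_ACND_cp_taylor3}, \eqref{eq_asym_ACND_cp_taylor4}) together with the fact that all equations through order $\Oc(\varepsilon^{M-1})$ (bulk) and $\Oc(\varepsilon^{M-1})$ (Neumann) have been solved by design, one obtains
\[
|\partial_t\vec u^I_\varepsilon-\Delta\vec u^I_\varepsilon+\tfrac1{\varepsilon^2}\nabla W(\vec u^I_\varepsilon)|\le C(\varepsilon^M e^{-c|\check\rho_\varepsilon|}+\varepsilon^{M+1})\quad\text{in }\Gamma(2\delta),
\]
and analogously the $\Oc(\varepsilon^{M-1}e^{-c(|\check\rho_\varepsilon|+H_\varepsilon)}+\varepsilon^{M+1})$ bound for the bulk residual of $\vec u^I_\varepsilon+\vec u^C_\varepsilon$ on $\Gamma^C(2\delta,2\mu_1)$ and the $\Oc(\varepsilon^M e^{-c|\check\rho_\varepsilon|})$ bound for the Neumann residual of $\vec u^I_\varepsilon+\vec u^C_\varepsilon$ on $\Gamma^C(2\delta,2\mu_1)\cap\partial Q_T$; here one uses crucially that the remainder terms in the Taylor expansions, being multiplied by exponentially decaying factors coming from $\vec\theta_0'$, $\vec u^I_j\in\check R^I_{j,(\beta)}$, $\vec u^C_j\in\check R^C_{j,(\beta,\gamma)}$, become $\Oc(\varepsilon^{M+1})$ or better.

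The remaining work is to patch these interior estimates through the cutoff functions, exactly as in the proof of Lemma~\ref{th_asym_ACND_uA}: set $\tilde{\vec u}^A_\varepsilon:=\vec u^I_\varepsilon+\vec u^C_\varepsilon\eta(b/\mu_1)$, expand $\nabla W(\vec u^I_\varepsilon+\eta(b/\mu_1)\vec u^C_\varepsilon)$ by Taylor to split it into $(1-\eta(b/\mu_1))\nabla W(\vec u^I_\varepsilon)+\eta(b/\mu_1)\nabla W(\vec u^I_\varepsilon+\vec u^C_\varepsilon)+\Oc(\eta(b/\mu_1)|\vec u^C_\varepsilon|)$, apply the product rule for $\partial_t$ and $\Delta$ to produce the commutator terms $\vec u^C_\varepsilon(\partial_t-\Delta)[\eta(b/\mu_1)]-2\nabla[\eta(b/\mu_1)]\cdot D_x\vec u^C_\varepsilon$, and observe that $\nabla[\eta(b/\mu_1)]$ is supported where $b\ge\mu_1$, i.e. $H_\varepsilon\ge\mu_1/\varepsilon$, so the decay $\vec u^C_j\in\check R^C_{j,(\beta,\gamma)}$ makes these terms exponentially small in $1/\varepsilon$, hence $\Oc(\varepsilon^{M+1})$; this gives the first two estimates with $\delta$ in place of $2\delta$. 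Then repeat the same maneuver with the outer cutoff $\eta(r/\delta)$, using $\nabla W(\vec u_\pm)=0$ and a Taylor expansion of $\nabla W$ around $\vec u_\pm$ on $Q_T^\pm\setminus\Gamma(\delta)$, noting $|\tilde{\vec u}^A_\varepsilon-\vec u_\pm|$ decays like $e^{-c|\check\rho_\varepsilon|}$ there, to absorb the commutators from $\eta(r/\delta)$; finally handle $\vec s^A_\varepsilon$ by the identity $\partial_{N_{\partial\Omega}}\vec u^A_\varepsilon=(\tilde{\vec u}^A_\varepsilon-\vec u_\pm)\partial_{N_{\partial\Omega}}[\eta(r/\delta)]+\eta(r/\delta)\partial_{N_{\partial\Omega}}\tilde{\vec u}^A_\varepsilon$ on the boundary. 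The main obstacle is purely bookkeeping: one must be confident that the vector-valued model-problem theorems deliver exactly the same decay classes $\check R^I_{j,(\beta)}$, $\check R^C_{j,(\beta,\gamma)}$ (with the admissible $\beta,\gamma$ now governed by $\sqrt{\lambda/2}$, $\check\beta_0$, $\check\beta(\gamma)$, $\check\gamma$ from Theorems \ref{th_ODE_vect}, \ref{th_ODE_vect_lin}, \ref{th_hp_vect_exp_sol} and the assumption $\dim\ker\check L_0=1$) that the scalar theorems delivered, so that every estimate in the scalar proof transfers term by term; since Section~\ref{sec_asym_vAC} has already verified these membership statements for all $j=0,\dots,M$, no genuinely new analytic difficulty arises, and I would simply refer to the proof of Lemma~\ref{th_asym_ACND_uA} for the details of the cutoff patching.
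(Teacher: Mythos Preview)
Your proposal is correct and matches the paper's approach exactly: the paper does not give a separate proof of this lemma but simply states (just before the lemma) that it is proved ``analogously as in Section~\ref{sec_asym_ACND_uA}'', i.e.\ by repeating the proof of Lemma~\ref{th_asym_ACND_uA} with the vector-valued objects and model-problem theorems in place of the scalar ones. In fact you have supplied more detail than the paper itself, which is entirely appropriate here.
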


\begin{Remark}\upshape
	The analogous assertions as in Remark \ref{th_asym_ACND_uA_rem} hold.
\end{Remark}

\section{Spectral Estimates}\label{sec_SE}
The second step in the method of de Mottoni and Schatzman \cite{deMS} consists of estimating the difference of the exact and approximate solutions. To this end one employs a Gronwall-type argument together with the idea of linearization at the approximate solution, since the structure of the latter is known in detail. In order to estimate all terms in a suitable way, it is important to have a spectral estimate for a linear operator corresponding to the diffuse interface model and the approximate solution, i.e.~an estimate for the related bilinear form. 

In order to sketch the idea, let us consider the scalar case in the following. In this situation we have the (at the approximate solution) linearized Allen-Cahn operator
\[
\Lc_{\varepsilon,t}:=-\Delta + \frac{1}{\varepsilon^2} f''(u^A_\varepsilon(.,t))\quad\text{ on }\Omega
\]
together with homogeneous Neumann boundary condition, where $u^A_\varepsilon$ is from Section \ref{sec_asym_ACND_uA}. We will show a spectral estimate of the following form (see also Theorem \ref{th_SE_ACND} below): there are constants $c_0,C,\varepsilon_0>0$ such that for all $u\in H^1(\Omega)$ and $\varepsilon\in(0,\varepsilon_0]$
\begin{align}\label{eq_SE1}
\int_\Omega |\nabla u|^2 + \frac{1}{\varepsilon^2} f''(u^A_\varepsilon(.,t))u^2 \geq 
-C\|u\|_{L^2(\Omega)}^2
+\|\nabla u\|_{L^2(\Omega\setminus\Gamma_t(\delta))}^2
+c_0\|\nabla_\tau u\|_{L^2(\Gamma_t(\delta))}^2,
\end{align}
where $\nabla_\tau$ is the tangential derivative defined in Remark \ref{th_coordND_rem},~2. The estimate (also without the two additional last terms in \eqref{eq_SE1}) implies that the spectrum of $\Lc_{\varepsilon,t}$ is bounded from below by $-C$, where $\Lc_{\varepsilon,t}$ is viewed as an unbounded operator on $\{u\in H^2(\Omega):\partial_{N_{\partial\Omega}}u=0 \}$ with values in $L^2(\Omega)$. $\Lc_{\varepsilon,t}$ is selfadjoint and has spectrum in $\R$ in this setting. This explains the name \enquote{spectral estimate}. The last two additional terms are not fundamental for our case, but will help to optimize some estimates. This decreases the number of terms necessary in the asymptotic expansion which could be interesting for regularity questions or couplings.

For the proof of the spectral estimates the control of perturbed 1D-operators on normal modes and large intervals will be a crucial ingredient. We show such estimates similar as in Chen \cite{ChenSpectrums} (and \cite{ALiu}, \cite{Marquardt}). However, in the latter publications the formulation of the 1D-problems is always linked to the situation of a given approximate solution over an interface. Here we will treat the 1D-problems separately in Section \ref{sec_SE_1Dprelim} for better readability. Therefore we introduce an abstract setting in 1D in Section \ref{sec_SE_1Dsetting} that is applicable in both cases. We prove integral transformations and remainder estimates in Section \ref{sec_SE_1Dtrafo_remainder}. In Sections \ref{sec_SE_1Dscal} and \ref{sec_SE_1Dvect} we show the spectral estimates for (unperturbed and perturbed) operators in 1D in the scalar and vector-valued case on finite large intervals, respectively. In the appendix, Section \ref{sec_fredh}, for Section \ref{sec_SE_1Dprelim} we summarize an abstract Fredholm Alternative that can be applied for all the cases in order to obtain discrete eigenvalues and orthonormal bases of eigenfunctions. 

Equipped with this we prove the spectral estimate for the scalar case in Section \ref{sec_SE_ACND} for a slightly more general structure concerning $u^A_\varepsilon$. The approach is similar to \cite{AbelsMoser}, Section 4, and motivated from Alikakos, Chen, Fusco \cite{ACF}. Roughly, the idea is as follows. The approximate solution always has a specific structure. For parts away from the contact points the estimate directly follows with the 1D-estimates from Section \ref{sec_SE_1Dprelim} and an integral transformation as in \cite{ChenSpectrums}, Theorem 2.3. Therefore by an argument with a partition of unity we can reduce the spectral estimate to a corresponding one close to the contact points. Moreover, via Taylor expansions, we can replace the potential part by a term with simpler structure. Then we use a suitable ansatz to get an approximate first eigenfunction $\phi^A_\varepsilon(.,t)$ which leads to the model problem studied in Section \ref{sec_hp_90}. Finally, we split the space of $H^1$-functions over the domain with the help of a subspace consisting of suitable tangential alterations $a(s(.,t))\phi^A_\varepsilon(.,t)$ and analyze the bilinear form corresponding to $\Lc_{\varepsilon,t}$ on every part. 

For the case of the vector-valued Allen-Cahn equation \hyperlink{vAC}{(vAC)} the above procedure is adapted correspondingly. This is done in Section \ref{sec_SE_vAC}.

\begin{Remark}\label{th_SE_rem_alternative}\upshape
	Note that a general reduction strategy in analogy to \cite{ChenSpectrums} also might work in our cases, i.e.~the idea would be to reduce via perturbation arguments to the spectral properties of corresponding unperturbed operators on large domains approximating $\R^2_+$. However, that would require tedious estimates and the degeneracy is a difficulty, cf.~the tangential alterations in the eigenfunctions on the rectangle before. Therefore we work with the simpler strategy above. 
\end{Remark}

\subsection{Preliminaries in 1D}\label{sec_SE_1Dprelim}
\subsubsection{The Setting}\label{sec_SE_1Dsetting}
We introduce an abstract setting in 1D that is taylored for integrals over normal modes. Therefore let $\delta>0$ be fixed, $h_\varepsilon\in C^1([-\delta,\delta],\R)$ for $\varepsilon>0$ small such that
\begin{align}\label{eq_SE_h}
\|h_\varepsilon\|_{C_b^1([-\delta,\delta])}\leq C_0.
\end{align}
Then we set
\begin{align}\label{eq_SE_r_eps}
r_\varepsilon:[-\delta,\delta]\rightarrow\R:r\mapsto r-\varepsilon h_\varepsilon(r)\quad\text{ and }\quad \rho_\varepsilon:=\frac{r_\varepsilon}{\varepsilon}.
\end{align}
At this point let us already note Remark \ref{th_SE_1Dprelim_rem},~1.~below, where the correspondence to the application is explained.
\begin{Lemma}\label{th_SE_1Dprelim1}
	There is an $\varepsilon_0=\varepsilon_0(C_0)>0$ such that
	\begin{enumerate}
	\item $r_\varepsilon:[-\tilde{\delta},\tilde{\delta}]\rightarrow[-\tilde{\delta}-\varepsilon h_\varepsilon(-\tilde{\delta}),\tilde{\delta}-\varepsilon h_\varepsilon(\tilde{\delta})]$ is $C^1$ and invertible for all $\tilde{\delta}\in(0,\delta]$, $\varepsilon\in(0,\varepsilon_0]$. Moreover,
	\[
	\left|\frac{d}{dr}r_\varepsilon -1\right|\leq C_0\varepsilon\leq \frac{1}{2},\quad 
	\left|\frac{d}{d\tilde{r}}(r_\varepsilon^{-1})-1\right|\leq 2C_0\varepsilon
	\]
	and $|r_\varepsilon^{-1}(\tilde{r})|\leq (1+2C_0\varepsilon)(|\tilde{r}|+\varepsilon |h_\varepsilon(0)|)$ for all $\tilde{r}\in r_\varepsilon([-\delta,\delta])$ and $\varepsilon\in(0,\varepsilon_0]$.
	\item If additionally $h_\varepsilon\in C^2([-\delta,\delta])$ with $\|\frac{d^2}{dr^2}h_\varepsilon\|_{C_b^0([-\delta,\delta])}\leq\tilde{C}_0$ for $\varepsilon\in(0,\varepsilon_1]$ , $0<\varepsilon_1\leq\varepsilon_0$, then $r_\varepsilon$ is $C^2$ for $\varepsilon\in(0,\varepsilon_1]$ and it holds
	\[
	\left|\frac{d^2}{dr^2}r_\varepsilon\right|\leq \tilde{C}_0\varepsilon\quad\text{ and }\quad
	\left|\frac{d^2}{d\tilde{r}^2}(r_\varepsilon^{-1})\right|\leq 8\tilde{C}_0\varepsilon.
	\]
	\end{enumerate}
\end{Lemma}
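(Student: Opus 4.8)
The plan is to treat this as a direct computation from the defining formula $r_\varepsilon(r) = r - \varepsilon h_\varepsilon(r)$, using the chain rule for the inverse and the bounds on $h_\varepsilon$. The only genuine input is $\|h_\varepsilon\|_{C_b^1}\le C_0$ (for part 1) and additionally $\|\tfrac{d^2}{dr^2}h_\varepsilon\|_{C_b^0}\le \tilde C_0$ (for part 2), together with a smallness condition on $\varepsilon$ that I will fix along the way.

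\emph{Part 1.} First I would differentiate: $\frac{d}{dr}r_\varepsilon = 1 - \varepsilon h_\varepsilon'(r)$, so $\bigl|\frac{d}{dr}r_\varepsilon - 1\bigr| = \varepsilon|h_\varepsilon'(r)| \le C_0\varepsilon$. Choosing $\varepsilon_0 = \varepsilon_0(C_0)$ so that $C_0\varepsilon_0 \le \tfrac12$ gives the stated bound $C_0\varepsilon \le \tfrac12$ and in particular $\frac{d}{dr}r_\varepsilon \ge \tfrac12 > 0$ on $[-\delta,\delta]$ for all $\varepsilon\in(0,\varepsilon_0]$. Hence $r_\varepsilon$ is a strictly increasing $C^1$ function on any subinterval $[-\tilde\delta,\tilde\delta]$, so it is a $C^1$ diffeomorphism onto its image, which by monotonicity is exactly the interval with endpoints $r_\varepsilon(\pm\tilde\delta) = \pm\tilde\delta - \varepsilon h_\varepsilon(\pm\tilde\delta)$; this is the claimed target interval (the endpoints are correctly ordered for $\varepsilon$ small since $\tfrac{d}{dr}r_\varepsilon>0$). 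For the inverse, the inverse function theorem gives $\frac{d}{d\tilde r}(r_\varepsilon^{-1})(\tilde r) = 1/\bigl(\frac{d}{dr}r_\varepsilon\bigr)(r_\varepsilon^{-1}(\tilde r))$, and then $\bigl|\frac{d}{d\tilde r}(r_\varepsilon^{-1}) - 1\bigr| = \bigl|1 - \frac{d}{dr}r_\varepsilon\bigr| / \bigl|\frac{d}{dr}r_\varepsilon\bigr| \le C_0\varepsilon / \tfrac12 = 2C_0\varepsilon$, using $\frac{d}{dr}r_\varepsilon\ge\tfrac12$. Finally, for the pointwise bound on $r_\varepsilon^{-1}$, write $r = r_\varepsilon^{-1}(\tilde r)$, so $\tilde r = r - \varepsilon h_\varepsilon(r)$; by the mean value theorem $h_\varepsilon(r) = h_\varepsilon(0) + h_\varepsilon'(\xi)r$ for some $\xi$, hence $r(1 - \varepsilon h_\varepsilon'(\xi)) = \tilde r + \varepsilon h_\varepsilon(0)$, and since $|1 - \varepsilon h_\varepsilon'(\xi)| \ge \tfrac12$ we get $|r| \le 2(|\tilde r| + \varepsilon|h_\varepsilon(0)|) \le (1+2C_0\varepsilon_0)\cdot 2 \cdot \dots$; to land exactly on the stated constant $(1+2C_0\varepsilon)$ I would instead bound $1/|1-\varepsilon h_\varepsilon'(\xi)| \le 1 + 2C_0\varepsilon$ directly (valid since $1/(1-x) \le 1 + 2x$ for $0\le x\le\tfrac12$, applied with $x = \varepsilon|h_\varepsilon'(\xi)| \le C_0\varepsilon$, possibly after a harmless further shrinking of $\varepsilon_0$ — or simply absorbing into the geometric-series estimate $1/(1-x)\le 1+x+x^2+\dots \le 1 + 2x$), which yields $|r| \le (1+2C_0\varepsilon)(|\tilde r| + \varepsilon|h_\varepsilon(0)|)$.

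\emph{Part 2.} Under the extra hypothesis, $r_\varepsilon$ is $C^2$ with $\frac{d^2}{dr^2}r_\varepsilon = -\varepsilon h_\varepsilon''(r)$, so $\bigl|\frac{d^2}{dr^2}r_\varepsilon\bigr| = \varepsilon|h_\varepsilon''(r)| \le \tilde C_0\varepsilon$. For the second derivative of the inverse, I would differentiate the first-derivative identity once more: from $\frac{d}{d\tilde r}(r_\varepsilon^{-1}) = 1/(r_\varepsilon'\circ r_\varepsilon^{-1})$ one gets $\frac{d^2}{d\tilde r^2}(r_\varepsilon^{-1}) = -\,(r_\varepsilon''\circ r_\varepsilon^{-1})\cdot\frac{d}{d\tilde r}(r_\varepsilon^{-1}) / (r_\varepsilon'\circ r_\varepsilon^{-1})^2$. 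Bounding each factor using part 1 and the bound just obtained — $|r_\varepsilon''|\le\tilde C_0\varepsilon$, $|r_\varepsilon'|\ge\tfrac12$ so $1/(r_\varepsilon')^2\le 4$, and $\bigl|\frac{d}{d\tilde r}(r_\varepsilon^{-1})\bigr|\le 1 + 2C_0\varepsilon \le 2$ (after shrinking $\varepsilon_0$ if needed so that $2C_0\varepsilon_0\le 1$) — gives $\bigl|\frac{d^2}{d\tilde r^2}(r_\varepsilon^{-1})\bigr| \le \tilde C_0\varepsilon \cdot 2 \cdot 4 = 8\tilde C_0\varepsilon$, exactly as claimed.

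There is no real obstacle here; the statement is an elementary consequence of the chain/inverse-function rules and the uniform $C^1$ (resp.\ $C^2$) bounds on $h_\varepsilon$. The only points requiring minor care are: (i) making sure the single $\varepsilon_0 = \varepsilon_0(C_0)$ is chosen once and for all to make $C_0\varepsilon_0\le\tfrac12$ (and, if one wants the cleanest constants, also $2C_0\varepsilon_0\le 1$), so that all the denominators $r_\varepsilon'$ stay bounded below by $\tfrac12$; and (ii) tracking constants precisely so the final bounds read $2C_0\varepsilon$ and $8\tilde C_0\varepsilon$ rather than slightly larger multiples — which is handled by the geometric-series estimate $1/(1-x)\le 1+2x$ for $x\in[0,\tfrac12]$ noted above. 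I would also remark that the image-interval claim is where monotonicity of $r_\varepsilon$ (rather than just local invertibility) is used, so it is worth stating explicitly that $r_\varepsilon' > 0$ on all of $[-\delta,\delta]$.
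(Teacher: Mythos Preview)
Your proof is correct and follows essentially the same route as the paper: differentiate $r_\varepsilon$, use $|r_\varepsilon'-1|\le C_0\varepsilon\le\tfrac12$ for monotonicity and invertibility, then apply the inverse-function formulas $(r_\varepsilon^{-1})'=1/(r_\varepsilon'\circ r_\varepsilon^{-1})$ and $(r_\varepsilon^{-1})''=-(r_\varepsilon''\circ r_\varepsilon^{-1})/(r_\varepsilon'\circ r_\varepsilon^{-1})^3$ together with $|r_\varepsilon'|\ge\tfrac12$. The only cosmetic difference is in the bound $|r_\varepsilon^{-1}(\tilde r)|\le(1+2C_0\varepsilon)(|\tilde r|+\varepsilon|h_\varepsilon(0)|)$: the paper writes $r_\varepsilon^{-1}(\tilde r)=\int_{-\varepsilon h_\varepsilon(0)}^{\tilde r}(r_\varepsilon^{-1})'\,d\bar r$ and bounds the integrand by $1+2C_0\varepsilon$, whereas you apply the mean value theorem to $h_\varepsilon$ and the inequality $1/(1-x)\le 1+2x$ for $x\in[0,\tfrac12]$; both yield the same constant.
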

\begin{proof}[Proof. Ad 1] 	
Since $h_\varepsilon$ is $C^1$, this is also true for $r_\varepsilon$. Moreover, it holds 
\[
\frac{d}{dr}r_\varepsilon(r)=1-\varepsilon\frac{d}{dr}h_\varepsilon(r)\quad\text{ and }\quad\left|\frac{d}{dr}r_\varepsilon(r)-1\right|\leq C_0\varepsilon\leq\frac{1}{2}
\] 
for all $r\in[-\delta,\delta]$ if $\varepsilon\in(0,\varepsilon_0]$ and $\varepsilon_0=\varepsilon_0(C_0)>0$ is small. In particular, $r_\varepsilon$ is strictly monotone and invertible on $[-\tilde{\delta},\tilde{\delta}]$ onto $[r_\varepsilon(-\tilde{\delta}),r_\varepsilon(\tilde{\delta})]$ for all $\tilde{\delta}\in(0,\delta]$. The inverse is also $C^1$ and
\[
\left|\frac{d}{d\tilde{r}}(r_\varepsilon^{-1})-1\right|=\left|\frac{1-\frac{d}{dr}r_\varepsilon(r_\varepsilon^{-1})}{\frac{d}{dr}r_\varepsilon(r_\varepsilon^{-1})}
\right|\leq 2C_0\varepsilon
\]
due to $|\frac{d}{dr}r_\varepsilon(r_\varepsilon^{-1})|\geq\frac{1}{2}$. Finally, note that $r_\varepsilon(0)=-\varepsilon h_\varepsilon(0)$. This yields for all $\tilde{r}\in r_\varepsilon([-\delta,\delta])$:
\[
r_\varepsilon^{-1}(\tilde{r}) = \int_{-\varepsilon h_\varepsilon(0)}^{\tilde{r}}\frac{d}{d\overline{r}}(r_\varepsilon^{-1})(\overline{r})\,d\overline{r}.
\]
Since the modulus of the integrand is bounded by $1+2C_0\varepsilon$, we obtain the estimate for $|r_\varepsilon^{-1}|$.\qedhere$_{1.}$\end{proof}

\begin{proof}[Ad 2] Let additionally $h_\varepsilon\in C^2([-\delta,\delta])$ with $\|\frac{d^2}{dr^2}h_\varepsilon\|_{C_b^0([-\delta,\delta])}\leq\tilde{C}_0$ for $\varepsilon\in(0,\varepsilon_1]$. Then $r_\varepsilon$ and $r_\varepsilon^{-1}$ are $C^2$ for all $\varepsilon\in(0,\varepsilon_1]$. The estimates follow from
\[
\frac{d^2}{dr^2}r_\varepsilon=-\varepsilon\frac{d^2}{dr^2}h_\varepsilon\quad\text{ and }\quad 
\frac{d^2}{d\tilde{r}^2}(r_\varepsilon^{-1})=-\frac{\frac{d^2}{dr^2}r_\varepsilon(r_\varepsilon^{-1})}{(\frac{d}{dr}r_\varepsilon(r_\varepsilon^{-1}))^3}
\]
together with $|\frac{d}{dr}r_\varepsilon(r_\varepsilon^{-1})|\geq\frac{1}{2}$.
\qedhere$_{2.}$\end{proof}

In particular $\rho_\varepsilon:[-\delta,\delta]\rightarrow \frac{1}{\varepsilon}r_\varepsilon([-\delta,\delta])$ is $C^1$ for $\varepsilon\in(0,\varepsilon_0]$ and invertible with inverse
\begin{align}\label{eq_SE_F_eps}
F_\varepsilon:\frac{1}{\varepsilon}r_\varepsilon([-\delta,\delta])\rightarrow[-\delta,\delta]:z\mapsto r_\varepsilon^{-1}(\varepsilon z).
\end{align}
Finally, let $J\in C^2([-\delta,\delta],\R)$ with
\begin{align}\label{eq_SE_J}
J\geq c_1>0\quad\text{ and }\quad \|J\|_{C_b^2([-\delta,\delta])}\leq C_2.
\end{align}
Then we define $J_\varepsilon:=J(F_\varepsilon):\frac{1}{\varepsilon}r_\varepsilon([-\delta,\delta])\rightarrow\R$ for $\varepsilon\in(0,\varepsilon_0]$.

\begin{Corollary}\label{th_SE_1Dprelim2}
Let $h_\varepsilon\in C^2([-\delta,\delta],\R)$ with $\|h_\varepsilon\|_{C^2([-\delta,\delta])}\leq \overline{C}_0$ for small $\varepsilon>0$ and $J$ be as above. Let $\varepsilon_0=\varepsilon_0(\overline{C}_0)>0$ be such that Lemma \ref{th_SE_1Dprelim1} holds. Then $F_\varepsilon$, $J_\varepsilon$ are well-defined for $\varepsilon\in(0,\varepsilon_0]$, $C^2$ and we obtain for all $z\in \frac{1}{\varepsilon}r_\varepsilon([-\delta,\delta])$ the estimates
\begin{alignat*}{3}
|F_\varepsilon(z)|&\leq 2\varepsilon(|z|+\overline{C}_0),& \quad \left|\frac{d}{dz}F_\varepsilon(z)\right| &\leq 2\varepsilon,&
\quad \left|\frac{d^2}{dz^2}F_\varepsilon(z)\right| &\leq 8\overline{C}_0\varepsilon^3,\\
c_1&\leq J_\varepsilon(z)\leq C_2,&\quad  \left|\frac{d}{dz}J_\varepsilon(z)\right| &\leq 2C_2\varepsilon,&
\quad \left|\frac{d^2}{dz^2}J_\varepsilon(z)\right| &\leq C(\overline{C}_0,C_2)\varepsilon^2.
\end{alignat*}
\end{Corollary}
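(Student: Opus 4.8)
The plan is to treat this as a direct corollary of Lemma \ref{th_SE_1Dprelim1} combined with the chain rule. First I would observe that the hypothesis $\|h_\varepsilon\|_{C^2([-\delta,\delta])}\leq\overline{C}_0$ gives in particular $\|h_\varepsilon\|_{C_b^1([-\delta,\delta])}\leq\overline{C}_0$ and $\|\frac{d^2}{dr^2}h_\varepsilon\|_{C_b^0([-\delta,\delta])}\leq\overline{C}_0$, so both parts of Lemma \ref{th_SE_1Dprelim1} apply with $C_0=\tilde{C}_0=\overline{C}_0$ and $\varepsilon_1=\varepsilon_0$. Consequently $r_\varepsilon:[-\delta,\delta]\to r_\varepsilon([-\delta,\delta])$ is a $C^2$-diffeomorphism for $\varepsilon\in(0,\varepsilon_0]$, so $F_\varepsilon(z)=r_\varepsilon^{-1}(\varepsilon z)$ is well-defined and $C^2$ on $\frac{1}{\varepsilon}r_\varepsilon([-\delta,\delta])$ as the composition of the linear map $z\mapsto\varepsilon z$ with $r_\varepsilon^{-1}$; since $J\in C^2([-\delta,\delta])$ and $F_\varepsilon$ takes values in $[-\delta,\delta]$, the composition $J_\varepsilon=J\circ F_\varepsilon$ is $C^2$ as well.

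Next I would read off the three estimates for $F_\varepsilon$ from Lemma \ref{th_SE_1Dprelim1}. For the zeroth-order bound, $|F_\varepsilon(z)|=|r_\varepsilon^{-1}(\varepsilon z)|\leq(1+2\overline{C}_0\varepsilon)(\varepsilon|z|+\varepsilon|h_\varepsilon(0)|)$; using $|h_\varepsilon(0)|\leq\|h_\varepsilon\|_{C_b^0}\leq\overline{C}_0$ together with the inequality $\overline{C}_0\varepsilon\leq\frac12$ that is already part of the conclusion of Lemma \ref{th_SE_1Dprelim1},~1., one gets $1+2\overline{C}_0\varepsilon\leq2$ and hence $|F_\varepsilon(z)|\leq2\varepsilon(|z|+\overline{C}_0)$. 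For the derivatives one differentiates $F_\varepsilon(z)=r_\varepsilon^{-1}(\varepsilon z)$, obtaining $\frac{d}{dz}F_\varepsilon(z)=\varepsilon\,(r_\varepsilon^{-1})'(\varepsilon z)$ and $\frac{d^2}{dz^2}F_\varepsilon(z)=\varepsilon^2(r_\varepsilon^{-1})''(\varepsilon z)$, and then inserts $|(r_\varepsilon^{-1})'-1|\leq2\overline{C}_0\varepsilon\leq1$ (so $|(r_\varepsilon^{-1})'|\leq2$) and $|(r_\varepsilon^{-1})''|\leq8\overline{C}_0\varepsilon$ from Lemma \ref{th_SE_1Dprelim1}, which yield $|\frac{d}{dz}F_\varepsilon|\leq2\varepsilon$ and $|\frac{d^2}{dz^2}F_\varepsilon|\leq8\overline{C}_0\varepsilon^3$.

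Finally I would derive the bounds for $J_\varepsilon$ by the chain rule and $\|J\|_{C_b^2([-\delta,\delta])}\leq C_2$. Since $F_\varepsilon(z)\in[-\delta,\delta]$ and $c_1\leq J\leq C_2$ there, the pointwise bound $c_1\leq J_\varepsilon(z)\leq C_2$ is immediate. Differentiating, $\frac{d}{dz}J_\varepsilon=J'(F_\varepsilon)\,\frac{d}{dz}F_\varepsilon$ gives $|\frac{d}{dz}J_\varepsilon|\leq C_2\cdot2\varepsilon$, and $\frac{d^2}{dz^2}J_\varepsilon=J''(F_\varepsilon)\bigl(\frac{d}{dz}F_\varepsilon\bigr)^2+J'(F_\varepsilon)\,\frac{d^2}{dz^2}F_\varepsilon$ combined with the previous estimates yields $|\frac{d^2}{dz^2}J_\varepsilon|\leq4C_2\varepsilon^2+8\overline{C}_0C_2\varepsilon^3\leq C(\overline{C}_0,C_2)\varepsilon^2$ for $\varepsilon\in(0,\varepsilon_0]$, absorbing the $\varepsilon^3$-term into the $\varepsilon^2$-term (using, say, $\varepsilon_0\leq1$). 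There is no substantial obstacle: the only points needing mild care are checking that the hypotheses of Lemma \ref{th_SE_1Dprelim1} are met with the right constants, confirming the range inclusion $F_\varepsilon(z)\in[-\delta,\delta]$ so that the pointwise bounds on $J$ may be used, and verifying that none of the resulting constants forces a further restriction on $\varepsilon_0$ beyond the one already fixed in Lemma \ref{th_SE_1Dprelim1}.
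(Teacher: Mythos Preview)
Your proof is correct and follows exactly the approach the paper indicates: the paper's proof is the single sentence ``This directly follows from Lemma~\ref{th_SE_1Dprelim1} and the chain rule,'' and you have simply carried out those computations in detail. One small remark: to absorb the $8\overline{C}_0C_2\varepsilon^3$ term you need not assume $\varepsilon_0\leq1$ separately, since $\overline{C}_0\varepsilon\leq\frac12$ is already part of the conclusion of Lemma~\ref{th_SE_1Dprelim1},~1., giving $8\overline{C}_0C_2\varepsilon^3\leq4C_2\varepsilon^2$ directly.
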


\begin{proof}
	This directly follows from Lemma \ref{th_SE_1Dprelim1} and the chain rule.
\end{proof}

\begin{Remark}\upshape\phantomsection{\label{th_SE_1Dprelim_rem}}
	\begin{enumerate}
	\item In the applications later $\delta$ corresponds to the one from Theorem \ref{th_coordND}. Moreover, $J$ will correlate to the determinant in Remark \ref{th_coordND_rem},~3. The $h_\varepsilon$, $\rho_\varepsilon$ here stand for the height function and the rescaled normal variable in the asymptotic expansions. However, $h_\varepsilon$ depends on the normal variable in the abstract setting. The latter will be constant in the application later. But the generalization is interesting for other situations, e.g.~an Allen-Cahn equation with a nonlinear Robin boundary condition designed to approximate \eqref{MCF} with a constant $\alpha$-contact angle for $\alpha$ close to $\frac{\pi}{2}$. See \cite{MoserDiss} or \cite{AbelsMoserAlpha}. In the abstract setting in this section additional variables like tangential ones or time do not appear. Therefore we prove uniform estimates with respect to the constants above.
	\item Consider the situation of Corollary \ref{th_SE_1Dprelim2}. Later it will be convenient to consider suitable symmetric subintervals of $\frac{1}{\varepsilon}r_\varepsilon([-\delta,\delta])$. Therefore note that for $\varepsilon_1=\varepsilon_1(\delta,\overline{C}_0)>0$ small it holds $\varepsilon_1\leq\varepsilon_0$ and
	\[
	[-\tilde{\delta},\tilde{\delta}]\subseteq r_\varepsilon([-\delta,\delta])\quad\text{ for all }\tilde{\delta}\in(0,\frac{3\delta}{4}], \varepsilon\in(0,\varepsilon_1].
	\] 
	Hence $F_\varepsilon, J_\varepsilon$ are well-defined and the assertions of Corollary \ref{th_SE_1Dprelim2} hold on $\overline{I_{\varepsilon,\tilde{\delta}}}$ for all $\tilde{\delta}\in(0,\frac{3\delta}{4}]$ and $\varepsilon\in(0,\varepsilon_1]$, where $I_{\varepsilon,\tilde{\delta}}:=(-\tilde{\delta}/\varepsilon,\tilde{\delta}/\varepsilon)$. Typically remainder terms on $\frac{1}{\varepsilon}r_\varepsilon([-\tilde{\delta},\tilde{\delta}])\setminus I_{\varepsilon,\tilde{\delta}}$ will behave nicely and it is sufficient to prove finer estimates on $I_{\varepsilon,\tilde{\delta}}$. This simplifies some notation.
	\end{enumerate}
	
\end{Remark}

\subsubsection{Transformations and Remainder Terms}\label{sec_SE_1Dtrafo_remainder}
Let $\delta,C_0>0$ and $h_\varepsilon\in C^1([-\delta,\delta],\R)$ such that \eqref{eq_SE_h} holds for $\varepsilon>0$ small. Moreover, let $r_\varepsilon, \rho_\varepsilon$ be as in \eqref{eq_SE_r_eps} and $\varepsilon_0=\varepsilon_0(C_0)>0$ be such that Lemma \ref{th_SE_1Dprelim1} holds. Then $F_\varepsilon$ as in \eqref{eq_SE_F_eps} is well-defined. We obtain the following lemma for transformation arguments and estimates for remainder terms.

\begin{Lemma}\label{th_SE_1Dtrafo_remainder}
	Let $\tilde{\varepsilon}_0\in(0,\varepsilon_0]$ and $R_\varepsilon:\R\times[-\delta,\delta]\rightarrow\R$ be integrable for $\varepsilon\in(0,\tilde{\varepsilon}_0]$. Moreover, let $\mathcal{J}\subseteq[-\delta,\delta]$ be an interval.
	\begin{enumerate}
		\item For all $\varepsilon\in(0,\tilde{\varepsilon}_0]$ it holds
		\[
		\int_{\mathcal{J}} R_\varepsilon(\rho_\varepsilon(r),r)\,dr=
		\int_{r_\varepsilon(\mathcal{J})/\varepsilon} R_\varepsilon(z,F_\varepsilon(z))\frac{d}{dz}F_\varepsilon(z)\,dz,
		\]
		where $\frac{d}{dz}F_\varepsilon(z)=\varepsilon\frac{d}{d\tilde{r}}(r_\varepsilon^{-1})(\varepsilon z)>0$ and $\left|\frac{d}{d\tilde{r}}(r_\varepsilon^{-1})(\varepsilon z)-1\right|\leq 2C_0\varepsilon$ for all $z\in\frac{r_\varepsilon([-\delta,\delta])}{\varepsilon}$.
		\item If additionally $|R_\varepsilon(\rho,r)|\leq \overline{C}|r|^k e^{-\beta|\rho|}$ for all $(\rho,r)\in\R\times[-\delta,\delta]$ and some $k\geq 0$, $\overline{C},\beta>0$, then for all $\varepsilon\in(0,\tilde{\varepsilon}_0]$ it follows that with constants independent of $\mathcal{J}$ we have the estimate
		\[
		\int_{\mathcal{J}} |R_\varepsilon(\rho_\varepsilon(r),r)|\,dr\leq \overline{C} C(C_0,k,\beta)\varepsilon^{k+1}.
		\] 
	\end{enumerate}
\end{Lemma}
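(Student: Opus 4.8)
\textbf{Plan for the proof of Lemma \ref{th_SE_1Dtrafo_remainder}.} The statement is elementary once the groundwork from Lemma \ref{th_SE_1Dprelim1} is in place, so the plan is essentially to apply the one-dimensional substitution rule and then estimate the resulting integrand crudely. For part 1, I would invoke the change of variables $r = F_\varepsilon(z) = r_\varepsilon^{-1}(\varepsilon z)$, which is legitimate because Lemma \ref{th_SE_1Dprelim1},~1.~guarantees that $r_\varepsilon$ (hence $\rho_\varepsilon$, hence $F_\varepsilon$) is a $C^1$-diffeomorphism from $[-\delta,\delta]$ onto its image for $\varepsilon\in(0,\varepsilon_0]$, with strictly positive derivative. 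Under this substitution $\rho_\varepsilon(r) = z$ and $dr = \frac{d}{dz}F_\varepsilon(z)\,dz$, and the interval $\mathcal{J}$ maps to $\rho_\varepsilon(\mathcal{J}) = r_\varepsilon(\mathcal{J})/\varepsilon$; this is exactly the claimed identity. The chain rule gives $\frac{d}{dz}F_\varepsilon(z) = \varepsilon\,\frac{d}{d\tilde r}(r_\varepsilon^{-1})(\varepsilon z)$, and the bound $|\frac{d}{d\tilde r}(r_\varepsilon^{-1})(\varepsilon z) - 1|\leq 2C_0\varepsilon$ is precisely the second estimate in Lemma \ref{th_SE_1Dprelim1},~1.; in particular the Jacobian is positive for $\varepsilon_0$ small, so no absolute value is needed. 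One should only note that integrability of $r\mapsto R_\varepsilon(\rho_\varepsilon(r),r)$ on $\mathcal{J}$ and of $z\mapsto R_\varepsilon(z,F_\varepsilon(z))\frac{d}{dz}F_\varepsilon(z)$ on $r_\varepsilon(\mathcal{J})/\varepsilon$ are equivalent by the substitution rule, which is fine since $R_\varepsilon$ is assumed integrable.

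\textbf{Part 2.} Here I would start from the identity in part 1 applied to $|R_\varepsilon|$ and use the pointwise bound $|R_\varepsilon(\rho,r)|\leq \overline C\,|r|^k e^{-\beta|\rho|}$. After substitution the integrand is bounded by
\[
\overline C\,|F_\varepsilon(z)|^k e^{-\beta|z|}\,\Bigl|\tfrac{d}{dz}F_\varepsilon(z)\Bigr|
\leq \overline C\,\bigl(2\varepsilon(|z|+\overline C_0)\bigr)^k e^{-\beta|z|}\cdot 2\varepsilon,
\]
where I used $|F_\varepsilon(z)|\leq 2\varepsilon(|z|+\overline C_0)$ and $|\frac{d}{dz}F_\varepsilon(z)|\leq 2\varepsilon$; strictly speaking these come from Corollary \ref{th_SE_1Dprelim2}, but the $C^1$-version of both bounds is already contained in Lemma \ref{th_SE_1Dprelim1},~1.~(the estimate for $|r_\varepsilon^{-1}(\tilde r)|$ with $h_\varepsilon(0)$ replaced by $\|h_\varepsilon\|_{C_b^1}\le C_0$, and $\frac{d}{dz}F_\varepsilon=\varepsilon\frac{d}{d\tilde r}(r_\varepsilon^{-1})(\varepsilon z)$ with $|\frac{d}{d\tilde r}(r_\varepsilon^{-1})|\le 1+2C_0\varepsilon\le 2$), so the hypothesis $h_\varepsilon\in C^1$ suffices here. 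Then I would extend the domain of integration from $r_\varepsilon(\mathcal{J})/\varepsilon$ to all of $\R$ (the integrand is nonnegative, so this only enlarges the integral, and it removes the $\mathcal{J}$-dependence of the constants) and use that $\int_\R (|z|+\overline C_0)^k e^{-\beta|z|}\,dz =: C(\overline C_0,k,\beta) < \infty$. Pulling out the $\varepsilon$-powers gives a bound of the form $\overline C\, 2^{k+1} C(\overline C_0,k,\beta)\,\varepsilon^{k+1}$, which is the claimed $\overline C\,C(C_0,k,\beta)\,\varepsilon^{k+1}$ (after renaming constants; note $\overline C_0$ here plays the role of $C_0$).

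\textbf{Main obstacle.} There is no real obstacle: the lemma is a bookkeeping statement whose content is entirely carried by Lemma \ref{th_SE_1Dprelim1}. The only points requiring a modicum of care are (i) making sure the substitution rule is applied on the correct image interval, so that the factor $\frac{1}{\varepsilon}$ in $r_\varepsilon(\mathcal{J})/\varepsilon$ appears correctly from $\rho_\varepsilon = r_\varepsilon/\varepsilon$; and (ii) tracking that all constants in part 2 can be chosen independently of $\mathcal{J}$, which is why one enlarges the integration domain to $\R$ before estimating. I would phrase the proof in two short paragraphs mirroring the two assertions, citing Lemma \ref{th_SE_1Dprelim1} (and, if one prefers, Corollary \ref{th_SE_1Dprelim2}) for the pointwise bounds on $F_\varepsilon$ and its derivative, and Theorem \ref{th_Leb_trafo} for the substitution itself.
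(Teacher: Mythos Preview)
Your proposal is correct and follows essentially the same route as the paper: part~1 is the one-dimensional substitution rule combined with the derivative estimates from Lemma~\ref{th_SE_1Dprelim1}, and part~2 applies part~1 to $|R_\varepsilon|$, inserts the pointwise bounds $|F_\varepsilon(z)|\leq (1+2C_0\varepsilon)\varepsilon(|z|+C_0)\leq 2\varepsilon(|z|+C_0)$ and $|\frac{d}{dz}F_\varepsilon(z)|\leq (1+2C_0\varepsilon)\varepsilon\leq 2\varepsilon$ from Lemma~\ref{th_SE_1Dprelim1},~1., and extends the integration to $\R$ to make the constant independent of $\mathcal{J}$. One minor point: the constant you call $\overline{C}_0$ should simply be $C_0$ throughout---only the $C^1$-bound from \eqref{eq_SE_h} is assumed in this section, and Lemma~\ref{th_SE_1Dprelim1},~1.\ (not Corollary~\ref{th_SE_1Dprelim2}) already supplies everything you need.
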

\begin{proof} The first assertion follows from Lemma \ref{th_SE_1Dprelim1} and the transformation rule. Using the latter for the second part, we obtain for all $\varepsilon\in(0,\tilde{\varepsilon}_0]$ that
	\[
	\int_{\mathcal{J}}|R_\varepsilon(\rho_\varepsilon(r),r)|\,dr\leq \overline{C}\varepsilon(1+2C_0\varepsilon)\int_{r_\varepsilon(\mathcal{J})/\varepsilon}|F_\varepsilon(z)|^k e^{-\beta|z|}\,dz.
	\]
	Here it holds $|F_\varepsilon(z)|=|r_\varepsilon^{-1}(\varepsilon z)|\leq (1+2C_0\varepsilon)\varepsilon(|z|+C_0)$ for all $z\in r_\varepsilon([-\delta,\delta])/\varepsilon$ and $C_0\varepsilon\leq\frac{1}{2}$ due to Lemma \ref{th_SE_1Dprelim1},~1. This yields 
	\[
	\int_{\mathcal{J}}|R_\varepsilon(\rho_\varepsilon(r),r)|\,dr\leq \overline{C}2^{k+1} \int_\R(|z|+C_0)^k e^{-\beta|z|}\,dz\,\varepsilon^{k+1}.
	\]
	This shows the estimate with constants independent of $\mathcal{J}$.
\end{proof}

\subsubsection{Spectral Estimates for Scalar-Valued Allen-Cahn-Type Operators in 1D}\label{sec_SE_1Dscal}
\paragraph{Unperturbed Scalar-Valued Allen-Cahn-Type Operators in 1D}\label{sec_SE_1Dscal_unpert}
We consider the operator $\Lc_0:=-\frac{d^2}{dz^2}+f''(\theta_0)$ on finite large intervals together with homogeneous Neumann boundary condition and we prove spectral properties in an $L^2$-setting. Note that in the following we only use Theorem \ref{th_theta_0}. For the spectral properties of $\Lc_0$ on the real line $\R$ see \cite{MoserDiss}, Section 4.1.2 or \cite{AbelsMoser}, Lemma 2.5. Note that the latter is important for the proof of Theorem \ref{th_hp_weak_sol} which is concerned with weak solutions for the model problem \eqref{eq_hp1}-\eqref{eq_hp2} on $\R^2_+$. 

Let $\tilde{\delta}>0$ be fixed, $\varepsilon>0$, $I_{\varepsilon,\tilde{\delta}}:=(-\frac{\tilde{\delta}}{\varepsilon},\frac{\tilde{\delta}}{\varepsilon})$ and $\K=\R$ or $\C$. Moreover, here one can reduce to $\tilde{\delta}=1$ by scaling in $\varepsilon$. However, introducing the $\tilde{\delta}$ in the notation already here will simplify the notation later. We consider the unbounded operator 
\[
L_{0,\varepsilon}:H^2_N(I_{\varepsilon,\tilde{\delta}},\K)
\rightarrow L^2(I_{\varepsilon,\tilde{\delta}},\K):u\mapsto \Lc_0u=\left[-\frac{d^2}{dz^2}+f''(\theta_0)\right]u,
\]
where $H^2_N(I_{\varepsilon,\tilde{\delta}},\K)$ is the space of $H^2$-functions $u$ on $I_{\varepsilon,\tilde{\delta}}$ satisfying the homogeneous Neumann boundary condition $\frac{d}{dz}u|_z=0$ for $z=\pm\tilde{\delta}/\varepsilon$.
The corresponding sesquilinearform is
\[
B_{0,\varepsilon}:H^1(I_{\varepsilon,\tilde{\delta}},\K)\times H^1(I_{\varepsilon,\tilde{\delta}},\K)\rightarrow\K: (\Phi,\Psi)\mapsto \int_{I_{\varepsilon,\tilde{\delta}}}\frac{d}{dz}\Phi\overline{\frac{d}{dz}\Psi} + f''(\theta_0)\Phi\overline{\Psi}\,dz.
\] 
As in \cite{ChenSpectrums}, Lemma 2.1 and \cite{Marquardt}, Section 3.1 we obtain the following lemma:
\begin{Lemma}\phantomsection{\label{th_SE_1Dscal_unpert}}
	\begin{enumerate}
		\item $L_{0,\varepsilon}$ is selfadjoint and the spectrum is given by discrete eigenvalues $(\lambda_{0,\varepsilon}^k)_{k\in\N}$ in $\R$ with $\lambda_{0,\varepsilon}^1\leq\lambda_{0,\varepsilon}^2\leq...  $ and $\lambda_{0,\varepsilon}^k\overset{k\rightarrow\infty}\longrightarrow\infty$. Moreover, there is an orthonormal basis $(\Psi_{0,\varepsilon}^k)_{k\in\N}$ of $L^2(I_{\varepsilon,\tilde{\delta}},\K)$ consisting of smooth $\R$-valued eigenfunctions $\Psi_{0,\varepsilon}^k$ to $\lambda_{0,\varepsilon}^k$. 
		\item $\lambda_{0,\varepsilon}^1$ is simple and the corresponding eigenfunction $\Psi_{0,\varepsilon}^1$ has a sign. We take $\Psi_{0,\varepsilon}^1$ positive.
		\item Let $c_0>0$ be such that $\inf_{|z|\geq c_0}f''(\theta_0(z))\geq \frac{3}{4}\min\{f''(\pm 1)\}$. Then for $\varepsilon>0$ small and any normalized eigenfunction $\Psi_{0,\varepsilon}$ of $L_{0,\varepsilon}$ to an eigenvalue $\lambda_{0,\varepsilon}\leq\frac{1}{4}\min\{f''(\pm 1)\}$ it holds
		\[
		|\Psi_{0,\varepsilon}(z)|\leq Ce^{-|z|\sqrt{\min\{f''(\pm 1)\}/3}}\quad\text{ for all }z\in I_{\varepsilon,\tilde{\delta}}, |z|\geq c_0+1,
		\]
		where $C>0$ only depends on $c_0$ and $\min\{f''(\pm 1)\}$.
		\item There is $\varepsilon_0=\varepsilon_0(\tilde{\delta})>0$ small such that for all $\varepsilon\in(0,\varepsilon_0]$
		\[
		\lambda_{0,\varepsilon}^1=\inf_{\Psi\in H^1(I_{\varepsilon,\tilde{\delta}}), \|\Psi\|_{L^2}=1} B_{0,\varepsilon}(\Psi,\Psi)= B_{0,\varepsilon}(\Psi_{0,\varepsilon}^1,\Psi_{0,\varepsilon}^1),\quad
		|\lambda_{0,\varepsilon}^1|\leq Ce^{-\frac{3\tilde{\delta}\sqrt{\min\{f''(\pm 1)\}}}{2\varepsilon}},
		\]
		where $C>0$ is independent of $\tilde{\delta}$, $\varepsilon$.
		\item There is $\nu_1>0$ independent of $\tilde{\delta}$, $\varepsilon$ and $\varepsilon_0=\varepsilon_0(\tilde{\delta})>0$ small such that
		\[
		\lambda_{0,\varepsilon}^2 = \inf_{\Psi\in H^1(I_{\varepsilon,\tilde{\delta}}), \|\Psi\|_{L^2}=1, \Psi\perp_{L^2}\Psi_{0,\varepsilon}^1} B_{0,\varepsilon}(\Psi,\Psi) \geq \nu_1\quad\text{ for all }\varepsilon\in(0,\varepsilon_0].
		\]
		\item Let $\beta_\varepsilon:=\|\theta_0'\|_{L^2(I_{\varepsilon,\tilde{\delta}})}^{-1}$ and $\Psi_{0,\varepsilon}^R:=\Psi_{0,\varepsilon}^1-\beta_\varepsilon\theta_0'$. For $\varepsilon_0=\varepsilon_0(\tilde{\delta})>0$ small and $\varepsilon\in(0,\varepsilon_0]$ we have
		\[\textstyle
		\left\|\Psi_{0,\varepsilon}^R\right\|_{L^2(I_{\varepsilon,\tilde{\delta}})}^2
		+\left\|\frac{d}{dz}\Psi_{0,\varepsilon}^R\right\|_{L^2(I_{\varepsilon,\tilde{\delta}})}^2
		\leq Ce^{-\frac{3\tilde{\delta}\sqrt{\min\{f''(\pm 1)\}}}{2\varepsilon}},
		\]
		where $C>0$ is independent of $\tilde{\delta}$, $\varepsilon$.
	\end{enumerate}
\end{Lemma}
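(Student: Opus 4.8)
The statement is the standard package of spectral facts for the 1D linearized Allen--Cahn operator $L_{0,\varepsilon}$ on the large but finite interval $I_{\varepsilon,\tilde\delta}=(-\tilde\delta/\varepsilon,\tilde\delta/\varepsilon)$ with Neumann boundary conditions, and the plan is to transfer the known whole-line spectral picture of $\Lc_0=-\tfrac{d^2}{dz^2}+f''(\theta_0)$ (Theorem~\ref{th_theta_0}, together with \cite{AbelsMoser}, Lemma~2.5 / \cite{MoserDiss}, Section~4.1.2) to the interval, following \cite{ChenSpectrums}, Lemma~2.1 and \cite{Marquardt}, Section~3.1. First I would establish 1.\ and 2.: since $f''(\theta_0)\in C_b^\infty(\R)$ is bounded, $B_{0,\varepsilon}$ is a bounded, symmetric, coercive-up-to-a-shift sesquilinearform on $H^1(I_{\varepsilon,\tilde\delta})$, so $L_{0,\varepsilon}$ is selfadjoint with compact resolvent; hence the spectrum consists of discrete eigenvalues accumulating only at $+\infty$, with an $L^2$-orthonormal basis of (automatically smooth, by elliptic regularity, and by reflection one may take them real-valued) eigenfunctions. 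This is exactly the abstract Fredholm-alternative setting referenced in the appendix (Section~\ref{sec_fredh}). Simplicity of $\lambda_{0,\varepsilon}^1$ and the sign of the ground state follow from the classical Perron--Frobenius/Krein--Rutman argument for Sturm--Liouville operators on a bounded interval (or, equivalently, from the fact that a minimizer of the Rayleigh quotient can be taken nonnegative and any sign change would contradict the strong maximum principle / unique continuation), so one fixes $\Psi_{0,\varepsilon}^1>0$.

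For 3.\ (the exponential decay of low-lying eigenfunctions), the plan is an Agmon-type estimate: choose $c_0$ with $\inf_{|z|\ge c_0}f''(\theta_0(z))\ge \tfrac34\min\{f''(\pm1)\}$, which is possible because $f''(\theta_0(z))\to f''(\pm1)$ exponentially by Theorem~\ref{th_theta_0}. For a normalized eigenfunction $\Psi_{0,\varepsilon}$ with eigenvalue $\lambda_{0,\varepsilon}\le\tfrac14\min\{f''(\pm1)\}$, test the eigenvalue equation with $e^{2\alpha\phi}\Psi_{0,\varepsilon}$ for a suitable $\alpha<\sqrt{\min\{f''(\pm1)\}/3}$ and weight $\phi(z)=(|z|-c_0)_+$; the gap between $f''(\theta_0)$ and $\lambda_{0,\varepsilon}$ on $\{|z|\ge c_0\}$ absorbs the weight derivative, giving an $L^2$-bound on $e^{\alpha\phi}\Psi_{0,\varepsilon}$, and then a local elliptic (Sobolev) estimate upgrades this to the pointwise bound $|\Psi_{0,\varepsilon}(z)|\le Ce^{-|z|\sqrt{\min\{f''(\pm1)\}/3}}$ for $|z|\ge c_0+1$, with $C$ depending only on $c_0$ and $\min\{f''(\pm1)\}$; Neumann boundary terms have the right sign and are discarded.

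The heart of the matter is 4.--6., and here is where I expect the main obstacle: one needs the \emph{precise} exponentially small bounds on $\lambda_{0,\varepsilon}^1$ (of order $e^{-c\tilde\delta/\varepsilon}$, with $c$ the explicit rate $\tfrac32\sqrt{\min\{f''(\pm1)\}}$) and the \emph{uniform} (in $\tilde\delta,\varepsilon$) gap $\lambda_{0,\varepsilon}^2\ge\nu_1>0$. For the upper bound on $\lambda_{0,\varepsilon}^1$ I would use $\theta_0'$ as a trial function: $\Lc_0\theta_0'=0$, $\theta_0'>0$ by Theorem~\ref{th_theta_0}, and the only defect is that $\theta_0'$ does not satisfy the Neumann condition at $\pm\tilde\delta/\varepsilon$; but $\tfrac{d}{dz}\theta_0'(\pm\tilde\delta/\varepsilon)=\theta_0''(\pm\tilde\delta/\varepsilon)$ is $\Oc(e^{-\tilde\delta\sqrt{\min\{f''(\pm1)\}}/\varepsilon})$, so a Neumann correction of $\theta_0'$ (or an integration-by-parts bookkeeping of the boundary terms in $B_{0,\varepsilon}(\theta_0',\theta_0')=\int \theta_0'\Lc_0\theta_0' + \text{bdry} = \text{bdry}$) yields $\lambda_{0,\varepsilon}^1\le B_{0,\varepsilon}(\Psi,\Psi)\le Ce^{-\tfrac{3\tilde\delta}{2\varepsilon}\sqrt{\min\{f''(\pm1)\}}}$ after normalization (the extra $\tfrac32$ in the exponent coming from $\|\theta_0'\|_{L^2}^{-2}$ times the boundary terms, both exponentially small). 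The lower bound $\lambda_{0,\varepsilon}^1\ge -Ce^{-c\tilde\delta/\varepsilon}$ follows by extending the positive ground state (cut off near the endpoints, using its decay from 3.) to a whole-line competitor and invoking $\Lc_0\ge0$ on $\R$ together with the cut-off error estimates. For the uniform gap in 5., the plan is a contradiction/compactness argument: if $\lambda_{0,\varepsilon}^2\to0$ along a sequence $\varepsilon\to0$, the corresponding second eigenfunctions (orthogonal to $\Psi_{0,\varepsilon}^1\approx\beta_\varepsilon\theta_0'$, normalized, decaying uniformly by 3.) would, after extension to $\R$ and extraction, converge to a nonzero element of $\ker\Lc_0$ on $\R$ that is $L^2$-orthogonal to $\theta_0'$; but $\ker\Lc_0=\mathrm{span}\{\theta_0'\}$ on $\R$ (the spectral-gap fact from \cite{AbelsMoser}, Lemma~2.5), a contradiction --- hence $\lambda_{0,\varepsilon}^2\ge\nu_1$ with $\nu_1$ derived from the whole-line spectral gap below $\min\{f''(\pm1)\}$, uniform in $\tilde\delta$ by the decay localizing everything away from the endpoints. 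Finally, 6.\ is a quantitative consequence of 2.\ and 4.: write $\Psi_{0,\varepsilon}^R=\Psi_{0,\varepsilon}^1-\beta_\varepsilon\theta_0'$; expanding $B_{0,\varepsilon}(\Psi_{0,\varepsilon}^R,\Psi_{0,\varepsilon}^R)$ using $L_{0,\varepsilon}\Psi_{0,\varepsilon}^1=\lambda_{0,\varepsilon}^1\Psi_{0,\varepsilon}^1$, $\Lc_0\theta_0'=0$ and the boundary terms, together with $\|\Psi_{0,\varepsilon}^1-\beta_\varepsilon\theta_0'\|_{L^2}^2\le C\lambda_{0,\varepsilon}^1/\nu_1$ (which comes from testing against the spectral gap 5., since $\Psi_{0,\varepsilon}^R$ is essentially the component of $\Psi_{0,\varepsilon}^1$ orthogonal to the would-be ground state $\beta_\varepsilon\theta_0'$ up to exponentially small errors), gives the claimed $e^{-\tfrac{3\tilde\delta}{2\varepsilon}\sqrt{\min\{f''(\pm1)\}}}$-bound on $\|\Psi_{0,\varepsilon}^R\|_{H^1}^2$; the $H^1$-part is controlled from the $L^2$-part via the equation and the bound on $f''(\theta_0)$. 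Throughout, the one point requiring care is keeping all constants genuinely independent of $\tilde\delta$ (only $\varepsilon_0$ may depend on $\tilde\delta$), which is ensured because every error term is localized near $z=\pm\tilde\delta/\varepsilon$ and carries the full exponential weight $e^{-c\tilde\delta/\varepsilon}$.
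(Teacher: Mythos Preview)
Your plan is correct and complete; it covers all six parts with workable arguments. The route you take for parts~3.--5.\ differs from the one the paper actually invokes, however. The paper cites \cite{ChenSpectrums}, Lemma~2.1 and \cite{Marquardt}, and explicitly names the \emph{comparison principle}, the \emph{Harnack inequality} and the \emph{Hopf maximum principle} as the tools: Chen's argument obtains the pointwise decay in 3.\ by comparing with explicit exponential super/subsolutions on $\{|z|\ge c_0\}$, and derives the spectral gap in 5.\ by a direct (not compactness) argument exploiting positivity and the scalar Sturm--Liouville structure. Your alternative---an Agmon-type weighted $L^2$ estimate for the decay, extension-to-$\R$ plus $\Lc_0\ge0$ for the lower bound on $\lambda_{0,\varepsilon}^1$, and a contradiction/compactness argument for $\lambda_{0,\varepsilon}^2\ge\nu_1$---is precisely the strategy the paper itself adopts later in the \emph{vector-valued} analogue (Lemma~\ref{th_SE_1Dvect_unpert}), where maximum principles are unavailable; so your proof is more portable, while the paper's referenced proof is shorter in the scalar case because it leans on order-preserving tools. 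One small correction: your explanation of the exponent $\tfrac{3}{2}\sqrt{\min\{f''(\pm1)\}}$ is off---it does not come from $\|\theta_0'\|_{L^2}^{-2}$ (which is $O(1)$), but simply from evaluating the boundary term $\theta_0'\theta_0''|_{\pm\tilde\delta/\varepsilon}=\Oc(e^{-2\beta\tilde\delta/\varepsilon})$ with the admissible decay rate $\beta=\tfrac{3}{4}\sqrt{\min\{f''(\pm1)\}}<\sqrt{\min\{f''(\pm1)\}}$ from Theorem~\ref{th_theta_0}.
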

\begin{proof} 
	By scaling in $\varepsilon$ it is enough to consider the case $\tilde{\delta}=1$. We set $I_\varepsilon:=I_{\varepsilon,1}$.\phantom{\qedhere}
	
	\begin{proof}[Ad 1] $L_{0,\varepsilon}$ is selfadjoint because $L_{0,\varepsilon}$ is symmetric and the resolvent set nonempty. The latter follows from the Lax-Milgram Theorem applied to a constant shift of $B_{0,\varepsilon}$ due to $f''(\theta_0)\geq -C$. The spectral properties in 1.~and the existence of the orthonormal basis follow from the abstract Fredholm alternative in Theorem \ref{th_fred} below (and standard regularity and integration by parts arguments) applied to
	\[
	A_{0,\varepsilon}:H^1(I_\varepsilon,\C)\rightarrow H^1(I_\varepsilon,\C)^*: u\mapsto [v\mapsto B_{0,\varepsilon}(u,v)],
	\]	 
	where $H^1(I_\varepsilon,\C)^*$ is the anti-dual space of $H^1(I_\varepsilon,\C)$, i.e.~the space of conjugate linear functionals on $H^1(I_\varepsilon,\C)$. Note that here also in the case $\K=\C$ one can obtain an $\R$-valued orthonormal basis since for an eigenfunction $u$ also $\overline{u}$ is an eigenfunction to the same eigenvalue and $u,\overline{u}$ are $\C$-linearly independent if and only if $\textup{Re}\,u, \textup{Im}\,u$ are $\R$-linearly independent. The latter can be seen with elementary arguments.\qedhere$_{1.}$\end{proof}
	
	\begin{proof}[Ad 2] The properties of $\lambda_{0,\varepsilon}^1$ and $\Psi_{0,\varepsilon}^1$ can be shown with the Krein-Rutman-Theorem and the maximum principle, cf.~\cite{Marquardt}, Proposition 3.6, 2.\qedhere$_{2.}$\end{proof}

    \begin{proof}[Ad 3.-6] For the remaining assertions it is enough to consider the case $\K=\R$. The eigenvalues are the same for $\K=\R, \C$ and the orthonormal basis of $\R$-valued eigenfunctions can be chosen to be the same. Moreover, the \textup{inf}-characterizations are known, cf.~e.g.~\cite{Marquardt}, proof of Proposition 3.6. The other assertions can be deduced with the comparison principle, Theorem \ref{th_theta_0}, the Harnack-inequality and the Hopf maximum principle, cf.~\cite{ChenSpectrums}, Lemma 2.1 and \cite{Marquardt}, Proposition 3.7 and Lemma 3.8. For the proof of 6.~see also the analogous computation in the proof of Lemma \ref{th_SE_1Dscal_unpert},~6.~below in the vector-valued case.\end{proof}
\end{proof}

\paragraph{Perturbed Scalar-Valued Allen-Cahn-Type Operators in 1D}\label{sec_SE_1Dscal_pert}
In this section we derive a result for perturbed and weighted operators in 1D. Let $\delta>0$ and $h_\varepsilon, J\in C^2([-\delta,\delta],\R)$ with $\|h_\varepsilon\|_{C^2([-\delta,\delta])}\leq \overline{C}_0$ for $\varepsilon>0$ small and $c_1,C_2>0$ be such that \eqref{eq_SE_J} holds. Then let $\rho_\varepsilon, F_\varepsilon, J_\varepsilon$ for $\varepsilon>0$ small be as in Section \ref{sec_SE_1Dsetting}. We consider
\begin{align}
\phi_\varepsilon:[-\delta,\delta]\rightarrow\R:r\mapsto \theta_0(\frac{r}{\varepsilon}) 
+ \varepsilon p_\varepsilon \theta_1(\frac{r}{\varepsilon}) + q_\varepsilon(r)\varepsilon^2, 
\end{align}
where $p_\varepsilon\in\R$ and $q_\varepsilon:[-\delta,\delta]\rightarrow\R$ is measurable with $|p_\varepsilon|+\frac{\varepsilon}{\varepsilon+|r|}|q_\varepsilon(r)|\leq C_3$ for all $r\in[-\delta,\delta]$, some $C_3>0$, and $\varepsilon>0$ small. Moreover, let $\theta_1\in L^\infty(\R)$ with $\|\theta_1\|_\infty\leq C_4$ for a $C_4>0$ and
\begin{align}\label{eq_SE_1Dscal_pert_theta1}
\int_\R f''(\theta_0)(\theta_0')^2\,\theta_1=0.
\end{align}

Let $\tilde{\delta}\in(0,\frac{3\delta}{4}]$ be fixed. Then $F_\varepsilon$, $J_\varepsilon$ are well-defined on $\overline{I_{\varepsilon,\tilde{\delta}}}$ for $\varepsilon\in(0,\varepsilon_1(\delta,\overline{C}_0)]$ and Corollary \ref{th_SE_1Dprelim2} is applicable due to Remark \ref{th_SE_1Dprelim_rem},~2. We consider the operators
	\[
	L_\varepsilon:H^2_N(I_{\varepsilon,\tilde{\delta}},\K)\rightarrow L^2_{J_\varepsilon}(I_{\varepsilon,\tilde{\delta}},\K):
	u\mapsto \Lc_\varepsilon u:= \left[-J_\varepsilon^{-1} \frac{d}{dz}\left(J_\varepsilon\frac{d}{dz}\right)+f''(\phi_\varepsilon(\varepsilon .))\right]u,
	\]
where $L^2_{J_\varepsilon}(I_{\varepsilon,\tilde{\delta}},\K)$ is the space of $L^2$-functions defined on $I_{\varepsilon,\tilde{\delta}}$ with the weight $J_\varepsilon$. We write $(.,.)_{J_\varepsilon}$, $\|.\|_{J_\varepsilon}$ and $\perp_{J_\varepsilon}$ for the corresponding scalar product, norm and orthogonal relation.
The sesquilinearform associated to $L_\varepsilon$ is given by
\[
B_\varepsilon:H^1(I_{\varepsilon,\tilde{\delta}},\K)\times H^1(I_{\varepsilon,\tilde{\delta}},\K)\rightarrow\K:(\Phi,\Psi)\mapsto \int_{I_{\varepsilon,\tilde{\delta}}}\left[\frac{d}{dz}\Phi\overline{\frac{d}{dz}\Psi} + f''(\phi_\varepsilon(\varepsilon .))\Phi\overline{\Psi}\right] J_\varepsilon\,dz.
\]
We obtain the analogue of Lemma \ref{th_SE_1Dscal_unpert},~1.-3.

\begin{Lemma}\phantomsection{\label{th_SE_1Dscal_pert1}}
	\begin{enumerate}
		\item $L_\varepsilon$ is selfadjoint and the spectrum is given by a sequence of discrete eigenvalues $(\lambda_\varepsilon^k)_{k\in\N}$ in $\R$ with $\lambda_\varepsilon^1\leq\lambda_\varepsilon^2\leq...  $ and $\lambda_\varepsilon^k\overset{k\rightarrow\infty}\longrightarrow\infty$. Moreover, there is an orthonormal basis $(\Psi_\varepsilon^k)_{k\in\N}$ of $L^2_{J_\varepsilon}(I_{\varepsilon,\tilde{\delta}},\K)$ consisting of smooth $\R$-valued eigenfunctions $\Psi_\varepsilon^k$ to $\lambda_\varepsilon^k$. 
		\item $\lambda_\varepsilon^1$ is simple and the corresponding eigenfunction $\Psi_\varepsilon^1$ has a sign. We take $\Psi_\varepsilon^1$ positive.
		\item Let $c_0>0$ be such that $\inf_{|z|\geq c_0}f''(\theta_0(z))\geq \frac{3}{4}\min\{f''(\pm 1)\}$. There is an $\varepsilon_0>0$ (only depending on $\delta$, $\tilde{\delta}$, $\overline{C}_0$, $c_1$, $C_2$, $C_3$, $C_4$) such that for all $\varepsilon\in(0,\varepsilon_0]$ and any normalized $\R$-valued eigenfunction $\Psi_\varepsilon$ of $L_\varepsilon$ to an eigenvalue $\lambda_\varepsilon\leq\frac{1}{4}\min\{f''(\pm 1)\}$ it holds 
		\[
		|\Psi_\varepsilon(z)|\leq  Ce^{-|z|\sqrt{\min\{f''(\pm 1)\}/3}}\quad\text{ for all }z\in I_{\varepsilon,\tilde{\delta}}, |z|\geq c_0+1,
		\]
		where $C>0$ only depends on $c_0$, $\min\{f''(\pm 1)\}$ and $c_1$.
	\end{enumerate}	
\end{Lemma}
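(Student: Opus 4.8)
The plan is to mimic the proof of Lemma \ref{th_SE_1Dscal_unpert},~1.-3., but now on the \emph{perturbed, weighted} operator $L_\varepsilon$, reducing the weighted problem to the setting of the abstract Fredholm alternative in Theorem \ref{th_fred}. First I would treat part~1: the sesquilinearform $B_\varepsilon$ is bounded on $H^1(I_{\varepsilon,\tilde\delta},\C)\times H^1(I_{\varepsilon,\tilde\delta},\C)$ because $0<c_1\le J_\varepsilon\le C_2$ by Corollary \ref{th_SE_1Dprelim2} and $f''(\phi_\varepsilon(\varepsilon\,\cdot))$ is bounded (here $\phi_\varepsilon(\varepsilon\,\cdot)$ is bounded uniformly in $\varepsilon$ by the bounds on $p_\varepsilon,q_\varepsilon,\theta_0,\theta_1$, and $f''$ is continuous). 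Moreover $f''(\phi_\varepsilon(\varepsilon\,\cdot))\ge -C$ for some $C>0$ independent of $\varepsilon$, so the shifted form $B_\varepsilon(\cdot,\cdot)+(C+1)(\cdot,\cdot)_{J_\varepsilon}$ is coercive on $H^1$ by the Lax–Milgram theorem, hence the map $A_\varepsilon:H^1(I_{\varepsilon,\tilde\delta},\C)\to H^1(I_{\varepsilon,\tilde\delta},\C)^*$, $u\mapsto[v\mapsto B_\varepsilon(u,v)]$, fits the framework of Theorem \ref{th_fred} with pivot space $L^2_{J_\varepsilon}(I_{\varepsilon,\tilde\delta},\C)$ (which is norm-equivalent to $L^2$ since $J_\varepsilon$ is bounded above and below). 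The compact embedding $H^1(I_{\varepsilon,\tilde\delta})\hookrightarrow L^2(I_{\varepsilon,\tilde\delta})$ and symmetry of $B_\varepsilon$ then give a sequence of discrete real eigenvalues $\lambda_\varepsilon^1\le\lambda_\varepsilon^2\le\dots\to\infty$ and an $L^2_{J_\varepsilon}$-orthonormal basis of eigenfunctions; standard elliptic regularity (the equation $-(J_\varepsilon u')'+J_\varepsilon f''(\phi_\varepsilon(\varepsilon\,\cdot))u=\lambda_\varepsilon J_\varepsilon u$ with $J_\varepsilon\in C^2$, $f''(\phi_\varepsilon(\varepsilon\,\cdot))$ smooth) plus a bootstrap gives smoothness of the eigenfunctions, and the $\C\leftrightarrow\R$ argument from the proof of Lemma \ref{th_SE_1Dscal_unpert},~1.\ (if $u$ is an eigenfunction so is $\bar u$, and $\mathrm{Re}\,u,\mathrm{Im}\,u$ span the same $\R$-space) lets one pick $\R$-valued eigenfunctions. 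The Neumann boundary condition appears via integration by parts in the weak-to-strong formulation, exactly as in the unperturbed case.

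For part~2 I would invoke the Krein–Rutman theorem together with the strong maximum principle, following \cite{Marquardt}, Proposition 3.6,~2.: rewriting in divergence form $\Lc_\varepsilon u=-J_\varepsilon^{-1}(J_\varepsilon u')'+f''(\phi_\varepsilon(\varepsilon\,\cdot))u$ one sees $\Lc_\varepsilon$ generates (after the constant shift) a positivity-improving resolvent on $C(\overline{I_{\varepsilon,\tilde\delta}})$, so the bottom eigenvalue $\lambda_\varepsilon^1$ is simple and its eigenfunction $\Psi_\varepsilon^1$ has no interior zeros; the Hopf lemma at $z=\pm\tilde\delta/\varepsilon$ forces it to keep its sign up to the boundary, and we normalize it positive. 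The weight $J_\varepsilon$ causes no trouble here since it is smooth, strictly positive, and bounded.

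For part~3, the decay estimate, I would argue as in \cite{ChenSpectrums}, Lemma 2.1 and \cite{Marquardt}, Proposition 3.7 and Lemma 3.8, via a comparison (barrier) argument: for $|z|\ge c_0$ we have, from Taylor's theorem applied to $f''$ at $\theta_0(z)$ and the bounds $|p_\varepsilon|\le C_3$, $\|\theta_1\|_\infty\le C_4$, $\frac{\varepsilon}{\varepsilon+|r|}|q_\varepsilon(r)|\le C_3$, that $f''(\phi_\varepsilon(\varepsilon z))\ge f''(\theta_0(z))-C\varepsilon\ge \tfrac34\min\{f''(\pm1)\}-C\varepsilon$; choosing $\varepsilon_0$ small (depending on $\delta,\tilde\delta,\overline C_0,c_1,C_2,C_3,C_4$) this is $\ge\tfrac23\min\{f''(\pm1)\}$ for $|z|\ge c_0$. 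Since $\lambda_\varepsilon\le\tfrac14\min\{f''(\pm1)\}$, on $\{|z|\ge c_0\}$ the function $w:=|\Psi_\varepsilon|$ satisfies, in the weak/viscosity sense, $-w''-\frac{J_\varepsilon'}{J_\varepsilon}w'+\big(f''(\phi_\varepsilon(\varepsilon\,\cdot))-\lambda_\varepsilon\big)w\le 0$ with the zeroth-order coefficient $\ge\tfrac13\min\{f''(\pm1)\}=:\mu^2$ and the drift term $|J_\varepsilon'/J_\varepsilon|\le C\varepsilon$ uniformly (Corollary \ref{th_SE_1Dprelim2}); comparing with the barrier $A e^{-\mu|z|}$, whose leading term dominates the $\mathcal O(\varepsilon)$ drift correction once $\varepsilon_0$ is small, and using $\|\Psi_\varepsilon\|_{L^2_{J_\varepsilon}}=1$ together with elliptic estimates to bound $\sup_{|z|\in[c_0,c_0+1]}|\Psi_\varepsilon|$ by a constant depending only on $c_0,\min\{f''(\pm1)\},c_1$, one obtains $|\Psi_\varepsilon(z)|\le C e^{-|z|\sqrt{\min\{f''(\pm1)\}/3}}$ for $|z|\ge c_0+1$. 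The main obstacle is the bookkeeping in this last step: one must be careful that all the perturbation errors (the $\mathcal O(\varepsilon)$ from replacing $f''(\phi_\varepsilon)$ by $f''(\theta_0)$, and the $\mathcal O(\varepsilon)$ drift from $J_\varepsilon'/J_\varepsilon$) are genuinely uniform in $\tilde\delta$ and absorbed by the gap between $\mu^2$ and the true coefficient, so that $\varepsilon_0$ and the final constant depend only on the quantities listed; this is routine but delicate, and I would handle it exactly by the barrier/comparison mechanism already used in \cite{ChenSpectrums} and \cite{Marquardt}, now with the extra first-order term controlled by Corollary \ref{th_SE_1Dprelim2}.
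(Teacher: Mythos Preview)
Your proposal is correct and follows essentially the same approach as the paper: the paper's proof simply refers back to the unperturbed case (Lemma \ref{th_SE_1Dscal_unpert},~1.--3.), applies the abstract Fredholm alternative in Theorem \ref{th_fred} to $A_\varepsilon:H^1_{J_\varepsilon}(I_{\varepsilon,\tilde\delta},\C)\to H^1_{J_\varepsilon}(I_{\varepsilon,\tilde\delta},\C)^*$, and cites \cite{Marquardt}, Proposition 3.7 for the decay in part~3. You have spelled out the details (coercivity after shift, Krein--Rutman, barrier comparison with the drift term controlled via Corollary \ref{th_SE_1Dprelim2}) that the paper only cites, but the strategy is identical.
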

\begin{proof}
	This follows in the analogous way as in the unperturbed case, cf.~the proof of Lemma \ref{th_SE_1Dscal_unpert},~1.-3.~above. For 3.~consider the proof of Proposition 3.7 in \cite{Marquardt}. Here the abstract Fredholm alternative in Theorem \ref{th_fred} below is applied to 
	\[
	A_\varepsilon:H^1_{J_\varepsilon}(I_{\varepsilon,\tilde{\delta}},\C)\rightarrow H^1_{J_\varepsilon}(I_{\varepsilon,\tilde{\delta}},\C)^*: u\mapsto [v\mapsto B_\varepsilon(u,v)],
	\]
	where $H^1_{J_\varepsilon}(I_{\varepsilon,\tilde{\delta}},\C)$ is $H^1(I_{\varepsilon,\tilde{\delta}},\C)$ with the weight $J_\varepsilon$ in the norm (both in the $L^2$-norm for the function and the derivative) and $H^1_{J_\varepsilon}(I_{\varepsilon,\tilde{\delta}},\C)^*$ is the anti-dual space.
\end{proof}

Now we obtain assertions that correspond to Lemma \ref{th_SE_1Dscal_unpert},~3.-6.~in the unperturbed case.

\begin{Theorem}\label{th_SE_1Dscal_pert2}
	There is an $\varepsilon_0>0$ only depending on $\delta$, $\tilde{\delta}$, $\overline{C}_0$ ,$c_1$, $C_2$, $C_3$, $C_4$ and $C>0$ only depending on $\tilde{\delta}$, $\overline{C}_0$, $c_1$, $C_2$, $C_3$, $C_4$ such that
	\begin{enumerate}
		\item For $\varepsilon\in(0,\varepsilon_0]$ it holds
		\[
		\lambda_\varepsilon^1=\inf_{\Psi\in H^1(I_{\varepsilon,\tilde{\delta}},\K), \|\Psi\|_{J_\varepsilon}=1} B_\varepsilon(\Psi,\Psi)= B_\varepsilon(\Psi_\varepsilon^1,\Psi_\varepsilon^1), \quad |\lambda_\varepsilon^1|\leq C\varepsilon^2.
		\]
		\item Let $\Psi^R_\varepsilon:=\Psi_\varepsilon^1-J(0)^{-\frac{1}{2}} \beta_\varepsilon\theta_0'$, where $\beta_\varepsilon=\|\theta_0'\|_{L^2(I_{\varepsilon,\tilde{\delta}})}$. Then for $\varepsilon\in(0,\varepsilon_0]$
		\[
		\textstyle\left\|\Psi^R_\varepsilon\right\|_{J_\varepsilon}
		+\left\|\frac{d}{dz}\Psi^R_\varepsilon\right\|_{J_\varepsilon}\leq C\varepsilon.
		\]
		\item With $\nu_1$ from Lemma \ref{th_SE_1Dscal_unpert},~5.~it holds for all $\varepsilon\in(0,\varepsilon_0]$
		\[
		\lambda_\varepsilon^2 = \inf_{\Psi\in H^1(I_{\varepsilon,\tilde{\delta}},\K), \|\Psi\|_{J_\varepsilon}=1, \Psi\perp_{J_\varepsilon} \Psi_\varepsilon^1} B_\varepsilon(\Psi,\Psi) \geq \nu_2:=\min\left\{\frac{\nu_1}{2},\frac{\min\{f''(\pm 1)\}}{4}\right\}>0.
		\]
	\end{enumerate}
\end{Theorem}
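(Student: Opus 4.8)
The plan is to reduce the perturbed, weighted problem on the large interval $I_{\varepsilon,\tilde\delta}$ to the unperturbed constant-coefficient problem of Lemma \ref{th_SE_1Dscal_unpert} by a two-step approximation: first remove the weight $J_\varepsilon$, then replace $f''(\phi_\varepsilon(\varepsilon\,\cdot))$ by $f''(\theta_0)$. For the first step I would pass from $B_\varepsilon$ to the \emph{unweighted} form $\tilde B_\varepsilon(\Phi,\Psi):=\int_{I_{\varepsilon,\tilde\delta}}\frac{d}{dz}\Phi\,\overline{\frac{d}{dz}\Psi}+f''(\phi_\varepsilon(\varepsilon z))\Phi\overline{\Psi}\,dz$ and its diagonalized cousin. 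Since $c_1\le J_\varepsilon\le C_2$ and $|\frac{d}{dz}J_\varepsilon|\le 2C_2\varepsilon$, $|\frac{d^2}{dz^2}J_\varepsilon|\le C\varepsilon^2$ by Corollary \ref{th_SE_1Dprelim2}, the substitution $v=J_\varepsilon^{1/2}u$ turns the weighted eigenvalue problem into an unweighted one with potential $f''(\phi_\varepsilon(\varepsilon z))+O(\varepsilon^2)$ uniformly, and $\|u\|_{J_\varepsilon}=\|v\|_{L^2}$; hence the eigenvalues of $L_\varepsilon$ differ from those of the unweighted perturbed operator by $O(\varepsilon^2)$, and the $\min$-max/$\inf$ characterizations transfer verbatim (the $\inf$-characterizations for $\lambda_\varepsilon^1,\lambda_\varepsilon^2$ are already recorded in Lemma \ref{th_SE_1Dscal_pert1} via the Fredholm alternative, so in part 1 and 3 I only need to produce the quantitative bounds).

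For the second step I would estimate the potential perturbation. Writing $\phi_\varepsilon(\varepsilon z)=\theta_0(z)+\varepsilon p_\varepsilon\theta_1(z)+\varepsilon^2 q_\varepsilon(\varepsilon z)$ and Taylor-expanding $f''$, we get $f''(\phi_\varepsilon(\varepsilon z))=f''(\theta_0(z))+\varepsilon p_\varepsilon f^{(3)}(\theta_0(z))\theta_1(z)+R_\varepsilon(z)$ with $|R_\varepsilon(z)|\le C\varepsilon^2(1+|z|)$ on $I_{\varepsilon,\tilde\delta}$ (using $\frac{\varepsilon}{\varepsilon+|r|}|q_\varepsilon(r)|\le C_3$, i.e. $\varepsilon^2|q_\varepsilon(\varepsilon z)|\le C\varepsilon^2(1+|z|)$, and $\|\theta_1\|_\infty\le C_4$). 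Then for part 1: testing $B_\varepsilon$ with the normalized translate $\beta_\varepsilon\theta_0'(\mathrm{via}\ J(0)^{-1/2}\beta_\varepsilon\theta_0')$ and using $\Lc_0\theta_0'=0$, $\theta_0'$ exponentially decaying (Theorem \ref{th_theta_0}), the Neumann defect at $z=\pm\tilde\delta/\varepsilon$ is $O(e^{-c/\varepsilon})$, the $\varepsilon p_\varepsilon f^{(3)}(\theta_0)\theta_1$ term contributes $\varepsilon p_\varepsilon\int f^{(3)}(\theta_0)(\theta_0')^2\theta_1 = -\varepsilon p_\varepsilon\int f''(\theta_0)'(\theta_0')^2\theta_1$... actually cleaner: note $\frac{d}{dz}f''(\theta_0)=f^{(3)}(\theta_0)\theta_0'$ so $\int f^{(3)}(\theta_0)(\theta_0')^2\theta_1$ is exactly (up to an integration by parts that produces $\int f''(\theta_0)\frac{d}{dz}(\theta_0'\theta_1)$) controlled by the hypothesis \eqref{eq_SE_1Dscal_pert_theta1} $\int_\R f''(\theta_0)(\theta_0')^2\theta_1=0$ — this is precisely why that condition is assumed, and it kills the would-be $O(\varepsilon)$ contribution. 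The remainder $R_\varepsilon$ term is $O(\varepsilon^2)$ after integrating against the exponentially-decaying $(\theta_0')^2$ and tail truncation $\int_\R(1+|z|)(\theta_0')^2<\infty$. This gives $\lambda_\varepsilon^1\le C\varepsilon^2$; the matching lower bound $\lambda_\varepsilon^1\ge -C\varepsilon^2$ follows because $L_\varepsilon\ge 0$ up to the same perturbation: $\Lc_0\ge 0$ on $\R$ implies $B_{0,\varepsilon}\ge \lambda_{0,\varepsilon}^1\ge -Ce^{-c/\varepsilon}$ by Lemma \ref{th_SE_1Dscal_unpert},~4., and adding the potential/weight perturbations shifts this by at most $O(\varepsilon^2)$.

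For part 2, I would argue by spectral decomposition exactly as in Lemma \ref{th_SE_1Dscal_unpert},~6.: set $w_\varepsilon:=J(0)^{-1/2}\beta_\varepsilon\theta_0'$, extended by $0$ outside (or rather restricted to $I_{\varepsilon,\tilde\delta}$, which is an admissible $H^1$ but not Neumann test function — one corrects by a cutoff producing only $O(e^{-c/\varepsilon})$ errors). Then $\|w_\varepsilon\|_{J_\varepsilon}=1+O(e^{-c/\varepsilon})$ and $B_\varepsilon(w_\varepsilon,w_\varepsilon)=O(\varepsilon^2)$ by the computation above. Expanding $w_\varepsilon=\sum_k c_k\Psi_\varepsilon^k$ gives $\sum_k c_k^2\lambda_\varepsilon^k = B_\varepsilon(w_\varepsilon,w_\varepsilon)=O(\varepsilon^2)$, and with $\lambda_\varepsilon^2\ge\nu_2>0$ (part 3) and $\lambda_\varepsilon^1\ge -C\varepsilon^2$ one deduces $\sum_{k\ge2}c_k^2 = O(\varepsilon^2)$, hence $\|w_\varepsilon-c_1\Psi_\varepsilon^1\|_{J_\varepsilon}=O(\varepsilon)$ and $|c_1|=1+O(\varepsilon)$; choosing the sign of $\Psi_\varepsilon^1$ positive gives $c_1>0$ and thus $\|\Psi_\varepsilon^1-w_\varepsilon\|_{J_\varepsilon}=O(\varepsilon)$, i.e. $\|\Psi_\varepsilon^R\|_{J_\varepsilon}=O(\varepsilon)$. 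The $H^1$-bound $\|\frac{d}{dz}\Psi_\varepsilon^R\|_{J_\varepsilon}=O(\varepsilon)$ then follows by feeding this back into $B_\varepsilon(\Psi_\varepsilon^R,\Psi_\varepsilon^R)=\lambda_\varepsilon^1\|\Psi_\varepsilon^1\|_{J_\varepsilon}^2 - 2\lambda_\varepsilon^1(\Psi_\varepsilon^1,w_\varepsilon)_{J_\varepsilon} + B_\varepsilon(w_\varepsilon,w_\varepsilon)$ and using the uniform lower bound $f''(\phi_\varepsilon(\varepsilon z))\ge -C$ to absorb the potential term.

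For part 3, I would reuse Lemma \ref{th_SE_1Dscal_unpert},~5.: on the subspace $\Psi\perp_{J_\varepsilon}\Psi_\varepsilon^1$ with $\|\Psi\|_{J_\varepsilon}=1$, write $\Psi = \alpha\,\tilde\theta_0' + \Psi^\sharp$ where $\tilde\theta_0'$ is the normalized (in $L^2$) ground mode of the \emph{unperturbed} operator $L_{0,\varepsilon}$, i.e. $\Psi_{0,\varepsilon}^1$, and $\Psi^\sharp\perp_{L^2}\Psi_{0,\varepsilon}^1$; since $\Psi_\varepsilon^1$ and $\Psi_{0,\varepsilon}^1$ are $O(\varepsilon)$-close (part 2 plus Lemma \ref{th_SE_1Dscal_unpert},~6.), orthogonality to $\Psi_\varepsilon^1$ in the weighted product forces $|\alpha|\le C\varepsilon$. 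Then $B_\varepsilon(\Psi,\Psi) = \tilde B_\varepsilon(\Psi,\Psi)+O(\varepsilon^2) = B_{0,\varepsilon}(\Psi,\Psi) + O(\varepsilon)\|\Psi\|_{H^1}^2$, and $B_{0,\varepsilon}(\Psi^\sharp,\Psi^\sharp)\ge\nu_1\|\Psi^\sharp\|_{L^2}^2$ while $B_{0,\varepsilon}(\tilde\theta_0',\cdot)$ is $O(e^{-c/\varepsilon})$, so $B_\varepsilon(\Psi,\Psi)\ge \nu_1(1-C\varepsilon^2) - C\varepsilon - O(e^{-c/\varepsilon}) \ge \nu_2$ for $\varepsilon$ small, after absorbing $O(\varepsilon)\|\Psi\|_{H^1}^2$ using that $\|\Psi\|_{H^1}^2\le C(B_\varepsilon(\Psi,\Psi)+1)$. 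One must track the constants so that $\varepsilon_0$ and $C$ depend only on the listed quantities $\delta,\tilde\delta,\overline C_0,c_1,C_2,C_3,C_4$ — this is where uniformity of Corollary \ref{th_SE_1Dprelim2} and Lemma \ref{th_SE_1Dscal_unpert} in $\tilde\delta$ is used.

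The main obstacle I anticipate is part 3, specifically making the spectral-gap transfer quantitative and circular-free: one needs the $O(\varepsilon)$-closeness of $\Psi_\varepsilon^1$ to the unperturbed mode (part 2) to conclude the gap estimate, but the cleanest proof of part 2 in turn invokes the gap estimate $\lambda_\varepsilon^2\ge\nu_2$. The resolution is to first prove a \emph{crude} gap $\lambda_\varepsilon^2\ge\nu_1/2$ directly from a one-step comparison $B_\varepsilon\ge B_{0,\varepsilon}-C\varepsilon(\cdot)$ without using part 2 (this is cheap because on the orthogonal complement of the \emph{approximate} kink one only needs coercivity, not sharp constants), then bootstrap parts 1–2, then return to sharpen part 3 to $\nu_2$; alternatively one orders the argument as 1, crude 3, 2, refined 3. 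The bookkeeping of the Neumann boundary correction terms at $z=\pm\tilde\delta/\varepsilon$ — extending $\theta_0'$ to a genuine Neumann test function — is routine but must be done consistently, producing only $O(e^{-c\tilde\delta/\varepsilon})$ errors that are swallowed by $C\varepsilon^2$.
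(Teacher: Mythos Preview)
Your upper bound $\lambda_\varepsilon^1\le C\varepsilon^2$ and the overall architecture (remove the weight via $v=J_\varepsilon^{1/2}u$, then compare to $B_{0,\varepsilon}$) match the paper. The genuine gap is your lower bound $\lambda_\varepsilon^1\ge -C\varepsilon^2$. You claim that ``adding the potential/weight perturbations shifts this by at most $O(\varepsilon^2)$'', but this is false for a generic test function: after the $J_\varepsilon^{1/2}$-conjugation the potential becomes $f''(\theta_0)+\varepsilon p_\varepsilon f'''(\theta_0)\theta_1+\tilde q_\varepsilon$ with $|\tilde q_\varepsilon(z)|\le C\varepsilon^2(1+|z|)$. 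The first-order piece $\varepsilon p_\varepsilon f'''(\theta_0)\theta_1$ is only $O(\varepsilon)$ in $L^\infty$, and the cancellation \eqref{eq_SE_1Dscal_pert_theta1} applies only when integrated against $(\theta_0')^2$, not against $|\hat\Psi|^2$ for arbitrary $\hat\Psi$. Likewise $\int_{I_{\varepsilon,\tilde\delta}}(1+|z|)|\hat\Psi|^2$ can be of order $1/\varepsilon$ for a general $L^2$-normalized $\hat\Psi$ on the long interval. Hence a direct min--max comparison only yields $\lambda_\varepsilon^1\ge\lambda_{0,\varepsilon}^1-C\varepsilon\ge -C\varepsilon$. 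Feeding this into your spectral expansion of $w_\varepsilon=J(0)^{-1/2}\beta_\varepsilon\theta_0'$ then gives only $\sum_{k\ge2}c_k^2=O(\varepsilon)$, i.e.\ $\|\Psi_\varepsilon^R\|_{J_\varepsilon}=O(\sqrt\varepsilon)$, not $O(\varepsilon)$; iterating the argument improves the exponent but never reaches $O(\varepsilon)$ in finitely many steps. Your proposed ordering ``1, crude 3, 2, refined 3'' does not help, since the sharp part~2 still needs the sharp lower bound on $\lambda_\varepsilon^1$.

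The paper's fix is to reverse the roles: instead of expanding the profile $w_\varepsilon$ in the \emph{perturbed} eigenbasis, it expands the (transformed) perturbed ground state $\hat\Psi_\varepsilon^1=J_\varepsilon^{1/2}\Psi_\varepsilon^1$ in the \emph{unperturbed} eigenbasis, writing $\hat\Psi_\varepsilon^1=a_\varepsilon\Psi_{0,\varepsilon}^1+\Psi_\varepsilon^\perp$ with $\Psi_\varepsilon^\perp\perp_{L^2}\Psi_{0,\varepsilon}^1$. The point is that $\Psi_\varepsilon^1$ has the exponential decay of Lemma~\ref{th_SE_1Dscal_pert1},~3.\ (available once the upper bound $\lambda_\varepsilon^1\le C\varepsilon^2$ is established), so both the $(1+|z|)$-weighted remainder and the boundary terms are harmless; and on the main part $a_\varepsilon\Psi_{0,\varepsilon}^1$ the $\theta_1$-cancellation applies because $\Psi_{0,\varepsilon}^1=\beta_\varepsilon\theta_0'+\Psi_{0,\varepsilon}^R$ with exponentially small $\Psi_{0,\varepsilon}^R$ (Lemma~\ref{th_SE_1Dscal_unpert},~6.). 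Combining this with the unperturbed gap $B_{0,\varepsilon}(\Psi_\varepsilon^\perp,\Psi_\varepsilon^\perp)\ge\nu_1\|\Psi_\varepsilon^\perp\|_{L^2}^2$ one gets, in a single step, both $\|\Psi_\varepsilon^\perp\|_{L^2}=O(\varepsilon)$ and $\lambda_\varepsilon^1\ge -C\varepsilon^2$. Part~2 then follows from this $\Psi_\varepsilon^\perp$-estimate (not from your eigenfunction expansion of $w_\varepsilon$), and part~3 is obtained by the same splitting applied to $\hat\Psi_\varepsilon^2$, using $\hat\Psi_\varepsilon^1\perp_{L^2}\hat\Psi_\varepsilon^2$ together with $\|\Psi_\varepsilon^\perp\|=O(\varepsilon)$ to force the coefficient $\tilde a_\varepsilon=(\hat\Psi_\varepsilon^2,\Psi_{0,\varepsilon}^1)_{L^2}$ to be $O(\varepsilon)$.
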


Although the proof is analogous to the ones of Lemma 2.2, 1.-2.~in \cite{ChenSpectrums} and Lemma 3.8 in \cite{Marquardt}, we give some details for the convenience of the reader. This will also help to understand the vector-valued case later.

\begin{proof} Note that it is enough to consider the case $\K=\R$. The eigenvalues are the same for $\K=\R, \C$ and the orthonormal basis of $\R$-valued eigenfunctions can be chosen to be the same. The \textup{inf}-characterizations can be shown as in the unperturbed case. For convenience, if we write \enquote{for $\varepsilon$ small} in the following it is always meant \enquote{for all $\varepsilon\in(0,\varepsilon_0]$ for some $\varepsilon_0>0$ small only depending on $\delta$, $\tilde{\delta}$, $\overline{C}_0$ ,$c_1$, $C_2$, $C_3$, $C_4$}. Similarly, all appearing constants (also in $\Oc$-notation) below only depend on $\tilde{\delta}$, $\overline{C}_0$ ,$c_1$, $C_2$, $C_3$, $C_4$, but we do not explicitly state this.\phantom{\qedhere}
	
First, we derive an identity for $B_\varepsilon(\Psi,\Psi)$ for all $\Psi\in H^1(I_{\varepsilon,\tilde{\delta}},\R)$. We define $\hat{\Psi}:=J_\varepsilon^{1/2}\Psi$. Then 
\[
\frac{d}{dz}\Psi = -\frac{1}{2}J_\varepsilon^{-\frac{3}{2}}(\frac{d}{dz}J_\varepsilon)\hat{\Psi}+ J_\varepsilon^{-\frac{1}{2}}\frac{d}{dz}\hat{\Psi}.
\]
Therefore
\[
B_\varepsilon(\Psi,\Psi)=\int_{I_{\varepsilon,\tilde{\delta}}} (\frac{d}{dz}\hat{\Psi})^2 + \left[f''(\phi_\varepsilon(\varepsilon .))+\frac{1}{4}J_\varepsilon^{-2}(\frac{d}{dz}J_\varepsilon)^2\right]\hat{\Psi}^2
-J_\varepsilon^{-1}(\frac{d}{dz}J_\varepsilon)\frac{1}{2}\frac{d}{dz}(\hat{\Psi}^2)\,dz.
\]
In order to use results from the unperturbed case, we would like to replace $f''(\phi_\varepsilon(\varepsilon.))$ by $f''(\theta_0)$. Therefore we use a Taylor expansion and obtain for all $|z|\leq\frac{\delta}{\varepsilon}$
\begin{align*}
&|f''(\phi_\varepsilon(\varepsilon z))-f''(\theta_0(z))-\varepsilon p_\varepsilon f'''(\theta_0(z))\theta_1(z)|\\
&\leq 
C|q_\varepsilon(\varepsilon z)|\varepsilon^2 + C|\varepsilon p_\varepsilon\theta_1(z)+q_\varepsilon(\varepsilon z)\varepsilon^2|^2\leq C(1+|z|)\varepsilon^2.
\end{align*}
Rewriting the last term in the above identity for $B_\varepsilon(\Psi,\Psi)$ with integration by parts yields
\begin{align}\label{eq_SE_1Dscal_pert_Beps_id}
B_\varepsilon(\Psi,\Psi)=
B_{0,\varepsilon}(\hat{\Psi},\hat{\Psi})
+ \int_{I_{\varepsilon,\tilde{\delta}}}  \left[\varepsilon p_\varepsilon f'''(\theta_0)\theta_1
+\tilde{q}_\varepsilon\right]
\hat{\Psi}^2
-\frac{1}{2}\left[J_\varepsilon^{-1}(\frac{d}{dz}J_\varepsilon)\hat{\Psi}^2 \right]_{z=-\frac{\tilde{\delta}}{\varepsilon}}^{\frac{\tilde{\delta}}{\varepsilon}},\\\notag
\tilde{q}_\varepsilon:=f''(\phi_\varepsilon(\varepsilon .))-f''(\theta_0)-f'''(\theta_0)\varepsilon p_\varepsilon\theta_1 + \frac{1}{4}\left(2J_\varepsilon^{-1}(\frac{d^2}{dz^2}J_\varepsilon)-J_\varepsilon^{-2}(\frac{d}{dz}J_\varepsilon)^2\right).
\end{align}
The first part of $\tilde{q}_\varepsilon$ is estimated above, the second part can be controlled with Corollary \ref{th_SE_1Dprelim2}. This implies $|\tilde{q}_\varepsilon(z)|\leq C\varepsilon^2(1+|z|)$ for all $z\in I_{\varepsilon,\tilde{\delta}}$.

\begin{proof}[Ad 1] First we show an upper bound on $\lambda_\varepsilon^1$ using \eqref{eq_SE_1Dscal_pert_Beps_id}. To this end we consider $\Psi=J_\varepsilon^{-1/2}\beta_\varepsilon\theta_0'$. It holds $\|J_\varepsilon^{-1/2}\beta_\varepsilon\theta_0'\|_{J_\varepsilon}=1$ due to the definitions. Hence with  \eqref{eq_SE_1Dscal_pert_Beps_id} and Corollary \ref{th_SE_1Dprelim2} we obtain
\[
\lambda_\varepsilon^1\leq \beta_\varepsilon^2\left[ B_{0,\varepsilon}(\theta_0',\theta_0') + \varepsilon p_\varepsilon \int_{I_{\varepsilon,\tilde{\delta}}} f'''(\theta_0)\theta_1(\theta_0')^2 + C\varepsilon^2\int_{I_{\varepsilon,\tilde{\delta}}}(1+|z|)\theta_0'(z)^2\,dz+C\varepsilon e^{-c\tilde{\delta}/\varepsilon}\right].
\]
The identity $\int_\R(\theta_0'')^2+f''(\theta_0)(\theta_0')^2=0$ due to integration by parts, \eqref{eq_SE_1Dscal_pert_theta1} and the decay for $\theta_0$ and its derivatives from Theorem \ref{th_theta_0} imply $\lambda_\varepsilon^1\leq C\varepsilon^2$ for $\varepsilon$ small.

In particular, Lemma \ref{th_SE_1Dscal_pert1},~3.~yields for $\varepsilon$ small the decay 
\[
|\Psi_\varepsilon^1(z)|\leq Ce^{-\frac{\sqrt{\min\{f''(\pm 1)\}}}{2}|z|}\quad\text{ for all }z\in I_{\varepsilon,\tilde{\delta}}, |z|\geq c_0+1,
\]
where $c_0>0$ is such that $\inf_{|z|\geq c_0}f''(\theta_0(z))\geq \frac{3}{4}\min\{f''(\pm 1)\}$.

Hence with \eqref{eq_SE_1Dscal_pert_Beps_id} and estimates as before we get for $\varepsilon$ small
\[
\lambda_\varepsilon^1=
B_{0,\varepsilon}(\hat{\Psi}_\varepsilon^1,\hat{\Psi}_\varepsilon^1)
+\varepsilon p_\varepsilon \int_{I_{\varepsilon,\tilde{\delta}}} f'''(\theta_0)\theta_1(\hat{\Psi}_\varepsilon^1)^2\,dz+\Oc(\varepsilon^2).
\]
In order to use \eqref{eq_SE_1Dscal_pert_theta1}, we note that $\Psi_{0,\varepsilon}^R:=\Psi_{0,\varepsilon}^1-\beta_\varepsilon\theta_0'$ satisfies good estimates due to Lemma \ref{th_SE_1Dscal_unpert},~6. Therefore we split
\begin{align}\label{eq_SE_1Dscal_pert_split}
\hat{\Psi}_\varepsilon^1=J_\varepsilon^\frac{1}{2}\Psi_\varepsilon^1=a_\varepsilon\Psi_{0,\varepsilon}^1 + \Psi_\varepsilon^\perp
\end{align}
orthogonally in $L^2(I_{\varepsilon,\tilde{\delta}})$, where $a_\varepsilon:=(\hat{\Psi}_\varepsilon^1,\Psi_{0,\varepsilon}^1)_{L^2(I_{\varepsilon,\tilde{\delta}})}$. It holds $|a_\varepsilon|\leq 1$ due to the Cauchy-Schwarz-Inequality and $a_\varepsilon^2=1-\|\Psi_\varepsilon^\perp\|_{L^2(I_{\varepsilon,\tilde{\delta}})}^2$. Moreover, due to positivity of $\hat{\Psi}_\varepsilon^1$ and $\Psi_{0,\varepsilon}^1$ we have $a_\varepsilon>0$. Note that the latter is only needed for the estimate of $\Psi_\varepsilon^R$ later. Hence
\[
\left|\int_{I_{\varepsilon,\tilde{\delta}}} f'''(\theta_0)\theta_1(\hat{\Psi}_\varepsilon^1)^2\,dz\right|\leq a_\varepsilon^2\left|\int_{I_{\varepsilon,\tilde{\delta}}} f'''(\theta_0)\theta_1(\Psi_{0,\varepsilon}^1)^2\,dz\right|+C\|\Psi_\varepsilon^\perp\|_{L^2(I_{\varepsilon,\tilde{\delta}})},
\]
where we used $\|\Psi_\varepsilon^\perp\|_{L^2(I_{\varepsilon,\tilde{\delta}})}\leq 1$. We substitute $\Psi_{0,\varepsilon}^1$ by $\Psi_{0,\varepsilon}^R+\beta_\varepsilon\theta_0'$. With \eqref{eq_SE_1Dscal_pert_theta1}, the decay for $\theta_0'$ from Theorem \ref{th_theta_0} and Lemma \ref{th_SE_1Dscal_unpert},~6.~we obtain for $\varepsilon$ small
\[
\left|\int_{I_{\varepsilon,\tilde{\delta}}} f'''(\theta_0)\theta_1(\hat{\Psi}_\varepsilon^1)^2\,dz\right|\leq C(e^{-c/\varepsilon}+\|\Psi_\varepsilon^\perp\|_{L^2(I_{\varepsilon,\tilde{\delta}})})\leq \tilde{C}(\varepsilon+\|\Psi_\varepsilon^\perp\|_{L^2(I_{\varepsilon,\tilde{\delta}})}).
\]
Moreover, due to integration by parts we have $B_{0,\varepsilon}(\Psi_{0,\varepsilon}^1,\Psi_\varepsilon^\perp)=0$ and therefore
\[
B_{0,\varepsilon}(\hat{\Psi}_\varepsilon^1,\hat{\Psi}_\varepsilon^1)=a_\varepsilon^2B_{0,\varepsilon}(\Psi_{0,\varepsilon}^1,\Psi_{0,\varepsilon}^1)+B_{0,\varepsilon}(\Psi_\varepsilon^\perp,\Psi_\varepsilon^\perp)=a_\varepsilon^2\lambda_{0,\varepsilon}^1+B_{0,\varepsilon}(\Psi_\varepsilon^\perp,\Psi_\varepsilon^\perp).
\]
Together with Lemma \ref{th_SE_1Dscal_unpert},~4.-5.~this yields for $\varepsilon$ small
\[
C\varepsilon^2\geq\lambda_\varepsilon^1\geq \nu_1\|\Psi_\varepsilon^\perp\|_{L^2(I_{\varepsilon,\tilde{\delta}})}^2 + \Oc(\varepsilon)\|\Psi_\varepsilon^\perp\|_{L^2(I_{\varepsilon,\tilde{\delta}})} +\Oc(\varepsilon^2)\geq\frac{\nu_1}{2}\|\Psi_\varepsilon^\perp\|_{L^2(I_{\varepsilon,\tilde{\delta}})}^2-\tilde{C}\varepsilon^2,
\]
where we used Young's inequality for the last estimate. This shows $\|\Psi_\varepsilon^\perp\|_{L^2(I_{\varepsilon,\tilde{\delta}})}=\Oc(\varepsilon)$ and hence $\lambda_\varepsilon^1=\Oc(\varepsilon^2)$ for $\varepsilon$ small. Moreover, we get $a_\varepsilon^2=1+\Oc(\varepsilon^2)$ for $\varepsilon$ small.\qedhere$_{1.}$\end{proof}

\begin{proof}[Ad 2] The estimates above yield $|B_{0,\varepsilon}(\Psi_\varepsilon^\perp,\Psi_\varepsilon^\perp)|=\Oc(\varepsilon^2)$. Therefore $\|\Psi_\varepsilon^\perp\|_{L^2(I_{\varepsilon,\tilde{\delta}})}=\Oc(\varepsilon)$ and the definition of $B_{0,\varepsilon}$ imply $\|\frac{d}{dz}\Psi_\varepsilon^\perp\|_{L^2(I_{\varepsilon,\tilde{\delta}})}=\Oc(\varepsilon)$. Using this together with properties of $\Psi_{0,\varepsilon}^R=\Psi_{0,\varepsilon}^1-\beta_\varepsilon\theta_0'$ from Lemma \ref{th_SE_1Dscal_unpert},~6.~we will deduce estimates for $\Psi_\varepsilon^R:=\Psi_\varepsilon^1-J(0)^{-1/2}\beta_\varepsilon\theta_0'$. First, we use the splitting \eqref{eq_SE_1Dscal_pert_split} and the definition of $\Psi_{0,\varepsilon}^R$ to rewrite
\[
\Psi_\varepsilon^R=J_\varepsilon^{-\frac{1}{2}}
\left[(a_\varepsilon-J_\varepsilon^{\frac{1}{2}}J(0)^{-\frac{1}{2}})\beta_\varepsilon\theta_0'+a_\varepsilon\Psi_{0,\varepsilon}^R+\Psi_\varepsilon^\perp\right].
\]
With $a_\varepsilon^2=1+\Oc(\varepsilon^2)$ for $\varepsilon$ small, $a_\varepsilon>0$ and $(a_\varepsilon-1)(a_\varepsilon+1)=a_\varepsilon^2-1$ we get $a_\varepsilon=1+\Oc(\varepsilon^2)$. Moreover, a Taylor expansion and Corollary \ref{th_SE_1Dprelim2} yields for all $z\in I_{\varepsilon,\tilde{\delta}}$
\[
J_\varepsilon(z)^{\frac{1}{2}}=J(0)^{\frac{1}{2}}+\Oc(|F_\varepsilon(z)|)\quad\text{ and }\quad|F_\varepsilon(z)|\leq C\varepsilon(|z|+1).
\]
Therefore $|a_\varepsilon-J_\varepsilon(z)^{\frac{1}{2}}J(0)^{-\frac{1}{2}}|\leq \tilde{C}\varepsilon(|z|+1)$ for all $z\in I_{\varepsilon,\tilde{\delta}}$ and $\varepsilon$ small. Together with the decay for $\theta_0'$ and the estimates for $\|\Psi_\varepsilon^\perp\|_{L^2(I_{\varepsilon,\tilde{\delta}})}$ and $\|\Psi_{0,\varepsilon}^R\|_{L^2(I_{\varepsilon,\tilde{\delta}})}$ we obtain $\|\Psi_\varepsilon^R\|_{J_\varepsilon}=\Oc(\varepsilon)$. Moreover, the derivative is given by
\begin{align*}
\frac{d}{dz}\Psi_\varepsilon^R&=
J_\varepsilon^{-\frac{1}{2}}\left[(a_\varepsilon-J_\varepsilon^{\frac{1}{2}}J(0)^{-\frac{1}{2}})\beta_\varepsilon\theta_0''+(-\frac{1}{2}J_\varepsilon^{-\frac{1}{2}}\frac{d}{dz}J_\varepsilon J(0)^{-\frac{1}{2}})\beta_\varepsilon\theta_0'+a_\varepsilon\frac{d}{dz}\Psi_{0,\varepsilon}^R+\frac{d}{dz}\Psi_\varepsilon^\perp\right]\\
&-\frac{1}{2}J_\varepsilon^{-1}(\frac{d}{dz}J_\varepsilon)\Psi_\varepsilon^R.
\end{align*}
The estimates for $|a_\varepsilon-J_\varepsilon(z)^{\frac{1}{2}}J(0)^{-\frac{1}{2}}|$, $\|\Psi_\varepsilon^R\|_{J_\varepsilon}$, $\|\frac{d}{dz}\Psi_{0,\varepsilon}^R\|_{L^2}$ and $\|\frac{d}{dz}\Psi_\varepsilon^\perp\|_{L^2}$ as well as Corollary \ref{th_SE_1Dprelim2} yield $\|\frac{d}{dz}\Psi_\varepsilon^R\|_{J_\varepsilon}=\Oc(\varepsilon)$.\qedhere$_{2.}$\end{proof}

\begin{proof}[Ad 3] Consider any normalized eigenfunction $\Psi_\varepsilon^2$ to $\lambda_\varepsilon^2$. If $\lambda_\varepsilon^2\geq\frac{1}{4}\min\{f''(\pm1)\}$, then there is nothing to show. Therefore we assume that $\lambda_\varepsilon^2\leq\frac{1}{4}\min\{f''(\pm1)\}$. Then $\Psi_\varepsilon^2$ satisfies the decay in Lemma \ref{th_SE_1Dscal_pert1},~3.~and computations as before yield 
\[
\lambda_\varepsilon^2=
B_{0,\varepsilon}(\hat{\Psi}_\varepsilon^2,\hat{\Psi}_\varepsilon^2)
+\varepsilon p_\varepsilon \int_{I_{\varepsilon,\tilde{\delta}}} f'''(\theta_0)\theta_1(\hat{\Psi}_\varepsilon^2)^2\,dz+\Oc(\varepsilon^2)=B_{0,\varepsilon}(\hat{\Psi}_\varepsilon^2,\hat{\Psi}_\varepsilon^2)+\Oc(\varepsilon).
\]
Analogously as above we split 
\[
\hat{\Psi}_\varepsilon^2=\tilde{a}_\varepsilon\Psi_{0,\varepsilon}^1+\Psi_\varepsilon^{2,\perp}
\]
orthogonally in $L^2(I_{\varepsilon,\tilde{\delta}})$ and obtain with Lemma \ref{th_SE_1Dscal_unpert},~5.~that
\[
\lambda_\varepsilon^2=
\tilde{a}_\varepsilon^2\lambda_{0,\varepsilon}^1
+B_{0,\varepsilon}(\Psi_\varepsilon^{2,\perp},\Psi_\varepsilon^{2,\perp})
+\Oc(\varepsilon)
\geq\tilde{a}_\varepsilon^2\lambda_{0,\varepsilon}^1
+\nu_1(1-\tilde{a}_\varepsilon^2)-C\varepsilon.
\]
In order to get an estimate for $\tilde{a}_\varepsilon=(\hat{\Psi}_\varepsilon^2,\Psi_{0,\varepsilon}^1)_{L^2(I_{\varepsilon,\tilde{\delta}})}$, note that $\hat{\Psi}_\varepsilon^1\perp_{L^2}\hat{\Psi}_\varepsilon^2$. Therefore with the splitting \eqref{eq_SE_1Dscal_pert_split} we obtain for $\varepsilon$ small
\[
|\tilde{a}_\varepsilon|=
\left|-\frac{1}{a_\varepsilon}(\Psi_\varepsilon^\perp,\hat{\Psi}_\varepsilon^2)_{L^2(I_{\varepsilon,\tilde{\delta}})}\right|
\leq C\|\Psi_\varepsilon^\perp\|_{L^2(I_{\varepsilon,\tilde{\delta}})}
\leq C\varepsilon.
\]
This yields $\lambda_\varepsilon^2\geq\frac{\nu_1}{2}$ for $\varepsilon$ small if $\lambda_\varepsilon^2\leq\frac{1}{4}\min\{f''(\pm1)\}$.\qedhere$_{3.}$\end{proof}
\end{proof}

\subsubsection{Spectral Estimates for Vector-Valued Allen-Cahn-Type Operators in 1D}\label{sec_SE_1Dvect}
In the scalar case we frequently used theorems and estimates that are not available in the vector-valued case, e.g.~the comparison principle, the Harnack-inequality and the Hopf maximum principle. Looking closely into the last Section \ref{sec_SE_1Dscal}, we observe that these arguments were used explicitly only for the proofs of Lemma \ref{th_SE_1Dscal_unpert},~2.-5.~and Lemma \ref{th_SE_1Dscal_pert1},~2.-3. For the vector-valued case we have to adjust suitably. The goal is to obtain analogous assertions based on the operator $\check{\Lc}_0:=-\frac{d^2}{dz^2} + D^2W(\vec{\theta}_0)$, where $W:\R^m\rightarrow\R$ is as in Definition \ref{th_vAC_W} and $\vec{\theta}_0$ is as in Remark \ref{th_ODE_vect_rem},~1. In Lemma \ref{th_ODE_vect_lin_op} we already showed properties of $\check{\Lc}_0$ viewed as an unbounded operator 
$\check{L}_0:H^2(\R,\K)^m\rightarrow L^2(\R,\K)^m$ and we obtained a spectral gap provided $\dim\ker\check{L}_0=1$. Under this assumption we show analogous properties as in the scalar case in the last Section \ref{sec_SE_1Dscal}. To this end we use contradiction arguments and further assertions in Kusche \cite{Kusche}, in particular \cite{Kusche}, Chapter 1, where abstract vector-valued Sturm-Liouville operators are considered.

\paragraph{Unperturbed Vector-Valued Allen-Cahn-Type Operators in 1D}\label{sec_SE_1Dvect_unpert}
We consider $\check{\Lc}_0$ on finite large intervals together with homogeneous Neumann boundary condition. 

Let $\tilde{\delta}>0$ fixed, $\varepsilon>0$, $I_{\varepsilon,\tilde{\delta}}:=(-\frac{\tilde{\delta}}{\varepsilon},\frac{\tilde{\delta}}{\varepsilon})$ and $\K=\R$ or $\C$. We consider the unbounded operator 
\[
\check{L}_{0,\varepsilon}:H^2_N(I_{\varepsilon,\tilde{\delta}},\K)^m\rightarrow L^2(I_{\varepsilon,\tilde{\delta}},\K)^m:
\vec{u}\mapsto \check{\Lc}_0\vec{u}=\left[-\frac{d^2}{dz^2}+D^2W(\vec{\theta}_0)\right]\vec{u}.
\]
The associated sesquilinearform is $\check{B}_{0,\varepsilon}:H^1(I_{\varepsilon,\tilde{\delta}},\K)^m\times H^1(I_{\varepsilon,\tilde{\delta}},\K)^m\rightarrow\K$,
\[
\check{B}_{0,\varepsilon}(\vec{\Phi},\vec{\Psi})
:=\int_{I_{\varepsilon,\tilde{\delta}}}(\frac{d}{dz}\vec{\Phi},\frac{d}{dz}\vec{\Psi})_{\K^m} + (D^2W(\vec{\theta}_0)\vec{\Phi},\vec{\Psi})_{\K^m}\,dz.
\] 
We obtain the analogue of Lemma \ref{th_SE_1Dscal_unpert}.
\begin{Lemma}\phantomsection{\label{th_SE_1Dvect_unpert}}Assume $\dim\ker\check{L}_0=1$, cf.~Remark \ref{th_ODE_vect_lin_op_rem}. Then
	\begin{enumerate}
		\item $\check{L}_{0,\varepsilon}$ is selfadjoint and the spectrum is given by discrete eigenvalues $(\check{\lambda}_{0,\varepsilon}^k)_{k\in\N}$ in $\R$ with $\check{\lambda}_{0,\varepsilon}^1\leq\check{\lambda}_{0,\varepsilon}^2\leq...  $ and $\check{\lambda}_{0,\varepsilon}^k\overset{k\rightarrow\infty}\longrightarrow\infty$. Moreover, there is an orthonormal basis $(\vec{\Psi}_{0,\varepsilon}^k)_{k\in\N}$ of $L^2(I_{\varepsilon,\tilde{\delta}},\K)^m$ consisting of smooth $\R^m$-valued eigenfunctions $\vec{\Psi}_{0,\varepsilon}^k$ to $\check{\lambda}_{0,\varepsilon}^k$. 
		\item $\check{\lambda}_{0,\varepsilon}^1$ is simple for $\varepsilon>0$ small.
		\item For any normalized eigenfunction $\vec{\Psi}_{0,\varepsilon}$ to an eigenvalue $\check{\lambda}_{0,\varepsilon}\leq\frac{1}{4}\min\{\sigma(D^2W(\vec{u}_\pm))\}$ of $\check{L}_{0,\varepsilon}$ and $\varepsilon>0$ small it holds 
		\[
		|\vec{\Psi}_{0,\varepsilon}(z)|\leq Ce^{-|z|\sqrt{\min\{\sigma(D^2W(\vec{u}_\pm))\}/6}}\quad\text{ for all }z\in I_{\varepsilon,\tilde{\delta}},
		\]
		where $C>0$ is independent of $\varepsilon$, $\tilde{\delta}$.
		\item There is $\check{\varepsilon}_0=\check{\varepsilon}_0(\tilde{\delta})>0$ small such that for all $\varepsilon\in(0,\check{\varepsilon}_0]$
		\[
		\check{\lambda}_{0,\varepsilon}^1=\inf_{\vec{\Psi}\in H^1(I_{\varepsilon,\tilde{\delta}})^m, \|\vec{\Psi}\|_{L^2}=1} \check{B}_{0,\varepsilon}(\vec{\Psi},\vec{\Psi})= \check{B}_{0,\varepsilon}(\vec{\Psi}_{0,\varepsilon}^1,\vec{\Psi}_{0,\varepsilon}^1)=
		\Oc(e^{-\frac{3\tilde{\delta}\sqrt{\min\{\sigma(D^2W(\vec{u}_\pm))\}}}{2\sqrt{2}\varepsilon}}),
		\]
		where the constant in the $\Oc$-estimate is independent of $\tilde{\delta}$, $\varepsilon$.
		\item There is $\check{\nu}_1>0$ independent of $\tilde{\delta}$, $\varepsilon$ and $\check{\varepsilon}_0=\check{\varepsilon}_0(\tilde{\delta})>0$ small such that
		\[
		\check{\lambda}_{0,\varepsilon}^2 = \inf_{\vec{\Psi}\in H^1(I_{\varepsilon,\tilde{\delta}})^m, \|\vec{\Psi}\|_{L^2}=1, \vec{\Psi}\perp \vec{\Psi}_{0,\varepsilon}^1} \check{B}_{0,\varepsilon}(\vec{\Psi},\vec{\Psi}) \geq \check{\nu}_1\quad\text{ for all }\varepsilon\in(0,\check{\varepsilon}_0].
		\]
		\item Let $\check{\beta}_\varepsilon:=\|\vec{\theta}_0'\|_{L^2(I_{\varepsilon,\tilde{\delta}})^m}^{-1}$. For $\check{\varepsilon}_0=\check{\varepsilon}_0(\tilde{\delta})>0$ small and $\varepsilon\in(0,\check{\varepsilon}_0]$ there are $\check{c}_{0,\varepsilon}\in\{\pm 1\}$ such that for $\vec{\Psi}_{0,\varepsilon}^R:=\check{c}_{0,\varepsilon}\vec{\Psi}_{0,\varepsilon}^1-\check{\beta}_\varepsilon\vec{\theta}_0'$ we have
		\[\textstyle
		\left\|\vec{\Psi}_{0,\varepsilon}^R\right\|_{L^2(I_{\varepsilon,\tilde{\delta}})^m}^2
		+\left\|\frac{d}{dz}\vec{\Psi}_{0,\varepsilon}^R\right\|_{L^2(I_{\varepsilon,\tilde{\delta}})^m}^2
		\leq Ce^{-\frac{3\tilde{\delta}\sqrt{\min\{\sigma(D^2W(\vec{u}_\pm))\}}}{2\sqrt{2}\varepsilon}},
		\]
		where $C>0$ is independent of $\tilde{\delta}$, $\varepsilon$.
	\end{enumerate}
\end{Lemma}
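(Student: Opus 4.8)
The plan is to mirror the scalar case in Lemma \ref{th_SE_1Dscal_unpert}, replacing the tools that are unavailable in the vector-valued setting (comparison principle, Harnack inequality, Hopf lemma) by the functional-analytic input from Lemma \ref{th_ODE_vect_lin_op} and the contradiction arguments in Kusche \cite{Kusche}. For 1.\ I would argue exactly as in the scalar case: $\check{L}_{0,\varepsilon}$ is symmetric, bounded below since $D^2W(\vec{\theta}_0)\geq -C$ (indeed $\check{L}_0\geq 0$ by Lemma \ref{th_ODE_vect_lin_op}), so the Lax--Milgram Theorem applied to a constant shift gives $\rho(\check{L}_{0,\varepsilon})\neq\emptyset$, hence selfadjointness; the discrete spectrum, the orthonormal basis of smooth $\R^m$-valued eigenfunctions, and the \textup{inf}-characterizations in 4.--5.\ then follow from the abstract Fredholm alternative in Theorem \ref{th_fred} (applied to $\vec{u}\mapsto[\vec{v}\mapsto\check{B}_{0,\varepsilon}(\vec{u},\vec{v})]$ on $H^1(I_{\varepsilon,\tilde{\delta}},\C)^m$) together with elliptic regularity and integration by parts. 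The real-valuedness of the basis uses the same elementary fact as before, since $D^2W(\vec{\theta}_0)$ is real.

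For the quantitative statements I would proceed as follows. First prove the upper bound in 4.: testing $\check{B}_{0,\varepsilon}$ with $\check{\beta}_\varepsilon\vec{\theta}_0'$, using $\check{\Lc}_0\vec{\theta}_0'=0$ and the exponential decay of $\vec{\theta}_0'$ from Theorem \ref{th_ODE_vect} produces boundary terms of size $\Oc(e^{-c\tilde{\delta}/\varepsilon})$ with $c$ proportional to $\sqrt{\lambda/2}$, where $\lambda=\min\{\sigma(D^2W(\vec{u}_\pm))\}$; choosing the exponent as in the statement, $\check{\lambda}_{0,\varepsilon}^1\leq 0$ plus this exponentially small error, and $\check{\lambda}_{0,\varepsilon}^1\geq 0$ since $\check{L}_0\geq 0$ implies $\check{B}_{0,\varepsilon}\geq 0$ (restrict to $\R$, extend the test function by the constant boundary value — actually by reflection as in Chen \cite{ChenSpectrums}). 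Next, for the decay estimate 3.\ I would use the standard ODE decay argument: where $|z|$ is large, $D^2W(\vec{\theta}_0(z))$ is close to $D^2W(\vec{u}_\pm)\geq\lambda I$, so for an eigenvalue $\check{\lambda}_{0,\varepsilon}\leq\lambda/4$ the function $|\vec{\Psi}_{0,\varepsilon}|^2$ is subsolution-like and a Gronwall/differential-inequality argument (as in \cite{ChenSpectrums}, Lemma 2.1, carried out componentwise via $\frac{d^2}{dz^2}|\vec{\Psi}|^2\geq 2(\lambda-\check{\lambda}_{0,\varepsilon})|\vec{\Psi}|^2 - \text{lower order}$) gives exponential decay with the stated rate; here no maximum principle for systems is needed since one works with the scalar quantity $|\vec{\Psi}_{0,\varepsilon}|^2$. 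The spectral gap 5.\ is the crucial consequence of $\dim\ker\check{L}_0=1$: by Lemma \ref{th_ODE_vect_lin_op} the quantity $\check{\nu}_0>0$ controls $\check{B}_0$ on $\mathrm{span}\{\vec{\theta}_0'\}^\perp$ on all of $\R$; transferring this to the finite interval $I_{\varepsilon,\tilde{\delta}}$ requires a cutoff/extension argument showing that any $\vec{\Psi}\perp\vec{\Psi}_{0,\varepsilon}^1$ on $I_{\varepsilon,\tilde{\delta}}$ is, after extension by zero (or a smooth cutoff), nearly orthogonal to $\vec{\theta}_0'$ on $\R$ with controlled error — this is exactly the step \cite{ChenSpectrums}, \cite{Kusche} carry out, and I would cite the uniform-on-large-interval estimates in \cite{Kusche}, Chapter 1.

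For 2.\ (simplicity of $\check{\lambda}_{0,\varepsilon}^1$ for small $\varepsilon$) I would use a contradiction argument rather than Krein--Rutman: if the first eigenvalue were not simple for a sequence $\varepsilon_k\to 0$, one would get two orthonormal eigenfunctions with eigenvalue $\to 0$ (by 4.), both with the uniform exponential decay from 3.; passing to the limit (after the change of variables that makes the interval $\R$) one obtains two orthonormal elements of $\ker\check{L}_0$, contradicting $\dim\ker\check{L}_0=1$; this is the argument in \cite{Kusche}, Proposition 2.1 and the surrounding material. Finally, for 6.\ I would mimic the scalar computation: write $\vec{\Psi}_{0,\varepsilon}^1 = c_{0,\varepsilon}^{-1}(\check{\beta}_\varepsilon\vec{\theta}_0' + \vec{\Psi}_{0,\varepsilon}^R)$ where $c_{0,\varepsilon}$ is chosen as the sign of $(\vec{\Psi}_{0,\varepsilon}^1,\vec{\theta}_0')_{L^2}$ (there is no positivity to exploit, hence the sign ambiguity $\check{c}_{0,\varepsilon}\in\{\pm1\}$ appears, unlike the scalar case); decompose $\vec{\Psi}_{0,\varepsilon}^1$ orthogonally against $\vec{\theta}_0'/\|\vec{\theta}_0'\|$ on $I_{\varepsilon,\tilde{\delta}}$, use 4.\ and 5.\ to show the orthogonal part has $\check{B}_{0,\varepsilon}$-norm, hence $H^1$-norm, bounded by the exponentially small quantity, and combine with the exponential smallness of $\big|\,\|\vec{\theta}_0'\|_{L^2(I_{\varepsilon,\tilde{\delta}})}-\|\vec{\theta}_0'\|_{L^2(\R)}\,\big|$ from Theorem \ref{th_ODE_vect}. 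I expect the main obstacle to be step 5.\ (the uniform spectral gap on large finite intervals): translating the $\R$-coercivity $\check{\nu}_0>0$ into a uniform lower bound on $I_{\varepsilon,\tilde{\delta}}$ with a cutoff requires careful control of the commutator/cutoff error terms so that they are genuinely negligible, and this is precisely where one must invoke the delicate uniform estimates of \cite{Kusche}; the simplicity argument in 2.\ is conceptually the second-hardest point since it is the only place the hypothesis $\dim\ker\check{L}_0=1$ is used in an essentially non-quantitative way.
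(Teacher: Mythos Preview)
Your outline for parts 1., 2., 3., and 6.\ matches the paper's proof essentially verbatim (abstract Fredholm alternative for 1.; contradiction via Kusche \cite{Kusche}, Lemma~1.2 for 2.; citing \cite{Kusche}, Lemma~1.1 for 3.; orthogonal decomposition against $\vec{\theta}_0'$ for 6.).

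There is however a genuine gap in your argument for the \emph{lower} bound in 4. Your claim that $\check{L}_0\geq 0$ on $H^1(\R)^m$ implies $\check{B}_{0,\varepsilon}\geq 0$ on $H^1(I_{\varepsilon,\tilde{\delta}})^m$ via reflection or constant extension does not work: neither extension lands in $H^1(\R)^m$, and even a smooth cutoff after reflection fails because the potential $D^2W(\vec{\theta}_0(z))$ is not symmetric under reflection at $\pm\tilde{\delta}/\varepsilon$, so the reflected integral is not $\check{B}_{0,\varepsilon}$. In fact $\check{\lambda}_{0,\varepsilon}^1$ need not be nonnegative (already in the scalar case it can be slightly negative). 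The paper obtains the two-sided bound differently: first a contradiction argument via \cite{Kusche}, Lemma~1.2 gives $|(\vec{\Psi}_{0,\varepsilon}^1,\vec{\theta}_0')_{L^2(I_\varepsilon)^m}|\geq C>0$ for small $\varepsilon$; then integrating the eigenvalue equation against $\vec{\theta}_0'$ and using $\check{\Lc}_0\vec{\theta}_0'=0$ leaves only boundary terms, which are exponentially small by the decay in 3.\ and Theorem~\ref{th_ODE_vect}. Dividing by the inner product lower bound yields $|\check{\lambda}_{0,\varepsilon}^1|\leq Ce^{-c\tilde{\delta}/\varepsilon}$.

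For part 5.\ your proposed route (extend by cutoff to $\R$ and invoke $\check{\nu}_0>0$ from Lemma~\ref{th_ODE_vect_lin_op}) is not what the paper does, and you correctly flag it as the delicate step. The paper instead argues by contradiction: set $\check{\nu}_1:=\min\{\tfrac{1}{2}\check{\nu}_0,\tfrac{1}{4}\min\sigma(D^2W(\vec{u}_\pm))\}$; if $\check{\lambda}_{0,\varepsilon_n}^2<\check{\nu}_1$ along a sequence $\varepsilon_n\to 0$, then \cite{Kusche}, Lemma~1.2 extracts a subsequence of normalized second eigenfunctions converging in $C^2_{\mathrm{loc}}$ to a normalized eigenfunction $\vec{\Psi}_0^2$ of $\check{L}_0$ with eigenvalue $<\check{\nu}_1$, hence eigenvalue $0$ by Lemma~\ref{th_ODE_vect_lin_op}; orthogonality $\vec{\Psi}_{0,\varepsilon_n}^2\perp\vec{\Psi}_{0,\varepsilon_n}^1$ passes to the limit (dominated convergence, using the uniform decay from 3.) and gives $(\vec{\Psi}_0^2,\vec{\theta}_0')_{L^2(\R)^m}=0$, contradicting $\dim\ker\check{L}_0=1$. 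This compactness-and-contradiction route avoids having to quantify cutoff commutator errors entirely.
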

\begin{Remark}\upshape
	Lemma \ref{th_SE_1Dvect_unpert},~1.~and 3.-4.~also work without the assumption $\dim\ker\check{L}_0=1$, cf.~\cite{Kusche}, Lemma 1.1 and Lemma 2.1. However, one has to modify the decay parameters by some scalar factor independent of $W$. This is because the decay properties for $\vec{\theta}_0$ from Theorem \ref{th_ODE_vect} are better than the ones obtained from \cite{Kusche}. More precisely, the maximal rate is $\sqrt{\min\{\sigma(D^2W(\vec{u}_\pm))\}/2}$ instead of $\sqrt{\min\{\sigma(D^2W(\vec{u}_\pm))\}}/2$ in \cite{Kusche}. Nevertheless, the precise rates in Lemma \ref{th_SE_1Dvect_unpert},~3.-4.~are not so important anyway.
\end{Remark}

\begin{proof} By scaling in $\varepsilon$ it is enough to consider the case $\tilde{\delta}=1$. We set $I_\varepsilon:=I_{\varepsilon,1}$.\phantom{\qedhere}
	
\begin{proof}[Ad 1]This can be seen as in the scalar case, cf.~the proof of Lemma \ref{th_SE_1Dscal_unpert},~1. Here the abstract Fredholm alternative in Theorem \ref{th_fred} below is used for 
	\[
	\check{A}_{0,\varepsilon}:H^1(I_\varepsilon,\C^m)\rightarrow H^1(I_\varepsilon,\C^m)^*:\vec{u}\mapsto [\vec{v}\mapsto \check{B}_{0,\varepsilon}(\vec{u},\vec{v})],
	\]
	where $H^1(I_\varepsilon,\C^m)^*$ is the anti-dual space.\qedhere$_{1.}$\end{proof}
	
\begin{proof}[Ad 2] Assume the contrary. Then there is a zero sequence $(\varepsilon_n)_{n\in\N}$ and normalized, pairwise orthogonal eigenfunctions $\vec{\Psi}_{0,\varepsilon_n}^1$, $\vec{\Psi}_{0,\varepsilon_n}^2$ of $\check{L}_{0,\varepsilon_n}$ to the eigenvalue $\check{\lambda}_{0,\varepsilon_n}^1$ for all $n\in\N$.
	Now note that the upper bound on $\check{\lambda}_{0,\varepsilon}^1$ in 4.~can be shown solely with the decay properties of $\vec{\theta}_0$ from Theorem \ref{th_ODE_vect}, cf.~\cite{Kusche}, proof of Lemma 2.1, 1. Therefore due to \cite{Kusche}, Lemma 1.2 (and its proof) there is a subsequence $(\varepsilon_{n_k})_{k\in\N}$ such that $\vec{\Psi}_{0,\varepsilon_{n_k}}^j$ converges uniformly in $C^2$ on compact subsets of $\R$ to a normalized eigenfunction $\vec{\Psi}_0^j\in H^2(\R,\K)^m\cap C^2(\R,\K)^m$ to the eigenvalue $0$ of $\check{L}_0$ for $j=1,2$. Because of \cite{Kusche}, Lemma 1.1 all $\vec{\Psi}_{0,\varepsilon_n}^j$, $\vec{\Psi}_0^j$ for $n\in\N$ and $j=1,2$ satisfy uniform pointwise exponential bounds. Hence the Dominated Convergence Theorem yields that $\vec{\Psi}_0^1$ is orthogonal to $\vec{\Psi}_0^2$. This is a contradiction to $\dim\ker\check{L}_0=1$.\qedhere$_{2.}$\end{proof}
	
	\begin{proof}[Ad 3] This follows from \cite{Kusche}, Lemma 1.1.\qedhere$_{3.}$\end{proof}
	
	\begin{proof}[Ad 4] The \textup{inf}-characterization can be shown as in the scalar case. As mentioned in the proof of 2. above, the upper bound on $\check{\lambda}_{0,\varepsilon}^1$ follows from Theorem \ref{th_ODE_vect}. \cite{Kusche}, Lemma 1.2 and a contradiction argument yield $|(\vec{\Psi}_{0,\varepsilon}^1,\vec{\theta}_0')_{L^2(I_\varepsilon)^m}|\geq C>0$ for $\varepsilon$ small. Together with the uniform decay for eigenfunctions from 2.~this implies the estimate, cf.~also the proof of Lemma 2.1, 1.~in \cite{Kusche}.\qedhere$_{4.}$\end{proof}
	
	\begin{proof}[Ad 5] The \textup{inf}-characterization follows as in the scalar case. For $\check{\nu}_0$ as in Lemma \ref{th_ODE_vect_lin_op} let $\check{\nu}_1:=\min\{\frac{1}{2}\check{\nu}_0,\frac{1}{4}\min\{\sigma(D^2(\vec{u}_\pm))\}\}>0$. Assume the estimate on $\check{\lambda}_{0,\varepsilon}^2$ does not hold with this $\check{\nu}_1$. Then there is a zero sequence $(\varepsilon_n)_{n\in\N}$ such that $\check{\lambda}_{0,\varepsilon_n}^2<\check{\nu}_1$. Due to \cite{Kusche}, Lemma 1.2, there is a subsequence $(\varepsilon_{n_k})_{k\in\N}$ such that some normalized eigenvectors $\vec{\Psi}_{0,\varepsilon_{n_k}}^2$ to $\check{\lambda}_{0,\varepsilon_{n_k}}^2$ converge uniformly in $C^2$ on compact subsets of $\R$ to an eigenfunction $\vec{\Psi}_0^2$ of $\check{L}_0$. Due to the assumption on $\check{\nu}_1$, the eigenvalue corresponding to $\vec{\Psi}_0^2$ is necessarily zero. In particular $\dim\ker\check{L}_0=1$ yields $(\vec{\Psi}_0^2,\vec{\theta}_0')_{L^2(\R)^m}\neq 0$. On the other hand, since $\vec{\Psi}_{0,\varepsilon}^2$ and $\vec{\Psi}_{0,\varepsilon}^1$ are orthogonal in $L^2(I_\varepsilon)^m$, we obtain with Lemma 2.1 in \cite{Kusche}, the Dominated Convergence Theorem applied to another subsequence using the uniform decay in 3.~that $(\vec{\Psi}_0^2,\vec{\theta}_0')_{L^2(\R)^m}=0$. This is a contradiction.\qedhere$_{5.}$\end{proof}
	
	\begin{proof}[Ad 6] The proof is analogous to the one of \cite{Marquardt}, Lemma 3.8, 3. We decompose the function $\check{\beta}_\varepsilon\vec{\theta}_0'=\check{a}_{0,\varepsilon}\vec{\Psi}_{0,\varepsilon}^1+\vec{\Psi}_{0,\varepsilon}^\perp$ orthogonally in $L^2(I_\varepsilon)^m$, where $\check{\beta}_\varepsilon=\|\vec{\theta}_0'\|_{L^2(I_\varepsilon)^m}^{-1}$. Due to 4.~and Theorem \ref{th_ODE_vect} we obtain with integration by parts for $\varepsilon$ small that
	\[
	\Oc(e^{-\frac{3\sqrt{\min\{\sigma(D^2W(\vec{u}_\pm))\}}}{2\sqrt{2}\varepsilon}})=\check{B}_{0,\varepsilon}(\check{\beta}_\varepsilon\vec{\theta}_0',\check{\beta}_\varepsilon\vec{\theta}_0')\geq \check{a}_{0,\varepsilon}^2 \check{\lambda}_{0,\varepsilon}^1+\check{\nu}_1\|\vec{\Psi}_{0,\varepsilon}^\perp\|_{L^2(I_\varepsilon)^m}^2.
	\]
	Now note that $1=\check{a}_{0,\varepsilon}^2+\|\vec{\Psi}_{0,\varepsilon}^\perp\|_{L^2(I_\varepsilon)^m}^2$. Therefore 4.-5.~and the above estimate yield for $\varepsilon$ small
	\[
	\|\vec{\Psi}_{0,\varepsilon}^\perp\|_{L^2(I_\varepsilon)^m}^2=\Oc(e^{-\frac{3\sqrt{\min\{\sigma(D^2W(\vec{u}_\pm))\}}}{2\sqrt{2}\varepsilon}}).
	\]
	Hence with $1-\check{a}_{0,\varepsilon}^2=(1-\check{a}_{0,\varepsilon})(1+\check{a}_{0,\varepsilon})$ we obtain the estimate in the lemma for $\|\vec{\Psi}_{0,\varepsilon}^R\|_{L^2(I_\varepsilon)^m}$ if we set $\check{c}_{0,\varepsilon}:=\textup{sign}\,\check{a}_{0,\varepsilon}\in\{\pm 1\}$. For notational simplicity assume w.l.o.g. $\check{c}_{0,\varepsilon}=1$, otherwise one can replace $\vec{\Psi}_{0,\varepsilon}^1$ by $\check{c}_{0,\varepsilon}\vec{\Psi}_{0,\varepsilon}^1$. Then it holds $\frac{d}{dz}\vec{\Psi}_{0,\varepsilon}^R=\frac{d}{dz}\vec{\Psi}_{0,\varepsilon}^1-\check{\beta}_\varepsilon\vec{\theta}_0''$ and
	\[
	\left\|\frac{d}{dz}\vec{\Psi}_{0,\varepsilon}^R\right\|_{L^2(I_\varepsilon)^m}^2=\int_{I_\varepsilon}|\check{\beta}_\varepsilon\vec{\theta}_0''|^2-2\check{\beta}_\varepsilon\vec{\theta}_0''\cdot\frac{d}{dz}\vec{\Psi}_{0,\varepsilon}^1+\left|\frac{d}{dz}\vec{\Psi}_{0,\varepsilon}^1\right|^2\,dz.
	\]
	The first term is a problem. Therefore we rewrite 
	\begin{align*}
    -\int_{I_\varepsilon}\vec{\theta}_0''\cdot\frac{d}{dz}\vec{\Psi}_{0,\varepsilon}^1
    =\int_{I_\varepsilon}\vec{\theta}_0'\cdot\frac{d^2}{dz^2}\vec{\Psi}_{0,\varepsilon}^1, \quad
    \int_{I_\varepsilon}\left|\frac{d}{dz}\vec{\Psi}_{0,\varepsilon}^1\right|^2
    =\check{\lambda}_{0,\varepsilon}^1-\int_{I_\varepsilon}(D^2W(\vec{\theta}_0)\vec{\Psi}_{0,\varepsilon}^1,\vec{\Psi}_{0,\varepsilon}^1)_{\R^m}.
    \end{align*}
    We use $\frac{d^2}{dz^2}\vec{\Psi}_{0,\varepsilon}^1
    =-\check{\lambda}_{0,\varepsilon}^1\vec{\Psi}_{0,\varepsilon}^1+D^2W(\vec{\theta}_0)\vec{\Psi}_{0,\varepsilon}^1$ and insert 
    $\vec{\Psi}_{0,\varepsilon}^1=\vec{\Psi}_{0,\varepsilon}^R
    +\check{\beta}_\varepsilon\vec{\theta}_0'$ everywhere. Then integration by parts yields that the quadratic terms in $\vec{\theta}_0$ cancel up to an appropriately decaying term. Moreover, $\check{\lambda}_{0,\varepsilon}^1$ has the decay due to 4.~and the other terms (without the one with $\check{\lambda}_{0,\varepsilon}^1$) where $\vec{\theta}_0'$ is combined with $\vec{\Psi}_{0,\varepsilon}^R$ cancel. Together with the estimate on the $L^2$-norm of $\vec{\Psi}$ we obtain the estimate for the derivative.\qedhere$_{6.}$\end{proof}
\end{proof}

\paragraph{Perturbed Vector-Valued Allen-Cahn-Type Operators in 1D}\label{sec_SE_1Dvect_pert}
In this section we consider perturbed and weighted vector-valued operators in 1D. Let $\delta>0$ and $h_\varepsilon, J\in C^2([-\delta,\delta],\R)$ with $\|h_\varepsilon\|_{C^2([-\delta,\delta])}\leq \overline{C}_0$ for $\varepsilon>0$ small and $c_1,C_2>0$ be such that \eqref{eq_SE_J} holds. Then let $\rho_\varepsilon, F_\varepsilon, J_\varepsilon$ for $\varepsilon>0$ small be as in Section \ref{sec_SE_1Dsetting}. We define
\begin{align}
\vec{\phi}_\varepsilon:[-\delta,\delta]\rightarrow\R^m:r\mapsto \vec{\theta}_0(\frac{r}{\varepsilon}) 
+ \varepsilon p_\varepsilon \vec{\theta}_1(\frac{r}{\varepsilon}) + \vec{q}_\varepsilon(r)\varepsilon^2, 
\end{align}
where $p_\varepsilon\in\R$ and $\vec{q}_\varepsilon:[-\delta,\delta]\rightarrow\R$ is measurable with $|p_\varepsilon|+\frac{\varepsilon}{\varepsilon+|r|}|\vec{q}_\varepsilon(r)|\leq \check{C}_3$ for $r\in[-\delta,\delta]$, a $\check{C}_3>0$, and $\varepsilon>0$ small. Moreover, let $\vec{\theta}_1\in L^\infty(\R)^m$ with $\|\vec{\theta}_1\|_\infty\leq \check{C}_4$ for a $\check{C}_4>0$ and
\begin{align}\label{eq_SE_1Dvect_pert_theta1}
\int_\R (\vec{\theta}_0',\sum_{\xi\in\N_0^m,|\xi|=1}\partial^{\xi}D^2W(\vec{\theta}_0)(\vec{\theta}_1)^\xi\vec{\theta}_0')_{\R^m}=0.
\end{align}

Let $\tilde{\delta}\in(0,\frac{3\delta}{4}]$ be fixed. Then $F_\varepsilon$, $J_\varepsilon$ are well-defined on $\overline{I_{\varepsilon,\tilde{\delta}}}$ for $\varepsilon\in(0,\varepsilon_1(\delta,\overline{C}_0)]$ and Corollary \ref{th_SE_1Dprelim2} is applicable due to Remark \ref{th_SE_1Dprelim_rem},~2. We consider the operators
\[
\check{L}_\varepsilon:H^2_N(I_{\varepsilon,\tilde{\delta}},\K)^m\rightarrow L^2_{J_\varepsilon}(I_{\varepsilon,\tilde{\delta}},\K)^m:
\vec{u}\mapsto \check{\Lc}_\varepsilon \vec{u}:= \left[-J_\varepsilon^{-1} \frac{d}{dz}\left(J_\varepsilon\frac{d}{dz}\right)+D^2W(\vec{\phi}_\varepsilon(\varepsilon .))\right]\vec{u},
\]
where $L^2_{J_\varepsilon}(I_{\varepsilon,\tilde{\delta}},\K)^m$ is the space of $\K^m$-valued $L^2$-functions defined on $I_{\varepsilon,\tilde{\delta}}$ with the weight $J_\varepsilon$. We write $(.,.)_{J_\varepsilon}$, $\|.\|_{J_\varepsilon}$ and $\perp_{J_\varepsilon}$ for the corresponding scalar product, norm and orthogonal relation. Note that for convenience we use the same notation for the latter as in the scalar case.
The sesquilinearform associated to $\check{L}_\varepsilon$ is given by $\check{B}_\varepsilon:H^1(I_{\varepsilon,\tilde{\delta}},\K)^m\times H^1(I_{\varepsilon,\tilde{\delta}},\K)^m\rightarrow\K$,
\[
\check{B}_\varepsilon(\vec{\Phi},\vec{\Psi}):= \int_{I_{\varepsilon,\tilde{\delta}}}\left[(\frac{d}{dz}\vec{\Phi},\frac{d}{dz}\vec{\Psi})_{\K^m} + (D^2W(\vec{\phi}_\varepsilon(\varepsilon .))\vec{\Phi},\vec{\Psi})_{\K^m}\right] J_\varepsilon\,dz.
\]
We obtain the analogue of Lemma \ref{th_SE_1Dvect_unpert},~1.-3.

\begin{Lemma}\phantomsection{\label{th_SE_1Dvect_pert1}}
	\begin{enumerate}
		\item $\check{L}_\varepsilon$ is selfadjoint and the spectrum is given by a sequence of discrete eigenvalues $(\check{\lambda}_\varepsilon^k)_{k\in\N}$ in $\R$ with $\check{\lambda}_\varepsilon^1\leq\check{\lambda}_\varepsilon^2\leq...  $ and $\check{\lambda}_\varepsilon^k\overset{k\rightarrow\infty}\longrightarrow\infty$. Moreover, there is an orthonormal basis $(\vec{\Psi}_\varepsilon^k)_{k\in\N}$ of $L^2_{J_\varepsilon}(I_{\varepsilon,\tilde{\delta}},\K)^m$ consisting of smooth $\R$-valued eigenfunctions $\vec{\Psi}_\varepsilon^k$ to $\check{\lambda}_\varepsilon^k$. 
		\item $\check{\lambda}_\varepsilon^1$ is simple for $\varepsilon>0$ small.
		\item There is an $\check{\varepsilon}_0>0$ (only depending on $\delta$, $\tilde{\delta}$, $\overline{C}_0$, $c_1$, $C_2$, $\check{C}_3$, $\check{C}_4$) such that for all $\varepsilon\in(0,\varepsilon_0]$ and any normalized eigenfunction $\vec{\Psi}_\varepsilon$ of $L_\varepsilon$ to an eigenvalue $\check{\lambda}_\varepsilon
		\leq\frac{1}{4}\min\{\sigma(D^2W(\vec{u}_\pm))\}$ it holds 
		\[
		|\vec{\Psi}_\varepsilon(z)|\leq Ce^{-|z|\sqrt{\min\{\sigma(D^2W(\vec{u}_\pm))\}/6}}\quad\text{ for all }z\in I_{\varepsilon,\tilde{\delta}},
		\]
		where $C>0$ only depends on $c_1$.
	\end{enumerate}	
\end{Lemma}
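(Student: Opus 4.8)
The plan is to follow the proof of the unperturbed vector-valued case, Lemma~\ref{th_SE_1Dvect_unpert},~1.--3., with $\check{L}_\varepsilon$ in place of $\check{L}_{0,\varepsilon}$, treating both the $\varepsilon$-dependent perturbation of the potential and the weight $J_\varepsilon$ as lower order in $\varepsilon$; the scalar arguments relying on the maximum principle (Krein--Rutman, Harnack, Hopf) have to be replaced by contradiction and compactness arguments based on Kusche~\cite{Kusche}, Chapter~1. \emph{Ad 1.} Since $W$ is smooth and $\vec{\phi}_\varepsilon$ is uniformly bounded, $D^2W(\vec{\phi}_\varepsilon(\varepsilon\cdot))\geq-CI$ uniformly in $\varepsilon$, and $c_1\leq J_\varepsilon\leq C_2$ by \eqref{eq_SE_J} and Corollary~\ref{th_SE_1Dprelim2}; hence a suitable constant shift of $\check{B}_\varepsilon$ is coercive on the weighted space $H^1_{J_\varepsilon}(I_{\varepsilon,\tilde{\delta}},\C)^m$. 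I would then apply the abstract Fredholm alternative of Theorem~\ref{th_fred} to $\check{A}_\varepsilon\colon H^1_{J_\varepsilon}(I_{\varepsilon,\tilde{\delta}},\C)^m\to (H^1_{J_\varepsilon}(I_{\varepsilon,\tilde{\delta}},\C)^m)^*$, $\vec{u}\mapsto[\vec{v}\mapsto\check{B}_\varepsilon(\vec{u},\vec{v})]$, exactly as for $\check{A}_{0,\varepsilon}$ in the proof of Lemma~\ref{th_SE_1Dvect_unpert},~1.: self-adjointness comes from symmetry of $\check{L}_\varepsilon$ together with a nonempty resolvent set, the discrete spectral structure and the orthonormal basis from Theorem~\ref{th_fred} plus standard regularity and integration by parts, and, since $D^2W(\vec{\phi}_\varepsilon(\varepsilon\cdot))$ is real symmetric, one passes to $\R^m$-valued eigenfunctions by the real/imaginary-part argument used in the scalar case.

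\emph{Ad 2.} This is the step I expect to be the main obstacle, since the scalar proof of simplicity uses the maximum principle which is unavailable here. First I would record the a priori bound $\check{\lambda}_\varepsilon^1=\Oc(\varepsilon^2)$. Writing $\hat{\vec{\Psi}}:=J_\varepsilon^{1/2}\vec{\Psi}$ and integrating by parts the weight terms as in the scalar case, one obtains $\check{B}_\varepsilon(\vec{\Psi},\vec{\Psi})=\check{B}_{0,\varepsilon}(\hat{\vec{\Psi}},\hat{\vec{\Psi}})+\int_{I_{\varepsilon,\tilde{\delta}}}(M_\varepsilon\hat{\vec{\Psi}},\hat{\vec{\Psi}})_{\R^m}\,dz+(\text{exp.\ small boundary term})$, where $M_\varepsilon=\varepsilon p_\varepsilon\sum_{\xi\in\N_0^m,|\xi|=1}\partial^\xi D^2W(\vec{\theta}_0)(\vec{\theta}_1)^\xi+\check{q}_\varepsilon$ with $|\check{q}_\varepsilon(z)|\leq C\varepsilon^2(1+|z|)$ (Taylor expansion of $D^2W$ and Corollary~\ref{th_SE_1Dprelim2}); testing with $J_\varepsilon^{-1/2}\check{\beta}_\varepsilon\vec{\theta}_0'$ and using $\int_\R|\vec{\theta}_0''|^2+(D^2W(\vec{\theta}_0)\vec{\theta}_0',\vec{\theta}_0')_{\R^m}=0$, the orthogonality \eqref{eq_SE_1Dvect_pert_theta1}, and the decay of $\vec{\theta}_0$ from Theorem~\ref{th_ODE_vect} gives the bound (this anticipates only the upper estimate of Theorem~\ref{th_SE_1Dscal_pert2},~1., in the vector-valued form). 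Then I argue by contradiction: if $\check{\lambda}_\varepsilon^1$ were not simple along a sequence $\varepsilon_n\to0$, pick normalized $L^2_{J_{\varepsilon_n}}$-orthogonal eigenfunctions $\vec{\Psi}_{\varepsilon_n}^1,\vec{\Psi}_{\varepsilon_n}^2$ to $\check{\lambda}_{\varepsilon_n}^1$; after the substitution they solve $-\hat{\vec{\Psi}}''+(D^2W(\vec{\phi}_{\varepsilon_n}(\varepsilon_n\cdot))+\check{q}_{\varepsilon_n})\hat{\vec{\Psi}}=\check{\lambda}_{\varepsilon_n}^1\hat{\vec{\Psi}}$, the coefficient converges locally uniformly to $D^2W(\vec{\theta}_0)$ and $\check{\lambda}_{\varepsilon_n}^1\to0$. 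Uniform pointwise exponential bounds for these eigenfunctions (Kusche~\cite{Kusche}, Lemma~1.1, applied to the $\varepsilon$-dependent operator) together with the compactness result \cite{Kusche}, Lemma~1.2, produce a subsequence with $\hat{\vec{\Psi}}_{\varepsilon_{n_k}}^j\to\vec{\Psi}_0^j$ in $C^2_{\mathrm{loc}}(\R)$, each a normalized eigenfunction of $\check{L}_0$ to the eigenvalue $0$; dominated convergence (using the uniform decay) preserves orthogonality of $\vec{\Psi}_0^1$ and $\vec{\Psi}_0^2$, contradicting $\dim\ker\check{L}_0=1$ (cf.\ Remark~\ref{th_ODE_vect_lin_op_rem}).

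\emph{Ad 3.} For the decay estimate I again pass to the unweighted equation $-\hat{\vec{\Psi}}_\varepsilon''+(D^2W(\vec{\phi}_\varepsilon(\varepsilon\cdot))+\check{q}_\varepsilon)\hat{\vec{\Psi}}_\varepsilon=\check{\lambda}_\varepsilon\hat{\vec{\Psi}}_\varepsilon$ via $\hat{\vec{\Psi}}_\varepsilon=J_\varepsilon^{1/2}\vec{\Psi}_\varepsilon$. Fixing $c_0>0$ so that $D^2W\geq\frac{1}{2}\min\{\sigma(D^2W(\vec{u}_\pm))\}I$ on the $c_0$-neighbourhoods of $\vec{u}_\pm$, for $\varepsilon$ small and $|z|\geq c_0/\varepsilon$ the point $\vec{\phi}_\varepsilon(\varepsilon z)$ lies in such a neighbourhood, so using $\check{\lambda}_\varepsilon\leq\frac{1}{4}\min\{\sigma(D^2W(\vec{u}_\pm))\}$ and $|\check{q}_\varepsilon|=\Oc(\varepsilon^2(1+|z|))$ one gets $D^2W(\vec{\phi}_\varepsilon(\varepsilon z))+\check{q}_\varepsilon(z)-\check{\lambda}_\varepsilon\geq\frac{1}{4}\min\{\sigma(D^2W(\vec{u}_\pm))\}I$ there. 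A scalar differential-inequality/comparison argument for $|\hat{\vec{\Psi}}_\varepsilon|$ on that region (Kusche~\cite{Kusche}, Lemma~1.1; cf.\ the scalar proof via Marquardt~\cite{Marquardt}, Proposition~3.7) then yields $|\hat{\vec{\Psi}}_\varepsilon(z)|\leq Ce^{-|z|\sqrt{\min\{\sigma(D^2W(\vec{u}_\pm))\}/6}}$ on $I_{\varepsilon,\tilde{\delta}}$, the behaviour on the fixed-size set $|z|\leq c_0/\varepsilon$ being absorbed into $C$; transferring back through $J_\varepsilon^{1/2}\in[c_1^{1/2},C_2^{1/2}]$ gives the claim with $C$ depending only on $c_1$ and the structural constants. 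The non-optimality of the exponent is harmless, as in Lemma~\ref{th_SE_1Dvect_unpert},~3.
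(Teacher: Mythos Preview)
Your approach matches the paper's: apply the abstract Fredholm alternative (Theorem~\ref{th_fred}) to $\check{A}_\varepsilon$ for part~1, and for parts~2 and~3 reduce to the unperturbed case via Kusche's compactness and decay lemmas after noting that the perturbation of the potential and the weight are uniformly $\Oc(\varepsilon)$. The paper is terser---it simply records $|J_\varepsilon-J(0)|\leq C\varepsilon$, $|\frac{d}{dz}J_\varepsilon|\leq C\varepsilon$ and $|\vec{\phi}_\varepsilon(\varepsilon z)-\vec{\theta}_0(z)|\leq C\varepsilon$ on $I_{\varepsilon,\tilde{\delta}}$ and then states that Kusche's abstract results in \cite{Kusche}, Chapter~1 apply directly to the weighted perturbed operator, so that the arguments from Lemma~\ref{th_SE_1Dvect_unpert},~1.--3.\ carry over verbatim. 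Your explicit substitution $\hat{\vec{\Psi}}=J_\varepsilon^{1/2}\vec{\Psi}$ is a legitimate alternative and in fact is exactly what is used later in the proof of Theorem~\ref{th_SE_1Dvect_pert2}; for part~2, note that you do not need the full $\Oc(\varepsilon^2)$ bound on $\check{\lambda}_\varepsilon^1$---any upper bound below $\min\{\sigma(D^2W(\vec{u}_\pm))\}$ (obtained by the crude test with $J_\varepsilon^{-1/2}\check{\beta}_\varepsilon\vec{\theta}_0'$, without the Taylor expansion) already suffices for the contradiction argument.

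There is, however, a genuine slip in your Ad~3. You take the coercivity region to be $|z|\geq c_0/\varepsilon$ and then say ``the behaviour on the fixed-size set $|z|\leq c_0/\varepsilon$ being absorbed into $C$''. But $|z|\leq c_0/\varepsilon$ is not fixed-size; it exhausts $I_{\varepsilon,\tilde{\delta}}$ as $\varepsilon\to0$, so nothing can be absorbed there. The correct threshold is a \emph{fixed} $z_0>0$, chosen so that $|\vec{\theta}_0(z)-\vec{u}_\pm|$ is small for $|z|\geq z_0$; then $\vec{\phi}_\varepsilon(\varepsilon z)=\vec{\theta}_0(z)+\Oc(\varepsilon)$ lies in the $c_0$-neighbourhood of $\vec{u}_\pm$ for $|z|\geq z_0$ and $\varepsilon$ small, and the complement $|z|\leq z_0$ is genuinely bounded and absorbed into $C$. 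With this correction your argument goes through (and matches what \cite{Kusche}, Lemma~1.1 does).
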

\begin{proof}
	We need some properties of the weight and the perturbation. Corollary \ref{th_SE_1Dprelim2} and the assumptions yield $|\frac{d}{dz}J_\varepsilon|\leq 2C_2\varepsilon$, $|J_\varepsilon-J(0)|\leq C(\overline{C}_0,C_2)\varepsilon$ in $I_{\varepsilon,\tilde{\delta}}$ and
	\[
	|\vec{\phi}_\varepsilon(\varepsilon z)-\vec{\theta}_0(z)|\leq C(\delta,\check{C}_3,\check{C}_4)\varepsilon\quad\text{ for all }z\in I_{\varepsilon,\tilde{\delta}}.
	\]
	With these uniform estimates one can show that the abstract results in \cite{Kusche} are applicable. Hence the assertions follow in the analogous way as in the unperturbed case, cf.~the proof of Lemma \ref{th_SE_1Dvect_unpert},~1.-3.~above. Here the abstract Fredholm alternative in Theorem \ref{th_fred} below is applied to 
	\[
	\check{A}_\varepsilon:H^1_{J_\varepsilon}(I_{\varepsilon,\tilde{\delta}},\C^m)\rightarrow H^1_{J_\varepsilon}(I_{\varepsilon,\tilde{\delta}},\C^m)^*: \vec{u}\mapsto [\vec{v}\mapsto \check{B}_\varepsilon(\vec{u},\vec{v})],
	\]
	where $H^1_{J_\varepsilon}(I_{\varepsilon,\tilde{\delta}},\C^m)$ is $H^1(I_{\varepsilon,\tilde{\delta}},\C^m)$ with the weight $J_\varepsilon$ in the norm and $H^1_{J_\varepsilon}(I_{\varepsilon,\tilde{\delta}},\C^m)^*$ is the anti-dual space.
\end{proof}

Now we obtain the analogue to Theorem \ref{th_SE_1Dscal_pert2}.

\begin{Theorem}\label{th_SE_1Dvect_pert2}
	There is an $\check{\varepsilon}_0>0$ only depending on $\delta$, $\tilde{\delta}$, $\overline{C}_0$ ,$c_1$, $C_2$, $\check{C}_3$, $\check{C}_4$ and $C>0$ only depending on $\tilde{\delta}$, $\overline{C}_0$, $c_1$, $C_2$, $\check{C}_3$, $\check{C}_4$ such that
	\begin{enumerate}
		\item For $\varepsilon\in(0,\check{\varepsilon}_0]$ it holds
		\[
		\check{\lambda}_\varepsilon^1=\inf_{\vec{\Psi}\in H^1(I_{\varepsilon,\tilde{\delta}},\K)^m, \|\vec{\Psi}\|_{J_\varepsilon}=1} \check{B}_\varepsilon(\vec{\Psi},\vec{\Psi})= \check{B}_\varepsilon(\vec{\Psi}_\varepsilon^1,\vec{\Psi}_\varepsilon^1), \quad |\check{\lambda}_\varepsilon^1|\leq C\varepsilon^2.
		\]
		\item There are $\check{c}_\varepsilon\in\{\pm 1\}$ such that for $\vec{\Psi}^R_\varepsilon:=\check{c}_\varepsilon\vec{\Psi}_\varepsilon^1-J(0)^{-\frac{1}{2}} \check{\beta}_\varepsilon\vec{\theta}_0'$, where $\beta_\varepsilon=\|\vec{\theta}_0'\|_{L^2(I_{\varepsilon,\tilde{\delta}})^m}$, and $\varepsilon\in(0,\check{\varepsilon}_0]$ it holds 
		\[\textstyle
		\|\vec{\Psi}^R_\varepsilon\|_{J_\varepsilon}+\|\frac{d}{dz}\vec{\Psi}^R_\varepsilon\|_{J_\varepsilon}\leq C\varepsilon.
		\]
		\item With $\check{\nu}_1$ from Lemma \ref{th_SE_1Dvect_unpert},~5.~it holds for all $\varepsilon\in(0,\check{\varepsilon}_0]$
		\[
		\check{\lambda}_\varepsilon^2 = \inf_{\vec{\Psi}\in H^1(I_{\varepsilon,\tilde{\delta}},\K)^m, \|\vec{\Psi}\|_{J_\varepsilon}=1, \vec{\Psi}\perp_{J_\varepsilon} \vec{\Psi}_\varepsilon^1} B_\varepsilon(\vec{\Psi},\vec{\Psi}) \geq \check{\nu}_2:=\min\left\{\frac{\check{\nu}_1}{2},\frac{\sigma(D^2W(\vec{u}_\pm))}{4}\right\}>0.
		\]
	\end{enumerate}
\end{Theorem}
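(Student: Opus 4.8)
The plan is to transcribe the proof of Theorem \ref{th_SE_1Dscal_pert2} into the vector-valued setting, replacing the scalar building blocks by their analogues: Lemma \ref{th_SE_1Dvect_unpert} in place of Lemma \ref{th_SE_1Dscal_unpert}, Lemma \ref{th_SE_1Dvect_pert1} in place of Lemma \ref{th_SE_1Dscal_pert1}, and using $\check{L}_0$, $\vec{\theta}_0$ and the assumption $\dim\ker\check{L}_0=1$ throughout. As in the scalar case it suffices to treat $\K=\R$, since eigenvalues coincide for $\K=\R,\C$ and one may pick the $\R^m$-valued orthonormal basis; the $\inf$-characterizations follow from Lemma \ref{th_SE_1Dvect_pert1},~1 by the usual variational argument. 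The starting point is the analogue of the identity \eqref{eq_SE_1Dscal_pert_Beps_id}: setting $\hat{\vec{\Psi}}:=J_\varepsilon^{1/2}\vec{\Psi}$ and using the product rule one rewrites $\check{B}_\varepsilon(\vec{\Psi},\vec{\Psi})$ as $\check{B}_{0,\varepsilon}(\hat{\vec{\Psi}},\hat{\vec{\Psi}})$ plus $\int_{I_{\varepsilon,\tilde{\delta}}}[\varepsilon p_\varepsilon\sum_{\xi\in\N_0^m,|\xi|=1}\partial^\xi D^2W(\vec{\theta}_0)(\vec{\theta}_1)^\xi +\check{\vec{q}}_\varepsilon]\hat{\vec{\Psi}}\cdot\hat{\vec{\Psi}}$ plus a boundary term, where a Taylor expansion of $D^2W(\vec{\phi}_\varepsilon(\varepsilon\,\cdot\,))$ around $\vec{\theta}_0$ together with the estimates $|p_\varepsilon|+\tfrac{\varepsilon}{\varepsilon+|r|}|\vec{q}_\varepsilon(r)|\le\check{C}_3$, $\|\vec{\theta}_1\|_\infty\le\check{C}_4$ and the weight bounds from Corollary \ref{th_SE_1Dprelim2} give $|\check{\vec{q}}_\varepsilon(z)|\le C\varepsilon^2(1+|z|)$ on $I_{\varepsilon,\tilde{\delta}}$.

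For part 1, I would first test with $\vec{\Psi}=J_\varepsilon^{-1/2}\check{\beta}_\varepsilon\vec{\theta}_0'$ (which is $\|\cdot\|_{J_\varepsilon}$-normalized): the term $\check{B}_{0,\varepsilon}(\vec{\theta}_0',\vec{\theta}_0')$ is exponentially small because integration by parts in $\check{\Lc}_0\vec{\theta}_0'=0$ gives $\int_\R|\vec{\theta}_0''|^2+(D^2W(\vec{\theta}_0)\vec{\theta}_0',\vec{\theta}_0')_{\R^m}=0$, the linear term vanishes by \eqref{eq_SE_1Dvect_pert_theta1}, and the remaining contributions are $\Oc(\varepsilon^2)$ by the decay of $\vec{\theta}_0$ from Theorem \ref{th_ODE_vect}; hence $\check{\lambda}_\varepsilon^1\le C\varepsilon^2$. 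This upper bound puts $\check{\lambda}_\varepsilon^1$ below $\tfrac14\min\{\sigma(D^2W(\vec{u}_\pm))\}$, so Lemma \ref{th_SE_1Dvect_pert1},~3 yields the uniform exponential decay of $\vec{\Psi}_\varepsilon^1$. For the lower bound I would use the $\check{B}_\varepsilon$-identity for $\hat{\vec{\Psi}}_\varepsilon^1$, split $\hat{\vec{\Psi}}_\varepsilon^1=a_\varepsilon\vec{\Psi}_{0,\varepsilon}^1+\vec{\Psi}_\varepsilon^\perp$ orthogonally in $L^2(I_{\varepsilon,\tilde{\delta}})^m$ with $a_\varepsilon^2=1-\|\vec{\Psi}_\varepsilon^\perp\|_{L^2}^2$, replace $\vec{\Psi}_{0,\varepsilon}^1$ by $\vec{\Psi}_{0,\varepsilon}^R+\check{\beta}_\varepsilon\vec{\theta}_0'$ in the linear term and use \eqref{eq_SE_1Dvect_pert_theta1} plus Lemma \ref{th_SE_1Dvect_unpert},~6 to make it $\Oc(\varepsilon+\|\vec{\Psi}_\varepsilon^\perp\|_{L^2})$, and use $\check{B}_{0,\varepsilon}(\hat{\vec{\Psi}}_\varepsilon^1,\hat{\vec{\Psi}}_\varepsilon^1)=a_\varepsilon^2\check{\lambda}_{0,\varepsilon}^1+\check{B}_{0,\varepsilon}(\vec{\Psi}_\varepsilon^\perp,\vec{\Psi}_\varepsilon^\perp)$ together with Lemma \ref{th_SE_1Dvect_unpert},~4--5. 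Young's inequality then forces $\|\vec{\Psi}_\varepsilon^\perp\|_{L^2}=\Oc(\varepsilon)$, whence $\check{\lambda}_\varepsilon^1=\Oc(\varepsilon^2)$ and $a_\varepsilon^2=1+\Oc(\varepsilon^2)$.

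For part 2, set $\check{c}_\varepsilon:=\textup{sign}\,a_\varepsilon\in\{\pm1\}$ (this is exactly the sign appearing in the statement, and is the vector-valued substitute for the positivity of $\Psi_\varepsilon^1$ in the scalar case); then $\check{c}_\varepsilon a_\varepsilon=1+\Oc(\varepsilon^2)$. Writing $\vec{\Psi}^R_\varepsilon=J_\varepsilon^{-1/2}[(\check{c}_\varepsilon a_\varepsilon-J_\varepsilon^{1/2}J(0)^{-1/2})\check{\beta}_\varepsilon\vec{\theta}_0'+\check{c}_\varepsilon a_\varepsilon\vec{\Psi}_{0,\varepsilon}^R+\check{c}_\varepsilon\vec{\Psi}_\varepsilon^\perp]$ and using the Taylor expansion $J_\varepsilon(z)^{1/2}=J(0)^{1/2}+\Oc(|F_\varepsilon(z)|)$ with $|F_\varepsilon(z)|\le C\varepsilon(|z|+1)$ from Corollary \ref{th_SE_1Dprelim2}, together with the decay of $\vec{\theta}_0'$, Lemma \ref{th_SE_1Dvect_unpert},~6 and $\|\vec{\Psi}_\varepsilon^\perp\|_{L^2}=\Oc(\varepsilon)$, gives $\|\vec{\Psi}^R_\varepsilon\|_{J_\varepsilon}=\Oc(\varepsilon)$; differentiating this expression and using $\|\frac{d}{dz}\vec{\Psi}_\varepsilon^\perp\|_{L^2}=\Oc(\varepsilon)$ (from $\check{B}_{0,\varepsilon}(\vec{\Psi}_\varepsilon^\perp,\vec{\Psi}_\varepsilon^\perp)=\Oc(\varepsilon^2)$ and the definition of $\check{B}_{0,\varepsilon}$) and the weight bounds yields $\|\frac{d}{dz}\vec{\Psi}^R_\varepsilon\|_{J_\varepsilon}=\Oc(\varepsilon)$. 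Finally, for part 3, if $\check{\lambda}_\varepsilon^2\ge\tfrac14\min\{\sigma(D^2W(\vec{u}_\pm))\}$ there is nothing to prove; otherwise $\vec{\Psi}_\varepsilon^2$ enjoys the decay of Lemma \ref{th_SE_1Dvect_pert1},~3, so $\check{\lambda}_\varepsilon^2=\check{B}_{0,\varepsilon}(\hat{\vec{\Psi}}_\varepsilon^2,\hat{\vec{\Psi}}_\varepsilon^2)+\Oc(\varepsilon)$; splitting $\hat{\vec{\Psi}}_\varepsilon^2=\tilde{a}_\varepsilon\vec{\Psi}_{0,\varepsilon}^1+\vec{\Psi}_\varepsilon^{2,\perp}$ orthogonally and using $\hat{\vec{\Psi}}_\varepsilon^1\perp_{L^2}\hat{\vec{\Psi}}_\varepsilon^2$ together with the first splitting gives $|\tilde{a}_\varepsilon|\le C\|\vec{\Psi}_\varepsilon^\perp\|_{L^2}\le C\varepsilon$, and Lemma \ref{th_SE_1Dvect_unpert},~5 then yields $\check{\lambda}_\varepsilon^2\ge\tfrac{\check{\nu}_1}{2}$ for $\varepsilon$ small. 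The only genuinely new point compared with the scalar argument — and the place to be careful — is that all uses of the comparison principle, Harnack and Hopf must be replaced by the sign-tracking via $\check{c}_\varepsilon$ and by Kusche's abstract spectral and decay results, so one must verify once and for all that the uniform bounds on $J_\varepsilon$, $h_\varepsilon$ and $\vec{\phi}_\varepsilon$ furnished by Corollary \ref{th_SE_1Dprelim2} put us in the scope of those abstract results; beyond that, the computation is a routine transcription of the scalar proof.
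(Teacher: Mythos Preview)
Your proposal is correct and follows essentially the same route as the paper's proof: the identity \eqref{eq_SE_1Dvect_pert_Beps_id} via $J_\varepsilon^{1/2}\vec{\Psi}$, the Taylor expansion of $D^2W(\vec{\phi}_\varepsilon(\varepsilon\,\cdot\,))$, the test function $J_\varepsilon^{-1/2}\check{\beta}_\varepsilon\vec{\theta}_0'$ for the upper bound, the orthogonal splitting $\check{a}_\varepsilon\vec{\Psi}_{0,\varepsilon}^1+\vec{\Psi}_\varepsilon^\perp$, and the definition $\check{c}_\varepsilon:=\textup{sign}\,\check{a}_\varepsilon$ all match the paper exactly. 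One small point worth making explicit (the paper does this) is that before substituting $\vec{\Psi}_{0,\varepsilon}^1=\vec{\Psi}_{0,\varepsilon}^R+\check{\beta}_\varepsilon\vec{\theta}_0'$ you should assume w.l.o.g.\ that the sign $\check{c}_{0,\varepsilon}$ from Lemma \ref{th_SE_1Dvect_unpert},~6 equals $1$, by replacing $\vec{\Psi}_{0,\varepsilon}^1$ with $\check{c}_{0,\varepsilon}\vec{\Psi}_{0,\varepsilon}^1$ if necessary.
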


\begin{proof} The proof is analogous to the scalar case, cf.~Theorem \ref{th_SE_1Dscal_pert2}. It is enough to consider $\K=\R$. Moreover, the \textup{inf}-characterizations can be shown as in the scalar case. If we write \enquote{for $\varepsilon$ small} in the following it is always meant \enquote{for all $\varepsilon\in(0,\check{\varepsilon}_0]$ for some $\check{\varepsilon}_0>0$ small only depending on $\delta$, $\tilde{\delta}$, $\overline{C}_0$ ,$c_1$, $C_2$, $\check{C}_3$, $\check{C}_4$}. Similarly, all appearing constants (also in $\Oc$-notation) below only depend on $\tilde{\delta}$, $\overline{C}_0$ ,$c_1$, $C_2$, $\check{C}_3$, $\check{C}_4$, but we do not explicitly state this.\phantom{\qedhere}
	
	As in the scalar case, we prove an identity for $\check{B}_\varepsilon(\Psi,\Psi)$ for all $\Psi\in H^1(I_{\varepsilon,\tilde{\delta}},\R)^m$ first. Let $\check{\Psi}:=J_\varepsilon^{1/2}\vec{\Psi}$. Then 
	\[
	\frac{d}{dz}\vec{\Psi} = -\frac{1}{2}J_\varepsilon^{-\frac{3}{2}}(\frac{d}{dz}J_\varepsilon)\check{\Psi}+ J_\varepsilon^{-\frac{1}{2}}\frac{d}{dz}\check{\Psi}.
	\]
	Therefore
	\[
	\check{B}_\varepsilon(\vec{\Psi},\vec{\Psi})=\int_{I_{\varepsilon,\tilde{\delta}}}|\frac{d}{dz}\check{\Psi}|^2 + \check{\Psi}\cdot\left[D^2W(\vec{\phi}_\varepsilon(\varepsilon .))+\frac{1}{4}J_\varepsilon^{-2}(\frac{d}{dz}J_\varepsilon)^2\right]\check{\Psi}
	-J_\varepsilon^{-1}(\frac{d}{dz}J_\varepsilon)\frac{1}{2}\frac{d}{dz}|\check{\Psi}^2|.
	\]
	To use the result from the unperturbed case, we replace $D^2(\vec{\phi}_\varepsilon(\varepsilon.))$ by $D^2W(\vec{\theta}_0)$. To this end we use a Taylor expansion and get for all $|z|\leq\frac{\delta}{\varepsilon}$
	\begin{align*}
	\left|D^2W(\vec{\phi}_\varepsilon(\varepsilon z))-D^2W(\vec{\theta}_0(z))-\varepsilon p_\varepsilon\sum_{\xi\in\N_0^m,|\xi|=1}\partial^\xi D^2W(\vec{\theta}_0(z))(\vec{\theta}_1)^\xi(z)\right|\\
	\leq 
	C|\vec{q}_\varepsilon(\varepsilon z)|\varepsilon^2 + C\varepsilon^2(| p_\varepsilon\vec{\theta}_1(z)|+\varepsilon|\vec{q}_\varepsilon(\varepsilon z)|)^2
	\leq \tilde{C}(1+|z|)\varepsilon^2.
	\end{align*}
	We use integration by parts for the last term in the above identity for $\check{B}_\varepsilon(\vec{\Psi},\vec{\Psi})$. This yields
	\begin{align}\label{eq_SE_1Dvect_pert_Beps_id}
	\check{B}_\varepsilon(\vec{\Psi},\vec{\Psi})&=
	\check{B}_{0,\varepsilon}(\check{\Psi},\check{\Psi})
	-\frac{1}{2}\left[J_\varepsilon^{-1}(\frac{d}{dz}J_\varepsilon)\check{\Psi}^2 \right]_{z=-\frac{\tilde{\delta}}{\varepsilon}}^{\frac{\tilde{\delta}}{\varepsilon}}\\\notag
	&+ \int_{I_{\varepsilon,\tilde{\delta}}} \check{\Psi}\cdot \left[\varepsilon p_\varepsilon\sum_{\xi\in\N_0^m,|\xi|=1}\partial^\xi D^2W(\vec{\theta}_0)(\vec{\theta}_1)^\xi
	+\check{q}_\varepsilon\right]
	\check{\Psi}\,dz,
	\end{align}
	where 
	\begin{align*}
	\check{q}_\varepsilon:=
	D^2W(\vec{\phi}_\varepsilon(\varepsilon .))-D^2W(\vec{\theta}_0)-\varepsilon p_\varepsilon\sum_{\xi\in\N_0^m,|\xi|=1}\partial^\xi D^2W(\vec{\theta}_0)(\vec{\theta}_1)^\xi\\
	+\frac{1}{4}\left(2J_\varepsilon^{-1}(\frac{d^2}{dz^2}J_\varepsilon)-J_\varepsilon^{-2}(\frac{d}{dz}J_\varepsilon)^2\right)\textup{Id}_{\R^{m\times m}}.
	\end{align*}
	The first part of $\check{q}_\varepsilon$ is estimated above, for the second part we use Corollary \ref{th_SE_1Dprelim2}. This yields $|\check{q}_\varepsilon(z)|\leq C\varepsilon^2(1+|z|)$ for all $z\in I_{\varepsilon,\tilde{\delta}}$.
	
	\begin{proof}[Ad 1] First we prove an upper bound on $\check{\lambda}_\varepsilon^1$ with \eqref{eq_SE_1Dvect_pert_Beps_id}. Let $\vec{\Psi}=J_\varepsilon^{-1/2}\check{\beta}_\varepsilon\vec{\theta}_0'$. Then it holds $\|J_\varepsilon^{-1/2}\check{\beta}_\varepsilon\vec{\theta}_0'\|_{J_\varepsilon}=1$. Therefore \eqref{eq_SE_1Dvect_pert_Beps_id} and Corollary \ref{th_SE_1Dprelim2} yield
	\begin{align*}
	\check{\lambda}_\varepsilon^1\leq \check{\beta}_\varepsilon^2\left[ \check{B}_{0,\varepsilon}(\vec{\theta}_0',\vec{\theta}_0')
	+\varepsilon p_\varepsilon \int_{I_{\varepsilon,\tilde{\delta}}} (\vec{\theta}_0',\sum_{\xi\in\N_0^m,|\xi|=1}\partial^\xi D^2W(\vec{\theta}_0)(\vec{\theta}_1)^\xi\vec{\theta}_0')_{\R^m}\,dz\right. \\
	\left.+C\varepsilon^2\int_{I_{\varepsilon,\tilde{\delta}}}(1+|z|)|\vec{\theta}_0'(z)|^2\,dz+C\varepsilon e^{-c\tilde{\delta}/\varepsilon}\right].
	\end{align*}
	It holds $\int_\R|\vec{\theta}_0''|^2+\vec{\theta}_0'\cdot D^2W(\vec{\theta}_0)\vec{\theta}_0'=0$ because of integration by parts. Together with \eqref{eq_SE_1Dvect_pert_theta1} and the decay properties of $\vec{\theta}_0$ from Theorem \ref{th_ODE_vect} this implies $\check{\lambda}_\varepsilon^1\leq C\varepsilon^2$ for $\varepsilon$ small.
	
	In particular, Lemma \ref{th_SE_1Dvect_pert1},~3.~yields for $\varepsilon$ small the decay 
	\[
	|\vec{\Psi}_\varepsilon^1(z)|\leq Ce^{-|z|\sqrt{\min\{\sigma(D^2W(\vec{u}_\pm))\}/6}}\quad\text{ for all }z\in I_{\varepsilon,\tilde{\delta}}.
	\]
	Hence \eqref{eq_SE_1Dvect_pert_Beps_id} and estimates as before imply for $\varepsilon$ small
	\[
	\check{\lambda}_\varepsilon^1=
	\check{B}_{0,\varepsilon}(\check{\Psi}_\varepsilon^1,\check{\Psi}_\varepsilon^1)
	+\varepsilon p_\varepsilon \int_{I_{\varepsilon,\tilde{\delta}}} (\check{\Psi}_\varepsilon^1,\sum_{\xi\in\N_0^m,|\xi|=1}\partial^\xi D^2W(\vec{\theta}_0)(\vec{\theta}_1)^\xi\check{\Psi}_\varepsilon^1)_{\R^m}\,dz+\Oc(\varepsilon^2).
	\]
	To estimate the second term we will use \eqref{eq_SE_1Dvect_pert_theta1}. For notational convenience we assume w.l.o.g. that $\check{c}_{0,\varepsilon}=1$ in Lemma \ref{th_SE_1Dvect_unpert},~6. If this is not the case then one can simply exchange $\vec{\Psi}_{0,\varepsilon}^1$. We split
	\begin{align}\label{eq_SE_1Dvect_pert_split}
	\check{\Psi}_\varepsilon^1=J_\varepsilon^\frac{1}{2}\vec{\Psi}_\varepsilon^1=\check{a}_\varepsilon\vec{\Psi}_{0,\varepsilon}^1 + \vec{\Psi}_\varepsilon^\perp
	\end{align}
	orthogonally in $L^2(I_{\varepsilon,\tilde{\delta}})^m$, where $\check{a}_\varepsilon:=(\check{\Psi}_\varepsilon^1,\vec{\Psi}_{0,\varepsilon}^1)_{L^2(I_{\varepsilon,\tilde{\delta}})^m}$. Due to the Cauchy-Schwarz-Inequality we have $|\check{a}_\varepsilon|\leq 1$. Moreover, it holds $\check{a}_\varepsilon^2=1-\|\vec{\Psi}_\varepsilon^\perp\|_{L^2(I_{\varepsilon,\tilde{\delta}})^m}^2$. Hence
	\begin{align*}
	&\left|\int_{I_{\varepsilon,\tilde{\delta}}} (\check{\Psi}_\varepsilon^1,\sum_{\xi\in\N_0^m,|\xi|=1}\partial^\xi D^2W(\vec{\theta}_0)(\vec{\theta}_1)^\xi\check{\Psi}_\varepsilon^1)_{\R^m}\,dz\right|\\
	&\leq \check{a}_\varepsilon^2\left|\int_{I_{\varepsilon,\tilde{\delta}}} (\vec{\Psi}_{0,\varepsilon}^1,\sum_{\xi\in\N_0^m,|\xi|=1}\partial^\xi D^2W(\vec{\theta}_0)(\vec{\theta}_1)^\xi\vec{\Psi}_{0,\varepsilon}^1)_{\R^m}\,dz\right|+C\|\vec{\Psi}_\varepsilon^\perp\|_{L^2(I_{\varepsilon,\tilde{\delta}})^m},
	\end{align*}
	where we used $\|\vec{\Psi}_\varepsilon^\perp\|_{L^2(I_{\varepsilon,\tilde{\delta}})^m}\leq 1$. We insert $\vec{\Psi}_{0,\varepsilon}^1=\vec{\Psi}_{0,\varepsilon}^R+\check{\beta}_\varepsilon\vec{\theta}_0'$. The assumption \eqref{eq_SE_1Dvect_pert_theta1} on $\vec{\theta}_1$, the decay for $\vec{\theta}_0'$ from Theorem \ref{th_ODE_vect} and Lemma \ref{th_SE_1Dvect_unpert},~6.~yield for $\varepsilon$ small
	\[
	\left|\int_{I_{\varepsilon,\tilde{\delta}}} (\check{\Psi}_\varepsilon^1,\sum_{\xi\in\N_0^m,|\xi|=1}\partial^\xi D^2W(\vec{\theta}_0)(\vec{\theta}_1)^\xi\check{\Psi}_\varepsilon^1)_{\R^m}\,dz\right|
	\leq C(e^{-c/\varepsilon}+\|\vec{\Psi}_\varepsilon^\perp\|_{L^2(I_{\varepsilon,\tilde{\delta}})^m}).
	\]
	Moreover, integration by parts yields $\check{B}_{0,\varepsilon}(\vec{\Psi}_{0,\varepsilon}^1,\vec{\Psi}_\varepsilon^\perp)=0$. Hence we obtain
	\[
	\check{B}_{0,\varepsilon}(\check{\Psi}_\varepsilon^1,\check{\Psi}_\varepsilon^1)=\check{a}_\varepsilon^2\check{B}_{0,\varepsilon}(\vec{\Psi}_{0,\varepsilon}^1,\vec{\Psi}_{0,\varepsilon}^1)+\check{B}_{0,\varepsilon}(\vec{\Psi}_\varepsilon^\perp,\vec{\Psi}_\varepsilon^\perp)=\check{a}_\varepsilon^2\check{\lambda}_{0,\varepsilon}^1+\check{B}_{0,\varepsilon}(\vec{\Psi}_\varepsilon^\perp,\vec{\Psi}_\varepsilon^\perp).
	\]
	Therefore Lemma \ref{th_SE_1Dvect_unpert},~4.-5.~implies for $\varepsilon$ small
	\[
	C\varepsilon^2\geq\check{\lambda}_\varepsilon^1\geq \check{\nu}_1\|\vec{\Psi}_\varepsilon^\perp\|_{L^2(I_{\varepsilon,\tilde{\delta}})^m}^2 + \Oc(\varepsilon)\|\vec{\Psi}_\varepsilon^\perp\|_{L^2(I_{\varepsilon,\tilde{\delta}})^m} +\Oc(\varepsilon^2)\geq\frac{\check{\nu}_1}{2}\|\vec{\Psi}_\varepsilon^\perp\|_{L^2(I_{\varepsilon,\tilde{\delta}})^m}^2-\tilde{C}\varepsilon^2,
	\]
	where the last estimate follows from Young's inequality. Hence we obtain $\|\vec{\Psi}_\varepsilon^\perp\|_{L^2(I_{\varepsilon,\tilde{\delta}})^m}=\Oc(\varepsilon)$ and $\check{\lambda}_\varepsilon^1=\Oc(\varepsilon^2)$ for $\varepsilon$ small. Moreover, it holds $\check{a}_\varepsilon^2=1+\Oc(\varepsilon^2)$ for $\varepsilon$ small.\qedhere$_{1.}$\end{proof}
	
	\begin{proof}[Ad 2] The estimates above also imply $|\check{B}_{0,\varepsilon}(\vec{\Psi}_\varepsilon^\perp,\vec{\Psi}_\varepsilon^\perp)|=\Oc(\varepsilon^2)$. Therefore the definition of $\check{B}_{0,\varepsilon}$ and $\|\vec{\Psi}_\varepsilon^\perp\|_{L^2(I_{\varepsilon,\tilde{\delta}})^m}=\Oc(\varepsilon)$ yield $\|\frac{d}{dz}\vec{\Psi}_\varepsilon^\perp\|_{L^2(I_{\varepsilon,\tilde{\delta}})^m}=\Oc(\varepsilon)$. Let $\check{c}_\varepsilon:=\textup{sign}\,\check{a}_\varepsilon\in\{\pm 1\}$. For notational simplicity let $\check{c}_\varepsilon=1$. If this is not the case, one can modify $\vec{\Psi}_\varepsilon^1$. We consider $\vec{\Psi}_\varepsilon^R:=\vec{\Psi}_\varepsilon^1-J(0)^{-1/2}\check{\beta}_\varepsilon\vec{\theta}_0'$. Then with the splitting \eqref{eq_SE_1Dvect_pert_split} and $\vec{\Psi}_{0,\varepsilon}^1=\vec{\Psi}_{0,\varepsilon}^R+\check{\beta}_\varepsilon\vec{\theta}_0'$ it follows that
	\[
	\vec{\Psi}_\varepsilon^R=J_\varepsilon^{-\frac{1}{2}}
	\left[(\check{a}_\varepsilon-J_\varepsilon^{\frac{1}{2}}J(0)^{-\frac{1}{2}})\check{\beta}_\varepsilon\vec{\theta}_0'+\check{a}_\varepsilon\vec{\Psi}_{0,\varepsilon}^R+\vec{\Psi}_\varepsilon^\perp\right].
	\]
	As in the scalar case one can show with Corollary \ref{th_SE_1Dprelim2} that $|\check{a}_\varepsilon-J_\varepsilon(z)^{\frac{1}{2}}J(0)^{-\frac{1}{2}}|\leq C\varepsilon(|z|+1)$ for all $z\in I_{\varepsilon,\tilde{\delta}}$ and $\varepsilon$ small. Then the decay properties of $\vec{\theta}_0'$ and the estimates for the $L^2$-norms of $\vec{\Psi}_\varepsilon^\perp$ and $\vec{\Psi}_{0,\varepsilon}^R$ yield $\|\vec{\Psi}_\varepsilon^R\|_{J_\varepsilon}=\Oc(\varepsilon)$. Finally, as in the scalar case one can directly compute and estimate the derivative, cf.~the proof of Theorem \ref{th_SE_1Dscal_pert2},~2.\qedhere$_{2.}$\end{proof}
	
	\begin{proof}[Ad 3] Let $\vec{\Psi}_\varepsilon^2$ be a normalized eigenfunction to $\check{\lambda}_\varepsilon^2$. If $\check{\lambda}_\varepsilon^2\geq\frac{1}{4}\min\{\sigma(D^2W(\vec{u}_\pm))\}$, then we are done. Therefore let $\check{\lambda}_\varepsilon^2\leq\frac{1}{4}\min\{\sigma(D^2W(\vec{u}_\pm))\}$. Then $\vec{\Psi}_\varepsilon^2$ has the decay property in Lemma \ref{th_SE_1Dvect_pert1},~3. Hence computations as before yield 
	\[
	\check{\lambda}_\varepsilon^2=
	\check{B}_{0,\varepsilon}(\check{\Psi}_\varepsilon^2,\check{\Psi}_\varepsilon^2)
	+\varepsilon p_\varepsilon \int_{I_{\varepsilon,\tilde{\delta}}} (\check{\Psi}_\varepsilon^2,\sum_{\xi\in\N_0^m,|\xi|=1}\partial^\xi D^2W(\vec{\theta}_0)(\vec{\theta}_1)^\xi\check{\Psi}_\varepsilon^2)_{\R^m}\,dz+\Oc(\varepsilon^2)
	\]
	and the second term is $\Oc(\varepsilon)$. Therefore analogous computations as in the scalar case yield $\check{\lambda}_\varepsilon^2\geq\frac{\check{\nu}_1}{2}$ provided that $\check{\lambda}_\varepsilon^2\leq\frac{1}{4}\min\{\sigma(D^2W(\vec{u}_\pm))\}$, cf.~the proof of Theorem \ref{th_SE_1Dscal_pert2},~3.\qedhere$_{3.}$\end{proof}
\end{proof}

\subsubsection{Appendix: An Abstract Fredholm Alternative}\label{sec_fredh}
We use an abstract Fredholm Alternative in the setting of a Gelfand-Triple. The result is basically well-known, but hard to find in the literature in the form presented below. Therefore we state the result for the convenience of the reader. The presentation is taken directly from the lecture notes Abels \cite{AbelsPDEI_17}, Section 6.3. First, let us recall the definition of a Gelfand-Triple:
\begin{Remark}[\textbf{Gelfand-Triple}]\upshape\label{th_fred_gelfand}
	Let $V,H$ be $\K$-Hilbert spaces such that there exists $i\in\Lc(V,H)$ injective with $i(V)$ dense in $H$. We write $(.,.)_V, (.,.)_H$ for the scalar product in $V,H$, respectively. Moreover, we identify $V$ with the subspace $i(V)$ of $H$ and write $V\subseteq H$. Let $V^\ast,H^\ast$ be the \textit{anti-dual space} of $V,H$, respectively, i.e.~the space of all conjugate-linear functionals. We write $\langle.,.\rangle_{V^\ast,V},\langle.,.\rangle_{H^\ast,H}$ for the dual product on $V^\ast\times V$, $H^\ast\times H$, respectively. Then due to the Riesz-Representation Theorem we can identify $H\cong H^\ast$ via $y\mapsto(y,.)_H$. Moreover, $i^\ast:H^\ast\rightarrow V^\ast:y^\ast\mapsto y^\ast\circ i$ defines $i^\ast\in\Lc(H^\ast,V^\ast)$ injective and we identify $H^\ast\subseteq V^\ast$. Hence
	\[
	V\overset{i}{\hookrightarrow}_d H\cong H^\ast \overset{i^\ast}{\hookrightarrow} V^\ast\quad\text{ and shortened }\quad V\subseteq H\cong H^\ast\subseteq V^\ast.
	\]
	The triple $(V,H,V^\ast)$ is then called \textit{Gelfand-Triple}.
\end{Remark}

\begin{Theorem}[\textbf{An Abstract Fredholm Alternative}]\label{th_fred}
	Let $(V,H,V^\ast)$ be a Gelfand-Triple as in Remark \ref{th_fred_gelfand} with $\K=\C$, let $H$ be infinite dimensional and $i$ compact. Moreover, let $A\in\Lc(V,V^\ast)$ be such that for some $c_0>0, c_1\in\R$ it holds
	\begin{alignat*}{2}
	\textup{Re}\langle Au,u\rangle_{V^\ast,V}&\geq c_0\|u\|_V^2-c_1\|u\|_H^2&\quad&\text{ for all }u\in V,\\
    \langle Au,v\rangle_{V^\ast,V}&=\overline{\langle Av,u\rangle_{V^\ast,V}}&\quad&\text{ for all }u,v\in V.
	\end{alignat*}
	Then there is a sequence of real numbers $\lambda_1\leq\lambda_2\leq...\leq\lambda_k\overset{k\rightarrow\infty}{\longrightarrow}\infty$ and $e_k\in V, k\in\N$ such that $(e_k)_{k\in\N}$ is an orthonormal base of $H$ and $e_k$ is an eigenvector of $A$ to the eigenvalue $\lambda_k$, i.e.~$(\lambda_k-A)e_k=0$ in $V^\ast$. Moreover:
	\begin{enumerate}
		\item For all $\lambda\in\C\setminus\{\lambda_k:k\in\N\}$ and $f\in H$ there is a unique solution $u\in V$ of
		\begin{align}\label{eq_fred}
		(\lambda-A)u=f\quad\text{ in }V^\ast.
		\end{align}
		The solution $u$ can be represented as 
		\[
		u=\sum_{k=1}^\infty \frac{1}{\lambda-\lambda_k}(f,e_k)_H e_k\quad\text{ in }H.
		\]
		\item Let $\lambda\in\{\lambda_k:k\in\N\}$ and $J_\lambda:=\{j\in\N:\lambda_j=\lambda\}$. Then for $f\in H$ there exists a solution $u\in V$ of \eqref{eq_fred} if and only if $(f,e_j)_H=0$ for all $j\in J_\lambda$. If the latter holds, then all solutions of \eqref{eq_fred} can be represented as
		\[
		\sum_{k\in\N\setminus J_\lambda}\frac{1}{\lambda-\lambda_k}(f,e_k)_H e_k + \textup{span}\{e_j:j\in J_\lambda\}.
		\]
	\end{enumerate}
\end{Theorem}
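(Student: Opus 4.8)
The plan is to reduce the theorem to the spectral theorem for compact self-adjoint operators on $H$ by inverting a coercive shift of $A$. First I would define $A_1\in\Lc(V,V^\ast)$ by $\langle A_1u,v\rangle_{V^\ast,V}:=\langle Au,v\rangle_{V^\ast,V}+c_1(u,v)_H$. The Gårding inequality gives $\textup{Re}\langle A_1u,u\rangle_{V^\ast,V}\geq c_0\|u\|_V^2$, so by Lax--Milgram $A_1$ is an isomorphism $V\to V^\ast$. I then introduce $T:H\to H$ as the composition $H\overset{i^\ast}{\hookrightarrow}V^\ast\overset{A_1^{-1}}{\longrightarrow}V\overset{i}{\hookrightarrow}H$, i.e. $Tf:=A_1^{-1}f$ read back in $H$. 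Since $i:V\to H$ is compact, $T$ is compact. Using the symmetry of $A$ (which passes to $A_1$ because the term $c_1(u,v)_H$ is Hermitian) one checks that $T$ is self-adjoint on $H$, and $(Tf,f)_H=\langle A_1(A_1^{-1}f),A_1^{-1}f\rangle_{V^\ast,V}\geq c_0\|A_1^{-1}f\|_V^2>0$ for $f\neq0$ since $A_1^{-1}$ is injective. Thus $T$ is a compact, self-adjoint, injective, positive operator; as $H$ is infinite-dimensional the spectral theorem for compact self-adjoint operators provides an orthonormal basis $(e_k)_{k\in\N}$ of $H$ of eigenvectors of $T$ with eigenvalues $\mu_1\geq\mu_2\geq\dots>0$, $\mu_k\to0$. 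Setting $\lambda_k:=\mu_k^{-1}-c_1$ gives $\lambda_1\leq\lambda_2\leq\dots\to\infty$, and applying $A_1$ to $A_1^{-1}e_k=\mu_ke_k$ shows $e_k=\mu_k^{-1}Te_k$ lies in $\textup{range}(T)\subseteq V$ and satisfies $A_1e_k=\mu_k^{-1}e_k$, i.e. $(\lambda_k-A)e_k=0$ in $V^\ast$. This settles the eigen-pairs.

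For the solvability assertions I would translate $(\lambda-A)u=f$ into a problem for $T$. With $\mu:=(\lambda+c_1)^{-1}$ (the value $\lambda=-c_1$ lies strictly below $\lambda_1$ and is handled directly by $u=-A_1^{-1}f$), applying $A_1^{-1}$ to $(\lambda+c_1)u-A_1u=f$ turns the equation into $(T-\mu)u=\mu Tf$ in $H$, with $u\in V$ recovered a posteriori from $u=\mu^{-1}Tu-Tf\in\textup{range}(T)\subseteq V$. Since $\sigma(T)=\{0\}\cup\{\mu_k:k\in\N\}$ and $\lambda\notin\{\lambda_k\}$ is equivalent to $\mu\notin\sigma(T)$, for assertion~1 the operator $T-\mu$ is boundedly invertible on $H$, yielding existence and uniqueness; expanding $f=\sum_k(f,e_k)_He_k$ and matching coefficients gives the series formula, whose convergence in $H$ follows from $\lambda_k\to\infty$ and whose convergence in $V$ follows from $\textup{Re}\langle A_1u,u\rangle\geq c_0\|u\|_V^2$ together with $\mu_k^{-1}(\lambda-\lambda_k)^{-2}\to0$. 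For assertion~2, $\lambda=\lambda_j$ corresponds to $\mu=\mu_j\in\sigma(T)$, so $T-\mu$ is a self-adjoint Fredholm operator of index zero; hence $(T-\mu)u=\mu Tf$ is solvable iff $\mu Tf\perp e_j$ for all $j\in J_\lambda$, which by self-adjointness of $T$ and $Te_j=\mu_je_j$ is exactly $(f,e_j)_H=0$, and the solution set is a particular solution plus $\ker(T-\mu)=\textup{span}\{e_j:j\in J_\lambda\}$, giving the claimed representation.

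The routine-but-delicate part is the bookkeeping inside the Gelfand triple: keeping track of which argument of $(\cdot,\cdot)_H$ and of $\langle\cdot,\cdot\rangle_{V^\ast,V}$ is conjugate-linear, how $i^\ast$ realizes $H$ inside $V^\ast$, and verifying self-adjointness and positivity of $T$ with the conjugations in the right places. Apart from this identification work, everything else is the classical Hilbert--Schmidt theorem plus the Fredholm alternative for $T-\mu$, so I do not expect a genuine obstacle; one should only be careful to record that $e_k\in V$ because $e_k=\mu_k^{-1}Te_k$ lies in the range of $T$, and that the convergence of the eigenfunction expansions holds in the $V$-topology, not merely in $H$.
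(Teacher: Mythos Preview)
Your proposal is correct and follows essentially the same route as the paper: the paper's proof consists of applying the spectral theorem for compact self-adjoint operators to $(\mu-A)^{-1}$ for some $\mu\leq -c_1$, with invertibility coming from Lax--Milgram, and then refers to Alt \cite{AltFA} and Renardy--Rogers \cite{RenardyRogers} for the remaining standard arguments. Your shift $A_1=A+c_1$ is exactly this with $\mu=-c_1$ (up to a sign in the resolvent), and your more detailed treatment of the translation between $(\lambda-A)u=f$ and $(T-\mu)u=\mu Tf$, together with the Fredholm alternative for $T-\mu$, simply spells out what the paper leaves to the references.
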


\begin{proof}
	One applies the Spectral Theorem for Compact Self-Adjoint Operators to $(\mu-A)^{-1}$ for some $\mu\leq -c_1$ viewed as a bounded linear operator in $H$. The existence of the resolvent for these $\mu$ follows from the Lax-Milgram Theorem. For spectral theorems see Alt \cite{AltFA}, Theorem 11.9 and Theorem 12.12. A similar application can be found in Renardy, Rogers \cite{RenardyRogers}, Section 9.3.
\end{proof}

\subsection{Spectral Estimate for (AC) in ND}
\label{sec_SE_ACND}
In this section we show the spectral estimate \eqref{eq_SE1}. This works in a similar way as in the 2D-case in \cite{AbelsMoser}, Section 4, but some computations are more technical. For convenience we often use the same notation. The construction of the approximate solution in Section \ref{sec_asym_ACND} yields the precise structure of $u^A_\varepsilon$, but as in the 2D-case a sightly more general structure is enough for the spectral estimate. In the following we state the assumptions for this section.

Let $\Omega\subset\R^N$ and $\Gamma=(\Gamma_t)_{t\in[0,T]}$ for $T>0$ be as in Section \ref{sec_coord_surface_requ} for $N\geq 2$ with $90$°-contact angle (\eqref{MCF} not needed). Moreover, we consider $\delta>0$ such that Theorem \ref{th_coordND} holds for $2\delta$ instead of $\delta$. In the following we use the same notation for $\vec{n}_{\partial\Sigma}, \vec{n}_{\partial\Gamma}, Y, X_0, X, \mu_0, \mu_1, r, s, \sigma, b$ as in Theorem~\ref{th_coordND}. Furthermore, we use the definitions of some sets and of $\partial_n$, $\nabla_\tau$, $J$ from Remark \ref{th_coordND_rem}. In this section we assume for the height functions $h_1$ and $h_2=h_2(\varepsilon)$ (with a slight abuse of notation) that
\[
h_j\in B([0,T],C^0(\Sigma)\cap C^2(\hat{\Sigma})), j=1,2,\quad \hat{\Sigma}:= Y(\partial\Sigma\times[0,2\mu_0]),\quad C^2(\hat{\Sigma}):=C^2(\overline{\hat{\Sigma}^\circ}).
\] 
Moreover, let $\overline{C}_0>0$ be such that $\|h_j\|_{B([0,T],C^0(\Sigma)\cap C^2(\hat{\Sigma}))}
\leq\overline{C}_0$ for $j=1,2$. Then we define $h_\varepsilon:=h_1+\varepsilon h_2$ for $\varepsilon>0$ small and introduce the scaled variables
\[
\rho_\varepsilon:=\frac{r-\varepsilon h_\varepsilon(s,t)}{\varepsilon}\quad\text{ in }\overline{\Gamma(2\delta)}, \quad H_\varepsilon:=\frac{b}{\varepsilon}\quad\text{ in }\overline{\Gamma^C(2\delta,2\mu_1)}.
\]
Furthermore, let $\hat{u}^C_1:\overline{\R^2_+}\times\partial\Sigma\times[0,T]\rightarrow\R:(\rho,H,\sigma,t)\mapsto\hat{u}^C_1(\rho,H,\sigma,t)$ be in the space
$B([0,T];C^2(\partial\Sigma,H^2_{(0,\gamma)}(\R^2_+)))$ for some $\gamma>0$. Then we define
\[
u^C_1(x,t):=\hat{u}^C_1(\rho_\varepsilon(x,t),H_\varepsilon(x,t),\sigma(x,t),t)\quad\text{ for }(x,t)\in\overline{\Gamma^C(2\delta,2\mu_1)}.
\] 
For $\varepsilon>0$ small let
\begin{align*}
u^A_\varepsilon=
\begin{cases}
\theta_0(\rho_\varepsilon)+\Oc(\varepsilon^2)&\quad\text{ in }\Gamma(\delta,\mu_0),\\
\theta_0(\rho_\varepsilon)+\varepsilon u^C_1+\Oc(\varepsilon^2)&\quad\text{ in }\Gamma^C(\delta,2\mu_0),\\
\pm 1+\Oc(\varepsilon)&\quad\text{ in }Q_T^\pm\setminus\Gamma(\delta),
\end{cases}
\end{align*}
where $\theta_0$ is from Theorem \ref{th_theta_0} and $\Oc(\varepsilon^k)$ are measurable functions bounded by $C\varepsilon^k$. 
\begin{Remark}\upshape\label{th_SE_ACND_rem}
	It is also possible to include an additional term of the form $\varepsilon\theta_1(\rho_\varepsilon)p_\varepsilon(s,t)$ in $u^A_\varepsilon$ on $\Gamma(\delta)$, where $p_\varepsilon\in B([0,T],C^0(\Sigma)\cap C^2(\hat{\Sigma}))$ fulfils a uniform estimate for $\varepsilon$ small and
	\begin{align}\label{eq_SE_ACND_theta1}
	\theta_1\in C_b^0(\R)\quad\text{ with }\quad\int_\R f'''(\theta_0)\theta_1(\theta_0')^2\,d\rho=0.
	\end{align} 
	See Remark \ref{th_SE_ACND_asym_rem},~2.~below.
\end{Remark}

We obtain the following spectral estimate:
\begin{Theorem}[\textbf{Spectral Estimate for (AC) in ND}]\label{th_SE_ACND}
	There are $\varepsilon_0,C,c_0>0$ independent of the $h_j$ for fixed $\overline{C}_0$ such that for all $\varepsilon\in(0,\varepsilon_0], t\in[0,T]$ and $\psi\in H^1(\Omega)$ it holds
	\[
	\int_\Omega|\nabla\psi|^2+\frac{1}{\varepsilon^2}f''(u^A_\varepsilon(.,t))\psi^2\,dx\geq -C\|\psi\|_{L^2(\Omega)}^2+\|\nabla\psi\|_{L^2(\Omega\setminus\Gamma_t(\delta))}^2+c_0\|\nabla_\tau\psi\|_{L^2(\Gamma_t(\delta))}^2.
	\]
\end{Theorem}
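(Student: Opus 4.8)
The plan is to localize the bilinear form near the interface, pass to the curvilinear coordinates of Theorem \ref{th_coordND}, and reduce the estimate to the one-dimensional spectral estimates of Section \ref{sec_SE_1Dscal} away from the contact set $\partial\Sigma$ and to the half-space model problem of Section \ref{sec_hp_90} near it. We fix $t\in[0,T]$ throughout; since the abstract one-dimensional setting of Section \ref{sec_SE_1Dprelim} contains no extra variables, all constants produced there are uniform in $t$ (and in the $h_j$ for fixed $\overline{C}_0$). First I would choose a smooth partition of unity $\eta_0^2+\eta_1^2+\eta_2^2\equiv1$ on $\overline\Omega$ with $\eta_0$ supported away from $\Gamma_t$, $\eta_1$ supported in $\Gamma_t(\delta)$ away from the contact region (say away from $\overline{\Gamma_t^C(\tfrac32\delta,2\mu_0)}$), $\eta_2$ supported in $\Gamma_t^C(2\delta,2\mu_1)$, and $|\nabla\eta_k|\le C$. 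By the localization identity
\[
\int_\Omega|\nabla\psi|^2+\tfrac{1}{\varepsilon^2}f''(u^A_\varepsilon)\psi^2=\sum_{k=0}^2\Big(\int_\Omega|\nabla(\eta_k\psi)|^2+\tfrac{1}{\varepsilon^2}f''(u^A_\varepsilon)(\eta_k\psi)^2\Big)-\sum_{k=0}^2\int_\Omega|\nabla\eta_k|^2\psi^2,
\]
the last sum is $\ge-C\|\psi\|_{L^2(\Omega)}^2$, so it suffices to bound each localized form below by its share of the right-hand side of the claim.

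For $\eta_0$ this is immediate: on $\supp\eta_0$ one has $u^A_\varepsilon=\pm1+\Oc(\varepsilon)$, hence $f''(u^A_\varepsilon)\ge\tfrac12\min\{f''(\pm1)\}>0$ for $\varepsilon$ small, and $\int|\nabla(\eta_0\psi)|^2\ge\int_{\Omega\setminus\Gamma_t(\delta)}|\nabla(\eta_0\psi)|^2$ already exceeds what is needed. For $\eta_1$ I would pass to coordinates $(r,s)$, use the Fubini-type isomorphism of Lemma \ref{th_SobMfd_prod_set} to rewrite the integral as an $\Hc^{N-1}$-integral over $\Sigma$ of one-dimensional integrals in $r$, split $|\nabla(\eta_1\psi)|^2$ into its normal and tangential parts via Corollary \ref{th_coordND_nabla_tau_n}, and for a.e.\ $s$ change variables $r\mapsto z=\rho_\varepsilon$ as in Lemma \ref{th_SE_1Dtrafo_remainder}. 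The resulting one-dimensional form, with weight $J_\varepsilon$ coming from the determinant in Remark \ref{th_coordND_rem},~3., is exactly $B_\varepsilon$ of Section \ref{sec_SE_1Dscal_pert} with $\phi_\varepsilon$ given by the expansion of $u^A_\varepsilon$ (with $p_\varepsilon=0$, or with $p_\varepsilon,\theta_1$ as in Remark \ref{th_SE_ACND_rem} so that \eqref{eq_SE_1Dscal_pert_theta1} holds via \eqref{eq_SE_ACND_theta1}). Splitting the one-dimensional function at each $s$ into a multiple $a(s)$ of the normalized first eigenfunction (essentially $J(0)^{-1/2}\beta_\varepsilon\theta_0'$ by Theorem \ref{th_SE_1Dscal_pert2},~2.) plus an $L^2_{J_\varepsilon}$-orthogonal remainder, Theorem \ref{th_SE_1Dscal_pert2} gives a lower bound $\ge-C\varepsilon^2\|a\|_{L^2(\Sigma)}^2+\nu_2\|\text{remainder}\|^2$; since the normal derivative is thereby absorbed for free, and since the tangential derivative of $a(s)\theta_0'(\rho_\varepsilon)$ plus that of the remainder reconstitutes $\|\nabla_\tau(\eta_1\psi)\|^2$ up to $\Oc(\varepsilon)$-errors and the harmless $\Oc(1)$-remainder terms in $u^A_\varepsilon$ (controlled by Lemma \ref{th_SE_1Dtrafo_remainder},~2.), re-summing over $s$ yields $-C\|\psi\|^2+\|\nabla(\eta_1\psi)\|_{L^2(\Omega\setminus\Gamma_t(\delta))}^2+c_0\|\nabla_\tau(\eta_1\psi)\|_{L^2(\Gamma_t(\delta))}^2$.

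The term with $\eta_2$, supported near $\partial\Sigma$, is the main obstacle. Here I would use the coordinates $(r,b,\sigma)$, rescale $(r,b)\mapsto(\rho,H)=(\rho_\varepsilon,H_\varepsilon)$, and freeze $\sigma$: for each $\sigma$ the localized form becomes, modulo the Jacobian and lower-order terms, the bilinear form of the half-space operator $L_{\frac\pi2}=(-\Delta+f''(\theta_0(R)),-\partial_H|_{H=0})$ on a large bounded-then-unbounded subdomain of $\overline{\R^2_+}$. The plan is to build an approximate first eigenfunction $\phi^A_\varepsilon(\cdot,\cdot,\sigma,t)=\theta_0'(\rho)+\varepsilon(\cdots)$ whose correction solves a problem of the form \eqref{eq_hp1}--\eqref{eq_hp2} and is furnished with exponential decay in $(\rho,H)$ by Theorem \ref{th_hp_exp3}, so that $\phi^A_\varepsilon$ is an almost-eigenfunction with eigenvalue $\Oc(\varepsilon)$ and nonnegative leading part, while the spectral gap on the orthogonal complement follows from the spectral properties of $L_0$ on $\R$ (Lemma \ref{th_SE_1Dscal_unpert}) together with Theorem \ref{th_hp_weak_sol} (coercivity of $a$ on $\theta_0'$-orthogonal functions) and a Poincaré-type bound in $H$. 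Splitting $\eta_2\psi=a(\sigma)\phi^A_\varepsilon(\rho_\varepsilon,H_\varepsilon,\sigma,t)+\text{remainder}$ orthogonally in the weighted $(\rho,H)$-$L^2$ at each $\sigma$, bounding the form on the first summand by $-C\varepsilon^2\|a\|^2+c\|\nabla_\tau(\eta_2\psi)\|^2$, the orthogonal part by a strictly positive multiple of its $L^2$-norm squared, and converting $\sigma$-derivatives into tangential derivatives via Corollary \ref{th_coordND_nabla_tau_n},~3.\ (the splitting $\nabla_\Sigma\leftrightarrow(\nabla_{\partial\Sigma},\partial_b)$), one obtains the required lower bound for this piece. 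The hard part is exactly this contact-point analysis: the construction and decay of $\phi^A_\varepsilon$, the quantitative spectral gap for $L_{\frac\pi2}$ on the rescaled domain uniformly in $\sigma$ and $\varepsilon$, the degeneracy from the free tangential modulation noted in Remark \ref{th_SE_rem_alternative}, and the careful matching of the partition regions so that all cross-terms and the $\Oc(\varepsilon)$-, $\Oc(\varepsilon^2)$- and $\Oc(1)$-remainders in $u^A_\varepsilon$ are absorbed into $-C\|\psi\|_{L^2(\Omega)}^2$ and a small fraction of the tangential gradient. Collecting the three contributions and shrinking $c_0$ if necessary completes the proof.
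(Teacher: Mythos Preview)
Your localization scheme and the treatment of $\eta_0,\eta_1$ are sound and match the paper's strategy (the paper handles the inner piece $\Gamma_t(\delta,\mu_0)$ even more simply: there $|\nabla\psi|^2=|\partial_n\psi|^2+|\nabla_\tau\psi|^2$ exactly by Corollary~\ref{th_coordND_nabla_tau_n},~2., so one keeps $|\nabla_\tau\psi|^2$ intact with $c_0=1$ and bounds the normal part by the 1D first-eigenvalue estimate $\lambda_\varepsilon^1\ge-C\varepsilon^2$ of Theorem~\ref{th_SE_1Dscal_pert2},~1.; no eigenfunction splitting at each $s$ is needed).

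The genuine gap is in your contact-point step. You split $\eta_2\psi=a(\sigma)\phi^A_\varepsilon+\text{remainder}$ with $a$ depending only on $\sigma\in\partial\Sigma$ and orthogonality taken in the weighted $(\rho,H)$-$L^2$ at each fixed $\sigma$. With this choice the orthogonal complement still contains all modes of the form $\theta_0'(\rho)g(H)$ with suitable zero mean in $H$; on these the two-dimensional bilinear form reduces to $\|\theta_0'\|_{L^2}^2\int|g'(H)|^2\,dH$, and since the $H$-range has length of order $\varepsilon^{-1}$ a Poincar\'e bound yields only an $O(\varepsilon^2)$ constant. Hence your claim of a uniform ``strictly positive multiple of its $L^2$-norm squared'' on the orthogonal complement fails; this is exactly the degeneracy of Remark~\ref{th_SE_rem_alternative} (the operator $-\Delta+f''(\theta_0(R))$ on $\R^2_+$ has $0$ at the bottom of its spectrum, non-isolated). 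The paper avoids this by a different splitting: the subspace $V_{\varepsilon,t}$ in \eqref{eq_SE_ACND_V_def} is parametrized by $a(s)$ with $s$ ranging over the full $\hat\Sigma$ (i.e.\ $a$ depends on \emph{both} $\sigma$ and $b$), and the orthogonality in Lemma~\ref{th_SE_ACND_split_L2},~2.\ is a one-dimensional $r$-integral condition at each $s$. The orthogonal-complement estimate (Lemma~\ref{th_SE_ACND_VpxVp}) then reduces fibrewise in $s$ to the 1D spectral gap $\lambda_\varepsilon^2\ge\nu_2$ of Theorem~\ref{th_SE_1Dscal_pert2},~3., giving the strong bound $B^C_{\varepsilon,t}(\psi,\psi)\ge\nu[\varepsilon^{-2}\|\psi\|^2+\|\nabla\psi\|^2]$ needed to absorb the cross terms (Lemma~\ref{th_SE_ACND_VxVp}); the tangential-gradient control then comes from the $H^1(\hat\Sigma^\circ)$-coercivity of $a\mapsto B^C_{\varepsilon,t}(a\phi^A_\varepsilon,a\phi^A_\varepsilon)$ in Lemma~\ref{th_SE_ACND_VxV}, not from a 2D spectral gap. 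To repair your argument you would have to let $a$ depend on $b$ as well, at which point the splitting becomes the paper's.
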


As in the 2D-case in \cite{AbelsMoser} we separately prove a spectral estimate on 
\[
\Omega^C_t:=\Gamma^C_t(\delta,2\mu_0)=X((-\delta,\delta)\times \hat{\Sigma}^\circ\times\{t\})\quad\text{ for }t\in[0,T].
\] 
In the following we need several properties of Sobolev spaces on the appearing sets.
\begin{Remark}\phantomsection{\label{th_SE_ACND_sob_rem}}\upshape 
	The results of Sections \ref{sec_Leb}-\ref{sec_SobMfd} can be applied for sets such as $U=\Sigma^\circ$, $\hat{\Sigma}^\circ$, $(-\delta,\delta)\times\Sigma^\circ$, $(-\delta,\delta)\times\hat{\Sigma}^\circ$, $\Gamma(\delta)$, $\Gamma_t(\delta)$ and $\Omega_t^C$ for all $t\in[0,T]$. These sets can all be viewed as an open subset of a smooth compact Riemannian submanifold of some $\R^n$ with the Euclidean metric and they have Lipschitz boundary. For $\Sigma^\circ$, $(-\delta,\delta)\times\Sigma^\circ$ etc.~one can simply consider local charts to prove this. For $\Gamma(\delta)$, $\Gamma_t(\delta)$, $\Omega_t^C$ and similar sets one can show this with the extension of $\overline{X}$ to a diffeomorphism due to Theorem \ref{th_coordND} and Remark \ref{th_SobMfd_LipRem}. In particular
	\begin{enumerate}
		\item We can  transform integrals via $\overline{X}$ and $X(.,t)$ for $t\in[0,T]$ with the usual transformation formula due to Theorem \ref{th_Leb_trafo_mfd} with the factor $J$ and $J_t$, $t\in[0,T]$ from Remark \ref{th_coordND_rem}, 3.
		\item Density and trace theorems for Sobolev spaces on the above sets hold due to Theorem \ref{th_SobDom_LipThm}, Theorem \ref{th_SobMfd_LipThm} and we can use integration by parts on $\Omega_t^C$ because of Theorem \ref{th_SobDom_LipThm},~4.
		\item $H^k(\Gamma_t(\delta))\cong H^k((-\delta,\delta)\times\Sigma^\circ)$ and $H^k(\Omega_t^C)\cong H^k((-\delta,\delta)\times\hat{\Sigma}^\circ)$ etc.~for $k\in\N_0$ via $X(.,t)$ for all $t\in[0,T]$ due to Theorem \ref{th_SobMfd_trafo}. Therefore Corollary~\ref{th_coordND_nabla_tau_n},~1.-2.~carries over to $H^1$-functions. In particular the gradients are pointwise a.e.~uniformly equivalent. Moreover, note that for $k=0$, i.e.~$L^2$-spaces, the operator norms of the transformations can be estimated uniformly in $t\in[0,T]$ because of Theorem \ref{th_Leb_trafo_mfd} and Remark \ref{th_coordND_rem},~3. Hence this also holds for the $L^2$-norms of the gradients and due to Lemma \ref{th_SobMfd_def_lemma},~3.~also for $k=1$. 
		\item Lemma \ref{th_SobMfd_prod_set} yields $L^2((-\delta,\delta)\times\Sigma)\cong L^2(-\delta,\delta,L^2(\Sigma))\cong L^2(\Sigma,L^2(-\delta,\delta))$ as well as
		\begin{align*}
		H^1((-\delta,\delta)\times\Sigma^\circ)&\cong H^1(-\delta,\delta,L^2(\Sigma^\circ))\cap L^2(-\delta,\delta,H^1(\Sigma^\circ))\\
		&\cong H^1(\Sigma^\circ,L^2(-\delta,\delta))\cap L^2(\Sigma^\circ,H^1(-\delta,\delta))
		\end{align*}
		and the derivatives $\nabla_\Sigma:=\nabla_{\Sigma^\circ}$ and $\partial_r$ are compatible. The analogous assertion holds for $\hat{\Sigma}$ instead of $\Sigma$ and similar sets. 
	\end{enumerate}
\end{Remark}

The spectral estimate on $\Omega_t^C$ is as follows:
\begin{Theorem}\label{th_SE_ACND_cp} 
	There are $\tilde{\varepsilon}_0, C, \tilde{c}_0>0$ independent of the $h_j$ for fixed $\overline{C}_0$ such that for all $\varepsilon\in(0,\tilde{\varepsilon}_0]$, $t\in[0,T]$ and $\psi\in H^1(\Omega^C_t)$ with $\psi|_{X(.,s,t)}=0$ for a.e.~$s\in Y(\partial\Sigma\times[\frac{3}{2}\mu_0,2\mu_0])$:
	\[
	\int_{\Omega^C_t}|\nabla\psi|^2+\frac{1}{\varepsilon^2}f''(u^A_\varepsilon(.,t))\psi^2\,dx\geq -C\|\psi\|_{L^2(\Omega_t^C)}^2+\tilde{c}_0\|\nabla_\tau\psi\|_{L^2(\Omega_t^C)}^2.
	\]
\end{Theorem}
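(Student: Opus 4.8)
The plan is to reduce the two–dimensional spectral estimate on $\Omega_t^C$ to the half–space model problem studied in Section \ref{sec_hp_90}, following the strategy of \cite{AbelsMoser}, Section 4 and \cite{ACF}, but now with the extra tangential directions contained in $\partial\Sigma$ treated as parameters. First I would transform the integral via $X(\cdot,\cdot,t)$ to coordinates $(r,\sigma,b)\in(-\delta,\delta)\times\hat\Sigma^\circ$, using Remark \ref{th_SE_ACND_sob_rem}, 1.–4., so that $\Omega_t^C$ becomes a product set and the gradient splits (up to uniformly equivalent quadratic forms, by Corollary \ref{th_coordND_nabla_tau_n}) into a normal part $\partial_n$, a $b$–part, and a tangential part $\nabla_{\partial\Sigma}$. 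Then I would rescale $r\leadsto\rho$ and $b\leadsto H$ via $\rho_\varepsilon,H_\varepsilon$ (using the $1$D machinery of Section \ref{sec_SE_1Dprelim}, in particular Corollary \ref{th_SE_1Dprelim2} and Lemma \ref{th_SE_1Dtrafo_remainder} to control the Jacobian factors and to absorb remainder terms of order $\varepsilon^{k}e^{-\beta|\rho|}$). Via a Taylor expansion of $f''(u^A_\varepsilon)$ around $f''(\theta_0(\rho_\varepsilon))$ — exploiting the assumed structure $u^A_\varepsilon=\theta_0(\rho_\varepsilon)+\varepsilon u^C_1+\Oc(\varepsilon^2)$ on $\Gamma^C(\delta,2\mu_0)$ and the decay of $\hat u^C_1$ in $H^2_{(0,\gamma)}(\R^2_+)$ — the potential part is replaced, modulo controllable errors, by $\tfrac1{\varepsilon^2}f''(\theta_0(\rho_\varepsilon))$.

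Next I would construct, for each $(\sigma,t)$, an approximate first eigenfunction. The natural ansatz is $\phi^A_\varepsilon(x,t):=J(0)^{-1/2}\beta_\varepsilon\,\theta_0'(\rho_\varepsilon)$ times a cutoff, possibly corrected by a term solving the model problem \eqref{eq_hp1}–\eqref{eq_hp2} with Neumann data dictated by the $90^\circ$–contact angle (the same data that appeared in \eqref{eq_asym_ACND_cp_uC1}–\eqref{eq_asym_ACND_cp_uC2}); the solvability is guaranteed by Theorem \ref{th_hp_exp3} and Remark \ref{th_hp_exp_sol_rem}. Then I would decompose an arbitrary $\psi\in H^1(\Omega_t^C)$ (with the prescribed vanishing near the inner cutoff region) into a component along the subspace of tangential alterations $a(s(x,t))\,\phi^A_\varepsilon(x,t)$ and an orthogonal remainder, as in \cite{AbelsMoser}. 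On the alteration subspace the bilinear form produces the principal eigenvalue contribution, which is $\Oc(\varepsilon^2)$ by Theorem \ref{th_SE_1Dscal_pert2}, 1., together with a genuine tangential gradient term $\sim\|\nabla_\tau\psi\|^2$ coming from differentiating $a(s)$; on the orthogonal complement the fiberwise spectral gap $\nu_2>0$ of Theorem \ref{th_SE_1Dscal_pert2}, 3. (applied on the large interval $I_{\varepsilon,\tilde\delta}$ for each fixed $(\sigma,t)$, with the weight $J_\varepsilon$), gives a positive lower bound after integrating over $\partial\Sigma$. Assembling these two pieces and undoing the coordinate transformation yields the claimed estimate with some $\tilde c_0>0$ and $-C\|\psi\|_{L^2}^2$; uniformity in $t$ and in the $h_j$ (for fixed $\overline C_0$) follows because all constants in Section \ref{sec_SE_1Dprelim} and Theorem \ref{th_SE_1Dscal_pert2} depend only on the bounds $\overline C_0,c_1,C_2,C_3,C_4$, which are uniform here by Remark \ref{th_SE_ACND_sob_rem}, 3. and Remark \ref{th_coordND_rem}, 3.

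The main obstacle I anticipate is the degeneracy in the tangential direction: the "approximate eigenfunction" $\phi^A_\varepsilon$ depends on $\sigma$ only through slowly varying data, so the $1$D estimates must be applied fiberwise and then integrated, and one must carefully track that the cross terms between the normal/$H$–directions and $\nabla_{\partial\Sigma}$ — which are $\Oc(\varepsilon)$ or better by Lemma \ref{th_asym_ACND_cp_trafo} and Remark \ref{th_asym_ACND_cp_rem} — can be absorbed into the $-C\|\psi\|_{L^2}^2$ term and the good tangential gradient term by Young's inequality, without destroying the positive constant $\tilde c_0$. A secondary technical point is handling the behaviour on $\tfrac1\varepsilon r_\varepsilon([-\tilde\delta,\tilde\delta])\setminus I_{\varepsilon,\tilde\delta}$ and near $b\in[\tfrac32\mu_0,2\mu_0]$, where $\psi$ is required to vanish; there the exponential decay of $\theta_0'$ and of $\hat u^C_1$ (Theorem \ref{th_theta_0}, and the weighted bounds from Section \ref{sec_hp_exp_sol}) makes all contributions exponentially small, as in Lemma \ref{th_SE_1Dscal_unpert}, 4.–6. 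Once Theorem \ref{th_SE_ACND_cp} is in hand, Theorem \ref{th_SE_ACND} follows by a partition–of–unity argument gluing it to the easier estimate on $\Omega\setminus\Gamma_t(\delta)$ (where $f''(u^A_\varepsilon)\ge c>0$) and on $\Gamma_t(\delta,\mu_0)$ (where \cite{ChenSpectrums}, Theorem 2.3 together with Theorem \ref{th_SE_1Dscal_pert2} applies directly).
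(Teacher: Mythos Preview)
Your overall strategy matches the paper's (Sections \ref{sec_SE_ACND_outline}--\ref{sec_SE_ACND_BLF}): replace the potential by its Taylor expansion, build an approximate first eigenfunction $\phi^A_\varepsilon$, split $\tilde H^1(\Omega^C_t)=V_{\varepsilon,t}\oplus V_{\varepsilon,t}^\perp$ along tangential alterations $a(s)\phi^A_\varepsilon$, and treat $V_{\varepsilon,t}^\perp$ fiberwise via the 1D spectral gap of Theorem \ref{th_SE_1Dscal_pert2}. The $V^\perp$-part and the partition-of-unity gluing are as you describe.

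There is, however, a genuine gap in your treatment of the $V_{\varepsilon,t}\times V_{\varepsilon,t}$ block, and it is not the cross-term issue you anticipate. The correction solving the $\R^2_+$ model problem is \emph{not} optional, and an additional scalar amplitude $q(s,t)$ is required in the inner part. The paper takes $\phi^A_\varepsilon=\tfrac{1}{\sqrt\varepsilon}[\theta_0'(\rho_\varepsilon)q(s,t)+\varepsilon v^C_\varepsilon]$. On $V_{\varepsilon,t}$ the bilinear form is computed by integration by parts (Lemma \ref{th_SE_ACND_VxV}), which produces a boundary integral $\int_{\partial\Omega^C_t\cap\partial\Omega}a^2\phi^A_\varepsilon\,\partial_{N_{\partial\Omega}}\phi^A_\varepsilon$. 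After the $90^\circ$ angle kills the $O(1/\varepsilon)$ part of $\sqrt\varepsilon\,\partial_{N_{\partial\Omega}}\phi^A_\varepsilon$, the $O(1)$ remainder contains $\rho\theta_0''\,\partial_r(N_{\partial\Omega}\!\cdot\!\nabla r\circ\overline X)|_{(0,\sigma,t)}$; since $\int_\R\rho\,\theta_0'\theta_0''\neq0$, this gives a contribution of size $\|\tr a|_{\partial\Sigma}\|_{L^2}^2\le C\|a\|_{H^1(\hat\Sigma^\circ)}^2$ with a constant you cannot make small, and it cancels the good term $+c\|a\|_{H^1}^2$ coming from $|\nabla(a(s))|^2|\phi^A_\varepsilon|^2$. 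The role of $v^C_\varepsilon$ is precisely to push this Neumann remainder to $O(\varepsilon)$ (Lemma \ref{th_SE_ACND_phi_A}, second line; the surviving $\theta_0''$-piece then integrates to zero against $\theta_0'$), while $q$ is forced by the compatibility condition \eqref{eq_hp_comp} for the model problem (see \eqref{eq_SE_ACND_nabla_q}). Relatedly, the paper does \emph{not} discard the $\tfrac1\varepsilon f'''(\theta_0)u^C_1$ term as a ``controllable error'': it is kept in $\Lc^C_{\varepsilon,t}$ and becomes the bulk right-hand side $-f'''(\theta_0)\theta_0'\overline u^C_1 q$ of the model problem \eqref{eq_SE_ACND_asym_vbar1} for $v^C$. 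Finally, the $V\times V$ estimate is not obtained from Theorem \ref{th_SE_1Dscal_pert2},~1.\ (that is a 1D statement) but from the $N$-dimensional integration-by-parts identity and Lemma \ref{th_SE_ACND_phi_A}.
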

The additional assumption on $\psi$ is not needed but simplifies the proof, cf.~Remark \ref{th_SE_ACND_asym_rem},~3.~below. The latter is enough to show Theorem \ref{th_SE_ACND}:

\begin{proof}[Proof of Theorem \ref{th_SE_ACND}] 
	For $\varepsilon_0>0$ small and all $\varepsilon\in(0,\varepsilon_0]$ it holds $f''(u^A_\varepsilon)\geq 0$ on $Q_T^\pm\setminus\Gamma(\delta)$. Therefore it is enough to show the estimate in Theorem \ref{th_SE_ACND} for $\Gamma_t(\delta)$ instead of $\Omega$. On $\Gamma_t(\delta)$ we reduce to further subsets. 
	
	Due to Theorem \ref{th_SE_ACND_cp} we have an estimate for $\Omega_t^C=\Gamma^C_t(\delta,2\mu_0)$ instead of $\Omega$. Moreover, the estimate holds for $\Gamma_t(\delta,\mu_0)$ instead of $\Omega$ with $c_0=1$: 
	there our curvilinear coordinate system is the usual tubular neighbourhood coordinate system, cf.~Theorem \ref{th_coordND}. Let $\psi\in H^1(\Gamma_t(\delta,\mu_0))$.
	Then $|\nabla\psi|^2=|\partial_n\psi|^2+|\nabla_\tau\psi|^2$ on $\Gamma_t(\delta,\mu_0)$ due to Corollary \ref{th_coordND_nabla_tau_n},~2.~and Remark \ref{th_SE_ACND_sob_rem},~3. Due to Taylor's Theorem we can replace $u^A_\varepsilon(.,t)$ by $\theta_0(\rho_\varepsilon(.,t))$ in the integral. Therefore an integral transformation with $X(.,t)$, cf.~Remark \ref{th_SE_ACND_sob_rem},~1., and the Fubini Theorem yield for $\tilde{\psi}_t:=\psi|_{X(.,t)}$
	\begin{align*}
	\int_{\Gamma_t(\delta,\mu_0)}|\nabla\psi|^2+\frac{1}{\varepsilon^2}f''(u^A_\varepsilon(.,t))\psi^2\,dx
	\geq -\tilde{C}\|\psi\|_{L^2(\Gamma_t(\delta,\mu_0))}^2
	+\|\nabla_\tau\psi\|_{L^2(\Gamma_t(\delta,\mu_0))}^2\\
	+\int_{\Sigma\setminus [Y(\partial\Sigma\times[0,\mu_0])]}\int_{-\delta}^\delta
	\left[|\partial_r\tilde{\psi}_t|^2+\frac{1}{\varepsilon^2}f''(\theta_0(\rho_\varepsilon|_{\overline{X}(.,t)}))\tilde{\psi}_t^2\right]J_t\,dr\,d\Hc^{N-1}(s).
	\end{align*}
	Due to Remark \ref{th_SE_ACND_sob_rem},~4.~we can estimate the inner integral in the second line in the analogous way as in \cite{ALiu}, proof of Theorem 2.13. Note that the results of Section \ref{sec_SE_1Dprelim} are  applicable with a constant $h_\varepsilon$ there, in particular we use the transformations in Lemma \ref{th_SE_1Dtrafo_remainder},~1.~and Theorem \ref{th_SE_1Dscal_pert2},~1. See also \cite{MoserDiss}, proof of Theorem 6.16 for the calculation in the 2D-case. This yields the estimate for $\Gamma_t(\delta,\mu_0)$ instead of $\Omega$ with $c_0=1$.
	
	Finally, we combine the above estimates with a suitable partition of unity for 
	\begin{align}\label{eq_SE_ACND_proof}
	\Gamma_t(\delta)\subseteq\overline{\Gamma_t(\delta,\mu_0)}\cup\overline{\Gamma_t^C(\delta,2\mu_0)}.
	\end{align}
	The arguments are similar to the 2D-case, cf.~the proof of Theorem 4.1 in \cite{AbelsMoser}. Here one uses $b$ from Theorem \ref{th_coordND},~4.~to construct the cut-off functions.
\end{proof}

\subsubsection{Outline for the Proof of the Spectral Estimate close to the Contact Points}\label{sec_SE_ACND_outline}
For the proof of Theorem \ref{th_SE_ACND_cp} we can replace $\frac{1}{\varepsilon^2}f''(u^A_\varepsilon(.,t))$ by
\[
\frac{1}{\varepsilon^2}f''(\theta_0|_{\rho_\varepsilon(.,t)})+\frac{1}{\varepsilon}f'''(\theta_0|_{\rho_\varepsilon(.,t)})u^C_1|_{(.,t)}
\] 
due to a Taylor expansion. We construct an approximation $\phi^A_\varepsilon(.,t)$ to the first eigenfunction of 
\[
\Lc_{\varepsilon,t}^C:=-\Delta
+\frac{1}{\varepsilon^2}f''(\theta_0|_{\rho_\varepsilon(.,t)})
+\frac{1}{\varepsilon}f'''(\theta_0|_{\rho_\varepsilon(.,t)})u^C_1|_{(.,t)}\quad\text{ on }\Omega_t^C
\] 
together with a homogeneous Neumann boundary condition. Then we split
\begin{align}\label{eq_SE_ACND_H1tilde_Omega}
\tilde{H}^1(\Omega^C_t):=
\left\{\psi\in H^1(\Omega^C_t): \psi|_{X(.,s,t)}=0\text{ for a.e.~}s\in Y(\partial\Sigma\times[\frac{3}{2}\mu_0,2\mu_0])\right\}
\end{align}
with respect to the subspace of tangential alterations of $\phi^A_\varepsilon(.,t)$. Therefore we set up the ansatz
\begin{alignat*}{2}
\phi^A_\varepsilon(.,t)&:=\frac{1}{\sqrt{\varepsilon}}[v^I_\varepsilon(.,t)+\varepsilon v^C_\varepsilon(.,t)]&\quad&\text{ on }\Omega^C_t,\\
v^I_\varepsilon(.,t)&:=\hat{v}^I(\rho_\varepsilon(.,t),s(.,t),t):=\theta_0'|_{\rho_\varepsilon(.,t)}\,q(s(.,t),t)&\quad&\text{ on }\Omega^C_t,\\ v^C_\varepsilon(.,t)&:=\hat{v}^C(\rho_\varepsilon(.,t),H_\varepsilon(.,t),\sigma(.,t),t)&\quad&\text{ on }\Omega^C_t,
\end{alignat*}
where $q:\hat{\Sigma}\times[0,T]\rightarrow\R$ and $\hat{v}^C:\overline{\R^2_+}\times\partial\Sigma\times[0,T]\rightarrow\R$. The $\frac{1}{\sqrt{\varepsilon}}$-factor normalizes in a suitable way, see Lemma \ref{th_SE_ACND_split_L2} below. 

In Subsection \ref{sec_SE_ACND_asym} we expand $\Lc_{\varepsilon,t}^C\phi^A_\varepsilon(.,t)$ and $\partial_{N_{\partial\Omega}}\phi^A_\varepsilon(.,t)$ similarly as in Section \ref{sec_asym_ACND} and choose $q$ and $\hat{v}^C$ appropriately. The $q$-term will be used to enforce the compatibility condition for the equations for $\hat{v}^C$. In Subsection \ref{sec_SE_ACND_splitting} we characterize the $L^2$-orthogonal splitting of $\tilde{H}^1(\Omega^C_t)$ with respect to the subspace
\begin{align}\label{eq_SE_ACND_V_def}
V_{\varepsilon,t}
&:=\left\{\phi=a(s(.,t))\phi^A_\varepsilon(.,t):
a\in\tilde{H}^1(\hat{\Sigma}^\circ)\right\},\\
\tilde{H}^1(\hat{\Sigma}^\circ)&:=
\left\{a\in H^1(\hat{\Sigma}^\circ) : a|_{Y(.,b)}=0\text{ for a.e.~}b\in[\frac{3}{2}\mu_0,2\mu_0]\right\}.\label{eq_SE_ACND_H1tilde_interval}
\end{align}
Finally, in Subsection \ref{sec_SE_ACND_BLF} we analyze the bilinear form $B_{\varepsilon,t}^C$ corresponding to $\Lc_{\varepsilon,t}^C$ on $V_{\varepsilon,t}\times V_{\varepsilon,t}$, $V_{\varepsilon,t}^\perp\times V_{\varepsilon,t}^\perp$ and $V_{\varepsilon,t}\times V_{\varepsilon,t}^\perp$. Here for $\phi,\psi\in H^1(\Omega_t^C)$ we set
\begin{align}\label{eq_SE_ACND_Bepst}
B_{\varepsilon,t}^C(\phi,\psi):=\int_{\Omega_t^C}\nabla\phi\cdot\nabla\psi+\left[\frac{1}{\varepsilon^2}f''(\theta_0|_{\rho_\varepsilon(.,t)})+\frac{1}{\varepsilon}f'''(\theta_0|_{\rho_\varepsilon(.,t)})u^C_1(.,t)\right]\phi\psi\,dx.
\end{align}
\subsubsection{Asymptotic Expansion for the Approximate Eigenfunction}\label{sec_SE_ACND_asym}
\begin{proof}[Asymptotic Expansion of $\sqrt{\varepsilon}\Lc_{\varepsilon,t}^C\phi^A_\varepsilon(.,t)$.] 
	First, we expand $\Delta v^I_\varepsilon$ as in the inner expansion in Section \ref{sec_asym_ACND_in}. The lowest order $\Oc(\frac{1}{\varepsilon^2})$ is given by $\frac{1}{\varepsilon^2}|\nabla r|^2|_{\overline{X}_0(s,t)}\theta_0'''(\rho)q(s,t)=\frac{1}{\varepsilon^2}\theta_0'''(\rho)q(s,t)$. In $\sqrt{\varepsilon}\Lc_{\varepsilon,t}^C\phi^A_\varepsilon(.,t)$ this cancels with $\frac{1}{\varepsilon^2}f''(\theta_0(\rho))\theta_0'(\rho)q(s,t)$. For the $\frac{1}{\varepsilon}$-order of $\Delta v^I_\varepsilon$ we get 
	\begin{align*}
	&\frac{1}{\varepsilon}\theta_0'''(\rho)q(s,t)\left[(\rho+h_1)\partial_r(|\nabla r|^2\circ\overline{X})|_{(0,s,t)}-2(D_xs\nabla r)^\top|_{\overline{X}_0(s,t)}\nabla_\Sigma h_1\right]\\
	&+\frac{1}{\varepsilon}\theta_0''(\rho)\left[\Delta r|_{\overline{X}_0(s,t)}q(s,t)+2(D_xs\nabla r)^\top|_{\overline{X}_0(s,t)}\nabla_\Sigma q(s,t)\right]=\frac{1}{\varepsilon}\theta_0''(\rho)\Delta r|_{\overline{X}_0(s,t)}q(s,t).
	\end{align*} 
	We leave $\frac{1}{\varepsilon}\Delta r|_{\overline{X}_0(s,t)}q(s,t)\theta_0''(\rho)$ as a remainder.\phantom{\qedhere}
	
	For $\varepsilon\Delta v^C_\varepsilon$ we apply the expansion in Section \ref{sec_asym_ACND_cp_bulk}, but without using a Taylor expansion for the $h_j$ since we just need the lowest order and we intended to minimize the regularity assumptions. More precisely, the $(x,t)$-terms in the formula for $\Delta v^C_\varepsilon$ in Lemma \ref{th_asym_ACND_cp_trafo} are expanded only with \eqref{eq_asym_ACND_cp_taylor3}. At the lowest order  $\Oc(\frac{1}{\varepsilon})$ we obtain $\frac{1}{\varepsilon}\Delta^{\sigma,t}\hat{v}^C$, where $\Delta^{\sigma,t}:=\partial_\rho^2+|\nabla b|^2|_{\overline{X}_0(\sigma,t)}\partial_H^2$ for $(\sigma,t)\in\partial\Sigma\times[0,T]$. Moreover, the $f$-parts yield $\frac{1}{\varepsilon}f''(\theta_0(\rho))\hat{v}^C+\frac{1}{\varepsilon}f'''(\theta_0(\rho))\hat{u}^C_1\hat{v}^I$. To get an equation for $\hat{v}^C$ in $(\rho,H,\sigma,t)$ we apply a Taylor expansion for $q(Y(\sigma,.),t)|_{[0,2\mu_0]}$: 
	\[
	q(Y(\sigma,\varepsilon H),t)=q(\sigma,t)+\Oc(\varepsilon H)\quad\text{ for }(\sigma,\varepsilon H)\in\partial\Sigma\times[0,2\mu_0].
	\]
	Therefore we require
	\begin{align}\label{eq_SE_ACND_asym_bulk}
	\left[-\Delta^{\sigma,t}+f''(\theta_0(\rho))\right]\hat{v}^C=
	-f'''(\theta_0)\theta_0'|_\rho\hat{u}^C_1|_{(\rho,H,\sigma,t)}q|_{(\sigma,t)}\quad\text{ in }\overline{\R^2_+}\times\partial\Sigma\times[0,T].
	\end{align}\end{proof}

\begin{proof}[Asymptotic Expansion of $\sqrt{\varepsilon}\partial_{N_{\partial\Omega}}\phi^A_\varepsilon(.,t)$.] We proceed as in Section \ref{sec_asym_ACND_cp_neum}. 
	Note that in $\overline{\Omega^C_t}$
	\begin{align}\begin{split}\label{eq_SE_ACND_nabla_vI_vC}
	\nabla v^I_\varepsilon
	&=q|_{(s,t)}\theta_0''(\rho_\varepsilon)
	\left[\frac{\nabla r}{\varepsilon}-(D_xs)^\top \nabla_\Sigma h_\varepsilon|_{(s,t)}\right]
	+\theta_0'(\rho_\varepsilon)(D_xs)^\top\nabla_\Sigma q|_{(s,t)},\\
	\nabla v^C_\varepsilon  &=\partial_\rho\hat{v}^C
	\left[\frac{\nabla r}{\varepsilon}-(D_xs)^\top\nabla_\Sigma h_\varepsilon|_{(s,t)}\right]
	+\frac{\nabla b}{\varepsilon}\partial_H\hat{v}^C
	+(D_x\sigma)^\top\nabla_{\partial\Sigma}\hat{v}^C,\end{split}
	\end{align}
	where the $\hat{v}^C$-terms are evaluated at $(\rho_\varepsilon,H_\varepsilon,\sigma,t)$.
	The lowest order $\Oc(\frac{1}{\varepsilon})$ in $\sqrt{\varepsilon}\partial_{N_{\partial\Omega}}\phi^A_\varepsilon(.,t)$ is $\frac{1}{\varepsilon}(N_{\partial\Omega}\cdot\nabla r)|_{\overline{X}_0(\sigma,t)}\theta_0''(\rho)q|_{(\sigma,t)}=0$ due to the $90$°-contact angle condition. At $\Oc(1)$ we get
	\begin{align*}
	q|_{(\sigma,t)}\theta_0''(\rho)\left[(\rho+h_1|_{(\sigma,t)})\partial_r((N_{\partial\Omega}\cdot\nabla r)\circ\overline{X})|_{(0,\sigma,t)}-(D_xsN_{\partial\Omega})^\top|_{\overline{X}_0(\sigma,t)}\nabla_\Sigma h_1|_{(\sigma,t)}\right]\\
	+(D_xsN_{\partial\Omega})^\top|_{\overline{X}_0(\sigma,t)} \nabla_\Sigma q|_{(\sigma,t)}\theta_0'(\rho)+0\cdot\partial_\rho\hat{v}^C|_{H=0}+(N_{\partial\Omega}\cdot\nabla b)|_{\overline{X}_0(\sigma,t)}\partial_H\hat{v}^C|_{H=0}.
	\end{align*}
	This vanishes if and only if\phantom{\qedhere}
	\begin{align*}
	&(N_{\partial\Omega}\cdot\nabla b)|_{\overline{X}_0(\sigma,t)}\partial_H\hat{v}^C|_{H=0}=-(D_xsN_{\partial\Omega})^\top|_{\overline{X}_0(\sigma,t)}\nabla_\Sigma q|_{(\sigma,t)}\theta_0'(\rho)\\
	&+q|_{(\sigma,t)}\theta_0''(\rho)
	\left[(D_xsN_{\partial\Omega})^\top|_{\overline{X}_0(\sigma,t)}\nabla_\Sigma h_1|_{(\sigma,t)}-(\rho+h_1|_{(\sigma,t)})\partial_r((N_{\partial\Omega}\cdot\nabla r)\circ\overline{X})|_{(0,\sigma,t)}\right].
	\end{align*}
	Here note that for the desired regularity of $\hat{v}^C$ the term with $\nabla_\Sigma h_1$ is not good enough. One option is to require additionally $\nabla_\Sigma h_1|_{\partial\Sigma\times[0,T]}\in B([0,T],C^2(\partial\Sigma))$. However, we can also leave the term as a remainder similar to the one in $\sqrt{\varepsilon}\Lc_{\varepsilon,t}^C\phi^A_\varepsilon(.,t)$. Hence due to \eqref{eq_SE_ACND_asym_bulk} we require
	\begin{alignat}{2}\label{eq_SE_ACND_asym_vbar1}
	[-\Delta+f''(\theta_0)]\overline{v}^C&=-f'''(\theta_0)\theta_0'\overline{u}^C_1q|_{(\sigma,t)}&\quad&\text{ in }\overline{\R^2_+}\times\partial\Sigma\times[0,T],\\
	-\partial_H\overline{v}^C|_{H=0}&=(|\nabla b|/N_{\partial\Omega}\cdot\nabla b)|_{\overline{X}_0(\sigma,t)}g^{C}&\quad&\text{ in }\R\times\partial\Sigma\times[0,T],\label{eq_SE_ACND_asym_vbar2}
	\end{alignat}
	where $\overline{v}^C,\overline{u}^C_1:\overline{\R^2_+}\times\partial\Sigma\times[0,T]\rightarrow\R$ correspond to $\hat{v}^C$ and $\hat{u}^C_1$ in the same way as in \eqref{eq_asym_ACND_cp_ubar} in Section \ref{sec_asym_ACND_cp_neum_0}, respectively, and we define $g^C(\rho,\sigma,t)$ for all $(\rho,\sigma,t)\in\R\times\partial\Sigma\times[0,T]$ as
	\[
	-q|_{(\sigma,t)}\theta_0''(\rho)(\rho+h_1|_{(\sigma,t)})\partial_r((N_{\partial\Omega}\cdot\nabla r)\circ\overline{X})|_{(0,\sigma,t)}
	-(D_xsN_{\partial\Omega})^\top|_{\overline{X}_0(\sigma,t)}\nabla_\Sigma q|_{(\sigma,t)}\theta_0'(\rho).
	\]
	The right hand sides in \eqref{eq_SE_ACND_asym_vbar1}-\eqref{eq_SE_ACND_asym_vbar2} are contained in $B([0,T];C^2(\partial\Sigma,H^2_{(\beta,\gamma)}(\R^2_+)\times H^{5/2}_{(\beta)}(\R)))$ for some $\beta,\gamma>0$ provided that $(q,\nabla_\Sigma q)|_{\partial\Sigma\times[0,T]}\in B([0,T],C^2(\partial\Sigma))^{1+N}$. We require $q=1$ on $\partial\Sigma\times[0,T]$. Then the compatibility condition \eqref{eq_hp_comp} associated to \eqref{eq_SE_ACND_asym_vbar1}-\eqref{eq_SE_ACND_asym_vbar2} is equivalent to 
	\begin{align}\label{eq_SE_ACND_nabla_q}
	-(D_xsN_{\partial\Omega})^\top|_{\overline{X}_0(\sigma,t)}\nabla_\Sigma q|_{(\sigma,t)}&=\hat{g}^C|_{(\sigma,t)},\\
	\hat{g}^C|_{(\sigma,t)}:=\frac{1}{\|\theta_0'\|_{L^2(\R)}^2}\left[\partial_r((N_{\partial\Omega}\cdot\nabla r)\circ\overline{X})|_{(0,\sigma,t)}\right.&\int_\R\rho\theta_0'(\rho)\theta_0''(\rho)\,d\rho \notag\\
	+\left.\frac{N_{\partial\Omega}\cdot\nabla b}{|\nabla b|}\right|_{\overline{X}_0(\sigma,t)}&\int_{\R^2_+}\left.f'''(\theta_0(\rho))\theta_0'(\rho)^2\overline{u}^C_1|_{(\rho,H,\sigma,t)}\,d(\rho,H)\right].\notag
	\end{align}
	Note that $\hat{g}^C\in B([0,T],C^2(\partial\Sigma))$.
	
	To construct $q$ we consider $\tilde{q}:\partial\Sigma\times[0,2\mu_0]\times[0,T]\rightarrow\R:(\sigma,b,t)\mapsto q(Y(\sigma,b),t)$. Then due to $Y(.,0)=\textup{id}_{\partial\Sigma}$ and \eqref{eq_coordND_DbY} it holds
	\[
	\nabla_\Sigma q|_{(\sigma,t)}\cdot(\vec{v}-\vec{n}_{\partial\Sigma}|_\sigma w)=\nabla_\Sigma q|_{(\sigma,t)}\cdot d_{(\sigma,0)}Y(\vec{v},w)
	=\nabla_{\partial\Sigma}[\tilde{q}(.,0,t)]|_\sigma\cdot\vec{v}
	+w\,\partial_b\tilde{q}(\sigma,0,t)
	\]
	for all $(\vec{v},w)\in T_\sigma\partial\Sigma\times\R$ and $(\sigma,t)\in\partial\Sigma\times[0,T]$. If $q=1$ on $\partial\Sigma\times[0,T]$, then we obtain $\nabla_{\partial\Sigma}[\tilde{q}(.,0,t)]|_\sigma=0$ and therefore 
	\begin{align}\label{eq_SE_ACND_nabla_q2}
	\nabla_\Sigma q|_{(\sigma,t)}=-\vec{n}_{\partial\Sigma}|_\sigma\partial_b\tilde{q}(\sigma,0,t)\quad\text{ for all }(\sigma,t)\in\partial\Sigma\times[0,T].
	\end{align}
	Note that $|(D_xs N_{\partial\Omega})|_{\overline{X}_0(\sigma,t)}\cdot\vec{n}_{\partial\Sigma}|_\sigma|\geq c>0$ for all $(\sigma,t)\in\partial\Sigma\times[0,T]$ due to Theorem~ \ref{th_coordND},~4. Hence with a simple ansatz and cutoff we can construct $q\in B([0,T],C^2(\hat{\Sigma}))$ such that $q=1$ on $(\partial\Sigma\cup Y(\partial\Sigma,[\mu_0,2\mu_0]))\times[0,T]$ and $c\leq q\leq C$ for some $c,C>0$ and such that \eqref{eq_SE_ACND_nabla_q} holds. Together with \eqref{eq_SE_ACND_nabla_q2} the latter yields $\nabla_\Sigma q|_{\partial\Sigma\times[0,T]}\in B([0,T],C^2(\partial\Sigma))$.  
	Therefore Remark~\ref{th_hp_exp_sol_rem} yields a unique solution of \eqref{eq_SE_ACND_asym_vbar1}-\eqref{eq_SE_ACND_asym_vbar2} such that for some $\beta,\gamma>0$
	\[\overline{v}^C\in B([0,T];C^2(\partial\Sigma,H^4_{(\beta,\gamma)}(\R^2_+)))\hookrightarrow B([0,T];C^2(\partial\Sigma,C^2_{(\beta,\gamma)}(\overline{\R^2_+}))).
	\]\end{proof}

\begin{Remark}\upshape\phantomsection{\label{th_SE_ACND_asym_rem}}
	\begin{enumerate}
		\item Consider the situation of Section \ref{sec_asym_ACND}. Then $h_1$ is smooth and the $\nabla_\Sigma h_1$-term can be included above. Moreover, $\overline{u}^C_1$ is smooth and solves \eqref{eq_asym_ACND_cp_uC1}-\eqref{eq_asym_ACND_cp_uC2}. Hence we can use $\hat{v}^C:=\partial_\rho\overline{u}^C_1$ in this situation, cf.~Remark 4.3, 2.~in \cite{AbelsMoser} for the 2D-case.
		\item In the case of additional terms in $u^A_\varepsilon$ as in Remark \ref{th_SE_ACND_rem} one can proceed analogously as in the 2D-case, cf.~\cite{MoserDiss}, Remark 6.18, 2.~or Remark 4.1, 2.~in \cite{AbelsMoser}.
		\item The behaviour of $\phi^A_\varepsilon(x,t)$ for $x\in\Omega^C_t$ with $b(x,t)\in[\frac{7}{4}\mu_0,2\mu_0]$ is not important because we only consider $\psi\in\tilde{H}^1(\Omega^C_t)$ in Theorem \ref{th_SE_ACND_cp}, where $\tilde{H}^1(\Omega^C_t)$ was defined in \eqref{eq_SE_ACND_H1tilde_Omega}.
	\end{enumerate}
\end{Remark}
\begin{Lemma}\label{th_SE_ACND_phi_A}
	The function $\phi^A_\varepsilon(.,t)$ is $C^2(\overline{\Omega_t^C})$ and satisfies uniformly in $t\in[0,T]$:
	\begin{alignat*}{2}
	\left|\sqrt{\varepsilon}\Lc_{\varepsilon,t}^C\phi^A_\varepsilon(.,t)+\frac{1}{\varepsilon}\Delta r|_{\overline{X}_0(s(.,t),t)}q|_{(s(.,t),t)}\theta_0''|_{\rho_\varepsilon(.,t)}\right|&\leq Ce^{-c|\rho_\varepsilon(.,t)|}&\quad&\text{ in }\Omega_t^C,\\
	\left|\sqrt{\varepsilon}\partial_{N_{\partial\Omega}}\phi^A_\varepsilon|_{(.,t)}+D_xsN_{\partial\Omega}|_{\overline{X}_0(\sigma(.,t),t)}\cdot\nabla_\Sigma h_1|_{(\sigma(.,t),t)}\theta_0''\right|&\leq C\varepsilon e^{-c|\rho_\varepsilon(.,t)|}&\quad&\text{ on }\partial\Omega_t^C\cap\partial\Omega,\\
	\left|\sqrt{\varepsilon}N_{\partial\Omega^C_t}\cdot\nabla\phi^A_\varepsilon|_{(.,t)}\right|&\leq Ce^{-c/\varepsilon}&\quad&\text{ on }\partial\Omega_t^C\setminus\Gamma_t(\delta).
	\end{alignat*}
\end{Lemma}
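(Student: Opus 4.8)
The plan is to prove the three estimates in Lemma \ref{th_SE_ACND_phi_A} by carefully tracking the remainder terms in the asymptotic expansion carried out in Subsection \ref{sec_SE_ACND_asym}. The regularity claim $\phi^A_\varepsilon(.,t)\in C^2(\overline{\Omega_t^C})$ follows directly from the construction: $\theta_0$ is smooth with exponentially decaying derivatives by Theorem \ref{th_theta_0}, the height functions satisfy $h_j\in B([0,T],C^0(\Sigma)\cap C^2(\hat\Sigma))$, the function $q$ was chosen in $B([0,T],C^2(\hat\Sigma))$ with $c\le q\le C$, and $\overline v^C\in B([0,T];C^2(\partial\Sigma,C^2_{(\beta,\gamma)}(\overline{\R^2_+})))$ by the solution theory of Section \ref{sec_hp_90} (Remark \ref{th_hp_exp_sol_rem}); composing with the smooth curvilinear coordinates $(\rho_\varepsilon,H_\varepsilon,\sigma,s)$ from Theorem \ref{th_coordND} preserves $C^2$-regularity up to the closure.

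For the first (interior) estimate, I would substitute the ansatz $\phi^A_\varepsilon(.,t)=\frac{1}{\sqrt\varepsilon}[v^I_\varepsilon+\varepsilon v^C_\varepsilon]$ into $\sqrt\varepsilon\,\Lc^C_{\varepsilon,t}\phi^A_\varepsilon(.,t)=\Delta(v^I_\varepsilon+\varepsilon v^C_\varepsilon)+[\varepsilon^{-2}f''(\theta_0(\rho_\varepsilon))+\varepsilon^{-1}f'''(\theta_0(\rho_\varepsilon))u^C_1](v^I_\varepsilon+\varepsilon v^C_\varepsilon)$ and use Lemma \ref{th_asym_ACND_in_trafo} and Lemma \ref{th_asym_ACND_cp_trafo} to expand $\Delta v^I_\varepsilon$ and $\Delta v^C_\varepsilon$ in powers of $\varepsilon$. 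The orders $\Oc(\varepsilon^{-2})$ and the leading pieces of $\Oc(\varepsilon^{-1})$ cancel exactly by the choice of $\theta_0$ (equation \eqref{eq_theta_0_ODE}), of $\hat v^C$ solving \eqref{eq_SE_ACND_asym_bulk}, and of $q\equiv 1$ on $\partial\Sigma$ which makes the tangential derivative terms in $q$ vanish to leading order; as noted in the text, the single term $\varepsilon^{-1}\Delta r|_{\overline X_0}\,q\,\theta_0''(\rho_\varepsilon)$ is deliberately retained on the left-hand side of the asserted estimate. All remaining contributions come from: higher-order Taylor remainders (which carry a factor $\theta_0''(\rho_\varepsilon)$, $\theta_0'''(\rho_\varepsilon)$ or derivatives thereof times polynomials in $\rho_\varepsilon$, times powers of $\varepsilon$), the error $|q(Y(\sigma,\varepsilon H),t)-q(\sigma,t)|=\Oc(\varepsilon H)$ in passing from $v^C$ to $\hat v^C$, and the $H$-dependence of $\hat v^C$ through $\overline v^C\in C^2_{(\beta,\gamma)}$. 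Each such term is bounded by $C\varepsilon^k e^{-c|\rho_\varepsilon|}e^{-cH_\varepsilon}$ for suitable $k\ge 0$, and since $H_\varepsilon\ge 0$ and $\varepsilon\le\varepsilon_0$, all of them are absorbed into $Ce^{-c|\rho_\varepsilon(.,t)|}$. The uniformity in $t$ comes from the uniform-in-$t$ bounds on $h_j$, $q$, $\overline v^C$ and from compactness of $\partial\Sigma\times[0,T]$ together with Remark \ref{th_nabla_sigma_equiv}.

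For the second estimate (Neumann derivative on $\partial\Omega^C_t\cap\partial\Omega$), I would insert the ansatz into $\sqrt\varepsilon\,\partial_{N_{\partial\Omega}}\phi^A_\varepsilon|_{(.,t)}=N_{\partial\Omega}\cdot\nabla(v^I_\varepsilon+\varepsilon v^C_\varepsilon)$ using the gradient formulas \eqref{eq_SE_ACND_nabla_vI_vC}, restrict to $x=X(r,\sigma,t)$ where $H_\varepsilon=0$ and $s=\sigma$, and expand using \eqref{eq_asym_ACND_cp_taylor4}. The $\Oc(\varepsilon^{-1})$-order vanishes by the $90$°-contact angle condition, and at $\Oc(1)$ the terms cancel by the boundary condition \eqref{eq_SE_ACND_asym_vbar2} on $\overline v^C$ and the compatibility-enforcing choice of $\nabla_\Sigma q$ in \eqref{eq_SE_ACND_nabla_q}, except for the term $D_xsN_{\partial\Omega}|_{\overline X_0(\sigma,t)}\cdot\nabla_\Sigma h_1|_{(\sigma,t)}\,\theta_0''$, which the text deliberately leaves on the left of the asserted inequality (this is exactly why only $B([0,T],C^0(\Sigma)\cap C^2(\hat\Sigma))$-regularity of $h_1$ is needed rather than $C^2$ in the tangential direction). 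The remaining contributions are $\Oc(\varepsilon)$ times products of decaying $\theta_0$-derivatives with bounded coefficients, hence bounded by $C\varepsilon e^{-c|\rho_\varepsilon(.,t)|}$. The third estimate is the simplest: on $\partial\Omega^C_t\setminus\Gamma_t(\delta)$ one has $b(x,t)\in[\tfrac74\mu_0,2\mu_0]$ in the relevant part (outside of which $\phi^A_\varepsilon$ is irrelevant by Remark \ref{th_SE_ACND_asym_rem},~3.), so $H_\varepsilon=b/\varepsilon\ge c/\varepsilon$ and, more to the point, $\phi^A_\varepsilon$ there is built from $\theta_0(\rho_\varepsilon)$, $q$ and $v^C_\varepsilon$ with $|\rho_\varepsilon|\ge c/\varepsilon$ (since $|r|\ge c\delta$ there), so every term and its gradient decays like $e^{-c/\varepsilon}$; bounding $N_{\partial\Omega^C_t}\cdot\nabla\phi^A_\varepsilon$ by the full gradient gives the claim.

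I expect the main obstacle to be the bookkeeping of the remainder terms in the combined inner/contact-point expansion: one must verify that after all the exact cancellations guaranteed by \eqref{eq_theta_0_ODE}, \eqref{eq_SE_ACND_asym_bulk}, \eqref{eq_SE_ACND_asym_vbar1}--\eqref{eq_SE_ACND_asym_vbar2}, \eqref{eq_SE_ACND_nabla_q} and the $90$°-condition, \emph{every} surviving term other than the two explicitly retained ones genuinely carries an extra power of $\varepsilon$ (for the boundary estimate) or at least a factor $e^{-c|\rho_\varepsilon|}$ with no negative power of $\varepsilon$ (for the interior estimate), and that the constants are uniform in $t$. This is analogous to the 2D computation in \cite{AbelsMoser}, Lemma 4.2 and \cite{MoserDiss}, but the presence of genuine tangential derivatives $\nabla_\Sigma$, $\nabla_{\partial\Sigma}$ and the additional variable $\sigma\in\partial\Sigma$ makes the list of terms longer; the key structural point that keeps it tractable is that, by Remark \ref{th_asym_ACND_in_rem},~2.\ and Remark \ref{th_asym_ACND_cp_rem}, all new terms are of the same type as in the 2D case, merely with more indices, so the estimates go through verbatim after applying Corollary \ref{th_coordND_nabla_tau_n} and the uniform bounds on the coordinate quantities from Theorem \ref{th_coordND} and Remark \ref{th_coordND_rem},~3.
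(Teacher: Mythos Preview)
Your proposal is correct and follows exactly the approach the paper takes: the paper's own proof simply says the regularity comes from the construction and the estimates follow from rigorous remainder estimates for the expansions in Section~\ref{sec_SE_ACND_asym} together with the decay of the involved terms, referring to the 2D case in \cite{AbelsMoser}, Lemma~4.4. One small slip: in the third estimate, the set $\partial\Omega^C_t\setminus\Gamma_t(\delta)$ is the face $|r|=\delta$ (with $b$ ranging over all of $[0,2\mu_0]$), not a region where $b\in[\tfrac74\mu_0,2\mu_0]$---but your ``more to the point'' clause already gives the correct reason, namely $|\rho_\varepsilon|\geq c/\varepsilon$ there.
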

\begin{proof}
	The regularity for $\phi^A_\varepsilon$ is obtained from the construction. The estimates follow from rigorous estimates for the remainder terms in the expansions above and the decay properties of the involved terms, cf.~the proof of Lemma 4.4 in \cite{AbelsMoser} in the 2D-case.
\end{proof}

\subsubsection{The Splitting}\label{sec_SE_ACND_splitting} 
Similar as in the 2D-case we show a characterization for the splitting of $\tilde{H}^1(\Omega^C_t)$.
\begin{Lemma}\phantomsection{\label{th_SE_ACND_split_L2}}
	Let $\tilde{H}^1(\Omega^C_t)$, $V_{\varepsilon,t}$ and $\tilde{H}^1(\hat{\Sigma}^\circ)$ be as in \eqref{eq_SE_ACND_H1tilde_Omega}-\eqref{eq_SE_ACND_H1tilde_interval}. Then
	\begin{enumerate}
		\item $V_{\varepsilon,t}$ is a subspace of $\tilde{H}^1(\Omega_t^C)$ and for $\varepsilon_0>0$ small there are $c_1,C_1>0$ such that 
		\[
		c_1\|a\|_{L^2(\hat{\Sigma})}\leq\|\psi\|_{L^2(\Omega_t^C)}\leq C_1\|a\|_{L^2(\hat{\Sigma})}
		\]
		for all $\psi=a(s(.,t))\phi^A_\varepsilon(.,t)\in V_{\varepsilon,t}$ and $\varepsilon\in(0,\varepsilon_0]$, $t\in[0,T]$.
		\item Let $V_{\varepsilon,t}^\perp$ be the $L^2$-orthogonal complement of $V_{\varepsilon,t}$ in $\tilde{H}^1(\Omega_t^C)$. Then for $\psi\in\tilde{H}^1(\Omega_t^C)$:
		\[
		\psi\in V_{\varepsilon,t}^\perp\quad\Leftrightarrow\quad\int_{-\delta}^\delta(\phi^A_\varepsilon(.,t)\psi)|_{X(r,s,t)}J_t(r,s)\,dr=0\quad\text{ for a.e.~}s\in\hat{\Sigma}.
		\]
		Moreover, $\tilde{H}^1(\Omega_t^C)=V_{\varepsilon,t}\oplus  V_{\varepsilon,t}^\perp$ for all $\varepsilon\in(0,\varepsilon_0]$ and $\varepsilon_0>0$ small.
	\end{enumerate}
\end{Lemma}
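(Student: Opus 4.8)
The plan is to establish the two claims of Lemma \ref{th_SE_ACND_split_L2} in order, relying mostly on the structure of $\phi^A_\varepsilon(.,t)$ and on the transformation theory from Remark \ref{th_SE_ACND_sob_rem}. For part 1, the first step is to check that $V_{\varepsilon,t}\subseteq\tilde H^1(\Omega_t^C)$: since $\phi^A_\varepsilon(.,t)\in C^2(\overline{\Omega_t^C})$ by Lemma \ref{th_SE_ACND_phi_A} and $a\in\tilde H^1(\hat\Sigma^\circ)$, the product $a(s(.,t))\phi^A_\varepsilon(.,t)$ lies in $H^1(\Omega_t^C)$ by the chain rule together with the bound on $|\nabla s|$ from Theorem \ref{th_coordND}, and it vanishes on $X(.,s,t)$ for a.e.\ $s\in Y(\partial\Sigma\times[\tfrac32\mu_0,2\mu_0])$ precisely because $a$ does (here one uses that $q=1$, hence $\phi^A_\varepsilon\neq 0$, on $Y(\partial\Sigma\times[\mu_0,2\mu_0])$, so the vanishing comes purely from $a$). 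For the norm equivalence, I would transform the integral $\|\psi\|_{L^2(\Omega_t^C)}^2$ via $X(.,t)$ using Remark \ref{th_SE_ACND_sob_rem},~1.\ to get $\int_{\hat\Sigma}a(s)^2\big(\int_{-\delta}^\delta \phi^A_\varepsilon(.,t)^2|_{X(r,s,t)}J_t(r,s)\,dr\big)\,d\mathcal H^{N-1}(s)$, and then show the inner $r$-integral is bounded above and below by positive constants uniformly in $s,t,\varepsilon$. This is the heart of part 1: using the ansatz $\phi^A_\varepsilon=\tfrac1{\sqrt\varepsilon}(v^I_\varepsilon+\varepsilon v^C_\varepsilon)$ with $v^I_\varepsilon=\theta_0'(\rho_\varepsilon)q(s,t)$, the substitution rule from Lemma \ref{th_SE_1Dtrafo_remainder},~1.\ turns $\tfrac1\varepsilon\int_{-\delta}^\delta \theta_0'(\rho_\varepsilon)^2 q^2 J_t\,dr$ into an integral over $\rho$-variables converging to $\|\theta_0'\|_{L^2(\R)}^2 q(s,t)^2 J_t(0,s)$, which is bounded above and below since $0<c\le q\le C$ and $0<c\le J_t\le C$; the cross term and the $v^C_\varepsilon$-term contribute $O(\varepsilon)$ using the exponential decay $\hat v^I\in R$-type estimates and Lemma \ref{th_SE_1Dtrafo_remainder},~2.

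For part 2, the $L^2$-orthogonality characterization follows by unraveling the definition: $\psi\perp V_{\varepsilon,t}$ means $\int_{\Omega_t^C}\psi\,a(s(.,t))\phi^A_\varepsilon(.,t)\,dx=0$ for all $a\in\tilde H^1(\hat\Sigma^\circ)$; transforming via $X(.,t)$ and applying the Fubini theorem (Remark \ref{th_SE_ACND_sob_rem},~1.\ and~4.) rewrites this as $\int_{\hat\Sigma}a(s)\big(\int_{-\delta}^\delta(\phi^A_\varepsilon\psi)|_{X(r,s,t)}J_t(r,s)\,dr\big)\,d\mathcal H^{N-1}(s)=0$, and since $\tilde H^1(\hat\Sigma^\circ)$ is dense in the relevant $L^2$-subspace of functions supported on $Y(\partial\Sigma\times[0,\tfrac32\mu_0])$ (and the inner integral itself vanishes for $s\in Y(\partial\Sigma\times[\tfrac32\mu_0,2\mu_0])$ because $\psi$ does), this is equivalent to the stated pointwise condition for a.e.\ $s\in\hat\Sigma$. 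The direct sum decomposition $\tilde H^1(\Omega_t^C)=V_{\varepsilon,t}\oplus V_{\varepsilon,t}^\perp$ is then obtained by exhibiting the explicit $L^2$-projection: given $\psi\in\tilde H^1(\Omega_t^C)$, define $a_\psi(s):=\big(\int_{-\delta}^\delta(\phi^A_\varepsilon\psi)|_{X(r,s,t)}J_t\,dr\big)/\big(\int_{-\delta}^\delta(\phi^A_\varepsilon)^2|_{X(r,s,t)}J_t\,dr\big)$, which by part 1 has a denominator bounded below by $c\varepsilon^0$, and check $a_\psi\in\tilde H^1(\hat\Sigma^\circ)$ via the product-space isomorphisms in Remark \ref{th_SE_ACND_sob_rem},~5.\ (the $H^1(\hat\Sigma^\circ,L^2(-\delta,\delta))\cap L^2(\hat\Sigma^\circ,H^1(-\delta,\delta))$ picture, together with $\phi^A_\varepsilon\in C^2$); then $\psi-a_\psi(s(.,t))\phi^A_\varepsilon(.,t)\in V_{\varepsilon,t}^\perp$ by the characterization just proved, and $V_{\varepsilon,t}\cap V_{\varepsilon,t}^\perp=\{0\}$ again from part 1.

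The main obstacle I expect is the regularity bookkeeping for $a_\psi$ in the direct-sum part — showing that dividing a tangentially-$H^1$ quantity by the (smooth, strictly positive) normalization factor stays in $H^1(\hat\Sigma^\circ)$, and that the boundary-vanishing condition defining $\tilde H^1(\hat\Sigma^\circ)$ is preserved. This is where one must carefully invoke Lemma \ref{th_SobMfd_prod_set} / Remark \ref{th_SE_ACND_sob_rem},~4.--5.\ to move between $H^1$ on the product $(-\delta,\delta)\times\hat\Sigma^\circ$ and mixed Bochner spaces, and use that $\phi^A_\varepsilon(.,t)$ and $J_t$ are $C^2$ in all variables with $t$-uniform bounds so that differentiation under the $r$-integral is justified and the resulting tangential derivatives are $L^2(\hat\Sigma^\circ)$. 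Everything else — the two-sided bound on the inner integral, the Fubini manipulation, the density argument — is routine given the substitution lemmas of Section \ref{sec_SE_1Dprelim} and the exponential-decay structure of the expansion terms.
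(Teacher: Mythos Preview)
Your proposal is correct and follows essentially the same route as the paper: transform $\|\psi\|_{L^2}^2$ via $X(.,t)$ and estimate the inner $r$-integral with the 1D substitution lemma for part~1, then for part~2 obtain the pointwise characterization via Fubini and the fundamental lemma of the calculus of variations, and build the projection by dividing the fiberwise inner product by the fiberwise normalization (your $a_\psi$ is exactly the paper's $a_\varepsilon$). The regularity bookkeeping you flag as the main obstacle is handled in the paper precisely as you anticipate, via the product-space identifications in Remark~\ref{th_SE_ACND_sob_rem} (note: there is no part~5 --- you mean part~4) together with the observation that integration over $(-\delta,\delta)$ is a bounded linear functional on $L^2(-\delta,\delta)$.
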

\begin{proof}[Proof. Ad 1] 
	It holds $\phi^A_\varepsilon(.,t)\in C^2(\overline{\Omega_t^C})$ for fixed $t\in[0,T]$ due to Lemma \ref{th_SE_ACND_phi_A}. 
	Moreover, $a(s(.,t))\in H^1(\Omega_t^C)$ for all $a\in H^1(\hat{\Sigma}^\circ)$ because of Remark \ref{th_SE_ACND_sob_rem},~3.-4. Therefore $V_{\varepsilon,t}$ is a subspace of $\tilde{H}^1(\Omega_t^C)$. Now let $\psi=a(s(.,t))\phi^A_\varepsilon(.,t)\in V_{\varepsilon,t}$ be arbitrary. Then the transformation rule, cf.~Remark \ref{th_SE_ACND_sob_rem},~1., and the Fubini Theorem yield
	\begin{align}\label{eq_SE_ACND_split_L2}
	\|\psi\|_{L^2(\Omega_t^C)}^2=\int_{\hat{\Sigma}}a(s)^2\int_{-\delta}^\delta(\phi^A_\varepsilon|_{\overline{X}(r,s,t)})^2 J_t(r,s)\,dr\,d\Hc^{N-1}(s).
	\end{align}
	Since there are $c,C>0$ with $c\leq J, q\leq C$, we can transform and estimate the inner integral with Lemma \ref{th_SE_1Dtrafo_remainder} as in the 2D-case, cf.~the proof of Lemma 4.6 ,~1.~in \cite{AbelsMoser}.\qedhere$_{1.}$\end{proof}

\begin{proof}[Ad 2] Let $t\in[0,T]$ be fixed. By definition
	\[
	V_{\varepsilon,t}^\perp=\left\{\psi\in \tilde{H}^1(\Omega_t^C):\int_{\Omega_t^C}\psi\, a(s(.,t))\phi^A_\varepsilon(.,t)\,dx=0\text{ for all }a\in\tilde{H}^1(\hat{\Sigma}^\circ)\right\}.
	\]
	The integral equals
	$\int_{\hat{\Sigma}}a(s)\int_{-\delta}^\delta(\phi^A_\varepsilon(.,t)\psi)|_{X(r,s,t)}J_t(r,s)\,dr\,d\Hc^{N-1}(s)$ due to the transformation rule and the Fubini Theorem. Therefore with the Fundamental Theorem of Calculus of Variations we obtain the characterization. Since by definition $V_{\varepsilon,t}\cap V_{\varepsilon,t}^\perp=\{0\}$, it remains to show $V_{\varepsilon,t}+V_{\varepsilon,t}^\perp=\tilde{H}^1(\Omega_t^C)$. We define
	\[
	w_\varepsilon:\hat{\Sigma}\rightarrow\R:s\mapsto\int_{-\delta}^\delta(\phi^A_\varepsilon|_{\overline{X}(r,s,t)})^2 J_t(r,s)\,dr.
	\]
	It holds $w_\varepsilon\in C^1(\hat{\Sigma})$ and with Lemma \ref{th_SE_1Dtrafo_remainder} one can prove $w_\varepsilon\geq c>0$ for small $\varepsilon$. Now let $\psi\in \tilde{H}^1(\Omega_t^C)$ be arbitrary. Then we set
	\[
	a_\varepsilon:\hat{\Sigma}\rightarrow\R:s\mapsto\frac{1}{w_\varepsilon(s)}\int_{-\delta}^\delta(\phi^A_\varepsilon(.,t)\psi)|_{X(r,s,t)}J_t(r,s)\,dr.
	\] 
	Due to Remark \ref{th_SE_ACND_sob_rem},~3.-4.~and since integration yields a bounded linear functional on $L^2(-\delta,\delta)$, we obtain $a_\varepsilon\in\tilde{H}^1(\hat{\Sigma}^\circ)$. For $\psi^\perp_\varepsilon:=\psi-a_\varepsilon(s(.,t))\phi^A_\varepsilon(.,t)\in\tilde{H}^1(\Omega_t^C)$ it holds
	\[
	\int_{-\delta}^\delta(\phi^A_\varepsilon(.,t)\psi_\varepsilon^\perp)|_{X(r,s,t)}J_t(r,s)\,dr=a_\varepsilon(s)w_\varepsilon(s)-a_\varepsilon(s)w_\varepsilon(s)=0
	\]
	for a.e.~$s\in\hat{\Sigma}$. Therefore by the integral characterization above we obtain $\psi^\perp_\varepsilon\in V_{\varepsilon,t}^\perp$.\qedhere$_{2.}$\end{proof}

\subsubsection{Analysis of the Bilinear Form}\label{sec_SE_ACND_BLF}
First we analyze $B_{\varepsilon,t}^C$ on $V_{\varepsilon,t}\times V_{\varepsilon,t}$.
\begin{Lemma}\label{th_SE_ACND_VxV}
	There are $\varepsilon_0,C,c>0$ such that 
	\[
	B_{\varepsilon,t}^C(\phi,\phi)\geq-C\|\phi\|_{L^2(\Omega_t^C)}^2+c\|a\|_{H^1(\hat{\Sigma}^\circ)}^2
	\]
	for all $\phi=a(s(.,t))\phi^A_\varepsilon(.,t)\in V_{\varepsilon,t}$ and $\varepsilon\in(0,\varepsilon_0],t\in[0,T]$.
\end{Lemma}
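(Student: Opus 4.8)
The plan is to plug $\phi = a(s(.,t))\phi^A_\varepsilon(.,t)$ into the bilinear form $B^C_{\varepsilon,t}$ defined in \eqref{eq_SE_ACND_Bepst}, integrate by parts to move one derivative onto $\phi^A_\varepsilon$, and thereby reduce everything to two ingredients: the defect estimates for $\Lc^C_{\varepsilon,t}\phi^A_\varepsilon$ and $\partial_{N_{\partial\Omega}}\phi^A_\varepsilon$ from Lemma \ref{th_SE_ACND_phi_A}, and the one-dimensional spectral estimates of Section \ref{sec_SE_1Dprelim} applied normal-mode-wise. Concretely, first I would write, using $\nabla\phi = a(s(.,t))\nabla\phi^A_\varepsilon + \phi^A_\varepsilon\nabla[a\circ s]$ and the product rule,
\[
B^C_{\varepsilon,t}(\phi,\phi) = \int_{\Omega^C_t}|\phi^A_\varepsilon|^2|\nabla(a\circ s)|^2\,dx + \int_{\Omega^C_t}(a\circ s)^2\Big[|\nabla\phi^A_\varepsilon|^2 + \big(\tfrac{1}{\varepsilon^2}f''(\theta_0(\rho_\varepsilon)) + \tfrac1\varepsilon f'''(\theta_0(\rho_\varepsilon))u^C_1\big)|\phi^A_\varepsilon|^2\Big]\,dx + \text{cross term},
\]
where the cross term $\int_{\Omega^C_t} 2(a\circ s)\phi^A_\varepsilon\nabla(a\circ s)\cdot\nabla\phi^A_\varepsilon$ combines with the middle bracket after integrating by parts: $\int (a\circ s)^2\phi^A_\varepsilon(-\Delta\phi^A_\varepsilon + (\dots)\phi^A_\varepsilon) + \int|\nabla(a\circ s)\,\phi^A_\varepsilon|^2$ plus a boundary term over $\partial\Omega^C_t$. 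Here one uses that $a\in\tilde H^1(\hat\Sigma^\circ)$ vanishes for $b\in[\tfrac32\mu_0,2\mu_0]$, so there is no contribution from $\partial\Omega^C_t\setminus\Gamma_t(\delta)$ except the exponentially small piece controlled by Lemma \ref{th_SE_ACND_phi_A}, and on $\partial\Omega^C_t\cap\partial\Omega$ the normal derivative term is governed by the second estimate in Lemma \ref{th_SE_ACND_phi_A}.

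The term $\int_{\Omega^C_t}(a\circ s)^2\phi^A_\varepsilon\,\Lc^C_{\varepsilon,t}\phi^A_\varepsilon$ is then estimated by substituting the bound from Lemma \ref{th_SE_ACND_phi_A}: up to the explicit $\tfrac1\varepsilon\Delta r|_{\overline X_0}q\,\theta_0''(\rho_\varepsilon)\phi^A_\varepsilon$ correction it is $O(\varepsilon^{-1/2}e^{-c|\rho_\varepsilon|})$ pointwise, and after multiplying by $(a\circ s)^2$, transforming the normal integral by $X(.,t)$ (Remark \ref{th_SE_ACND_sob_rem},~1) and applying Lemma \ref{th_SE_1Dtrafo_remainder},~2 the $e^{-c|\rho_\varepsilon|}$ integrates to $O(\varepsilon)$ in the normal direction, killing the $\varepsilon^{-1/2}$ and even the explicit $\varepsilon^{-1}\theta_0''(\rho_\varepsilon)\phi^A_\varepsilon \sim \varepsilon^{-3/2}\theta_0''\theta_0'$ correction (whose normal integral against $J_t$ is $O(\varepsilon)$ since $\int\theta_0'\theta_0''=0$ up to tail terms, or more crudely $O(\varepsilon^{1/2})$ by direct estimation); in all cases this term is bounded by $C\|a\|^2_{L^2(\hat\Sigma)}$, which by Lemma \ref{th_SE_ACND_split_L2},~1 is $\le C'\|\phi\|^2_{L^2(\Omega^C_t)}$. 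The boundary term on $\partial\Omega$ is handled the same way using the second bound of Lemma \ref{th_SE_ACND_phi_A} and the trace theorem on $\hat\Sigma$ (Theorem \ref{th_SobMfd_LipThm}) with a Young-inequality absorption into $c\|a\|^2_{H^1(\hat\Sigma^\circ)}$, at the cost of enlarging $C$. What remains is the genuinely positive part: $\int_{\Omega^C_t}|\phi^A_\varepsilon|^2\big(|\nabla(a\circ s)|^2 + |\nabla(a\circ s)|^2\big)$-type terms — more precisely $\int |\nabla(a\circ s)|^2|\phi^A_\varepsilon|^2$. Using Corollary \ref{th_coordND_nabla_tau_n} to compare $\nabla(a\circ s)$ with $\nabla_\Sigma a$, transforming by $X(.,t)$ and Fubini, this becomes $\int_{\hat\Sigma}|\nabla_\Sigma a|^2\,w_\varepsilon(s)\,d\mathcal H^{N-1}$ (up to uniformly equivalent metric factors), where $w_\varepsilon(s) = \int_{-\delta}^\delta|\phi^A_\varepsilon|_{\overline X(r,s,t)}^2 J_t\,dr \ge c>0$ for small $\varepsilon$ by Lemma \ref{th_SE_1Dtrafo_remainder} exactly as in the proof of Lemma \ref{th_SE_ACND_split_L2},~2; hence this dominates $c\|\nabla_\Sigma a\|^2_{L^2(\hat\Sigma^\circ)}$. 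Together with the $L^2$-lower bound $\|a\|^2_{L^2(\hat\Sigma)}\ge c\|\phi\|^2_{L^2}$ (again Lemma \ref{th_SE_ACND_split_L2},~1), and adding $\tilde C\|\phi\|_{L^2}^2$ to absorb $\|a\|^2_{L^2}$, we conclude $B^C_{\varepsilon,t}(\phi,\phi)\ge -C\|\phi\|^2_{L^2(\Omega^C_t)} + c\|a\|^2_{H^1(\hat\Sigma^\circ)}$.

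The main obstacle I expect is the careful bookkeeping of the cross term and the explicit lower-order defect $\tfrac1\varepsilon\Delta r\,q\,\theta_0''(\rho_\varepsilon)\phi^A_\varepsilon$ and the $\nabla_\Sigma h_1$-defect on $\partial\Omega$: these carry negative powers of $\varepsilon$, so one must genuinely exploit the odd-in-$\rho$ cancellation ($\int_\R\theta_0'\theta_0''\,d\rho = 0$) together with Lemma \ref{th_SE_1Dtrafo_remainder},~1 (the change of variables $\rho_\varepsilon\mapsto z$ with Jacobian $1+O(\varepsilon)$), rather than crude absolute-value estimates, to see that after integrating against $J_t$ over the normal variable one gains enough powers of $\varepsilon$; uniformity in $t\in[0,T]$ and independence of the constants from the height functions $h_j$ (for fixed $\overline C_0$) has to be tracked through every transformation, which is exactly why the abstract $1$D setup of Section \ref{sec_SE_1Dprelim} with its $\overline C_0$-, $c_1$-, $C_2$-uniform constants was set up. A secondary technical point is justifying the integration by parts: $\phi^A_\varepsilon(.,t)\in C^2(\overline{\Omega^C_t})$ by Lemma \ref{th_SE_ACND_phi_A} and $a\circ s\in H^1(\Omega^C_t)$ by Remark \ref{th_SE_ACND_sob_rem},~3--4, so the product lies in $H^1$ with the expected trace, and Theorem \ref{th_SobDom_LipThm},~4 (Gauß on Lipschitz domains, transported via $X(.,t)$) legitimizes the boundary term — this mirrors the 2D argument in \cite{AbelsMoser}, Lemma 4.7, so I would follow that structure and only spell out the points where the extra tangential directions of $\hat\Sigma$ change the computation.
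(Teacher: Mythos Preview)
Your approach is essentially the same as the paper's: after the integration by parts you arrive at the same three pieces $(I)=\int|\nabla(a\circ s)|^2|\phi^A_\varepsilon|^2$, $(II)=\int(a\circ s)^2\phi^A_\varepsilon\,\Lc^C_{\varepsilon,t}\phi^A_\varepsilon$, and the boundary term $(III)$, and you estimate them with the same tools (Lemma~\ref{th_SE_ACND_phi_A}, Lemma~\ref{th_SE_ACND_split_L2}, the cancellation $\int_\R\theta_0'\theta_0''=0$, and Lemma~\ref{th_SE_1Dtrafo_remainder}). One point to tighten: in $(II)$ the leading defect is $\phi^A_\varepsilon\cdot\Lc^C_{\varepsilon,t}\phi^A_\varepsilon\sim\varepsilon^{-2}\Delta r\,q^2\theta_0'\theta_0''(\rho_\varepsilon)$ (not $\varepsilon^{-3/2}$), and the normal integral $\int_{-\delta}^\delta\theta_0'\theta_0''(\rho_\varepsilon)J_t\,dr$ is $O(\varepsilon^2)$ (not $O(\varepsilon)$) once you Taylor-expand $J_t(r,s)=J_t(0,s)+O(|r|)$ and use $\int_\R\theta_0'\theta_0''=0$ on the constant part---so the product is $O(1)$, exactly as needed; your own \enquote{main obstacle} paragraph identifies this mechanism correctly even though the explicit powers in your first paragraph are off by $\sqrt\varepsilon$.
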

\begin{proof} 
	Let $\phi$ be as in the lemma. We rewrite $B_{\varepsilon,t}^C(\phi,\phi)$ in order to use Lemma \ref{th_SE_ACND_phi_A}. Therefore we compute
	$\nabla\phi=
	\nabla(a|_{s(.,t)})\phi^A_\varepsilon(.,t)+a|_{s(.,t)}\nabla\phi^A_\varepsilon(.,t)$ and \phantom{\qedhere}
	\[
	|\nabla\phi|^2=|\nabla(a(s))\phi^A_\varepsilon|^2|_{(.,t)}+a^2(s)|\nabla\phi^A_\varepsilon|^2|_{(.,t)}+\nabla(a^2(s))\cdot\nabla\phi^A_\varepsilon\phi^A_\varepsilon|_{(.,t)}.
	\]
	Due to Remark \ref{th_SE_ACND_sob_rem},~2.~we can use integration by parts on $\Omega_t^C$. Therefore
	\begin{align*}
	\int_{\Omega_t^C}\left[\nabla(a^2(s))\cdot\nabla\phi^A_\varepsilon\phi^A_\varepsilon\right]|_{(.,t)}\,dx&=-\int_{\Omega_t^C}\left[a^2(s)(\Delta\phi^A_\varepsilon\,\phi^A_\varepsilon+|\nabla\phi^A_\varepsilon|^2)\right]|_{(.,t)}\,dx\\
	&+\int_{\partial\Omega_t^C}\left[N_{\partial\Omega^C_t}\cdot\nabla\phi^A_\varepsilon\,\tr(a^2(s)\phi^A_\varepsilon|_{(.,t)})\right]\,d\Hc^{N-1}.
	\end{align*}
	Therefore we obtain
	\begin{align*}
	B_{\varepsilon,t}^C(\phi,\phi)&=\int_{\Omega_t^C}|\nabla(a(s))\phi^A_\varepsilon|^2|_{(.,t)}\,dx+\int_{\Omega_t^C}(a^2(s)\phi^A_\varepsilon)|_{(.,t)}\Lc_{\varepsilon,t}^C\phi^A_\varepsilon|_{(.,t)}\,dx\\
	&+\int_{\partial\Omega_t^C}\left[N_{\partial\Omega^C_t}\cdot\nabla\phi^A_\varepsilon\,\tr(a^2(s)\phi^A_\varepsilon|_{(.,t)})\right]\,d\Hc^{N-1}=:(I)+(II)+(III).
	\end{align*}
	\begin{proof}[Ad $(I)$] 
		Because of $\nabla(a|_{s(.,t)})=(D_xs)^\top|_{\overline{X}(.,t)}\nabla_{\hat{\Sigma}}a|_{s(.,t)}$ and Theorem \ref{th_coordND},~3.~it follows that $|\nabla(a|_{s(.,t)})|^2\geq c|\nabla_{\hat{\Sigma}} a|^2|_{s(.,t)}$. Therefore\phantom{\qedhere}
		\[
		(I)\geq \int_{\hat{\Sigma}}|\nabla_{\hat{\Sigma}} a|^2|_s\int_{-\delta}^\delta(\phi^A_\varepsilon)^2|_{\overline{X}(r,s,t)}J_t(r,s)\,dr\,d\Hc^{N-1}(s).
		\]
		The proof of Lemma \ref{th_SE_ACND_split_L2},~1.~yields that the inner integral is estimated from below by a uniform positive constant. Hence $(I)\geq c_0\|\nabla_{\hat{\Sigma}} a\|_{L^2(\hat{\Sigma})}^2$ for a $c_0>0$ independent of $\phi\in V_{\varepsilon,t}$ and  all $\varepsilon\in(0,\varepsilon_0]$, $t\in[0,T]$, if $\varepsilon_0>0$ is small. Lemma \ref{th_SobMfd_def_lemma},~3.~and Lemma \ref{th_SE_ACND_split_L2} yield the estimate.\end{proof}
	
	\begin{proof}[Ad $(II)$] We write\phantom{\qedhere}
		\[
		(II)=\int_{\hat{\Sigma}}a^2(s)\int_{-\delta}^\delta \phi^A_\varepsilon|_{\overline{X}(r,s,t)}(\Lc_{\varepsilon,t}^C\phi^A_\varepsilon(.,t))|_{X(r,s,t)}J_t(r,s)\,dr\,d\Hc^{N-1}(s)
		\]
		and estimate the inner integral. Lemma \ref{th_SE_ACND_phi_A} yields
		\[
		\left|\sqrt{\varepsilon}\Lc_{\varepsilon,t}^C\phi^A_\varepsilon(.,t)+\frac{1}{\varepsilon}\Delta r|_{\overline{X}_0(s(.,t),t)}q(s(.,t),t)\theta_0''(\rho_\varepsilon(.,t))\right|
		\leq Ce^{-c|\rho_\varepsilon(.,t)|}\quad\text{ in }\Omega_t^C.
		\]
		The lowest $\varepsilon$-order term in the inner integral in $(II)$ is
		\begin{align}\label{eq_SE_ACND_VxV_bad}
		\frac{1}{\varepsilon^2}\Delta r|_{\overline{X}_0(s,t)}q(s,t)^2\int_{-\delta}^\delta \theta_0''\theta_0'(\rho_\varepsilon|_{\overline{X}(r,s,t)})J_t(r,s)\,dr.
		\end{align}
		It holds $|J_t(r,s)-J_t(0,s)|\leq \tilde{C}|r|$ with $\tilde{C}>0$ independent of $(r,s,t)$. Due to Lemma \ref{th_SE_1Dtrafo_remainder},~1.~and $\int_\R\theta_0''\theta_0'\,dz=0$ the term \eqref{eq_SE_ACND_VxV_bad} with $J_t(0,s)$ instead of $J_t(r,s)$ can be estimated by a constant $C>0$ independent of $s, t$ and $\varepsilon\in(0,\varepsilon_0]$. The remaining terms in \eqref{eq_SE_ACND_VxV_bad} and $(II)$ can be controlled with Lemma \ref{th_SE_1Dtrafo_remainder}. Altogether we obtain $|(II)|\leq C\|a\|_{L^2(\hat{\Sigma})}^2$ with $C>0$ independent of $\phi\in V_{\varepsilon,t}$ and all $\varepsilon\in(0,\varepsilon_0]$, $t\in[0,T]$ if $\varepsilon_0>0$ is small.\end{proof}
	
	\begin{proof}[Ad $(III)$] 
		We transform the integral over $\partial\Omega^C_t$ to the boundary of $(-\delta,\delta)\times\hat{\Sigma}^\circ$ with the aid of Theorem \ref{th_Leb_trafo_mfd} and Theorem \ref{th_coordND}. This makes sense because of Remark \ref{th_SobDom_LipRem},~3. Note that the traces transform naturally by a density argument (possible due to Remark \ref{th_SE_ACND_sob_rem},~2.). Therefore we obtain
		\begin{align*}
		(III)&=\sum_\pm\int_{\hat{\Sigma}}a^2(s)[\phi^A_\varepsilon N_{\partial\Omega^C_t}\cdot\nabla\phi^A_\varepsilon]|_{\overline{X}(\pm\delta,s,t)}|\det d_s [X(\pm\delta,.,t)]|\,d\Hc^{N-1}(s)\\
		&+\int_{\partial\Sigma}\tr\,a^2|_{\sigma}\int_{-\delta}^\delta[\phi^A_\varepsilon \partial_{N_{\partial\Omega}}\phi^A_\varepsilon]|_{\overline{X}(r,Y(\sigma,0),t)}|\det d_{(r,\sigma)}[X(.,Y(.,0),t)]|\,dr\,d\Hc^{N-2}(\sigma).
		\end{align*}
		We apply Lemma \ref{th_SE_ACND_phi_A} and for the last integral we use Lemma \ref{th_SE_1Dtrafo_remainder} and $\int_\R\theta_0'\theta_0''=0$. This yields
		\[
		|(III)|\leq Ce^{-c/\varepsilon}\|a\|_{L^2(\hat{\Sigma})}^2+C\varepsilon \|\tr\, a|_{\partial\Sigma}\|_{L^2(\partial\Sigma)}^2.
		\]
		Theorem \ref{th_SobMfd_LipThm} implies
		$\|\tr\,a|_{\partial\Sigma}\|_{L^2(\partial\Sigma)}\leq C\|a\|_{H^1(\hat{\Sigma}^0)}$ and Lemma \ref{th_SE_ACND_split_L2}, 1.~yields the claim.\end{proof}
\end{proof}

Next we consider $B^C_{\varepsilon,t}$ on $V_{\varepsilon,t}^\perp\times V_{\varepsilon,t}^\perp$.
\begin{Lemma}\label{th_SE_ACND_VpxVp}
	There are $\nu,\varepsilon_0>0$ such that
	\[
	B^C_{\varepsilon,t}(\psi,\psi)\geq
	\nu\left[\frac{1}{\varepsilon^2}\|\psi\|_{L^2(\Omega_t^C)}^2+\|\nabla\psi\|_{L^2(\Omega_t^C)}^2\right]
	\]  
	for all $\psi\in V_{\varepsilon,t}^\perp$ and $\varepsilon\in(0,\varepsilon_0]$, $t\in[0,T]$.
\end{Lemma}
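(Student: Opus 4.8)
The plan is to follow the two–dimensional argument of \cite{AbelsMoser} and reduce the estimate, fiberwise over $\hat\Sigma$, to the one–dimensional perturbed spectral estimate Theorem \ref{th_SE_1Dscal_pert2}. Fix $t\in[0,T]$. First I would transform the integral defining $B^C_{\varepsilon,t}(\psi,\psi)$ (see \eqref{eq_SE_ACND_Bepst}) to the reference set $(-\delta,\delta)\times\hat\Sigma^\circ$ via $X(\cdot,t)$ (Remark \ref{th_SE_ACND_sob_rem},~1) and apply Fubini (Remark \ref{th_SE_ACND_sob_rem},~4) to write it as $\int_{\hat\Sigma}(\dots)\,d\Hc^{N-1}(s)$ of a one–dimensional quadratic form in $r\mapsto\tilde\psi_t(r,s):=(\psi\circ\overline X)(r,s,t)$. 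Using Corollary \ref{th_coordND_nabla_tau_n},~1 to bound $|\nabla\psi|^2$ below by $c(|\partial_n\psi|^2+|\nabla_\tau\psi|^2)$ and dropping the nonnegative tangential part, each fiber contributes at least a multiple of $\int_{-\delta}^\delta[(\partial_r\tilde\psi_t)^2+\varepsilon^{-2}f''(\theta_0(\rho_\varepsilon))\tilde\psi_t^2+\varepsilon^{-1}f'''(\theta_0(\rho_\varepsilon))\hat u^C_1\,\tilde\psi_t^2]J_t\,dr$. After a cutoff that is $1$ on $[-\tfrac34\delta,\tfrac34\delta]$ in $r$ and vanishes near $r=\pm\delta$ (the discarded collar contributing positively since $f''\ge c>0$ there), the transformation of Lemma \ref{th_SE_1Dtrafo_remainder},~1 (with $h_\varepsilon$ constant along the fiber) turns this, up to the factor $\varepsilon^{-1}$, into $B_\varepsilon(\hat\psi,\hat\psi)$ for the perturbed operator of Section \ref{sec_SE_1Dscal_pert} on $I_{\varepsilon,3\delta/4}$, with weight $J_\varepsilon$ and profile $\phi_\varepsilon=\theta_0+\varepsilon\,\hat u^C_1(\cdot,H_\varepsilon,\sigma,t)+\Oc(\varepsilon^2)$; here $H_\varepsilon$ is a fixed parameter along the fiber and the constants $\overline C_0,c_1,C_2,C_3,C_4$ are uniform in $s,t$ by the standing assumptions and Remark \ref{th_coordND_rem},~3.

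The decisive input is the orthogonality. By Lemma \ref{th_SE_ACND_split_L2},~2, $\psi\in V_{\varepsilon,t}^\perp$ means $\int_{-\delta}^\delta(\phi^A_\varepsilon(\cdot,t)\psi)|_{X(r,s,t)}J_t(r,s)\,dr=0$ for a.e.~$s\in\hat\Sigma$, i.e.~$\hat\psi\perp_{J_\varepsilon}\hat\phi^A_\varepsilon$ on each fiber. Since $\phi^A_\varepsilon(\cdot,t)=\varepsilon^{-1/2}(\theta_0'(\rho_\varepsilon)\,q(s,t)+\varepsilon v^C_\varepsilon)$ with $q\ge c>0$ bounded and $v^C_\varepsilon$ of order one with exponential decay, the normalized fiber direction of $\hat\phi^A_\varepsilon$ is $\theta_0'/\|\theta_0'\|_{J_\varepsilon}$ up to an error tending to $0$, which by Theorem \ref{th_SE_1Dscal_pert2},~2 agrees with the first eigenfunction $\Psi_\varepsilon^1$ up to a vanishing error. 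Decomposing $\hat\psi=c_\varepsilon\Psi_\varepsilon^1+\hat\psi^\perp$ orthogonally in $L^2_{J_\varepsilon}$ gives $|c_\varepsilon|=o(1)\|\hat\psi\|_{J_\varepsilon}$, hence with $|\lambda_\varepsilon^1|\le C\varepsilon^2$ and Theorem \ref{th_SE_1Dscal_pert2},~3 we get $B_\varepsilon(\hat\psi,\hat\psi)=c_\varepsilon^2\lambda_\varepsilon^1+B_\varepsilon(\hat\psi^\perp,\hat\psi^\perp)\ge(\nu_2-o(1))\|\hat\psi\|_{J_\varepsilon}^2\ge\tfrac{\nu_2}{2}\|\hat\psi\|_{J_\varepsilon}^2$ for $\varepsilon$ small. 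Transforming back and integrating over $s$, this yields $B^C_{\varepsilon,t}(\psi,\psi)\ge\tfrac{\nu_2}{2}\varepsilon^{-2}\|\psi\|_{L^2(\Omega_t^C)}^2-C\|\psi\|_{L^2(\Omega_t^C)}^2\ge\tfrac{\nu_2}{4}\varepsilon^{-2}\|\psi\|_{L^2(\Omega_t^C)}^2$ for $\varepsilon$ small, the $-C\|\psi\|^2$ absorbing the bounded error from replacing $f''(u^A_\varepsilon)$ by the profile in passing to $\Lc^C_{\varepsilon,t}$. Finally, writing $B^C_{\varepsilon,t}(\psi,\psi)=\theta\int_{\Omega_t^C}|\nabla\psi|^2+\theta\int_{\Omega_t^C}(\mathrm{pot})\psi^2+(1-\theta)B^C_{\varepsilon,t}(\psi,\psi)$ for a small fixed $\theta>0$, the first summand gives $\theta\|\nabla\psi\|_{L^2(\Omega_t^C)}^2$, the second is $\ge-C\theta\varepsilon^{-2}\|\psi\|^2$, and the third is $\ge(1-\theta)\tfrac{\nu_2}{4}\varepsilon^{-2}\|\psi\|^2$; choosing $\theta$ with $(1-\theta)\tfrac{\nu_2}{4}>C\theta$ produces the claim with $\nu>0$ independent of the $h_j$.

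The main obstacle I expect is the uniformity bookkeeping: one must verify that the fiberwise one–dimensional data (the weight $J_\varepsilon$, the perturbation $f'''(\theta_0)\hat u^C_1$, the $\Oc(\varepsilon^2)$–profile corrections) meet the hypotheses of Section \ref{sec_SE_1Dscal_pert} with constants independent of $s\in\hat\Sigma$, $t\in[0,T]$ and of the height functions within the bound $\overline C_0$, and that the $H_\varepsilon$–dependence of $\hat u^C_1$ along a fiber causes no harm — it enters only as a bounded, exponentially decaying perturbation, so Theorem \ref{th_SE_1Dscal_pert2},~3 applies and, even if the sharp $\Oc(\varepsilon^2)$–bound on $\lambda_\varepsilon^1$ via \eqref{eq_SE_1Dscal_pert_theta1} were unavailable, the weaker conclusion $|\lambda_\varepsilon^1|=o(1)$ used above still follows from the proof of Theorem \ref{th_SE_1Dscal_pert2},~2. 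A secondary technical point is the normal cutoff near $r=\pm\delta$: the cross terms from differentiating it are to be reorganised by integration by parts into terms bounded by $C\|\psi\|_{L^2(\Omega_t^C)}^2$ and absorbed, exactly as in the proof of Theorem \ref{th_SE_ACND}. Note also that fibers over $s\in Y(\partial\Sigma\times[\tfrac32\mu_0,2\mu_0])$ contribute nothing since $\psi\in\tilde H^1(\Omega_t^C)$, so no boundary conditions beyond the natural Neumann one enter. With these points settled the estimate follows as described.
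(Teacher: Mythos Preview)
There is a genuine gap in the reduction to the one-dimensional problem. You invoke Corollary~\ref{th_coordND_nabla_tau_n},~1 to obtain $|\nabla\psi|^2\geq c(|\partial_n\psi|^2+|\nabla_\tau\psi|^2)$ and then claim that each fiber contributes ``at least a multiple of'' the standard one-dimensional form. But the constant $c$ coming from Corollary~\ref{th_coordND_nabla_tau_n},~1 is just \emph{some} positive number, in general strictly less than $1$, while the potential part $\varepsilon^{-2}f''(\theta_0(\rho_\varepsilon))\tilde\psi_t^2$ keeps its full coefficient. Since $f''(\theta_0)$ changes sign (it is negative near $\rho=0$), the fiber form you actually obtain is
\[
c\int_{-\delta}^\delta(\partial_r\tilde\psi_t)^2J_t\,dr+\int_{-\delta}^\delta\frac{1}{\varepsilon^2}f''(\theta_0(\rho_\varepsilon))\tilde\psi_t^2J_t\,dr+\ldots,
\]
which is \emph{not} a positive multiple of $B_\varepsilon(\hat\psi,\hat\psi)$. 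After the rescaling this corresponds to the operator $-c\,\partial_z^2+f''(\theta_0)$, and testing with $\theta_0'$ shows (via $\int_\R|\theta_0''|^2+\int_\R f''(\theta_0)(\theta_0')^2=0$) that the associated first eigenvalue is of order $(c-1)\|\theta_0''\|_{L^2}^2/\|\theta_0'\|_{L^2}^2<0$, a fixed negative number independent of $\varepsilon$. Hence Theorem~\ref{th_SE_1Dscal_pert2} does not apply, and approximate orthogonality to $\Psi_\varepsilon^1$ cannot rescue the estimate: the spectral gap is destroyed by the wrong coefficient.

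The paper avoids this by using the finer identity $\nabla\psi=\nabla r\,\partial_r\tilde\psi_t+(D_xs)^\top\nabla_\Sigma\tilde\psi_t$ together with $|\nabla r|^2|_{r=0}=1$, $\partial_r(|\nabla r|^2)|_{r=0}=0$ and $D_xs\nabla r|_{r=0}=0$ from Theorem~\ref{th_coordND},~3, which after a Taylor expansion and Young's inequality gives
\[
|\nabla\psi|^2|_{X(.,t)}\geq(1-Cr^2)(\partial_r\tilde\psi_t)^2+c|\nabla_{\hat\Sigma}\tilde\psi_t|^2.
\]
One then restricts to a \emph{small} interval $I_{s,t}^\varepsilon=(-\tilde\delta,\tilde\delta)+\varepsilon h_\varepsilon(s,t)$ (not just $[-\tfrac34\delta,\tfrac34\delta]$), so that the coefficient $(1-C\tilde\delta^2)=1-\tilde c$ is as close to $1$ as needed; on the complement $f''(\theta_0(\rho_\varepsilon))\geq c_0>0$ already gives positivity. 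Only then does the one-dimensional theory apply (with $\tilde c$ small absorbed as a perturbation of order $\tilde c$), and the orthogonality from Lemma~\ref{th_SE_ACND_split_L2},~2 on the full interval has to be transferred to $I_{\varepsilon,\tilde\delta}$ at the cost of the boundary term in \eqref{eq_SE_ACND_VpxVp_3}. Your cutoff at $\tfrac34\delta$ does not achieve this and leaves the coefficient problem untouched.
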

\begin{proof}
	It is enough to show that there are $\tilde{\nu}, \tilde{\varepsilon}_0>0$ such that
	\begin{align}\label{eq_SE_ACND_VpxVp_1}
	\tilde{B}_{\varepsilon,t}(\psi,\psi):=\int_{\Omega^C_t}|\nabla\psi|^2+\frac{1}{\varepsilon^2}f''(\theta_0|_{\rho_\varepsilon(.,t)})\psi^2\,dx\geq\frac{\tilde{\nu}}{\varepsilon^2}\|\psi\|_{L^2(\Omega^C_t)}^2
	\end{align}
	for all $\psi\in V_{\varepsilon,t}^\perp$ and $\varepsilon\in(0,\tilde{\varepsilon}_0]$, $t\in[0,T]$.
	Then the claim follows as in the 2D-case, cf.~the proof of Lemma 4.8 in \cite{AbelsMoser}.
	
	Analogously to \cite{AbelsMoser} we show \eqref{eq_SE_ACND_VpxVp_1} by reducing to Neumann boundary problems in normal direction. To this end let $\tilde{\psi}_t:=\psi|_{X(.,t)}$ for $\psi\in V_{\varepsilon,t}^\perp$. Then $\tilde{\psi}_t\in H^1((-\delta,\delta)\times\hat{\Sigma}^\circ)$ and
	\[
	\nabla \psi|_{X(.,t)}=
	\nabla r|_{\overline{X}(.,t)}\partial_r\tilde{\psi}_t
	+(D_xs)^\top|_{\overline{X}(.,t)}\nabla_{\hat{\Sigma}}\tilde{\psi}_t
	\]
	in $\Omega_t^C$ due to Corollary~\ref{th_coordND_nabla_tau_n} and Remark \ref{th_SE_ACND_sob_rem}, 3.-4. Therefore
	\[
	|\nabla\psi|^2|_{X(.,t)}=(\partial_r,\nabla_{\hat{\Sigma}})^\top\tilde{\psi}_t
	\begin{pmatrix}
	|\nabla r|^2 & (D_xs\nabla r)^\top\\
	D_xs\nabla r & D_xs(D_xs)^\top
	\end{pmatrix}|_{\overline{X}(.,t)}
	\begin{pmatrix}
	\partial_r\\
	\nabla_{\hat{\Sigma}}
	\end{pmatrix}\tilde{\psi}_t
	\]
	in $\Omega_t^C$. Theorem \ref{th_coordND}, a Taylor expansion and Young's inequality imply 
	\begin{align}\label{eq_SE_ACND_VpxVp_2}
	|\nabla\psi|^2|_{X(.,t)}\geq
	(1-Cr^2)(\partial_r\tilde{\psi}_t)^2 
	+c|\nabla_{\hat{\Sigma}}\tilde{\psi}_t|^2
	\end{align}
	in $\Omega_t^C$ for some $c,C>0$. The second term is not needed here. In order to get $Cr^2$ small enough (which will be precise later), we fix $\tilde{\delta}>0$ small and estimate separately for $r$ in
	\[
	I_{s,t}^\varepsilon:=(-\tilde{\delta},\tilde{\delta})+\varepsilon h_\varepsilon(s,t)\quad\text{ and }\quad \hat{I}_{s,t}^\varepsilon:=(-\delta,\delta)\setminus I_{s,t}^\varepsilon.
	\]
	If $\varepsilon_0=\varepsilon_0(\tilde{\delta},\overline{C}_0)>0$ is small, then for all $\varepsilon\in(0,\varepsilon_0]$ and $s\in\hat{\Sigma}$, $t\in[0,T]$ we have
	\[
	f''(\theta_0(\rho_\varepsilon|_{\overline{X}(r,s,t)}))\geq c_0>0\quad \text{ for }r\in\hat{I}_{s,t}^{\varepsilon},\quad
	|r|\leq \tilde{\delta}+\varepsilon|h_\varepsilon(s,t)|\leq 2\tilde{\delta}\quad \text{ for }r\in I_{s,t}^{\varepsilon}.
	\]
	Let $\tilde{c}=\tilde{c}(\tilde{\delta}):=4C\tilde{\delta}^2$ with $C$ from \eqref{eq_SE_ACND_VpxVp_2}. Then for all $\varepsilon\in(0,\varepsilon_0]$ it holds
	\begin{align*}
	\tilde{B}_{\varepsilon,t}(\psi,\psi)&\geq\int_{\hat{\Sigma}}\int_{\hat{I}_{s,t}^\varepsilon}\frac{c_0}{\varepsilon^2}\tilde{\psi}_t^2 J_t|_{(r,s)}\,dr\,d\Hc^{N-1}(s)\\
	&+\int_{\hat{\Sigma}}\int_{I_{s,t}^\varepsilon}\left[(1-\tilde{c})(\partial_r\tilde{\psi}_t)^2+\frac{1}{\varepsilon^2}f''(\theta_0(\rho_\varepsilon|_{\overline{X}(.,t)}))\tilde{\psi}_t^2\right] J_t|_{(r,s)}\,dr\,d\Hc^{N-1}(s).
	\end{align*}
	
	We set $F_{\varepsilon,s,t}(z):=\varepsilon(z+h_\varepsilon(s,t))$ and $\tilde{J}_{\varepsilon,s,t}(z):=J_t(F_{\varepsilon,s,t}(z),s)$ for all $z\in[-\frac{\delta}{\varepsilon},\frac{\delta}{\varepsilon}]-h_\varepsilon(s,t)$ and $(s,t)\in\Sigma\times[0,T]$. Moreover, let $I_{\varepsilon,\tilde{\delta}}:=(-\frac{\tilde{\delta}}{\varepsilon},\frac{\tilde{\delta}}{\varepsilon})$ 
	and $\Psi_{\varepsilon,s,t}:=\sqrt{\varepsilon}
	\tilde{\psi}_t(F_{\varepsilon,s,t}(.),s)$. Due to Remark \ref{th_SE_ACND_sob_rem} it holds $\Psi_{\varepsilon,s,t}\in H^1(I_{\varepsilon,\tilde{\delta}})$ for a.e.~$s\in\hat{\Sigma}$ and all $t\in[0,T]$ and together with Lemma \ref{th_SE_1Dtrafo_remainder},~1.~we obtain that the second inner integral in the estimate above equals $1/\varepsilon^2$ times
	\begin{align*}
	B_{\varepsilon,s,t}^{\tilde{c}}(\Psi_{\varepsilon,s,t},\Psi_{\varepsilon,s,t})
	:=\int_{I_{\varepsilon,\tilde{\delta}}}\left[(1-\tilde{c})(\frac{d}{dz}\Psi_{\varepsilon,s,t})^2+f''(\theta_0(z))(\Psi_{\varepsilon,s,t})^2\right]\tilde{J}_{\varepsilon,s,t}\,dz
	\end{align*} 
	for a.e.~$s\in\hat{\Sigma}$ and all $t\in[0,T]$.
	Therefore \eqref{eq_SE_ACND_VpxVp_1} follows if we show
	with the same $c_0$ as above 
	\begin{align}\label{eq_SE_ACND_VpxVp_3}
	B_{\varepsilon,s,t}^{\tilde{c}}(\Psi_{\varepsilon,s,t},\Psi_{\varepsilon,s,t})
	\geq \overline{c}\|\Psi_{\varepsilon,s,t}\|^2_{L^2(I_{\varepsilon,\tilde{\delta}},\tilde{J}_{\varepsilon,s,t})}-\frac{c_0}{2}\|\tilde{\psi}_t(.,s)\|^2_{L^2(\hat{I}_{s,t}^\varepsilon,J_t(.,s))}
	\end{align}
	for $\varepsilon\in(0,\varepsilon_0]$,  a.e.~$s\in\hat{\Sigma}$ and all $t\in[0,T]$ with some $\varepsilon_0,\overline{c}>0$ independent of $\varepsilon,s,t$.
	
	The estimate \eqref{eq_SE_ACND_VpxVp_3} can be proven for appropriately small $\tilde{\delta}$ in the analogous way as in the 2D-case, cf.~the proof of Lemma 4.8 in \cite{AbelsMoser}. One uses the integral characterization for $\psi\in V_{\varepsilon,t}^\perp$ from Lemma \ref{th_SE_ACND_split_L2},~2.~and results from Section \ref{sec_SE_1Dscal_pert} for the operator
	\[
	\Lc_{\varepsilon,s,t}^0:=-(\tilde{J}_{\varepsilon,s,t})^{-1}\frac{d}{dz}\left(\tilde{J}_{\varepsilon,s,t}\frac{d}{dz}\right)+f''(\theta_0)
	\]
	on $H^2(I_{\varepsilon,\tilde{\delta}})$ with homogeneous Neumann boundary condition, in particular Theorem \ref{th_SE_1Dscal_pert2}. 
\end{proof}

For $B_{\varepsilon,t}^C$ on $V_{\varepsilon,t}\times V_{\varepsilon,t}^\perp$ it holds
\begin{Lemma}\label{th_SE_ACND_VxVp}
	There are $\varepsilon_0,C>0$ such that
	\[
	|B_{\varepsilon,t}^C(\phi,\psi)|\leq\frac{C}{\varepsilon}\|\phi\|_{L^2(\Omega_t^C)}\|\psi\|_{L^2(\Omega_t^C)}+\frac{1}{4}B_{\varepsilon,t}^C(\psi,\psi)+C\varepsilon\|a\|_{H^1(\hat{\Sigma}^\circ)}^2
	\]
	for all $\phi=a(s(.,t))\phi^A_\varepsilon(.,t)\in V_{\varepsilon,t}$, $\psi\in V_{\varepsilon,t}^\perp$ and $\varepsilon\in(0,\varepsilon_0]$, $t\in[0,T]$.
\end{Lemma}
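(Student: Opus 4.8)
The idea is to exploit the structure $\phi = a(s(\cdot,t))\,\phi^A_\varepsilon(\cdot,t)$ of elements of $V_{\varepsilon,t}$ together with the defining orthogonality of $\psi \in V_{\varepsilon,t}^\perp$, namely that the fibrewise integral $\int_{-\delta}^\delta (\phi^A_\varepsilon(\cdot,t)\psi)|_{X(r,s,t)}\,J_t(r,s)\,dr$ vanishes for a.e.\ $s\in\hat\Sigma$ (Lemma~\ref{th_SE_ACND_split_L2},~2). The first step is to write out $B_{\varepsilon,t}^C(\phi,\psi)$ as in \eqref{eq_SE_ACND_Bepst}, i.e.\ $\int_{\Omega_t^C}\nabla\phi\cdot\nabla\psi + [\tfrac1{\varepsilon^2}f''(\theta_0|_{\rho_\varepsilon}) + \tfrac1\varepsilon f'''(\theta_0|_{\rho_\varepsilon})u^C_1]\,\phi\psi\,dx$, and to decompose $\nabla\phi = \nabla(a(s(\cdot,t)))\,\phi^A_\varepsilon(\cdot,t) + a(s(\cdot,t))\,\nabla\phi^A_\varepsilon(\cdot,t)$. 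This splits $B_{\varepsilon,t}^C(\phi,\psi)$ into a "tangential-derivative" piece $\int_{\Omega_t^C}\nabla(a(s))\phi^A_\varepsilon\cdot\nabla\psi\,dx$ and a "remaining" piece $\int_{\Omega_t^C} a(s)\big[\nabla\phi^A_\varepsilon\cdot\nabla\psi + (\tfrac1{\varepsilon^2}f''(\theta_0|_{\rho_\varepsilon}) + \tfrac1\varepsilon f'''(\theta_0|_{\rho_\varepsilon})u^C_1)\phi^A_\varepsilon\psi\big]\,dx$.

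For the second piece, integrate by parts in $\Omega_t^C$ (allowed by Remark~\ref{th_SE_ACND_sob_rem},~2), moving the derivative off $\nabla\phi^A_\varepsilon\cdot\nabla\psi$: one obtains $-\int_{\Omega_t^C}\psi\,\mathrm{div}(a(s)\nabla\phi^A_\varepsilon)\,dx + \int_{\partial\Omega_t^C}a(s)(N_{\partial\Omega_t^C}\cdot\nabla\phi^A_\varepsilon)\,\tr\psi\,d\Hc^{N-1}$. Expanding the divergence gives a term $-\int a(s)\psi\,\Delta\phi^A_\varepsilon$ and a term $-\int\psi\,\nabla(a(s))\cdot\nabla\phi^A_\varepsilon$; combining the first with the zeroth-order contribution reconstructs $-\int a(s)\psi\,\Lc_{\varepsilon,t}^C\phi^A_\varepsilon$, which by Lemma~\ref{th_SE_ACND_phi_A} is $\varepsilon^{-3/2}\Delta r|_{\overline X_0}\,q\,\theta_0''|_{\rho_\varepsilon}$ up to an $O(\varepsilon^{-1/2}e^{-c|\rho_\varepsilon|})$ error. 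Transforming this to fibre integrals over $\hat\Sigma\times(-\delta,\delta)$ via $X(\cdot,t)$ (Remark~\ref{th_SE_ACND_sob_rem},~1) and using the orthogonality of $\psi$ in the $r$-variable: the leading term is $\int_{\hat\Sigma}a(s)\,(\text{smooth in }s)\,\big[\int_{-\delta}^\delta \theta_0''(\rho_\varepsilon)\,\psi|_{X(r,s,t)}\,J_t(0,s)\,dr\big]$ plus corrections; since $\theta_0''(\rho_\varepsilon)J_t(0,s)$ differs from $\theta_0'(\rho_\varepsilon)q(s,t)^{-1}\phi^A_\varepsilon$-type profile only by $O(r)$ and exponentially decaying tails, the orthogonality kills the leading contribution up to terms controllable by Lemma~\ref{th_SE_1Dtrafo_remainder}, leaving $O(\varepsilon^{-1}\|a\|_{L^2(\hat\Sigma)}\|\psi\|_{L^2(\Omega_t^C)})$. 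The cross term $-\int\psi\,\nabla(a(s))\cdot\nabla\phi^A_\varepsilon$ is estimated by Cauchy–Schwarz and the fibre transformation as $\le C\varepsilon^{-1/2}\|\nabla_{\hat\Sigma}a\|_{L^2(\hat\Sigma)}\|\psi\|_{L^2(\Omega_t^C)}$, which after Young's inequality (splitting the $\varepsilon$-powers asymmetrically) yields $C\varepsilon\|a\|_{H^1(\hat\Sigma^\circ)}^2 + C\varepsilon^{-1}\|\psi\|_{L^2(\Omega_t^C)}^2$; here one must recall $\|\psi\|_{L^2} \le \varepsilon \|\psi\|_{L^2}/\varepsilon$ and absorb part into $\tfrac14 B_{\varepsilon,t}^C(\psi,\psi) \gtrsim \tfrac\nu4(\varepsilon^{-2}\|\psi\|_{L^2}^2 + \|\nabla\psi\|_{L^2}^2)$ using Lemma~\ref{th_SE_ACND_VpxVp}. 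The boundary integral over $\partial\Omega_t^C$ splits into the "caps" $r=\pm\delta$, where $\phi^A_\varepsilon$ and its gradient are $O(e^{-c/\varepsilon})$ (Lemma~\ref{th_SE_ACND_phi_A}, third estimate), and the contact-boundary part $\partial\Omega_t^C\cap\partial\Omega$, where by the second estimate in Lemma~\ref{th_SE_ACND_phi_A} one has $|N_{\partial\Omega}\cdot\nabla\phi^A_\varepsilon| \le C\varepsilon^{-1/2}e^{-c|\rho_\varepsilon|}$, so after a trace estimate ($\|\tr a\|_{L^2(\partial\Sigma)} \le C\|a\|_{H^1(\hat\Sigma^\circ)}$, Theorem~\ref{th_SobMfd_LipThm}) and a one-dimensional exponential integral this contributes $\le C\varepsilon\|a\|_{H^1(\hat\Sigma^\circ)}\|\psi\|_{H^1(\Omega_t^C)}$-type terms, again absorbed by Young into $C\varepsilon\|a\|_{H^1}^2 + \tfrac14 B_{\varepsilon,t}^C(\psi,\psi)$.

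The first ("tangential-derivative") piece $\int_{\Omega_t^C}\nabla(a(s))\,\phi^A_\varepsilon\cdot\nabla\psi\,dx$ is the term with no small factor in front, so it needs another integration by parts rather than a crude estimate: move the derivative from $\nabla\psi$ onto $\nabla(a(s))\phi^A_\varepsilon$, producing a volume term $-\int_{\Omega_t^C}\psi\,\mathrm{div}(\phi^A_\varepsilon\nabla(a(s)))\,dx$ and a boundary term. The divergence expands into $\phi^A_\varepsilon\,\Delta(a(s))$ — which, writing $a(s)$ in coordinates, involves second tangential derivatives of $a$ times $\phi^A_\varepsilon$, a term one must handle by testing against $\psi$ with a Cauchy–Schwarz that puts the full $H^1$-norm on $a$ and $\|\psi\|_{L^2}$, giving $C\|a\|_{H^1}\|\psi\|_{L^2}$; strictly this requires $a\in H^2$, which we do not have, so instead one keeps $\int \nabla(a(s))\cdot\nabla(\phi^A_\varepsilon\psi)$ and writes it as $\int\nabla(a(s))\cdot(\psi\nabla\phi^A_\varepsilon + \phi^A_\varepsilon\nabla\psi)$ and estimates the two summands directly: the first is $\le C\varepsilon^{-1/2}\|\nabla_{\hat\Sigma}a\|_{L^2}\|\psi\|_{L^2}$ (since $|\nabla\phi^A_\varepsilon|\lesssim \varepsilon^{-3/2}e^{-c|\rho_\varepsilon|}$ only in the normal direction, but pointwise bounded in the tangential directions by $\varepsilon^{-1/2}$ times decaying profiles, and after the fibre transformation the normal derivative contributes $\int \varepsilon^{-1/2}\theta_0''(\rho_\varepsilon)$-type integrals of order $\varepsilon^{-1/2}$) — hence $C\varepsilon\|a\|_{H^1}^2 + C\varepsilon^{-1}\|\psi\|_{L^2}^2$ after Young; the second summand $\int\phi^A_\varepsilon\nabla(a(s))\cdot\nabla\psi$ is exactly what we started with, so this must be organised as a bound on the \emph{symmetric} combination, or one uses instead that $\int\nabla(a(s))\phi^A_\varepsilon\cdot\nabla\psi = -\int\psi\nabla(a(s))\cdot\nabla\phi^A_\varepsilon - \int\psi\phi^A_\varepsilon\,\mathrm{div}(\nabla(a(s))) + (\text{boundary})$ and the middle term, being $-\int\psi\phi^A_\varepsilon\Delta_x(a\circ s)$, is rewritten via the chain rule on $\Sigma$ so that only first tangential derivatives of $a$ appear multiplied by smooth coefficients and by $\partial_{x_j}s$-type quantities — this is the standard manipulation already used in \cite{AbelsMoser}. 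I expect the delicate point — and the main obstacle — to be precisely this handling of the top-order term: one must avoid needing $a\in H^2$, must extract the genuinely small factors ($\varepsilon$ or $e^{-c/\varepsilon}$) only after the fibre transformation and an appeal to Lemma~\ref{th_SE_1Dtrafo_remainder}, and must keep every remainder either of the form $C\varepsilon^{-1}\|\phi\|_{L^2}\|\psi\|_{L^2}$ (using $\|\phi\|_{L^2}\sim\|a\|_{L^2}$ from Lemma~\ref{th_SE_ACND_split_L2},~1), or $\tfrac14 B_{\varepsilon,t}^C(\psi,\psi)$, or $C\varepsilon\|a\|_{H^1(\hat\Sigma^\circ)}^2$, matching the claimed inequality. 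Once all pieces are collected, the statement follows; the proof is essentially the $N$-dimensional, coordinate-transformed analogue of Lemma~4.9 in \cite{AbelsMoser}, with the product-space Sobolev identifications of Remark~\ref{th_SE_ACND_sob_rem} used throughout to justify the fibre-wise computations.
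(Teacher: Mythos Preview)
Your proposal has the right global structure (decompose $\nabla\phi$, integrate by parts, invoke Lemma~\ref{th_SE_ACND_phi_A} and Lemma~\ref{th_SE_ACND_VpxVp}), but the two places where orthogonality enters are reversed, and as a result the main term is never controlled.

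For the piece $\int a(s)\,\psi\,\Lc_{\varepsilon,t}^C\phi^A_\varepsilon$ you invoke the orthogonality of $\psi$; this is both unnecessary and ineffective. The leading part of $\Lc_{\varepsilon,t}^C\phi^A_\varepsilon$ is $\varepsilon^{-3/2}\theta_0''(\rho_\varepsilon)$, not $\theta_0'(\rho_\varepsilon)$, so orthogonality to $\phi^A_\varepsilon\sim\varepsilon^{-1/2}\theta_0'(\rho_\varepsilon)$ gains nothing. The paper simply applies H\"older: $\|a(s)\Lc_{\varepsilon,t}^C\phi^A_\varepsilon\|_{L^2(\Omega_t^C)}\le C\varepsilon^{-1}\|a\|_{L^2(\hat\Sigma)}$ after the fibre transformation and Lemma~\ref{th_SE_1Dtrafo_remainder}, giving $|(I)|\le C\varepsilon^{-1}\|\phi\|_{L^2}\|\psi\|_{L^2}$ directly.

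The genuine obstacle is the one you identify but do not resolve. The paper does \emph{not} estimate $\int\nabla(a(s))\phi^A_\varepsilon\cdot\nabla\psi$ and $-\int\psi\,\nabla(a(s))\cdot\nabla\phi^A_\varepsilon$ separately; instead it groups them as
\[
(III)=\int_{\Omega_t^C}\nabla(a(s))\cdot\bigl[\phi^A_\varepsilon\nabla\psi-\psi\nabla\phi^A_\varepsilon\bigr]\,dx
=\int_{\hat\Sigma}\nabla_{\hat\Sigma}a\cdot g_t(s)\,d\Hc^{N-1}.
\]
In $g_t(s)$ one splits $\nabla\psi|_{X(.,t)}=\nabla r\,\partial_r\tilde\psi_t+(D_xs)^\top\nabla_{\hat\Sigma}\tilde\psi_t$. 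The key step is this: for the $\nabla_{\hat\Sigma}\tilde\psi_t$-part one applies $\nabla_{\hat\Sigma}$ to the orthogonality relation $\int_{-\delta}^\delta(\phi^A_\varepsilon\psi)|_{X(r,s,t)}J_t\,dr=0$ from Lemma~\ref{th_SE_ACND_split_L2},~2, which converts $\int\phi^A_\varepsilon\,\nabla_{\hat\Sigma}\tilde\psi_t\,J_t\,dr$ into $-\int\psi[\nabla_{\hat\Sigma}(\phi^A_\varepsilon|_{\overline{X}})J_t+\phi^A_\varepsilon\nabla_{\hat\Sigma}J_t]\,dr$. This trades a tangential derivative of $\psi$ for $\psi$ times $\nabla_\tau\phi^A_\varepsilon$, and \eqref{eq_SE_ACND_dsigma_phiA} shows $\nabla_\tau\phi^A_\varepsilon$ is a full $\varepsilon$-order better than $\nabla\phi^A_\varepsilon$. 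The $\partial_r\tilde\psi_t$-part is paired with $D_xs\nabla r$, which vanishes at $r=0$ and hence carries an extra factor $r$. Together these give $|g_t(s)|\le C\|\tilde\psi_t(.,s)\|_{L^2}+C\varepsilon\|(\partial_r,\nabla_{\hat\Sigma})\tilde\psi_t(.,s)\|_{L^2}$, after which Young and Lemma~\ref{th_SE_ACND_VpxVp} close the estimate. Your attempts (integrating by parts onto $a$, writing out $\nabla(\phi^A_\varepsilon\psi)$) are circular precisely because they miss this differentiation of the orthogonality identity. For the boundary term you also need the $\varepsilon$-scaled trace inequality of Lemma~\ref{th_SE_ACND_intpol_tr}, not just the plain trace theorem.
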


First we prove the following auxiliary estimate. 
\begin{Lemma}\label{th_SE_ACND_intpol_tr}
	Let $\overline{\varepsilon}>0$ be fixed. Then there is a $\overline{C}>0$ (independent of $\psi$, $\varepsilon$, $t$) such that 
	\[
	\|\tr\,\psi\|_{L^2(\partial\Omega_t^C)}^2
	\leq \overline{C}\left[\varepsilon\|\nabla \psi\|_{L^2(\Omega_t^C)}^2+\frac{1}{\varepsilon}\|\psi\|_{L^2(\Omega_t^C)}^2\right]
	\]
	for all $\psi\in H^1(\Omega_t^C)$ and $\varepsilon\in(0,\overline{\varepsilon}]$, $t\in[0,T]$.
\end{Lemma}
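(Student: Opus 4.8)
This is a standard trace interpolation ("$\varepsilon$-Ehrling") estimate, and the plan is to reduce it, via the curvilinear coordinates, to the corresponding one-dimensional interpolation inequality on an interval. First I would transform everything to the product set $(-\delta,\delta)\times\hat{\Sigma}^\circ$ using $X(.,t)$: by Remark \ref{th_SE_ACND_sob_rem},~1.--3., the map $\psi\mapsto\tilde\psi_t:=\psi|_{X(.,t)}$ is an isomorphism $H^1(\Omega_t^C)\cong H^1((-\delta,\delta)\times\hat{\Sigma}^\circ)$ with norm equivalence constants uniform in $t$, and the traces transform naturally by density (possible because of Remark \ref{th_SE_ACND_sob_rem},~2.). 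The boundary $\partial\Omega_t^C$ decomposes into the two "caps" $\{r=\pm\delta\}$ and the lateral part $\{s\in\partial\Sigma\}$, each of which, after transformation, becomes either $\{\pm\delta\}\times\hat{\Sigma}$ or $(-\delta,\delta)\times\partial\Sigma$; since the transformation Jacobians are bounded above and below (Theorem \ref{th_coordND},~3.-4.\ and Remark \ref{th_coordND_rem},~3.), it suffices to prove the estimate on the model product set.

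On the product set I would use Lemma \ref{th_SobMfd_prod_set} (via Remark \ref{th_SE_ACND_sob_rem},~4.) to write $H^1((-\delta,\delta)\times\hat{\Sigma}^\circ)\hookrightarrow L^2(\hat{\Sigma}^\circ,H^1(-\delta,\delta))\cap H^1(\hat{\Sigma}^\circ,L^2(-\delta,\delta))$, so that for a.e.\ $s$ the slice $\tilde\psi_t(\cdot,s)$ lies in $H^1(-\delta,\delta)$, and then apply, slicewise, the elementary one-dimensional trace interpolation inequality
\[
|v(\pm\delta)|^2\le \overline C_1\Big(\varepsilon\|v'\|_{L^2(-\delta,\delta)}^2+\tfrac1\varepsilon\|v\|_{L^2(-\delta,\delta)}^2\Big)\quad\text{for all }v\in H^1(-\delta,\delta),\ \varepsilon\in(0,\overline\varepsilon],
\]
which follows from the fundamental theorem of calculus together with Young's inequality (the constant depends only on $\delta$ and $\overline\varepsilon$). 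Integrating in $s$ over $\hat{\Sigma}$ handles the cap parts of $\partial\Omega_t^C$. For the lateral part $(-\delta,\delta)\times\partial\Sigma$ one argues symmetrically: now one uses the decomposition $H^1((-\delta,\delta)\times\hat{\Sigma}^\circ)\hookrightarrow L^2(-\delta,\delta,H^1(\hat{\Sigma}^\circ))$ and the bounded trace operator $\tr:H^1(\hat{\Sigma}^\circ)\to L^2(\partial\Sigma)$ from Theorem \ref{th_SobMfd_LipThm}; applied slicewise in $r$ and integrated in $r$, this gives $\|\tr\,\tilde\psi_t\|_{L^2((-\delta,\delta)\times\partial\Sigma)}^2\le C\|\tilde\psi_t\|_{H^1((-\delta,\delta)\times\hat{\Sigma}^\circ)}^2\le C'(\varepsilon\|\nabla\psi\|_{L^2(\Omega_t^C)}^2+\tfrac1\varepsilon\|\psi\|_{L^2(\Omega_t^C)}^2)$, the last step using $\varepsilon\le\overline\varepsilon$ so that $1\le \overline\varepsilon/\varepsilon$ absorbs the $\varepsilon$-independent gradient term into the $\tfrac1\varepsilon$ term (or, more cleanly, one interpolates the $\hat{\Sigma}$-trace itself, but crude absorption suffices since only $\varepsilon\le\overline\varepsilon$ is claimed). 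Finally I would re-transform back to $\Omega_t^C$, collecting the uniform-in-$t$ constants into a single $\overline C$.

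**Main obstacle.** None of the steps is deep; the only points requiring care are bookkeeping ones: ensuring all norm-equivalence constants from the coordinate transformation (Theorem \ref{th_coordND}, Remark \ref{th_SE_ACND_sob_rem}) and from the product-space isomorphisms (Lemma \ref{th_SobMfd_prod_set}, Theorem \ref{th_SobMfd_LipThm}) are genuinely independent of $t\in[0,T]$ — this is guaranteed by compactness of $[0,T]$ and the smoothness of $\overline X$ — and handling the lateral boundary $\{s\in\partial\Sigma\}$ correctly, where one cannot gain an $\varepsilon$-factor in the tangential direction and must instead simply use the bounded trace on $\hat\Sigma$ and absorb via $\varepsilon\le\overline\varepsilon$. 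Since the lemma only asks for an estimate valid for $\varepsilon\le\overline\varepsilon$ (not an optimal $\varepsilon$-scaling in every direction), this absorption is legitimate and the proof goes through without difficulty.
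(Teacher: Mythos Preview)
Your reduction to the product set $S=(-\delta,\delta)\times\hat\Sigma^\circ$ via $X(.,t)$ is correct and matches the paper. The slicewise one-dimensional trace interpolation for the caps $\{\pm\delta\}\times\hat\Sigma$ is also fine. The gap is in your treatment of the lateral boundary: from the bounded trace $H^1(\hat\Sigma^\circ)\to L^2(\partial\hat\Sigma)$ you obtain only $\|\tr\,\tilde\psi_t\|_{L^2((-\delta,\delta)\times\partial\hat\Sigma)}^2\le C\|\tilde\psi_t\|_{H^1}^2$, and your claim that ``$1\le\overline\varepsilon/\varepsilon$ absorbs the $\varepsilon$-independent gradient term into the $\tfrac1\varepsilon$ term'' is false. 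The $\tfrac1\varepsilon$-term is $\tfrac1\varepsilon\|\psi\|_{L^2}^2$, and there is no inequality of the form $\|\nabla\psi\|_{L^2}^2\le C\bigl(\varepsilon\|\nabla\psi\|_{L^2}^2+\tfrac1\varepsilon\|\psi\|_{L^2}^2\bigr)$ valid uniformly for small $\varepsilon$; take any $\psi$ with large gradient and small $L^2$-norm. Your parenthetical remark ``one interpolates the $\hat\Sigma$-trace itself'' is in fact the correct fix (use that $\hat\Sigma=Y(\partial\Sigma\times[0,2\mu_0])$ and apply the one-dimensional interpolation in the $b$-variable), but you dismiss it in favour of the crude absorption, which does not work.

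The paper sidesteps the case distinction entirely by a different device: on $S$ it constructs a $C^2$-vector field $\vec w$ with $\vec w\cdot N_{\partial S}\ge 1$ a.e.\ on $\partial S$ (taking $\vec w(r,s)=(w_1(r),\vec w_2(s))$ with $w_1(\pm\delta)=\pm1$ and $\vec w_2|_{\partial\hat\Sigma}=N_{\partial\hat\Sigma}$), applies the Gau\ss{} theorem to $\psi^2\vec w$, and obtains
\[
\|\tr\,\psi\|_{L^2(\partial S)}^2\le\int_S\psi^2\,\diverg_S\vec w+2\psi\,\vec w\cdot\nabla_S\psi\,d\Hc^N\le C\bigl(\|\psi\|_{L^2}^2+\|\psi\|_{L^2}\|\nabla_S\psi\|_{L^2}\bigr).
\]
Young's inequality with parameter $\varepsilon$ on the cross term then gives $C\bigl(\varepsilon\|\nabla_S\psi\|_{L^2}^2+(1+\tfrac1\varepsilon)\|\psi\|_{L^2}^2\bigr)$, and only the harmless absorption $1\le\overline\varepsilon/\varepsilon$ on the $L^2$-term is needed. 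This handles all boundary pieces at once and avoids the issue you ran into.
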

\begin{proof}
	Because of Remark \ref{th_SE_ACND_sob_rem}, Theorem \ref{th_Leb_trafo_mfd} and Theorem \ref{th_coordND} it is equivalent to prove the estimate for $S:=(-\delta,\delta)\times\hat{\Sigma}^\circ$ instead of $\Omega_t^C$ and $\nabla_S=(\partial_r,\nabla_{\hat{\Sigma}})$ instead of $\nabla$. For the $S$-case we use the idea from Evans \cite{Evans},~5.10, problem 7. Note that $S$ is a smooth manifold with thin singular set in the sense of Amann, Escher \cite{AmannEscherIII}, Chapter 3.1. Therefore the outer unit normal $N_{\partial S}$ is defined $\Hc^{N-1}$-a.e.~on $\partial S$ and the Gauß-Theorem holds for $C^2$-vector fields on $\overline{S}$ due to \cite{AmannEscherIII}, Theorem XII.3.15 and Remark XII.3.16(c). Let $w_1\in C^2([-\delta,\delta])$ with $w_1|_{\pm\delta}=\pm 1$ and $w_2$ be a $C^2$-vector field on $\hat{\Sigma}$ such that $\vec{w}_2|_{\partial\hat{\Sigma}}=N_{\partial\hat{\Sigma}}$. Then 
	\[
	\vec{w}:\overline{S}=[-\delta,\delta]\times\hat{\Sigma}\rightarrow\R^{N+1}:(r,s)\mapsto (w_1(r),0)+(0,\vec{w}_2(s))
	\] 
	is a $C^2$-vector field on $\overline{S}$ such that $\vec{w}\cdot N_{\partial S}\geq 1$ holds $\Hc^{N-1}$-a.e.~on $\partial S$. Hence for all $\psi\in C^2(\overline{S})$:
	\[
	\|\tr\,\psi\|_{L^2(\partial S)}^2\leq\int_{\partial S}\psi^2 \vec{w}\cdot N_{\partial S}\,d\Hc^{N-1}=\int_S\diverg_S(\psi^2\vec{w})\,d\Hc^N=\int_S\psi^2\diverg_S\vec{w}+2\psi \vec{w}\cdot\nabla_S\psi\,d\Hc^N.
	\]
	Therefore Young's inequality and $1\leq \overline{\varepsilon}/\varepsilon$ yields
	\[
	\|\tr\,\psi\|_{L^2(\partial S)}^2\leq C\left[\varepsilon\|\nabla_S\psi\|_{L^2(S)}^2+(1+\frac{1}{\varepsilon})\|\psi\|_{L^2(S)}^2\right]
	\leq \overline{C}\left[\varepsilon\|\nabla_S \psi\|_{L^2(S)}^2+\frac{1}{\varepsilon}\|\psi\|_{L^2(S)}^2\right]
	\]
	for all $\psi\in C^2(\overline{S})$ and $\varepsilon\in(0,\overline{\varepsilon}]$, where $\overline{C}>0$ is independent of $\psi$, $\varepsilon$.
	Hence the estimate also follows for all $\psi\in H^1(S)$ via density due to Remark \ref{th_SE_ACND_sob_rem} and Theorem \ref{th_SobMfd_LipThm}.
\end{proof}

\begin{proof}[Proof of Lemma \ref{th_SE_ACND_VxVp}]
	We rewrite $B_{\varepsilon,t}^C(\phi,\psi)$ in order to use Lemma \ref{th_SE_ACND_phi_A} and Lemma \ref{th_SE_ACND_split_L2}. It holds $\nabla\phi=\nabla(a|_{s(.,t)})\phi^A_\varepsilon+a|_{s(.,t)}\nabla\phi^A_\varepsilon|_{(.,t)}$ and integration by parts, cf.~Remark \ref{th_SE_ACND_sob_rem},~2., yields \phantom{\qedhere}
	\begin{align*}
	\int_{\Omega_t^C}a(s)\nabla\phi^A_\varepsilon|_{(.,t)}\cdot\nabla\psi\,dx
	=-\int_{\Omega_t^C}\left[\nabla(a(s))\cdot\nabla\phi^A_\varepsilon+a(s)\Delta\phi^A_\varepsilon|_{(.,t)}\right]\psi\,dx\\
	+\int_{\partial\Omega_t^C}N_{\partial\Omega^C_t}\cdot\nabla\phi^A_\varepsilon|_{(.,t)}\tr\left[a(s(.,t))\psi\right]\,d\Hc^{N-1}.
	\end{align*}
	Therefore we obtain
	\begin{align*}
	B_{\varepsilon,t}^C(\phi,\psi)
	=\int_{\Omega_t^C} a(s)|_{(.,t)}\psi\Lc_{\varepsilon,t}^C\phi^A_\varepsilon|_{(.,t)}\,dx
	+\int_{\partial\Omega_t^C}N_{\partial\Omega^C_t}\cdot\nabla\phi^A_\varepsilon|_{(.,t)}\tr\left[a(s(.,t))\psi\right]\,d\Hc^{N-1}\\
	+\int_{\Omega_t^C}\nabla(a(s))|_{(.,t)}\cdot\left[\phi^A_\varepsilon|_{(.,t)}\nabla\psi-\nabla\phi^A_\varepsilon|_{(.,t)}\psi\right]\,dx=:(I)+(II)+(III).
	\end{align*}
	
	\begin{proof}[Ad $(I)$] 
		The Hölder Inequality yields $|(I)|\leq\|a(s|_{(.,t)})\Lc_{\varepsilon,t}^C\phi^A_\varepsilon(.,t)\|_{L^2(\Omega_t^C)}\|\psi\|_{L^2(\Omega_t^C)}$, where
		\[
		\|a(s|_{(.,t)})\Lc_{\varepsilon,t}^C\phi^A_\varepsilon(.,t)\|_{L^2(\Omega_t^C)}^2=\int_{\hat{\Sigma}}a^2(s)\int_{-\delta}^\delta (\Lc_{\varepsilon,t}^C\phi^A_\varepsilon(.,t))^2|_{X(r,s,t)}\,J_t(r,s)\,dr\,d\Hc^{N-1}(s)
		\]
		due to Theorem \ref{th_Leb_trafo_mfd}. By Lemma \ref{th_SE_ACND_phi_A} and Lemma \ref{th_SE_1Dtrafo_remainder} the inner integral is estimated by $\frac{C}{\varepsilon^2}$, cf.~also the estimate of $|(I)|$ for the 2D-case in the proof of Lemma 4.10 in \cite{AbelsMoser} for more details. Hence Lemma~\ref{th_SE_ACND_split_L2},~1.~yields
		\[
		|(I)|\leq \frac{C}{\varepsilon}\|a\|_{L^2(\hat{\Sigma})}\|\psi\|_{L^2(\Omega_t^C)}
		\leq\frac{\tilde{C}}{\varepsilon}\|\phi\|_{L^2(\Omega_t^C)}\|\psi\|_{L^2(\Omega_t^C)}
		\] 
		for all $t\in[0,T]$ and $\varepsilon\in(0,\varepsilon_0]$ if $\varepsilon_0>0$ is small. \phantom{\qedhere}\end{proof}
	
	\begin{proof}[Ad $(II)$] 
		The Hölder Inequality yields\phantom{\qedhere} 
		\[	
		|(II)|\leq\|\tr\,\psi\|_{L^2(\partial\Omega_t^C)}\|\tr(a(s|_{(.,t)}))N_{\partial\Omega^C_t}\cdot\nabla\phi^A_\varepsilon|_{(.,t)}\|_{L^2(\partial\Omega_t^C)}.
		\] 
		We transform the second integral as in the estimate of $(III)$ in the proof of Lemma \ref{th_SE_ACND_VxV} with Theorem \ref{th_Leb_trafo_mfd} and Remark \ref{th_SE_ACND_sob_rem}. Then Lemma \ref{th_SE_ACND_phi_A}, Lemma \ref{th_SE_1Dtrafo_remainder} and Theorem \ref{th_SobMfd_LipThm} yield
		\[
		\|a(s)N_{\partial\Omega^C_t}\cdot\nabla\phi^A_\varepsilon|_{(.,t)}\|_{L^2(\partial\Omega_t^C)}\leq C\|\tr\,a|_{\partial\Sigma}\|_{L^2(\partial\Sigma)}+ Ce^{-c/\varepsilon}\|a\|_{L^2(\hat{\Sigma})}\leq C\|a\|_{H^1(\hat{\Sigma}^\circ)}.
		\]
		We estimate $\|\tr\,\psi\|_{L^2(\partial\Omega_t^C)}$ with Lemma \ref{th_SE_ACND_intpol_tr}. Hence
		Young's inequality and Lemma \ref{th_SE_ACND_VpxVp} imply
		\[
		|(II)|\leq
		\frac{\nu}{8\varepsilon \overline{C}}\|\tr\,\psi\|_{L^2(\partial\Omega_t^C)}^2
		+\tilde{C}\varepsilon\|a\|_{H^1(\hat{\Sigma}^\circ)}^2\leq\frac{1}{8}B_{\varepsilon,t}^C(\psi,\psi)
		+\tilde{C}\varepsilon\|a\|_{H^1(\hat{\Sigma}^\circ)}^2,
		\]
		where $\overline{C}$ is as in Lemma \ref{th_SE_ACND_intpol_tr}.
	\end{proof}
	
	\begin{proof}[Ad $(III)$] 
		With Remark \ref{th_SE_ACND_sob_rem},~1.~we can transform $(III)=\int_{\hat{\Sigma}}\nabla_{\hat{\Sigma}}a(s)\cdot g_t(s)\,d\Hc^{N-1}(s)$, where
		\[
		g_t(s):=\int_{-\delta}^\delta D_xs|_{\overline{X}(r,s,t)}\left[\phi^A_\varepsilon(.,t)\nabla\psi-\nabla\phi^A_\varepsilon(.,t)\psi\right]|_{X(r,s,t)}\,J_t(r,s)\,dr.
		\]
		It holds
		$\nabla\psi|_{X(.,t)}=
		\nabla r|_{\overline{X}(.,t)}\partial_r\tilde{\psi}_t
		+(D_xs)^\top|_{\overline{X}(.,t)} \nabla_{\hat{\Sigma}}\tilde{\psi}_t$ with
		$\tilde{\psi}_t:=\psi|_{X(.,t)}$ in $\Omega_t^C$ because of Remark \ref{th_SE_ACND_sob_rem},~3.~and Corollary~\ref{th_coordND_nabla_tau_n}.
		For the $\nabla_{\hat{\Sigma}}\tilde{\psi}_t$-term in $g_t$ we use 
		\[
		\left|D_xs(D_xs)^\top|_{\overline{X}(r,s,t)}-D_xs(D_xs)^\top|_{\overline{X}(0,s,t)}\right|\leq C|r|.
		\] 
		Therefore $|g_t(s)|$ is for a.e.~$s\in\hat{\Sigma}$ estimated by
		\begin{align*}
		&|D_xs(D_xs)^\top|_{\overline{X}_0(s,t)}\left|\int_{-\delta}^\delta\left[\phi^A_\varepsilon|_{\overline{X}(.,t)}\nabla_{\hat{\Sigma}}\tilde{\psi}_t J_t\right]|_{(r,s)}\,dr\right|+\int_{-\delta}^\delta\left|(\tilde{\psi}_t J_t)|_{(r,s)}D_xs\nabla\phi^A_\varepsilon|_{\overline{X}(r,s,t)}\right|\,dr\\
		&+\int_{-\delta}^\delta\left[
		C\left|r\, \nabla_{\hat{\Sigma}}\tilde{\psi}_t|_{(r,s)}\right|
		+\left|D_xs\nabla r|_{\overline{X}(r,s,t)} \partial_r\tilde{\psi}_t|_{(r,s)}\right|\right] \cdot\left|\phi^A_\varepsilon|_{\overline{X}(r,s,t)}
		J_t|_{(r,s)}\right|\,dr.
		\end{align*}
		We use $\psi\in V_{\varepsilon,t}^\perp$ to estimate the first term. Due to Lemma \ref{th_SobMfd_prod_set} and since integration gives a bounded linear operator on $L^2(-\delta,\delta)$, we can apply $\nabla_{\hat{\Sigma}}$ to the identity in Lemma \ref{th_SE_ACND_split_L2},~2.~and commute integration with $\nabla_{\hat{\Sigma}}$. Therefore the first term is bounded by 
		\[
		C\left|\int_{-\delta}^\delta\left[\left(\nabla_{\hat{\Sigma}}(\phi^A_\varepsilon|_{\overline{X}(.,t)})J_t+\phi^A_\varepsilon|_{\overline{X}(.,t)}\nabla_{\hat{\Sigma}} J_t\right)\tilde{\psi}_t\right]|_{(r,s)}\,dr\right|
		\]
		for a.e.~$s\in\hat{\Sigma}$.
		Now we make use of the structure of $\phi^A_\varepsilon$. Due to \eqref{eq_SE_ACND_nabla_vI_vC} it holds in $\Omega^C_t$
		\begin{align*}
		&\nabla\phi^A_\varepsilon|_{(.,t)}=
		\frac{1}{\sqrt{\varepsilon}}\left(
		\theta_0''|_{\rho_\varepsilon}q|_{(s,t)}
		+\varepsilon\partial_\rho\hat{v}^C|_{(\rho_\varepsilon,H_\varepsilon,\sigma,t)}\right)
		\left[\frac{\nabla r}{\varepsilon}-(D_xs)^\top\nabla_{\hat{\Sigma}}h_\varepsilon|_{(s,t)}\right]\\
		&+\frac{1}{\sqrt{\varepsilon}}\left[(D_xs)^\top
		\nabla_{\hat{\Sigma}} q|_{(s,t)}\theta_0'|_{\rho_\varepsilon}
		+\nabla b\,
		\partial_H\hat{v}^C|_{(\rho_\varepsilon,H_\varepsilon,\sigma,t)}\right]+\sqrt{\varepsilon}(D_x\sigma)^\top\nabla_{\partial\Sigma}\hat{v}^C|_{(\rho_\varepsilon,H_\varepsilon,\sigma,t)},
		\end{align*}
		where all terms are evaluated at $(.,t)$. Moreover, instead of $\nabla_{\hat{\Sigma}}(\phi^A_\varepsilon|_{\overline{X}})$ it is equivalent to estimate $\nabla_\tau\phi^A_\varepsilon|_{\overline{X}}=\nabla\phi^A_\varepsilon|_{\overline{X}}-\nabla r\partial_r(\phi^A_\varepsilon|_{\overline{X}})$ due to Corollary~\ref{th_coordND_nabla_tau_n}. The latter identity yields
		\begin{align}\begin{split}\label{eq_SE_ACND_dsigma_phiA}\nabla_\tau\phi^A_\varepsilon|_{\overline{X}}
		=-\frac{1}{\sqrt{\varepsilon}}\left(
		\theta_0''|_{\rho_\varepsilon}q|_{(s,t)}
		+\varepsilon\partial_\rho\hat{v}^C|_{(\rho_\varepsilon,H_\varepsilon,\sigma,t)}\right)
		(D_xs)^\top\nabla_{\hat{\Sigma}}h_\varepsilon|_{(s,t)}\qquad\qquad\qquad\\
		+\frac{1}{\sqrt{\varepsilon}}\left[(D_xs)^\top
		\nabla_{\hat{\Sigma}} q|_{(s,t)}\theta_0'|_{\rho_\varepsilon}
		+\nabla b\,
		\partial_H\hat{v}^C|_{(\rho_\varepsilon,H_\varepsilon,\sigma,t)}\right]+\sqrt{\varepsilon}(D_x\sigma)^\top\nabla_{\partial\Sigma}\hat{v}^C|_{(\rho_\varepsilon,H_\varepsilon,\sigma,t)},
		\end{split}
		\end{align} 
		where the terms on the right hand side are evaluated at $\overline{X}$. Note that compared to $\nabla\phi^A_\varepsilon$ the $\varepsilon$-order is better by one. Therefore we can control the terms in the above estimate for $|g_t(s)|$. The Hölder Inequality, Lemma \ref{th_SE_1Dtrafo_remainder} and $D_xs\nabla r|_{\overline{X}_0(s,t)}=0$ yield for a.e.~$s\in\hat{\Sigma}$
		\[
		|g_t(s)|\leq C\|\tilde{\psi}_t(.,s)\|_{L^2(-\delta,\delta;J_t(.,s))}+C\varepsilon\|(\partial_r,\nabla_{\hat{\Sigma}})\tilde{\psi}_t(.,s)\|_{L^2(-\delta,\delta;J_t(.,s))}.
		\]
		Due to Remark \ref{th_SE_ACND_sob_rem} and Corollary~\ref{th_coordND_nabla_tau_n} it holds  $|(\partial_r,\nabla_{\hat{\Sigma}})\tilde{\psi}_t|\leq C|\nabla\psi|_{X(.,t)}|$. Therefore
		\begin{align*}
		|(III)|
		&\leq C\|\nabla_{\hat{\Sigma}}a\|_{L^2(\hat{\Sigma})}(\|\psi\|_{L^2(\Omega_t^C)}+\varepsilon\|\nabla\psi\|_{L^2(\Omega_t^C)})\\
		&\leq C\varepsilon^2\|\nabla_{\hat{\Sigma}}a\|_{L^2(\hat{\Sigma})}^2+\frac{\nu}{8}\left[\frac{1}{\varepsilon^2}\|\psi\|_{L^2(\Omega_t^C)}^2+\|\nabla\psi\|_{L^2(\Omega_t^C)}^2\right],
		\end{align*}
		where we used Young's inequality in the second step and $\nu$ is as in Lemma \ref{th_SE_ACND_VpxVp}. The last term is estimated by $\frac{1}{8}B_{\varepsilon,t}^C(\psi,\psi)$ due to Lemma \ref{th_SE_ACND_VpxVp}. With Lemma \ref{th_SobMfd_def_lemma} the claim follows.\end{proof}
\end{proof}

Finally, we put together Lemma \ref{th_SE_ACND_VxV}-\ref{th_SE_ACND_VxVp}.

\begin{Theorem}\label{th_SE_ACND_cp2}
	There are $\varepsilon_0, C, c_0>0$ such that for all $\varepsilon\in(0,\varepsilon_0]$, $t\in[0,T]$ and $\psi\in H^1(\Omega_t^C)$ with $\psi|_{X(.,s,t)}=0$ for a.e.~$s\in Y(\partial\Sigma\times[\frac{3}{2}\mu_0,2\mu_0])$ it holds
	\[
	B_{\varepsilon,t}^C(\psi,\psi)\geq -C\|\psi\|_{L^2(\Omega_t^C)}^2+c_0\|\nabla_\tau\psi\|_{L^2(\Omega_t^C)}^2.
	\] 
\end{Theorem}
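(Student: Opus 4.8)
The plan is to obtain Theorem \ref{th_SE_ACND_cp2} by combining the three bilinear-form estimates from Lemmas \ref{th_SE_ACND_VxV}--\ref{th_SE_ACND_VxVp} via the $L^2$-orthogonal splitting $\tilde H^1(\Omega_t^C)=V_{\varepsilon,t}\oplus V_{\varepsilon,t}^\perp$ established in Lemma \ref{th_SE_ACND_split_L2}. First I would note that the additional hypothesis on $\psi$ means exactly $\psi\in\tilde H^1(\Omega_t^C)$ in the sense of \eqref{eq_SE_ACND_H1tilde_Omega}, so the splitting applies: write $\psi=\phi+\psi^\perp$ with $\phi=a(s(.,t))\phi^A_\varepsilon(.,t)\in V_{\varepsilon,t}$ and $\psi^\perp\in V_{\varepsilon,t}^\perp$. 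By bilinearity,
\[
B_{\varepsilon,t}^C(\psi,\psi)=B_{\varepsilon,t}^C(\phi,\phi)+2B_{\varepsilon,t}^C(\phi,\psi^\perp)+B_{\varepsilon,t}^C(\psi^\perp,\psi^\perp).
\]
Then I would insert the three estimates: Lemma \ref{th_SE_ACND_VxV} bounds the first term below by $-C\|\phi\|_{L^2}^2+c\|a\|_{H^1(\hat\Sigma^\circ)}^2$; Lemma \ref{th_SE_ACND_VpxVp} bounds the third below by $\nu[\varepsilon^{-2}\|\psi^\perp\|_{L^2}^2+\|\nabla\psi^\perp\|_{L^2}^2]$; and Lemma \ref{th_SE_ACND_VxVp} controls the cross term by $\tfrac{C}{\varepsilon}\|\phi\|_{L^2}\|\psi^\perp\|_{L^2}+\tfrac14 B_{\varepsilon,t}^C(\psi^\perp,\psi^\perp)+C\varepsilon\|a\|_{H^1(\hat\Sigma^\circ)}^2$ (note the factor $2$ in front of the cross term, so one actually needs $\tfrac12 B_{\varepsilon,t}^C(\psi^\perp,\psi^\perp)$ absorbed, which is fine since $\tfrac12<1$).

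Next I would carry out the absorption. Using Young's inequality, $\tfrac{2C}{\varepsilon}\|\phi\|_{L^2}\|\psi^\perp\|_{L^2}\le C^2\eta^{-1}\|\phi\|_{L^2}^2+\eta\varepsilon^{-2}\|\psi^\perp\|_{L^2}^2$ for any $\eta>0$; choosing $\eta=\nu/4$ lets the $\varepsilon^{-2}\|\psi^\perp\|_{L^2}^2$ part be absorbed into the positive term $\nu\varepsilon^{-2}\|\psi^\perp\|_{L^2}^2$ coming from Lemma \ref{th_SE_ACND_VpxVp} (of which only $\tfrac12\nu$ has been consumed by the cross-term's $\tfrac12 B_{\varepsilon,t}^C(\psi^\perp,\psi^\perp)$ bound; here one uses $B_{\varepsilon,t}^C(\psi^\perp,\psi^\perp)\ge\nu[\varepsilon^{-2}\|\psi^\perp\|_{L^2}^2+\|\nabla\psi^\perp\|_{L^2}^2]\ge0$ again). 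Also, for $\varepsilon$ small the term $2C\varepsilon\|a\|_{H^1(\hat\Sigma^\circ)}^2$ is dominated by $\tfrac{c}{2}\|a\|_{H^1(\hat\Sigma^\circ)}^2$ from Lemma \ref{th_SE_ACND_VxV}. Collecting everything, one arrives at
\[
B_{\varepsilon,t}^C(\psi,\psi)\ge -C'\big(\|\phi\|_{L^2}^2+\|\psi^\perp\|_{L^2}^2\big)+\tfrac{c}{4}\|a\|_{H^1(\hat\Sigma^\circ)}^2+\tilde\nu\|\nabla\psi^\perp\|_{L^2}^2
\]
for suitable constants. By $L^2$-orthogonality of the splitting, $\|\phi\|_{L^2}^2+\|\psi^\perp\|_{L^2}^2=\|\psi\|_{L^2}^2$, so the first group is already $-C'\|\psi\|_{L^2(\Omega_t^C)}^2$.

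It remains to produce the tangential term $c_0\|\nabla_\tau\psi\|_{L^2(\Omega_t^C)}^2$ from $\tfrac{c}{4}\|a\|_{H^1(\hat\Sigma^\circ)}^2$ and $\tilde\nu\|\nabla\psi^\perp\|_{L^2}^2$. Here I would use $\nabla_\tau\psi=\nabla_\tau\phi+\nabla_\tau\psi^\perp$ and estimate $\|\nabla_\tau\psi\|_{L^2}^2\le 2\|\nabla_\tau\phi\|_{L^2}^2+2\|\nabla_\tau\psi^\perp\|_{L^2}^2$. For the second piece, Corollary \ref{th_coordND_nabla_tau_n},~1.\ and Remark \ref{th_SE_ACND_sob_rem},~3.\ give $\|\nabla_\tau\psi^\perp\|_{L^2}\le C\|\nabla\psi^\perp\|_{L^2}$, which is controlled by $\tilde\nu\|\nabla\psi^\perp\|_{L^2}^2$. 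For the first piece one uses the explicit structure of $\phi=a(s(.,t))\phi_\varepsilon^A(.,t)$: from $\nabla_\tau\phi=(D_xs)^\top(\nabla_{\hat\Sigma}a)(s(.,t))\,\phi_\varepsilon^A(.,t)+a(s(.,t))\nabla_\tau\phi_\varepsilon^A(.,t)$, a transformation with $X(.,t)$, the Fubini theorem, Lemma \ref{th_SE_1Dtrafo_remainder}, and the lower bound on $\int_{-\delta}^\delta(\phi_\varepsilon^A)^2 J_t\,dr$ from the proof of Lemma \ref{th_SE_ACND_split_L2},~1.\ (plus the bound $\|\nabla_\tau\phi_\varepsilon^A\|\le C$ from Lemma \ref{th_SE_ACND_phi_A} via \eqref{eq_SE_ACND_dsigma_phiA}), one gets $\|\nabla_\tau\phi\|_{L^2(\Omega_t^C)}^2\le C(\|\nabla_{\hat\Sigma}a\|_{L^2(\hat\Sigma)}^2+\|a\|_{L^2(\hat\Sigma)}^2)\le C\|a\|_{H^1(\hat\Sigma^\circ)}^2$ using Lemma \ref{th_SobMfd_def_lemma},~3. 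Hence $\|\nabla_\tau\phi\|_{L^2}^2$ is controlled by $\tfrac{c}{4}\|a\|_{H^1(\hat\Sigma^\circ)}^2$ up to a constant, and after shrinking $c_0$ and enlarging $C$ we obtain the claimed inequality for all $\varepsilon\in(0,\varepsilon_0]$, $t\in[0,T]$. The main obstacle is bookkeeping: choosing the Young-inequality weights and $\varepsilon_0$ so that every unwanted term is genuinely absorbed by a positive term with room to spare (in particular keeping enough of the $B_{\varepsilon,t}^C(\psi^\perp,\psi^\perp)$ positivity after the cross-term absorption), and verifying that all constants are uniform in $t$ — which follows from the uniformity already recorded in Remark \ref{th_SE_ACND_sob_rem},~3.\ and in Lemmas \ref{th_SE_ACND_VxV}--\ref{th_SE_ACND_VxVp}.
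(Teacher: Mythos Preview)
Your proposal is correct and follows essentially the same route as the paper: split $\psi=\phi+\psi^\perp$ via Lemma \ref{th_SE_ACND_split_L2}, combine Lemmas \ref{th_SE_ACND_VxV}--\ref{th_SE_ACND_VxVp} with Young's inequality to absorb the cross term, and then bound $\|\nabla_\tau\psi\|_{L^2}^2$ by $\|a\|_{H^1(\hat\Sigma^\circ)}^2+\|\nabla\psi^\perp\|_{L^2}^2$ using the explicit formula \eqref{eq_SE_ACND_dsigma_phiA}. One small imprecision: $\nabla_\tau\phi^A_\varepsilon$ is not pointwise $\le C$ (it is $O(\varepsilon^{-1/2})$ by \eqref{eq_SE_ACND_dsigma_phiA}, and Lemma \ref{th_SE_ACND_phi_A} does not provide such a bound), but your invocation of Lemma \ref{th_SE_1Dtrafo_remainder} is exactly what makes the argument work, since the normal integral $\int_{-\delta}^{\delta}|\nabla_\tau\phi^A_\varepsilon|^2 J_t\,dr$ is uniformly bounded after the scaling in $\rho_\varepsilon$.
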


\begin{Remark}\upshape\begin{enumerate}
		\item The estimate can be refined, cf.~the proof below.
		\item Theorem \ref{th_SE_ACND_cp2} directly yields Theorem \ref{th_SE_ACND_cp}, cf.~the beginning of Section \ref{sec_SE_ACND_outline}.
	\end{enumerate}
\end{Remark}

\begin{proof}[Proof of Theorem \ref{th_SE_ACND_cp2}]
	Due to Lemma \ref{th_SE_ACND_split_L2} we can uniquely represent any $\psi\in\tilde{H}^1(\Omega_t^C)$ as
	\[
	\psi=\phi+\phi^\perp\quad\text{ with } \phi=[a(s)\phi^A_\varepsilon]|_{(.,t)}\in V_{\varepsilon,t}\text{ and }
	\phi^\perp\in V_{\varepsilon,t}^\perp.
	\]
	Lemma \ref{th_SE_ACND_VxV} and Lemma \ref{th_SE_ACND_VxVp} yield for $\varepsilon_0>0$ small and all $\varepsilon\in(0,\varepsilon_0]$, $t\in[0,T]$ that
	\begin{align*}
	&B_{\varepsilon,t}^C(\psi,\psi)=B_{\varepsilon,t}^C(\phi,\phi)+2B_{\varepsilon,t}^C(\phi,\phi^\perp)+B_{\varepsilon,t}^C(\phi^\perp,\phi^\perp)\\
	&\geq -C\|\phi\|_{L^2(\Omega_t^C)}^2+(c_0-C\varepsilon)\|a\|_{H^1(\hat{\Sigma}^\circ)}^2-\frac{C}{\varepsilon}\|\phi\|_{L^2(\Omega_t^C)}\|\phi^\perp\|_{L^2(\Omega_t^C)}+\frac{B_{\varepsilon,t}^C(\phi^\perp,\phi^\perp)}{2}.
	\end{align*}
	The third term is estimated by $\frac{\nu}{4\varepsilon^2}\|\phi^\perp\|_{L^2(\Omega_t^C)}^2+\tilde{C}\|\phi\|_{L^2(\Omega_t^C)}^2$ due to Young's inequality, where $\nu$ is as in Lemma \ref{th_SE_ACND_VpxVp}. Hence we obtain 
	\[
	B_{\varepsilon,t}^C(\psi,\psi)\geq-C\|\phi\|_{L^2(\Omega_t^C)}^2+\frac{\nu}{4\varepsilon^2}\|\phi^\perp\|_{L^2(\Omega_t^C)}^2+\frac{c_0}{2}\|a\|_{H^1(\hat{\Sigma}^\circ)}^2+\frac{\nu}{2}\|\nabla(\phi^\perp)\|_{L^2(\Omega_t^C)}^2
	\]
	for all $\varepsilon\in(0,\varepsilon_0]$ and $t\in[0,T]$, where $\varepsilon_0>0$ is small.
	
	It remains to include the $\nabla_\tau\psi$-term in the estimate. Because of the triangle inequality it holds $\|\nabla_\tau\psi\|_{L^2(\Omega_t^C)}\leq\|\nabla_\tau\phi\|_{L^2(\Omega_t^C)}+\|\nabla_\tau(\phi^\perp)\|_{L^2(\Omega_t^C)}$. Here 
	\[
	\nabla_\tau\phi|_{X(r,s,t)}
	=(D_xs)^\top|_{\overline{X}(r,s,t)}\nabla_{\hat{\Sigma}}a|_s \phi^A_\varepsilon|_{\overline{X}(r,s,t)}
	+a|_s\nabla_\tau\phi^A_\varepsilon|_{\overline{X}(r,s,t)}.
	\]
	We already computed $\nabla_\tau\phi^A_\varepsilon$ in \eqref{eq_SE_ACND_dsigma_phiA}. An integral transformation with Remark \ref{th_SE_ACND_sob_rem},~1., the Fubini Theorem, Lemma \ref{th_SE_1Dtrafo_remainder} and Lemma \ref{th_SobMfd_def_lemma},~3.~yield $\|\nabla_\tau\phi\|_{L^2(\Omega_t^C)}\leq C\|a\|_{H^1(\hat{\Sigma}^\circ)}$. Moreover, Remark \ref{th_SE_ACND_sob_rem},~3.~and Corollary~\ref{th_coordND_nabla_tau_n} imply $\|\nabla_\tau(\phi^\perp)\|_{L^2(\Omega_t^C)}\leq C\|\nabla(\phi^\perp)\|_{L^2(\Omega_t^C)}$.
	Therefore
	\[
	\|\nabla_\tau\psi\|_{L^2(\Omega_t^C)}^2\leq C(\|a\|_{H^1(\hat{\Sigma}^\circ)}^2+\|\nabla(\phi^\perp)\|_{L^2(\Omega_t^C)}^2).
	\]
	Finally, together with the above estimate for $B_{\varepsilon,t}^C$ this shows the claim.\end{proof}

\subsection{Spectral Estimate for (vAC) in ND}
\label{sec_SE_vAC}
In this section we prove the analogue of the spectral estimate \eqref{eq_SE1} from the scalar case for the vector-valued Allen-Cahn equation \eqref{eq_vAC1}-\eqref{eq_vAC3} when the diffuse interface meets the boundary in the case of $N$ dimensions, $N\geq 2$. The procedure is analogous to the scalar case in the last Section \ref{sec_SE_ACND}. The coordinates are the same, in particular we can use Remark \ref{th_SE_ACND_sob_rem}. Hence the only new difficulty is that we have to consider the potential $W:\R^m\rightarrow\R$ from Definition \ref{th_vAC_W} and vector-valued functions, i.e.~the image space is $\R^m$ instead of $\R$. However, we already laid all the foundations to adapt the arguments from the scalar case. Under the assumption in Remark \ref{th_ODE_vect_lin_op_rem} we solved the model problems for the vector-valued case in Sections \ref{sec_ODE_vect}-\ref{sec_hp_vect} and proved spectral properties for vector-valued Allen-Cahn-type operators in 1D in Section \ref{sec_SE_1Dvect}. We have a specific $\vec{u}^A_\varepsilon$ in mind, cf.~Section \ref{sec_asym_vAC_uA}. Nevertheless, as in the scalar case a slightly more general form is enough to prove the spectral estimate. Now we fix the assumptions for this section.

Let $\Omega\subset\R^N$ and $\Gamma=(\Gamma_t)_{t\in[0,T]}$ for $T>0$ be as in Section \ref{sec_coord_surface_requ} for $N\geq 2$ with contact angle $\alpha=\frac{\pi}{2}$ (\eqref{MCF} not needed). Moreover, let $\delta>0$ be such that Theorem \ref{th_coordND} holds for $2\delta$ instead of $\delta$. We use the notation for $\vec{n}_{\partial\Sigma}, \vec{n}_{\partial\Gamma}, Y, X_0, X, \mu_0, \mu_1, r, s, \sigma, b$ as in Theorem~\ref{th_coordND} and the definitions of several sets and of $\partial_n$, $\nabla_\tau$, $J$ from Remark \ref{th_coordND_rem}. Here for suitable $\R^m$-valued functions $\vec{\psi}$ we define $\partial_n\vec{\psi}$ and $\nabla_\tau\vec{\psi}$ component-wise. More precisely $\partial_n\vec{\psi}:=(\partial_n\psi_1,...,\partial_n\psi_m)$ and $\nabla_\tau\vec{\psi}:=((\nabla_\tau\psi_1)^\top,...,(\nabla_\tau\psi_m)^\top)$. Note that Corollary \ref{th_coordND_nabla_tau_n} carries over to $\R^m$-valued functions, in particular 
\[
\nabla\vec{\psi}:=(D_x\vec{\psi})^\top=\nabla r \partial_n\vec{\psi}+\nabla_\tau\vec{\psi}.
\] 
We consider height functions $\check{h}_1$ and $\check{h}_2=\check{h}_2(\varepsilon)$ and assume (with a slight abuse of notation)
\[
\check{h}_j\in B([0,T],C^0(\Sigma)\cap C^2(\hat{\Sigma})), j=1,2,\quad \hat{\Sigma}:= Y(\partial\Sigma\times[0,2\mu_0]),\quad C^2(\hat{\Sigma}):=C^2(\overline{\hat{\Sigma}^\circ}).
\] 
Moreover, consider $\check{C}_0>0$ such that $\|\check{h}_j\|_{B([0,T],C^0(\Sigma)\cap C^2(\hat{\Sigma}))}
\leq\check{C}_0$ for $j=1,2$. We set $\check{h}_\varepsilon:=\check{h}_1+\varepsilon\check{h}_2$ for $\varepsilon>0$ small and introduce the scaled variables
\[
\check{\rho}_\varepsilon:=\frac{r-\varepsilon \check{h}_\varepsilon(s,t)}{\varepsilon}\quad\text{ in }\overline{\Gamma(2\delta)}, \quad H_\varepsilon:=\frac{b}{\varepsilon}\quad\text{ in }\overline{\Gamma^C(2\delta,2\mu_1)}.
\]
For $W:\R^m\rightarrow\R$ as in Definition \ref{th_vAC_W} and any fixed distinct pair $\vec{u}_\pm$ of minimizers of $W$ let $\vec{\theta}_0$ be as in Remark \ref{th_ODE_vect_rem},~1. We make the assumption $\dim\ker\check{L}_0=1$, cf.~Remark \ref{th_ODE_vect_lin_op_rem}, where $\check{L}_0$ is as in \eqref{eq_ODE_vect_L0}. Moreover, let $\check{u}^C_1:\overline{\R^2_+}\times\partial\Sigma\times[0,T]\rightarrow\R^m:(\rho,H,\sigma,t)\mapsto\check{u}^C_1(\rho,H,\sigma,t)$ be in the space
$B([0,T];C^2(\partial\Sigma,H^2_{(0,\gamma)}(\R^2_+)^m))$ for some $\gamma>0$. Then we define
\[
\vec{u}^C_1(x,t):=\check{u}^C_1(\check{\rho}_\varepsilon(x,t),H_\varepsilon(x,t),\sigma(x,t),t)\quad\text{ for }(x,t)\in\overline{\Gamma^C(2\delta,2\mu_1)}.
\] 
For $\varepsilon>0$ small we consider
\begin{align*}
\vec{u}^A_\varepsilon=
\begin{cases}
\vec{\theta}_0(\check{\rho}_\varepsilon)+\Oc(\varepsilon^2)&\quad\text{ in }\Gamma(\delta,\mu_0),\\
\vec{\theta}_0(\check{\rho}_\varepsilon)+\varepsilon \vec{u}^C_1+\Oc(\varepsilon^2)&\quad\text{ in }\Gamma^C(\delta,2\mu_0),\\
\vec{u}_\pm+\Oc(\varepsilon)&\quad\text{ in }Q_T^\pm\setminus\Gamma(\delta),
\end{cases}
\end{align*}
where $\Oc(\varepsilon^k)$ are $\R^m$-valued measurable functions bounded by $C\varepsilon^k$. 
\begin{Remark}\upshape\label{th_SE_vAC_rem}
	We can also include an additional term of the form $\varepsilon\vec{\theta}_1(\check{\rho}_\varepsilon)\check{p}_\varepsilon(s,t)$ in $\vec{u}^A_\varepsilon$ on $\Gamma(\delta)$, where $\check{p}_\varepsilon\in B([0,T],C^0(\Sigma)\cap C^2(\hat{\Sigma}))$ satisfies a uniform estimate for $\varepsilon$ small and
	\begin{align}\label{eq_SE_vAC_theta1}
	\vec{\theta}_1\in C_b^0(\R)^m\quad\text{ with }\quad\int_\R (\vec{\theta}_0',\sum_{\xi\in\N_0^m,|\xi|=1}\partial^{\xi}D^2W(\vec{\theta}_0)(\vec{\theta}_1)^\xi\vec{\theta}_0')_{\R^m}=0.
	\end{align} 
	See Remark \ref{th_SE_vAC_asym_rem},~2.~below for details.
\end{Remark}

We obtain the following spectral estimate:
\begin{Theorem}[\textbf{Spectral Estimate for (vAC) in ND}]\label{th_SE_vAC}
	There are $\varepsilon_0,C,c_0>0$ independent of the $\check{h}_j$ for fixed $\check{C}_0$ such that for all $\varepsilon\in(0,\varepsilon_0], t\in[0,T]$ and $\vec{\psi}\in H^1(\Omega)^m$ it holds
	\begin{align*}
	\int_\Omega|\nabla\vec{\psi}|^2+\frac{1}{\varepsilon^2}&(\vec{\psi},D^2W(\vec{u}^A_\varepsilon(.,t))\vec{\psi})_{\R^m}\,dx\\
	&\geq -C\|\vec{\psi}\|_{L^2(\Omega)^m}^2+\|\nabla\vec{\psi}\|_{L^2(\Omega\setminus\Gamma_t(\delta))^{N\times m}}^2+c_0\|\nabla_\tau\vec{\psi}\|_{L^2(\Gamma_t(\delta))^{N\times m}}^2.
	\end{align*}
\end{Theorem}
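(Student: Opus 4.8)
The plan is to mirror the scalar proof in Section~\ref{sec_SE_ACND} line by line, replacing scalar quantities by their vector-valued analogues and invoking the vector-valued model-problem and 1D-spectral results that were prepared in Sections~\ref{sec_ODE_vect}, \ref{sec_hp_vect} and \ref{sec_SE_1Dvect}. As in the scalar case, since $D^2W(\vec{u}_\pm)$ is positive definite, for $\varepsilon_0$ small we have $D^2W(\vec{u}^A_\varepsilon(.,t))\geq 0$ on $Q_T^\pm\setminus\Gamma(\delta)$, so it suffices to prove the estimate with $\Gamma_t(\delta)$ in place of $\Omega$. On $\Gamma_t(\delta,\mu_0)$, where the curvilinear coordinates coincide with the tubular neighbourhood system, one uses $|\nabla\vec{\psi}|^2=|\partial_n\vec{\psi}|^2+|\nabla_\tau\vec{\psi}|^2$ (Corollary~\ref{th_coordND_nabla_tau_n},~2., applied component-wise), replaces $D^2W(\vec{u}^A_\varepsilon(.,t))$ by $D^2W(\vec{\theta}_0(\check{\rho}_\varepsilon))$ via Taylor's theorem, transforms the integral with $X(.,t)$ and Fubini, and reduces the inner normal integral to the 1D setting, applying Theorem~\ref{th_SE_1Dvect_pert2},~1.\ (the vector-valued analogue of Theorem~\ref{th_SE_1Dscal_pert2}) with a constant $h_\varepsilon$. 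The remaining piece, the estimate near the contact points on $\Omega_t^C=\Gamma^C_t(\delta,2\mu_0)$, is stated as a separate theorem analogous to Theorem~\ref{th_SE_ACND_cp}, and the two estimates are glued together by a partition of unity subordinate to $\Gamma_t(\delta)\subseteq\overline{\Gamma_t(\delta,\mu_0)}\cup\overline{\Gamma_t^C(\delta,2\mu_0)}$, exactly as in the proof of Theorem~\ref{th_SE_ACND}, using $b$ from Theorem~\ref{th_coordND},~4.\ to build the cut-offs.

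For the contact-point estimate I would follow the outline of Sections~\ref{sec_SE_ACND_outline}--\ref{sec_SE_ACND_BLF} verbatim, with the obvious substitutions. One first replaces $\tfrac1{\varepsilon^2}D^2W(\vec{u}^A_\varepsilon(.,t))$ by $\tfrac1{\varepsilon^2}D^2W(\vec{\theta}_0(\check{\rho}_\varepsilon))+\tfrac1\varepsilon\sum_{\xi\in\N_0^m,|\xi|=1}\partial^\xi D^2W(\vec{\theta}_0(\check{\rho}_\varepsilon))(\vec{u}^C_1)^\xi$ by Taylor expansion. Then one constructs an approximate first eigenfunction $\vec{\phi}^A_\varepsilon(.,t)=\tfrac1{\sqrt\varepsilon}[\vec{v}^I_\varepsilon(.,t)+\varepsilon\vec{v}^C_\varepsilon(.,t)]$ with $\vec{v}^I_\varepsilon=\vec{\theta}_0'(\check{\rho}_\varepsilon)q(s,t)$ and $\vec{v}^C_\varepsilon$ a vector-valued contact-point profile; the asymptotic expansion is carried out as in Section~\ref{sec_SE_ACND_asym}, producing a vector-valued elliptic problem on $\R^2_+$ with Neumann condition of the type solved by Theorem~\ref{th_hp_vect_exp_sol}, with the compatibility (solvability) condition \eqref{eq_hp_vect_comp} enforced by choosing $\nabla_\Sigma q$ on $\partial\Sigma$ appropriately (one requires $q\equiv 1$ on $\partial\Sigma\cup Y(\partial\Sigma,[\mu_0,2\mu_0])$ and solves for $\partial_b\tilde q$ using $|(D_xsN_{\partial\Omega})\cdot\vec{n}_{\partial\Sigma}|\geq c>0$ from Theorem~\ref{th_coordND},~4.). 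This yields an analogue of Lemma~\ref{th_SE_ACND_phi_A}. One then splits $\tilde H^1(\Omega_t^C)^m$ (functions vanishing for $s\in Y(\partial\Sigma\times[\tfrac32\mu_0,2\mu_0])$) $L^2$-orthogonally with respect to the subspace of tangential alterations $a(s(.,t))\vec{\phi}^A_\varepsilon(.,t)$, as in Lemma~\ref{th_SE_ACND_split_L2}, and analyzes the bilinear form $\check B^C_{\varepsilon,t}$ on $V_{\varepsilon,t}\times V_{\varepsilon,t}$, $V_{\varepsilon,t}^\perp\times V_{\varepsilon,t}^\perp$ and $V_{\varepsilon,t}\times V_{\varepsilon,t}^\perp$, i.e.\ the vector-valued analogues of Lemmas~\ref{th_SE_ACND_VxV}, \ref{th_SE_ACND_VpxVp} and \ref{th_SE_ACND_VxVp}. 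The coercivity on $V_{\varepsilon,t}^\perp$ again reduces, after transforming to normal modes and localizing in $r$, to a 1D perturbed operator with weight $\tilde J_{\varepsilon,s,t}$, for which Theorem~\ref{th_SE_1Dvect_pert2} provides the needed spectral gap via the integral characterization of $V_{\varepsilon,t}^\perp$; the trace estimate Lemma~\ref{th_SE_ACND_intpol_tr} and the cross-term manipulations carry over unchanged since they are purely geometric. Assembling the three pieces with Young's inequality as in Theorem~\ref{th_SE_ACND_cp2}, and adding back the $\nabla_\tau\vec{\psi}$-term via the explicit formula for $\nabla_\tau\vec{\phi}^A_\varepsilon$, gives the contact-point estimate and hence Theorem~\ref{th_SE_vAC}.

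The main obstacle, as usual in the vector-valued setting, is that the scalar proof relied crucially on tools unavailable here: the comparison principle, the Harnack inequality, the Hopf and Krein--Rutman arguments used in Lemma~\ref{th_SE_1Dscal_unpert} and Lemma~\ref{th_SE_1Dscal_pert1}. That obstacle, however, has already been removed upstream: under $\dim\ker\check L_0=1$, Lemma~\ref{th_ODE_vect_lin_op} supplies the 1D spectral gap $\check\nu_0>0$ on $(\vec{\theta}_0')^\perp$, and Lemmas~\ref{th_SE_1Dvect_unpert}--\ref{th_SE_1Dvect_pert1} together with Theorem~\ref{th_SE_1Dvect_pert2} reproduce (via compactness/contradiction arguments and the estimates of Kusche~\cite{Kusche}) every 1D ingredient used in the scalar proof. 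Consequently the only genuinely new bookkeeping in this section is notational: keeping track of $\R^m$-valued profiles, writing $(\cdot,\cdot)_{\R^m}$ and $(\cdot,D^2W(\vec{\theta}_0)\cdot)_{\R^m}$ in place of scalar products, and verifying that the compatibility condition for the contact-point model problem is still met by the choice of $q$. I expect no conceptual difficulty beyond that, so the proof can legitimately be presented, as elsewhere in this section, by saying the arguments are "completely analogous to the scalar case" and pointing to the precise vector-valued substitutes (Theorem~\ref{th_hp_vect_exp_sol}, Lemma~\ref{th_SE_1Dvect_unpert}, Lemma~\ref{th_SE_1Dvect_pert1}, Theorem~\ref{th_SE_1Dvect_pert2}) at the points where the scalar proof invokes its corresponding results.
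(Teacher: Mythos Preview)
Your proposal is correct and follows essentially the same approach as the paper: reduce to $\Gamma_t(\delta)$ using positive definiteness of $D^2W(\vec{u}_\pm)$, handle $\Gamma_t(\delta,\mu_0)$ via the 1D result Theorem~\ref{th_SE_1Dvect_pert2}, prove a separate contact-point estimate (Theorem~\ref{th_SE_vAC_cp}) by constructing an approximate eigenfunction via Theorem~\ref{th_hp_vect_exp_sol}, splitting along tangential alterations, and analyzing the bilinear form on the three pieces (Lemmas~\ref{th_SE_vAC_VxV}--\ref{th_SE_vAC_VxVp}), then glue with a partition of unity. The only nuance the paper flags that you do not mention is that, unlike the scalar case, one cannot simply take $\check{v}^C=\partial_\rho\underline{u}^C_1$ (Remark~\ref{th_SE_vAC_asym_rem},~1.), but this does not affect your outline since you solve the $\R^2_+$-problem directly via Theorem~\ref{th_hp_vect_exp_sol}.
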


We prove a spectral estimate on $\Omega^C_t:=\Gamma^C_t(\delta,2\mu_0)=X((-\delta,\delta)\times \hat{\Sigma}^\circ)$ for $t\in[0,T]$: 
\begin{Theorem}\label{th_SE_vAC_cp} 
	There are $\check{\varepsilon}_0, \check{C}, \check{c}_0>0$ independent of the $\check{h}_j$ for fixed $\check{C}_0$ such that for all $\varepsilon\in(0,\check{\varepsilon}_0]$, $t\in[0,T]$ and $\vec{\psi}\in H^1(\Omega^C_t)^m$ with $\vec{\psi}|_{X(.,s,t)}=0$ for a.e.~$s\in Y(\partial\Sigma\times[\frac{3}{2}\mu_0,2\mu_0])$:
	\[
	\int_{\Omega^C_t}|\nabla\vec{\psi}|^2+\frac{1}{\varepsilon^2}(\vec{\psi},D^2W(\vec{u}^A_\varepsilon(.,t))\vec{\psi})_{\R^m}\,dx\geq -\check{C}\|\vec{\psi}\|_{L^2(\Omega_t^C)^m}^2+\check{c}_0\|\nabla_\tau\vec{\psi}\|_{L^2(\Omega_t^C)^{N\times m}}^2.
	\]
\end{Theorem}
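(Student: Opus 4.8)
The plan is to mirror the scalar proof of Theorem~\ref{th_SE_ACND_cp} almost verbatim, replacing all scalar quantities by their vector-valued analogues and invoking the vector-valued model problems and $1$D-estimates established in Sections~\ref{sec_hp_vect} and~\ref{sec_SE_1Dvect}. First I would reduce, via a Taylor expansion of $D^2W$ at $\vec{\theta}_0(\check\rho_\varepsilon)$, the quadratic form to the bilinear form
\[
\check B^C_{\varepsilon,t}(\vec\phi,\vec\psi):=\int_{\Omega_t^C}(\nabla\vec\phi,\nabla\vec\psi)+\Bigl[\tfrac1{\varepsilon^2}D^2W(\vec\theta_0|_{\check\rho_\varepsilon(.,t)})+\tfrac1\varepsilon\!\sum_{\xi\in\N_0^m,|\xi|=1}\!\partial^\xi D^2W(\vec\theta_0|_{\check\rho_\varepsilon(.,t)})(\vec u^C_1|_{(.,t)})^\xi\Bigr]\vec\phi\cdot\vec\psi\,dx,
\]
so that it suffices to prove $\check B^C_{\varepsilon,t}(\vec\psi,\vec\psi)\geq -\check C\|\vec\psi\|_{L^2(\Omega_t^C)^m}^2+\check c_0\|\nabla_\tau\vec\psi\|^2_{L^2}$ on $\tilde H^1(\Omega_t^C)^m$. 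The Taylor remainder is $\Oc(1)$ times $|\vec\psi|^2$ and hence absorbed into the $-\check C\|\vec\psi\|^2$ term.

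Next I would construct the approximate first eigenfunction $\vec\phi^A_\varepsilon(.,t):=\varepsilon^{-1/2}[\vec v^I_\varepsilon+\varepsilon\vec v^C_\varepsilon]$ with $\vec v^I_\varepsilon:=\vec\theta_0'|_{\check\rho_\varepsilon}q(s,t)$ and $\vec v^C_\varepsilon:=\check v^C(\check\rho_\varepsilon,H_\varepsilon,\sigma,t)$, exactly as in Section~\ref{sec_SE_ACND_asym}. Expanding $\sqrt\varepsilon\,\check{\mathcal L}^C_{\varepsilon,t}\vec\phi^A_\varepsilon$ and $\sqrt\varepsilon\,\partial_{N_{\partial\Omega}}\vec\phi^A_\varepsilon$ as in Lemmas~\ref{th_asym_vAC_in_trafo} and~\ref{th_asym_vAC_cp_trafo}: the $\Oc(\varepsilon^{-2})$-order cancels since $\vec\theta_0''=D^2W(\vec\theta_0)\vec\theta_0'$ (differentiate \eqref{eq_asym_vAC_in_u0}), leaving a vector-valued half-space problem for $\underline v^C$ of the same shape as \eqref{eq_SE_ACND_asym_vbar1}--\eqref{eq_SE_ACND_asym_vbar2}, with $f'''(\theta_0)\theta_0'\hat u^C_1$ replaced by $\sum_{|\xi|=1}\partial^\xi D^2W(\vec\theta_0)(\check u^C_1)^\xi\vec\theta_0'$. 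The compatibility condition \eqref{eq_hp_vect_comp} — using $\dim\ker\check L_0=1$, which guarantees the one-dimensional kernel spanned by $\vec\theta_0'$ — then yields a first-order ODE for $q$ on $\partial\Sigma$ with the non-tangentiality coefficient $|(D_xsN_{\partial\Omega})|_{\overline X_0}\cdot\vec n_{\partial\Sigma}|\geq c>0$ from Theorem~\ref{th_coordND},~4., solvable by the same cutoff construction with $q=1$ on $(\partial\Sigma\cup Y(\partial\Sigma\times[\mu_0,2\mu_0]))\times[0,T]$; Theorem~\ref{th_hp_vect_exp_sol} (Remark~\ref{th_hp_vect_exp_rem}) then gives $\underline v^C\in B([0,T];C^2(\partial\Sigma,C^2_{(\beta,\gamma)}(\overline{\R^2_+})^m))$. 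This produces the vector-valued analogue of Lemma~\ref{th_SE_ACND_phi_A}, with $D^2W$-quantities replacing the $f$-derivatives.

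Then I would carry out the splitting of $\tilde H^1(\Omega_t^C)^m$ along $\check V_{\varepsilon,t}:=\{a(s(.,t))\vec\phi^A_\varepsilon(.,t):a\in\tilde H^1(\hat\Sigma^\circ)\}$ — note the scalar $a$, since the tangential alteration is scalar-valued — with the same integral characterisation of $\check V_{\varepsilon,t}^\perp$ as in Lemma~\ref{th_SE_ACND_split_L2}, using $\int_\R|\vec\theta_0'|^2>0$ and Lemma~\ref{th_SE_1Dtrafo_remainder}. The three bilinear-form estimates go through unchanged in structure: on $\check V_{\varepsilon,t}\times\check V_{\varepsilon,t}$ one integrates by parts and uses $\int_\R\vec\theta_0'\cdot\vec\theta_0''=0$ together with the estimates for $\check{\mathcal L}^C_{\varepsilon,t}\vec\phi^A_\varepsilon$; on $\check V_{\varepsilon,t}^\perp\times\check V_{\varepsilon,t}^\perp$ one reduces to $1$D Neumann problems in the normal direction and applies Theorem~\ref{th_SE_1Dvect_pert2} (the vector-valued analogue of Theorem~\ref{th_SE_1Dscal_pert2}) in place of the scalar spectral estimate — here \eqref{eq_SE_1Dvect_pert_theta1} is exactly the orthogonality condition satisfied by the $\vec\theta_1$-term appearing in $\vec u^A_\varepsilon$; on $\check V_{\varepsilon,t}\times\check V_{\varepsilon,t}^\perp$ one uses Lemma~\ref{th_SE_ACND_intpol_tr} (a purely geometric trace estimate, independent of scalar vs. vector), the improved $\varepsilon$-order of $\nabla_\tau\vec\phi^A_\varepsilon$, and Young's inequality. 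Finally one adds the $\nabla_\tau\vec\psi$-term as in Theorem~\ref{th_SE_ACND_cp2}. The global estimate of Theorem~\ref{th_SE_vAC} then follows from Theorem~\ref{th_SE_vAC_cp}, the easy estimate on $\Gamma_t(\delta,\mu_0)$ where the coordinates are tubular and one invokes Theorem~\ref{th_SE_1Dvect_pert2},~1., the nonnegativity of $D^2W(\vec u^A_\varepsilon)$ on $Q_T^\pm\setminus\Gamma(\delta)$, and a partition of unity subordinate to \eqref{eq_SE_ACND_proof}, exactly as in the proof of Theorem~\ref{th_SE_ACND}.

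The main obstacle, as in the scalar case, is \textbf{the estimate on $\check V_{\varepsilon,t}^\perp\times\check V_{\varepsilon,t}^\perp$}: one must show $\check B^C_{\varepsilon,t}(\vec\psi,\vec\psi)\geq\nu[\varepsilon^{-2}\|\vec\psi\|^2_{L^2}+\|\nabla\vec\psi\|^2_{L^2}]$ on the orthogonal complement. This requires the quantitative spectral gap $\check\nu_1>0$ for the perturbed weighted $1$D operator $\check{\mathcal L}^0_{\varepsilon,s,t}=-(\tilde J_{\varepsilon,s,t})^{-1}\frac{d}{dz}(\tilde J_{\varepsilon,s,t}\frac{d}{dz})+D^2W(\vec\theta_0)$ on large intervals with Neumann conditions, which is precisely Lemma~\ref{th_SE_1Dvect_unpert},~5. and Theorem~\ref{th_SE_1Dvect_pert2},~3. — and those in turn rest crucially on the assumption $\dim\ker\check L_0=1$, without which the gap can close. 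The $\varepsilon$-dependence of the constants must be tracked uniformly in $(s,t)$ using the uniform bounds from Corollary~\ref{th_SE_1Dprelim2} and Remark~\ref{th_SE_1Dprelim_rem}; the degeneracy of the coordinate metric near the contact points (addressed already by the tangential alterations built into $\check V_{\varepsilon,t}$) is what makes the naive reduction-to-$\R^2_+$ approach, mentioned in Remark~\ref{th_SE_rem_alternative}, unattractive and justifies the subspace-splitting strategy.
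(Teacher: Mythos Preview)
Your proposal is correct and follows essentially the same route as the paper: reduce via Taylor expansion to the bilinear form $\check B^C_{\varepsilon,t}$, construct the approximate eigenfunction $\vec\phi^A_\varepsilon$ by solving the vector-valued half-space problem (Theorem~\ref{th_hp_vect_exp_sol}) with $\check q$ fixed by the compatibility condition, split $\check H^1(\Omega_t^C)$ along $\check V_{\varepsilon,t}$, and analyse $\check B^C_{\varepsilon,t}$ on the three pieces exactly as in Lemmas~\ref{th_SE_vAC_VxV}--\ref{th_SE_vAC_VxVp} using Theorem~\ref{th_SE_1Dvect_pert2} for the $\check V_{\varepsilon,t}^\perp$-part. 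One small slip: the cancellation at order $\Oc(\varepsilon^{-2})$ uses $\vec\theta_0'''=D^2W(\vec\theta_0)\vec\theta_0'$ (differentiate \eqref{eq_asym_vAC_in_u0}), not $\vec\theta_0''$.
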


The extra assumption on $\vec{\psi}$ is not needed but simplifies the proof, cf.~Remark \ref{th_SE_vAC_asym_rem},~3.~below.  

\begin{proof}[Proof of Theorem \ref{th_SE_vAC}] 
	Note that $D^2W(\vec{u}^A_\varepsilon)$ is positive (semi-)definite on $Q_T^\pm\setminus\Gamma(\delta)$ for all $\varepsilon\in(0,\varepsilon_0]$ if $\varepsilon_0>0$ is small. Therefore it is enough to show the estimate in Theorem \ref{th_SE_vAC} for $\Gamma_t(\delta)$ instead of $\Omega$. This can be done with Theorem \ref{th_SE_vAC_cp} and Theorem \ref{th_SE_1Dvect_pert2},~1.~in the analogous way as in the scalar case, cf.~the proof of Theorem \ref{th_SE_ACND}.
\end{proof}

\subsubsection{Outline for the Proof of the Spectral Estimate close to the Contact Points}\label{sec_SE_vAC_outline}
For the proof of Theorem \ref{th_SE_vAC_cp} we can replace $\frac{1}{\varepsilon^2}D^2W(\vec{u}^A_\varepsilon(.,t))$ by
\[
\frac{1}{\varepsilon^2}D^2W(\vec{\theta}_0|_{\check{\rho}_\varepsilon(.,t)})+\frac{1}{\varepsilon}\sum_{\xi\in\N_0^m,|\xi|=1}\partial^{\xi}D^2W(\vec{\theta}_0|_{\check{\rho}_\varepsilon(.,t)})(\vec{u}^C_1|_{(.,t)})^\xi
\] 
with a Taylor expansion. We construct an approximation $\vec{\phi}^A_\varepsilon(.,t)$ to the first eigenfunction of 
\[
\check{\Lc}_{\varepsilon,t}^C:=-\Delta+
\frac{1}{\varepsilon^2}D^2W(\vec{\theta}_0|_{\check{\rho}_\varepsilon(.,t)})+\frac{1}{\varepsilon}\sum_{\xi\in\N_0^m,|\xi|=1}\partial^{\xi}D^2W(\vec{\theta}_0|_{\check{\rho}_\varepsilon(.,t)})(\vec{u}^C_1|_{(.,t)})^\xi\quad\text{ on }\Omega_t^C
\] 
together with a homogeneous Neumann boundary condition. Then we split
\begin{align}\label{eq_SE_vAC_H1tilde_Omega}
\check{H}^1(\Omega^C_t):=
\left\{\vec{\psi}\in H^1(\Omega^C_t)^m:\vec{\psi}|_{X(.,s,t)}=0\text{ for a.e.~}s\in Y(\partial\Sigma\times[\frac{3}{2}\mu_0,2\mu_0])\right\}
\end{align}
along the subspace of tangential alterations of $\vec{\phi}^A_\varepsilon(.,t)$. Therefore we consider the ansatz
\begin{alignat*}{2}
\vec{\phi}^A_\varepsilon(.,t)&:=\frac{1}{\sqrt{\varepsilon}}[\vec{v}^I_\varepsilon(.,t)+\varepsilon\vec{v}^C_\varepsilon(.,t)]&\quad&\text{ on }\Omega^C_t,\\
\vec{v}^I_\varepsilon(.,t)&:=\check{v}^I(\check{\rho}_\varepsilon(.,t),s(.,t),t):=\vec{\theta}_0'(\check{\rho}_\varepsilon(.,t))\,\check{q}(s(.,t),t)&\quad&\text{ on }\Omega^C_t,\\
\vec{v}^C_\varepsilon(.,t)&:=\check{v}^C(\check{\rho}_\varepsilon(.,t),H_\varepsilon(.,t),\sigma(.,t),t)&\quad&\text{ on }\Omega^C_t,
\end{alignat*}
where $\check{q}:\hat{\Sigma}\times[0,T]\rightarrow\R$ and $\check{v}^C:\overline{\R^2_+}\times\partial\Sigma\times[0,T]\rightarrow\R^m$. The $\frac{1}{\sqrt{\varepsilon}}$-factor is used for a normalization, see Lemma \ref{th_SE_vAC_split_L2} below. 

In Subsection \ref{sec_SE_vAC_asym} we expand $\check{\Lc}_{\varepsilon,t}^C\vec{\phi}^A_\varepsilon(.,t)$ and $\partial_{N_{\partial\Omega}}\vec{\phi}^A_\varepsilon(.,t)$ in a similar way as in Section \ref{sec_asym_vAC} and choose $\check{q}$ and $\check{v}^C$ suitably. The $\check{q}$-term is introduced in order to enforce the compatibility condition for the equations for $\check{v}^C$. In Subsection \ref{sec_SE_vAC_splitting} we characterize the $L^2(\Omega^C_t)^m$-orthogonal splitting of $\check{H}^1(\Omega^C_t)^m$ with respect to the subspace
\begin{align}\label{eq_SE_vAC_V_def}
\check{V}_{\varepsilon,t}
&:=\left\{\vec{\phi}=\check{a}(s(.,t))\vec{\phi}^A_\varepsilon(.,t):
\check{a}\in\check{H}^1(\hat{\Sigma}^\circ)\right\},\\
\check{H}^1(\hat{\Sigma}^\circ)&:=
\left\{\check{a}\in H^1(\hat{\Sigma}^\circ) : \check{a}|_{Y(.,b)}=0\text{ for a.e.~}b\in[\frac{3}{2}\mu_0,2\mu_0]\right\}.\label{eq_SE_vAC_H1check_interval}
\end{align}
Finally, in Subsection \ref{sec_SE_vAC_BLF} we prove estimates for the bilinear form $\check{B}_{\varepsilon,t}^C$ associated to $\check{\Lc}_{\varepsilon,t}^C$ on $\check{V}_{\varepsilon,t}\times\check{V}_{\varepsilon,t}$, $\check{V}_{\varepsilon,t}^\perp\times \check{V}_{\varepsilon,t}^\perp$ and $\check{V}_{\varepsilon,t}\times \check{V}_{\varepsilon,t}^\perp$. Here for $\vec{\phi},\vec{\psi}\in H^1(\Omega_t^C)^m$ we set
\begin{align}\label{eq_SE_vAC_Bepst}
&\check{B}_{\varepsilon,t}^C(\vec{\phi},\vec{\psi}):=\int_{\Omega_t^C}\nabla\vec{\phi}:\nabla\vec{\psi}\,dx\\
&+\int_{\Omega_t^C}(\vec{\phi},\left[\frac{1}{\varepsilon^2}D^2W(\vec{\theta}_0|_{\check{\rho}_\varepsilon(.,t)})+\frac{1}{\varepsilon}\sum_{\xi\in\N_0^m,|\xi|=1}\partial^{\xi}D^2W(\vec{\theta}_0|_{\check{\rho}_\varepsilon(.,t)})(\vec{u}^C_1|_{(.,t)})^\xi\right]\vec{\psi})_{\R^m}\,dx.\notag
\end{align}

\subsubsection{Asymptotic Expansion for the Approximate Eigenfunction}\label{sec_SE_vAC_asym}
\begin{proof}[Asymptotic Expansion of $\sqrt{\varepsilon}\check{\Lc}_{\varepsilon,t}^C\vec{\phi}^A_\varepsilon(.,t)$.] 
	First, we expand $\Delta\vec{v}^I_\varepsilon$ as in the inner expansion in Section \ref{sec_asym_vAC_in}. The lowest order $\Oc(\frac{1}{\varepsilon^2})$ equals $\frac{1}{\varepsilon^2}|\nabla r|^2|_{\overline{X}_0(s,t)}\vec{\theta}_0'''(\rho)\check{q}(s,t)=\frac{1}{\varepsilon^2}\vec{\theta}_0'''(\rho)\check{q}(s,t)$. In $\sqrt{\varepsilon}\check{\Lc}_{\varepsilon,t}^C\vec{\phi}^A_\varepsilon(.,t)$ the latter cancels with $\frac{1}{\varepsilon^2}D^2W(\vec{\theta}_0(\rho))\vec{\theta}_0'(\rho)\check{q}(s,t)$. At $\Oc(\frac{1}{\varepsilon})$ in $\Delta\vec{v}^I_\varepsilon$ we obtain 
	\begin{align*}
	&\frac{1}{\varepsilon}\vec{\theta}_0'''(\rho)\check{q}(s,t)\left[(\rho+\check{h}_1)\partial_r(|\nabla r|^2\circ\overline{X})|_{(0,s,t)}-2(D_xs\nabla r)^\top|_{\overline{X}_0(s,t)}\nabla_\Sigma\check{h}_1\right]\\
	&+\frac{1}{\varepsilon}\vec{\theta}_0''(\rho)\left[\Delta r|_{\overline{X}_0(s,t)}\check{q}(s,t)+2(D_xs\nabla r)^\top|_{\overline{X}_0(s,t)}\nabla_\Sigma \check{q}(s,t)\right]=\frac{1}{\varepsilon}\vec{\theta}_0''(\rho)\Delta r|_{\overline{X}_0(s,t)}\check{q}(s,t).
	\end{align*} 
	The term $\frac{1}{\varepsilon}\Delta r|_{\overline{X}_0(s,t)}\check{q}(s,t)\vec{\theta}_0''(\rho)$ is left as a remainder.\phantom{\qedhere}
	
	For $\varepsilon\Delta\vec{v}^C_\varepsilon$ we use the expansion in Section \ref{sec_asym_vAC_cp_bulk}, but without using a Taylor expansion for the $\check{h}_j$ because we only need the lowest order and we intended to reduce the regularity assumption on the $\check{h}_j$. More precisely, the $(x,t)$-terms in the formula for $\Delta\vec{v}^C_\varepsilon$ in Lemma \ref{th_asym_vAC_cp_trafo} are expanded solely with \eqref{eq_asym_ACND_cp_taylor3}. At the lowest order  $\Oc(\frac{1}{\varepsilon})$ we get $\frac{1}{\varepsilon}\Delta^{\sigma,t}\check{v}^C$, where $\Delta^{\sigma,t}:=\partial_\rho^2+|\nabla b|^2|_{\overline{X}_0(\sigma,t)}\partial_H^2$ for $(\sigma,t)\in\partial\Sigma\times[0,T]$. Moreover, the $W$-parts yield 
	\[
	\frac{1}{\varepsilon}D^2W(\vec{\theta}_0(\rho))\check{v}^C+\frac{1}{\varepsilon}\sum_{\xi\in\N_0^m,|\xi|=1}\partial^{\xi}D^2W(\vec{\theta}_0(\rho))(\check{u}^C_1)^\xi\check{v}^I.
	\] 
	To obtain an equation for $\check{v}^C$ in $(\rho,H,\sigma,t)$ we use a Taylor expansion for $\check{q}(Y(\sigma,.),t)|_{[0,2\mu_0]}$: 
	\[
	\check{q}(Y(\sigma,\varepsilon H),t)=\check{q}(\sigma,t)+\Oc(\varepsilon H)\quad\text{ for }(\sigma,\varepsilon H)\in\partial\Sigma\times[0,2\mu_0].
	\]
	Therefore we require in $\overline{\R^2_+}\times\partial\Sigma\times[0,T]$:
	\begin{align}\label{eq_SE_vAC_asym_bulk}
	\left[-\Delta^{\sigma,t}+D^2W(\vec{\theta}_0(\rho))\right]
	\check{v}^C=
	-\sum_{\xi\in\N_0^m,|\xi|=1}\partial^{\xi}D^2W(\vec{\theta}_0(\rho))(\check{u}^C_1|_{(\rho,H,\sigma,t)})^\xi\vec{\theta}_0'|_\rho \check{q}|_{(\sigma,t)}.
	\end{align}\end{proof}

\begin{proof}[Asymptotic Expansion of $\sqrt{\varepsilon}\partial_{N_{\partial\Omega}}\vec{\phi}^A_\varepsilon(.,t)$.] We expand as in Section \ref{sec_asym_vAC_cp_neum}. 
	Note that in $\overline{\Omega^C_t}$
	\begin{align}\begin{split}\label{eq_SE_vAC_nabla_vI_vC}
	D_x\vec{v}^I_\varepsilon
	&=\check{q}|_{(s,t)}\vec{\theta}_0''(\check{\rho}_\varepsilon)
	\left[\frac{\nabla r}{\varepsilon}-(D_xs)^\top \nabla_\Sigma \check{h}_\varepsilon|_{(s,t)}\right]^\top
	+\vec{\theta}_0'(\check{\rho}_\varepsilon)(\nabla_\Sigma \check{q}|_{(s,t)})^\top D_xs,\\
	D_x\vec{v}^C_\varepsilon  &=\partial_\rho\check{v}^C
	\left[\frac{\nabla r}{\varepsilon}-(D_xs)^\top\nabla_\Sigma \check{h}_\varepsilon|_{(s,t)}\right]^\top
	+\partial_H\check{v}^C\left[\frac{\nabla b}{\varepsilon}\right]^\top
	+D_{\partial\Sigma}\check{v}^C D_x\sigma,\end{split}
	\end{align}
	where the $\check{v}^C$-terms are evaluated at $(\check{\rho}_\varepsilon,H_\varepsilon,\sigma,t)$.
	In $\sqrt{\varepsilon}\partial_{N_{\partial\Omega}}\vec{\phi}^A_\varepsilon(.,t)=\sqrt{\varepsilon}D_x\vec{\phi}^A_\varepsilon N_{\partial\Omega}$ the lowest order is $\Oc(\frac{1}{\varepsilon})$ and given by $\frac{1}{\varepsilon}(N_{\partial\Omega}\cdot\nabla r)|_{\overline{X}_0(\sigma,t)}\vec{\theta}_0''(\rho)\check{q}|_{(\sigma,t)}=0$ due to the $90$°-contact angle condition. At $\Oc(1)$ we obtain
	\begin{align*}
	\check{q}|_{(\sigma,t)}\vec{\theta}_0''(\rho)\left[(\rho+\check{h}_1|_{(\sigma,t)})\partial_r((N_{\partial\Omega}\cdot\nabla r)\circ\overline{X})|_{(0,\sigma,t)}-(D_xsN_{\partial\Omega})^\top|_{\overline{X}_0(\sigma,t)}\nabla_\Sigma \check{h}_1|_{(\sigma,t)}\right]\\
	+(D_xsN_{\partial\Omega})^\top|_{\overline{X}_0(\sigma,t)} \nabla_\Sigma \check{q}|_{(\sigma,t)}\vec{\theta}_0'(\rho)+0\cdot\partial_\rho\check{v}^C|_{H=0}+(N_{\partial\Omega}\cdot\nabla b)|_{\overline{X}_0(\sigma,t)}\partial_H\check{v}^C|_{H=0}.
	\end{align*}
	The latter is zero if and only if\phantom{\qedhere}
	\begin{align*}
	&(N_{\partial\Omega}\cdot\nabla b)|_{\overline{X}_0(\sigma,t)}\partial_H\check{v}^C|_{H=0}=-(D_xsN_{\partial\Omega})^\top|_{\overline{X}_0(\sigma,t)}\nabla_\Sigma \check{q}|_{(\sigma,t)}\vec{\theta}_0'(\rho)\\
	&+\check{q}|_{(\sigma,t)}\vec{\theta}_0''(\rho)
	\left[(D_xsN_{\partial\Omega})^\top|_{\overline{X}_0(\sigma,t)}\nabla_\Sigma \check{h}_1|_{(\sigma,t)}-(\rho+\check{h}_1|_{(\sigma,t)})\partial_r((N_{\partial\Omega}\cdot\nabla r)\circ\overline{X})|_{(0,\sigma,t)}\right].
	\end{align*}
	Analogously to the scalar case, cf.~Section \ref{sec_SE_ACND_asym}, we leave the term with $\nabla_\Sigma\check{h}_1$ as a remainder in order to lower the needed regularity for $\check{h}_1$. Therefore due to \eqref{eq_SE_vAC_asym_bulk} we require
	\begin{alignat}{2}\label{eq_SE_vAC_asym_vbar1}
	\!\!\![-\Delta+D^2W(\vec{\theta}_0)]\underline{v}^C&=-\!\sum_{\xi\in\N_0^m,|\xi|=1}\partial^\xi D^2W(\vec{\theta}_0)(\underline{u}^C_1)^\xi\vec{\theta}_0'\check{q}|_{(\sigma,t)}&\quad\!\!&\text{ in }\overline{\R^2_+}\times\partial\Sigma\times[0,T],\\
	-\partial_H\underline{v}^C|_{H=0}&=(|\nabla b|/N_{\partial\Omega}\cdot\nabla b)|_{\overline{X}_0(\sigma,t)}\vec{g}^{C}&\quad\!\!&\text{ in }\R\times\partial\Sigma\times[0,T],\label{eq_SE_vAC_asym_vbar2}
	\end{alignat}
	where $\underline{v}^C,\underline{u}^C_1:\overline{\R^2_+}\times\partial\Sigma\times[0,T]\rightarrow\R^m$ are associated to $\check{v}^C$ and $\check{u}^C_1$ analogous to \eqref{eq_asym_vAC_cp_ubar} in Section \ref{sec_asym_vAC_cp_neum_0}, respectively, and we define $\vec{g}^C(\rho,\sigma,t)$ for all $(\rho,\sigma,t)\in\R\times\partial\Sigma\times[0,T]$ as
	\[
	-\check{q}|_{(\sigma,t)}\vec{\theta}_0''(\rho)(\rho+\check{h}_1|_{(\sigma,t)})\partial_r((N_{\partial\Omega}\cdot\nabla r)\circ\overline{X})|_{(0,\sigma,t)}
	-(D_xsN_{\partial\Omega})^\top|_{\overline{X}_0(\sigma,t)}\nabla_\Sigma\check{q}|_{(\sigma,t)}\vec{\theta}_0'(\rho).
	\]
	The right hand sides in \eqref{eq_SE_vAC_asym_vbar1}-\eqref{eq_SE_vAC_asym_vbar2} are of class $B([0,T];C^2(\partial\Sigma,H^2_{(\beta,\gamma)}(\R^2_+)^m\times H^{5/2}_{(\beta)}(\R)^m))$ for some $\beta,\gamma>0$ if $(\check{q},\nabla_\Sigma \check{q})|_{\partial\Sigma\times[0,T]}\in B([0,T],C^2(\partial\Sigma))^{1+N}$. For simplicity we require $\check{q}=1$ on $\partial\Sigma\times[0,T]$. Then the compatibility condition \eqref{eq_hp_vect_comp} for \eqref{eq_SE_vAC_asym_vbar1}-\eqref{eq_SE_vAC_asym_vbar2} is equivalent to 
	\begin{align}\label{eq_SE_vAC_nabla_q}
	&-(D_xsN_{\partial\Omega})^\top|_{\overline{X}_0(\sigma,t)}\nabla_\Sigma \check{q}|_{(\sigma,t)}=\check{g}^C|_{(\sigma,t)},\\
	\check{g}^C|_{(\sigma,t)}&:=\frac{1}{\|\vec{\theta}_0'\|_{L^2(\R)^m}^2}\left[\partial_r((N_{\partial\Omega}\cdot\nabla r)\circ\overline{X})|_{(0,\sigma,t)}\right.\int_\R\rho\vec{\theta}_0'(\rho)\cdot\vec{\theta}_0''(\rho)\,d\rho \notag\\
	&+\left.\frac{N_{\partial\Omega}\cdot\nabla b}{|\nabla b|}\right|_{\overline{X}_0(\sigma,t)}\int_{\R^2_+}(\vec{\theta}_0',\sum_{\xi\in\N_0^m,|\xi|=1}\left.\partial^\xi D^2W(\vec{\theta}_0)(\underline{u}^C_1|_{(\rho,H,\sigma,t)})^\xi\vec{\theta}_0')_{\R^m}\,d(\rho,H)\right].\notag
	\end{align}
    Here because of the assumptions it holds $\check{g}^C\in B([0,T],C^2(\partial\Sigma))$.
	
	Analogously to the scalar case, cf.~Section \ref{sec_SE_ACND_asym}, it is possible to construct $\check{q}\in B([0,T],C^2(\hat{\Sigma}))$ with $\nabla_\Sigma \check{q}|_{\partial\Sigma\times[0,T]}\in B([0,T],C^2(\partial\Sigma))$ and $\check{q}=1$ on $(\partial\Sigma\cup Y(\partial\Sigma,[\mu_0,2\mu_0]))\times[0,T]$ and such that $c\leq \check{q}\leq C$ for some $c,C>0$ and \eqref{eq_SE_vAC_nabla_q} holds. Hence Remark~\ref{th_hp_vect_exp_rem} yields a unique solution of \eqref{eq_SE_vAC_asym_vbar1}-\eqref{eq_SE_vAC_asym_vbar2} such that for some $\beta,\gamma>0$
	\[\underline{v}^C\in B([0,T];C^2(\partial\Sigma,H^4_{(\beta,\gamma)}(\R^2_+)^m))\hookrightarrow B([0,T];C^2(\partial\Sigma,C^2_{(\beta,\gamma)}(\overline{\R^2_+})^m)).
	\]\end{proof}

\begin{Remark}\upshape\phantomsection{\label{th_SE_vAC_asym_rem}}
	\begin{enumerate}
		\item Consider the situation of Section \ref{sec_asym_vAC}. Then $\check{h}_1$ is smooth and the $\nabla_\Sigma \check{h}_1$-term can be included above. Moreover, $\underline{u}^C_1$ is smooth and solves \eqref{eq_asym_vAC_cp_uC1}-\eqref{eq_asym_vAC_cp_uC2}. However note that in general it is not valid to use $\check{v}^C:=\partial_\rho\underline{u}^C_1$ in analogy to Remark \ref{th_SE_ACND_asym_rem},~1.~in the scalar case. This is because in general
		\[
		\partial_\rho(D^2W(\vec{\theta}_0))\underline{u}^C_1=\sum_{\xi\in\N_0^m,|\xi|=1}\partial_\xi D^2W(\vec{\theta}_0)(\vec{\theta}_0')^\xi\underline{u}^C_1\neq\sum_{\xi\in\N_0^m,|\xi|=1}\partial_\xi D^2W(\vec{\theta}_0)(\underline{u}^C_1)^\xi\vec{\theta}_0'.
		\]
		\item In the case of additional terms in $\vec{u}^A_\varepsilon$ as in Remark \ref{th_SE_vAC_rem} one can proceed analogously as in the scalar 2D-case, cf.~\cite{MoserDiss}, Remark 6.18, 2.~or Remark 4.1, 2.~in \cite{AbelsMoser}. More precisely, there is an additional term 
		\[
		\frac{1}{\varepsilon}\sum_{\xi\in\N_0^m,|\xi|=1}\partial^\xi D^2W(\vec{\theta}_0|_{\check{\rho}_\varepsilon(.,t)})(\vec{\theta}_1|_{\check{\rho}_\varepsilon(.,t)})^\xi \check{p}_\varepsilon(s(.,t),t)
		\] 
		in the operator $\check{\Lc}^C_{\varepsilon,t}$. Therefore in the ansatz for $\vec{\phi}^A_\varepsilon$ we add $\varepsilon \check{v}_1(\check{\rho}_\varepsilon(.,t))\check{q}_{1,\varepsilon}(s(.,t),t)$, where $\check{v}_1:\R\rightarrow\R^m$ and $\check{q}_{1,\varepsilon}$ is $\R$-valued. Hence in the asymptotic expansion of $\sqrt{\varepsilon}\check{\Lc}_{\varepsilon,t}^C\vec{\phi}^A_\varepsilon(.,t)$ new terms appear at order $\Oc(\frac{1}{\varepsilon})$, namely
		\[
		\frac{1}{\varepsilon}[-\partial_\rho^2+D^2W(\vec{\theta}_0)]\check{v}_1|_\rho\check{q}_{1,\varepsilon}|_{(s,t)}+\frac{1}{\varepsilon}\sum_{\xi\in\N_0^m,|\xi|=1}\partial^\xi D^2W(\vec{\theta}_0)(\vec{\theta}_1)^\xi\vec{\theta}_0'|_\rho(\check{p}_\varepsilon\check{q})|_{(s,t)}.
		\]
		Therefore we define $\check{q}_{1,\varepsilon}:=\check{p}_\varepsilon\check{q}$ and look for a solution of
		\[
		[-\partial_\rho^2+D^2W(\vec{\theta}_0)]\check{v}_1=-\sum_{\xi\in\N_0^m,|\xi|=1}\partial^\xi D^2W(\vec{\theta}_0)(\vec{\theta}_1)^\xi\vec{\theta}_0'.
		\] 
		The right hand side is an element of $C^0_{(\beta)}(\R)^m$ for some $\beta>0$ and \eqref{eq_SE_vAC_theta1} holds. Therefore Theorem \ref{th_ODE_vect_lin} yields a unique solution $\check{v}_1\in H^2_{(\beta)}(\R)^m$ for possibly smaller $\beta$. With embeddings and the equation it follows that $\check{v}_1\in C^2_{(\beta)}(\R)^m$ for some $\beta>0$. Then below analogous arguments can be used.
		\item The behaviour of $\vec{\phi}^A_\varepsilon(x,t)$ for $x\in\Omega^C_t$ with $b(x,t)\in[\frac{7}{4}\mu_0,2\mu_0]$ does not matter because we only have to consider $\vec{\psi}\in\check{H}^1(\Omega^C_t)$ in Theorem \ref{th_SE_vAC_cp}, where $\check{H}^1(\Omega^C_t)$ is from \eqref{eq_SE_vAC_H1tilde_Omega}.
	\end{enumerate}
\end{Remark}
\begin{Lemma}\label{th_SE_vAC_phi_A}
	The function $\vec{\phi}^A_\varepsilon(.,t)$ is $C^2(\overline{\Omega_t^C})^m$ and satisfies uniformly in $t\in[0,T]$:
	\begin{alignat*}{2}
	\left|\sqrt{\varepsilon}\check{\Lc}_{\varepsilon,t}^C\vec{\phi}^A_\varepsilon(.,t)+\frac{1}{\varepsilon}\Delta r|_{\overline{X}_0(s(.,t),t)}\check{q}|_{(s(.,t),t)}\vec{\theta}_0''|_{\check{\rho}_\varepsilon(.,t)}\right|&\leq Ce^{-c|\check{\rho}_\varepsilon(.,t)|}&\quad&\text{ in }\Omega_t^C,\\
	\left|\sqrt{\varepsilon}D_x\vec{\phi}^A_\varepsilon|_{(.,t)}N_{\partial\Omega^C_t}+[(D_xsN_{\partial\Omega})^\top|_{\overline{X}_0}\nabla_\Sigma \check{h}_1]|_{(\sigma(.,t),t)}\vec{\theta}_0''\right|&\leq C\varepsilon e^{-c|\check{\rho}_\varepsilon(.,t)|}&\quad&\text{ on }\partial\Omega_t^C\cap\partial\Omega,\\
	\left|\sqrt{\varepsilon}D_x\vec{\phi}^A_\varepsilon|_{(.,t)}N_{\partial\Omega^C_t}\right|&\leq Ce^{-c/\varepsilon}&\quad&\text{ on }\partial\Omega_t^C\setminus\Gamma_t(\delta).
	\end{alignat*}
\end{Lemma}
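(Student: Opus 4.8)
The plan is to mirror the scalar proof of Lemma \ref{th_SE_ACND_phi_A}, transporting each step to the vector-valued setting. The regularity claim $\vec{\phi}^A_\varepsilon(.,t)\in C^2(\overline{\Omega_t^C})^m$ is immediate from the construction: $\vec{\theta}_0$ is smooth with exponential decay of all derivatives (Theorem \ref{th_ODE_vect}), $\check{q}\in B([0,T],C^2(\hat{\Sigma}))$ by the construction in Section \ref{sec_SE_vAC_asym}, and $\underline{v}^C$ — hence $\check{v}^C$ — lies in $B([0,T];C^2(\partial\Sigma,C^2_{(\beta,\gamma)}(\overline{\R^2_+})^m))$ by the solution theorem in Remark \ref{th_hp_vect_exp_rem} applied to \eqref{eq_SE_vAC_asym_vbar1}--\eqref{eq_SE_vAC_asym_vbar2}. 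Since $r,s,b,\sigma,\check{\rho}_\varepsilon,H_\varepsilon$ are all smooth on $\overline{\Omega_t^C}$ by Theorem \ref{th_coordND}, composition and the prefactor $\varepsilon^{-1/2}$ give the stated $C^2$-regularity.

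For the first estimate I would expand $\sqrt{\varepsilon}\check{\Lc}_{\varepsilon,t}^C\vec{\phi}^A_\varepsilon(.,t)$ exactly as carried out in Section \ref{sec_SE_vAC_asym}: the $\Oc(\varepsilon^{-2})$ contribution from $\Delta\vec{v}^I_\varepsilon$ cancels against $\varepsilon^{-2}D^2W(\vec{\theta}_0)\vec{\theta}_0'\check{q}$; the $\Oc(\varepsilon^{-1})$ terms collapse to $\varepsilon^{-1}\Delta r|_{\overline{X}_0}\check{q}\,\vec{\theta}_0''$ (because the $\nabla_\Sigma\check{q}$ and $\nabla_\Sigma\check{h}_1$ contributions combine with the $\varepsilon\Delta\vec{v}^C_\varepsilon$ and $W$-terms, which were arranged to satisfy \eqref{eq_SE_vAC_asym_bulk}); and everything else is a genuine remainder. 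The remainder terms are all products of smooth $(x,t)$-dependent coefficients (uniformly bounded on $\overline{\Omega_t^C}$ by compactness and Theorem \ref{th_coordND}) with derivatives of $\vec{\theta}_0$ or of $\check{v}^C$ evaluated at $\check{\rho}_\varepsilon$ (resp.\ $(\check{\rho}_\varepsilon,H_\varepsilon)$), each of which decays like $e^{-c|\check{\rho}_\varepsilon|}$ by Theorem \ref{th_ODE_vect} and the $H^4_{(\beta,\gamma)}\hookrightarrow C^2_{(\beta,\gamma)}$ membership of $\underline{v}^C$; the bound $Ce^{-c|\check{\rho}_\varepsilon|}$ uniformly in $t$ follows, after choosing $c<\min\{\beta,\gamma,\sqrt{\lambda/2}\}$. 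I would keep a separate Taylor-remainder bookkeeping for replacing $D^2W(\vec{u}^A_\varepsilon)$ by its first-order expansion in $\varepsilon$ as noted at the start of Section \ref{sec_SE_vAC_outline}; these are bounded by $C\varepsilon\,e^{-c|\check{\rho}_\varepsilon|}$ and absorbed.

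The two boundary estimates follow from the asymptotic expansion of $\sqrt{\varepsilon}\,D_x\vec{\phi}^A_\varepsilon N_{\partial\Omega}$ restricted to $\partial\Omega^C_t\cap\partial\Omega$: the $\Oc(\varepsilon^{-1})$ term vanishes by the $90^\circ$-contact angle condition, and the $\Oc(1)$ term was forced to reduce to the single remainder $[(D_xsN_{\partial\Omega})^\top\nabla_\Sigma\check{h}_1]\vec{\theta}_0''$ by imposing the Neumann condition \eqref{eq_SE_vAC_asym_vbar2} on $\underline{v}^C$ (this is exactly why that term is left as a remainder in Section \ref{sec_SE_vAC_asym}). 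Hence $\sqrt{\varepsilon}D_x\vec{\phi}^A_\varepsilon N_{\partial\Omega}$ plus that term is $\Oc(\varepsilon)$ uniformly in $t$, with the $e^{-c|\check{\rho}_\varepsilon|}$ factor coming from the decay of $\vec{\theta}_0'',\vec{\theta}_0',\partial_\rho\check{v}^C,\partial_H\check{v}^C$ near $\partial\Omega$. On $\partial\Omega^C_t\setminus\Gamma_t(\delta)$ one has $b\in[\tfrac74\mu_0,2\mu_0]$ and $|r|$ comparable to $\delta$, so $|\check{\rho}_\varepsilon|\geq c/\varepsilon$ and $H_\varepsilon\geq c/\varepsilon$, and the exponential weights immediately give the $Ce^{-c/\varepsilon}$ bound.

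The main obstacle is the careful remainder accounting in the vector-valued first estimate. Unlike the scalar case one cannot set $\check{v}^C=\partial_\rho\underline{u}^C_1$ — the identity $\partial_\rho(D^2W(\vec{\theta}_0))\underline{u}^C_1\neq\sum_\xi\partial_\xi D^2W(\vec{\theta}_0)(\underline{u}^C_1)^\xi\vec{\theta}_0'$ noted in Remark \ref{th_SE_vAC_asym_rem} means $\check{v}^C$ must genuinely be produced by Remark \ref{th_hp_vect_exp_rem}, and one has to verify that the resulting right-hand sides lie in the stated $B([0,T];C^2(\partial\Sigma,H^2_{(\beta,\gamma)}(\R^2_+)^m\times H^{5/2}_{(\beta)}(\R)^m))$ class and satisfy the compatibility condition \eqref{eq_hp_vect_comp}; the latter is precisely what the equation \eqref{eq_SE_vAC_nabla_q} for $\nabla_\Sigma\check{q}$ arranges, using $\dim\ker\check{L}_0=1$. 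Once that bookkeeping is in place, all decay estimates are routine, exactly as in the scalar Lemma \ref{th_SE_ACND_phi_A}.
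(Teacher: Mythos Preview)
Your proposal is correct and follows exactly the approach the paper sketches (the paper's own proof is a one-line reference to the scalar 2D case, \cite{AbelsMoser}, Lemma~4.4, saying the regularity comes from the construction and the estimates from rigorous Taylor-remainder bounds). One small slip in your third estimate: on $\partial\Omega_t^C\setminus\Gamma_t(\delta)$ one has $|r|=\delta$ but $b$ ranges over all of $[0,2\mu_0]$, so $H_\varepsilon$ need not be large; the bound $Ce^{-c/\varepsilon}$ follows from $|\check{\rho}_\varepsilon|\geq c/\varepsilon$ alone, using the $\beta$-decay of $\check{v}^C$ in the $\rho$-variable (which your $C^2_{(\beta,\gamma)}$ membership already gives).
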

\begin{proof}
	The construction yields the regularity for $\vec{\phi}^A_\varepsilon$ and rigorous remainder estimates for the Taylor expansions above imply the estimates, cf.~the proof of Lemma 4.4 in \cite{AbelsMoser} in the scalar 2D-case.
\end{proof}

\subsubsection{The Splitting}\label{sec_SE_vAC_splitting} 
Analogously to the scalar case we use a characterization for the splitting of $\check{H}^1(\Omega^C_t)$.
\begin{Lemma}\phantomsection{\label{th_SE_vAC_split_L2}}
	Let $\check{H}^1(\Omega^C_t)$, $\check{V}_{\varepsilon,t}$ and $\check{H}^1(\hat{\Sigma}^\circ)$ be as in \eqref{eq_SE_vAC_H1tilde_Omega}-\eqref{eq_SE_vAC_H1check_interval}. Then
	\begin{enumerate}
		\item $\check{V}_{\varepsilon,t}$ is a subspace of $\check{H}^1(\Omega_t^C)$ and for $\varepsilon_0>0$ small there are $\check{c}_1,\check{C}_1>0$ such that 
		\[
		\check{c}_1\|\check{a}\|_{L^2(\hat{\Sigma})}\leq\|\vec{\psi}\|_{L^2(\Omega_t^C)^m}\leq \check{C}_1\|\check{a}\|_{L^2(\hat{\Sigma})}
		\]
		for all $\vec{\psi}=\check{a}(s(.,t))\vec{\phi}^A_\varepsilon(.,t)\in \check{V}_{\varepsilon,t}$ and $\varepsilon\in(0,\varepsilon_0]$, $t\in[0,T]$.
		\item Let $\check{V}_{\varepsilon,t}^\perp$ be the $L^2$-orthogonal complement of $\check{V}_{\varepsilon,t}$ in $\check{H}^1(\Omega_t^C)$. Then for $\vec{\psi}\in\check{H}^1(\Omega_t^C)$:
		\[
		\vec{\psi}\in \check{V}_{\varepsilon,t}^\perp\quad\Leftrightarrow\quad\int_{-\delta}^\delta(\vec{\phi}^A_\varepsilon(.,t)\cdot\vec{\psi})|_{X(r,s,t)}J_t(r,s)\,dr=0\quad\text{ for a.e.~}s\in\hat{\Sigma}.
		\]
		Moreover, $\check{H}^1(\Omega_t^C)=\check{V}_{\varepsilon,t}\oplus \check{V}_{\varepsilon,t}^\perp$ for all $\varepsilon\in(0,\varepsilon_0]$ and $\varepsilon_0>0$ small.
	\end{enumerate}
\end{Lemma}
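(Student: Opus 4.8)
The plan is to mirror the proof of the scalar analogue, Lemma \ref{th_SE_ACND_split_L2}, essentially verbatim, replacing scalars by $\R^m$-vectors and products by Euclidean inner products, since the coordinate geometry is identical and has already been packaged in Remark \ref{th_SE_ACND_sob_rem} (which applies here as stated, in particular to $\Omega_t^C$ and $\hat{\Sigma}^\circ$). The only genuinely vector-valued input is that $\vec{\phi}^A_\varepsilon(.,t)\in C^2(\overline{\Omega_t^C})^m$ with the structure from Subsection \ref{sec_SE_vAC_asym}, and the fact that its normal-direction $L^2$-weight $\int_{-\delta}^{\delta}|\vec{\phi}^A_\varepsilon|_{\overline{X}(r,s,t)}|^2 J_t(r,s)\,dr$ is bounded above and below by positive constants uniformly in $s,t$ and small $\varepsilon$; this is the vector-valued version of the estimate used in the scalar case and follows from Lemma \ref{th_SE_1Dtrafo_remainder} applied to $\vec{\phi}^A_\varepsilon = \tfrac{1}{\sqrt\varepsilon}(\vec{\theta}_0'(\check\rho_\varepsilon)\check q + \varepsilon\vec v^C_\varepsilon)$ together with the lower bound $\check q\geq c>0$, the decay of $\vec{\theta}_0'$ from Theorem \ref{th_ODE_vect}, $\vec{\theta}_0'\not\equiv 0$, and the decay of $\check v^C_\varepsilon$, exactly as in the proof of Lemma \ref{th_SE_ACND_split_L2}, 1.

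For part 1, I would first note that $\check a(s(.,t))\in H^1(\Omega_t^C)$ for every $\check a\in H^1(\hat\Sigma^\circ)$ by Remark \ref{th_SE_ACND_sob_rem}, 3.--4., hence $\vec\psi=\check a(s(.,t))\vec{\phi}^A_\varepsilon(.,t)\in \check H^1(\Omega_t^C)$ (the vanishing condition on $Y(\partial\Sigma\times[\tfrac32\mu_0,2\mu_0])$ is inherited from $\check a$), so $\check V_{\varepsilon,t}\subseteq \check H^1(\Omega_t^C)$. Then the transformation rule (Remark \ref{th_SE_ACND_sob_rem}, 1.) and the Fubini Theorem give
\[
\|\vec\psi\|_{L^2(\Omega_t^C)^m}^2=\int_{\hat\Sigma}\check a(s)^2\int_{-\delta}^{\delta}|\vec{\phi}^A_\varepsilon|_{\overline X(r,s,t)}|^2 J_t(r,s)\,dr\,d\Hc^{N-1}(s),
\]
and the two-sided bound on the inner integral yields the claimed equivalence with $\check c_1,\check C_1>0$ independent of the $\check h_j$ for fixed $\check C_0$.

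For part 2, the $L^2(\Omega_t^C)^m$-inner product $\int_{\Omega_t^C}\vec\psi\cdot \check a(s(.,t))\vec{\phi}^A_\varepsilon(.,t)\,dx$ transforms, again by Remark \ref{th_SE_ACND_sob_rem}, 1.~and Fubini, into $\int_{\hat\Sigma}\check a(s)\int_{-\delta}^{\delta}(\vec{\phi}^A_\varepsilon\cdot\vec\psi)|_{X(r,s,t)}J_t(r,s)\,dr\,d\Hc^{N-1}(s)$, so the Fundamental Lemma of the Calculus of Variations gives the stated integral characterization of $\check V_{\varepsilon,t}^\perp$. For the direct sum decomposition I would repeat the scalar construction: set $\check w_\varepsilon(s):=\int_{-\delta}^{\delta}|\vec{\phi}^A_\varepsilon|_{\overline X(r,s,t)}|^2 J_t(r,s)\,dr$, which is $C^1(\hat\Sigma)$ and bounded below by a positive constant for small $\varepsilon$ by the same argument as in part 1; then for arbitrary $\vec\psi\in\check H^1(\Omega_t^C)$ define $\check a_\varepsilon(s):=\check w_\varepsilon(s)^{-1}\int_{-\delta}^{\delta}(\vec{\phi}^A_\varepsilon(.,t)\cdot\vec\psi)|_{X(r,s,t)}J_t(r,s)\,dr$, which lies in $\check H^1(\hat\Sigma^\circ)$ since integration against the $C^1$ weight is a bounded linear functional on $L^2(-\delta,\delta)$ and one invokes Remark \ref{th_SE_ACND_sob_rem}, 3.--4.; the residual $\vec\psi-\check a_\varepsilon(s(.,t))\vec{\phi}^A_\varepsilon(.,t)$ then satisfies the orthogonality integral identically, hence lies in $\check V_{\varepsilon,t}^\perp$, and $\check V_{\varepsilon,t}\cap\check V_{\varepsilon,t}^\perp=\{0\}$ is immediate.

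I do not anticipate a serious obstacle here: the geometry is inherited unchanged, and the one place where the scalar proof uses a genuinely scalar fact — the positivity and size of the normal-direction weight of $\vec{\phi}^A_\varepsilon$ — is handled identically in the vector case because it only relies on $\check q$ being bounded between positive constants and on the (vector) exponential decay and nontriviality of $\vec{\theta}_0'$, all of which are available from Theorem \ref{th_ODE_vect} and the construction in Subsection \ref{sec_SE_vAC_asym}. The mild care points are purely bookkeeping: making sure the vanishing condition defining $\check H^1(\Omega_t^C)$ (respectively $\check H^1(\hat\Sigma^\circ)$) is respected by the maps $\check a\mapsto \check a(s(.,t))$ and $\vec\psi\mapsto\check a_\varepsilon$, and that all constants can be taken uniform in $t\in[0,T]$ and in the $\check h_j$ with $\check C_0$ fixed, which follows from the uniform bounds in Remark \ref{th_coordND_rem}, 3., Corollary \ref{th_SE_1Dprelim2}, and the uniform estimates already recorded for $\vec{\phi}^A_\varepsilon$ in Lemma \ref{th_SE_vAC_phi_A}.
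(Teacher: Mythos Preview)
Your proposal is correct and matches the paper's approach exactly: the paper's proof simply says the lemma follows in the analogous way as the scalar case (Lemma \ref{th_SE_ACND_split_L2}), noting only that $\vec{\theta}_0'(0)\neq 0$ from Theorem \ref{th_ODE_vect} ensures $\int_\R|\vec{\theta}_0'|^2>0$, which is precisely the nontriviality input you flag. Your expansion of the argument is faithful to the scalar template and identifies the one vector-specific point correctly.
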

\begin{proof}[Proof.] 
	This follows in the analogous way as in the scalar case, cf.~the proof of Lemma \ref{th_SE_ACND_split_L2}. Here note that $\vec{\theta}_0'(0)\neq 0$ and therefore $\int_\R|\vec{\theta}_0'|^2>0$ due to Theorem \ref{th_ODE_vect}.
\end{proof}

\subsubsection{Analysis of the Bilinear Form}\label{sec_SE_vAC_BLF}
First we consider $\check{B}_{\varepsilon,t}^C$ on $\check{V}_{\varepsilon,t}\times\check{V}_{\varepsilon,t}$.
\begin{Lemma}\label{th_SE_vAC_VxV}
	There are $\varepsilon_0,C,c>0$ such that 
	\[
	\check{B}_{\varepsilon,t}^C(\vec{\phi},\vec{\phi})\geq-C\|\vec{\phi}\|_{L^2(\Omega_t^C)^m}^2+c\|\check{a}\|_{H^1(\hat{\Sigma}^\circ)}^2
	\]
	for all $\vec{\phi}=\check{a}(s(.,t))\vec{\phi}^A_\varepsilon(.,t)\in \check{V}_{\varepsilon,t}$ and $\varepsilon\in(0,\varepsilon_0],t\in[0,T]$.
\end{Lemma}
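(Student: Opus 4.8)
The plan is to follow the scalar argument from Lemma~\ref{th_SE_ACND_VxV} line by line, replacing products by scalar products in $\R^m$, $f''(\theta_0)$ by $D^2W(\vec{\theta}_0)$ and the scalar $\phi^A_\varepsilon$ by the vector-valued $\vec{\phi}^A_\varepsilon$. First I would expand $\nabla\vec{\phi} = \nabla(\check a(s(.,t)))\otimes\vec{\phi}^A_\varepsilon + \check a(s(.,t))\nabla\vec{\phi}^A_\varepsilon|_{(.,t)}$, so that
\[
|\nabla\vec{\phi}|^2 = |\nabla(\check a(s))|^2|\vec{\phi}^A_\varepsilon|^2|_{(.,t)} + \check a^2(s)|\nabla\vec{\phi}^A_\varepsilon|^2|_{(.,t)} + \nabla(\check a^2(s))\cdot\nabla\vec{\phi}^A_\varepsilon\!\cdot\!\vec{\phi}^A_\varepsilon|_{(.,t)},
\]
where the last dot product is the $\R^m$-inner product contracted with the spatial gradient. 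Applying integration by parts on $\Omega_t^C$ (justified by Remark~\ref{th_SE_ACND_sob_rem},~2., which applies verbatim since the sets and coordinates are the same) to the mixed term, one arrives at the decomposition $\check{B}_{\varepsilon,t}^C(\vec{\phi},\vec{\phi}) = (I) + (II) + (III)$ with
\[
(I) := \int_{\Omega_t^C}|\nabla(\check a(s))\otimes\vec{\phi}^A_\varepsilon|^2|_{(.,t)}\,dx,\quad (II) := \int_{\Omega_t^C}(\check a^2(s)\vec{\phi}^A_\varepsilon)|_{(.,t)}\cdot\check{\Lc}_{\varepsilon,t}^C\vec{\phi}^A_\varepsilon|_{(.,t)}\,dx,
\]
and $(III)$ the boundary integral $\int_{\partial\Omega_t^C}(N_{\partial\Omega^C_t}\cdot\nabla\vec{\phi}^A_\varepsilon)\cdot\tr(\check a^2(s)\vec{\phi}^A_\varepsilon|_{(.,t)})\,d\Hc^{N-1}$.

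For $(I)$ I would use $\nabla(\check a|_{s(.,t)}) = (D_xs)^\top|_{\overline X(.,t)}\nabla_{\hat\Sigma}\check a|_{s(.,t)}$ together with the uniform positive definiteness of $D_xs(D_xs)^\top$ from Theorem~\ref{th_coordND},~3., to get $|\nabla(\check a|_{s(.,t)})|^2\ge c|\nabla_{\hat\Sigma}\check a|^2|_{s(.,t)}$; then transforming with Remark~\ref{th_SE_ACND_sob_rem},~1.\ and applying the Fubini theorem reduces the inner integral to $\int_{-\delta}^\delta |\vec{\phi}^A_\varepsilon|^2|_{\overline X(r,s,t)}J_t(r,s)\,dr$, which is bounded below by a uniform positive constant by (the vector-valued version of) the proof of Lemma~\ref{th_SE_vAC_split_L2},~1. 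This gives $(I)\ge c_0\|\nabla_{\hat\Sigma}\check a\|_{L^2(\hat\Sigma)}^2$, and combining with Lemma~\ref{th_SE_vAC_split_L2} and Lemma~\ref{th_SobMfd_def_lemma},~3.\ yields the desired $c\|\check a\|_{H^1(\hat\Sigma^\circ)}^2$ modulo a $-C\|\vec{\phi}\|_{L^2}^2$ term. For $(II)$ I would invoke Lemma~\ref{th_SE_vAC_phi_A}, which controls $\sqrt\varepsilon\,\check{\Lc}_{\varepsilon,t}^C\vec{\phi}^A_\varepsilon(.,t)$ up to the single leading term $-\frac1\varepsilon\Delta r|_{\overline X_0}\check q\,\vec{\theta}_0''|_{\check\rho_\varepsilon}$ and an exponentially decaying remainder; the delicate point is that the leading term contributes $\frac1{\varepsilon^2}\Delta r|_{\overline X_0(s,t)}\check q(s,t)^2\int_{-\delta}^\delta\vec{\theta}_0''\!\cdot\!\vec{\theta}_0'(\check\rho_\varepsilon)\,J_t\,dr$, and one exploits $\int_\R\vec{\theta}_0''\!\cdot\!\vec{\theta}_0'\,dz = 0$ (integration by parts) plus $|J_t(r,s)-J_t(0,s)|\le\tilde C|r|$ and the integral-transformation estimate Lemma~\ref{th_SE_1Dtrafo_remainder} to see that all terms are $\Oc(\|\check a\|_{L^2(\hat\Sigma)}^2)$. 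For $(III)$ I would transform the boundary integral to the boundary of $(-\delta,\delta)\times\hat\Sigma^\circ$ via Theorem~\ref{th_Leb_trafo_mfd} and Theorem~\ref{th_coordND}, split into the $r=\pm\delta$ faces and the $\partial\Sigma$ face, and use Lemma~\ref{th_SE_vAC_phi_A} (the $\pm\delta$ faces lie outside $\Gamma_t(\delta)$ so give $e^{-c/\varepsilon}$ decay; the $\partial\Sigma$ face gives, via $\int_\R\vec{\theta}_0'\!\cdot\!\vec{\theta}_0'' = 0$ and Lemma~\ref{th_SE_1Dtrafo_remainder}, a bound $C\varepsilon\|\tr\check a|_{\partial\Sigma}\|_{L^2(\partial\Sigma)}^2$), then absorb the trace via Theorem~\ref{th_SobMfd_LipThm} and Lemma~\ref{th_SE_vAC_split_L2},~1.

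The main obstacle — such as it is — is essentially bookkeeping: the vector-valued leading-order term in $\check{\Lc}_{\varepsilon,t}^C\vec{\phi}^A_\varepsilon$ involves $\vec{\theta}_0''\!\cdot\!\vec{\theta}_0'$ rather than $\theta_0''\theta_0'$, and one must be careful that the cancellation $\int_\R\vec{\theta}_0'\!\cdot\!\vec{\theta}_0''\,dz = 0$ still holds — it does, by integration by parts on the scalar function $\tfrac12|\vec{\theta}_0'|^2$ together with the exponential decay of $\vec{\theta}_0'$ from Theorem~\ref{th_ODE_vect}. Beyond that, every estimate transfers without change, since the coordinate machinery (Remark~\ref{th_SE_ACND_sob_rem}, Corollary~\ref{th_coordND_nabla_tau_n}, Lemma~\ref{th_SE_1Dtrafo_remainder}) is identical and the only vector-valued inputs needed — the lower bound on $\int_{-\delta}^\delta|\vec{\phi}^A_\varepsilon|^2 J_t\,dr$ and the decay of $\vec{\phi}^A_\varepsilon$ — are already supplied by Lemma~\ref{th_SE_vAC_split_L2} and Lemma~\ref{th_SE_vAC_phi_A}. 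I would close by simply citing the scalar proof of Lemma~\ref{th_SE_ACND_VxV} for the routine parts and spelling out only the three substitutions above.
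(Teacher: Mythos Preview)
Your proposal is correct and follows essentially the same approach as the paper: the paper derives the identical decomposition $\check{B}_{\varepsilon,t}^C(\vec{\phi},\vec{\phi})=(I)+(II)+(III)$ via the component-wise expansion of $|\nabla\vec{\phi}|^2$ and integration by parts, then notes $\int_\R\vec{\theta}_0'\cdot\vec{\theta}_0''=0$ and defers the estimates of $(I)$--$(III)$ to the scalar proof of Lemma~\ref{th_SE_ACND_VxV} together with Lemma~\ref{th_SE_vAC_phi_A}. Your write-up is in fact more detailed than the paper's, which is terser but relies on exactly the ingredients you identify.
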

\begin{proof} 
	Let $\vec{\phi}$ be as in the lemma. For $j=1,...,m$ we denote with $\phi_j$, $\phi^A_{\varepsilon,j}$ the $j$-th component of $\vec{\phi}$, $\vec{\phi}^A_\varepsilon$, respectively. Then
	$\nabla\phi_j=
	\nabla(\check{a}|_{s(.,t)})\phi^A_{\varepsilon,j}(.,t)+\check{a}|_{s(.,t)}\nabla(\phi^A_{\varepsilon,j})(.,t)$ for $j=1,...,m$ and 
	\[
	|\nabla\vec{\phi}|^2=|\nabla(\check{a}(s))|^2|\vec{\phi}^A_\varepsilon|^2|_{(.,t)}+\check{a}^2(s)|\nabla\vec{\phi}^A_\varepsilon|^2|_{(.,t)}+\sum_{j=1}^m\nabla(\check{a}^2(s))\cdot\nabla(\phi^A_{\varepsilon,j})\phi^A_{\varepsilon,j}|_{(.,t)}.
	\]
	Integration by parts, cf.~Remark \ref{th_SE_ACND_sob_rem},~2., yields
	\begin{align*}
	\sum_{j=1}^m\int_{\Omega_t^C}\left[\nabla(\check{a}^2(s))\cdot\nabla(\phi^A_{\varepsilon,j})\phi^A_{\varepsilon,j}\right]|_{(.,t)}\,dx&=-\int_{\Omega_t^C}\left[\check{a}^2(s)(\Delta\vec{\phi}^A_\varepsilon\cdot\vec{\phi}^A_\varepsilon+|\nabla\vec{\phi}^A_\varepsilon|^2)\right]|_{(.,t)}\,dx\\
	&+\int_{\partial\Omega_t^C}\left[D_x\vec{\phi}^A_\varepsilon N_{\partial\Omega^C_t}\cdot\,\tr(\check{a}^2(s)\vec{\phi}^A_\varepsilon|_{(.,t)})\right]\,d\Hc^{N-1}.
	\end{align*}
	Therefore we obtain
	\begin{align*}
	\check{B}_{\varepsilon,t}^C(\vec{\phi},\vec{\phi})&=\int_{\Omega_t^C}|\nabla(\check{a}(s))|^2|\vec{\phi}^A_\varepsilon|^2|_{(.,t)}\,dx+\int_{\Omega_t^C}(\check{a}^2(s)\vec{\phi}^A_\varepsilon)|_{(.,t)}\cdot\check{\Lc}_{\varepsilon,t}^C\vec{\phi}^A_\varepsilon|_{(.,t)}\,dx\\
	&+\int_{\partial\Omega_t^C}\left[D_x\vec{\phi}^A_\varepsilon N_{\partial\Omega^C_t}\cdot\,\tr(\check{a}^2(s)\vec{\phi}^A_\varepsilon|_{(.,t)})\right]\,d\Hc^{N-1}=:(I)+(II)+(III).
	\end{align*}
	
	Due to integration by parts it holds $\int_\R\vec{\theta}_0'\cdot\vec{\theta}_0''=0$.
	Therefore one can estimate $(I)$-$(III)$ in the analogous way as in the scalar case with Lemma \ref{th_SE_vAC_phi_A}, cf.~the proof of Lemma \ref{th_SE_ACND_VxV}. 
\end{proof}

Next we analyze $\check{B}^C_{\varepsilon,t}$ on $\check{V}_{\varepsilon,t}^\perp\times \check{V}_{\varepsilon,t}^\perp$.
\begin{Lemma}\label{th_SE_vAC_VpxVp}
	There are $\check{\nu},\check{\varepsilon}_0>0$ such that
	\[
	\check{B}^C_{\varepsilon,t}(\vec{\psi},\vec{\psi})\geq
	\check{\nu}\left[\frac{1}{\varepsilon^2}\|\vec{\psi}\|_{L^2(\Omega_t^C)^m}^2+\|\nabla\vec{\psi}\|_{L^2(\Omega_t^C)^{N\times m}}^2\right]
	\]  
	for all $\vec{\psi}\in\check{V}_{\varepsilon,t}^\perp$ and $\varepsilon\in(0,\check{\varepsilon}_0]$, $t\in[0,T]$.
\end{Lemma}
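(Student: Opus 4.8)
<br>

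The plan is to mirror the scalar-case proof of Lemma \ref{th_SE_ACND_VpxVp} step by step, replacing scalar quantities by their vector-valued analogues and invoking the results of Section \ref{sec_SE_1Dvect} in place of those of Section \ref{sec_SE_1Dscal_pert}. As in the scalar case, it suffices to establish the lower estimate for the ``unperturbed'' form
\[
\check{\tilde{B}}_{\varepsilon,t}(\vec{\psi},\vec{\psi}):=\int_{\Omega^C_t}|\nabla\vec{\psi}|^2+\frac{1}{\varepsilon^2}(\vec{\psi},D^2W(\vec{\theta}_0|_{\check{\rho}_\varepsilon(.,t)})\vec{\psi})_{\R^m}\,dx\geq\frac{\check{\tilde{\nu}}}{\varepsilon^2}\|\vec{\psi}\|_{L^2(\Omega^C_t)^m}^2
\]
for all $\vec{\psi}\in\check{V}_{\varepsilon,t}^\perp$ and $\varepsilon$ small, since the passage from $\check{\tilde{B}}_{\varepsilon,t}$ to $\check{B}^C_{\varepsilon,t}$ (controlling the extra $\tfrac1\varepsilon\sum_{|\xi|=1}\partial^\xi D^2W(\vec{\theta}_0)(\vec{u}^C_1)^\xi$ term and the two gradient norms) is a Young-inequality argument identical to the one in \cite{AbelsMoser}, proof of Lemma 4.8.

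First I would transform the integral via $X(.,t)$ using Remark \ref{th_SE_ACND_sob_rem},~1. and Theorem \ref{th_coordND}, writing $\vec{\tilde\psi}_t:=\vec{\psi}|_{X(.,t)}\in H^1((-\delta,\delta)\times\hat\Sigma^\circ)^m$. Using $\nabla\vec{\psi}|_{X(.,t)}=\nabla r\,\partial_r\vec{\tilde\psi}_t+(D_xs)^\top\nabla_{\hat\Sigma}\vec{\tilde\psi}_t$ componentwise together with Theorem \ref{th_coordND}, a Taylor expansion of the metric coefficients in $r$ and Young's inequality, one gets the pointwise bound $|\nabla\vec{\psi}|^2|_{X(.,t)}\geq(1-Cr^2)|\partial_r\vec{\tilde\psi}_t|^2+c|\nabla_{\hat\Sigma}\vec{\tilde\psi}_t|^2$ in $\Omega^C_t$, exactly as in \eqref{eq_SE_ACND_VpxVp_2}. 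Then I would fix a small $\tilde\delta$ and split the $r$-integration into $I^\varepsilon_{s,t}:=(-\tilde\delta,\tilde\delta)+\varepsilon\check{h}_\varepsilon(s,t)$ and its complement $\hat I^\varepsilon_{s,t}$; on the complement, $D^2W(\vec{\theta}_0(\check{\rho}_\varepsilon))$ is uniformly positive definite (with bound $c_0>0$) for small $\varepsilon$, while on $I^\varepsilon_{s,t}$ one has $|r|\le 2\tilde\delta$, making $Cr^2$ as small as needed.

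The core of the argument is then reduced, fiberwise in $s\in\hat\Sigma$, to the perturbed vector-valued 1D operator. Setting $F_{\varepsilon,s,t}(z):=\varepsilon(z+\check{h}_\varepsilon(s,t))$, $\tilde J_{\varepsilon,s,t}(z):=J_t(F_{\varepsilon,s,t}(z),s)$, $I_{\varepsilon,\tilde\delta}:=(-\tilde\delta/\varepsilon,\tilde\delta/\varepsilon)$ and $\vec{\Psi}_{\varepsilon,s,t}:=\sqrt\varepsilon\,\vec{\tilde\psi}_t(F_{\varepsilon,s,t}(.),s)\in H^1(I_{\varepsilon,\tilde\delta})^m$, Lemma \ref{th_SE_1Dtrafo_remainder},~1. turns the inner integral over $I^\varepsilon_{s,t}$ into $1/\varepsilon^2$ times $\check{B}^{\tilde c}_{\varepsilon,s,t}(\vec{\Psi}_{\varepsilon,s,t},\vec{\Psi}_{\varepsilon,s,t})$, a form of the type treated in Section \ref{sec_SE_1Dvect_pert} (with the weight $\tilde J_{\varepsilon,s,t}$, which satisfies \eqref{eq_SE_J} uniformly, the coefficient $\tilde c=4C\tilde\delta^2$ in front of the derivative term, and $\vec{\phi}_\varepsilon$ built from $\vec{\theta}_0$ with perturbation $\check{p}_\varepsilon=0$, $\vec{q}_\varepsilon$ absorbing the $\Oc(\varepsilon^2)$-part of $\vec{u}^A_\varepsilon$ and the $h_2$-shift). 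Here the integral characterization of $\check{V}_{\varepsilon,t}^\perp$ from Lemma \ref{th_SE_vAC_split_L2},~2. says precisely that $\vec{\Psi}_{\varepsilon,s,t}$ is $L^2$-orthogonal to $\check c_{0,\varepsilon}\vec{\Psi}^1_{\varepsilon,s,t}$ modulo a remainder on $\hat I^\varepsilon_{s,t}$ (using that $\vec{\phi}^A_\varepsilon$ agrees with $\sqrt\varepsilon^{-1}\vec{\theta}_0'\check q+\Oc(\sqrt\varepsilon)$ on $I^\varepsilon_{s,t}$ and the estimate on $\vec{\Psi}^R_\varepsilon$ from Theorem \ref{th_SE_1Dvect_pert2},~2.). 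Then the second-eigenvalue gap $\check\lambda^2_\varepsilon\ge\check\nu_2>0$ from Theorem \ref{th_SE_1Dvect_pert2},~3. applied to the operator $\check{\Lc}^0_{\varepsilon,s,t}:=-\tilde J_{\varepsilon,s,t}^{-1}\frac{d}{dz}(\tilde J_{\varepsilon,s,t}\frac{d}{dz})+D^2W(\vec{\theta}_0)$ yields, after choosing $\tilde\delta$ small enough that $\tilde c$ is negligible,
\[
\check{B}^{\tilde c}_{\varepsilon,s,t}(\vec{\Psi}_{\varepsilon,s,t},\vec{\Psi}_{\varepsilon,s,t})\geq\overline c\,\|\vec{\Psi}_{\varepsilon,s,t}\|^2_{L^2(I_{\varepsilon,\tilde\delta},\tilde J_{\varepsilon,s,t})}-\frac{c_0}{2}\|\vec{\tilde\psi}_t(.,s)\|^2_{L^2(\hat I^\varepsilon_{s,t},J_t(.,s))},
\]
and integrating in $s$ and recombining with the $\hat I^\varepsilon_{s,t}$-contribution (whose $c_0$-weight dominates the borrowed $\tfrac{c_0}{2}$) gives the desired $\tfrac{\check{\tilde\nu}}{\varepsilon^2}$-coercivity.

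The main obstacle is making the fiberwise orthogonality argument rigorous: one must check that the vector-valued perturbed eigenfunction estimates of Section \ref{sec_SE_1Dvect_pert} are genuinely applicable with constants uniform in $s\in\hat\Sigma$ and $t\in[0,T]$ (the parameters $\delta,\tilde\delta,\overline C_0,c_1,C_2,\check C_3,\check C_4$ there must be bounded uniformly, which follows from the assumed bounds on $\check h_j$, $J$ and $\vec{u}^C_1$ together with Corollary \ref{th_SE_1Dprelim2}), and that the measurability/integrability in $s$ of all the fiber quantities is in order — this is handled via Remark \ref{th_SE_ACND_sob_rem},~4. and Fubini exactly as in the scalar proof. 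The genuinely new input compared with Lemma \ref{th_SE_ACND_VpxVp} is only the replacement of the scalar comparison-principle-based spectral facts by the contradiction-argument-based vector-valued analogues from Lemma \ref{th_SE_1Dvect_unpert}, Lemma \ref{th_SE_1Dvect_pert1} and Theorem \ref{th_SE_1Dvect_pert2}, all of which rest on the standing assumption $\dim\ker\check L_0=1$; once those are in hand, every remaining step is a transcription of the scalar argument in \cite{AbelsMoser}, proof of Lemma 4.8, with scalar products and Frobenius norms in $\R^m$ in place of scalar multiplication.
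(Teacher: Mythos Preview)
Your proposal is correct and follows essentially the same approach as the paper: reduce to the unperturbed form $\check{B}_{\varepsilon,t}$, transform via $X(.,t)$, use the pointwise gradient bound \eqref{eq_SE_vAC_VpxVp_2}, split the $r$-integration, and reduce fiberwise to the 1D estimate \eqref{eq_SE_vAC_VpxVp_3}, which is then handled via the integral characterization in Lemma \ref{th_SE_vAC_split_L2},~2.~and the vector-valued spectral gap of Theorem \ref{th_SE_1Dvect_pert2}. The paper's proof is in fact somewhat terser than yours, simply pointing to the scalar 2D-case in \cite{AbelsMoser}, Lemma 4.8, for the final step.
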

\begin{proof}
	As in the scalar 2D-case, cf.~the proof of Lemma 4.8 in \cite{AbelsMoser}, it is enough to show for some $\nu, \varepsilon_0>0$ 
	\begin{align}\label{eq_SE_vAC_VpxVp_1}
	\check{B}_{\varepsilon,t}(\vec{\psi},\vec{\psi}):=\int_{\Omega^C_t}|\nabla\vec{\psi}|^2+\frac{1}{\varepsilon^2}(\vec{\psi},D^2W(\vec{\theta}_0|_{\check{\rho}_\varepsilon(.,t)})\vec{\psi})_{\R^m}\,dx
	\geq\frac{\nu}{\varepsilon^2}\|\vec{\psi}\|_{L^2(\Omega^C_t)^m}^2
	\end{align}
	for all $\vec{\psi}\in \check{V}_{\varepsilon,t}^\perp$ and $\varepsilon\in(0,\varepsilon_0]$, $t\in[0,T]$.
	
	Analogously to the scalar case we prove \eqref{eq_SE_vAC_VpxVp_1} by reducing to Neumann boundary problems in normal direction. Therefore let $\check{\psi}_t:=\vec{\psi}|_{X(.,t)}$ for $\vec{\psi}\in \check{V}_{\varepsilon,t}^\perp$. Then $\check{\psi}_t\in H^1((-\delta,\delta)\times\hat{\Sigma}^\circ)^m$ and
	\begin{align}\label{eq_SE_vAC_VpxVp_2}
	|\nabla\vec{\psi}|^2|_{X(.,t)}\geq
	(1-Cr^2)|\partial_r\check{\psi}_t|^2 
	+c|\nabla_{\hat{\Sigma}}\check{\psi}_t|^2
	\end{align}
	in $\Omega_t^C$ for some $c,C>0$ due to \eqref{eq_SE_ACND_VpxVp_2} for every component. We do not use the second term here. To get $Cr^2$ small enough (which will be precise later), we fix $\check{\delta}>0$ small and estimate separately for $r$ in
	\[
	I_{s,t}^\varepsilon:=(-\check{\delta},\check{\delta})+\varepsilon \check{h}_\varepsilon(s,t)\quad\text{ and }\quad \check{I}_{s,t}^\varepsilon:=(-\delta,\delta)\setminus I_{s,t}^\varepsilon.
	\]
	If $\varepsilon_0=\varepsilon_0(\check{\delta},\check{C}_0)>0$ is small, then for all $\varepsilon\in(0,\varepsilon_0]$ and $s\in\hat{\Sigma}$, $t\in[0,T]$ it holds
	\[
	D^2W(\vec{\theta}_0(\check{\rho}_\varepsilon|_{\overline{X}(r,s,t)}))\geq \check{c}_0I\quad \text{ for }r\in\check{I}_{s,t}^{\varepsilon},\quad
	|r|\leq \check{\delta}+\varepsilon|\check{h}_\varepsilon(s,t)|\leq 2\check{\delta}\quad \text{ for }r\in I_{s,t}^{\varepsilon},
	\]
	where $\check{c}_0>0$. Let $\check{c}=\check{c}(\check{\delta}):=4C\check{\delta}^2$ with $C$ from \eqref{eq_SE_vAC_VpxVp_2}. Then for all $\varepsilon\in(0,\varepsilon_0]$ we obtain
	\begin{align*}
	&\check{B}_{\varepsilon,t}(\vec{\psi},\vec{\psi})\geq\int_{\hat{\Sigma}}\int_{\check{I}_{s,t}^\varepsilon}\frac{\check{c}_0}{\varepsilon^2}|\check{\psi}_t|^2 J_t|_{(r,s)}\,dr\,d\Hc^{N-1}(s)\\
	&+\int_{\hat{\Sigma}}\int_{I_{s,t}^\varepsilon}\left[(1-\check{c})|\partial_r\check{\psi}_t|^2+\frac{1}{\varepsilon^2}(\check{\psi}_t,D^2W(\vec{\theta}_0(\check{\rho}_\varepsilon|_{\overline{X}(.,t)}))\check{\psi}_t)_{\R^m}\right] J_t|_{(r,s)}\,dr\,d\Hc^{N-1}(s).
	\end{align*}
	
	We set $\check{F}_{\varepsilon,s,t}(z):=\varepsilon(z+\check{h}_\varepsilon(s,t))$ and $\check{J}_{\varepsilon,s,t}(z):=J_t(\check{F}_{\varepsilon,s,t}(z),s)$ for all $z\in[-\frac{\delta}{\varepsilon},\frac{\delta}{\varepsilon}]-\check{h}_\varepsilon(s,t)$ and $(s,t)\in\Sigma\times[0,T]$. Moreover, we define $I_{\varepsilon,\check{\delta}}:=(-\frac{\check{\delta}}{\varepsilon},\frac{\check{\delta}}{\varepsilon})$ 
	and $\vec{\Psi}_{\varepsilon,s,t}:=\sqrt{\varepsilon}
	\check{\psi}_t(\check{F}_{\varepsilon,s,t}(.),s)$. Due to Remark \ref{th_SE_ACND_sob_rem} it holds $\vec{\Psi}_{\varepsilon,s,t}\in H^1(I_{\varepsilon,\check{\delta}})^m$ for a.e.~$s\in\hat{\Sigma}$ and all $t\in[0,T]$. Together with Lemma \ref{th_SE_1Dtrafo_remainder},~1.~it follows that the second inner integral in the estimate above equals $1/\varepsilon^2$ times
	\begin{align*}
	\check{B}_{\varepsilon,s,t}^{\check{c}}(\vec{\Psi}_{\varepsilon,s,t},\vec{\Psi}_{\varepsilon,s,t})
	:=\int_{I_{\varepsilon,\check{\delta}}}\left[(1-\check{c})|\frac{d}{dz}\vec{\Psi}_{\varepsilon,s,t}|^2+(\vec{\Psi}_{\varepsilon,s,t},D^2W(\vec{\theta}_0(z))\vec{\Psi}_{\varepsilon,s,t}))_{\R^m}\right]\check{J}_{\varepsilon,s,t}\,dz
	\end{align*} 
	for a.e.~$s\in\hat{\Sigma}$ and all $t\in[0,T]$.
	Therefore \eqref{eq_SE_vAC_VpxVp_1} follows if we show with the same $\check{c}_0$ as above
	\begin{align}\label{eq_SE_vAC_VpxVp_3}
	\check{B}_{\varepsilon,s,t}^{\check{c}}(\vec{\Psi}_{\varepsilon,s,t},\vec{\Psi}_{\varepsilon,s,t})
	\geq \overline{c}\|\vec{\Psi}_{\varepsilon,s,t}\|^2_{L^2(I_{\varepsilon,\check{\delta}},\check{J}_{\varepsilon,s,t})^m}-\frac{\check{c}_0}{2}\|\check{\psi}_t(.,s)\|^2_{L^2(\check{I}_{s,t}^\varepsilon,J_t(.,s))^m}
	\end{align}
	for $\varepsilon\in(0,\varepsilon_0]$,  a.e.~$s\in\hat{\Sigma}$ and all $t\in[0,T]$ with some $\varepsilon_0,\overline{c}>0$ independent of $\varepsilon,s,t$.
	
	The estimate \eqref{eq_SE_vAC_VpxVp_3} follows for suitable small $\check{\delta}$ analogously as in the scalar 2D-case, cf.~the proof of Lemma 4.8 in \cite{AbelsMoser}. One uses the integral characterization for $\vec{\psi}\in \check{V}_{\varepsilon,t}^\perp$ due to Lemma \ref{th_SE_vAC_split_L2},~2.~and spectral properties for the operator
	\[
	\check{\Lc}_{\varepsilon,s,t}^0:=-(\check{J}_{\varepsilon,s,t})^{-1}\frac{d}{dz}\left(\check{J}_{\varepsilon,s,t}\frac{d}{dz}\right)+D^2W(\vec{\theta}_0)
	\]
	on $H^2(I_{\varepsilon,\check{\delta}})^m$ with homogeneous Neumann boundary condition, see Section \ref{sec_SE_1Dvect_pert} and in particular Theorem \ref{th_SE_1Dvect_pert2}. 
\end{proof}

For $\check{B}_{\varepsilon,t}^C$ on $\check{V}_{\varepsilon,t}\times \check{V}_{\varepsilon,t}^\perp$ we obtain
\begin{Lemma}\label{th_SE_vAC_VxVp}
	There are $\varepsilon_0,C>0$ such that
	\[
	|\check{B}_{\varepsilon,t}^C(\vec{\phi},\vec{\psi})|\leq\frac{C}{\varepsilon}\|\vec{\phi}\|_{L^2(\Omega_t^C)^m}\|\vec{\psi}\|_{L^2(\Omega_t^C)^m}+\frac{1}{4}\check{B}_{\varepsilon,t}^C(\vec{\psi},\vec{\psi})+C\varepsilon\|\check{a}\|_{H^1(\hat{\Sigma}^\circ)}^2
	\]
	for all $\vec{\phi}=\check{a}(s(.,t))\vec{\phi}^A_\varepsilon(.,t)\in \check{V}_{\varepsilon,t}$, $\vec{\psi}\in \check{V}_{\varepsilon,t}^\perp$ and $\varepsilon\in(0,\varepsilon_0]$, $t\in[0,T]$.
\end{Lemma}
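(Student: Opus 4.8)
The plan is to mirror, mutatis mutandis, the proof of the scalar analogue Lemma \ref{th_SE_ACND_VxVp}, replacing everywhere the scalar product of scalars by the $\R^m$-inner product and using the vector-valued tools (Corollary \ref{th_coordND_nabla_tau_n} for $\R^m$-valued functions, Lemma \ref{th_SE_vAC_phi_A}, Lemma \ref{th_SE_vAC_split_L2}, Lemma \ref{th_SE_vAC_VpxVp}, and the vector-valued auxiliary trace estimate — an $m$-fold copy of Lemma \ref{th_SE_ACND_intpol_tr} applied component-wise). First I would write $\nabla\vec\phi=\nabla(\check a|_{s(.,t)})\otimes\vec\phi^A_\varepsilon(.,t)+\check a|_{s(.,t)}\nabla\vec\phi^A_\varepsilon(.,t)$ (componentwise $\nabla\phi_j=\nabla(\check a|_{s})\phi^A_{\varepsilon,j}+\check a|_s\nabla\phi^A_{\varepsilon,j}$) and integrate by parts in the term containing $\nabla(\check a(s))$, using Remark \ref{th_SE_ACND_sob_rem},~2. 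This yields a decomposition $\check B_{\varepsilon,t}^C(\vec\phi,\vec\psi)=(I)+(II)+(III)$ with
\[
(I)=\int_{\Omega_t^C}\check a(s)\,\vec\psi\cdot\check{\Lc}_{\varepsilon,t}^C\vec\phi^A_\varepsilon\,dx,\quad
(II)=\int_{\partial\Omega_t^C}(D_x\vec\phi^A_\varepsilon N_{\partial\Omega^C_t})\cdot\tr[\check a(s(.,t))\vec\psi]\,d\Hc^{N-1},
\]
and $(III)=\int_{\Omega_t^C}\sum_{j=1}^m\nabla(\check a(s))\cdot\bigl[\phi^A_{\varepsilon,j}\nabla\psi_j-\nabla\phi^A_{\varepsilon,j}\,\psi_j\bigr]\,dx$, exactly as in the scalar case but with the pointwise scalar product replaced by the $\R^m$-scalar product.

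For $(I)$ I would use the Hölder inequality and an integral transformation via $X(.,t)$ (Remark \ref{th_SE_ACND_sob_rem},~1., and the Fubini theorem) to reduce the $L^2$-norm of $\check a(s|_{(.,t)})\check{\Lc}_{\varepsilon,t}^C\vec\phi^A_\varepsilon(.,t)$ to a normal-direction integral, which Lemma \ref{th_SE_vAC_phi_A} and Lemma \ref{th_SE_1Dtrafo_remainder} bound by $C/\varepsilon^2$; then Lemma \ref{th_SE_vAC_split_L2},~1., turns $\|\check a\|_{L^2(\hat\Sigma)}$ into $\|\vec\phi\|_{L^2(\Omega_t^C)^m}$, giving the $\tfrac{C}{\varepsilon}\|\vec\phi\|_{L^2}\|\vec\psi\|_{L^2}$ contribution. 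For $(II)$ I would apply Hölder, transform the boundary integral to the boundary of $(-\delta,\delta)\times\hat\Sigma^\circ$ (Theorem \ref{th_Leb_trafo_mfd}, Theorem \ref{th_coordND}, Remark \ref{th_SobMfd_LipRem},~3.), estimate $\|\check a\,D_x\vec\phi^A_\varepsilon N_{\partial\Omega^C_t}\|_{L^2(\partial\Omega_t^C)^m}$ by $C\|\check a\|_{H^1(\hat\Sigma^\circ)}$ using Lemma \ref{th_SE_vAC_phi_A}, Lemma \ref{th_SE_1Dtrafo_remainder} and the trace Theorem \ref{th_SobMfd_LipThm}, control $\|\tr\vec\psi\|_{L^2(\partial\Omega_t^C)^m}$ by the vector-valued analogue of Lemma \ref{th_SE_ACND_intpol_tr}, and then absorb via Young's inequality and Lemma \ref{th_SE_vAC_VpxVp} into $\tfrac18\check B_{\varepsilon,t}^C(\vec\psi,\vec\psi)+C\varepsilon\|\check a\|_{H^1(\hat\Sigma^\circ)}^2$.

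The main obstacle — and the step I expect to require genuine care — is $(III)$, exactly as in the scalar case. Here I would transform to $(III)=\int_{\hat\Sigma}\nabla_{\hat\Sigma}\check a(s)\cdot\vec g_t(s)\,d\Hc^{N-1}(s)$ with $\vec g_t$ the obvious normal integral, split $\nabla\vec\psi|_{X(.,t)}=\nabla r\,\partial_r\check\psi_t+(D_xs)^\top\nabla_{\hat\Sigma}\check\psi_t$ (Corollary \ref{th_coordND_nabla_tau_n}, Remark \ref{th_SE_ACND_sob_rem},~3.), and crucially exploit the special structure of $\vec\phi^A_\varepsilon$: the formula \eqref{eq_SE_vAC_nabla_vI_vC} together with $\nabla_\tau\vec\phi^A_\varepsilon|_{\overline X}=\nabla\vec\phi^A_\varepsilon|_{\overline X}-\nabla r\,\partial_r(\vec\phi^A_\varepsilon|_{\overline X})$ shows that $\nabla_\tau\vec\phi^A_\varepsilon$ has $\varepsilon$-order better by one than $\nabla\vec\phi^A_\varepsilon$ (the exact vector-valued counterpart of \eqref{eq_SE_ACND_dsigma_phiA}). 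Then I would use $\vec\psi\in\check V_{\varepsilon,t}^\perp$ to apply $\nabla_{\hat\Sigma}$ to the orthogonality identity in Lemma \ref{th_SE_vAC_split_L2},~2.\ (commuting $\nabla_{\hat\Sigma}$ with the normal integral via Lemma \ref{th_SobMfd_prod_set}), use $D_xs\,\nabla r|_{\overline X_0(s,t)}=0$, Lemma \ref{th_SE_1Dtrafo_remainder}, and Hölder to get $|\vec g_t(s)|\le C\|\check\psi_t(.,s)\|_{L^2(-\delta,\delta;J_t)^m}+C\varepsilon\|(\partial_r,\nabla_{\hat\Sigma})\check\psi_t(.,s)\|_{L^2(-\delta,\delta;J_t)^{(N)\cdot m}}$; finally Corollary \ref{th_coordND_nabla_tau_n} gives $|(\partial_r,\nabla_{\hat\Sigma})\check\psi_t|\le C|\nabla\vec\psi|_{X(.,t)}|$, and Young's inequality plus Lemma \ref{th_SE_vAC_VpxVp} and Lemma \ref{th_SobMfd_def_lemma} complete the bound. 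No step is fundamentally new relative to Section \ref{sec_SE_ACND_BLF}; the only thing to watch is that the multi-index derivatives $\sum_{\xi}\partial^\xi D^2W(\vec\theta_0)$ appearing in $\check{\Lc}_{\varepsilon,t}^C$ are smooth and bounded on the relevant range, so the potential part of $\check B_{\varepsilon,t}^C$ is handled just like $f'''(\theta_0)u^C_1$ was.
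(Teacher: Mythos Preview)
Your proposal is correct and follows essentially the same approach as the paper: the decomposition $\check B_{\varepsilon,t}^C(\vec\phi,\vec\psi)=(I)+(II)+(III)$ via integration by parts, together with the same tools (Lemma \ref{th_SE_vAC_phi_A}, Lemma \ref{th_SE_ACND_intpol_tr}, differentiation of the orthogonality identity from Lemma \ref{th_SE_vAC_split_L2},~2., and the structure of $\vec\phi^A_\varepsilon$ via \eqref{eq_SE_vAC_nabla_vI_vC}), is precisely what the paper does---it simply refers back to the scalar proof of Lemma \ref{th_SE_ACND_VxVp} and names these ingredients.
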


\begin{proof}
	We rewrite $\check{B}_{\varepsilon,t}^C(\vec{\phi},\vec{\psi})$ in order to use Lemma \ref{th_SE_vAC_phi_A} and Lemma \ref{th_SE_vAC_split_L2}. Using notation as in the beginning of the proof of Lemma \ref{th_SE_vAC_VxV} we obtain with integration by parts
	\begin{align*}
	\int_{\Omega_t^C}\check{a}(s)\nabla\vec{\phi}^A_\varepsilon|_{(.,t)}:\nabla\vec{\psi}\,dx
	=-\sum_{j=1}^m\int_{\Omega_t^C}\left[\nabla(\check{a}(s))\cdot\nabla\phi^A_{\varepsilon,j}+\check{a}(s)\Delta\phi^A_{\varepsilon,j}|_{(.,t)}\right]\psi_j\,dx\\
	+\int_{\partial\Omega_t^C}D_x\phi^A_\varepsilon|_{(.,t)}N_{\partial\Omega^C_t}\cdot\tr\left[\check{a}(s(.,t))\vec{\psi}\right]\,d\Hc^{N-1}.
	\end{align*}
	Therefore it follows that
	\begin{align*}
	\check{B}_{\varepsilon,t}^C(\vec{\phi},\vec{\psi})
	=\int_{\Omega_t^C} \check{a}(s)|_{(.,t)}\vec{\psi}\cdot\check{\Lc}_{\varepsilon,t}^C\vec{\phi}^A_\varepsilon|_{(.,t)}\,dx
	+\int_{\partial\Omega_t^C}D_x\phi^A_\varepsilon|_{(.,t)}N_{\partial\Omega^C_t}\cdot\tr\left[\check{a}(s(.,t))\vec{\psi}\right]\,d\Hc^{N-1}\\
	+\int_{\Omega_t^C}\nabla(\check{a}(s))|_{(.,t)}\cdot\left[\sum_{j=1}^m\nabla\psi_j\,\phi^A_{\varepsilon,j}|_{(.,t)}-\psi_j\nabla\phi^A_{\varepsilon,j}|_{(.,t)}\right]\,dx=:(I)+(II)+(III).
	\end{align*}
	
	The terms $(I)$-$(III)$ can be estimated in the analogous way as in the scalar case, cf.~the proof of Lemma \ref{th_SE_ACND_VxVp}. The most important ingredients for the estimate of $(I)$ and $(II)$ are Lemma \ref{th_SE_vAC_phi_A} and Lemma \ref{th_SE_ACND_intpol_tr}. For $(III)$ one essentially uses the integral characterization for $\vec{\psi}\in\check{V}_{\varepsilon,t}^\perp$ from Lemma \ref{th_SE_vAC_split_L2},~2.~(by differentiating it) and the structure of $\vec{\phi}^A_\varepsilon$.
\end{proof}

Finally, we combine Lemma \ref{th_SE_vAC_VxV}-\ref{th_SE_vAC_VxVp}.

\begin{Theorem}\label{th_SE_vAC_cp2}
	There are $\check{\varepsilon}_0, \check{C}, \check{c}_0>0$ such that for all $\varepsilon\in(0,\check{\varepsilon}_0]$, $t\in[0,T]$ and every $\vec{\psi}\in H^1(\Omega_t^C)^m$ with $\vec{\psi}|_{X(.,s,t)}=0$ for a.e.~$s\in Y(\partial\Sigma\times[\frac{3}{2}\mu_0,2\mu_0])$ it holds
	\[
	\check{B}_{\varepsilon,t}^C(\vec{\psi},\vec{\psi})\geq -C\|\vec{\psi}\|_{L^2(\Omega_t^C)^m}^2+c_0\|\nabla_\tau\vec{\psi}\|_{L^2(\Omega_t^C)^{N\times m}}^2.
	\] 
\end{Theorem}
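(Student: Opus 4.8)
The plan is to mirror the scalar argument in the proof of Theorem~\ref{th_SE_ACND_cp2}, replacing every scalar quantity by its vector-valued counterpart and invoking the results of Section~\ref{sec_SE_1Dvect} in place of those of Section~\ref{sec_SE_1Dscal}. First I would use Lemma~\ref{th_SE_vAC_split_L2} to decompose an arbitrary $\vec{\psi}\in\check{H}^1(\Omega^C_t)$ uniquely as $\vec{\psi}=\vec{\phi}+\vec{\phi}^\perp$ with $\vec{\phi}=[\check{a}(s)\vec{\phi}^A_\varepsilon]|_{(.,t)}\in\check{V}_{\varepsilon,t}$ and $\vec{\phi}^\perp\in\check{V}_{\varepsilon,t}^\perp$, then expand $\check{B}_{\varepsilon,t}^C(\vec{\psi},\vec{\psi})=\check{B}_{\varepsilon,t}^C(\vec{\phi},\vec{\phi})+2\check{B}_{\varepsilon,t}^C(\vec{\phi},\vec{\phi}^\perp)+\check{B}_{\varepsilon,t}^C(\vec{\phi}^\perp,\vec{\phi}^\perp)$. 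Applying Lemma~\ref{th_SE_vAC_VxV} to the first term, Lemma~\ref{th_SE_vAC_VxVp} to the cross term, and absorbing the cross-term contributions with Young's inequality and Lemma~\ref{th_SE_vAC_VpxVp}, one obtains for $\varepsilon_0>0$ small and all $\varepsilon\in(0,\varepsilon_0]$, $t\in[0,T]$
\[
\check{B}_{\varepsilon,t}^C(\vec{\psi},\vec{\psi})\geq-C\|\vec{\phi}\|_{L^2(\Omega_t^C)^m}^2+\frac{\check{\nu}}{4\varepsilon^2}\|\vec{\phi}^\perp\|_{L^2(\Omega_t^C)^m}^2+\frac{c_0}{2}\|\check{a}\|_{H^1(\hat{\Sigma}^\circ)}^2+\frac{\check{\nu}}{2}\|\nabla(\vec{\phi}^\perp)\|_{L^2(\Omega_t^C)^{N\times m}}^2,
\]
with $\check{\nu}$ from Lemma~\ref{th_SE_vAC_VpxVp}.

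Next I would incorporate the tangential-gradient term. By the triangle inequality $\|\nabla_\tau\vec{\psi}\|_{L^2(\Omega_t^C)^{N\times m}}\leq\|\nabla_\tau\vec{\phi}\|_{L^2(\Omega_t^C)^{N\times m}}+\|\nabla_\tau(\vec{\phi}^\perp)\|_{L^2(\Omega_t^C)^{N\times m}}$. Using the product-rule decomposition $\nabla_\tau\vec{\phi}|_{X(r,s,t)}=(D_xs)^\top|_{\overline{X}(r,s,t)}(\nabla_{\hat{\Sigma}}\check{a})|_s\,\vec{\phi}^A_\varepsilon|_{\overline{X}(r,s,t)}+\check{a}|_s\nabla_\tau\vec{\phi}^A_\varepsilon|_{\overline{X}(r,s,t)}$ together with the explicit formula for $\nabla_\tau\vec{\phi}^A_\varepsilon$ coming from \eqref{eq_SE_vAC_nabla_vI_vC} (after subtracting the normal part via Corollary~\ref{th_coordND_nabla_tau_n}), an integral transformation by Remark~\ref{th_SE_ACND_sob_rem},~1., the Fubini Theorem, Lemma~\ref{th_SE_1Dtrafo_remainder} and Lemma~\ref{th_SobMfd_def_lemma},~3., I would obtain $\|\nabla_\tau\vec{\phi}\|_{L^2(\Omega_t^C)^{N\times m}}\leq C\|\check{a}\|_{H^1(\hat{\Sigma}^\circ)}$; and by Remark~\ref{th_SE_ACND_sob_rem},~3.~together with Corollary~\ref{th_coordND_nabla_tau_n} one has $\|\nabla_\tau(\vec{\phi}^\perp)\|_{L^2(\Omega_t^C)^{N\times m}}\leq C\|\nabla(\vec{\phi}^\perp)\|_{L^2(\Omega_t^C)^{N\times m}}$, so $\|\nabla_\tau\vec{\psi}\|_{L^2(\Omega_t^C)^{N\times m}}^2\leq C(\|\check{a}\|_{H^1(\hat{\Sigma}^\circ)}^2+\|\nabla(\vec{\phi}^\perp)\|_{L^2(\Omega_t^C)^{N\times m}}^2)$. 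Combining with the inequality above and with $\|\vec{\psi}\|_{L^2(\Omega_t^C)^m}^2\leq C(\|\vec{\phi}\|_{L^2(\Omega_t^C)^m}^2+\|\vec{\phi}^\perp\|_{L^2(\Omega_t^C)^m}^2)$ yields the claimed estimate.

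I do not expect genuinely new difficulties here: all the vector-valued analogues of the scalar ingredients have already been prepared, namely the eigenfunction construction in Section~\ref{sec_SE_vAC_asym}, the splitting lemma~\ref{th_SE_vAC_split_L2}, and the three bilinear-form estimates Lemmas~\ref{th_SE_vAC_VxV}--\ref{th_SE_vAC_VxVp}, the last of which in turn relies on the $1$D spectral results Theorem~\ref{th_SE_1Dvect_pert2}. The main place where one must be slightly careful is bookkeeping: the image space is now $\R^m$, so the bilinear form couples components through $D^2W(\vec{\theta}_0)$ and through $\sum_{\xi}\partial^\xi D^2W(\vec{\theta}_0)(\vec{u}^C_1)^\xi$ rather than being diagonal, and the weighted $L^2$- and $H^1$-norms carry the Euclidean norm on $\R^m$; one also needs the assumption $\dim\ker\check{L}_0=1$ (Remark~\ref{th_ODE_vect_lin_op_rem}) in order that Lemma~\ref{th_SE_vAC_VpxVp} provide the crucial coercivity $\check{B}_{\varepsilon,t}^C(\vec{\psi},\vec{\psi})\gtrsim\varepsilon^{-2}\|\vec{\psi}\|_{L^2}^2+\|\nabla\vec{\psi}\|_{L^2}^2$ on $\check{V}_{\varepsilon,t}^\perp$. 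Since the constants in all the cited lemmas are uniform in $t\in[0,T]$ and independent of the $\check{h}_j$ for fixed $\check{C}_0$, the final $\check{\varepsilon}_0,\check{C},\check{c}_0$ inherit this uniformity, which gives exactly the statement of Theorem~\ref{th_SE_vAC_cp2}.
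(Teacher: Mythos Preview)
Your proposal is correct and follows essentially the same approach as the paper: decompose $\vec{\psi}=\vec{\phi}+\vec{\phi}^\perp$ via Lemma~\ref{th_SE_vAC_split_L2}, combine Lemmas~\ref{th_SE_vAC_VxV}--\ref{th_SE_vAC_VxVp} with Young's inequality to obtain the intermediate lower bound for $\check{B}_{\varepsilon,t}^C(\vec{\psi},\vec{\psi})$, and then absorb $\|\nabla_\tau\vec{\psi}\|_{L^2}^2$ using the bound $\|\nabla_\tau\vec{\psi}\|_{L^2}^2\leq C(\|\check{a}\|_{H^1(\hat{\Sigma}^\circ)}^2+\|\nabla(\vec{\phi}^\perp)\|_{L^2}^2)$ coming from the structure of $\vec{\phi}^A_\varepsilon$ and Corollary~\ref{th_coordND_nabla_tau_n}. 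The only minor simplification is that since the splitting is $L^2$-orthogonal one has $\|\vec{\phi}\|_{L^2}^2\leq\|\vec{\psi}\|_{L^2}^2$ directly, so the extra triangle-type estimate you invoke at the end is not needed.
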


\begin{Remark}\upshape\begin{enumerate}
		\item The estimate can be refined, cf.~the proof below.
		\item Theorem \ref{th_SE_vAC_cp2} directly yields Theorem \ref{th_SE_vAC_cp}, cf.~the beginning of Section \ref{sec_SE_vAC_outline}.
	\end{enumerate}
\end{Remark}

\begin{proof}[Proof of Theorem \ref{th_SE_vAC_cp2}]
	Let $\vec{\psi}\in\check{H}^1(\Omega_t^C)^m$. Because of Lemma \ref{th_SE_vAC_split_L2} we can uniquely write
	\[
	\vec{\psi}=\vec{\phi}+\vec{\phi}^\perp\quad\text{ with } \vec{\phi}=[\check{a}(s)\vec{\phi}^A_\varepsilon]|_{(.,t)}\in \check{V}_{\varepsilon,t}\text{ and }
	\vec{\phi}^\perp\in \check{V}_{\varepsilon,t}^\perp.
	\]
	Analogously to the scalar case, cf.~the proof of Theorem \ref{th_SE_ACND_cp2}, we obtain from Lemma \ref{th_SE_vAC_VxV}-\ref{th_SE_vAC_VxVp}:
	\[
	\check{B}_{\varepsilon,t}^C(\vec{\psi},\vec{\psi})\geq-C\|\vec{\phi}\|_{L^2(\Omega_t^C)^m}^2+\frac{\check{\nu}}{4\varepsilon^2}\|\vec{\phi}^\perp\|_{L^2(\Omega_t^C)^m}^2+\frac{c_0}{2}\|\check{a}\|_{H^1(\hat{\Sigma}^\circ)}^2+\frac{\check{\nu}}{2}\|\nabla(\vec{\phi}^\perp)\|_{L^2(\Omega_t^C)^{N\times m}}^2
	\]
	for all $\varepsilon\in(0,\varepsilon_0]$ and $t\in[0,T]$, where $\varepsilon_0>0$ is small and $\check{\nu}$ is as in Lemma \ref{th_SE_vAC_VpxVp}. Moreover, as in the scalar case it follows that
	\[
	\|\nabla_\tau\vec{\psi}\|_{L^2(\Omega_t^C)^m}^2\leq C(\|\check{a}\|_{H^1(\hat{\Sigma}^\circ)}^2+\|\nabla(\vec{\phi}^\perp)\|_{L^2(\Omega_t^C)^{N\times m}}^2).
	\]
	Together with the estimate for $\check{B}_{\varepsilon,t}^C$ this shows the claim.\end{proof}

\section{Difference Estimates and Proofs of the Convergence Theorems}
\label{sec_DC}
In this section we estimate the difference of the exact and approximate solutions with a Gronwall-type argument in both cases. This is the second step in the method by de Mottoni and Schatzman \cite{deMS}. Here the major ingredients are the spectral estimates from Sections \ref{sec_SE_ACND}-\ref{sec_SE_vAC}. Moreover, we have to control certain nonlinear terms stemming from differences of potential terms. We will estimate these with interpolation inequalities. Therefore as preparation we prove uniform a priori bounds for exact classical solutions of \hyperlink{AC}{(AC)} and \hyperlink{vAC}{(vAC)} in Sections \ref{sec_DC_prelim_bdd_scal} and \ref{sec_DC_prelim_bdd_vect}, respectively. Moreover we recall some Gagliardo-Nirenberg estimates in Section \ref{sec_DC_prelim_GN}. For Allen-Cahn type models such uniform bounds in $\varepsilon$ for the exact solutions is typical, see de Mottoni, Schatzman \cite{deMS}, Section 6 for the standard Allen-Cahn equation as well as \cite{ALiu}, Remark 1.2 and \cite{ALiu}, Section 5.2 for the Allen-Cahn equation coupled with the Stokes system. In Section \ref{sec_DC_ACND} we prove the difference estimate in the scalar case and Theorem \ref{th_AC_conv}. This is similar to Section 5 and Section 6 in \cite{AbelsMoser}, but more technical. Finally, in Section \ref{sec_DC_vAC} we show the difference estimate in the vector-valued case and Theorem \ref{th_vAC_conv}. The latter works analogously to the scalar case.

\subsection{Preliminaries}\label{sec_DC_prelim}
\subsubsection{Uniform A Priori Bound for Classical Solutions of (AC)}\label{sec_DC_prelim_bdd_scal}
Let $N$, $\Omega$, $Q_T$, $\partial Q_T$ be as in Remark \ref{th_intro_coord},~1.~and $\varepsilon>0$. We prove uniform boundedness estimates for classical solutions of the Allen-Cahn equation \hyperlink{AC}{(AC)}. Let $f$ be as in \eqref{eq_AC_fvor1} and $R_0\geq 1$ be such that the condition \eqref{eq_AC_fvor2} for $f'$ holds.
\begin{Lemma}\label{th_DC_bdd_scal}
	Let $u_{0,\varepsilon}\in C^0(\overline{\Omega})$ and $u_{\varepsilon}\in 
	C^0(\overline{Q_T})\cap C^1(\overline{\Omega}\times(0,T])\cap C^2(\Omega\times(0,T])$ be a solution of \eqref{eq_AC1}-\eqref{eq_AC3}. Then 
	\[
	\|u_{\varepsilon}\|_{L^\infty(Q_T)}\leq\max\{R_0,\|u_{0,\varepsilon}\|_{L^\infty(\Omega)}\}.
	\]
\end{Lemma}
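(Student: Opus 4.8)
The plan is to prove the $L^\infty$-bound by a maximum-principle/comparison argument applied to the scalar parabolic equation \eqref{eq_AC1}. Set $M_\varepsilon:=\max\{R_0,\|u_{0,\varepsilon}\|_{L^\infty(\Omega)}\}$. The key observation is that the constants $\pm M_\varepsilon$ are a supersolution and a subsolution of \hyperlink{AC}{(AC)}: since $M_\varepsilon\geq R_0\geq 1$, the sign condition \eqref{eq_AC_fvor2} gives $f'(M_\varepsilon)\geq 0$, hence the constant function $M_\varepsilon$ satisfies $\partial_t M_\varepsilon-\Delta M_\varepsilon+\tfrac{1}{\varepsilon^2}f'(M_\varepsilon)=\tfrac{1}{\varepsilon^2}f'(M_\varepsilon)\geq 0$, together with $\partial_{N_{\partial\Omega}}M_\varepsilon=0$ on $\partial\Omega\times(0,T)$ and $M_\varepsilon\geq u_{0,\varepsilon}$ in $\Omega$; symmetrically $-M_\varepsilon$ is a subsolution with $-M_\varepsilon\leq u_{0,\varepsilon}$.

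First I would reduce the statement to showing $u_\varepsilon\leq M_\varepsilon$ on $\overline{Q_T}$, the lower bound $u_\varepsilon\geq -M_\varepsilon$ being entirely analogous (or obtained by applying the upper bound to $-u_\varepsilon$, which solves the same type of equation with potential $\tilde f(u)=f(-u)$ satisfying the same hypotheses). To prove $u_\varepsilon\leq M_\varepsilon$ I would argue by contradiction with an auxiliary exponential shift: for $\lambda>0$ to be chosen, set $w:=(u_\varepsilon-M_\varepsilon)e^{-\lambda t}$ and suppose $\sup_{\overline{Q_T}}w>0$. Since $w\in C^0(\overline{Q_T})$ and $\overline{Q_T}$ is compact, the positive supremum is attained at some $(x_0,t_0)$ with $t_0>0$ (because $w\leq 0$ at $t=0$ by the initial condition). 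One then distinguishes the interior case $x_0\in\Omega$ and the boundary case $x_0\in\partial\Omega$; in the latter, evaluating the Neumann condition $\partial_{N_{\partial\Omega}}u_\varepsilon=0$ shows the normal derivative of $w$ vanishes there, so Hopf's lemma forces $x_0$ to behave like an interior maximum (alternatively one invokes the version of the parabolic maximum principle valid up to a Neumann boundary). At such a point $\partial_t w(x_0,t_0)\geq 0$, $\Delta w(x_0,t_0)\leq 0$, while from the equation $\partial_t w-\Delta w=-\lambda w-\tfrac{1}{\varepsilon^2}e^{-\lambda t}\big(f'(u_\varepsilon)-f'(M_\varepsilon)\big)$ at $(x_0,t_0)$; using $f'(M_\varepsilon)\geq 0$, $f'(u_\varepsilon(x_0,t_0))=f'(M_\varepsilon+e^{\lambda t_0}w(x_0,t_0))$, a mean-value expansion of $f'$ and the fact that $f'$ has a locally Lipschitz bound, one chooses $\lambda$ large enough (depending on $\varepsilon$, $M_\varepsilon$, and the Lipschitz constant of $f'$ on the relevant compact interval) so that $\partial_t w(x_0,t_0)-\Delta w(x_0,t_0)<0$, contradicting the sign conditions above. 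Hence $\sup w\leq 0$, i.e.\ $u_\varepsilon\leq M_\varepsilon$.

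The only delicate point is the treatment of the maximum when it is located on $\partial\Omega$, since the solution is only assumed to be $C^1$ up to the boundary and $C^2$ in the interior; here I would cite the standard parabolic maximum principle with Neumann boundary data (as in, e.g., the references in Bellettini \cite{Bellettini} or Lunardi \cite{LunardiOptReg}) rather than reprove Hopf's lemma, and note that the a priori boundedness of $f'$ on the compact range involved (which is finite once we know $u_\varepsilon$ is bounded, a mild bootstrap or simply replacing $f'$ by a globally Lipschitz modification outside a large ball) makes the exponential-shift argument legitimate. Taking the upper and lower bounds together yields $\|u_\varepsilon\|_{L^\infty(Q_T)}\leq M_\varepsilon=\max\{R_0,\|u_{0,\varepsilon}\|_{L^\infty(\Omega)}\}$, which is the claim; note in particular the bound is independent of $\varepsilon$ once $\|u_{0,\varepsilon}\|_{L^\infty(\Omega)}$ is bounded uniformly in $\varepsilon$, as needed later in Section~\ref{sec_DC}.
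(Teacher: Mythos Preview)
Your comparison-principle approach is correct in spirit but you have overcomplicated the key step, and the paper takes a genuinely different route. In your argument, once you are at a point $(x_0,t_0)$ with $w(x_0,t_0)>0$, you already know $u_\varepsilon(x_0,t_0)>M_\varepsilon\geq R_0$, so \eqref{eq_AC_fvor2} gives $f'(u_\varepsilon(x_0,t_0))\geq 0$ \emph{directly}; the correct identity is $\partial_t w-\Delta w=-\lambda w-\tfrac{1}{\varepsilon^2}e^{-\lambda t}f'(u_\varepsilon)$ (no $f'(M_\varepsilon)$ term), and this is $\leq -\lambda w(x_0,t_0)<0$ for any $\lambda>0$. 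Thus the mean-value/Lipschitz detour, the $\varepsilon$-dependent choice of $\lambda$, and the circularity you flag (``boundedness of $f'$ on the compact range involved, which is finite once we know $u_\varepsilon$ is bounded'') are all unnecessary.

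The paper instead works with the auxiliary scalar function $u_{\varepsilon,\beta}:=|u_\varepsilon|^2+\beta e^{-t}$ and shows that a positive maximum of $u_{\varepsilon,\beta}$ above the threshold leads to $(\partial_t-\Delta)u_{\varepsilon,\beta}\leq -\beta e^{-t}<0$, using only the sign condition $u f'(u)\geq 0$ for $|u|\geq R_0$ and the identity $(\partial_t-\Delta)|u_\varepsilon|^2=2u_\varepsilon(\partial_t-\Delta)u_\varepsilon-2|\nabla u_\varepsilon|^2$; the boundary case is again handled by Hopf's lemma. This buys two things your approach does not: the upper and lower bounds are handled simultaneously, and---more importantly for this paper---the argument carries over verbatim to the vector-valued equation \hyperlink{vAC}{(vAC)} by replacing $|u_\varepsilon|^2$ with $|\vec{u}_\varepsilon|^2$ and $uf'(u)\geq 0$ with $\vec{u}\cdot\nabla W(\vec{u})\geq 0$ (see Lemma~\ref{th_DC_bdd_vect}). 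Your comparison with constant barriers $\pm M_\varepsilon$ has no obvious vector-valued analogue.
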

\begin{proof}
	We use a contradiction argument and ideas from the proof of the weak maximum principle for parabolic equations, cf.~Renardy, Rogers \cite{RenardyRogers}, Theorem 4.25. Variants of the proof may also work. We have chosen a proof that can be directly generalized to the vector-valued case, see below. Assume $\|u_{\varepsilon}\|_{L^\infty(Q_T)}>\max\{R_0,\|u_{0,\varepsilon}\|_{L^\infty(\Omega)}\}$. We consider $u_{\varepsilon,\beta}:=|u_{\varepsilon}|^2+\beta e^{-t}$ and $u_{0,\varepsilon,\beta}:=|u_{0,\varepsilon}|^2+\beta$ for $\beta>0$. Then for $\beta>0$ small
	\begin{align}\label{eq_DC_bdd_scal1}
	\|u_{\varepsilon,\beta}\|_{L^\infty(Q_T)}>
	\beta +\max\{R_0^2,\|u_{0,\varepsilon,\beta}\|_{L^\infty(\Omega)}\}.
	\end{align}
	Because of $u_{\varepsilon,\beta}\in C^0(\overline{Q_T})$, it follows that the maximum of $u_{\varepsilon,\beta}=|u_{\varepsilon,\beta}|$ is attained in some $(x_0,t_0)\in\overline{Q_T}$. Due to \eqref{eq_DC_bdd_scal1} it holds
	\begin{align}\label{eq_DC_bdd_scal2}
	|u_{\varepsilon}|^2|_{(x_0,t_0)}
	=u_{\varepsilon,\beta}|_{(x_0,t_0)}-\beta e^{-t_0}
	> \beta + R_0^2 -\beta = R_0^2.
	\end{align}
	Hence $f'(u_{\varepsilon}|_{(x_0,t_0)})u_{\varepsilon}|_{(x_0,t_0)}\geq 0$ due to \eqref{eq_AC_fvor2}. If $(x_0,t_0)\in\Omega\times(0,T]$, we get from \eqref{eq_AC1}:
	\begin{align*}
	(\partial_t-\Delta)u_{\varepsilon,\beta}|_{(x_0,t_0)}&=-\beta e^{-t_0}+
	2u_{\varepsilon}(\partial_tu_{\varepsilon}-\Delta u_{\varepsilon})|_{(x_0,t_0)}
	-2|\nabla u_{\varepsilon}|^2|_{(x_0,t_0)}\\
	&=-\beta e^{-t_0}-
	2\left[u_{\varepsilon}|_{(x_0,t_0)}\frac{1}{\varepsilon^2}f'(u_{\varepsilon}|_{(x_0,t_0)})
	+|\nabla u_{\varepsilon}|^2|_{(x_0,t_0)}\right]\\
	&\leq -\beta e^{-t_0}<0.
	\end{align*}
	
	If $(x_0,t_0)\in\overline
	\Omega\times\{0\}$, we get a contradiction from \eqref{eq_DC_bdd_scal1} and since $u_{\varepsilon,\beta}|_{t=0}=u_{0,\varepsilon,\beta}$ due to \eqref{eq_AC3}. In the case $(x_0,t_0)\in\Omega\times(0,T]$ we obtain a contradiction to $(\partial_t-\Delta)u_{\varepsilon,\beta}|_{(x_0,t_0)}\geq0$ as in the proof of the weak maximum principle, cf.~\cite{RenardyRogers}, Theorem 4.25. 
	
	Finally, let $(x_0,t_0)\in\partial\Omega\times(0,T]$. With the Hopf Lemma (cf.~Gilbarg, Trudinger \cite{GilbargTrudinger}, Lemma 3.4) we deduce a contradiction to the boundary condition \eqref{eq_AC2}. The above consideration yields  $u_{\varepsilon,\beta}|_{(x_0,t_0)}>u_{\varepsilon,\beta}|_{(x,t)}$ for all $(x,t)\in\Omega\times(0,T]$. Additionally, because of continuity and \eqref{eq_DC_bdd_scal2} it holds $|u_{\varepsilon}(x,t)|>R_0$ for all $(x,t)\in B_\eta(x_0,t_0)\cap(\Omega\times(0,T])$ and $\eta>0$ small. Hence as above $(\partial_t-\Delta)u_{\varepsilon,\beta}|_{(x,t)}\leq -\beta e^{-t}<0$ for these $(x,t)$. Moreover, since $(x_0,t_0)$ is a maximum of $u_{\varepsilon,\beta}$, it follows that $\partial_t u_{\varepsilon,\beta}|_{(x_0,t_0)}\geq 0$. Therefore continuity of $\partial_tu_{\varepsilon,\beta}$ yields $\Delta u_{\varepsilon,\beta}|_{(x,t)}<0$ for all $(x,t)\in B_\eta(x_0,t_0)\cap(\Omega\times(0,T])$ and $\eta>0$ small. Hence the Hopf Lemma is applicable on $B_\eta(x_0)\cap\Omega$ and yields $N_{\partial\Omega}\cdot\nabla u_{\varepsilon,\beta}|_{(x_0,t_0)}>0$. This gives a contradiction to \eqref{eq_AC2} because of
	$\nabla u_{\varepsilon,\beta}=2u_{\varepsilon}\nabla u_{\varepsilon}$ and \eqref{eq_DC_bdd_scal2}. Finally, we have considered all possible cases and obtained a contradiction. Hence the lemma is proven.
\end{proof}

\subsubsection{Uniform A Priori Bound for Classical Solutions of (vAC)}\label{sec_DC_prelim_bdd_vect}
Let $N$, $\Omega$, $Q_T$, $\partial Q_T$ be as in Remark \ref{th_intro_coord},~1.~and $\varepsilon>0$. Moreover, let $m\in\N$ and $W:\R^m\rightarrow\R$ be as in Definition \ref{th_vAC_W}. We prove uniform boundedness estimates for classical solutions of \hyperlink{vAC}{(vAC)}. This works analogously to the proof of Lemma \ref{th_DC_bdd_scal} for the scalar case in the last section.
\begin{Lemma}\label{th_DC_bdd_vect}
	Let $\vec{u}_{0,\varepsilon}\in C^0(\overline{\Omega})^m$ and $\vec{u}_\varepsilon\in 
	C^0(\overline{Q_T})^m\cap C^1(\overline{\Omega}\times(0,T])^m\cap C^2(\Omega\times(0,T])^m$ be a solution of \eqref{eq_vAC1}-\eqref{eq_vAC3}. Then with $\check{R}_0>0$ as in Definition \ref{th_vAC_W} it holds
	\[
	\|\vec{u}_\varepsilon\|_{L^\infty(Q_T,\R^m)}\leq\max\{\check{R}_0,\|\vec{u}_{0,\varepsilon}\|_{L^\infty(\Omega,\R^m)}\}.
	\]
\end{Lemma}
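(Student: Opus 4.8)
The statement (Lemma~\ref{th_DC_bdd_vect}) is the vector-valued analogue of Lemma~\ref{th_DC_bdd_scal}, and as the text already announces, the plan is to copy the proof of the scalar case essentially verbatim, replacing $|u_\varepsilon|^2$ by the squared Euclidean norm $|\vec{u}_\varepsilon|^2$ and using the condition $\vec{u}\cdot\nabla W(\vec{u})\geq 0$ for $|\vec{u}|\geq\check{R}_0$ from Definition~\ref{th_vAC_W} in place of \eqref{eq_AC_fvor2}. The point is that although \hyperlink{vAC}{(vAC)} is a system, the scalar auxiliary function $v_\varepsilon:=|\vec{u}_\varepsilon|^2$ satisfies a scalar parabolic differential inequality, so the weak maximum principle and the Hopf Lemma apply to $v_\varepsilon$ directly.

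First I would argue by contradiction: assume $\|\vec{u}_\varepsilon\|_{L^\infty(Q_T)^m}>\max\{\check{R}_0,\|\vec{u}_{0,\varepsilon}\|_{L^\infty(\Omega)^m}\}$ and introduce the regularised function $v_{\varepsilon,\beta}:=|\vec{u}_\varepsilon|^2+\beta e^{-t}$ and $v_{0,\varepsilon,\beta}:=|\vec{u}_{0,\varepsilon}|^2+\beta$ for small $\beta>0$, so that $\|v_{\varepsilon,\beta}\|_{L^\infty(Q_T)}>\beta+\max\{\check{R}_0^2,\|v_{0,\varepsilon,\beta}\|_{L^\infty(\Omega)}\}$. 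By continuity the maximum of $v_{\varepsilon,\beta}$ is attained at some $(x_0,t_0)\in\overline{Q_T}$, and the strict inequality forces $|\vec{u}_\varepsilon(x_0,t_0)|^2>\check{R}_0^2$, hence $\vec{u}_\varepsilon(x_0,t_0)\cdot\nabla W(\vec{u}_\varepsilon(x_0,t_0))\geq 0$. The key computation is
\[
(\partial_t-\Delta)v_{\varepsilon,\beta}=-\beta e^{-t}+2\vec{u}_\varepsilon\cdot(\partial_t\vec{u}_\varepsilon-\Delta\vec{u}_\varepsilon)-2|\nabla\vec{u}_\varepsilon|^2
=-\beta e^{-t}-2\Big[\tfrac{1}{\varepsilon^2}\vec{u}_\varepsilon\cdot\nabla W(\vec{u}_\varepsilon)+|\nabla\vec{u}_\varepsilon|^2\Big],
\]
using \eqref{eq_vAC1}, where $|\nabla\vec{u}_\varepsilon|^2$ denotes the Frobenius norm squared of $D_x\vec{u}_\varepsilon$. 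In a neighbourhood of $(x_0,t_0)$ inside $\Omega\times(0,T]$ continuity gives $|\vec{u}_\varepsilon|>\check{R}_0$, so the bracket is nonnegative there and $(\partial_t-\Delta)v_{\varepsilon,\beta}\leq-\beta e^{-t}<0$.

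Then I would run through the three cases for the location of $(x_0,t_0)$ exactly as in Lemma~\ref{th_DC_bdd_scal}: if $(x_0,t_0)\in\overline{\Omega}\times\{0\}$ one contradicts the strict inequality using $v_{\varepsilon,\beta}|_{t=0}=v_{0,\varepsilon,\beta}$ from \eqref{eq_vAC3}; if $(x_0,t_0)\in\Omega\times(0,T]$ one contradicts $(\partial_t-\Delta)v_{\varepsilon,\beta}|_{(x_0,t_0)}\geq 0$ as in the weak maximum principle (cf.~\cite{RenardyRogers}, Theorem~4.25); and if $(x_0,t_0)\in\partial\Omega\times(0,T]$ one applies the Hopf Lemma (\cite{GilbargTrudinger}, Lemma~3.4) to $v_{\varepsilon,\beta}$ on a small ball, obtaining $N_{\partial\Omega}\cdot\nabla v_{\varepsilon,\beta}|_{(x_0,t_0)}>0$, which contradicts the Neumann condition \eqref{eq_vAC2} because $\nabla v_{\varepsilon,\beta}=2(D_x\vec{u}_\varepsilon)^\top\vec{u}_\varepsilon$ and hence $N_{\partial\Omega}\cdot\nabla v_{\varepsilon,\beta}=2\,\vec{u}_\varepsilon\cdot\partial_{N_{\partial\Omega}}\vec{u}_\varepsilon=0$ on $\partial Q_T$. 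Since all cases lead to a contradiction, the assumed strict inequality is impossible and the bound follows. I do not expect a genuine obstacle here: the only thing to be a little careful about is that the sign condition in Definition~\ref{th_vAC_W} is phrased for all $\vec{u}$ with $|\vec{u}|\geq\check{R}_0$ (not just outside a ball of radius $1$ as in the scalar hypothesis \eqref{eq_AC_fvor2}), which is exactly what is needed, and that near the boundary maximum point one needs $|\vec{u}_\varepsilon|>\check{R}_0$ on a whole one-sided neighbourhood together with continuity of $\partial_t v_{\varepsilon,\beta}$ to conclude $\Delta v_{\varepsilon,\beta}<0$ there, so that Hopf's Lemma is applicable — this is handled verbatim as in the scalar proof.
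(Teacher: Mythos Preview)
Your proof is correct and follows essentially the same approach as the paper's own proof: both reduce to the scalar auxiliary function $|\vec{u}_\varepsilon|^2+\beta e^{-t}$, compute $(\partial_t-\Delta)$ of it using \eqref{eq_vAC1} and the sign condition on $\vec{u}\cdot\nabla W(\vec{u})$ from Definition~\ref{th_vAC_W}, and then run through the same three cases (initial time, interior maximum, boundary maximum via Hopf's Lemma) exactly as in the scalar Lemma~\ref{th_DC_bdd_scal}. The only differences are notational.
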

\begin{proof}
	Assume $\|\vec{u}_\varepsilon\|_{L^\infty(Q_T,\R^m)}>\max\{\check{R}_0,\|\vec{u}_{0,\varepsilon}\|_{L^\infty(\Omega,\R^m)}\}$. Let $\check{u}_{\varepsilon,\beta}:=|\vec{u}_\varepsilon|^2+\beta e^{-t}$ and $\check{u}_{0,\varepsilon,\beta}:=|\vec{u}_{0,\varepsilon}|^2+\beta$ for $\beta>0$. Then for $\beta>0$ small
	\begin{align}\label{eq_DC_bdd_vect1}
	\|\check{u}_{\varepsilon,\beta}\|_{L^\infty(Q_T)}>
	\beta +\max\{\check{R}_0^2,\|\check{u}_{0,\varepsilon,\beta}\|_{L^\infty(\Omega)}\}.
	\end{align}
	The maximum of $\check{u}_{\varepsilon,\beta}=|\check{u}_{\varepsilon,\beta}|$ is attained in some $(x_0,t_0)\in\overline{Q_T}$ due to $\vec{u}_\varepsilon\in C^0(\overline{Q_T})^m$. Then inequality \eqref{eq_DC_bdd_vect1} yields
	\begin{align}\label{eq_DC_bdd_vect2}
	|\vec{u}_\varepsilon|^2|_{(x_0,t_0)}
	=\check{u}_{\varepsilon,\beta}|_{(x_0,t_0)}-\beta e^{-t_0}
	> \beta + \check{R}_0^2 -\beta = \check{R}_0^2.
	\end{align}
	By the assumption on $W$ in Definition \ref{th_vAC_W} it follows that $\vec{u}_\varepsilon|_{(x_0,t_0)}\cdot\nabla W(\vec{u}_\varepsilon|_{(x_0,t_0)})\geq 0$. Hence equation \eqref{eq_vAC1} yields in the case $(x_0,t_0)\in\Omega\times(0,T]$ that
	\begin{align*}
	(\partial_t-\Delta)\check{u}_{\varepsilon,\beta}|_{(x_0,t_0)}&=-\beta e^{-t_0}+
	2\vec{u}_\varepsilon\cdot(\partial_t\vec{u}_\varepsilon-\Delta\vec{u}_\varepsilon)|_{(x_0,t_0)}
	-2|\nabla\vec{u}_\varepsilon|^2|_{(x_0,t_0)}\\
	&=-\beta e^{-t_0}-
	2\left[\vec{u}_\varepsilon|_{(x_0,t_0)}\cdot\frac{1}{\varepsilon^2}\nabla W(\vec{u}_\varepsilon|_{(x_0,t_0)})
	+|\nabla\vec{u}_\varepsilon|^2|_{(x_0,t_0)}\right]\\
	&\leq -\beta e^{-t_0}<0.
	\end{align*}
	In the case $(x_0,t_0)\in\overline
	\Omega\times\{0\}$ the contradiction follows from \eqref{eq_DC_bdd_vect1} and $\check{u}_{\varepsilon,\beta}|_{t=0}=\check{u}_{0,\varepsilon,\beta}$ because of \eqref{eq_vAC3}. In the case $(x_0,t_0)\in\Omega\times(0,T]$ we obtain a contradiction to $(\partial_t-\Delta)\check{u}_{\varepsilon,\beta}|_{(x_0,t_0)}\geq0$ as in the proof of the weak maximum principle, cf.~\cite{RenardyRogers}, Theorem 4.25.
	
	Finally, let $(x_0,t_0)\in\partial\Omega\times(0,T]$. The above consideration yields  $\check{u}_{\varepsilon,\beta}|_{(x_0,t_0)}>\check{u}_{\varepsilon,\beta}|_{(x,t)}$ for all $(x,t)\in\Omega\times(0,T]$. Moreover, due to continuity and \eqref{eq_DC_bdd_vect2} we obtain $|\vec{u}_\varepsilon(x,t)|>\check{R}_0$ for all $(x,t)\in B_\eta(x_0,t_0)\cap(\Omega\times(0,T])$ and $\eta>0$ small. Therefore as above it follows that $(\partial_t-\Delta)\check{u}_{\varepsilon,\beta}|_{(x,t)}\leq -\beta e^{-t}<0$ for these $(x,t)$. Furthermore, it holds $\partial_t \check{u}_{\varepsilon,\beta}|_{(x_0,t_0)}\geq 0$ because $(x_0,t_0)$ is a maximum of $\check{u}_{\varepsilon,\beta}$. By continuity of $\partial_t\check{u}_{\varepsilon,\beta}$ we obtain $\Delta \check{u}_{\varepsilon,\beta}|_{(x,t)}<0$ for all $(x,t)\in B_\eta(x_0,t_0)\cap(\Omega\times(0,T])$ and $\eta>0$ small. Therefore the Hopf Lemma (cf.~\cite{GilbargTrudinger}, Lemma 3.4) can be applied on $B_\eta(x_0)\cap\Omega$ and yields $N_{\partial\Omega}\cdot\nabla \check{u}_{\varepsilon,\beta}|_{(x_0,t_0)}>0$. Here 
	$\nabla \check{u}_{\varepsilon,\beta}=\sum_{j=1}^m\nabla(|u_{\varepsilon,j}|^2)=2\sum_{j=1}^m u_{\varepsilon,j}\nabla u_{\varepsilon,j}$. Therefore we obtain a contradiction to the boundary condition \eqref{eq_vAC2}. Finally, this yields the lemma.
\end{proof}

\subsubsection{Gagliardo-Nirenberg Inequalities}\label{sec_DC_prelim_GN}
Let us recall some Gagliardo-Nirenberg inequalities. 
\begin{Lemma}[\textbf{Gagliardo-Nirenberg Inequality}]\label{th_DC_GN}
	Let $n\in\N$, $1\leq p,q,r\leq\infty$ and $\theta\in[0,1]$ such that
	\[
	\theta\left(\frac{1}{p}-\frac{1}{n}\right)+\frac{1-\theta}{q}=\frac{1}{r},
	\]
	where $\frac{1}{\infty}:=0$. Moreover, if $p=n>1$, then we assume $\theta<1$. Then 
	\[
	\|u\|_{L^r(\R^n)}\leq c\|u\|_{L^q(\R^n)}^{1-\theta}\|\nabla u\|_{L^p(\R^n)}^{\theta}
	\]
	for all $u\in L^q(\R^n)\cap W^{1,p}(\R^n)$ and a constant $c=c(n,p,q,r)>0$.
\end{Lemma}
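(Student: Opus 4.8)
The final statement is the classical Gagliardo–Nirenberg inequality on $\R^n$, so the plan is to reduce it to standard building blocks rather than reprove it from scratch. First I would record the scaling heuristic: replacing $u$ by $u_\lambda(x):=u(\lambda x)$ forces the exponent relation $\theta(\frac1p-\frac1n)+\frac{1-\theta}{q}=\frac1r$, which is exactly the hypothesis, so the inequality is at least scale-invariant and one only has to prove it for, say, $\|\nabla u\|_{L^p}=\|u\|_{L^q}$ after rescaling, or equivalently prove the unnormalised form $\|u\|_{L^r}\le c\|u\|_{L^q}^{1-\theta}\|\nabla u\|_{L^p}^\theta$ directly by a density plus interpolation argument.

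The core of the proof I would give rests on two ingredients that may be cited as standard (they are in Leoni or Adams–Fournier, referenced earlier in Section~\ref{sec_SobDom}). The endpoint case is the Sobolev / Gagliardo–Nirenberg–Sobolev inequality: for $1\le p<n$ one has $\|u\|_{L^{p^*}(\R^n)}\le c\|\nabla u\|_{L^p(\R^n)}$ with $p^*=\frac{np}{n-p}$, proved for $u\in C_0^\infty(\R^n)$ by the iterated one-dimensional estimate $|u(x)|\le \int_{\R}|\partial_i u|\,dx_i$ in each coordinate followed by a repeated application of the generalised Hölder inequality, and extended to $W^{1,p}$ by density (Lemma~\ref{th_SobDom_dense} with $B=\K$). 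The second ingredient is the elementary interpolation inequality for Lebesgue spaces: if $\frac1r=\frac{1-\theta}{q}+\frac{\theta}{s}$ with $q\le r\le s$, then $\|u\|_{L^r}\le\|u\|_{L^q}^{1-\theta}\|u\|_{L^s}^{\theta}$, which is just Hölder with exponents $\frac{q}{(1-\theta)r}$ and $\frac{s}{\theta r}$. Combining these with $s=p^*$ handles all cases with $p<n$ and $q\le r\le p^*$; the hypothesis on $\theta$ guarantees precisely that $r$ lies in this range and that the interpolation exponents match.

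It then remains to treat the complementary ranges: $p\ge n$ (where $p^*$ is not available), and the possibility $r<q$ forced by the exponent relation. For $p>n$ one uses Morrey's inequality $\|u\|_{L^\infty}\le c\|u\|_{L^q}^{1-\theta'}\|\nabla u\|_{L^p}^{\theta'}$ on cubes combined with a covering/scaling argument, or more cleanly one reduces to the case $p=n$ by noting that the statement for a given $p>n$ follows from interpolating the $p=n$ statement against $\|\nabla u\|_{L^p}$ directly on balls; for $p=n>1$ the hypothesis $\theta<1$ is exactly what excludes the (false) endpoint $W^{1,n}\hookrightarrow L^\infty$, and one proves $\|u\|_{L^r}\le c\|u\|_{L^q}^{1-\theta}\|\nabla u\|_{L^n}^\theta$ for every finite $r$ by the same iterated-Hölder trick with a slightly perturbed exponent. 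The case $r<q$ is handled by a different splitting: write $\int|u|^r=\int_{\{|u|\le\lambda\}}+\int_{\{|u|>\lambda\}}$, bound the first piece using $|u|^r\le\lambda^{r-q}|u|^q$ and the second using the Sobolev embedding on the super-level set, then optimise in $\lambda$.

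The main obstacle is not any single estimate but the bookkeeping: one must check that in every regime the arithmetic of the exponents $(p,q,r,\theta,n)$ is consistent with the Hölder and interpolation exponents one wants to use, and in particular verify that the excluded endpoint cases (namely $p=n$ with $\theta=1$, and $p<n$ with $r>p^*$) are exactly those ruled out by the stated hypotheses, so that no case is left uncovered and none is claimed falsely. I would organise the write-up as: (i) reduce to $u\in C_0^\infty(\R^n)$ by density; (ii) prove the endpoint Sobolev inequality; (iii) interpolate to get $q\le r\le p^*$, $p<n$; (iv) handle $p\ge n$ by reduction to $p=n$ and the $\theta<1$ case; (v) handle $r<q$ by the level-set splitting; (vi) remove the smoothness assumption. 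Since the paper only invokes this lemma as a black-box tool for the difference estimates in Sections~\ref{sec_DC_ACND}--\ref{sec_DC_vAC}, citing Leoni \cite{Leoni} for the full statement and sketching only steps (ii)--(iii) would in fact be the most economical route.
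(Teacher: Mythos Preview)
Your sketch is correct in outline, but the paper's own proof is simply the one-line citation ``See Leoni \cite{Leoni}, Theorem 12.83.'' --- exactly the economical route you yourself identify at the end of your proposal. So you have written substantially more than the paper does: the paper treats the Gagliardo--Nirenberg inequality as a standard black-box input and defers entirely to the textbook reference, whereas you have sketched the classical proof (endpoint Sobolev inequality plus $L^p$-interpolation, with separate handling of $p\ge n$ and $r<q$). Your outline is sound and would furnish an actual proof, but for the purposes of this paper the citation alone is what is wanted.
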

\begin{proof}
	See Leoni \cite{Leoni}, Theorem 12.83.
\end{proof}

\begin{Remark}\label{th_DC_GN_rem}\upshape
	With suitable extension operators the estimate in Lemma \ref{th_DC_GN} carries over to domains $\Omega$ with uniform Lipschitz boundary if the $\nabla u$-factor in the estimate is replaced by the $W^{1,p}$-norm and the constant in the estimate depends on $n,p,q,r$ and the operator norm of the extension operator. For the existence of such extension operators (going back to Stein) see Leoni \cite{Leoni}, Theorem 13.8 and Theorem 13.17. Note that the operator norms in \cite{Leoni} are estimated solely in terms of the usual parameters and the geometrical quantities of $\Omega$ and $\partial\Omega$. In particular if the geometrical quantities can be controlled in a uniform way, the operator norms and the constants in the above Gagliardo-Nirenberg inequalities can be taken uniformly with respect to $\Omega$.
\end{Remark}

\subsection[Difference Estimate and Proof of the Convergence Thm. for (AC) in ND]{Difference Estimate and Proof of the Convergence Theorem for (AC) in ND}\label{sec_DC_ACND}
We prove in Section \ref{sec_DC_ACND_DE} a rather abstract estimate for the difference of exact solutions and suitable approximate solutions for the Allen-Cahn equation \eqref{eq_AC1}-\eqref{eq_AC3} in ND. Then in Section \ref{sec_DC_ACND_conv} we show the Theorem \ref{th_AC_conv} about convergence by verifying the requirements for the difference estimate applied to the approximate solution from Section \ref{sec_asym_ACND_uA}.

\subsubsection{Difference Estimate}\label{sec_DC_ACND_DE}
\begin{Theorem}[\textbf{Difference Estimate for (AC)}]\label{th_DC_ACND}
	Let $N\geq2$, $\Omega$, $Q_T$ and $\partial Q_T$ be as in Remark \ref{th_intro_coord},~1. Moreover, let $\Gamma=(\Gamma_t)_{t\in[0,T_0]}$ for some $T_0>0$ be as in Section \ref{sec_coordND} and $\delta>0$ be such that Theorem \ref{th_coordND} holds for $2\delta$ instead of $\delta$. We use the notation for $\Gamma_t(\delta)$, $\Gamma(\delta)$, $\nabla_\tau$ and $\partial_n$ from Remark \ref{th_coordND_rem}. Additionally, let $f$ satisfy \eqref{eq_AC_fvor1}-\eqref{eq_AC_fvor2}.
	
	Moreover, let $\varepsilon_0>0$, $u^A_\varepsilon\in C^2(\overline{Q_{T_0}})$, $u_{0,\varepsilon}\in C^2(\overline{\Omega})$ with $\partial_{N_{\partial\Omega}} u_{0,\varepsilon}=0$ on $\partial\Omega$ and let $u_\varepsilon\in C^2(\overline{Q_{T_0}})$ be exact solutions to \eqref{eq_AC1}-\eqref{eq_AC3} with $u_{0,\varepsilon}$ in \eqref{eq_AC3} for $\varepsilon\in(0,\varepsilon_0]$. 
	
	For some $R>0$ and $M\in\N, M\geq k(N):=\max\{2,\frac{N}{2}\}$ we impose the following conditions:
	\begin{enumerate}
		\item \textup{Uniform Boundedness:} $\sup_{\varepsilon\in(0,\varepsilon_0]}\|u^A_\varepsilon\|_{L^\infty(Q_{T_0})}+\|u_{0,\varepsilon}\|_{L^\infty(\Omega)}<\infty$.
		\item \textup{Spectral Estimate:} There are $c_0,C>0$ such that
		\[
		\int_\Omega|\nabla\psi|^2+\frac{1}{\varepsilon^2}f''(u^A_\varepsilon(.,t))\psi^2\,dx\geq -C\|\psi\|_{L^2(\Omega)}^2+\|\nabla\psi\|_{L^2(\Omega\setminus\Gamma_t(\delta))}^2+c_0\|\nabla_\tau\psi\|_{L^2(\Gamma_t(\delta))}^2
		\]
		for all $\psi\in H^1(\Omega)$ and $\varepsilon\in(0,\varepsilon_0],t\in[0,T_0]$.
		\item \textup{Approximate Solution:} For the remainders 
		\[
		r^A_\varepsilon:=\partial_t u^A_\varepsilon-\Delta u^A_\varepsilon+\frac{1}{\varepsilon^2}f'(u^A_\varepsilon)\quad\text{ and }\quad s^A_\varepsilon:=\partial_{N_{\partial\Omega}}u^A_\varepsilon
		\]
		in \eqref{eq_AC1}-\eqref{eq_AC2} for $u^A_\varepsilon$ and the difference $\overline{u}_\varepsilon:=u_\varepsilon-u^A_\varepsilon$ it holds
		\begin{align}\begin{split}\label{eq_DC_ACND_uA}
		&\left|\int_{\partial\Omega}s^A_\varepsilon\tr\,\overline{u}_\varepsilon(t)\,d\Hc^{N-1}+\int_\Omega r^A_\varepsilon\overline{u}_\varepsilon(t)\,dx\right|\\
		&\leq C\varepsilon^{M+\frac{1}{2}}(\|\overline{u}_\varepsilon(t)\|_{L^2(\Omega)}+\|\nabla_\tau\overline{u}_\varepsilon(t)\|_{L^2(\Gamma_t(\delta))}+\|\nabla\overline{u}_\varepsilon(t)\|_{L^2(\Omega\setminus\Gamma_t(\delta))})
		\end{split}
		\end{align}
		for all $\varepsilon\in(0,\varepsilon_0]$ and $T\in(0,T_0]$.
		\item \textup{Well-Prepared Initial Data:} For all $\varepsilon\in(0,\varepsilon_0]$ it holds
		\begin{align}\label{eq_DC_ACND_u0} \|u_{0,\varepsilon}-u^A_\varepsilon|_{t=0}\|_{L^2(\Omega)}\leq R\varepsilon^{M+\frac{1}{2}}.
		\end{align}
	\end{enumerate} 
	Then we obtain
	\begin{enumerate}
		\item Let $M>k(N)$. Then there are $\beta,\varepsilon_1>0$ such that for $g_\beta(t):=e^{-\beta t}$ it holds
		\begin{align}
		\begin{split}
		\sup_{t\in[0,T]}\|(g_\beta\overline{u}_\varepsilon)(t)\|_{L^2(\Omega)}^2+\|g_\beta\nabla\overline{u}_\varepsilon\|_{L^2(Q_T\setminus\Gamma(\delta))}^2&\leq 2R^2\varepsilon^{2M+1},\\
		c_0\|g_\beta\nabla_\tau\overline{u}_\varepsilon\|^2_{L^2(Q_T\cap\Gamma(\delta))}+\varepsilon^2\|g_\beta\partial_n\overline{u}_\varepsilon\|^2_{L^2(Q_T\cap\Gamma(\delta))}&\leq 2R^2\varepsilon^{2M+1}\label{eq_DC_ACND_DE}\end{split}
		\end{align}
		for all $\varepsilon\in(0,\varepsilon_1]$ and $T\in(0,T_0]$.
		\item Let $k(N)\in\N$ and $M=k(N)$. Let \eqref{eq_DC_ACND_uA} hold for some $\tilde{M}>M$ instead of $M$. Then there are $\beta,\tilde{R},\varepsilon_1>0$ such that, if \eqref{eq_DC_ACND_u0} holds for $\tilde{R}$ instead of $R$, then $\eqref{eq_DC_ACND_DE}$ for $\tilde{R}$ instead of $R$ is valid for all $\varepsilon\in(0,\varepsilon_1], T\in(0,T_0]$. 
		\item Let $N\in\{2,3\}$ and $M=2(=k(N))$. Then there are $\varepsilon_1,T_1>0$ such that $\eqref{eq_DC_ACND_DE}$ holds for $\beta=0$ and for all $\varepsilon\in(0,\varepsilon_1], T\in(0,T_1]$.
	\end{enumerate}
\end{Theorem}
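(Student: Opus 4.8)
\textbf{Proof plan for the difference estimate (Theorem \ref{th_DC_ACND}).} The plan is to run the classical de Mottoni–Schatzman Gronwall argument on the weighted difference $w_\varepsilon := g_\beta \overline{u}_\varepsilon = g_\beta(u_\varepsilon - u^A_\varepsilon)$. First I would derive the evolution equation for $\overline{u}_\varepsilon$: subtracting \eqref{eq_AC1} for $u^A_\varepsilon$ (with remainder $r^A_\varepsilon$) from \eqref{eq_AC1} for $u_\varepsilon$, one gets
\[
\partial_t\overline{u}_\varepsilon - \Delta\overline{u}_\varepsilon + \frac{1}{\varepsilon^2}\big[f'(u_\varepsilon) - f'(u^A_\varepsilon)\big] = -r^A_\varepsilon \quad\text{in }Q_{T_0},
\]
with $\partial_{N_{\partial\Omega}}\overline{u}_\varepsilon = -s^A_\varepsilon$ on $\partial Q_{T_0}$ and $\overline{u}_\varepsilon|_{t=0} = u_{0,\varepsilon} - u^A_\varepsilon|_{t=0}$. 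Writing $f'(u_\varepsilon) - f'(u^A_\varepsilon) = f''(u^A_\varepsilon)\overline{u}_\varepsilon + N_\varepsilon$ with the quadratic-and-higher remainder $N_\varepsilon = \int_0^1 (1-\tau) f'''(u^A_\varepsilon + \tau\overline{u}_\varepsilon)\,d\tau\,\overline{u}_\varepsilon^2$, I would test with $\overline{u}_\varepsilon$, integrate by parts (the boundary term producing $\int_{\partial\Omega} s^A_\varepsilon\,\tr\overline{u}_\varepsilon$), and obtain
\[
\frac{1}{2}\frac{d}{dt}\|\overline{u}_\varepsilon\|_{L^2(\Omega)}^2 + \int_\Omega |\nabla\overline{u}_\varepsilon|^2 + \frac{1}{\varepsilon^2}f''(u^A_\varepsilon)\overline{u}_\varepsilon^2\,dx = -\int_\Omega r^A_\varepsilon\overline{u}_\varepsilon - \int_{\partial\Omega}s^A_\varepsilon\,\tr\overline{u}_\varepsilon - \frac{1}{\varepsilon^2}\int_\Omega N_\varepsilon\overline{u}_\varepsilon\,dx.
\]
Then I would apply the spectral estimate (assumption 2) to the quadratic form on the left, pick up the good terms $\|\nabla\overline{u}_\varepsilon\|^2_{L^2(\Omega\setminus\Gamma_t(\delta))} + c_0\|\nabla_\tau\overline{u}_\varepsilon\|^2_{L^2(\Gamma_t(\delta))} - C\|\overline{u}_\varepsilon\|^2_{L^2(\Omega)}$, and bound the $r^A_\varepsilon, s^A_\varepsilon$ terms via assumption 3 with Young's inequality to absorb the gradient factors. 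The $\varepsilon^2\|\partial_n\overline{u}_\varepsilon\|^2_{L^2(\Gamma_t(\delta))}$ term on the left of \eqref{eq_DC_ACND_DE} must be recovered separately: since $|\nabla\overline{u}_\varepsilon|^2 = |\partial_n\overline{u}_\varepsilon|^2 + |\nabla_\tau\overline{u}_\varepsilon|^2$ up to equivalence on $\Gamma_t(\delta,\mu_0)$ (Corollary \ref{th_coordND_nabla_tau_n}) and only a tangential bound comes from the spectral estimate there, I would keep a small fraction $\kappa\|\nabla\overline{u}_\varepsilon\|^2_{L^2(\Omega)}$ of the Dirichlet energy unused in the spectral estimate step — i.e. split $\int|\nabla\overline{u}_\varepsilon|^2 = \kappa\int|\nabla\overline{u}_\varepsilon|^2 + (1-\kappa)\int|\nabla\overline{u}_\varepsilon|^2$ — and feed only $(1-\kappa)$ of it into the spectral estimate, leaving $\kappa|\nabla\overline{u}_\varepsilon|^2$ to produce the $\varepsilon^2|\partial_n\overline{u}_\varepsilon|^2$ contribution after multiplying through appropriately.

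\textbf{Controlling the nonlinearity.} The crucial estimate is for $\frac{1}{\varepsilon^2}\int_\Omega |N_\varepsilon|\,|\overline{u}_\varepsilon|\,dx \lesssim \frac{1}{\varepsilon^2}\|\overline{u}_\varepsilon\|_{L^3(\Omega)}^3$, using that $u_\varepsilon, u^A_\varepsilon$ are uniformly $L^\infty$-bounded (assumption 1 together with Lemma \ref{th_DC_bdd_scal} applied to $u_\varepsilon$), so $f'''$ evaluated along the segment is uniformly bounded. I would interpolate $\|\overline{u}_\varepsilon\|_{L^3(\Omega)}$ via the Gagliardo–Nirenberg inequality (Lemma \ref{th_DC_GN}, Remark \ref{th_DC_GN_rem}) between $\|\overline{u}_\varepsilon\|_{L^2(\Omega)}$ and $\|\overline{u}_\varepsilon\|_{H^1(\Omega)}$ — in dimension $N$, $\|\overline{u}_\varepsilon\|_{L^3} \le c\|\overline{u}_\varepsilon\|_{L^2}^{1-\theta}\|\overline{u}_\varepsilon\|_{H^1}^\theta$ with $\theta = \theta(N) = N/6$ (hence $\theta \le 1/2$ exactly for $N \le 3$, $\theta = 2/3$ for $N = 4$, etc.). Then $\frac{1}{\varepsilon^2}\|\overline{u}_\varepsilon\|_{L^3}^3 \lesssim \frac{1}{\varepsilon^2}\|\overline{u}_\varepsilon\|_{L^2}^{3(1-\theta)}\|\overline{u}_\varepsilon\|_{H^1}^{3\theta}$; using $\|\overline{u}_\varepsilon\|_{H^1}^2 \lesssim \|\overline{u}_\varepsilon\|_{L^2}^2 + \|\nabla\overline{u}_\varepsilon\|_{L^2}^2$ and Young's inequality I would split off an $\eta\|\nabla\overline{u}_\varepsilon\|^2_{L^2(\Omega)}$ piece (absorbable into the retained Dirichlet energy) at the cost of a term $\propto \varepsilon^{-\frac{2}{3\theta}(3-3\theta+ \cdots)}\|\overline{u}_\varepsilon\|_{L^2}^{\text{power}}$; a bootstrap/continuation argument using the a priori smallness $\|\overline{u}_\varepsilon(t)\|_{L^2} \lesssim \varepsilon^{M+1/2}$ makes this cubic term genuinely lower order precisely when $M > k(N)$ (case 1), marginally critical when $M = k(N) \in \mathbb{N}$ so that one extra expansion order $\tilde M$ and a smaller $\tilde R$ are needed (case 2), and for $N\in\{2,3\}$, $M = 2$ one only gets a short-time estimate $T \le T_1$ because the continuation argument closes only up to a finite time (case 3).

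\textbf{Carrying out the three cases.} After assembling the differential inequality
\[
\frac{d}{dt}\|\overline{u}_\varepsilon\|_{L^2(\Omega)}^2 + c\big(\|\nabla\overline{u}_\varepsilon\|^2_{L^2(\Omega\setminus\Gamma_t(\delta))} + c_0\|\nabla_\tau\overline{u}_\varepsilon\|^2_{L^2(\Gamma_t(\delta))} + \varepsilon^2\|\partial_n\overline{u}_\varepsilon\|^2_{L^2(\Gamma_t(\delta))}\big) \le \beta\|\overline{u}_\varepsilon\|_{L^2(\Omega)}^2 + C\varepsilon^{2M+1} + (\text{cubic remainder}),
\]
I would multiply by $g_\beta^2 = e^{-2\beta t}$ with $\beta$ large enough to absorb the linear term and the constant $C$ from the spectral estimate, integrate in $t$, and use the well-prepared data bound \eqref{eq_DC_ACND_u0}. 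For case 1 a standard continuity argument on the set $\{T : \sup_{[0,T]}\|g_\beta\overline{u}_\varepsilon\|_{L^2}^2 \le 2R^2\varepsilon^{2M+1}\}$ shows it is open, closed and nonempty in $[0,T_0]$, hence equals $[0,T_0]$, yielding \eqref{eq_DC_ACND_DE}; here the cubic term is dominated because $M > k(N)$ gives a strictly positive power of $\varepsilon$ to spare. Case 2 is identical after replacing $M$ by $k(N)$ in the target and shrinking $R$ to $\tilde R$ so that the marginal cubic term still closes the bootstrap; the extra decay $\varepsilon^{\tilde M + 1/2}$ in \eqref{eq_DC_ACND_uA} ensures the source term is not the obstruction. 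Case 3 ($N\le 3$, $M=2$) uses $\theta(N)\le 1/2$ so the cubic term is exactly borderline with no $\varepsilon$ to spare; the continuity argument then only closes on a possibly shorter interval $[0,T_1]$ determined by how long the Gronwall factor stays controlled, and one takes $\beta = 0$. \textbf{The main obstacle} I expect is the bookkeeping of the cubic remainder: tracking the precise $\varepsilon$-powers through the Gagliardo–Nirenberg interpolation and the Young splitting so that (i) enough Dirichlet energy is retained to absorb both the nonlinearity and to produce the $\varepsilon^2\|\partial_n\overline{u}_\varepsilon\|^2$ term, and (ii) the residual power of $\varepsilon$ is nonnegative exactly in the stated ranges of $M$ and $N$ — this is where the dimension-dependent threshold $k(N) = \max\{2,N/2\}$ and the three-case structure originate, and it must be done carefully (as in \cite{AbelsMoser}, Sections 5–6) since the balance is tight.
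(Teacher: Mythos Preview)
Your overall structure is correct and matches the paper: derive the equation for $\overline{u}_\varepsilon$, test with $g_\beta^2\overline{u}_\varepsilon$, integrate, apply the spectral estimate, control remainder and nonlinearity, then close via a continuation argument on the maximal time $T_{\varepsilon,\beta,R}$ for which \eqref{eq_DC_ACND_DE} holds. The recovery of $\varepsilon^2\|\partial_n\overline{u}_\varepsilon\|^2$ also works, though you should make explicit that $\kappa$ must be taken of order $\varepsilon^2$ (otherwise the leftover potential term $\tfrac{\kappa}{\varepsilon^2}\int f''(u^A_\varepsilon)\overline{u}_\varepsilon^2$ is not controllable); this is exactly what the paper does when it bounds $\varepsilon^2\|\partial_n\overline{u}_\varepsilon\|^2$ by $C\varepsilon^2$ times the spectral bilinear form plus $C\|\overline{u}_\varepsilon\|_{L^2}^2$ and absorbs with half of the spectral term.

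The genuine gap is your treatment of the cubic term. Applying the standard isotropic Gagliardo--Nirenberg inequality on all of $\Omega$, i.e.\ $\|\overline{u}_\varepsilon\|_{L^3(\Omega)}^3\le C\|\overline{u}_\varepsilon\|_{L^2}^{3-N/2}\|\overline{u}_\varepsilon\|_{H^1(\Omega)}^{N/2}$, forces you to pay for the full $\|\nabla\overline{u}_\varepsilon\|_{L^2(\Omega)}$ including $\|\partial_n\overline{u}_\varepsilon\|_{L^2(\Gamma_t(\delta))}$. But the bootstrap \eqref{eq_DC_ACND_DE} only gives $\int_0^T g_\beta^2\|\partial_n\overline{u}_\varepsilon\|^2\lesssim R^2\varepsilon^{2M-1}$, worse by $\varepsilon^{-2}$ than the tangential and outer gradient bounds. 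Tracking the powers, your route yields a threshold strictly larger than $k(N)=\max\{2,N/2\}$ (for instance $M\ge 3$ already for $N=2,3$), so it does not prove the theorem as stated; cases~2 and~3 would fail entirely, and case~1 only for a smaller range of $M$. The paper addresses this explicitly (see Remark~\ref{th_DC_ACND_GN_rem}) and instead proves Lemma~\ref{th_DC_ACND_GN}: on $\Gamma_t(\delta)$ one transforms to $(-\delta,\delta)\times\Sigma$ and applies Gagliardo--Nirenberg \emph{separately} in the tangential $(N-1)$-dimensional and normal $1$-dimensional directions. This anisotropic splitting uses only a single power $\tfrac12$ (for $N\le4$) or $\tfrac{2}{N}$ (for $N\ge5$) of the normal $H^1$-norm, which is exactly what makes the sharp threshold $k(N)$ emerge. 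For $N\ge5$ an additional reduction of $L^3$ to $L^{2+4/N}$ via the uniform $L^\infty$ bound is needed, which your sketch also omits.
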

\begin{Remark}\upshape\phantomsection{\label{th_DC_ACND_rem}}
	\begin{enumerate}
	\item The parameter $M$ corresponds to the order of the approximate solution in Section \ref{sec_asym_ACND}.
	\item The parameter $\beta$ was introduced in order to obtain a result valid for all times $T\in(0,T_0]$.
	\item Note that weaker requirements in the theorem also work, e.g.~when one does not have the two additional terms on the right hand side of the spectral estimate or only an estimate with the full $H^1$-norm on the right hand side in \eqref{eq_DC_ACND_uA}. Moreover, a slightly less involved proof is also possible, see e.g.~Remark \ref{th_DC_ACND_GN_rem} below. However, then the result is also weaker, in particular the somewhat critical order $k(N)$ for $M$ could be increased and the $\varepsilon$-orders in \eqref{eq_DC_ACND_DE} could be weakened. This is because the $H^1$-norm can be controlled with the spectral term but one has to pay $\varepsilon^{-2}$ times the $L^2$-norm. Nevertheless, we intended to give an optimal result, also having in mind e.g.~couplings with other equations like the Stokes system as in \cite{ALiu}, where a low number of terms in the ansatz for the approximate solution is convenient.
	\item That the parameter $k(N)$ is critical for $M$ in our proof can be seen at \eqref{eq_DC_ACND_DE2} in the proof below. This is due to an estimate of a cubic term, see \eqref{eq_DC_ACND_proof4} and Lemma \ref{th_DC_ACND_GN} below. The results 2.-3.~are the best we could prove for the critical case $M=k(N)\in\N$. This situation is difficult because in the estimates there will be a term of order larger than $2$ in $R$ and a linear term in $R$, but the desired order is $2$ in $R$. The linear term in $R$ will enter due to \eqref{eq_DC_ACND_uA}, see \eqref{eq_DC_ACND_DE2} below. For the parameter $\beta$ there is a similar problem in the critical case.
	\end{enumerate}
\end{Remark}
\begin{proof}[Proof of Theorem \ref{th_DC_ACND}]
	The continuity of the objects on the left hand side in \eqref{eq_DC_ACND_DE} yields that
	\begin{align}\label{eq_DC_ACND_T_epsR}
	T_{\varepsilon,\beta,R}:=\sup\,\{\tilde{T}\in(0,T_0]: \eqref{eq_DC_ACND_DE}\text{ holds for }\varepsilon,R\text{ and all }T\in(0,\tilde{T}]\}
	\end{align}
	is well-defined for all $\varepsilon\in(0,\varepsilon_0],\beta\geq 0$ and  $T_{\varepsilon,\beta,R}>0$. In the different cases we have to show:
	\begin{enumerate}
	\item If $M>k(N)$, then there are $\beta,\varepsilon_1>0$ such that $T_{\varepsilon,\beta,R}=T_0$ for all $\varepsilon\in(0,\varepsilon_1]$.
	\item If $k(N)\in\N$ and $M=k(N)$, then there are $\beta,\tilde{R},\varepsilon_1>0$ such that $T_{\varepsilon,\beta,\tilde{R}}=T_0$ provided that $\varepsilon\in(0,\varepsilon_1]$ and \eqref{eq_DC_ACND_uA} is true for some $\tilde{M}>M$ instead of $M$ and \eqref{eq_DC_ACND_u0} is valid with $R$ replaced by $\tilde{R}$.
	\item If $N\in\{2,3\}$, $M=2$, then there are $T_1,\varepsilon_1>0$ such that $T_{\varepsilon,0,R}\geq T_1$ for all $\varepsilon\in(0,\varepsilon_1]$.
	\end{enumerate}
	
	We carry out a general computation first and return back to the different cases later. The difference of the left hand sides in \eqref{eq_AC1} for $u_\varepsilon$ and $u^A_\varepsilon$ yields
	\begin{align}\label{eq_DC_ACND_proof1}
	\left[\partial_t-\Delta+\frac{1}{\varepsilon^2}f''(u^A_\varepsilon)\right]\overline{u}_\varepsilon=-r^A_\varepsilon-r_\varepsilon(u_\varepsilon,u^A_\varepsilon),
	\end{align} 
	where $r_\varepsilon(u_\varepsilon,u^A_\varepsilon):=\frac{1}{\varepsilon^2}\left[f'(u_\varepsilon)-f'(u^A_\varepsilon)-f''(u^A_\varepsilon)\overline{u}_\varepsilon\right]$. We multiply \eqref{eq_DC_ACND_proof1} by $g_\beta^2\overline{u}_\varepsilon$ and integrate over $Q_T$ for $T\in(0,T_{\varepsilon,\beta,R}]$, where $\varepsilon\in(0,\varepsilon_0]$ and $\beta\geq 0$ are fixed. This yields 
	\begin{align}\label{eq_DC_ACND_proof2}
	\int_0^T g _\beta^2\int_\Omega\overline{u}_\varepsilon\left[\partial_t-\Delta+\frac{1}{\varepsilon^2}f''(u^A_\varepsilon)\right]\overline{u}_\varepsilon\,dx\,dt
	=-\int_0^T g _\beta^2\int_\Omega [r^A_\varepsilon+r_\varepsilon(u_\varepsilon,u^A_\varepsilon)]\overline{u}_\varepsilon\,dx\,dt
	\end{align} 
	for all $T\in(0,T_{\varepsilon,\beta,R}]$, $\varepsilon\in(0,\varepsilon_0]$ and $\beta\geq 0$.
	We have to estimate all terms in a suitable way. First, $\frac{1}{2}\partial_t|\overline{u}_\varepsilon|^2=\overline{u}_\varepsilon\partial_t\overline{u}_\varepsilon$, integration by parts in time and $\partial_tg_\beta=-\beta g_\beta$ imply
	\[
	\int_0^T\int_\Omega g_\beta^2\partial_t\overline{u}_\varepsilon\overline{u}_\varepsilon\,dx\,dt=\frac{1}{2}g_\beta(T)^2\|\overline{u}_\varepsilon(T)\|_{L^2(\Omega)}^2-\frac{1}{2}\|\overline{u}_\varepsilon(0)\|_{L^2(\Omega)}^2+\beta\int_0^T g_\beta^2 \|\overline{u}_\varepsilon\|_{L^2(\Omega)}^2\,dt,
	\]
	where $\|\overline{u}_\varepsilon(0)\|_{L^2(\Omega)}^2\leq R^2\varepsilon^{2M+1}$ due to \eqref{eq_DC_ACND_u0} (\enquote{well-prepared initial data}). For the other term on the left hand side in \eqref{eq_DC_ACND_proof2} we use integration by parts in space. This yields
	\begin{align*}
	&\int_0^Tg_\beta^2\int_\Omega \overline{u}_\varepsilon\left[-\Delta+\frac{1}{\varepsilon^2}f''(u^A_\varepsilon)\right]\overline{u}_\varepsilon\,dx\,dt\\
	&=\int_0^Tg_\beta^2\int_\Omega|\nabla\overline{u}_\varepsilon|^2+\frac{1}{\varepsilon^2}f''(u^A_\varepsilon)\overline{u}_\varepsilon^2\,dx\,dt
	+\int_0^Tg_\beta^2\int_{\partial\Omega}s^A_\varepsilon\,\tr\,\overline{u}_\varepsilon\,d\Hc^{N-1}\,dt.
	\end{align*}
	With requirement 2.~(\enquote{spectral estimate}) in the theorem it follows that the first integral on the right hand side in the latter equation is bounded from below by
	\[
	-C\int_0^T g_\beta^2\|\overline{u}_\varepsilon\|_{L^2(\Omega)}^2\,dt+\|g_\beta\nabla\overline{u}_\varepsilon\|_{L^2(Q_T\setminus\Gamma(\delta))}^2+c_0\|g_\beta\nabla_\tau\overline{u}_\varepsilon\|_{L^2(Q_T\cap\Gamma(\delta))}^2.
	\]
    For the remainder terms involving $r^A_\varepsilon$ and $s^A_\varepsilon$ we use \eqref{eq_DC_ACND_uA} (\enquote{approximate solution}). This yields 
    \[
    \left|\int_0^Tg_\beta^2\left[\int_{\partial\Omega}s^A_\varepsilon\tr\,\overline{u}_\varepsilon(t)\,d\Hc^{N-1}+\int_\Omega r^A_\varepsilon\overline{u}_\varepsilon(t)\,dx\right]dt\right|
    \leq \overline{C}_1R\|g_\beta\|_{L^2(0,T)}\varepsilon^{2M+1}
    \]
    due to \eqref{eq_DC_ACND_DE} for all $T\in(0,T_{\varepsilon,\beta,R}]$ and $\varepsilon\in(0,\varepsilon_0]$, where we used $\|g_\beta\|_{L^1(0,T)}\leq \sqrt{T_0}\|g_\beta\|_{L^2(0,T)}$.
	
	In the following we estimate the $r_\varepsilon$-term in \eqref{eq_DC_ACND_proof2}. The requirement 1.~(\enquote{uniform boundedness}) in the theorem and Lemma \ref{th_DC_bdd_scal} yield
	\begin{align}\label{eq_DC_ACND_proof3}
	\sup_{\varepsilon\in(0,\varepsilon_0]}\left[\|u_\varepsilon\|_{L^\infty(Q_{T_0})}+\|u^A_\varepsilon\|_{L^\infty(Q_{T_0})}\right]<\infty.
	\end{align}
	Therefore we can apply the Taylor Theorem and obtain
	\begin{align}\label{eq_DC_ACND_proof4}
	\left|\int_0^Tg_\beta^2\int_\Omega r_\varepsilon(u_\varepsilon,u^A_\varepsilon)\overline{u}_\varepsilon\,dx\,dt\right|
	\leq \frac{C}{\varepsilon^2}\int_0^Tg_\beta^2\|\overline{u}_\varepsilon\|_{L^3(\Omega)}^3\,dt.
	\end{align}
	In order to estimate the latter we use a standard Gagliardo-Nirenberg Inequality on $\Omega\setminus\Gamma_t(\delta)$ but on $\Gamma_t(\delta)$ we apply such inequalities in tangential and normal direction. This is similar to \cite{ALiu}, Lemma 5.3. The idea is to get a finer estimate and account for the fact that the estimate for $\nabla_\tau\overline{u}_\varepsilon$ in \eqref{eq_DC_ACND_DE} is better than that for $\partial_n\overline{u}_\varepsilon$. However, note that if $N$ is too large, estimating the full $L^3$-norm for $\overline{u}_\varepsilon$ will not work because of the requirements for the Gagliardo-Nirenberg Inequality or because we only have $L^2$-estimates in \eqref{eq_DC_ACND_DE} for $\nabla_\tau\overline{u}_\varepsilon$ and $\partial_n\overline{u}_\varepsilon$. Therefore we use the uniform boundedness \eqref{eq_DC_ACND_proof3} to lower the exponent. However, this will also decrease the resulting $\varepsilon$-order. Therefore we try to find the largest possible parameter. The estimates are lengthy and we decided to postpone them, see below. The result is
	\begin{Lemma}\label{th_DC_ACND_GN}
	Under the assumptions in Theorem \ref{th_DC_ACND} it holds
	\begin{align*}
	\left|\int_0^T\!g_\beta^2\int_\Omega r_\varepsilon(u_\varepsilon,u^A_\varepsilon)\overline{u}_\varepsilon\,dx\,dt\right|
	\leq
	CR^{2+K(N)}\varepsilon^{2M+1} \varepsilon^{K(N)(M-k(N))}\|g_\beta^{-K(N)}\|_{L^{\frac{4}{4-\min\{4,N\}}}(0,T)}
	\end{align*}
	for all $T\in(0,T_{\varepsilon,\beta,R}]$ and $\varepsilon\in(0,\varepsilon_0]$, where $T_{\varepsilon,\beta,R}$ is as in \eqref{eq_DC_ACND_T_epsR}, $k(N)=\max\{2,\frac{N}{2}\}$ and $K(N):=\min\{1,\frac{4}{N}\}\in(0,1]$.
	\end{Lemma}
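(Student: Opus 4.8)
The plan is to estimate the cubic term $\frac{1}{\varepsilon^2}\int_0^T g_\beta^2\|\overline u_\varepsilon\|_{L^3(\Omega)}^3\,dt$ that appeared in \eqref{eq_DC_ACND_proof4}, using the \emph{a priori} bound \eqref{eq_DC_ACND_proof3} to interpolate between the $L^2$-control we already have (from \eqref{eq_DC_ACND_DE}) and the $L^\infty$-bound, and then trading gradient norms against the spectral quantities. Concretely, first I would split $\Omega$ into the bulk region $\Omega\setminus\Gamma_t(\delta)$ and the tubular neighbourhood $\Gamma_t(\delta)$, and in both cases interpolate $\|\overline u_\varepsilon\|_{L^3}$ (or a higher $L^p$-norm) downward using $\|\overline u_\varepsilon\|_{L^\infty(\Omega)}\le C$ so as to reduce to an $L^q$-norm with $q$ small enough that the Gagliardo--Nirenberg inequality of Lemma \ref{th_DC_GN} (in the form valid on Lipschitz domains, Remark \ref{th_DC_GN_rem}) applies with the operator norms uniform in $t$. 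The appearance of $K(N)=\min\{1,4/N\}$ and $k(N)=\max\{2,N/2\}$ is exactly the bookkeeping of this interpolation: for $N\le 4$ one can keep $K(N)=1$ (the honest cubic estimate), while for larger $N$ one must lower the exponent, which is why $K(N)=4/N<1$ there.

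Next I would treat the tubular part more carefully, since $\eqref{eq_DC_ACND_DE}$ gives a better bound on $\nabla_\tau\overline u_\varepsilon$ than on $\partial_n\overline u_\varepsilon$. Transforming to $(-\delta,\delta)\times\Sigma^\circ$ via $X(\cdot,t)$ (Remark \ref{th_SE_ACND_sob_rem}) and using the product structure $H^1\cong L^2(\Sigma,H^1(-\delta,\delta))\cap H^1(\Sigma,L^2(-\delta,\delta))$ from Lemma \ref{th_SobMfd_prod_set}, I would apply a one-dimensional Gagliardo--Nirenberg inequality in the normal variable together with the $(N-1)$-dimensional one in the tangential variable. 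This is the mechanism (following \cite{ALiu}, Lemma 5.3) that lets the $\varepsilon\|\partial_n\overline u_\varepsilon\|$ term in \eqref{eq_DC_ACND_DE} absorb the normal derivative at the cost of only one factor of $\varepsilon^{-1}$, which combined with the definition of $T_{\varepsilon,\beta,R}$ and the bounds $\|g_\beta\overline u_\varepsilon\|_{L^2}^2\le 2R^2\varepsilon^{2M+1}$, $\|g_\beta\nabla\overline u_\varepsilon\|_{L^2(Q_T\setminus\Gamma(\delta))}^2\le 2R^2\varepsilon^{2M+1}$, etc., gives a power of $R$ equal to $2+K(N)$ and a power of $\varepsilon$ equal to $2M+1+K(N)(M-k(N))$, the latter being nonnegative precisely because $M\ge k(N)$. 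The surviving weight factor is handled by pulling out $g_\beta^{-K(N)}$ and estimating it in $L^{4/(4-\min\{4,N\})}(0,T)$, which is exactly the Hölder exponent dual to what the interpolation produces.

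The main obstacle I expect is the exponent bookkeeping in the Gagliardo--Nirenberg application: one has to choose the interpolation parameter $\theta$ and the auxiliary Lebesgue exponent so that (i) the GN scaling relation $\theta(1/p - 1/n) + (1-\theta)/q = 1/r$ holds with $p=2$ and $n$ equal to $N$ in the bulk and to $1$ (normal) or $N-1$ (tangential) in the tube, (ii) the resulting power of the $H^1$-norm stays $\le 1$ so that one may substitute $\eqref{eq_DC_ACND_DE}$, and (iii) the leftover power of $\|\overline u_\varepsilon\|_{L^2}$ and the weight $g_\beta$ come out with the claimed exponents. A secondary technical point is ensuring uniformity in $t\in[0,T_0]$ of all constants --- this follows from the uniform positivity and boundedness of $J_t$ (Remark \ref{th_coordND_rem},~3) and from the fact that the extension-operator norms in Remark \ref{th_DC_GN_rem} depend only on fixed geometric data of $\Omega$ and of $\Gamma$. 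Once Lemma \ref{th_DC_ACND_GN} is in place, plugging it back into \eqref{eq_DC_ACND_proof2} together with the spectral estimate and \eqref{eq_DC_ACND_uA} yields a Gronwall-type inequality for $g_\beta(T)^2\|\overline u_\varepsilon(T)\|_{L^2}^2$ plus the gradient terms, from which cases~1--3 of Theorem \ref{th_DC_ACND} follow by the usual continuity/bootstrap argument on $T_{\varepsilon,\beta,R}$.
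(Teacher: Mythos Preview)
Your proposal is correct and follows essentially the same route as the paper's proof: split into $\Omega\setminus\Gamma_t(\delta)$ and $\Gamma_t(\delta)$, on the bulk apply the standard Gagliardo--Nirenberg inequality (lowering the exponent via the $L^\infty$-bound \eqref{eq_DC_ACND_proof3} when $N\geq 5$, which produces $r(N)=2+4/N$ and hence $K(N)=4/N$), and on the tube transform to $(-\delta,\delta)\times\Sigma$ and apply Gagliardo--Nirenberg first on $\Sigma$ and then on $(-\delta,\delta)$, exactly as in \cite{ALiu}, Lemma~5.3. One small correction: the constraint you need is that the power of the $H^1$-norm (in space, at fixed $t$) stays $\leq 2$, not $\leq 1$, since \eqref{eq_DC_ACND_DE} controls the \emph{squared} $L^2(Q_T)$-norms of the gradients and you close via H\"older in time; this is precisely what forces the choice $r(N)=2+4/N$ for $N\geq 5$.
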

	\begin{Remark}\upshape\label{th_DC_ACND_GN_rem}
	Note that one could apply the same standard Gagliardo-Nirenberg Inequality used for $\Omega\setminus\Gamma_t(\delta)$ also for whole $\Omega$, but then the estimate is weaker and the minimal order $k(N)$ for $M$ that is required for the difference estimate to work increases. 
	\end{Remark}
	
	It remains to estimate $\partial_n\overline{u}_\varepsilon$. Therefore we use that due to Corollary \ref{th_coordND_nabla_tau_n}
	\begin{align*}
	\varepsilon^2\|g_\beta\partial_n\overline{u}_\varepsilon\|_{L^2(Q_T\cap\Gamma(\delta))}^2
	&\leq C\varepsilon^2\int_0^T g_\beta^2\int_\Omega|\nabla\overline{u}_\varepsilon|^2+\frac{1}{\varepsilon^2}f''(u^A_\varepsilon)(\overline{u}_\varepsilon)^2\,dx\,dt\\
	&+C\sup_{\varepsilon\in(0,\varepsilon_0]}\|f''(u^A_\varepsilon)\|_{L^\infty(Q_{T_0})}\int_0^T g_\beta^2\|\overline{u}_\varepsilon(t)\|_{L^2(\Omega)}^2\,dt
	\end{align*}
	with a constant $C>0$ independent of $\varepsilon$, $T$ and $R$. The first term is absorbed with $\frac{1}{2}$ of the spectral term above if $\varepsilon\in(0,\varepsilon_1]$ and $\varepsilon_1>0$ is small (independent of $T$, $R$). Finally, all terms are estimated and we obtain
	\begin{align}\notag
	&\|\frac{g_\beta\overline{u}_\varepsilon}{2}|_T\|_{L^2(\Omega)}^2
	+\|\frac{g_\beta}{2}\nabla\overline{u}_\varepsilon\|_{L^2(Q_T\setminus\Gamma(\delta))}^2
	+\frac{c_0}{2}\|g_\beta\nabla_\tau\overline{u}_\varepsilon\|_{L^2(Q_T\cap\Gamma(\delta))}^2
	+\frac{\varepsilon^2}{2}\|g_\beta\partial_n\overline{u}_\varepsilon\|_{L^2(Q_T\cap\Gamma(\delta))}^2\\
	\begin{split}&\leq
	\frac{R^2}{2}\varepsilon^{2M+1}+\int_0^T(-\beta+\overline{C}_0)g_\beta^2\|\overline{u}_\varepsilon(t)\|_{L^2(\Omega)}^2\,dt+\overline{C}_1R\varepsilon^{2M+1}\|g_\beta\|_{L^2(0,T)}\\
	&+CR^{2+K(N)}\varepsilon^{2M+1} \varepsilon^{K(N)(M-k(N))}\|g_\beta^{-K(N)}\|_{L^{\frac{4}{4-\min\{4,N\}}}(0,T)}\label{eq_DC_ACND_DE2}\end{split}
	\end{align}
	for all $T\in(0,T_{\varepsilon,\beta,R}]$, $\varepsilon\in(0,\varepsilon_1]$ and constants $\overline{C}_0,\overline{C}_1,C>0$ independent of $\varepsilon,T,R$, where $k(N)=\max\{2,\frac{N}{2}\}$ and $K(N)=\min\{1,\frac{4}{N}\}$. 
	
	Now we consider the cases in the theorem.\phantom{\qedhere}
	
	\begin{proof}[Ad 1] 
	If $M>k(N)$, then we choose $\beta\geq \overline{C}_0$ large such that $\overline{C}_1R\|g_\beta\|_{L^2(0,T_0)}\leq\frac{R^2}{8}$.
	Then \eqref{eq_DC_ACND_DE2} is estimated by $\frac{3}{4}R^2\varepsilon^{2M+1}$ for all $T\in (0,T_{\varepsilon,\beta,R}]$ and $\varepsilon\in(0,\varepsilon_1]$, if $\varepsilon_1>0$ is small. By contradiction and continuity this shows $T_{\varepsilon,\beta,R}=T_0$ for all $\varepsilon\in(0,\varepsilon_1]$.\qedhere$_{1.}$
	\end{proof}

	\begin{proof}[Ad 2] 
	Let $k(N)\in\N$, $M=k(N)$ and let \eqref{eq_DC_ACND_uA} hold for some $\tilde{M}>M$ instead of $M$. Then the term in \eqref{eq_DC_ACND_DE2} where $R$ enters linearly is improved by a factor $\varepsilon^{\tilde{M}-M}$. Let $\beta\geq\overline{C}_0$ be fixed. Now we can first choose $R>0$ small such that the $R^{2+K(N)}$-term in \eqref{eq_DC_ACND_DE2} is bounded by $\frac{1}{8}R^2\varepsilon^{2M+1}$. Then $\varepsilon_1>0$ can be taken small such that \eqref{eq_DC_ACND_DE2} is estimated by $\frac{3}{4}R^2\varepsilon^{2M+1}$ for all $T\in (0,T_{\varepsilon,\beta,R}]$ and $\varepsilon\in(0,\varepsilon_1]$. By contradiction we get $T_{\varepsilon,\beta,R}=T_0$ for all $\varepsilon\in(0,\varepsilon_1]$.\qedhere$_{2.}$
	\end{proof}

	\begin{proof}[Ad 3] 
	Finally, let $N\in\{2,3\}$, $M=2$ and $\beta=0$. Then \eqref{eq_DC_ACND_DE2} is dominated by
	\[
	\left[\frac{R^2}{2}+CR^2T+CRT^{\frac{1}{2}}+CR^3 T^{\frac{4-N}{4}}\right]\varepsilon^{2M+1}.
	\]
	Due to $\frac{4-N}{4}>0$ there are $\varepsilon_1,T_1>0$ such that this is bounded by $\frac{3}{4}R^2\varepsilon^{2M+1}$ for every $T\in(0,\min(T_{\varepsilon,\beta,R},T_1)]$ and $\varepsilon\in(0,\varepsilon_1]$. Therefore $T_{\varepsilon,0,R}\geq T_1$ for all $\varepsilon\in(0,\varepsilon_1]$.
	\qedhere$_{3.}$\end{proof}
\end{proof}

\begin{proof}[Proof of Lemma \ref{th_DC_ACND_GN}]
First, let us estimate \eqref{eq_DC_ACND_proof4} for $\Omega\setminus\Gamma_t(\delta)$ instead of $\Omega$. The Gagliardo-Nirenberg Inequality in Lemma \ref{th_DC_GN} and Remark \ref{th_DC_GN_rem} imply for $2\leq N\leq 6$
\[
\|\overline{u}_\varepsilon(t)\|_{L^3(\Omega\setminus\Gamma_t(\delta))}^3\leq C\|\overline{u}_\varepsilon(t)\|_{L^2(\Omega\setminus\Gamma_t(\delta))}^{3(1-\frac{N}{6})}\|\overline{u}_\varepsilon(t)\|_{H^1(\Omega\setminus\Gamma_t(\delta))}^{3\frac{N}{6}},
\]
where the constant $C$ is independent of $t\in[0,T_0]$ because of Remark \ref{th_DC_GN_rem} and since $\Omega\setminus\Gamma_t(\delta)$ has a Lipschitz-boundary uniformly in $t\in[0,T_0]$, cf.~Remark \ref{th_SE_ACND_sob_rem}. For $3\frac{N}{6}>2$, i.e.~$N=5,6$, the right hand side can not be controlled with \eqref{eq_DC_ACND_DE}. But for $N\in\{2,3,4\}$ we can use this estimate and obtain that \eqref{eq_DC_ACND_proof4} for $\Omega\setminus\Gamma_t(\delta)$ instead of $\Omega$ is estimated by
\begin{align}\begin{split}\label{eq_DC_ACND_proof_GN1}
&\frac{C}{\varepsilon^2}\int_0^T g_\beta^{-1}
\!\left[g_\beta^3\|\overline{u}_\varepsilon(t)\|_{L^2(\Omega\setminus\Gamma_t(\delta))}^3
+g_\beta^{3-\frac{N}{2}}\|\overline{u}_\varepsilon\|_{L^2(\Omega\setminus\Gamma_t(\delta))}^{3-\frac{N}{2}} g_\beta^{\frac{N}{2}}\|\nabla\overline{u}_\varepsilon(t)\|_{L^2(\Omega\setminus\Gamma_t(\delta))}^{\frac{N}{2}}\right]\!dt\\
&\leq CR^3\varepsilon^{-2+3(M+\frac{1}{2})}\left[\|g_\beta^{-1}\|_{L^1(0,T)}+\|g_\beta^{-1}\|_{L^{\frac{4}{4-N}}(0,T)}\right]\end{split}
\end{align}
for all $T\in(0,T_{\varepsilon,\beta,R}]$ and $\varepsilon\in(0,\varepsilon_0]$, where we used \eqref{eq_DC_ACND_DE} and the Hölder Inequality with exponents $\infty$, $\frac{4}{N}$ and $\frac{4}{4-N}$ for the second term.
Now let $N\geq 5$. Then we consider $r(N)\in(2,3]$ (to be determined later) and estimate with the uniform boundedness from \eqref{eq_DC_ACND_proof3}
\[
\|\overline{u}_\varepsilon(t)\|_{L^3(\Omega\setminus\Gamma_t(\delta))}^3\leq
C\|\overline{u}_\varepsilon(t)\|_{L^{r(N)}(\Omega\setminus\Gamma_t(\delta))}^{r(N)}.
\]
By Lemma \ref{th_DC_GN} the Gagliardo-Nirenberg Inequality is applicable for all $\theta=\theta(N)\in[0,1]$ with
\[
\theta\left(\frac{1}{2}-\frac{1}{N}\right)+\frac{1-\theta}{2}=\frac{1}{r(N)}\quad\Leftrightarrow\quad \theta=N\left(\frac{1}{2}-\frac{1}{r(N)}\right).
\]
The condition $\theta\in[0,1]$ restricts $r(N)$ to be in $(2,2+\frac{4}{N-2}]$. Then Lemma \ref{th_DC_GN} and Remark \ref{th_DC_GN_rem} yield
\[
\|\overline{u}_\varepsilon(t)\|_{L^{r(N)}(\Omega\setminus\Gamma_t(\delta))}^{r(N)}
\leq C\|\overline{u}_\varepsilon(t)\|_{L^2(\Omega\setminus\Gamma_t(\delta))}^{r(N)(1-\theta)} \|\overline{u}_\varepsilon(t)\|_{H^1(\Omega\setminus\Gamma_t(\delta))}^{r(N)\theta}.
\]
Because we have to use \eqref{eq_DC_ACND_DE} to control the right hand side, we require $r(N)\theta\leq 2$. This is equivalent to
\[
N\left(\frac{r(N)}{2}-1\right)\leq 2\quad\Leftrightarrow\quad r(N)\leq 2+\frac{4}{N}.
\]
Since $2+\frac{4}{N}\leq\min\{3,2+\frac{4}{N-2}\}$ for $N\geq 5$, we can take $r(N):=2+\frac{4}{N}$ for $N\geq 5$. Therefore $r(N)\theta=2$ and $r(N)(1-\theta)=r(N)-r(N)\theta=\frac{4}{N}$. Hence for $N\geq 5$ we obtain the estimate
\[
\|\overline{u}_\varepsilon(t)\|_{L^3(\Omega\setminus\Gamma_t(\delta))}^3\leq C\|\overline{u}_\varepsilon(t)\|_{L^2(\Omega\setminus\Gamma_t(\delta))}^{\frac{4}{N}} \|\overline{u}_\varepsilon(t)\|_{H^1(\Omega\setminus\Gamma_t(\delta))}^2.
\]
Note that for $N=4$ the calculation also works but yields the same as before. Hence using \eqref{eq_DC_ACND_DE} we obtain for $N\geq 5$ that \eqref{eq_DC_ACND_proof4} with $\Omega$ replaced by $\Omega\setminus\Gamma_t(\delta)$ is estimated via
\begin{align}\notag
&\frac{C}{\varepsilon^2}\int_0^T g_\beta^{-\frac{4}{N}}\left[
g_\beta^{2+\frac{4}{N}}\|\overline{u}_\varepsilon(t)\|_{L^2(\Omega\setminus\Gamma_t(\delta))}^{2+\frac{4}{N}}
+g_\beta^{\frac{4}{N}}\|\overline{u}_\varepsilon(t)\|_{L^2(\Omega\setminus\Gamma_t(\delta))}^{\frac{4}{N}}g_\beta^2 \|\nabla\overline{u}_\varepsilon(t)\|_{L^2(\Omega\setminus\Gamma_t(\delta))}^2\right]dt\\
&\leq CR^{2+\frac{4}{N}}\varepsilon^{-2+(2+\frac{4}{N})(M+\frac{1}{2})}\left[\|g_\beta^{-\frac{4}{N}}\|_{L^1(0,T)}+\|g_\beta^{-\frac{4}{N}}\|_{L^\infty(0,T)}\right]\label{eq_DC_ACND_proof_GN2}
\end{align}
for all $T\in(0,T_{\varepsilon,\beta,R}]$ and $\varepsilon\in(0,\varepsilon_0]$.

Next we estimate \eqref{eq_DC_ACND_proof4} for $\Gamma_t(\delta)$ instead of $\Omega$. First we transform the integral to $(-\delta,\delta)\times\Sigma$ via $\overline{X}$. This yields
\[
\int_0^Tg_\beta^2\|\overline{u}_\varepsilon\|_{L^3(\Omega)}^3\,dt=\int_0^Tg_\beta^2\int_{-\delta}^\delta\int_\Sigma|\overline{u}_\varepsilon|_{\overline{X}(r,s,t)}|^3 J_t(r,s)\,d\Hc^{N-1}(s)\,dr\,dt.
\]
Here $0<c\leq J_t\leq C$ with $c,C>0$ independent of $t$ by Remark \ref{th_coordND_rem},~3. Because of Lemma \ref{th_SobMfd_def_lemma} and Remark \ref{th_DC_GN_rem} we can use the Gagliardo-Nirenberg Inequality for $\Sigma$ with $N-1$ instead of $N$ (and full $W^{1,p}$-norm on the right hand side). First we consider $N\in\{2,3,4\}$ since this was a special case for the estimate on $\Omega\setminus\Gamma_t(\delta)$, too. Then
\[
\int_0^T g_\beta^2\int_{-\delta}^\delta\|\overline{u}_\varepsilon|_{\overline{X}(r,.,t)}\|_{L^3(\Sigma)}^3\,dr\,dt
\leq \int_0^TCg_\beta^2\int_{-\delta}^\delta \|\overline{u}_\varepsilon|_{\overline{X}(r,.,t)}\|_{L^2(\Sigma)}^{\frac{7-N}{2}}\|\overline{u}_\varepsilon|_{\overline{X}(r,.,t)}\|_{H^1(\Sigma)}^{\frac{N-1}{2}}\,dr\,dt.
\]
By Lemma \ref{th_SobMfd_prod_set} we can use the Hölder Inequality with exponents $\frac{4}{5-N}$ and $\frac{4}{N-1}$. Therefore
\[
\int_{-\delta}^\delta\|\overline{u}_\varepsilon|_{\overline{X}(r,.,t)}\|_{L^3(\Sigma)}^3\,dr
\leq C \|\overline{u}_\varepsilon|_{\overline{X}(.,t)}\|_{L^{2\frac{7-N}{5-N}}(-\delta,\delta,L^2(\Sigma))}^{\frac{7-N}{2}}\|\overline{u}_\varepsilon|_{\overline{X}(.,t)}\|_{L^2(-\delta,\delta,H^1(\Sigma))}^{\frac{N-1}{2}}.
\]
Here $2\frac{7-N}{5-N}\in(2,\infty)$. Hence for the first term we can use the Gagliardo-Nirenberg Inequality for $(-\delta,\delta)$. The condition on the intermediate parameter $\theta=\theta(N)\in[0,1]$ is
\[
\theta\left(\frac{1}{2}-1\right)+\frac{1-\theta}{2}
=\frac{5-N}{2(7-N)}\quad\Leftrightarrow\quad 
\theta=\frac{1}{7-N}.
\]
Hence we obtain 
\[
\|\overline{u}_\varepsilon|_{\overline{X}(.,t)}\|_{L^{2\frac{7-N}{5-N}}(-\delta,\delta,L^2(\Sigma))}^{\frac{7-N}{2}}
\leq C\|\overline{u}_\varepsilon|_{\overline{X}(.,t)}\|_{L^2(-\delta,\delta,L^2(\Sigma))}^{\frac{6-N}{2}} \|\overline{u}_\varepsilon|_{\overline{X}(.,t)}\|_{H^1(-\delta,\delta,L^2(\Sigma))}^\frac{1}{2}.
\]
Hence \eqref{eq_DC_ACND_proof4} for $\Gamma_t(\delta)$ instead of $\Omega$ and $N\in\{2,3,4\}$ is estimated by
\begin{align}\label{eq_DC_ACND_proof_GN3}
\frac{C}{\varepsilon^2}\int_0^T g_\beta^2 \|\overline{u}_\varepsilon|_{\overline{X}(.,t)}\|_{L^2(-\delta,\delta,L^2(\Sigma))}^{\frac{6-N}{2}} \|\overline{u}_\varepsilon|_{\overline{X}(.,t)}\|_{H^1(-\delta,\delta,L^2(\Sigma))}^\frac{1}{2} 
\|\overline{u}_\varepsilon|_{\overline{X}(.,t)}\|_{L^2(-\delta,\delta,H^1(\Sigma))}^{\frac{N-1}{2}}\,dt.
\end{align}
Moreover, because of Lemma \ref{th_SobMfd_def_lemma}, Lemma \ref{th_SobMfd_prod_set} and Corollary \ref{th_coordND_nabla_tau_n},~1.~it holds 
\begin{align*}
\|\overline{u}_\varepsilon|_{\overline{X}(.,t)}\|_{H^1(-\delta,\delta,L^2(\Sigma))}
&\leq C(\|\overline{u}_\varepsilon|_{\overline{X}(.,t)}\|_{L^2((-\delta,\delta)\times\Sigma)}+\|\partial_n\overline{u}_\varepsilon|_{\overline{X}(.,t)}\|_{L^2((-\delta,\delta)\times\Sigma)}),\\
\|\overline{u}_\varepsilon|_{\overline{X}(.,t)}\|_{L^2(-\delta,\delta,H^1(\Sigma))}
&\leq C(\|\overline{u}_\varepsilon|_{\overline{X}(.,t)}\|_{L^2((-\delta,\delta)\times\Sigma)}+\|\nabla_\tau\overline{u}_\varepsilon|_{\overline{X}(.,t)}\|_{L^2((-\delta,\delta)\times\Sigma)}).
\end{align*}
For the product term in \eqref{eq_DC_ACND_proof_GN3} involving both $\partial_n\overline{u}_\varepsilon$ and $\nabla_\tau\overline{u}_\varepsilon$ as factors we apply the Hölder Inequality with exponents $\frac{4}{4-N}$, $4$ and $\frac{4}{N-1}$. For the term with $\partial_n\overline{u}_\varepsilon$ we use the exponents $\frac{4}{3}$, $4$ and for the one with $\nabla_\tau$ we use $\frac{4}{5-N}$, $\frac{4}{N-1}$.
Altogether \eqref{eq_DC_ACND_proof4} for $\Gamma_t(\delta)$ instead of $\Omega$ and $N\in\{2,3,4\}$ is controlled by
\begin{align}\label{eq_DC_ACND_proof_GN4}
CR^3\varepsilon^{3M-1}\!
\left[\|g_\beta^{-1}\|_{L^1(0,T)}\!+\!\|g_\beta^{-1}\|_{L^{\frac{4}{5-N}}\!(0,T)}\!+\!\|g_\beta^{-1}\|_{L^{\frac{4}{3}}\!(0,T)}\!+\!\|g_\beta^{-1}\|_{L^{\frac{4}{4-N}}\!(0,T)}\right]
\end{align}
for all $T\in(0,T_{\varepsilon,\beta,R}]$ and $\varepsilon\in(0,\varepsilon_0]$, where we used \eqref{eq_DC_ACND_DE} and $\varepsilon\leq\varepsilon_0$ for the terms that possess a higher $\varepsilon$-order. Now let $N\geq 5$. Then with \eqref{eq_DC_ACND_proof3} we estimate
\[
\|\overline{u}_\varepsilon(t)\|_{L^3(\Gamma_t(\delta))}^3 
\leq \|\overline{u}_\varepsilon(t)\|_{L^{\tilde{r}(N)}(\Gamma_t(\delta))}^{\tilde{r}(N)}
\]
for some $\tilde{r}(N)\in(2,3]$. We seek the maximal $\tilde{r}(N)$ such that similar calculations as above work. It will turn out that $\tilde{r}(N)=2+\frac{4}{N}$ is optimal. Note that the latter also was the best exponent for the estimate on $\Omega\setminus\Gamma_t(\delta)$ above in the case $N\geq 5$. But let us carry out the calculations with a general $\tilde{r}(N)$. The Gagliardo-Nirenberg Inequality on $\Sigma$ is applicable for $\theta=\theta(N)\in[0,1]$ with
\[
\theta\left(\frac{1}{2}-\frac{1}{N-1}\right)+\frac{1-\theta}{2}=\frac{1}{\tilde{r}(N)}\quad\Leftrightarrow\quad \theta=(N-1)\left(\frac{1}{2}-\frac{1}{\tilde{r}(N)}\right).
\]
This restricts $\tilde{r}(N)$ to be in $(2,2+\frac{4}{N-3}]$. Therefore for such $\tilde{r}(N)$ we obtain the estimate
\[
\int_0^T g_\beta^2\int_{-\delta}^\delta\|\overline{u}_\varepsilon|_{\overline{X}(r,.,t)}\|_{L^{\tilde{r}(N)}(\Sigma)}^{\tilde{r}(N)}\,dr\,dt
\leq \int_0^TCg_\beta^2\int_{-\delta}^\delta \|\overline{u}_\varepsilon|_{\overline{X}(r,.,t)}\|_{L^2(\Sigma)}^{q(N)}\|\overline{u}_\varepsilon|_{\overline{X}(r,.,t)}\|_{H^1(\Sigma)}^{p(N)}\,dr\,dt,
\]
where $p(N):=\theta\tilde{r}(N)=(N-1)\left(\frac{\tilde{r}(N)}{2}-1\right)$ and $q(N):=(1-\theta)\tilde{r}(N)=\tilde{r}(N)-p(N)$. The next step is to use the Hölder Inequality on $(-\delta,\delta)$. To this end we need $p(N)\leq 2$. This is equivalent to $\tilde{r}(N)\leq 2+\frac{4}{N-1}$. Hence for these $\tilde{r}(N)$ we can use the Hölder Inequality with exponents $\frac{2}{2-p(N)}$, $\frac{2}{p(N)}$ and obtain
\[
\int_{-\delta}^\delta\|\overline{u}_\varepsilon|_{\overline{X}(r,.,t)}\|_{L^{\tilde{r}(N)}(\Sigma)}^{\tilde{r}(N)}\,dr
\leq C \|\overline{u}_\varepsilon|_{\overline{X}(.,t)}\|_{L^{y(N)}(-\delta,\delta,L^2(\Sigma))}^{q(N)}\|\overline{u}_\varepsilon|_{\overline{X}(.,t)}\|_{L^2(-\delta,\delta,H^1(\Sigma))}^{p(N)},
\]
where we have set $y(N):=\frac{2q(N)}{2-p(N)}$. Note that $y(N)=\frac{2\tilde{r}(N)-2p(N)}{2-p(N)}>2$. Therefore we can use the Gagliardo-Nirenberg Inequality on $(-\delta,\delta)$. The condition for $\theta=\theta(N)\in[0,1]$ is
\[
\theta\left(\frac{1}{2}-1\right)+\frac{\theta}{2}=\frac{1}{y(N)}\quad\Leftrightarrow\quad \theta=\frac{1}{2}-\frac{1}{y(N)}=\frac{q(N)+p(N)-2}{2q(N)}=\frac{\tilde{r}(N)-2}{2q(N)}.
\]
Hence we obtain
\[
\|\overline{u}_\varepsilon|_{\overline{X}(.,t)}\|_{L^{y(N)}(-\delta,\delta,L^2(\Sigma))}^{q(N)}
\leq C\|\overline{u}_\varepsilon|_{\overline{X}(.,t)}\|_{L^2(-\delta,\delta,L^2(\Sigma))}^{q(N)+1-\frac{\tilde{r}(N)}{2}} \|\overline{u}_\varepsilon|_{\overline{X}(.,t)}\|_{H^1(-\delta,\delta,L^2(\Sigma))}^{\frac{\tilde{r}(N)}{2}-1}.
\]
Therefore \eqref{eq_DC_ACND_proof4} for $\Gamma_t(\delta)$ instead of $\Omega$ and $N\geq 5$ is estimated by
\begin{align}\label{eq_DC_ACND_proof_GN5}
\frac{C}{\varepsilon^2}\int_0^T g_\beta^2 \|\overline{u}_\varepsilon|_{\overline{X}(.,t)}\|_{L^2(-\delta,\delta,L^2(\Sigma))}^{1+q(N)-\frac{\tilde{r}(N)}{2}} \|\overline{u}_\varepsilon|_{\overline{X}(.,t)}\|_{H^1(-\delta,\delta,L^2(\Sigma))}^{\frac{\tilde{r}(N)}{2}-1} 
\|\overline{u}_\varepsilon|_{\overline{X}(.,t)}\|_{L^2(-\delta,\delta,H^1(\Sigma))}^{p(N)}\,dt.
\end{align}
We can estimate the terms using $\partial_n$ and $\nabla_\tau$, see below \eqref{eq_DC_ACND_proof_GN3}. The last step is to use the Hölder Inequality in time. For the product of the $\partial_n\overline{u}_\varepsilon$-term and the $\nabla_\tau\overline{u}_\varepsilon$-term we want to use the Hölder Inequality with exponents $2\frac{2}{\tilde{r}(N)-2}$, $\frac{2}{p(N)}$. This gives the following condition for $\tilde{r}(N)$:
\[
\frac{p(N)}{2}+\frac{\tilde{r}(N)-2}{2}\leq 2\quad\Leftrightarrow\quad 
\tilde{r}(N)\leq 2+\frac{4}{N}.
\]
For $\tilde{r}(N)=2+\frac{4}{N}$ all the calculations work and $p(N)=2-\frac{2}{N}=2\frac{N-1}{N}$, $q(N)=\frac{6}{N}$ as well as $1+q(N)-\frac{\tilde{r}(N)}{2}=\frac{4}{N}$ and $\frac{\tilde{r}(N)}{2}-1=\frac{2}{N}$. Altogether \eqref{eq_DC_ACND_proof4} for $\Gamma_t(\delta)$ instead of $\Omega$ and $N\geq 5$ is controlled by $CR^{2+\frac{4}{N}}$ times
\begin{align}\label{eq_DC_ACND_proof_GN6}
\varepsilon^{-2+(2+\frac{4}{N})M+1}\!
\left[\|g_\beta^{-\frac{4}{N}}\|_{L^1(0,T)}\!+\!\|g_\beta^{-\frac{4}{N}}\|_{L^N\!(0,T)}\!+\!\|g_\beta^{-\frac{4}{N}}\|_{L^{\frac{N}{N-1}}\!(0,T)}\!+\!\|g_\beta^{-\frac{4}{N}}\|_{L^\infty\!(0,T)}\right]
\end{align}
for all $T\in(0,T_{\varepsilon,\beta,R}]$ and $\varepsilon\in(0,\varepsilon_0]$, where we used \eqref{eq_DC_ACND_DE} and $\varepsilon\leq\varepsilon_0$.

Finally, we collect the above estimates. To reduce the number of $g_\beta$-terms we apply the embedding $L^p(0,t)\hookrightarrow L^q(0,t)$ for all $1\leq q\leq p\leq\infty$ and $0<t\leq T_0$ with embedding constant independent of $t$. Therefore if $N\in\{2,3,4\}$, then by \eqref{eq_DC_ACND_proof_GN1}, \eqref{eq_DC_ACND_proof_GN4}, and if $N\geq 5$, then by \eqref{eq_DC_ACND_proof_GN2}, \eqref{eq_DC_ACND_proof_GN6}, we get 
\begin{align*}
\left|\int_0^T\!g_\beta^2\int_\Omega r_\varepsilon(u_\varepsilon,u^A_\varepsilon)\overline{u}_\varepsilon\,dx\,dt\right|
\leq\begin{cases}
CR^3\varepsilon^{3M-1}\|g_\beta^{-1}\|_{L^{\frac{4}{4-N}}(0,T)} &\text{for } N\in\{2,3,4\},\\
CR^{2+\frac{4}{N}}\varepsilon^{(2+\frac{4}{N})M-1}\|g_\beta^{-\frac{4}{N}}\|_{L^\infty(0,T)} &\text{for }N\geq 4.
\end{cases}
\end{align*}
This shows Lemma \ref{th_DC_ACND_GN}.
\end{proof}

The proof of Theorem \ref{th_DC_ACND} is completed.

\subsubsection{Proof of Theorem \ref{th_AC_conv}}\label{sec_DC_ACND_conv}
Let $N\geq2$, $\Omega$, $Q_T$ and $\partial Q_T$ be as in Remark \ref{th_intro_coord},~1. Moreover, let $\Gamma=(\Gamma_t)_{t\in[0,T_0]}$ for some $T_0>0$ be a smooth solution to \eqref{MCF} with $90$°-contact angle condition parametrized as in Section \ref{sec_coord_surface_requ} and let $\delta>0$ be such that Theorem \ref{th_coordND} holds for $2\delta$ instead of $\delta$. We use the notation from Section \ref{sec_coord_surface_requ} and Section \ref{sec_coordND}. Let $M\in\N$ with $M\geq 2$ and denote with $(u^A_\varepsilon)_{\varepsilon>0}$ the approximate solution on $\overline{Q_{T_0}}$ defined in Section \ref{sec_asym_ACND_uA} (which we obtained from asymptotic expansions in Section \ref{sec_asym_ACND}) and let $\varepsilon_0>0$ be such that Lemma \ref{th_asym_ACND_uA} (\enquote{remainder estimate}) holds for $\varepsilon\in(0,\varepsilon_0]$. The property $\lim_{\varepsilon\rightarrow 0} u^A_\varepsilon=\pm 1$ uniformly on compact subsets of $Q_{T_0}^\pm$ follows from the construction in Section \ref{sec_asym_ACND}. 

Note that $g_\beta(t):=e^{-\beta t}$ for fixed $\beta$ is trapped between uniform positive constants for all $t\in[0,T_0]$. Therefore Theorem \ref{th_AC_conv} follows immediately from Theorem \ref{th_DC_ACND} if we show the conditions 1.-4.~in Theorem \ref{th_DC_ACND}. The requirement 1.~(\enquote{uniform boundedness}) is fulfilled due to Lemma \ref{th_asym_ACND_uA} for $u^A_\varepsilon$ and for $u_{0,\varepsilon}$ this is an assumption in Theorem \ref{th_AC_conv}. Condition 2.~(\enquote{spectral estimate}) is valid because of Theorem \ref{th_SE_ACND}. Requirement 4.~(\enquote{well prepared initial data}) is a condition on $u_{0,\varepsilon}$ and assumed in Theorem \ref{th_AC_conv}. It remains to prove 3.~(\enquote{approximate solution}). 

First we estimate the boundary term in \eqref{eq_DC_ACND_uA}. Lemma \ref{th_asym_ACND_uA} yields $s^A_\varepsilon=0$ on $\partial\Omega\setminus\Gamma_t(2\delta)$ and $|s^A_\varepsilon|\leq C\varepsilon^Me^{-c|\rho_\varepsilon|}$, where $\rho_\varepsilon$ is defined in \eqref{eq_asym_ACND_rho}. Therefore
\[
\left|\int_{\partial\Omega}s^A_\varepsilon\tr\,\overline{u}_\varepsilon(t)\,d\Hc^{N-1}\right|\leq \|s^A_\varepsilon\|_{L^2(\partial\Omega\cap\Gamma_t(2\delta))}
\|\tr\,\overline{u}_\varepsilon(t)\|_{L^2(\partial\Omega\cap\Gamma_t(2\delta))}.
\]
Here by the substitution rule in Theorem \ref{th_Leb_trafo_mfd} it holds
\[
\|s^A_\varepsilon\|_{L^2(\partial\Omega\cap\Gamma_t(2\delta))}^2=\int_{\partial\Sigma}\int_{-2\delta}^{2\delta}|s^A_\varepsilon|^2|_{\overline{X}(r,Y(\sigma,0),t)}|\!\det d_{(r,\sigma)}[X(.,Y(.,0),t)]|\,dr\,d\Hc^{N-2}(\sigma).
\]
With a scaling argument this is estimated by $C\varepsilon^{2M+1}$, see Lemma \ref{th_SE_1Dtrafo_remainder}. Moreover, one can prove
\[
\|\tr\,\overline{u}_\varepsilon(t)\|_{L^2(\partial\Omega\cap\Gamma_t(2\delta))}
\leq C
(\|\overline{u}_\varepsilon(t)\|_{L^2(\Gamma_t(2\delta))}
+\|\nabla_\tau\overline{u}_\varepsilon(t)\|_{L^2(\Gamma_t(2\delta))}).
\]
This can be shown with a similar idea as in the proof of Lemma \ref{th_SE_ACND_intpol_tr}. Here one uses $\vec{w}$ in the proof of the latter with $w_1:=0$ there and then Corollary \ref{th_coordND_nabla_tau_n} to estimate $|\nabla_\Sigma(\overline{u}_\varepsilon|_{\overline{X}})|\leq C|\nabla_\tau\overline{u}_\varepsilon|_{\overline{X}}|$. Moreover, $|\nabla_\tau\overline{u}_\varepsilon|\leq C|\nabla\overline{u}_\varepsilon|$ by Corollary \ref{th_coordND_nabla_tau_n} and the estimate for the $s^A_\varepsilon$-term in \eqref{eq_DC_ACND_uA} follows. 

Finally, we estimate the $r^A_\varepsilon$-term in \eqref{eq_DC_ACND_uA}. Lemma \ref{th_asym_ACND_uA} yields $r^A_\varepsilon=0$ in $\Omega\setminus\Gamma_t(2\delta)$ and
\begin{alignat*}{2}
|r^A_\varepsilon|&\leq C(\varepsilon^M e^{-c|\rho_\varepsilon|}+\varepsilon^{M+1})&\quad &\text{ in }\Gamma(2\delta,\mu_1),\\
|r^A_\varepsilon|&
\leq 
C(\varepsilon^{M-1} e^{-c(|\rho_\varepsilon|+H_\varepsilon)}
+\varepsilon^M e^{-c|\rho_\varepsilon|}+\varepsilon^{M+1})
&\quad &\text{ in }\Gamma^C(2\delta,2\mu_1).
\end{alignat*}
The substitution rule in Theorem \ref{th_Leb_trafo_mfd} implies
\[
\left|\int_\Omega r^A_\varepsilon\overline{u}_\varepsilon(t)\,dx\right|
\leq\int_{\Gamma_t(2\delta)}|r^A_\varepsilon\overline{u}_\varepsilon(t)|\,dx=\int_\Sigma\int_{-2\delta}^{2\delta}|r^A_\varepsilon\overline{u}_\varepsilon|_{\overline{X}(r,s,t)}| J_t(r,s)\,dr\,d\Hc^{N-1}(s),
\]
where $J_t$ is uniformly bounded in $t\in[0,T_0]$ by Remark \ref{th_coordND_rem}, 3. We split the integral over $\Sigma$ in integrals over $\Sigma\setminus Y(\partial\Sigma\times[0,2\mu_1])$ and $Y(\partial\Sigma\times[0,2\mu_1])$. For both we use the Hölder Inequality with exponents $2$, $2$ for the inner integral. With a scaling argument, cf.~Lemma \ref{th_SE_1Dtrafo_remainder}, and Hölder's inequality the integral over $\Sigma\setminus Y(\partial\Sigma\times[0,2\mu_1])$ is estimated by
\[
C\varepsilon^{M+\frac{1}{2}}\int_{\Sigma\setminus Y(\partial\Sigma\times[0,2\mu_1])} \|\overline{u}_\varepsilon|_{\overline{X}(.,s,t)}\|_{L^2(-2\delta,2\delta)}\,d\Hc^{N-1}(s)\leq C\varepsilon^{M+\frac{1}{2}}\|\overline{u}_\varepsilon(t)\|_{L^2(\Gamma_t(2\delta,2\mu_1))}.
\]
Moreover, by the substitution rule in Theorem \ref{th_Leb_trafo_mfd} the integral over $Y(\partial\Sigma\times[0,2\mu_1])$ is controlled via
\[
C\varepsilon^{M-\frac{1}{2}}\int_{\partial\Sigma}\int_0^{2\mu_1} \|\overline{u}_\varepsilon|_{\overline{X}(.,Y(\sigma,b),t)}\|_{L^2(-2\delta,2\delta)}(e^{-c\frac{b}{\varepsilon}}+\varepsilon)\,db\,d\Hc^{N-2}(\sigma).
\]
For the $\overline{u}_\varepsilon$-term we use $H^1(0,2\mu_1,L^2(-2\delta,2\delta))\hookrightarrow L^\infty(0,2\mu_1,L^2(-2\delta,2\delta))$. Moreover, a scaling argument yields $\int_0^{2\mu_1}e^{-cb/\varepsilon}\,db\leq C\varepsilon$. Hence the above term is estimated by
\[
C\varepsilon^{M+\frac{1}{2}}\int_{\partial\Sigma}\|\overline{u}_\varepsilon|_{\overline{X}(.,Y(\sigma,.),t)}\|_{L^2(-2\delta,2\delta,H^1(0,2\mu_1))}\,d\Hc^{N-2}(\sigma).
\]
Note that due to Lemma \ref{th_SobDom_prod_set} the expression $\|\overline{u}_\varepsilon|_{\overline{X}(.,Y(\sigma,.),t)}\|_{L^2(-2\delta,2\delta,H^1(0,2\mu_1))}$ equals 
\[
\|\overline{u}_\varepsilon|_{\overline{X}(.,Y(\sigma,.),t)}\|_{L^2((-2\delta,2\delta)\times(0,2\mu_1))}
+\|\partial_b\overline{u}_\varepsilon|_{\overline{X}(.,Y(\sigma,.),t)}\|_{L^2((-2\delta,2\delta)\times(0,2\mu_1))}
\] 
and Corollary \ref{th_coordND_nabla_tau_n} yields
\[
\|\partial_b\overline{u}_\varepsilon|_{\overline{X}(.,Y(\sigma,.),t)}\|_{L^2((-2\delta,2\delta)\times(0,2\mu_1))}\leq C\|\nabla_\tau\overline{u}_\varepsilon|_{\overline{X}(.,Y(\sigma,.),t)}\|_{L^2((-2\delta,2\delta)\times(0,2\mu_1))}.
\]
Finally, by Theorem \ref{th_Leb_trafo_mfd}, Lemma \ref{th_SobDom_prod_set} and Hölder's inequality we obtain
\[
\left|\int_\Omega r^A_\varepsilon\overline{u}_\varepsilon(t)\,dx\right|\leq C\varepsilon^{M+\frac{1}{2}}(\|\overline{u}_\varepsilon(t)\|_{L^2(\Gamma_t(2\delta))}+\|\nabla_\tau\overline{u}_\varepsilon(t)\|_{L^2(\Gamma_t(2\delta))}).
\]
The estimate $|\nabla_\tau\overline{u}_\varepsilon|\leq C|\nabla\overline{u}_\varepsilon|$ due to Corollary \ref{th_coordND_nabla_tau_n} yields \eqref{eq_DC_ACND_uA}. Therefore Theorem \ref{th_AC_conv} follows from the difference estimates in Theorem \ref{th_DC_ACND}.\hfill$\square$

\subsection[Difference Estimate and Proof of the Convergence Thm. for (vAC) in ND]{Difference Estimate and Proof of the Convergence Theorem for (vAC) in ND}\label{sec_DC_vAC}
We show in Section \ref{sec_DC_vAC_DE} the difference estimate for exact and suitable approximate solutions for the vector-valued Allen-Cahn equation \eqref{eq_vAC1}-\eqref{eq_vAC3}. Then in Section \ref{sec_DC_vAC_conv} we prove the Theorem \ref{th_vAC_conv} about convergence by checking the requirements for the difference estimate applied to the approximate solution from Section \ref{sec_asym_vAC_uA}. All computations are analogous to the scalar case in the last Section \ref{sec_DC_ACND}.

\subsubsection{Difference Estimate}\label{sec_DC_vAC_DE}

\begin{Theorem}[\textbf{Difference Estimate for (vAC)}]\label{th_DC_vAC}
	Let $N\geq2$, $\Omega$, $Q_T$ and $\partial Q_T$ be as in Remark \ref{th_intro_coord},~1. Moreover, let $\Gamma=(\Gamma_t)_{t\in[0,T_0]}$ for some $T_0>0$ be as in Section \ref{sec_coordND} and $\delta>0$ be such that Theorem \ref{th_coordND} holds for $2\delta$ instead of $\delta$. We use the notation for $\Gamma_t(\delta)$, $\Gamma(\delta)$, $\nabla_\tau$ and $\partial_n$ from Remark \ref{th_coordND_rem}. Additionally, let $W:\R^m\rightarrow\R$ be as in Definition \ref{th_vAC_W}.
	
	Moreover, let $\check{\varepsilon}_0>0$, $\vec{u}^A_\varepsilon\in C^2(\overline{Q_{T_0}})^m$, $\vec{u}_{0,\varepsilon}\in C^2(\overline{\Omega})^m$ with $\partial_{N_{\partial\Omega}} \vec{u}_{0,\varepsilon}=0$ on $\partial\Omega$ and let $\vec{u}_\varepsilon\in C^2(\overline{Q_{T_0}})^m$ be exact solutions to \eqref{eq_vAC1}-\eqref{eq_vAC3} with $\vec{u}_{0,\varepsilon}$ in \eqref{eq_vAC3} for $\varepsilon\in(0,\check{\varepsilon}_0]$. 
	
	For some $R>0$ and $M\in\N, M\geq k(N):=\max\{2,\frac{N}{2}\}$ we impose the following conditions:
	\begin{enumerate}
		\item \textup{Uniform Boundedness:} $\sup_{\varepsilon\in(0,\check{\varepsilon}_0]}\|\vec{u}^A_\varepsilon\|_{L^\infty(Q_{T_0})^m}+\|\vec{u}_{0,\varepsilon}\|_{L^\infty(\Omega)^m}<\infty$.
		\item \textup{Spectral Estimate:} There are $\check{c}_0,\check{C}>0$ such that
		\begin{align*}
		\int_\Omega|\nabla\vec{\psi}|^2+\frac{1}{\varepsilon^2}&(\vec{\psi},D^2W(\vec{u}^A_\varepsilon(.,t))\vec{\psi})_{\R^m}\,dx\\
		&\geq -\check{C}\|\vec{\psi}\|_{L^2(\Omega)^m}^2+\|\nabla\vec{\psi}\|_{L^2(\Omega\setminus\Gamma_t(\delta))^{N\times m}}^2+\check{c}_0\|\nabla_\tau\vec{\psi}\|_{L^2(\Gamma_t(\delta))^{N\times m}}^2
		\end{align*}
		for all $\vec{\psi}\in H^1(\Omega)^m$ and $\varepsilon\in(0,\check{\varepsilon}_0],t\in[0,T_0]$.
		\item \textup{Approximate Solution:} For the remainders 
		\[
		\vec{r}^A_\varepsilon:=\partial_t \vec{u}^A_\varepsilon-\Delta \vec{u}^A_\varepsilon+\frac{1}{\varepsilon^2}\nabla W(\vec{u}^A_\varepsilon)\quad\text{ and }\quad \vec{s}^A_\varepsilon:=\partial_{N_{\partial\Omega}}\vec{u}^A_\varepsilon
		\]
		in \eqref{eq_vAC1}-\eqref{eq_vAC2} for $\vec{u}^A_\varepsilon$ and the difference $\underline{u}_\varepsilon:=\vec{u}_\varepsilon-\vec{u}^A_\varepsilon$ it holds
		\begin{align}\begin{split}\label{eq_DC_vAC_uA}
		&\left|\int_{\partial\Omega}\vec{s}^A_\varepsilon\cdot\tr\,\underline{u}_\varepsilon(t)\,d\Hc^{N-1}+\int_\Omega \vec{r}^A_\varepsilon\cdot\underline{u}_\varepsilon(t)\,dx\right|\\
		&\leq\! C\varepsilon^{M+\frac{1}{2}}(\|\underline{u}_\varepsilon(t)\|_{L^2(\Omega)^m}\!
		+\!\|\nabla_\tau\underline{u}_\varepsilon(t)\|_{L^2(\Gamma_t(\delta))^{N\times m}}\!+\!\|\nabla\underline{u}_\varepsilon(t)\|_{L^2(\Omega\setminus\Gamma_t(\delta))^{N\times m}}\!)
		\end{split}
		\end{align}
		for all $\varepsilon\in(0,\check{\varepsilon}_0]$ and $T\in(0,T_0]$.
		\item \textup{Well-Prepared Initial Data:} For all $\varepsilon\in(0,\check{\varepsilon}_0]$ it holds
		\begin{align}\label{eq_DC_vAC_u0} \|\vec{u}_{0,\varepsilon}-\vec{u}^A_\varepsilon|_{t=0}\|_{L^2(\Omega)^m}\leq R\varepsilon^{M+\frac{1}{2}}.
		\end{align}
	\end{enumerate} 
	Then we obtain
	\begin{enumerate}
		\item Let $M>k(N)$. Then there are $\beta,\check{\varepsilon}_1>0$ such that for $g_\beta(t):=e^{-\beta t}$ it holds
		\begin{align}
		\begin{split}
		\sup_{t\in[0,T]}\|(g_\beta\underline{u}_\varepsilon)(t)\|_{L^2(\Omega)^m}^2+\|g_\beta\nabla\underline{u}_\varepsilon\|_{L^2(Q_T\setminus\Gamma(\delta))^{N\times m}}^2&\leq 2R^2\varepsilon^{2M+1},\\
		\check{c}_0\|g_\beta\nabla_\tau\underline{u}_\varepsilon\|^2_{L^2(Q_T\cap\Gamma(\delta))^{N\times m}}+\varepsilon^2\|g_\beta\partial_n\underline{u}_\varepsilon\|^2_{L^2(Q_T\cap\Gamma(\delta))^m}&\leq 2R^2\varepsilon^{2M+1}\label{eq_DC_vAC_DE}\end{split}
		\end{align}
		for all $\varepsilon\in(0,\check{\varepsilon}_1]$ and $T\in(0,T_0]$.
		\item Let $k(N)\in\N$ and $M=k(N)$. Let \eqref{eq_DC_vAC_uA} hold for some $\check{M}>M$ instead of $M$. Then there are $\beta,\check{R},\check{\varepsilon}_1>0$ such that, if \eqref{eq_DC_vAC_u0} holds for $\check{R}$ instead of $R$, then $\eqref{eq_DC_vAC_DE}$ for $\check{R}$ instead of $R$ is valid for all $\varepsilon\in(0,\check{\varepsilon}_1], T\in(0,T_0]$. 
		\item Let $N\in\{2,3\}$ and $M=2(=k(N))$. Then there are $\check{\varepsilon}_1,\check{T}_1>0$ such that $\eqref{eq_DC_vAC_DE}$ holds for $\beta=0$ and for all $\varepsilon\in(0,\check{\varepsilon}_1], T\in(0,\check{T}_1]$.
	\end{enumerate}
\end{Theorem}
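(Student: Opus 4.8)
The plan is to prove Theorem~\ref{th_DC_vAC} by mimicking the proof of the scalar difference estimate (Theorem~\ref{th_DC_ACND}) essentially verbatim, replacing every scalar object by its vector-valued counterpart: $f''(u^A_\varepsilon)$ by $D^2W(\vec u^A_\varepsilon)$, scalar products of functions by $(.,.)_{\R^m}$-pairings, and the scalar a~priori bound (Lemma~\ref{th_DC_bdd_scal}) by its vector version (Lemma~\ref{th_DC_bdd_vect}). First I would define, exactly as in \eqref{eq_DC_ACND_T_epsR}, the maximal time $\check T_{\varepsilon,\beta,R}$ up to which \eqref{eq_DC_vAC_DE} holds; continuity of the left-hand sides shows it is positive, and in the three cases one must show $\check T_{\varepsilon,\beta,R}=T_0$ (cases 1, 2) or $\check T_{\varepsilon,0,R}\geq\check T_1$ (case 3).

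The core computation: subtract the left-hand sides of \eqref{eq_vAC1} for $\vec u_\varepsilon$ and $\vec u^A_\varepsilon$ to get
\[
\Bigl[\partial_t-\Delta+\tfrac{1}{\varepsilon^2}D^2W(\vec u^A_\varepsilon)\Bigr]\underline{u}_\varepsilon=-\vec r^A_\varepsilon-\vec r_\varepsilon(\vec u_\varepsilon,\vec u^A_\varepsilon),
\]
where $\vec r_\varepsilon(\vec u_\varepsilon,\vec u^A_\varepsilon):=\tfrac{1}{\varepsilon^2}[\nabla W(\vec u_\varepsilon)-\nabla W(\vec u^A_\varepsilon)-D^2W(\vec u^A_\varepsilon)\underline{u}_\varepsilon]$. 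I would test this against $g_\beta^2\underline{u}_\varepsilon$ and integrate over $Q_T$ for $T\in(0,\check T_{\varepsilon,\beta,R}]$. The $\partial_t$-term gives, after integration by parts in time and using $\partial_tg_\beta=-\beta g_\beta$ together with the well-prepared data \eqref{eq_DC_vAC_u0}, the boundary-in-time contributions plus $\beta\int_0^Tg_\beta^2\|\underline{u}_\varepsilon\|_{L^2}^2$. The $-\Delta$-term, after integration by parts in space, produces $\int_0^Tg_\beta^2\int_\Omega|\nabla\underline{u}_\varepsilon|^2+\tfrac1{\varepsilon^2}(\underline{u}_\varepsilon,D^2W(\vec u^A_\varepsilon)\underline{u}_\varepsilon)_{\R^m}$ plus a boundary term $\int_0^Tg_\beta^2\int_{\partial\Omega}\vec s^A_\varepsilon\cdot\tr\underline{u}_\varepsilon$; the bulk part is bounded below using the spectral estimate (condition~2), and the $\vec s^A_\varepsilon$- and $\vec r^A_\varepsilon$-terms are absorbed via \eqref{eq_DC_vAC_uA} and Young's inequality, producing a term linear in $R$ carrying $\varepsilon^{2M+1}\|g_\beta\|_{L^2(0,T)}$. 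The normal derivative $\varepsilon^2\|g_\beta\partial_n\underline{u}_\varepsilon\|^2$ is controlled exactly as in the scalar case via Corollary~\ref{th_coordND_nabla_tau_n}, absorbing half of the spectral term for $\varepsilon$ small.

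The delicate term is the cubic-type nonlinearity $\int_0^Tg_\beta^2\int_\Omega(\vec r_\varepsilon(\vec u_\varepsilon,\vec u^A_\varepsilon),\underline{u}_\varepsilon)_{\R^m}$. By Lemma~\ref{th_DC_bdd_vect} and condition~1 one has a uniform $L^\infty$-bound on $\vec u_\varepsilon,\vec u^A_\varepsilon$, so Taylor's theorem for $\nabla W$ bounds this by $\tfrac{C}{\varepsilon^2}\int_0^Tg_\beta^2\|\underline{u}_\varepsilon\|_{L^3(\Omega)^m}^3$. I would then prove the exact vector analogue of Lemma~\ref{th_DC_ACND_GN}: split $\Omega=(\Omega\setminus\Gamma_t(\delta))\cup\Gamma_t(\delta)$, apply the Gagliardo-Nirenberg inequality (Lemma~\ref{th_DC_GN}, Remark~\ref{th_DC_GN_rem}) componentwise on $\Omega\setminus\Gamma_t(\delta)$, and on $\Gamma_t(\delta)$ transform by $\overline X$ and use the product-space structure (Lemma~\ref{th_SobMfd_prod_set}) together with Corollary~\ref{th_coordND_nabla_tau_n} to split into tangential and normal directions, lowering the exponent to $r(N)=2+\tfrac4N$ when $N\geq5$ by the uniform boundedness. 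The outcome is the bound
\[
\Bigl|\int_0^Tg_\beta^2\!\int_\Omega(\vec r_\varepsilon,\underline{u}_\varepsilon)_{\R^m}\Bigr|
\le CR^{2+K(N)}\varepsilon^{2M+1}\varepsilon^{K(N)(M-k(N))}\|g_\beta^{-K(N)}\|_{L^{\frac{4}{4-\min\{4,N\}}}(0,T)}
\]
with $K(N)=\min\{1,\tfrac4N\}$, identical to the scalar case since all vector-valued norms decompose componentwise.

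Assembling everything yields the vector analogue of \eqref{eq_DC_ACND_DE2}. In case~1 ($M>k(N)$) choose $\beta$ large enough to dominate both the $\overline C_0$-term and the $R$-linear term, then $\varepsilon$ small, getting the right-hand side $\le\tfrac34R^2\varepsilon^{2M+1}$ and hence $\check T_{\varepsilon,\beta,R}=T_0$ by a contradiction-and-continuity argument. In case~2 ($M=k(N)\in\N$, with \eqref{eq_DC_vAC_uA} improved to $\check M>M$) the $R$-linear term gains a factor $\varepsilon^{\check M-M}$; fix $\beta$, then pick $\check R$ small so the $R^{2+K(N)}$-term is $\le\tfrac18\check R^2\varepsilon^{2M+1}$, then $\varepsilon$ small. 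In case~3 ($N\in\{2,3\}$, $M=2$, $\beta=0$) the right-hand side is dominated by $[\tfrac{R^2}{2}+CR^2T+CRT^{1/2}+CR^3T^{(4-N)/4}]\varepsilon^{2M+1}$, and since $(4-N)/4>0$ there is $\check T_1>0$ with the bracket $\le\tfrac34R^2$. I do not anticipate any genuinely new obstacle relative to the scalar case: the only point requiring care is checking that the Gagliardo-Nirenberg and trace/product-space estimates survive componentwise with constants uniform in $t\in[0,T_0]$ (using Remark~\ref{th_SE_ACND_sob_rem} for the uniform Lipschitz regularity of $\Omega\setminus\Gamma_t(\delta)$ and $\Gamma_t(\delta)$), which is exactly the same bookkeeping as in Section~\ref{sec_DC_ACND}.
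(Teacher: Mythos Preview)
Your proposal is correct and follows essentially the same approach as the paper: define the maximal time $\check T_{\varepsilon,\beta,R}$, derive the energy identity by testing the linearized equation against $g_\beta^2\underline{u}_\varepsilon$, use the spectral estimate and the bootstrap assumption \eqref{eq_DC_vAC_DE} (rather than Young's inequality directly) to control the remainder and boundary terms, bound the cubic term via Taylor and the componentwise Gagliardo--Nirenberg argument of Lemma~\ref{th_DC_ACND_GN}, and then handle the three cases exactly as in the scalar proof. The paper's proof is precisely this transcription of the scalar argument to the vector-valued setting, with no additional ideas required.
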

\begin{Remark}\upshape\phantomsection{\label{th_DC_vAC_rem}}
	\begin{enumerate}
	\item The parameter $M$ corresponds to the order of the approximate solution constructed in Section \ref{sec_asym_vAC}.
	\item The comments for the scalar case in Remark \ref{th_DC_ACND_rem}, 2.-4., on the role of the parameters $\beta$, $k(N)$ and weaker requirements in the theorem, hold analogously for the vector-valued case. More precisely, see \eqref{eq_DC_vAC_DE2} below. The critical order $k(N)$ is the same as in the scalar case because we use the same Gagliardo-Nirenberg estimates that were used in the scalar case in the proof of Lemma \ref{th_DC_ACND_GN}. 
	\end{enumerate}
\end{Remark}
\begin{proof}[Proof of Theorem \ref{th_DC_vAC}]
	The continuity of the objects on the left hand side in \eqref{eq_DC_vAC_DE} yields that
	\begin{align}\label{eq_DC_vAC_T_epsR}
	\check{T}_{\varepsilon,\beta,R}:=\sup\,\{\tilde{T}\in(0,T_0]: \eqref{eq_DC_vAC_DE}\text{ holds for }\varepsilon, R\text{ and all }T\in(0,\tilde{T}]\}
	\end{align}
	is well-defined for all $\varepsilon\in(0,\check{\varepsilon}_0],\beta\geq 0$ and  $\check{T}_{\varepsilon,\beta,R}>0$. In the different cases we have to show:
	\begin{enumerate}
		\item If $M>k(N)$, then there are $\beta,\check{\varepsilon}_1>0$ such that $\check{T}_{\varepsilon,\beta,R}=T_0$ for all $\varepsilon\in(0,\check{\varepsilon}_1]$.
		\item If $M=k(N)\in\N$, then there are $\beta,\check{R},\check{\varepsilon}_1>0$ such that $\check{T}_{\varepsilon,\beta,\check{R}}=T_0$ provided that $\varepsilon\in(0,\check{\varepsilon}_1]$ and \eqref{eq_DC_vAC_uA} is true for some $\check{M}>M$ instead of $M$ and \eqref{eq_DC_vAC_u0} is valid with $R$ replaced by $\check{R}$.
		\item If $N\in\{2,3\}$, $M=2$, then there are $\check{T}_1,\check{\varepsilon}_1>0$ such that $\check{T}_{\varepsilon,0,R}\geq \check{T}_1$ for all $\varepsilon\in(0,\check{\varepsilon}_1]$.
	\end{enumerate}
	
	We do a general computation first and consider the specific cases later. The difference of the left hand sides in \eqref{eq_vAC1} for $\vec{u}_\varepsilon$ and $\vec{u}^A_\varepsilon$ yields
	\begin{align}\label{eq_DC_vAC_proof1}
	\left[\partial_t-\Delta+\frac{1}{\varepsilon^2}D^2W(\vec{u}^A_\varepsilon)\right]\underline{u}_\varepsilon=-\vec{r}^A_\varepsilon-\vec{r}_\varepsilon(\vec{u}_\varepsilon,\vec{u}^A_\varepsilon),
	\end{align} 
	where $\vec{r}_\varepsilon(\vec{u}_\varepsilon,\vec{u}^A_\varepsilon):=\frac{1}{\varepsilon^2}\left[\nabla W(\vec{u}_\varepsilon)-\nabla W(\vec{u}^A_\varepsilon)-D^2W(\vec{u}^A_\varepsilon)\underline{u}_\varepsilon\right]$. We multiply \eqref{eq_DC_vAC_proof1} by $g_\beta^2\underline{u}_\varepsilon$ and integrate over $Q_T$ for $T\in(0,\check{T}_{\varepsilon,\beta,R}]$, where $\varepsilon\in(0,\check{\varepsilon}_0]$ and $\beta\geq 0$ are fixed. This implies 
	\begin{align}\begin{split}\label{eq_DC_vAC_proof2}
	\int_0^T g _\beta^2\int_\Omega\underline{u}_\varepsilon \cdot &\left[\partial_t-\Delta+\frac{1}{\varepsilon^2}D^2W(\vec{u}^A_\varepsilon)\right]\underline{u}_\varepsilon\,dx\,dt\\
	&=-\int_0^T  g _\beta^2\int_\Omega [\vec{r}^A_\varepsilon+\vec{r}_\varepsilon(\vec{u}_\varepsilon,\vec{u}^A_\varepsilon)]\cdot\underline{u}_\varepsilon\,dx\,dt\end{split}
	\end{align} 
	for all $T\in(0,\check{T}_{\varepsilon,\beta,R}]$, $\varepsilon\in(0,\check{\varepsilon}_0]$ and $\beta\geq 0$.
	We estimate all terms. First, $\frac{1}{2}\partial_t|\underline{u}_\varepsilon|^2=\underline{u}_\varepsilon\cdot\partial_t\underline{u}_\varepsilon$, integration by parts in time and $\partial_tg_\beta=-\beta g_\beta$ yield
	\[
	\int_0^T\int_\Omega g_\beta^2\partial_t\underline{u}_\varepsilon\cdot\underline{u}_\varepsilon\,dx\,dt=\frac{1}{2}g_\beta(T)^2\|\underline{u}_\varepsilon(T)\|_{L^2(\Omega)^m}^2-\frac{1}{2}\|\underline{u}_\varepsilon(0)\|_{L^2(\Omega)^m}^2+\beta\int_0^T\! g_\beta^2 \|\underline{u}_\varepsilon\|_{L^2(\Omega)^m}^2\,dt,
	\]
	where $\|\underline{u}_\varepsilon(0)\|_{L^2(\Omega)^m}^2\leq R^2\varepsilon^{2M+1}$ because of \eqref{eq_DC_vAC_u0} (\enquote{well-prepared initial data}). For the other term on the left hand side in \eqref{eq_DC_vAC_proof2} we use integration by parts in space. This yields
	\begin{align*}
	&\int_0^Tg_\beta^2\int_\Omega \underline{u}_\varepsilon\cdot\left[-\Delta+\frac{1}{\varepsilon^2}D^2W(\vec{u}^A_\varepsilon)\right]\underline{u}_\varepsilon\,dx\,dt\\
	&=\int_0^Tg_\beta^2\int_\Omega|\nabla\underline{u}_\varepsilon|^2+\frac{1}{\varepsilon^2}(\underline{u}_\varepsilon,D^2W(\vec{u}^A_\varepsilon(.,t))\underline{u}_\varepsilon)_{\R^m}\,dx\,dt +\int_0^Tg_\beta^2\int_{\partial\Omega}\vec{s}^A_\varepsilon\cdot\tr\,\underline{u}_\varepsilon\,d\Hc^{N-1}\,dt.
	\end{align*}
	Using requirement 2.~(\enquote{spectral estimate}) in the theorem we obtain that the first integral on the right hand side of the latter equation is bounded from below by
	\[
	-\check{C}\|\underline{u}_\varepsilon\|_{L^2(\Omega)^m}^2
	+\|\nabla \underline{u}_\varepsilon\|_{L^2(\Omega\setminus\Gamma_t(\delta))^{N\times m}}^2
	+\check{c}_0\|\nabla_\tau\underline{u}_\varepsilon\|_{L^2(\Gamma_t(\delta))^{N\times m}}^2.
	\]
	For the remainder terms involving $\vec{r}^A_\varepsilon$ and $\vec{s}^A_\varepsilon$ we apply \eqref{eq_DC_vAC_uA} (\enquote{approximate solution}). Hence
	\[
	\left|\int_0^Tg_\beta^2\left[\int_{\partial\Omega}\vec{s}^A_\varepsilon\cdot\tr\,\underline{u}_\varepsilon(t)\,d\Hc^{N-1}
	+\int_\Omega \vec{r}^A_\varepsilon\cdot\underline{u}_\varepsilon(t)\,dx\right]dt\right|
	\leq \check{C}_1R\|g_\beta\|_{L^2(0,T)}\varepsilon^{2M+1}
	\]
	due to \eqref{eq_DC_vAC_DE} for all $T\in(0,\check{T}_{\varepsilon,\beta,R}]$, $\varepsilon\in(0,\check{\varepsilon}_0]$, where $\|g_\beta\|_{L^1(0,T)}\leq \sqrt{T_0}\|g_\beta\|_{L^2(0,T)}$ is used.
	
	Now we estimate the $\vec{r}_\varepsilon$-term in \eqref{eq_DC_vAC_proof2}. The requirement 1.~(\enquote{uniform boundedness}) in the theorem and Lemma \ref{th_DC_bdd_vect} yield
	\begin{align}\label{eq_DC_vAC_proof3}
	\sup_{\varepsilon\in(0,\check{\varepsilon}_0]}\left[\|\vec{u}_\varepsilon\|_{L^\infty(Q_{T_0})^m}+\|\vec{u}^A_\varepsilon\|_{L^\infty(Q_{T_0})^m}\right]<\infty.
	\end{align}
	Therefore the Taylor Theorem yields
	\begin{align}\label{eq_DC_vAC_proof4}
	\left|\int_0^Tg_\beta^2\int_\Omega \vec{r}_\varepsilon(\vec{u}_\varepsilon,\vec{u}^A_\varepsilon)\cdot\underline{u}_\varepsilon\,dx\,dt\right|
	\leq \frac{C}{\varepsilon^2}\int_0^Tg_\beta^2\|\underline{u}_\varepsilon\|_{L^3(\Omega)^m}^3\,dt.
	\end{align}
    This term can be estimated in the analogous way as in the scalar case with Gagliardo-Nirenberg inequalities on $\Omega\setminus\Gamma_t(\delta)$ and $\Gamma_t(\delta)$, cf.~the proof of Lemma \ref{th_DC_ACND_GN}. This yields
	\begin{align*}
	\left|\int_0^T\!\!g_\beta^2\int_\Omega \vec{r}_\varepsilon(\vec{u}_\varepsilon,\vec{u}^A_\varepsilon)\cdot\underline{u}_\varepsilon\,dx\,dt\right|
	\leq
	CR^{2+K(N)}\varepsilon^{2M+1} \varepsilon^{K(N)(M-k(N))}\|g_\beta^{-K(N)}\|_{L^{\frac{4}{4-\min\{4,N\}}}\!(0,T)}
	\end{align*}
	for all $T\in(0,\check{T}_{\varepsilon,\beta,R}]$ and $\varepsilon\in(0,\check{\varepsilon}_0]$, where $K(N):=\min\{1,\frac{4}{N}\}\in(0,1]$.
	
	In order to control $\partial_n\underline{u}_\varepsilon$ we use Corollary \ref{th_coordND_nabla_tau_n} and obtain
	\begin{align*}
	\varepsilon^2\|g_\beta\partial_n\underline{u}_\varepsilon\|_{L^2(Q_T\cap\Gamma(\delta))^m}^2
	&\leq C\varepsilon^2\int_0^T g_\beta^2\int_\Omega|\nabla\underline{u}_\varepsilon|^2+\frac{1}{\varepsilon^2}(\underline{u}_\varepsilon,D^2W(\vec{u}^A_\varepsilon(.,t))\underline{u}_\varepsilon)_{\R^m}\,dx\,dt\\
	&+C\sup_{\varepsilon\in(0,\check{\varepsilon}_0]}\|D^2W(\vec{u}^A_\varepsilon)\|_{L^\infty(Q_{T_0})^{m\times m}}\int_0^T g_\beta^2\|\underline{u}_\varepsilon(t)\|_{L^2(\Omega)^m}^2\,dt
	\end{align*}
	with a constant $C>0$ independent of $\varepsilon$, $T$ and $R$. The first term is absorbed with $\frac{1}{2}$ of the spectral term above if $\varepsilon\in(0,\check{\varepsilon}_1]$ and $\check{\varepsilon}_1>0$ is small (independent of $T$, $R$). Altogether we obtain
	\begin{align}\begin{split}
	&\frac{1}{2}g_\beta(T)\|\underline{u}_\varepsilon(T)\|_{L^2(\Omega)^m}^2
	+\frac{1}{2}\|g_\beta\nabla\underline{u}_\varepsilon\|_{L^2(Q_T\setminus\Gamma(\delta))^{N\times m}}^2\\
	&+\frac{\check{c}_0}{2}\|g_\beta\nabla_\tau\underline{u}_\varepsilon\|_{L^2(Q_T\cap\Gamma(\delta))^{N\times m}}^2
	+\frac{1}{2}\varepsilon^2\|g_\beta\partial_n\underline{u}_\varepsilon\|_{L^2(Q_T\cap\Gamma(\delta))^m}^2\\
	&\leq
	\frac{R^2}{2}\varepsilon^{2M+1}+\int_0^T(-\beta+\check{C}_0)g_\beta^2\|\underline{u}_\varepsilon(t)\|_{L^2(\Omega)^m}^2\,dt+\check{C}_1R\varepsilon^{2M+1}\|g_\beta\|_{L^2(0,T)}\\
	&+CR^{2+K(N)}\varepsilon^{2M+1} \varepsilon^{K(N)(M-k(N))}\|g_\beta^{-K(N)}\|_{L^{\frac{4}{4-\min\{4,N\}}}(0,T)}\label{eq_DC_vAC_DE2}\end{split}
	\end{align}
	for all $T\in(0,\check{T}_{\varepsilon,\beta,R}]$, $\varepsilon\in(0,\check{\varepsilon}_1]$ and constants $\check{C}_0,\check{C}_1,C>0$ independent of $\varepsilon,T,R$, where $k(N)=\max\{2,\frac{N}{2}\}$ and  $K(N)=\min\{1,\frac{4}{N}\}$. 
	
	Now we consider the different cases in the theorem.
	
	\begin{proof}[Ad 1] 
		If $M>k(N)$, then we choose $\beta\geq \check{C}_0$ large such that $\check{C}_1R\|g_\beta\|_{L^2(0,T_0)}\leq\frac{R^2}{8}$.
		Then \eqref{eq_DC_vAC_DE2} is estimated by $\frac{3}{4}R^2\varepsilon^{2M+1}$ for all $T\in (0,\check{T}_{\varepsilon,\beta,R}]$ and $\varepsilon\in(0,\check{\varepsilon}_1]$, if $\check{\varepsilon}_1>0$ is small. Via contradiction and continuity this proves $\check{T}_{\varepsilon,\beta,R}=T_0$ for all $\varepsilon\in(0,\check{\varepsilon}_1]$.\qedhere$_{1.}$
	\end{proof}
	
	\begin{proof}[Ad 2] 
		Let $M=k(N)\in\N$ and let \eqref{eq_DC_vAC_uA} hold for some $\check{M}>M$ instead of $M$. Then the term in \eqref{eq_DC_vAC_DE2} where $R$ enters linearly is improved by a factor $\varepsilon^{\check{M}-M}$. Let $\beta\geq\check{C}_0$ be fixed. Now we first choose $R>0$ small such that the $R^{2+K(N)}$-term in \eqref{eq_DC_vAC_DE2} is estimated by $\frac{1}{8}R^2\varepsilon^{2M+1}$. Then $\check{\varepsilon}_1>0$ can be taken small such that \eqref{eq_DC_vAC_DE2} is bounded by $\frac{3}{4}R^2\varepsilon^{2M+1}$ for all $T\in (0,\check{T}_{\varepsilon,\beta,R}]$ and $\varepsilon\in(0,\check{\varepsilon}_1]$. By contradiction and continuity we get $\check{T}_{\varepsilon,\beta,R}=T_0$ for all $\varepsilon\in(0,\check{\varepsilon}_1]$.\qedhere$_{2.}$
	\end{proof}
	
	\begin{proof}[Ad 3] 
		Finally, let $N\in\{2,3\}$, $M=2$ and $\beta=0$. Then \eqref{eq_DC_vAC_DE2} is estimated by
		\[
		\left[\frac{R^2}{2}+CR^2T+CRT^{\frac{1}{2}}+CR^3 T^{\frac{4-N}{4}}\right]\varepsilon^{2M+1}.
		\]
		Due to $\frac{4-N}{4}>0$ there are $\check{\varepsilon}_1,\check{T}_1>0$ such that the latter is bounded by $\frac{3}{4}R^2\varepsilon^{2M+1}$ for every $T\in(0,\min(T_{\varepsilon,\beta,R},\check{T}_1)]$ and $\varepsilon\in(0,\check{\varepsilon}_1]$. Therefore $\check{T}_{\varepsilon,0,R}\geq \check{T}_1$ for all $\varepsilon\in(0,\check{\varepsilon}_1]$.
		\qedhere$_{3.}$\end{proof}
	
The proof of Theorem \ref{th_DC_vAC} is completed.
\end{proof}

\subsubsection{Proof of Theorem \ref{th_vAC_conv}}\label{sec_DC_vAC_conv}
Let $N\geq2$, $\Omega$, $Q_T$ and $\partial Q_T$ be as in Remark \ref{th_intro_coord},~1. Let $W:\R^m\rightarrow\R$ be as in Definition \ref{th_vAC_W} and $\vec{u}_\pm$ be any distinct pair of minimizers of $W$. Moreover, let $\Gamma=(\Gamma_t)_{t\in[0,T_0]}$ for some $T_0>0$ be a smooth solution to \eqref{MCF} with $90$°-contact angle condition parametrized as in Section \ref{sec_coord_surface_requ} and let $\delta>0$ be such that Theorem \ref{th_coordND} holds for $2\delta$ instead of $\delta$. We use the notation from Section \ref{sec_coord_surface_requ} and Section \ref{sec_coordND}. Let $M\in\N$ with $M\geq 2$ and denote with $(\vec{u}^A_\varepsilon)_{\varepsilon>0}$ the approximate solution on $\overline{Q_{T_0}}$ from Section \ref{sec_asym_vAC_uA} (that was constructed with asymptotic expansions in Section \ref{sec_asym_vAC}) and let $\check{\varepsilon}_0>0$ be such that Lemma \ref{th_asym_vAC_uA} (\enquote{remainder estimate}) holds for $\varepsilon\in(0,\check{\varepsilon}_0]$. The property $\lim_{\varepsilon\rightarrow 0} \vec{u}^A_\varepsilon=\vec{u}_\pm$ uniformly on compact subsets of $Q_{T_0}^\pm$ follows from Section \ref{sec_asym_vAC}. 

Theorem \ref{th_vAC_conv} follows directly from Theorem \ref{th_DC_vAC} if we prove the conditions 1.-4.~in Theorem \ref{th_DC_vAC}. The requirement 1.~(\enquote{uniform boundedness}) is satisfied because of Lemma \ref{th_asym_vAC_uA} for $\vec{u}^A_\varepsilon$ and for $\vec{u}_{0,\varepsilon}$ this is an assumption in Theorem \ref{th_vAC_conv}. Condition 2.~(\enquote{spectral estimate}) is precisely the assertion in Theorem \ref{th_SE_vAC}. Requirement 4.~(\enquote{well prepared initial data}) is a condition on $\vec{u}_{0,\varepsilon}$ and assumed in Theorem \ref{th_vAC_conv}. It remains to prove 3.~(\enquote{approximate solution}). This can be done in the analogous way as in the scalar case, cf.~the proof of Theorem \ref{th_AC_conv} in Section \ref{sec_DC_ACND_conv}. Basically one uses suitable integral transformations, Hölder estimates, transformation arguments like in Lemma \ref{th_SE_1Dtrafo_remainder}, the properties of $\vec{r}^A_\varepsilon$, $\vec{s}^A_\varepsilon$ from Lemma \ref{th_asym_vAC_uA} as well as the comparison of several differential operators in Corollary \ref{th_coordND_nabla_tau_n}. Since the computations are completely analogous to the scalar case, we refrain from going into details.\hfill$\square$\\
\newline
\textit{Acknowledgments.} The author gratefully acknowledges support through DFG, GRK 1692 \enquote{Curvature, Cycles and Cohomology} during parts of the work.

\setcounter{secnumdepth}{0}

\makeatletter
\renewenvironment{thebibliography}[1]
{\section{\bibname}
	\@mkboth{\MakeUppercase\bibname}{\MakeUppercase\bibname}%
	\list{\@biblabel{\@arabic\c@enumiv}}%
	{\settowidth\labelwidth{\@biblabel{#1}}%
		\leftmargin\labelwidth
		\advance\leftmargin\labelsep
		\@openbib@code
		\usecounter{enumiv}%
		\let\p@enumiv\@empty
		\renewcommand\theenumiv{\@arabic\c@enumiv}}%
	\sloppy
	\clubpenalty4000
	\@clubpenalty \clubpenalty
	\widowpenalty4000%
	\sfcode`\.\@m}
{\def\@noitemerr
	{\@latex@warning{Empty `thebibliography' environment}}%
	\endlist}
\makeatother

\footnotesize

\bibliographystyle{siam}

\begin{thebibliography}{BCMP}
	\normalsize
	\baselineskip=17pt
	
	\bibitem[A]{AbelsPDEI_17}
	\textsc{Abels,~H.}
	\textit{Partielle Differentialgleichungen I.} Lecture notes. Univ. of Regensburg (2017).
	
	\bibitem[AL]{ALiu}
	\textsc{Abels,~H.,~\&~Liu,~Y.}
	Sharp interface limit for a Stokes/Allen-Cahn system.
	\emph{Arch.~Rational Mech.~Anal.} \textbf{229}(1) (2018), 417--502.
	
	\bibitem[AMa1]{AMa1}
	\textsc{Abels,~H.~\&~Marquardt,~A.}
	Sharp interface limit of a Stokes/Cahn-Hilliard system, part I: Convergence result.
	\textit{Preprint} arXiv:2003.03139 (2020).
	
	\bibitem[AMa2]{AMa2}
	\textsc{Abels,~H.~\&~Marquardt,~A.}
	Sharp Interface Limit of a Stokes/Cahn–Hilliard System, Part II: Approximate Solutions.
	\emph{J. Math. Fluid Mech.} \textbf{23}(38) (2021), 1--48.
	
	\bibitem[AM1]{AbelsMoser}
	\textsc{Abels,~H.,~\&~Moser,~M.} 
	Convergence of the Allen-Cahn Equation to the Mean Curvature Flow with $90$°-Contact Angle in 2D. \emph{Interfaces and Free Boundaries} \textbf{21}(3) (2019), 313--365. 
	
	\bibitem[AM2]{AbelsMoserAlpha}
	\textsc{Abels,~H.,~\&~Moser,~M.}
	Convergence of the Allen-Cahn equation with a nonlinear Robin Boundary Condition to Mean Curvature Flow with constant $\alpha$-Contact Angle close to $90$°.
	\textit{Preprint} (2021).
	
	\bibitem[AS]{AbelsSchaubeck}
	\textsc{Abels,~H.,~\&~Schaubeck,~S.}
	Sharp interface limit for the Cahn-Larché system.
	\emph{Asymptotic Analysis} \textbf{91} (2015), 283--340.
	
	\bibitem[AF]{AdamsFournier}
	\textsc{Adams,~R.~A.,~\&~Fournier,~J.~J.~F.}
	\emph{Sobolev Spaces.}
	Second edition, Elsevier Ltd. (2003).
	
	\bibitem[ABC]{ABC}
	\textsc{Alikakos,~N.~D.,~Bates,~P.~W.,~\&~Chen,~X.}
	Convergence of the Cahn-Hilliard equation to the Hele-Shaw model.
	\emph{Arch.~Rational Mech.~Anal.} \textbf{128} (1994), 165--205.
	
	\bibitem[ACF]{ACF}
	\textsc{Alikakos,~N.~D.,~Chen,~X.,~\&~Fusco,~G.}
	Motion of a droplet by surface tension along the boundary.
	\emph{Calc.~Var.} \textbf{11} (2000), 233--305.
	
	\bibitem[AC]{AC}
	\textsc{Allen,~S.~M.,~\&~Cahn,~J.~W.}
	A microscopic theory for antiphase boundary motion and its
	application to antiphase domain coarsening.
	\emph{Acta Metallurgica} \textbf{27} (1979), 1085--1095.

	\bibitem[Al]{AltFA}
	\textsc{Alt,~H.~W.}
	\emph{Linear Functional Analysis.}
	\emph{An Application-Oriented Introduction.}
	Springer-Verlag London (2016).

	\bibitem[AE]{AmannEscherIII}
	\textsc{Amann,~H.,~\&~Escher,~J.}
	\emph{Analysis III.}
	Birkhäuser, Basel (2009).
	
	\bibitem[AFW]{AndersonMcFaddenWheeler}
	\textsc{Anderson,~D.~M.,~McFadden~G.~B.,~\&~Wheeler,~A.~A.}
	Diffuse-Interface Methods in Fluid Mechanics.
	\emph{Annu.~Rev.~Fluid Mech.} \textbf{30} (1998), 139--165.
	
	\bibitem[Bal]{Baldo}
	\textsc{Baldo,~S.}
	Minimal interface criterion for phase transitions in mixtures of Cahn-Hilliard fluids.
	\emph{Ann.~Inst.~Henri Poincaré} \textbf{7}(2) (1990), 67--90.
	
	\bibitem[BGN]{BarrettGarckeNuern}
	\textsc{Barrett,~J.,~Garcke,~H.,~\&~Nürnberg,~R.} 
	Gradient Flow Dynamics of Two-Phase Biomembranes: Sharp Interface Variational Formulation and Finite Element Approximation.
    \emph{SMAI Journal of Computational Mathematics.} \textbf{4} (2018), 151--195.
	
	\bibitem[Bar]{Bartels_book}
	\textsc{Bartels,~S.}
	\emph{Numerical Methods for Nonlinear Partial Differential Equations.}
	Springer Series in Computational Mathematics, Springer (2015).
	
	\bibitem[Be]{Bellettini}
	\textsc{Bellettini,~G.}
	\emph{Lecture Notes on Mean Curvature Flow, Barriers and Singular Perturbations.}
	Scuola Normale Superiore Pisa (2013).
	
	\bibitem[BCMP]{BaiComMagPoz}
	\textsc{Baiocchi,~C.,~Comincioli,~V.,~Magenes,~E.,~\&~Pozzi,~G.}
	Free Boundary Problems in the Theory of Fluid Flow Through Porous Media: Existence and Uniqueness Theorems.
	\emph{Annali di Matematica Pura ed Applicata} \textbf{97}(1) (1973), 1--82.
	
	\bibitem[BGS]{BGS}
	\textsc{Bronsard,~L.,~Gui,~C.,~\&~Schatzman,~M.}
	A three‐layered minimizer in $\R^2$ for a variational problem with a symmetric three‐well potential. 
	\emph{Comm.~Pure Appl.~Math.} \textbf{49}(7) (1996), 677--715.
	
	\bibitem[BR]{BronsardReitich}
	\textsc{Bronsard,~L.,~\&~Reitich,~F.}
	On three-phase boundary motion and the singular limit of a vector-valued Ginzburg-Landau equation.
	\emph{Arch.~Rational Mech.~Anal.} \textbf{124}(4) (1993), 355--379.
	
	\bibitem[CC]{CaginalpChen}
	\textsc{Caginalp,~G.,~\&~Chen,~X.}
	Convergence of the phase field model to its sharp interface limits.
	\emph{Euro.~Jnl.~of Applied Mathematics} \textbf{9} (1998), 417--445.
	
	\bibitem[CCE]{CaginalpChenEck}
	\textsc{Caginalp,~G.,~Chen,~X.,~\&~Eck,~C.}
	A rapidly converging phase field model.
	\emph{Discrete and continuous dynamical systems} \textbf{15}(4) (2006), 1017--1034.
	
	\bibitem[CP]{CarrPego}
	\textsc{Carr,~P.,~\&~Pego,~R.~L.}
	Metastable Patterns in Solutions of $u_t=\varepsilon^2u_{xx}-f(u)$.
	\emph{Comm.~Pure Appl.~Math.} \textbf{42} (1989), 523--576.
	
	\bibitem[C1]{ChenGenPropInt}
	\textsc{Chen,~X.}
	Generation and propagation of interfaces for reaction-diffusion equations.
	\emph{Journal of Differential Equations} \textbf{96} (1992), 116--141.
	
	\bibitem[C2]{ChenSpectrums}
	\textsc{Chen,~X.}
	Spectrum for the Allen-Cahn, Cahn-Hilliard, and phase-field
	equations for generic interfaces.
	\emph{Commun.~in Partial Differential Equations} \textbf{19}(7\&8) (1994), 1371--1395.
	
	\bibitem[CHL]{CHL}
	\textsc{Chen,~X.,~Hilhorst,~D.,~\&~Logak,~E.}
	Mass conserving Allen-Cahn equation and volume preserving mean curvature flow.
	\emph{Interfaces and Free Boundaries} \textbf{12} (2010),
	527--549.
	
	\bibitem[deMS]{deMS}
	\textsc{de~Mottoni,~P.,~\&~Schatzman,~M.}
	Geometrical evolution of developed interfaces.
	\emph{Transactions of the Americal Mathematical Society} \textbf{347}(5) (1995), 207--220.
	
	\bibitem[DHP]{DenkHieberPrüss}
	\textsc{Denk,~R.,~Hieber,~M.,~\&~Prüss.,~J.}
	Optimal Lp-Lq-estimates for parabolic boundary
	value problems with inhomogeneous data. 
	\emph{Math.~Z.} \textbf{257} (2007), 193--224.
	
	\bibitem[D]{Depner}
	\textsc{Depner,~D.}
	Stability analysis of geometric evolution equations with triple lines and boundary contact.
	PhD thesis, Univ. of Regensburg (2010). urn:nbn:de:bvb:355-epub-160479
	
	\bibitem[Eb]{Ebenbeck}
	\textsc{Ebenbeck,~M.}
	Cahn–Hilliard–Brinkman models for tumour growth: Modelling, analysis and optimal control.
	PhD thesis, Univ. of Regensburg (2019). urn:nbn:de:bvb:355-epub-433763
	
	\bibitem[EGK]{EckGarckeKnabner}
	\textsc{Eck,~C.,~Garcke,~H.,~\&~Knabner,~P.}
	\emph{Mathematical Modeling.}
	Springer International Publishing AG (2017).
	
	\bibitem[El]{Elstrodt}
	\textsc{Elstrodt,~J.}
	\emph{Maß-und Integrationstheorie.}
	8.~Auflage.
	Springer Spektrum (2018).
	
	\bibitem[Ev]{Evans}
	\textsc{Evans,~L.~C.}
	\emph{Partial Differential Equations.}
	Second edition. American Mathematical Society, Providence, Rhode Island (2010).
	
	\bibitem[EG]{EvansGariepy}
	\textsc{Evans,~L.~C.,~\&~Gariepy,~R.~F.}
	\emph{Measure Theory and Fine Properties of Functions.}
	Revised Edition.
	Taylor \& Francis Group, LLC (2015).
	
	\bibitem[EvSS]{ESS}
	\textsc{Evans,~L.~C.,~Soner,~H.~M.,~\&~Souganidis,~P.~E.}
	Phase transitions and generalized motion by mean curvature.
	\emph{Comm.~Pure Appl.~Math.} \textbf{45} (1992), 1097--1123.
	
	\bibitem[FeL]{FeiLiu}
	\textsc{Fei,~M.,~\&~Liu,~Y.}
	Phase-field approximation of the Willmore flow.
	Preprint (2020). arXiv:1904.11139
	
	\bibitem[FiLS]{FischerLauxSimon}
	\textsc{Fischer,~J.,~Laux~,T.,~\&~Simon,~T.~M.}
	Convergence rates of the Allen-Cahn equation to mean curvature flow: A short proof based on relative entropies.
	\emph{SIAM J. Math. Anal.} \textbf{52}6 (2020), 6222--6233.
	
	\bibitem[Fr]{Friedman}
	\textsc{Friedman, A.} 
	Free Boundary Problems in Science and Technology.
	\emph{Notices of the AMS.}
	\textbf{47}(8) (2000), 854--861.
	
	\bibitem[GaHa]{GarckeHaas}
	\textsc{Garcke,~H.,~\&~Haas,~R.}
	Modelling of non-isothermal multi-component, multi-phase systems with convection.
	\emph{Phase Transformations in Multicomponent Melts.} Wiley-VCH Verlag, Weinheim (2008), 325--338.
	
	\bibitem[GiT]{GilbargTrudinger}
	\textsc{Gilbarg,~D.,~\&~Trudinger,~N.~S.}
	\emph{Elliptic Partial Differential Equations of Second Order.} Second Edition.
	Springer-Verlag Berlin Heidelberg (2001).
	
	\bibitem[Ha]{HaasDiss}
	\textsc{Haas,~R.}
	Modeling and Analysis for General Non-Isothermal Convective Phase Field Systems.
	PhD thesis, Univ.~of Regensburg (2007). urn:nbn:de:bvb:355-opus-7834
	
	\bibitem[He]{Hebey}
	\textsc{Hebey,~E.}
	\emph{Sobolev Spaces on Riemannian Manifolds.}
	Springer, Berlin - Heidelberg (1996).
	
	\bibitem[HMT]{HMitreaTaylor}
	\textsc{Hofmann,~S.,~Mitrea,~M.,~\&~Taylor,~M.}
	Geometric and transformational properties of Lipschitz domains, Semmes-Kenig-Toro domains, and other classes of finite perimeter domains.
	\emph{The Journal of Geometric Analysis} \textbf{17} (2007), 593--647.
	
	\bibitem[Hu]{Huisken}
	\textsc{Huisken,~G.}
	Non-parametric Mean Curvature Evolution with Boundary Conditions.
	\emph{Journal of Differential Equations} \textbf{77} (1989), 369--378.
	
	\bibitem[I]{Ilmanen}
	\textsc{Ilmanen,~T.}
	Convergence of the Allen-Cahn equation to Brakke's motion by mean curvature.
	\emph{J.~Differential Geom.} \textbf{38} (1993), 417--461.
	
	\bibitem[Ka]{Kagaya}
	\textsc{Kagaya,~T.}
	Convergence of the Allen-Cahn equation with a zero Neumann boundary condition on non-convex domains.
	\emph{Math.~Ann.} \textbf{373} (2019), 1485--1528. 

	\bibitem[K]{Kato}
	\textsc{Kato,~T.}
	\emph{Perturbation Theory for Linear Operators.}
	Springer-Verlag Berlin Heidelberg (1995).
	
	\bibitem[KKR]{KKR}
	\textsc{Katsoulakis,~M.,~Kossioris,~G.~T.,~\&~Reitich,~F.}
	Generalized Motion by Mean Curvature with Neumann Conditions and the Allen-Cahn Model for Phase Transitions.
	\emph{The Journal of Geometric Analysis} \textbf{5}(2) (1995), 255--279.
	
	\bibitem[Kr]{Kreuter}
	\textsc{Kreuter,~M.}
	Sobolev Spaces of Vector-Valued Functions.
	Master Thesis, Univ.~of Ulm (2015).

	\bibitem[Ku]{Kusche}
	\textsc{Kusche,~T.}
	Spectral analysis for linearizations of the Allen-Cahn equation around rescaled stationary solutions with triple junction.
	PhD thesis, Univ.~of Regensburg (2006). urn:nbn:de:bvb:355-opus-6471
	
	\bibitem[LS]{LauxSimon}
	\textsc{Laux,~T.,~\&~Simon,~T.~M.}
	Convergence of the Allen-Cahn Equation
	to Multiphase Mean Curvature Flow.
	\emph{Comm.~Pure Appl.~Math.} \textbf{71}(8) (2018), 1597--1647.
	
	\bibitem[LK]{LeeKim}
	\textsc{Lee,~D.~S.,~\&~Kim,~J.~S.}
	Mean curvature flow by the Allen-Cahn equation.
	\emph{Euro.~Jnl.~of Applied Mathematics} \textbf{26} (2015), 535--559.
	
	\bibitem[Le]{Leoni}
	\textsc{Leoni,~G.}
	\emph{A First Course in Sobolev Spaces.}
	Second edition. American Mathematical Society, Providence, Rhode Island (2017).
	
	\bibitem[Lu]{LunardiOptReg}
	\textsc{Lunardi,~A.}
	\emph{Analytic Semigroups and Optimal Regularity in Parabolic Problems.}
	Springer, Basel (1995).
	
	\bibitem[LSW]{LunardiSvW}
	\textsc{Lunardi,~A.,~Sinestrari,~E.,~\&~Von~Wahl,~W.}
	A semigroup approach to the time-dependent parabolic initial boundary value problem.
	\emph{Diff.~Int.~Eqns.} \textbf{5} (1992), 1275--1306.
	
	\bibitem[Ma]{Marquardt}
	\textsc{Marquardt,~A.}
	Sharp Interface Limit for a Stokes/Cahn-Hilliard System.
	PhD thesis, Univ.~of Regensburg (2019). urn:nbn:de:bvb:355-epub-384308 
	
	\bibitem[Mir]{Miranville}
	\textsc{Miranville,~A.}
	\emph{The Cahn–Hilliard Equation: Recent Advances and Applications.}
	SIAM (2019), xiv+216.
	
	\bibitem[MizT]{MizunoTonegawa}
	\textsc{Mizuno,~M.,~\&~Tonegawa,~Y.}
	Convergence of the Allen-Cahn equation with Neumann boundary conditions.
	\emph{SIAM J.~Math.~Anal.} \textbf{47}(3) (2015), 1906--1932. 
	
	\bibitem[Mo]{Modica}
	\textsc{Modica,~L.}
	The Gradient Theory of Phase Transitions and
	the Minimal Interface Criterion.
	\emph{Arch.~Rational Mech.~Anal.} \textbf{98} (1987), 123--142.
	
	\bibitem[M]{MoserDiss}
	\textsc{Moser,~M.}
	Sharp Interface Limits for Diffuse Interface Models with Contact Angle.
	PhD thesis, Univ.~of Regensburg (2020). urn:nbn:de:bvb:355-epub-443894
	
	\bibitem[N]{Novick_CH}
	\textsc{Novick-Coen,~A.}
	The Cahn-Hilliard Equation.
	\emph{Handbook of Differential Equations: Evolutionary Equations} \textbf{4}, Elsevier (2008), 201--228. 
	
	\bibitem[O'N]{ONeill}
	\textsc{O'Neill,~B.}
	\emph{Semi Riemannian Geometry. With Applications to Relativity.}
	Academic Press (1983).
	
	\bibitem[PS]{PruessSimonett}
	\textsc{Prüss,~J.,~\&~Simonett,~G.}
	\emph{Moving Interfaces and Quasilinear Parabolic Evolution Equations.} 
	Birkhäuser, Basel (2016).
	
	\bibitem[RR]{RenardyRogers}
	\textsc{Renardy,~M.,~\&~Rogers,~R.~C.}
    \emph{An Introduction to Partial Differential Equations.}
    Second Edition. Springer-Verlag New York (2004).
	
	\bibitem[RSK]{RSK}
	\textsc{Rubinstein,~J.,~Sternberg,~P.,~\&~Keller,~J.}
	Fast reaction, slow diffusion, and curve shortening.
	\emph{SIAM J.~Appl.~Math.} \textbf{49}(1) (1989).
	
	\bibitem[R]{Ruzicka}
	\textsc{R\r{u}\v{z}i\v{c}ka,~M.}
	\emph{Nichtlineare Funktionalanalysis.}
	Springer, Berlin - Heidelberg (2004).
	
	\bibitem[Sa1]{SaezCurveShort}
	\textsc{S\'aez Trumper,~M.}
	Relaxation of the curve shortening flow via the parabolic Ginzburg-Landau equation.
	\emph{Calc.~Var.} \textbf{31}(3) (2008), 359--386.
	
	\bibitem[Sa2]{SaezTriodFlow}
	\textsc{S\'aez Trumper,~M.}
	Relaxation of the flow of triods by curve shortening flow via the vector-valued parabolic Allen-Cahn equation.
	\emph{J.~Reine Angew.~Math.} \textbf{634} (2009), 143--168.
	
	\bibitem[Sb]{Schaubeck}
	\textsc{Schaubeck,~S.}
	Sharp interface limits for diffuse interface models.
	PhD thesis, Univ.~of Regensburg (2014). urn:nbn:de:bvb:355-epub-294622
	
	\bibitem[St]{SternbergSingPert}
	\textsc{Sternberg,~P.}
	The effect of a singular perturbation on nonconvex variational problems.
	\emph{Arch.~Rational Mech.~Anal.} \textbf{101} (1988), 209--260.
	
	\bibitem[Sw]{Schweizer}
	\textsc{Schweizer,~B.}
	\emph{Partielle Differentialgleichungen.}
	2.~Auflage.
	Springer Spektrum (2018).
	
	\bibitem[T1]{Triebel_Interpol_Theory}
	\textsc{Triebel,~H.}
	\emph{Interpolation Theory, Function Spaces, Differential Operators.}
	North-Holland Publishing Company, Amsterdam - New York - Oxford (1978).
	
	\bibitem[T2]{Triebel_Fct_SpacesI}
	\textsc{Triebel,~H.}
	\emph{Theory of Function Spaces.}
	Birkhäuser, Basel (1983).
	
	\bibitem[V]{Vogel}
	\textsc{Vogel,~T.}
	Sufficient conditions for capillary surfaces to be energy minima.
	\emph{Pac.~J.~Math.} \textbf{194}(2) (2000), 469--489.
\end{thebibliography}

\end{document}